\theoremstyle{plain}
\newtheorem{thm}{Theorem}
\newtheorem{prp}[thm]{Proposition}
\newtheorem{lem}[thm]{Lemma}
\newtheorem{cor}[thm]{Corollary}
\theoremstyle{definition}
\newtheorem*{rems}{Remarks}
\newtheorem{remN}[thm]{Remark}
\newtheorem*{rem}{Remark}
\theoremstyle{definition}
\newtheorem*{dfn}{Definition}
\numberwithin{equation}{section}
\numberwithin{thm}{section}
\newenvironment{smal}%
{\par\medskip
\smaller\everypar{\setlength{\leftskip}{0.5cm}%
\setlength{\rightskip}{0.5cm}}}
{\par\medskip\goodbreak }
\def\SmallDisplay#1\[#2\]{%
\par
\vskip 2pt plus 1pt minus 1pt
\hbox to \linewidth{%
\kern0.5cm #1 \hfill $\displaystyle #2$ \hfill
\hphantom{#1}\kern0.5cm}%
\vskip 4pt plus 2pt minus 2pt
}
\font\ninerm=cmr9
\font\ninett=cmtt10 at 9pt
\font\sevenit=cmti7
\def\Arg{\mathop{\rm Arg}\nolimits}
\def\bar{\mathop{\rm bar}\nolimits}
\def\barre{\raise -1.5pt\hbox{\big|}}
\def\C{\mathbf{C}}
\def\ca#1{\mathcal{#1}}
\def\Di#1#2{{#1}_{(#2)}}
\def\DiE#1#2#3{{#1}_{(#2)}^{#3}}
\def\d#1{{\rm d}\hbox{$\mskip 0.5mu$}#1} 
\def\di#1#2{#1_{[#2]}}
\def\dint{\int \!\!\! \int }
\def\E{\mathop{\rm E{}}\nolimits}
\def\e{\mathop{\rm e{}}\nolimits}
\def\et{ \ms4 \& \ms4 }
\def\fg{\rq\rq}  
\def\fge{\fg\ }  
\def\ge{\geqslant}
\def\gr#1{{\mathbf{#1}}}  
\def\I{\mskip 1.5mu{\rm I}\mskip 1.0mu}
\def\Id{\mskip 1.5mu{\rm Id}\mskip 1.5mu}
\def\Im{\mathop{\rm Im}\nolimits}
\def\ii{\mskip 1.5mu{\rm i}\mskip 1.5mu}  
\def\Klc{K_{\textit{lc}}}
\def\Kg{K_{\textit{g}}}
\def\le{\leqslant}
\def\lra{\mathop{\longrightarrow}\limits}
\def\M{\mathrm{M}}
\def\Mg{\mathit{M}}
\def\mg{m_{\textit{g}}}
\def\mlc{m_{\textit{lc}}}
\def\mr#1{\mathrm{#1}}
\def\ms#1{\mskip#1mu}   
\def\ns#1{\mskip-#1mu}  
\def\N{\mathbf{N}}
\def\og{\lq\lq}  
\def\PF{\widehat{\hphantom{h}\vphantom{n}}}
\def\ps{\mathbin{\cdot}}  
\def\R{\mathbf{R}}
\def\Re{\mathop{\rm Re}\nolimits}
\def\sca#1#2{\langle #1, #2 \rangle}  
\def\sign{\mathop{\rm sign}\nolimits}
\def\T{\mathbf{T}}
\def\xG{x_{\scriptscriptstyle \Gamma}}
\def\up#1{\raise 1.5pt\hbox{#1}}  
\def\Z{\mathbf{Z}}
\font\ninesy=cmsy9   
\newbox\sileq
\def\bsimleq{\mathop{\vtop{\unvcopy\sileq}}\limits}
\def\simleq{\mathop{\vtop{\ialign{##\crcr 
\raise1.33pt\hbox{$\hfil\displaystyle < \hfil$}\crcr
\noalign{\kern -0.05pt\nointerlineskip}
\raise1.05pt\hbox{\kern0.31pt$\ninesy\hfil \sim \hfil$}\crcr
 \noalign{\kern1pt}}}}\limits}
\def\Dsimleq{\mathop{\vtop{\ialign{##\crcr 
\raise1.33pt\hbox{$\hfil\displaystyle < \hfil$}\crcr
\noalign{\kern -0.05pt\nointerlineskip}
\raise1.05pt\hbox{\kern0.15pt$\ninesy\hfil \sim \hfil$}\crcr
 \noalign{\kern1pt}}}}\limits}
\def\bsle#1{\ms{#1}\bsimleq\ms{#1}}
\def\Dsle#1{\ms{#1}\Dsimleq\ms{#1}}
\def\dumou{\vskip 2pt plus 2pt minus 4pt}
\newbox\LDOG
\newbox\LDOGB
\newbox\LDOGC
\newbox\VV
\newbox\BM
\newbox\BMB
\begin{document}

\title[Dimension free bounds]{Dimension free bounds for the
 Hardy--Littlewood maximal operator associated to convex sets}
\date{September, 2016}
\author[L.~Deleaval]{Luc Deleaval}
\author[O.~Gu\'edon]{Olivier Gu\'edon}
\author[B.~Maurey]{Bernard Maurey \\
        \\
 \box\LDOG \\
 \box\LDOGB \\
 \box\LDOGC \\
 \box\VV \\
 \box\BM \\
 \box\BMB }

\vskip -0.35cm

\begin{abstract}
This survey is based on a series of lectures given by the authors at the
working seminar \og Convexit\'e et Probabilit\'es\fge at UPMC Jussieu,
Paris, during the spring 2013. It is devoted to maximal functions
associated to symmetric convex sets in high dimensional linear spaces, a
topic mainly developed between 1982 and 1990 but recently renewed by
further advances. 

 The series focused on proving these maximal function inequalities in
$L^p(\R^n)$, with bounds independent of the dimension~$n$ and for 
all~$p \in (1, +\infty]$ in the best cases. This program was initiated in
1982 by Elias Stein, who obtained the first theorem of this kind for the
family of Euclidean balls in arbitrary dimension. We present several
results along this line, proved by Bourgain, Carbery and M\"uller during
the period 1986--1990, and a new one due to Bourgain~(2014) for the family
of cubes in arbitrary dimension. We complete the cube case with a negative
answer to the possible dimensionless behavior of the weak type (1,~1)
constant, due to Aldaz, Aubrun and Iakovlev--Str\"omberg between 2009 and
2013.
\end{abstract}

\maketitle

\begin{smal}
\noindent{\sc R\'esum\'e.} 
Ces Notes reprennent et compl\`etent une s\'erie d'expos\'es donn\'es par
les auteurs au groupe de travail \og Convexit\'e et Probabilit\'es\fge \`a
l'UPMC Jussieu, Paris, au cours du printemps 2013. Elles sont consacr\'ees
\`a l'\'etude des fonctions maximales de type Hardy--Littlewood associ\'ees
aux corps convexes sym\'etriques dans~$\R^n$. On s'int\'eresse tout
particuli\`erement au comportement des constantes intervenant dans les
estimations lorsque la dimension $n$ tend vers l'infini. Ce sujet a \'et\'e
d\'evelopp\'e principalement entre 1982 et 1990, mais a \'et\'e relanc\'e
par des avanc\'ees r\'ecentes.

 Le but de la s\'erie d'expos\'es \'etait de prouver des in\'egalit\'es
maximales dans $L^p(\R^n)$ avec des bornes ind\'ependantes de la
dimension~$n$, pour certaines familles de corps convexes. Dans les
meilleurs cas, on a pu obtenir de tels r\'esultats pour toutes les valeurs
de $p$ dans $(1, +\infty]$. Ce th\`eme de recherche a \'et\'e initi\'e en
1982 par Elias Stein~\cite{SteinSF}, qui a d\'emontr\'e le premier
th\'eor\`eme de ce genre pour la famille des boules euclidiennes en
dimension arbitraire, obtenant pour tout $p \in (1, +\infty]$ une borne
dans $L^p(\R^n)$ ind\'ependante de~$n$. Nous pr\'esentons ce th\'eor\`eme de
Stein ainsi que plusieurs autres r\'esultats dans cette direction,
d\'emontr\'es par Bourgain, par Carbery et par M\"uller dans la p\'eriode
1986--1990. En 1986, Bourgain~\cite{BourgainL2} obtient une borne
ind\'ependante de~$n$ valable dans $L^2(\R^n)$ pour tout corps convexe
sym\'etrique dans~$\R^n$, puis Bourgain~\cite{BourgainLp} et
Carbery~\cite{CarberyLp} \'etendent le r\'esultat $L^p(\R^n)$ de Stein aux
corps convexes sym\'etriques quelconques, mais sous la condition que 
$p > 3/2$. M\"uller~\cite{MullerQC} obtient un r\'esultat valable pour tout
$p > 1$ quand un certain param\`etre g\'eom\'etrique, li\'e aux volumes des
projections du corps convexe sur les hyperplans, reste born\'e. Ce
param\`etre ne reste pas born\'e pour tous les convexes, en particulier, il
tend vers l'infini pour les cubes de grande dimension. Nous donnons un
th\'eor\`eme r\'ecent (2014) d\^u \`a Bourgain~\cite{BourgainCube} qui
obtient pour tout $p > 1$ une borne dans $L^p(\R^n)$ ind\'ependante de $n$
pour la famille des fonctions maximales associ\'ees aux cubes en dimension
arbitraire. Nous compl\'etons l'\'etude du cas du cube par des r\'esultats
pour la constante de type faible $(1, 1)$, dus \`a Aldaz~\cite{AldazWT},
\`a Aubrun~\cite{Aubrun} et \`a Iakovlev--Str\"omberg~\cite{IS} entre 2009
et 2013. \`A l'inverse du cas $L^p(\R^n)$, $1 < p \le +\infty$, cette
constante de type faible ne reste pas born\'ee quand la dimension tend vers
l'infini.

 Nous tenons \`a remercier ceux qui nous ont encourag\'es dans notre projet
de r\'edaction de ces Notes, et tout particuli\`erement Franck Barthe qui a
su nous mettre en mouvement. Nous sommes reconnaissants \`a P.~Auscher et
E.~Stein, qui nous ont t\'emoign\'e leur int\'er\^et et indiqu\'e des
r\'ef\'erences importantes qui nous avaient \'echapp\'e. Nous remercions
les rapporteurs pour leurs suggestions constructives.
\end{smal}

\bigskip

\def\Dsection#1#2{\noindent \ref{#2}. #1 
 \leaders \hbox to 0.8em{\hss.\hss}\hfill  \pageref{#2} \par}
\def\DBsection#1#2{\noindent #1 
 \leaders \hbox to 0.8em{\hss.\hss}\hfill  \pageref{#2} \par}
\def\Dsubsection#1#2{\noindent\kern 0.35cm  \ref{#2}. #1 
 \leaders \hbox to 0.8em{\hss.\hss}\hfill  \pageref{#2} \par}
\def\Dsubsubsection#1#2{\noindent\kern 0.70cm  \ref{#2}. #1
 \leaders \hbox to 0.8em{\hss.\hss}\hfill  \pageref{#2} \par}
\def\Dthebibliography{\noindent References}

\noindent{\bf Contents}

{\smaller

\DBsection{Introduction}{Intro}
\dumou

\Dsection{General dimension free inequalities, first part}{Gdfi}
\Dsubsection{Doob's maximal inequality}{MaxiDoob}
\Dsubsection{The Hopf maximal inequality}{MaxiHopf}
\Dsubsection{From martingales to semi-groups, via an argument of Rota}
 {RotaRem}
\Dsubsection{Brownian motion, and more on martingales}{BrowniMarti}
\Dsubsubsection{Gaussian distributions and Brownian motion}{GaussiDist}
\Dsubsubsection{The Burkholder--Gundy inequalities}{BGIneqs}
\Dsubsubsection{A consequence of the \og reflection principle\fg}
 {PrincipeReflexion}
\Dsubsection{The Poisson semi-group}{PoissonSG}

\dumou
\Dsection{General dimension free inequalities, second part}{GdfiTwo}
\Dsubsection{Littlewood--Paley functions}{LittlePal}
\Dsubsubsection{Littlewood--Paley and maximal functions}{MaxiLittlePal}
\Dsubsection{Fourier multipliers}{FourierMult}
\Dsubsubsection{Multipliers \og of Laplace type\fg}{LaplaceMulTyp}
\Dsubsection{Riesz transforms}{TransfoRiesz}

\dumou
\Dsection{Analytic tools}{AnaTool}
\Dsubsection{Some known facts about the Gamma function}{FoGamm}
\Dsubsection{The interpolation scheme}{InterpoSchem}
\Dsubsubsection{The three lines lemma}{TheTLL}
\Dsubsubsection{Interpolation of holomorphic families of linear operators}
 {Ihf}
\Dsubsection{On the definition of maximal functions}{DefiMaxiFunc}

\dumou
\Dsection{The results of Stein for Euclidean balls}{SteinsResults}
\Dsubsection{Proof of Theorem \ref{indepdim}}{SphericOp}
\Dsubsection{Boundedness of the spherical maximal operator}{Bsmo}
\Dsubsubsection{Maximal operator and square function}{Mmoasf}
\Dsubsubsection{Strong and weak type results for $\Mg_{K_\ell}$, 
  $\ell \ge 1$}
 {Sawtr}
\Dsubsubsection{Conclusion}{PoTsph}

\dumou
\Dsection{The $L^2$ result of Bourgain}{ArtiBour}
\Dsubsection{The general setting}{TheSetting}
\Dsubsection{On the volume of sections}{VoluSections}
\Dsubsection{Fourier analysis in $L^2(\R^n)$}{AnaFouri}
\Dsubsubsection{Conclusion of Bourgain's argument}{ConcluBour}

\dumou
\Dsection{The $L^p$ results of Bourgain and Carbery}{ArtiCarbe}
\Dsubsection{{\it A priori\/} estimate and interpolation}{EstiPrio}
\Dsubsection{Fractional derivatives}{FractiDeri}
\Dsubsubsection{Multipliers associated to fractional derivatives}{MultipAssoc}
\Dsubsection{Fourier criteria for bounding the maximal function}{CritFou}
\Dsubsection{Proofs of Theorems~\ref{TheoDyad} and~\ref{TheoMaxi},
 and Proposition~\ref{MoreGeneral}}{PreuveTheos}
\Dsubsubsection{Where is the gap?}{GapQuestion}
\Dsubsection{A proof for the property $(\gr S_2)$}{AutrePreuve}
\Dsubsubsection{A solution to the gap question}{SGQ}
\Dsubsection{Annex: proof of Bourgain's $L^2$ theorem by Carbery's criterion}
 {PreuvBour}

\dumou
\Dsection{The Detlef M\"uller article}{AMuller}
\Dsubsection{The M\"uller strategy}{StrateMull}
\Dsubsection{Model of proof: the Poisson case}{Model}
\Dsubsection{The interpolation part of Carbery's proof for 
 Theorem~\ref{TheoMaxi}}{InterpoCarbe}
\Dsubsection{Upper bounds for the functions  
 $\xi \mapsto m^\varepsilon_z (\xi)$}{LesMajos}
\Dsubsection{Lemma 4 of M\"uller's article}{LemmeQuatre}
\Dsubsubsection{Conclusion}{ConclusionMull}

\dumou
\Dsection{Bourgain's article on cubes}{LeCube}
\Dsubsection{Holding on M\"uller and Carbery}{Raccord}
\Dsubsubsection{\textit{A priori} estimate}{APriori}
\Dsubsection{First reduction}{FirstReduc}
\Dsubsubsection{Decoupling}{Decoupling}
\Dsubsection{Second reduction}{SecondReduc}
\Dsubsection{Conclusion}{ConcluBourCube}

\dumou
\Dsection{The Aldaz weak type result for cubes,
          and improvements}{AlAu}

\dumou

\DBsection{References}{References}

\DBsection{Index}{Index}

\DBsection{Notation}{Notation}

}

\vfill\eject

\section*{Introduction\label{Intro}}
\dumou

\noindent
First defined by Hardy and Littlewood~\cite{HaLi} in the one-dimensional
setting, the Hardy--Littlewood maximal operator was generalized in
arbitrary dimension by Wiener~\cite{Wiener}. It turned out to be a
powerful tool, for instance in harmonic or Fourier analysis, in
differentiation theory or in singular integrals theory. It was extended to
various situations, including not only homogeneous settings, as in the book
of Coifman and Weiss~\cite{CW}, but also non-homogeneous, like noncompact
symmetric spaces in works by Clerc and Stein~\cite{ClSt} or
Str\"omberg~\cite{StrA}. Also studied in vector-valued settings with the
Fefferman--Stein type inequalities~\cite{FS}, it gave rise to several kinds
of maximal operators which are now important in real analysis.
\dumou

 We shall denote by $\M$ the classical centered Hardy--Littlewood maximal
operator, defined on the class of locally integrable functions $f$
on~$\R^n$ by
\begin{equation}
   (\M f)(x)
 = \sup_{r > 0} \frac 1 {|B_r|} \int_{B_r} |f(x-y)| \, \d y,
 \quad x \in \R^n,
 \label{ClassicalM}
\end{equation}
\dumou
\noindent
where $B_r$ is the Euclidean ball of radius~$r$ and center~$0$ in $\R^n$,
and $|S|$ denotes here the $n$-dimensional Lebesgue volume of a Borel
subset $S$ of $\R^n$. It is well known that this nonlinear operator $\M$ is
of \emph{strong type} $(p, p)$\label{StrongType} 
when $1 < p \le +\infty$ and of \emph{weak type} $(1, 1)$, as stated in the
following famous theorem. We write $L^p(\R^n)$ for the $L^p$-space
corresponding to the Lebesgue measure on~$\R^n$.
\dumou

\begin{thm}[Hardy--Littlewood maximal theorem]%
\label{TheoHL}
Let $n$ be an integer~$\ge 1$.

\begin{enumerate}
\item 
For every function $f \in L^1(\R^n)$ and $\lambda > 0$, the
\emph{weak type inequality}
\begin{equation} 
      \bigl| \bigl\{ x \in \R^n : (\M f) (x) > \lambda \bigr\} \bigr|
 \le \frac {C(n)} \lambda \ms2 \|f\|_{L^1(\R^n)}
 \label{weak-type} \tag{WT}
\end{equation}
holds true, with a constant $C(n)$ depending only on the dimension~$n$.

\item Let\/ $1 < p \le +\infty$. There exists a constant $C(n, p)$ such that
for every function $f$ in~$L^p(\R^n)$, one has
\begin{equation}
     \|\M f\|_{L^p(\R^n)}
 \le C(n, p) \ms2 \|f\|_{L^p(\R^n)}.
 \label{strong-type} \tag{ST}
\end{equation}
\end{enumerate}
\end{thm}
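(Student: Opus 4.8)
The weak type (1,1) inequality is the heart of the matter; the strong type $(p,p)$ bounds for $1<p<\infty$ will then follow by interpolation with the trivial $L^\infty$ bound, and the case $p=+\infty$ is immediate since averages of $|f|$ never exceed $\|f\|_{L^\infty}$.

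\textbf{Proof of the weak type inequality.} Fix $\lambda>0$ and set $E_\lambda=\{x\in\R^n:(\M f)(x)>\lambda\}$. For each $x\in E_\lambda$, by definition of $\M$ there is a radius $r_x>0$ with
\[
  \frac1{|B_{r_x}|}\int_{B_{r_x}}|f(x-y)|\,\d y>\lambda,
\]
i.e. the ball $B(x,r_x)$ centered at $x$ of radius $r_x$ satisfies $|B(x,r_x)|<\lambda^{-1}\int_{B(x,r_x)}|f|$. It suffices to bound the measure of an arbitrary compact subset $K\subset E_\lambda$; cover $K$ by finitely many of the balls $B(x,r_x)$ and apply the Vitali covering lemma to extract a finite disjoint subfamily $B(x_1,r_{x_1}),\dots,B(x_m,r_{x_m})$ such that the dilated balls $B(x_i,5r_{x_i})$ still cover $K$. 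Then
\[
  |K|\le\sum_{i=1}^m |B(x_i,5r_{x_i})|
      =5^n\sum_{i=1}^m|B(x_i,r_{x_i})|
      <\frac{5^n}{\lambda}\sum_{i=1}^m\int_{B(x_i,r_{x_i})}|f|
      \le\frac{5^n}{\lambda}\|f\|_{L^1(\R^n)},
\]
the last step using disjointness of the selected balls. Taking the supremum over compact $K\subset E_\lambda$ gives \eqref{weak-type} with $C(n)=5^n$.

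\textbf{Proof of the strong type inequalities.} For $p=+\infty$ one has $(\M f)(x)\le\|f\|_{L^\infty(\R^n)}$ pointwise, so \eqref{strong-type} holds with $C(n,\infty)=1$. For $1<p<\infty$, given $f\in L^p(\R^n)$ and $\lambda>0$, split $f=f\mathbf 1_{\{|f|>\lambda/2\}}+f\mathbf 1_{\{|f|\le\lambda/2\}}=:f_1+f_2$. Since $\|f_2\|_{L^\infty}\le\lambda/2$, we get $\M f_2\le\lambda/2$, hence $\{\M f>\lambda\}\subset\{\M f_1>\lambda/2\}$, and the weak type bound applied to $f_1\in L^1(\R^n)$ yields
\[
  \bigl|\{\M f>\lambda\}\bigr|
  \le\frac{2C(n)}{\lambda}\int_{\{|f|>\lambda/2\}}|f|.
\]
Now integrate using the layer-cake formula $\|\M f\|_{L^p}^p=p\int_0^\infty\lambda^{p-1}|\{\M f>\lambda\}|\,\d\lambda$, substitute the above estimate, apply Fubini to the resulting double integral, and compute the elementary one-variable integral in $\lambda$; this produces $\|\M f\|_{L^p(\R^n)}^p\le C_p\,C(n)\,\|f\|_{L^p(\R^n)}^p$ for a constant $C_p$ depending only on $p$, which is \eqref{strong-type}.

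\textbf{Main obstacle.} The only genuinely nontrivial ingredient is the Vitali covering lemma and, relatedly, the measurability of $E_\lambda$ (which follows since $\M f$ is lower semicontinuous: each average $r\mapsto|B_r|^{-1}\int_{B_r}|f(x-y)|\,\d y$ is continuous in $x$, so the supremum is lower semicontinuous). Everything else — the dyadic splitting of $f$ and the layer-cake integration for the $L^p$ bound — is routine. Note that this argument only produces $C(n)=5^n$ and $C(n,p)$ growing with $n$; the whole point of the survey is that these constants can in fact be taken independent of $n$, which requires the far subtler arguments developed in the later sections.
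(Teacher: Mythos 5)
Your proof is correct and follows the same standard route the paper itself sketches: the Vitali covering lemma for the weak type estimate (the paper's version works with finite covers and the constant $3$, yielding $C(n)=3^n$ for the uncentered operator, but your $5r$-version is equally valid), and Marcinkiewicz-type interpolation with $L^\infty$ for the strong type, which the paper explicitly cites as the usual argument giving $C(n,p)=p\ms2 C^{n/p}/(p-1)$.
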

\dumou

 The weak type inequality is optimal in the sense that $\M f$ is never
in~$L^1(\R^n)$, unless $f = 0$ almost everywhere. Zygmund introduced the
so-called \og$L\log L$ class\fge to give a sufficient condition for the
local integrability of the Hardy--Littlewood maximal function, a condition
that is actually necessary, as proved by Stein~\cite{SteinLlogL}. The proof
of Theorem~\ref{TheoHL} by Hardy and Littlewood was combinatorial and used
decreasing rearrangements. The authors said: \og The problem is most easily
grasped when stated in the language of cricket, or any other game in which
a player compiles a series of scores of which an average is recorded\fg.
Passing through the Vitali covering lemma, which is recalled below,
has become later a standard approach.
\dumou

 A natural question that can be raised is the following. Could we compute
the best constant in both inequalities~\eqref{weak-type}
and~\eqref{strong-type}? This question seems to be out of reach in full
generality. There is a very remarkable exception to this statement, the
one-dimensional case where Melas has shown in~\cite{Melas} by a mixture
of combinatorial, geometric and analytic arguments, that the best constant
in~\eqref{weak-type} is $(11 + \sqrt{61}) / 12$. The case $p > 1$ is still
open, even in the one-dimensional case, despite of substantial progress by
Grafakos, Montgomery-Smith and Motrunich~\cite{GMSM}, who obtained by
variational methods the best constant in~\eqref{strong-type} for the class
of positive functions on the line that are convex except at one point. 
The \emph{uncentered}\label{Uncent} 
maximal operator $f \mapsto f^*$ is better understood~\cite{GMS}, the
uncentered maximal function $f^*$ being defined for every $x \in \R^n$ by
\begin{equation}
   f^*(x)
 = \sup_{B \in \ca B(x)} \ms2 \frac 1 {|B|} \int_{B} |f(u)| \, \d u,
 \label{UncentOp}
\end{equation}
where $\ca B(x)$ denotes the family of Euclidean balls $B$ containing~$x$,
with arbitrary center $y$ and radius $> d(y, x)$. It is clear that 
$f^* \ge \M f$, and the maximal theorem also holds for $f^*$ since any 
\og uncentered\fge ball $B \in \ca B(x)$ of radius $r$ is contained
in~$B(x, 2 r)$, yielding the far from sharp pointwise inequality 
$f^* \le 2^n \ms1 \M f$.
\dumou

 Lacking for exact values, one may address the question of the asymptotic
behavior of the constants when the dimension $n$ tends to infinity. This
program was initiated at the beginning of the 80s by Stein. In the usual
proof of the Hardy--Littlewood maximal theorem based on the Vitali covering
lemma, the dependence on the dimension~$n$ in the weak type result is
exponential, of the form $C(n) = C^n$ for some $C > 1$. Then, by
interpolation of Marcinkiewicz-type between the weak-$L^1$ case and the
trivial $L^\infty$ case, one can get for the strong type in $L^p(\R^n)$ a
constant of the form $C(n, p) = p \ms2 C^{n/p} / (p - 1)$,
when $1 < p \le +\infty$ (see~\cite[Exercises, 1.3.3~(a)]{GrafakosCFA}). 
In~\cite{SteinSF}, Stein has improved this asymptotic behavior in a
spectacular fashion. Indeed, by using a spherical maximal operator together
with a lifting method, he showed that for every $p > 1$, one can replace
the bound $C(n, p)$ in~\eqref{strong-type} by a bound $C(p)$ independent
of~$n$. The detailed proof appeared in the paper~\cite{SteStr} by Stein and
Str\"omberg. 
\dumou

 The use of an appropriate spherical maximal operator is now a decisive
approach for bounding the $L^p$ norm of Hardy--Littlewood-type maximal
operators independently of the dimension~$n$, when $p > 1$. This is the
case, for instance, for the Heisenberg group~\cite{Zien} or for hyperbolic
spaces~\cite{Li2}. Moreover, Stein and Str\"omberg proved that the
weak type $(1, 1)$ constant grows at most like $\mr O(n)$, and it is still
unknown whether or not this constant may be bounded independently of the
dimension. The proof in~\cite{SteStr} draws on the
Hopf--Dunford--Schwartz ergodic theorem, about which Stein says
in~\cite{SteinTHA} that it is \og one of the most powerful results in
abstract analysis\fg. The strategy, which exploits the relationship between
averages on balls and either the heat semi-group or the Poisson
semi-group, is well explained in~\cite{CGGM}, and has been applied in
several different settings~\cite{Del, Li1, Li3, LiLo}. 
\dumou

 In a large part of these Notes, we shall replace Euclidean balls in the
definition~\eqref{ClassicalM} of the maximal operator by other centrally
symmetric convex bodies in~$\R^n$ (in what follows, we shall omit
\og centrally\fge and abbreviate it as \emph{symmetric convex body}). For
example, replacing averages over Euclidean balls $B_r$ of radius~$r$ by
averages over $n$-dimensional cubes $Q_r$ with side $2 r$ gives an operator
$\M_Q$ which satisfies both the weak type and strong type maximal
inequalities. Indeed, since $B_r \subset Q_r \subset \sqrt n B_r$, it is
obvious that $\M_Q$ is bounded in~$L^p(\R^n)$ with $C(n, p)$ replaced by 
$n^{n/2} C(n, p)$, but this painless route badly spoils the constants.
Several results specific to the cube case have been obtained, as we shall
indicate below.
\dumou
\dumou

 More generally, as in Stein and Str\"omberg~\cite{SteStr}, one can give a
symmetric convex body $C$ in $\R^n$ and introduce the \emph{maximal
operator $\M_C$ associated to the convex set $C$} as follows: for every 
$f \in L^1_{\rm loc}(\R^n)$ one defines the function $\M_C f$ on $\R^n$ by 
\begin{subequations}\label{OpMaxiG}%
\begin{equation}
   (\M_{C} f)(x) 
 = \sup_{t > 0} \frac 1 { |t \ms1 C| } \int_{x + t C} |f(y)| \, \d y
 = \sup_{t > 0} \frac 1 {|C|} \int_{C} |f(x + t \ms1 v)| \, \d v,
 \quad
 x \in \R^n,
 \label{OpMaxi} \tag{\ref{OpMaxiG}.{\bf M}}
\end{equation}
\end{subequations}
\dumou
\noindent
where $x + t C := \{ x + t c : c \in C \}$. 
One may also consider $\M_C$ when $C$ is not symmetric but has its centroid
at $0$, see Fradelizi~\cite[Section~1.5]{FradelHSCB}. The maximal operator
$\M_C$ satisfies, again, a maximal theorem of Hardy--Littlewood type. 

 Let $C$ be a symmetric convex body in~$\R^n$. The weak type $(1, 1)$
property for $\M_C$ can be deduced from the 
\emph{Vitali covering lemma}:\label{ViCovLem} 
given a finite family of translated-dilated sets $x_i + r_i C$, $i \in I$,
$x_i \in \R^n$, $r_i > 0$, one can extract a \emph{disjoint} subfamily
$(x_j + r_j C)_{j \in J}$, $J \subset I$, such that each set $x_i + r_i C$,
$i \in I$, of the original family is contained in the \emph{dilate} 
$x_j + 3 \ms1 r_j C$ of some member $x_j + r_j C$, $j \in J$, of the
\emph{extracted} disjoint family. One may explain the constant~3 by the use
of the triangle inequality for the norm on $\R^n$ whose unit ball is~$C$.
Passing to the Lebesgue measure in~$\R^n$, this statement naturally
introduces a factor $3^n$ corresponding to the dilation factor~3. If
$f^*_C$ denotes the corresponding uncentered maximal function of $f$
associated to~$C$, then for every $\lambda > 0$, one has that 
\begin{equation}
      \bigl| \bigl\{ x \in \R^n : f^*_C(x) > \lambda \bigr\} \bigr|
 \le \frac {3^n \ns4} \lambda \ms4
      \int_{ \{f^*_C > \lambda\} } |f(x)| \, \d x.
 \label{VitaliEstim}
\end{equation}
We briefly sketch a proof, very similar to that of Doob's maximal inequality
presented in Section~\ref{MaxiDoob}. It is convenient here to consider that
$C$ is an \emph{open} subset of $\R^n$. Given an arbitrary compact subset
$K$ of the open set $U_\lambda = \{ f^*_C > \lambda \}$, one applies the
Vitali lemma to a finite covering of~$K$ by open sets $S_i = x_i + r_i C$
such that $\int_{S_i} |f| > \lambda |S_i|$. A simple feature of $f^*_C$ is
that each such $S_i$ is actually \emph{contained} in~$U_\lambda$. If 
$J \subset I$ corresponds to the disjoint family given by Vitali, then
\[
     |K|
 \le \sum_{j \in J} |x_j + 3 r_j C|
  =  3^n \sum_{j \in J} |x_j + r_j C|
 \le \frac {3^n \ns4} \lambda \ms4 \int_{U_\lambda} |f(x)| \, \d x,
\]
implying~\eqref{VitaliEstim}. Next, a direct argument involving only Fubini
and H\"older can give an $L^p$ bound, exactly as in the proof of Doob's
Theorem~\ref{TheoDoob} below, but giving a factor $3^n$ instead of $3^{n/p}$
obtained by interpolation. This Vitali method does not depend upon the
symmetric body $C$, does not distinguish the centered and uncentered
operators, and introduces a quite unsatisfactory exponential constant.
\dumou
\dumou

 Stein and Str\"omberg have greatly improved this exponential dependence
in~\cite{SteStr}. By a clever covering argument with less overlap
than in Vitali's lemma, they proved that the weak type constant admits a
bound of the form $\mr O(n\log n)$, and by using the Calder\'on--Zygmund
method of rotations, they obtained for the strong type property a constant
which behaves as~$n \ms1 p / (p-1)$. Concerning the weak type constant, Naor
and Tao~\cite{NaorTao} have established the same $n\log n$
behavior for the large class of \emph{$n$-strong micro-doubling} metric
measure spaces (see also~\cite{CrSo}). Several powerful results about the
strong type constant for maximal functions associated to convex sets,
beyond the one of Stein--Str\"omberg, have been established between 1986 and
1990. First of all, Bourgain proved a dimensionless theorem for general
symmetric convex bodies in the $L^2$ case~\cite{BourgainL2}, applying
geometrical arguments and methods from Fourier analysis. This result has
been generalized to $L^p(\R^n)$, for all $p > 3 / 2$, by
Bourgain~\cite{BourgainLp} and Carbery~\cite{CarberyLp} in two
independent papers. They both bring into play an auxiliary dyadic maximal
operator, but Bourgain uses it together with square function techniques
while Carbery uses multipliers associated to fractional derivatives. Detlef
M\"uller extended in~\cite{MullerQC} the $L^p$ bound to every $p > 1$, but
under an additional geometrical condition on the family of convex sets~$C$
under study. M\"uller also proved that for every fixed 
$q \in [1, +\infty)$, his condition is fulfilled by the family $\ca F_q$ of
$\ell^q_n$ balls, $n \in \N^*$.
\dumou
\dumou

 After M\"uller's article, activity in this area slowed down. Nevertheless,
Bourgain recently proved in~\cite{BourgainCube} that for all $p > 1$,
the strong type constant can be bounded independently of the dimension when
we average over cubes. In order to attack this problem, Bourgain applies an
arsenal of techniques, including a holomorphic semi-group theorem due to
Pisier~\cite{PisierHSG} and ideas inspired by martingale theory. The cube case
is rather well understood since Aldaz~\cite{AldazWT} has proved that the
weak type~$(1, 1)$ constant $\kappa_{Q, n}$\label{KappaQ} for cubes must
tend to infinity with the dimension~$n$. The best lower bound known at the
time of our writing is due to Iakovlev--Str\"omberg~\cite{IS} who obtained 
$\kappa_{Q, n} \ge \kappa \ms1 n^{1/4}$, improving a previous estimate
$    \kappa_{Q, n}
 \ge \kappa_\varepsilon \ms1 (\log n)^{1 - \varepsilon}$ for every 
$\varepsilon > 0$, which was obtained by Aubrun~\cite{Aubrun} following the
Aldaz result.
\dumou
\dumou

 In the present survey, except for Section~\ref{AlAu} on the Aldaz 
\og negative\fge result, we shall restrict ourselves to $p > 1$ and examine
the strong type $(p, p)$ behavior of maximal functions associated to
symmetric convex bodies in~$\R^n$. We shall present the dimensionless
result of Stein for Euclidean balls, the works of Bourgain, Carbery and
M\"uller during the 80s and the recent dimensionless theorem of Bourgain
for cubes. As we shall see, the proofs require a lot of methods and tools,
including multipliers, square functions, Littlewood--Paley theory, complex
interpolation, holomorphic semi-groups and geometrical arguments involving
convexity. The study of weak type inequalities for Hardy--Littlewood-type
operators needs powerful methods as well: not only the aforementioned
Hopf--Dunford--Schwartz ergodic theorem, but also sharp estimates for heat
or Poisson semi-group, Iwasawa decomposition, $K$-bi-invariant
convolution-type operators, expander-type estimates\dots
\dumou
\dumou

 The first two sections contain general dimension free inequalities
obtained respectively by probabilistic methods or by Fourier transform
methods. The Poisson semi-group plays an important r\^ole in Stein's
book~\cite{SteinTHA}, and appears also in Bourgain's
articles~\cite{BourgainL2, BourgainLp} and in Carbery~\cite{CarberyLp}.
We give a presentation of this semi-group, both on the
probabilistic and Fourier analytic viewpoints. The third section is about
some analytic tools that are employed later on, namely, estimates for the
Gamma function in the complex plane, and the complex interpolation scheme
for linear operators, as developed in Stein~\cite{SteinILO}. The Stein
result for Euclidean balls in arbitrary dimension is our
Theorem~\ref{indepdim}. Section~\ref{ArtiBour} is about Bourgain's
$L^2$-theorem in arbitrary dimension~$n$, stating that there exists a
constant $\kappa_2$ independent of~$n$ such that for any symmetric convex
body $C$ in~$\R^n$, one has
\[
 \| \M_C f \|_{L^2(\R^n)} \le \kappa_2 \ms2 \|f\|_{L^2(\R^n)}
\]
for every $f \in L^2(\R^n)$. The next section presents Carbery's proof of
the generalization to $L^p$ of the latter bound, obtained by
Bourgain~\cite{BourgainLp} and Carbery~\cite{CarberyLp}. In both
papers, the $L^p$ result for general symmetric convex bodies is proved for
$p > 3 / 2$ only. A theorem due to Detlef M\"uller~\cite{MullerQC} is given
in Section~\ref{AMuller}; for families of symmetric convex sets~$C$ for
which a certain parameter $q(C)$ remains bounded, it extends the
dimensionless $L^p$ bound to every $p > 1$. This parameter is related to the
$(n-1)$-dimensional measure of hyperplane projections of a specific volume
one linear image of~$C$, the so-called \emph{isotropic position}.
Section~\ref{LeCube} presents the result of Bourgain about cubes in
arbitrary dimension. In this special case, an $L^p$ bound independent of
the dimension is valid for all $p > 1$, although the M\"uller condition is
not satisfied. Bourgain's proof is highly dependent on the product
structure of the cube. In Section~\ref{AlAu}, we prove the Aldaz result
that the weak type $(1, 1)$ constant for cubes is not bounded when the
dimension~$n$ tends to infinity. We mention the quantitative improvement
by Aubrun~\cite{Aubrun}, and give a proof for the lower bound 
$\kappa \ms1 n^{1/4}$ due to Iakovlev--Str\"omberg~\cite{IS}.
\dumou

 We have put a notable emphasis on the notion of log-concavity. We shall see
that with not much more effort, most maximal theorems for convex sets
generalize to symmetric log-concave probability densities. This kind of
extension from convex sets to log-concave functions has attracted a lot of
attention in convex geometry in recent years,
see~\cite{Ball, Klar, Klar2, GuMi} among many others. In fact, Bourgain's
estimate~\eqref{EstimaBour}, which is crucial to all results in
Section~\ref{ArtiBour} and after, is only based on properties of
log-concave distributions.
\dumou

 We have chosen a very elementary expository style. We shall give fully
detailed proofs, except in the first two introductory sections. Most
readers will know the contents of these sections and may start by reading
Section~\ref{SteinsResults}. Some may be happy though to see a gentle
introduction to a few points they are less familiar with. Our choice of
topics in these two first sections owes a lot to Stein's monograph
\emph{Topics in harmonic analysis}~\cite{SteinTHA}. In the next sections,
we have chosen to recall and usually follow the methods from the original
papers. This leads sometimes to unnecessary complications, but we shall try
to give hints to other possibilities.
\dumou

 We believe that most of our notation is standard. We write
$\lfloor x \rfloor$, $\lceil x \rceil$ for the \emph{floor} and
\emph{ceiling} of a real number $x$, integers verifying
$x - 1 < \lfloor x \rfloor \le x \le \lceil x \rceil < x + 1$. We pay 
a special attention to constants independent of the dimension, for instance
those appearing in results about martingale inequalities, Riesz transforms,
and try to keep specific letters for these constants throughout the paper,
such as $c_p$, $\rho_p,\ldots$\ We use the letter $\kappa$ to denote a 
\og universal\fge constant that does not deserve to be remembered. Most
often in our Notes, \og we\fge is a two-letter abbreviation for \og the
author\fg, namely, Stein, Bourgain, Carbery, M\"uller and several
others\dots\ We include an index and a notation index.
\dumou

\section{General dimension free inequalities, first part%
\label{Gdfi}}%

\noindent
This first section is devoted to general facts obtained by probabilistic
methods, or merely employing the probabilistic language. We begin by
reviewing the basic definitions. The functions here are real or complex
valued, or they take values in a finite dimensional real or complex linear
space $F$ equipped with a norm denoted by $|x|$, for every vector 
$x \in F$. If $\Omega$ is a set, a \emph{$\sigma$-field\label{SigmaFie} 
$\ca G$} of subsets of $\Omega$ is a family of subsets that is closed under
countable unions $\bigcup_{n \in \N} A_n$, closed under taking complement
$A \mapsto A^c$, and such that $\emptyset \in \ca G$. If $\Omega$ is a set
and $\ca G$ a $\sigma$-field of subsets of~$\Omega$, one says that a
function $g$ on~$\Omega$ is \emph{$\ca G$-measurable} when for every Borel
subset~$B$ of the range space, the inverse image $g^{-1}(B)$, also denoted
by
\[
 \{ g \in B \} := \{\omega \in \Omega : g(\omega) \in B\},
\]
belongs to the collection $\ca G$. 
\dumou

 A \emph{probability space}\label{ProbaSpa} 
$(\Omega, \ca F, P)$ consists of a set $\Omega$, a $\sigma$-field $\ca F$
of subsets of $\Omega$ and a \emph{probability measure} $P$ on 
$(\Omega, \ca F)$, {\it i.e.}, a nonnegative $\sigma$-additive measure on
$(\Omega, \ca F)$ such that $P(\Omega) = 1$. If a function $f$ is 
$\ca F$-measurable (we say then that $f$ is a 
\emph{random variable}\label{RandoVar}) 
and if $f$ is $P$-integrable, the
\emph{expectation of $f$}\label{Expecte} 
is the integral of $f$ with respect to $P$, denoted by
\[
 \E f := \int_\Omega f(\omega) \, \d P(\omega).
\]
Random variables $(f_i)_{i \in I}$ on $(\Omega, \ca F, P)$ are
\emph{independent}\label{IndRaVar}
if for any finite subset $J \subset I$, one has 
$\E \bigl( \prod_{j \in J} h_j \circ f_j \bigr)
 = \prod_{j \in J} \E (h_j \circ f_j)$ for all nonnegative Borel functions
$(h_j)_{j \in J}$ on the range space. The 
\emph{distribution}\label{Distribu} 
of the random variable $f$ with values in $Y = \R$, $\C$ or $F$ is the
image probability measure $\mu = f_\# P$,\label{PushForward} 
defined on the Borel $\sigma$-field $\ca B_Y$ of $Y$ by letting 
$\mu(B) = P \bigl( \{ f \in B \} \bigr)$ for every $B \in \ca B_Y$. If
$\mu$ is a distribution on the Euclidean space $F$, the
\emph{marginals}\label{Margi} 
of $\mu$ on the linear subspaces $F_0$ of $F$ are the distributions
$\mu_{F_0}$ obtained from $\mu$ as images by orthogonal projection,
\textit{i.e.}, one sets $\mu_{F_0} = (\pi_0)_\# \mu$ where $\pi_0$ is the
orthogonal projection from $F$ onto~$F_0$. If $f$ is $F$-valued and if
$\mu$ is the distribution of~$f$, then $\mu_{F_0}$ is that 
of~$\pi_0 \circ f$.
\dumou

 If $\ca G$ is a sub-$\sigma$-field of $\ca F$, the \emph{conditional
expectation}\label{CondExpe} 
on $\ca G$ of an integrable function $f$ is the unique element
$\E (f \ms1 | \ms1 \ca G)$ of $L^1(\Omega, \ca F, P)$ possessing a 
$\ca G$-measurable representative $g$ such that 
\[
   \E (\gr 1_A f) 
 = \E (\gr 1_A g) 
 = \E \bigl( \gr 1_A \E (f \ms1 | \ms1 \ca G) \bigr)
\]
for every set $A \in \ca G$, where $\gr 1_A$ denotes the \emph{indicator
function}\label{IndiFunc} 
of $A$, equal to $1$ on~$A$ and $0$ outside. It follows that
\[
 \E (h f) = \E \bigl( h \E (f \ms1 | \ms1 \ca G) \bigr),
 \ms{10} \hbox{and actually} \ms{10}
 \E (h f \ms1 | \ms1 \ca G) = h \E (f \ms1 | \ms1 \ca G)
\]
for every bounded $\ca G$-measurable scalar function $h$ on $\Omega$. When
$f$ is scalar and belongs to $L^2(\Omega, \ca F, P)$, the conditional
expectation of $f$ on $\ca G$ is the orthogonal projection of $f$ onto the
closed linear subspace $L^2(\Omega, \ca G, P)$ of $L^2(\Omega, \ca F, P)$
formed by $\ca G$-measurable and square integrable functions. When $A$ is an
\emph{atom}\label{Atom} 
of $\ca G$, \textit{i.e.}, a minimal non-empty element of $\ca G$, and if
$P(A) > 0$, the value of $\E (f \ms1 | \ms1 \ca G)$ on the atom $A$ is the
average of $f$ on~$A$, hence
\[
   \E (f \ms1 | \ms1 \ca G)(\omega)
 = \frac 1 {P(A)} \int_A f(\omega') \, \d P(\omega'),
 \quad \omega \in A.
\]

 The conditional expectation operator $\E( \cdot \ms1 | \ms 1 \ca G)$ is
linear, and \emph{positive},\label{Positi} 
\textit{i.e.}, it sends nonnegative functions to nonnegative functions. It
follows that we have the inequality
$\varphi( \E (f \ms1 | \ms1 \ca G) )
 \le \E ( \varphi(f) \ms1 | \ms1 \ca G)$ when the real-valued function
$\varphi$ is convex on the range space of $f$. In particular, one has that
$\bigl| \E (f \ms1 | \ms1 \ca G) \bigr|
 \le \E \bigl( |f| \ms1 \big| \ms1 \ca G \bigr)$, and
\[
     \bigl\| \E (f \ms1 | \ms1 \ca G) \bigr\|_{L^p(\Omega, \ca F, P)}
 \le \bigl\| f \bigr\|_{L^p(\Omega, \ca F, P)},
 \quad
 1 \le p < +\infty.
\]
The inequality is true also when $p = +\infty$, it is easy and treated
separately.

\subsection{Doob's maximal inequality%
\label{MaxiDoob}}

\noindent
A (discrete time) \emph{martingale} on a probability space 
$(\Omega, \ca F, P)$ consists of a \emph{filtration},\label{Filtra} 
\textit{i.e.}, an increasing sequence $(\ca F_k)_{k \in I}$ of
sub-$\sigma$-fields of~$\ca F$ indexed by a subset $I$ of $\Z$, and of a
sequence $(M_k)_{k \in I}$ of integrable functions on $\Omega$ such that
for all $k, \ell \in I$ with $k \le \ell$, one has
\[
 M_k = \E \bigl( M_\ell \ms1 \big| \ms1 \ca F_k \bigr).
\]
Notice that each $M_k$, $k \in I$, is $\ca F_k$-measurable. If $I$ has a
maximal element~$N$, the martingale is completely determined by its last
element $M_N$, since we have then 
that~$M_k = \E \bigl( M_N \ms1 \big| \ms1 \ca F_k \bigr)$ for every 
$k \in I$. In the case of a finite field $\ca F_k$, the martingale condition
means that the value of $M_k$ on each atom of $\ca F_k$ is the average of
the values of $M_\ell$ on that atom, for every $\ell \in I$ with $\ell > k$.
Clearly, any subsequence $(M_k)_{k \in J}$, $J \subset I$, is a martingale
with respect to the filtration $(\ca F_k)_{k \in J}$. 
\dumou

 Let us consider a finite martingale $(M_k)_{k=0}^N$ on 
$(\Omega, \ca F, P)$, with respect to a filtration $(\ca F_k)_{k=0}^N$.
This martingale can be real or complex valued, or may take values in a
finite dimensional normed space $F$. We introduce the \emph{maximal process}
$(M^*_k)_{k=0}^N$,\label{MaxiProc} 
which is defined by $M^*_k = \max_{\ms1 0 \le j \le k} |M_j|$ for 
$k = 0, \ldots, N$. In the vector-valued case, $|M_j|$ is the function
assigning to each $\omega \in \Omega$ the norm of the vector 
$M_j(\omega) \in F$. We also employ the lighter notation $\|M\|_p$ for the
norm $\|M\|_{L^p}$ of a function $M$ in $L^p(\Omega, \ca F, P)$, when 
$1 \le p \le +\infty$.
\dumou

\begin{thm}[Doob's inequality]%
\label{TheoDoob}
Let\/ $(M_k)_{k=0}^N$ be a martingale (real, complex or vector-valued). For
every real number $c > 0$, one has that
\[
     c \ms2 P \bigl( \{ M^*_N > c \} \bigr) 
 \le \int_{ \{M^*_N > c\} } |M_N| \, \d P.
\]
Furthermore, for every $p \in (1, +\infty]$, one has when 
$M_N \in L^p(\Omega, \ca F, P)$ that
\begin{equation}
 \|M^*_N\|_p \le \frac p {p - 1} \ms2 \|M_N\|_p.
 \label{DoobLp}
\end{equation}
\end{thm}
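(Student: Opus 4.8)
The plan is to prove the weak-type (maximal) inequality first, by the classical stopping-time argument, and then deduce the $L^p$ bound from it together with the trivial $L^\infty$ bound, using a distribution-function/layer-cake computation rather than abstract interpolation (so as to track the constant $p/(p-1)$ exactly).

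For the weak-type estimate, fix $c > 0$ and define the first entrance time $\tau(\omega) = \min\{ k : |M_k(\omega)| > c \}$, with $\tau = +\infty$ (or $\tau = N+1$) if $|M_k| \le c$ for all $k \le N$. The key observation is that $\{ M_N^* > c \} = \bigcup_{k=0}^N \{\tau = k\}$, a disjoint union, and that each event $\{\tau = k\}$ lies in $\ca F_k$, since it is determined by $|M_0|, \ldots, |M_k|$. On $\{\tau = k\}$ we have $|M_k| > c$, hence
\[
 c \ms2 P(\{\tau = k\}) \le \int_{\{\tau = k\}} |M_k| \, \d P
 = \int_{\{\tau = k\}} \bigl| \E( M_N \ms1 | \ms1 \ca F_k) \bigr| \, \d P
 \le \int_{\{\tau = k\}} \E( |M_N| \ms1 | \ms1 \ca F_k) \, \d P
 = \int_{\{\tau = k\}} |M_N| \, \d P,
\]
where the middle equality is the martingale property, the following inequality is the conditional Jensen inequality $|\E(\cdot|\ca F_k)| \le \E(|\cdot| \,|\, \ca F_k)$ recalled just above, and the last equality uses $\{\tau = k\} \in \ca F_k$. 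Summing over $k = 0, \ldots, N$ and using disjointness gives $c \ms2 P(\{M_N^* > c\}) \le \int_{\{M_N^* > c\}} |M_N| \, \d P$, which is the first assertion.

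For the $L^p$ bound with $1 < p < +\infty$, write $g = M_N^*$ and $h = |M_N|$; both are in $L^p$ since $g \le \sum_k |M_k|$. Using the layer-cake formula and the weak-type inequality just proved (applied with $c = \lambda$), together with Fubini's theorem,
\[
 \|g\|_p^p = \int_0^\infty p \lambda^{p-1} P(\{g > \lambda\}) \, \d\lambda
 \le \int_0^\infty p \lambda^{p-2} \Bigl( \int_{\{g > \lambda\}} h \, \d P \Bigr) \d\lambda
 = \E\Bigl( h \int_0^{g} p \lambda^{p-2} \, \d\lambda \Bigr)
 = \frac p {p-1} \ms2 \E\bigl( h \ms1 g^{p-1} \bigr).
\]
Now apply H\"older's inequality with exponents $p$ and $p/(p-1)$ to bound $\E(h \ms1 g^{p-1}) \le \|h\|_p \ms1 \|g\|_p^{p-1}$. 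Since $\|g\|_p < +\infty$ we may divide by $\|g\|_p^{p-1}$ (handling the trivial case $\|g\|_p = 0$ separately) to obtain $\|M_N^*\|_p \le \frac p{p-1} \|M_N\|_p$. The case $p = +\infty$ is immediate: $|M_k| = |\E(M_N | \ca F_k)| \le \E(|M_N| \,|\, \ca F_k) \le \|M_N\|_\infty$ a.s.\ for each $k$, hence $M_N^* \le \|M_N\|_\infty$ a.s., and $p/(p-1) = 1$ in the limit.

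The only genuinely delicate point is the measurability/integrability bookkeeping needed to run the layer-cake and Fubini steps cleanly — in particular knowing $\|M_N^*\|_p < +\infty$ a priori so that the final division is legitimate; this is harmless here because the martingale is finite, so $M_N^* \le \sum_{k=0}^N |M_k|$ and each $|M_k| \le \E(|M_N| \,|\, \ca F_k)$ already lies in $L^p$. In the vector-valued case nothing changes: $|M_k(\omega)|$ denotes the norm in $F$, the conditional Jensen inequality applies to the convex function $x \mapsto |x|$ on $F$, and the stopping time $\tau$ is defined exactly as above.
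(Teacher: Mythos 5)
Your proof is correct and follows essentially the same route as the paper: the sets $\{\tau = k\}$ of your first-entrance time are exactly the paper's disjoint events $A_k$, and the $L^p$ step (layer-cake + weak type + Fubini + H\"older) matches the paper's computation, with your added but harmless remark on why $\|M_N^*\|_p < +\infty$ a priori.
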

\dumou

\begin{proof}
We cut the set $\{M^*_N > c\}$ into disjoint events $A_0, \ldots, A_N$,
corresponding to the first time $k$ when $|M_k| > c$. Let 
$A_0 = \{ \ms1 |M_0| > c \}$ and for each integer~$k$ between $1$ and~$N$,
let $A_k$ denote the set of $\omega \in \Omega$ such that 
$|M_k(\omega)| > c$ and $M^*_{k-1}(\omega) \le c$. On the set~$A_k$, we
have $|M_k| > c$, and $A_k$ belongs to the $\sigma$-field~$\ca F_k$ since
$|M_k|$ and $M^*_{k-1}$ are $\ca F_k$-measurable, hence
\begin{align*}
     c \ms2 P(A_k)
 &\le \int_{A_k} |M_k| \, \d P 
  =  \int_{A_k} 
      \bigl| 
       \E \bigl( M_N \ms1 \big| \ms1 \ca F_k \bigr)
      \bigr| \, \d P 
 \\
 &\le \int_{A_k} 
       \E \bigl( |M_N| \ms1 \big| \ms1 \ca F_k \bigr) \, \d P 
  =  \int_{A_k} |M_N| \, \d P.
\end{align*}
\dumou\noindent
On the other hand, we see that
$
 \{M^*_N > c\} = \bigcup_{k=0}^N A_k
$,
union of pairwise disjoint sets, therefore
\begin{equation}
     c \ms2 P \bigl( \{ M^*_N > c \} \bigr) 
  =  \sum_{k=0}^N c \ms2 P(A_k)
 \le \sum_{k=0}^N \int_{A_k} |M_N| \, \d P
  =  \int_{ \{ M^*_N > c  \} } |M_N| \, \d P.
 \label{DoobProof}
\end{equation}
\dumou

 The result for $L^p$ when $1 < p < +\infty$ follows. For each value 
$t > 0$, we apply~\eqref{DoobProof} with $c = t$, we use Fubini's theorem
and H\"older's inequality, obtaining thus
\begin{align*}
     \E \bigl( (M^*_N)^p \bigr)
  &= \E \Bigl( \int_0^{M^*_N} p \ms1 t^{p-1} \, \d t \Bigr)
   = \int_0^{+\infty} p \ms1 t^{p-1} 
                       P \bigl( \{ M^*_N > t \} \bigr) \, \d t
 \\
 &\le \int_0^{+\infty} p \ms1 t^{p-2} \ms1
       \E \bigl( \gr 1_{ \{M^*_N > t\} } \ms2 |M_N| \bigr) \, \d t
   = \E \Bigl( \frac p {p - 1}  (M^*_N)^{p-1} \ms2 |M_N| \Bigr)
 \\
 &\le \frac p {p - 1} 
       \bigl( \E \bigl( (M^*_N)^p \bigr) \bigr)^{1 - 1/p} 
        \bigl(\E \bigl( |M_N|^p \bigr) \bigr)^{1 / p},
\end{align*}
hence
$
     \|M^*_N\|_p
 \le p \ms1 (p - 1)^{-1} \ms1 \|M_N\|_p
$.
The case $p = +\infty$ is straightforward. 
\end{proof}

\begin{remN}%
\label{InfiniProba} 
In some contexts, it is useful to observe that the notion of conditional
expectation on a sub-$\sigma$-field $\ca F_0$ of $\ca F$ remains well
defined if we have a possibly infinite measure $\mu$ on $(\Omega, \ca F)$,
but which is \emph{$\sigma$-finite} on~$\ca F_0$, in other words,
if~$\Omega$ can be split in countably many sets $A_i$ in $\ca F_0$ such that
$\mu(A_i) < +\infty$ for each~$i$. If this condition is fulfilled by $\mu$
and by the smallest sub-$\sigma$-field $\ca F_0$ of a filtration 
$(\ca F_k)_{k=0}^N$, we can also speak about martingales with respect to
the infinite measure~$\mu$, and Theorem~\ref{TheoDoob} remains true with the
same proof, simply replacing the words \og probability of an event\fge by 
\og measure of a set\fg. 

 We can always consider the orthogonal projection $\pi_0$ from 
$L^2(\Omega, \ca F, \mu)$ onto $L^2(\Omega, \ca F_0, \mu)$, but
$L^2(\Omega, \ca F_0, \mu) = \{0\}$ when $\ca F_0$ does not contain any set
with finite positive measure. On the other hand, when $A \in \ca F_0$ has
finite measure, the formula $\pi_0(\gr 1_A f) = \gr 1_A \pi_0(f)$
allows one to work on $A$ as in the case of a probability measure.
\end{remN}

\subsection{The Hopf maximal inequality%
\label{MaxiHopf}}

\noindent
We are given a measure space $(X, \Sigma, \mu)$ and a linear operator $T$
from $L^1(X, \Sigma, \mu)$ to itself. We shall only consider
$\sigma$-finite measures throughout these Notes, and we work in this
section with the space $L^1(X, \Sigma, \mu)$ of real-valued functions. We
assume that $T$ is positive and nonexpansive, which means that for every
nonnegative function $g \in L^1(X, \Sigma, \mu)$, $T g$ is nonnegative, and
that the norm of~$T$ is~$\le 1$. We can sum up these two properties by
saying that when $g \ge 0$, then~$T g \ge 0$ and 
$\int_X T g  \, \d \mu \le \int_X g \, \d \mu$. 
\dumou

 Let us consider a function $f$ in $L^1(X, \Sigma, \mu)$, and for every
integer $k \ge 0$ let
\[
 S_k(f) = f + T f + T^{2} f + \cdots + T^{k} f.
\]
If $N$ is a nonnegative integer, we set
$
 S_N^*(f) = \max \ms1 \{S_j(f) : 0 \le j \le N\}
$.

\begin{lem}[Hopf]%
\label{GarsiLem}
With the preceding notation, we have for every function
$f \in L^1(X, \Sigma, \mu)$ and $N \ge 0$ that
\[
 \int_{ \{S_N^*(f) > 0 \} } f \, \d \mu \ge 0.
\]
\end{lem}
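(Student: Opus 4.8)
The plan is to prove the Hopf maximal inequality (Lemma \ref{GarsiLem}) by the classical Garsia-style argument using the \emph{maximal function of partial sums} and the positivity/nonexpansiveness of $T$. First I would define, for a fixed $N \ge 0$, the auxiliary function $F_N = \max\{0, S_1(f), \ldots, S_N(f)\}$, so that $F_N \ge 0$ everywhere and $\{S_N^*(f) > 0\} = \{F_N > 0\}$. The key algebraic observation is that for each $j$ with $1 \le j \le N$ one has $S_j(f) = f + T(S_{j-1}(f)) \le f + T(F_N)$, using that $T$ is positive and $S_{j-1}(f) \le F_N$ (note $S_0(f) = f \le F_N$ as well, trivially, and the $j=0$ term is covered since $S_1(f) = f + Tf$, but more simply each $S_j(f) \le f + TF_N$ for $j \ge 1$ because $S_{j-1}(f) \le F_N$). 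Taking the maximum over $1 \le j \le N$, and also noting $0 \le F_N$, we obtain on the set $\{F_N > 0\}$ that $F_N = \max_{1 \le j \le N} S_j(f) \le f + T(F_N)$, hence $f \ge F_N - T(F_N)$ on $\{F_N > 0\}$.

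The next step is to integrate this pointwise inequality over $\{F_N > 0\}$:
\[
   \int_{\{F_N > 0\}} f \, \d\mu
 \ge \int_{\{F_N > 0\}} \bigl( F_N - T(F_N) \bigr) \, \d\mu
  =  \int_{\{F_N > 0\}} F_N \, \d\mu - \int_{\{F_N > 0\}} T(F_N) \, \d\mu.
\]
Now on $\{F_N > 0\}$ we have $F_N = \max(0, S_1(f), \ldots, S_N(f))$, while on the complement $F_N = 0$, so $\int_{\{F_N > 0\}} F_N \, \d\mu = \int_X F_N \, \d\mu$. For the other term, since $F_N \ge 0$ we get $T(F_N) \ge 0$, hence $\int_{\{F_N > 0\}} T(F_N) \, \d\mu \le \int_X T(F_N) \, \d\mu \le \int_X F_N \, \d\mu$, where the last inequality is exactly the nonexpansiveness of $T$ on nonnegative functions. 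Combining, $\int_{\{F_N > 0\}} f \, \d\mu \ge \int_X F_N \, \d\mu - \int_X F_N \, \d\mu = 0$, which is the claim once we recall $\{F_N > 0\} = \{S_N^*(f) > 0\}$.

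One should check integrability to make the manipulations legitimate: $f \in L^1$ and each $T^k f \in L^1$ (as $T$ maps $L^1$ to $L^1$), so $S_j(f) \in L^1$ for all $j$, and therefore $F_N \in L^1$ as a finite maximum of $L^1$ functions; thus every integral above is finite and the splitting is valid. The main obstacle — really the only subtle point — is getting the pointwise inequality $F_N \le f + T(F_N)$ right on $\{F_N > 0\}$: one must be careful that $F_N$ includes the constant $0$ in the max (so it is genuinely nonnegative, which is what makes $T(F_N) \ge 0$ usable), yet on the set where $F_N > 0$ the max is attained by one of the $S_j(f)$ with $j \ge 1$, and for that $j$ the recursion $S_j(f) = f + T(S_{j-1}(f))$ together with $S_{j-1}(f) \le F_N$ and positivity of $T$ delivers the bound. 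Everything else is a short integration.
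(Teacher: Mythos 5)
Your argument is Garsia's, exactly as in the paper, but there is a slip in the definition of $F_N$ that makes several of your intermediate claims false as written. You set $F_N = \max\{0, S_1(f), \ldots, S_N(f)\}$, omitting $S_0(f) = f$ from the maximum. With that definition $\{F_N > 0\}$ need not equal $\{S_N^*(f) > 0\}$: the two sets differ wherever $f > 0$ but $S_j(f) \le 0$ for every $j \ge 1$ (already for $N = 0$ one has $F_0 \equiv 0$, so $\{F_0 > 0\} = \emptyset$, whereas $\{S_0^*(f) > 0\} = \{f > 0\}$). On that same set the assertion that ``$S_0 = f \le F_N$, trivially'' is false, and this breaks the $j = 1$ step of your key inequality, since $S_1 = f + T(S_0) \le f + T(F_N)$ needs $S_0 \le F_N$. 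What you actually want is $F_N = \max\{0, f, S_1(f), \ldots, S_N(f)\}$, i.e.\ the nonnegative part $\max(S_N^*(f), 0)$ of $S_N^*(f)$, which is precisely the paper's $S^{*\scriptscriptstyle +}$. With that correction every intermediate claim holds: $\{F_N > 0\} = \{S_N^*(f) > 0\}$, $S_{j-1} \le F_N$ for all $1 \le j \le N$, and $S_0 = f \le f + T F_N$ since $T F_N \ge 0$; the pointwise bound $f \ge F_N - T F_N$ on $\{F_N > 0\}$ and the integration step using positivity and nonexpansiveness of $T$ then go through verbatim and coincide with the paper's proof.
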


\begin{proof}[Proof, after Garsia~\cite{Garsia}]
Let us simply write $S_k$ for $S_k(f)$ and $S^*$ for $S^*_N(f)$. By
definition, we have $S_k \le S^*$ for each integer $k \le N$; since $T$ is
positive and linear, we see that
\[
 T \ms1 S_k \le T \ms1 S^*,
 \ms{16} \hbox{and} \ms{16}
 S_{k+1} = f + T \ms1 S_k \le f + T \ms1 S^*.
\]
In order to get for $S_0 = f$ an inequality similar to
$S_{k+1} \le f + T \ms1 S^*$, we replace~$S^*$ by its nonnegative part
$S^{*\scriptscriptstyle +} = \max(S^*, 0) \ge S^*$. Using positivity, we
can write
\[
 S_0 = f \le f + T (S^{*\scriptscriptstyle +}),
 \ms{16}
 S_{k+1} \le f + T \ms1 S^* \le f + T (S^{*\scriptscriptstyle +}).
\]
Taking the supremum of $S_k\ms1$s for $0 \le k \le N$, we obtain the
crucial inequality
\begin{equation}
 S^* \le f + T (S^{*\scriptscriptstyle +}),
 \ms{10} \hbox{or} \ms{10}
 f \ge S^* - T (S^{*\scriptscriptstyle +}).
 \label{Cruci}
\end{equation}
Since $T$ is positive and nonexpansive on $L^1(X, \Sigma, \mu)$, we have
\[
   \int_{ \{S^* > 0\} } S^* \, \d \mu
 = \int_X S^{*\scriptscriptstyle +} \, \d \mu
 \ge \int_X T (S^{*\scriptscriptstyle +}) \, \d \mu
 \ge \int_{ \{S^* > 0\} } T (S^{*\scriptscriptstyle +}) \, \d \mu,
\]
and the result follows by~\eqref{Cruci}, because
\[
     \int_{ \{S^* > 0\} } f \, \d \mu 
 \ge \int_{ \{S^* > 0\} } 
      \bigl(  S^* - T (S^{*\scriptscriptstyle +}) \bigr) \, \d \mu
 \ge 0.
\qedhere
\]
\end{proof}

 We go on with the same linear operator $T$. For each integer $k \ge 0$,
let us define the $k\ms1$th average operator $a_k = a_{k, T}$ associated 
to~$T$ by writing
\[
   a_k(f) 
 = \frac { f + T f + \cdots + T^k f } {k+1}
 = \frac {S_k(f)} {k+1} \up,
 \quad f \in L^1(X, \Sigma, \mu).
\]
For each integer $N \ge 0$, let
$  a^*_N(f)
 = \max \ms1 \{a_j(f) : 0 \le j \le N\}
$.
It is clear that the set $\{a^*_N(f) > 0 \}$ coincides with the set
$\{S^*_N(f) > 0 \}$ which appears in Lemma~\ref{GarsiLem}.
\dumou

 We continue in a simplified setting where we also assume that $\mu$ is
finite and that $T \ms1 \gr 1 = \gr 1$. It follows that 
$a_k (\gr 1) = \gr 1$ for each $k \ge 0$ and $a_k(f - c) = a_k(f) - c$ for
every $c \in \R$, thus $a^*_N(f - c) = a^*_N(f) - c$. Lemma~\ref{GarsiLem}
yields
\[
     \int_{ \{ a^*_N(f - c) > 0 \} } (f - c) \, \d \mu 
  =  \int_{ \{ S^*_N(f - c) > 0 \} } (f - c) \, \d \mu 
 \ge 0.
\]
Equivalently, for every $f \in L^1(X, \Sigma, \mu)$, we have
\begin{equation}
     c \ms1 \mu \bigl( \{a^*_N(f) > c\} \bigr)
 \le \int_{ \{a^*_N(f) > c\} } f \, \d \mu,
 \qquad N \ge 0, \ms8 c \in \R.
 \label{HopfN}
\end{equation}

\begin{smal}
\noindent
This inequality makes sense also when $\mu$ is infinite. Note that if 
$c < 0$ and if $\mu$ is infinite, then 
$\mu \bigl( \{f \le c\} \bigr)
 \le \mu \bigl( \{ |f| \ge |c| \} \bigr) < +\infty$, the measure of
$\{a^*_N (f) > c\}$ is thus infinite and~\eqref{HopfN} is trivial. 
We can extend~\eqref{HopfN} to an infinite $\mu$ if there exists an
increasing sequence $(C_\ell)_{\ell \ge 0}$ of subsets of $X$ with finite
measure such that 
\begin{subequations}\label{QuasiMarko}
\SmallDisplay{\eqref{QuasiMarko}}%
\[
 T^j \gr 1_{C_\ell} \le \gr 1
  \ms{ 8} \hbox{for all} \ms6 j, \ell \ge 0,
  \ms{12}
 T^j \gr 1_{C_\ell} \lra_{\ell \to +\infty} \gr 1 
  \ms{ 8} \hbox{pointwise for each} \ms6 j \ge 0.
\]
\end{subequations}
\noindent
Let $c, \varepsilon > 0$ and abbreviate $\{ a^*_N(f) > t\}$ as $D(t)$, 
for~$t > 0$. Choose $c' > c$ such that
$\int_{D(c) \setminus D(c')} \bigl( 1 + |f| \bigr) \, \d \mu
 < \varepsilon$. Let $E(c', \ell)
 = \bigl\{
    \min_{\ms2 0 \le j \le N} T^j \gr 1_{C_\ell} \le c / c' 
   \bigr\}$,
choose a large $\ell$ such that $\mu(D(c') \setminus C_\ell) < \varepsilon$
and $\int_{E(c', \ell)} |f| \, \d \mu < \varepsilon$, then observe that
\[
         D(c')
 \subset
  \bigl\{ a^*_N \bigl( f - c' \ms1 \gr 1_{C_\ell} \bigr) > 0 \bigr\}
 \subset D(c) \cup E(c', \ell)
\]
and apply Lemma~\ref{GarsiLem} to $f - c' \ms1 \gr 1_{C_\ell}$.
The assumption~\eqref{QuasiMarko} is fulfilled when $T$ is an operator of
convolution with a probability measure on~$\R^n$, acting on~$L^1(\R^n)$.

\end{smal}

\noindent
For each function $f \in L^1(X, \Sigma, \mu)$, let us define
\[
   a^*(f) 
 = \sup_{k \ge 0} \ms1 a_k(f) 
 = \sup_{k \ge 0} \ms2 \frac {f + T f + \cdots + T^k f} {k+1}
 = \lim_{N \to +\infty} \ms2 a^*_N(f). 
\]
The set $\{ a^*(f) > c \}$ is the increasing union of the sets
$\{a^*_N(f) > c\}$, $N \ge 0$, so, passing to the limit by dominated
convergence, we deduce from~\eqref{HopfN} that
\begin{equation}
     c \ms2 \mu \bigl( \{ a^*(f) > c \} \bigr)
 \le \int_{ \{ a^*(f) > c \} } f \, \d \mu,
 \qquad c \in \R.
 \label{Hopf0}
\end{equation}
Following~\cite[Lemma~VIII.6.7]{DunfordSchwartz}, we now get a
variant of~\eqref{Hopf0}. Assume $c > 0$ in what follows. We define $f_c$ by
$f_c(x) = f(x)$ when $f(x) > c$ and $f_c(x) = 0$ otherwise, for $x \in X$.
Note that $f \le f_c + c$. If $a^*(f_c)(x) \le c$, then 
$f_c(x) = a_0(f_c)(x) \le c$ thus $f_c(x) = 0$ by construction. Hence $f_c$
vanishes outside $\{ a^*(f_c) > c \}$ and
\[
   \int_{ \{ f > c \} } f \, \d \mu
 = \int_X f_c \, \d \mu
 = \int_{ \{ a^*(f_c) > c \} } f_c \, \d \mu.
\] 
Using the positivity of $T$ and of $a_k$ for each $k \ge 0$, we infer
from $f_c \ge f - c$ that $a^*(f_c) \ge a^*(f - c) = a^*(f) - c$. Then,
by~\eqref{Hopf0} for $f_c$ and since $c > 0$, we get
\[
     \int_{ \{ f > c \} } f \, \d \mu
 \ge c \ms1 \mu \bigl( \{ a^*(f_c) > c \} \bigr)
 \ge c \ms1 \mu \bigl( \{ a^*(f) - c > c \} \bigr).
\]
Finally, we have obtained
\begin{equation}
     c \ms1 \mu \bigl( \{ a^*(f) > 2 \ms1 c \} \bigr)
 \le \int_{ \{ f > c \} } f \, \d \mu,
 \quad c > 0.
 \label{Hopf00}
\end{equation}
\dumou

 Let us define
$
   A^*(f) 
 = \sup_{k \ge 0} |a_k(f)|
 = \max (a^*(f), a^*(-f))
$.
Still assuming $c > 0$, we decompose the set 
$\{ A^*(f) > c \} = \{ a^*(f) > c \} \cup \{ a^*(-f) > c \}$ 
into three disjoint pieces,
$E_0 = \{ a^*(f) > c  \ms1\et\ms1  a^*(-f) \le c \}$,
$E_1 = \{ a^*(f) > c  \ms1\et\ms1  a^*(-f) > c \}$
and $E_2 = \{ a^*(f) \le c  \ms1\et\ms1  a^*(-f) > c \}$.
According to~\eqref{Hopf0} we have
\begin{align}
      c \ms2 \mu \bigl( \{ A^*(f) > c \} \bigr)
 &\le c \ms1 \mu \bigl( \{ a^*(f) > c \} \bigr)
       + c \ms1 \mu \bigl( \{ a^*(-f) > c \} \bigr)
 \notag
 \\
 &\le \int_{ \{ a^*(f) > c \} } f \, \d \mu
       + \int_{ \{ a^*(-f) > c \} } (-f) \, \d \mu
 \notag
 \\
 & =  \int_{ E_0 } f \, \d \mu
       + \int_{ E_2 } (-f) \, \d \mu
  \le \int_{ \{ A^*(f) > c \} } |f| \, \d \mu,
 \label{Hopf01}
\end{align}
noting that the integrals of $f$ and $-f$ on $E_1$ cancel each other. In
the same way, we can get from~\eqref{Hopf00} the variant form
$
     c \ms2 \mu \bigl( \{ A^* f > 2 \ms1 c \} \bigr)
 \le \int_{ \{ |f| > c \} } |f| \, \d \mu
$.
Notice that the latter \og variant form\fge will be inherited by any
linear operator $S$ satisfying that $|S^k f| \le T^k |f|$ for every 
$k \ge 0$, and see Remark~\ref{DSRemark}.
\dumou

 When $1 < p < +\infty$, we deduce from~\eqref{Hopf01} the $L^p$ inequality
\begin{equation}
     \Bigl\| \sup_{k \ge 0} 
      \frac {|f + T f + \cdots + T^k f|} {k + 1}
     \Bigr\|_p
 \le \frac p {p - 1} \ms2 \|f\|_p
 \label{GarsiaLp}
\end{equation}
as we have seen with Doob's inequality~\eqref{DoobLp}, while the variant
form leads to a constant $2 \ms2 \bigl(p / (p - 1) \bigr)^{1/p}$ which is
larger than $p / (p-1)$ for every $p > 1$.
\dumou
\dumou

 Let now $(T_t)_{t \ge 0}$ be a semi-group of linear operators 
on~$L^1(X, \Sigma, \mu)$,\label{SemiGrou} 
\textit{i.e.}, operators satisfying $T_{s+t} = T_s \circ T_t$ for all 
$s, t \ge 0$. We assume in addition that each~$T_t$ is positive and
nonexpansive on $L^1$. We also assume that $T_t$ is actually defined
on $L^1(X, \Sigma, \mu) + L^\infty(X, \Sigma, \mu)$ and that
$T_t \ms1 \gr 1 = \gr 1$ for every $t \ge 0$. This implies that $T_t$ is
continuous from $L^\infty$ to $L^\infty$, with norm~$1$. By interpolation,
we get that the norm $\|T_t\|_{p \rightarrow p}$ on $L^p$, for 
$p \in [1, +\infty]$, is~$\le 1$. Suppose that the semi-group is
\emph{strongly continuous} on~$L^1$, which means that 
$\|f - T_t f\|_1 \rightarrow 0$ as $t \rightarrow 0$, for each $f \in L^1$.
Together with our other assumptions, it follows that $t \mapsto T_t f$ is
continuous from $[0, +\infty)$ to~$L^p$ for every function $f \in L^p$ and
$1 \le p < +\infty$. For $f \in L^p(X, \Sigma, \mu)$ let
\[
   a^* f 
 = \sup_{t > 0} \ms2
    \frac 1 t \int_0^t T_s \ms1 f \, \d s,
 \ms{18}
   A^* f 
 = \sup_{t > 0} \ms2
    \Bigl| \frac 1 t \int_0^t T_s \ms1 f \, \d s \Bigr|,
\]
where the supremum can be defined as an \emph{essential supremum}, see the
discussion in Section~\ref{DefiMaxiFunc}. Yet, for the main examples of
semi-groups of interest in these Notes, namely, the Gaussian
semi-group or the Poisson semi-group on $\R^n$, the function
$t \mapsto (T_t f)(x)$ is continuous on $(0, +\infty)$ for each fixed 
$x \in \R^n$ and $f \in L^1(\R^n)$, so $a^* f$ and $A^* f$ have then a
well defined pointwise value, possibly~$+\infty$.
\dumou

 Suppose now that the measure $\mu$ is finite (or that a continuous analog
of~\eqref{QuasiMarko} is satisfied). When $1 < p < +\infty$, we obtain
from~\eqref{GarsiaLp} the $L^p$ inequality
\begin{equation}
     \bigl\| A^* f \bigr\|_p
 \le \frac p {p - 1} \ms2 \|f\|_p.
 \label{HopfLp}
\end{equation}
If $T_t$ is positive and $T_t \gr 1 = \gr 1$, the case $p = +\infty$
in~\eqref{HopfLp} is clear.

\begin{smal}
\noindent 
Since $t \mapsto a(t, f) := t^{-1} \int_0^t T_s f \, \d s$ is continuous
from $(0, +\infty)$ to $L^p$, we can reach any $a(t, f)$, $t > 0$, as an
almost everywhere limit of a sequence $(a(t_j, f))_{j \ge 0}$, where each
$t_j$ is rational and~$> 0$. It follows that $A^* f$ can be defined as the
supremum of $|a(t, f)|$ for $t > 0$ rational. For all integers
$k \ge 0$ and $n \ge 1$, observe that
\[
   a \Bigl( \frac {k+1} n \up, \ms1 f \Bigr)
 = \frac n {k+1} \sum_{i=0}^k \int_{i/n}^{(i+1)/n} T_s f \, \d s
 = \biggl( \frac {\sum_{i=0}^k T_{i/n} } {k+1} \biggl)
    \Bigl( n \int_0^{1/n} T_s f \, \d s \Bigr).
\]
Letting $f_n = n \int_0^{1/n} T_s f \, \d s = a(1/n, f)$ and 
$T = T_{1/n}$ we see that
\[
   a \Bigl( \frac {k+1} n \up, \ms1 f \Bigr)
 = \frac {f_n + T f_n + \cdots + T^k f_n} {k+1}
 = a_{k, T} (f_n).
\]
Let $Q_n$ be the set of positive multiples of~$1/n$. By~\eqref{GarsiaLp}
applied to $T_{1/n}$ and $f_n$, and because $a(1/n, \cdot)$ is an average
of operators with norm $\le 1$ on $L^p$, we get
\[
     \Bigl\| 
      \sup_{t \in Q_n} \bigl| a(t, f) \bigr|
     \Bigr\|_p
  =  \Bigl\| 
      \sup_{j \ge 1} \ms1 \bigl| a(j / n, f) \bigr|
     \Bigr\|_p
 \le \frac p {p-1} \ms2 \bigl\| a(1/n, f) \bigr\|_p
 \le \frac p {p-1} \ms2 \|f\|_p.
\]
We see that $Q_m \subset Q_{m \ms1 n}$ for all $m, n \ge 1$. The sets $Q_n$
corresponding to $n = \ell\ms{0.8}!$ for $\ell \ge 1$ are increasing with
$\ell$, and they cover the set of positive rationals. We can conclude by
noticing that $A^* f$ is the increasing limit of 
$\sup_{t \in Q_{\ell\ms{0.4}!}} \ms1 |a(t, f)|$.
\end{smal}

\noindent
Applying~\eqref{Hopf0} we can obtain a version
of Hopf's maximal inequality as
\[
     c \ms{2.5} \mu \bigl( \{ a^* f > c \} \bigr)
 \le \int_{ \{ a^* f > c \} } f \, \d \mu,
 \quad
 c \in \R, \ms8
 f \in L^1(X, \Sigma, \mu),
\]
and from~\eqref{Hopf01}, we have
$
     c \ms{2.5} \mu \bigl( \{  A^* f > c \} \bigr) 
 \le \int_{ \{ A^* f > c \} } |f| \, \d \mu
$
when $c > 0$.

\begin{smal}
\noindent
By the preceding remark about the sets $Q_{\ell\ms{0.9}!}$ it is enough to
prove the inequality with
$a^*_n := \sup_{t \in Q_n} a(t, f) = \sup_{k \ge 0} a_{k, T_{1/n}}(f_n)$ 
replacing $a^* f$, with $n \ge 1$ arbitrary and with a vanishing error term.
By~\eqref{Hopf0} we have
$
     c \ms1 \mu \bigl( \{ a^*_n > c \} \bigr)
 \le \int_{ \{ a^*_n > c \} } f_n
$.
Since the semi-group $(T_t)_{t \ge 0}$ is strongly continuous, we know
that $\|f_n - f\|_1 \to 0$ and we can conclude because
$\int_{ \{ a_n^* f > c \} } f_n \, \d \mu
 - \int_{ \{ a_n^* f > c \} } f \, \d \mu$ tends to zero.
\end{smal}

\noindent
We have made here assumptions more restrictive than those of the
Hopf--Dunford--Schwartz statement~\cite[Chapter~VIII]{DunfordSchwartz}
praised by Stein~\cite{SteinTHA}, which does not assume $T_t$ positive, nor
$\mu$ finite and $T_t \gr 1 = \gr 1$. Theorem~\ref{HDS} below contains
Lemma~VIII.7.6 and Theorem~VIII.7.7 from~\cite{DunfordSchwartz} in a
slightly simplified form (the set $U$ there has only one element here).
The semi-group $(T_t)_{t \ge 0}$ on $L^1(X, \Sigma, \mu)$ is said to be
\emph{strongly measurable} if, for each $f$ in $L^1(X, \Sigma, \mu)$, the
mapping $t \mapsto T_t f \in L^1(X, \Sigma, \mu)$ is measurable with
respect to the Lebesgue measure on $[0, +\infty)$.

\begin{thm}[\cite{DunfordSchwartz}]\label{HDS}
Let\/ $(T_t)_{t \ge 0}$ be a strongly measurable semi-group on
the space $L^1(X, \Sigma, \mu)$, with\/ $\|T_t\|_{1 \to 1} \le 1$ and\/ 
$\|T_t\|_{\infty \to \infty} \le 1$ for all $t \ge 0$. For 
every function $f \in L^1(X, \Sigma, \mu)$ and every $c > 0$ one has
\[
     c \ms2 \mu \bigl( \{ A^* f > 2 \ms1 c \} \bigr)
 \le \int_{ \{ |f| > c \} } |f| \, \d \mu.
\]  
If\/ $1 < p < +\infty$ and $f \in L^p(X, \Sigma, \mu)$, the function 
$A^* f$ is in $L^p(X, \Sigma, \mu)$ and 
\[
       \|A^* f\|_p 
 \le 2 \ms1 \Bigl( \frac p {p-1} \Bigr)^{1/p} \|f\|_p.
\]
\end{thm}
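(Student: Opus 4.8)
\emph{Overview and the two reductions.} The plan is to run the two classical reductions behind the Hopf--Dunford--Schwartz theorem and then feed the outcome into the Hopf maximal inequality already established in Section~\ref{MaxiHopf}, closing with a layer-cake computation identical to the one in the proof of Doob's Theorem~\ref{TheoDoob}. \emph{First reduction (continuous to discrete):} fix $f\in L^1(X,\Sigma,\mu)$; as in Section~\ref{MaxiHopf}, for each $n\ge1$ set $S_n=T_{1/n}$ and $f_n=n\int_0^{1/n}T_sf\,\d s$, so that $a\bigl((k+1)/n,f\bigr)=a_{k,S_n}(f_n)$ with $\|f_n\|_p\le\|f\|_p$ for all $p\in[1,+\infty]$, and $A^*f$ is the increasing limit over $\ell$ of the (essential) suprema of $|a(t,f)|$ for $t\in Q_{\ell!}$ (see Section~\ref{DefiMaxiFunc} for the essential-supremum formalism), so it suffices to bound $\sup_{k\ge0}|a_{k,S_n}(f_n)|$ uniformly in $n$, with a vanishing error term as in Section~\ref{MaxiHopf} and~\cite{DunfordSchwartz}. \emph{Second reduction (general to positive):} each $S_n$ is a contraction on $L^1$ and on $L^\infty$, hence on every $L^p$; since $\mu$ is $\sigma$-finite, $S_n$ admits a \emph{linear modulus} $\tau_n=|S_n|$, i.e. the positive linear operator on $L^1$ determined by $\tau_n g=\sup\{|S_n h|:|h|\le g\}$ for $g\ge0$ (supremum in the Banach lattice $L^1$), which is again a contraction on $L^1$ and on $L^\infty$ and satisfies $|S_n^k h|\le\tau_n^k|h|$ for all $k\ge0$. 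Hence $|a_{k,S_n}(h)|\le a_{k,\tau_n}(|h|)$, so $\sup_{k\ge0}|a_{k,S_n}(f_n)|\le a^*_{\tau_n}(|f_n|)$, where $a^*_{\tau_n}$ is the Hopf maximal operator of the \emph{positive} contraction $\tau_n$; moreover $\|\tau_n\|_{\infty\to\infty}\le1$ forces $0\le\tau_n\gr1\le\gr1$, hence $a_{k,\tau_n}(\gr1)\le\gr1$ for every $k$.

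\emph{Weak-type bound and passage to $L^p$.} I would now repeat the argument of Section~\ref{MaxiHopf} leading from Lemma~\ref{GarsiLem} to \eqref{Hopf00}, but with $\tau_n$ in place of the operator there and applied only to nonnegative functions. Two hypotheses used there relax: the identity $a^*(f-c)=a^*(f)-c$ is needed only as the inequality $a^*_{\tau_n}(h-c)\ge a^*_{\tau_n}(h)-c$ for $c>0$, which follows from $a_{k,\tau_n}(\gr1)\le\gr1$ and positivity; and the finiteness of $\mu$ is not needed, because in the derivation of \eqref{Hopf00} the only functions fed to Lemma~\ref{GarsiLem} and \eqref{Hopf0} are the truncations $h_c=h\,\gr1_{\{h>c\}}$ with $c>0$, which lie in $L^1$ and are supported on $\{h>c\}$, a set of finite $\mu$-measure (at most $c^{-1}\|h\|_1$), so that the constant that is subtracted, $c\,\gr1_{\{h>c\}}$, is itself integrable. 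This gives $c\,\mu(\{a^*_{\tau_n}(|f_n|)>2c\})\le\int_{\{|f_n|>c\}}|f_n|\,\d\mu$; taking $n\to+\infty$ with the vanishing error term of the first reduction yields the first assertion,
\[
 c\,\mu\bigl(\{A^*f>2c\}\bigr)\le\int_{\{|f|>c\}}|f|\,\d\mu,\qquad c>0,
\]
and the same estimate holds whenever $|f|\,\gr1_{\{|f|>c\}}\in L^1$, in particular for every $f\in L^p$, $1\le p\le+\infty$, since $\int_X|f|\,\gr1_{\{|f|>c\}}\,\d\mu\le c^{1-p}\|f\|_p^p$ when $p<+\infty$. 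For the $L^p$ statement, fix $1<p<+\infty$ and $f\in L^p$ and argue exactly as in the proof of Theorem~\ref{TheoDoob}:
\[
 \|A^*f\|_p^p=\int_0^{+\infty}p\,t^{p-1}\mu\bigl(\{A^*f>t\}\bigr)\,\d t=2^p\,p\int_0^{+\infty}c^{p-1}\mu\bigl(\{A^*f>2c\}\bigr)\,\d c,
\]
then insert the weak-type bound and use Fubini:
\[
 \|A^*f\|_p^p\le2^p\,p\int_X|f|\Bigl(\int_0^{|f|}c^{p-2}\,\d c\Bigr)\d\mu=\frac{2^p\,p}{p-1}\,\|f\|_p^p,
\]
so $A^*f\in L^p$ and $\|A^*f\|_p\le2\bigl(p/(p-1)\bigr)^{1/p}\|f\|_p$, the extra factor $2$ compared with Doob's $p/(p-1)$ coming precisely from the $2c$ in the weak-type inequality.

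\emph{Main obstacle.} Everything above, apart from one ingredient, is either the reduction machinery already present in Section~\ref{MaxiHopf} or a routine rerun of the Doob layer-cake estimate; the single point that genuinely requires work — and the only place where the hypotheses $\|T_t\|_{1\to1}\le1$ and $\|T_t\|_{\infty\to\infty}\le1$, with $\mu$ merely $\sigma$-finite, are really used — is the construction of the linear modulus $\tau_n=|S_n|$. One must check that the set $\{|S_n h|:|h|\le g\}$ has a supremum in $L^1$ for each $g\ge0$, that $g\mapsto\sup\{|S_n h|:|h|\le g\}$ is additive on the cone of nonnegative $L^1$ functions (so that it extends to a bounded linear operator), that this extension is positive and contractive on $L^1$ and on $L^\infty$, and that $|S_n^k h|\le\tau_n^k|h|$ for all $k$ (the last by induction using positivity and monotonicity of $\tau_n$). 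This is the classical Chacon--Krengel linear-modulus lemma; I would state and prove it (or cite it) as the sole preliminary, after which Theorem~\ref{HDS} reduces to the bookkeeping indicated above.
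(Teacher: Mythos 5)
The paper does not prove Theorem~\ref{HDS}: it is stated as a citation of Dunford--Schwartz, with the only commentary being Remark~\ref{DSRemark}, which singles out the linear--modulus reduction as the decisive step and checks it in the finite (matrix) case. Your sketch therefore fills a gap the paper deliberately leaves, and it is the correct classical argument: discretize the semi-group exactly as in the small-print block of Section~\ref{MaxiHopf}, dominate $S_n=T_{1/n}$ by its Chacon--Krengel linear modulus $\tau_n$ (which inherits both contraction properties and, by positivity, satisfies $\tau_n\gr1\le\gr1$), feed the positive contractions $\tau_n$ into the Hopf machinery, and finish with the Doob-style layer cake. Your identification of the linear modulus as the single nontrivial preliminary is exactly what Remark~\ref{DSRemark} points at, and your observation that the identity $a^*(h-c)=a^*(h)-c$ can be weakened to $a^*_{\tau_n}(h-c)\ge a^*_{\tau_n}(h)-c$, valid under $a_{k,\tau_n}(\gr1)\le\gr1$, is correct and is what makes the $T\gr1=\gr1$ hypothesis superfluous. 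The $L^p$ estimate from the weak-type bound is computed correctly; the factor $2(p/(p-1))^{1/p}$ comes from the doubling in $\{A^*f>2c\}$ just as you say.

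One step in the weak-type part is stated incorrectly, even though it is repairable. You claim that the constant to be subtracted before invoking Lemma~\ref{GarsiLem} can be taken to be $c\,\gr1_{\{h>c\}}$, on the grounds that $h_c=h\gr1_{\{h>c\}}$ is supported there. But $h_c-c\,\gr1_{\{h>c\}}=(h-c)^+\ge0$, so Lemma~\ref{GarsiLem} applied to this function is vacuous: $\int_{\{S^*_N>0\}}(h-c)^+\,\d\mu\ge0$ carries no information. The integrable truncation of $\gr1$ must instead be adapted to the level set of the maximal function, not of $h$. For $g:=h_c\ge0$ in $L^1$, fix $N$, set $E_N=\{a^*_N(g)>c\}$, which has finite measure by Chebyshev since $\|a_k(g)\|_1\le\|g\|_1$, and apply Lemma~\ref{GarsiLem} to $g-c\,\gr1_{E_N}\in L^1$. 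On $E_N$ there is a $k\le N$ with $S_k(g)>(k+1)c\ge c\,S_k(\gr1_{E_N})$ (the last inequality from $\tau_n\gr1\le\gr1$), so $E_N\subset\{S^*_N(g-c\,\gr1_{E_N})>0\}$; Hopf then gives $c\,\mu(E_N)\le\int_{\{S^*_N(g-c\gr1_{E_N})>0\}}g\le\int_X g$, and letting $N\to\infty$ yields $c\,\mu(\{a^*(h_c)>c\})\le\int_{\{h>c\}}h$. Combined with $\{a^*(h)>2c\}\subset\{a^*(h_c)>c\}$ (which you obtain from the relaxed translation inequality) and the corresponding bound for $-h$, this gives the stated weak type. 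So the strategy and all the ideas are right, but replace $\gr1_{\{h>c\}}$ by $\gr1_{\{a^*_N(h_c)>c\}}$ in that one step, and route the integral through $\int_X h_c$ rather than $\int_{E_N}h_c$.
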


\begin{remN}\label{DSRemark}
In Dunford--Schwartz, Chapter~VIII, Section~VIII.6, the authors consider
first a linear operator $T$ acting from $L^1$ to $L^1$ with norm~$\le 1$
and also acting from $L^\infty$ to $L^\infty$ with norm $\le 1$; in this
discrete parameter case, they study
\[
   A_T^* f
 = \sup_{n \ge 1}
    \ms 2 \frac 1 n \ms1 \Bigl| \sum_{k=0}^{n-1} T^k f \Bigr|,
\]
before going to the continuous setting of a semi-group $(T_t)_{t \ge 0}$.
One of the steps in their proof consists in introducing a \emph{positive}
operator $P$ which acts from $L^1$ to $L^1$ and from $L^\infty$ to
$L^\infty$, with norm $\le 1$ in both cases, and such that
\[
 \forall n \ge 0,
 \ms{12}
 |T^n f| \le P^n ( |f| ),
 \quad 
 f \in L^\infty \ns1 \cap \ms1 L^1.
\]
This step is easy when the measure is the uniform measure on a finite set.
The assumptions imply that~$T$ is given by a matrix $(t_{i, j})$ such that
the sum of absolute values in each row and in each column is $\le 1$. It is
then enough to take $P$ to be the matrix with entries $p_{i, j}$ equal to
the absolute values $|t_{i,j}|$ of the entries of $T$.
\end{remN}

\subsection{From martingales to semi-groups, via an argument of Rota%
\label{RotaRem}}

\noindent
The arguments in this section, due to Rota~\cite{Rota}, are presented in a
more sophisticated manner in Stein's book~\cite[Chap.~4, \S~4]{SteinTHA}.
We consider a Markov chain $X_0, \ldots, X_N$ with transition matrix $P$,
assumed to be \emph{symmetric}. We suppose for simplicity that the state
space $\ca E$ is finite, with cardinality~$Z$. For every $e_0 \in \ca E$,
we have
\[
 \sum_{e \in \ca E} P(e_0, e) = 1.
\] 
For each integer $k$ such that $0 \le k < N$ and for all 
$e_0, e_1 \in \ca E$, the probability that $X_{k+1} = e_1$ knowing that
$X_k = e_0$ is given by the entry $P(e_0, e_1)$ of the matrix $P$. This
statement introduces implicitly the \emph{Markov property}, which loosely
speaking, prescribes that what happens after time $k$ depends only on what
we know at the instant~$k$, regardless of the past positions at times 
$j < k$. For each integer $j \ge 2$, the power $P^j$ of the matrix $P$
controls the moves in $j$ successive steps, the entry $P^j(e_0, e)$ giving
the probability of moving from $e_0$ to $e$ in exactly~$j$ steps. If $Q$ is
a transition matrix and $f$ a scalar function on $\ca E$, we introduce the
notation
\[
 (Q f)(x) = \sum_{y \in \ca E} Q(x, y) f(y),
 \quad x \in \ca E.
\]
When applied to a power $P^j$, the notation $P^j f$ corresponds to the
semi-group notation $P_t f$, with $j \in \N$ replacing $t \ge 0$. If the
transition matrix $Q$ is symmetric, hence bistochastic, and 
if~$1 \le p \le +\infty$, convexity implies that $\|Q f\|_p \le \|f\|_p$
with respect to the uniform measure on~$\ca E$. Let $f$ be a function on
$\ca E$ and let $j, k$ be two nonnegative integers with $j + k \le N$. If
we fix $x_0 \in \ca E$, the mean of the values $f(y)$, when the chain makes
$j$ steps from the position $x_0$ at time $k$ to the position $y$ at time 
$k + j$, is equal to~$(P^j f)(x_0)$.
\dumou

 A simple but important symmetric example is that of the \emph{Bernoulli
random walk}\label{BernouWal} 
on~$\Z$, where for all $x, y \in \Z$ we have $P(x, y) = 1/2$ when 
$|x - y| = 1$, and $P(x, y) = 0$ otherwise. This is not a finite example,
but it can be \og approximated\fge by considering on the finite set 
$\ca E_N = \{-N, \ldots, N\}$, for $N$ large, the modified matrix $P_N$
which still has $P_N(x, y) = 1/2$ when $|x - y| = 1$, for 
$x, y \in \ca E_N$, but where $P_N(N, N) = P_N(-N, -N) = 1/2$. One can also
consider the Bernoulli random walk on $\Z^n$, for which $P(x, y) = 2^{-n}$
when $|x_i - y_i| = 1$ for all coordinates $x_i, y_i$, $i = 1, \ldots, n$,
of the points $x = (x_1, \ldots, x_n)$ and $y = (y_1, \ldots, y_n)$
in~$\Z^n$.
\dumou
 
 Assume that the distribution of the initial position $X_0$ is uniform,
that is to say, that $P(X_0 = e_0) = 1 / Z$ for every $e_0 \in \ca E$. Then
for each $e_1 \in \ca E$, we have
\[
   P(X_1 = e_1) 
 = \sum_{e \in \ca E} P(X_0 = e \et X_1 = e_1)
 = \sum_{e \in \ca E} \frac 1 Z \ms2 P(e, e_1)
 = \frac 1 Z \ms1 \sum_{e \in \ca E} P(e_1, e)
 = \frac 1 Z \ms1 \up,
\]
\dumou
\noindent
since the matrix $P$ is symmetric. The distribution of the position $X_1$
of the chain at time $i = 1$ remains the uniform distribution, as well as
that of $X_2, \ldots, X_N$. The uniform distribution is \emph{invariant\/}
under\label{InvariMe} 
the action of $P$. Recalling the meaning of the transition matrix in terms
of conditional probability, using Markov's property and letting
$A_{N-1} = \{ X_0 = e_0  \et  X_1 = e_1  \et \ldots
       \et X_{N-1} = e_{N-1} \}$, we have that
\begin{align*}
 E:=  & \ms4 P( X_0 = e_0  \et  X_1 = e_1  \et \ldots \et X_N = e_N ) 
 \\
   =  & \ms4 P(A_{N-1} \et X_N = e_N) 
 =   \ms1 P(A_{N-1}) \ms3 P(X_N = e_N \,|\, A_{N-1} )
 \\
 = & \ms4 P(A_{N-1})
   \ms2 P(X_N = e_N \,|\, X_{N-1} = e_{N-1} )
 = \ms1 P(A_{N-1}) \ms2 P(e_{N-1}, e_N).
\end{align*}
We may go on, and by the symmetry property of the matrix we get
\begin{align*}
 E = & \ms4 \cdots
 = \frac 1 Z \ms2 P(e_0, e_1) P(e_1, e_2) \ldots
    P(e_{N-2}, e_{N-1}) P(e_{N-1}, e_N)
 \\
 = & \ms4 \frac 1 Z \ms2 P(e_N, e_{N-1}) P(e_{N-1}, e_{N-2}) \ldots
    P(e_2, e_1) P(e_1, e_0)
 \\
 = & \ms4 P(X_N = e_0  \et  X_{N-1} = e_1  \et \ldots
       \et X_1 = e_{N-1}  \et X_0 = e_N).
\end{align*}
We see that the \og reversed\fge chain has the same behavior as that of
the original chain. Since the matrix is symmetric, we certainly have,
whatever the distribution of $X_0$ can be, that the probability to arrive
at a fixed~$y_0$ at time $N$, starting from an arbitrary point $x$ at time
$k = N - j$, is given by $P^j(x, y_0) = P^j(y_0, x)$, the probability of
moving from $y_0$ at time $0$ to $x$ at time $j$. But under the invariant
distribution, we can say more: if $g$ is a function on~$\ca E$, the
\emph{mean} of the values $g(x)$ on all trajectories starting from $x$ at
time $k$ and arriving at $y_0$ at time~$N$ is equal to $(P^j g)(y_0)$.
Clearly, this statement is not true in general, since this mean value
depends on the distribution of $X_k$, hence on that of $X_0$. Under the
uniform distribution, we see by reversing the chain that the preceding mean
is equal to the mean of $g(x)$, when starting from $y_0$ at time $0$ and
arriving at $x$ at time $j$, namely, this mean is equal to $(P^j g)(y_0)$.

 Let us describe the situation more formally. Let $\Omega = \ca E^{N+1}$
denote the space of all possible trajectories 
$(e_0, e_1, \ldots, e_N) \in \ca E^{N+1}$ for the chain. On this model
space~$\Omega$ and for $k = 0, \ldots, N$, we set
\[
 X_k(\omega) = \omega_k \in \ca E,
 \qquad
 \omega = (\omega_0, \ldots, \omega_N) \in \ca E^{N+1}.
\]
It is easy to determine the probability measure $\gr P$ on $\Omega$ that
corresponds to the behavior of our Markov chain under the invariant
distribution. For each singleton 
$\{\omega\} =  \{(\omega_0, \ldots, \omega_N)\}$ in $\ca P(\Omega)$, we must
have that
\[
   \gr P \bigl( \{(\omega_0, \ldots, \omega_N)\} \bigr)
 = \frac 1 Z \ms2 P(\omega_0, \omega_1)
               \ms1 P(\omega_1, \omega_2) \ldots
               \ms1 P(\omega_{N-1}, \omega_N).
\]
For $k = 0, \ldots, N$, let $\ca F_k$ denote the finite field of subsets
of $\Omega$ whose atoms $A$ are of the following form: to any 
$e_0, \ldots, e_k$ fixed in $\ca E$ we associate $A_{\gr e} \in \ca F_k$
defined by
\[
   A 
 = A_{\gr e}
 = \{ \omega = (\omega_0, \ldots, \omega_N) : 
       \omega_j = e_j, \ms3 0 \le j \le k \} \in \ms{0.3} \ca F_k,
 \quad
 \gr e = (e_0, \ldots, e_k).
\]
This $\ca F_k$ is the \og field of past events\fge at time $k$, it increases
with~$k$. Let $\ca G_k$ denote the field of events occurring precisely at
time $k$, whose atoms $B$ are of the form
\[
 B = \{ \omega = (\omega_0, \ldots, \omega_N): 
          \omega_k = e_k \} \in \ms{0.3} \ca G_k.
\]
Clearly, we have $\ca G_k \subset \ca F_k$. A function on $\Omega$ which is
$\ca G_k$-measurable depends only on the coordinate $\omega_k$, and is thus
of the form $g(X_k)$ with $g$ a function on~$\ca E$. If $f$ is a function
on $\ca E$, the Markov property yields
\[
   \E (f(X_N) \ms1|\ms1 \ca F_k) 
 = \E (f(X_N) \ms1|\ms1 \ca G_k)
 = g(X_k)
\]
where $g(x) = (P^{N - k} f)(x)$ for every $x \in \ca E$. The preliminary
discussion shows that
\begin{equation}
 (P^{N - k} f)(X_k) = \E (f(X_N) \ms1|\ms1 \ca F_k), 
 \ms{16}
 (P^{N - k} g)(X_N) = \E (g(X_k) \ms1|\ms1 \ca G_N). 
 \label{allerretour}
\end{equation}
We introduce the \og canonical\fge martingale associated to a function
$f$ on $\ca E$, by letting
\begin{equation}
 M_i = (P^{N - i} f)(X_i) = \E (f(X_N) \ms1|\ms1 \ca F_i), 
 \ms{16} 0 \le i \le N.
 \label{MartiCano}
\end{equation}
We see that in~\eqref{allerretour}, one occurrence of $P^{N - k}$ relates
to the expectation at time $k < N$ of future positions $f(X_N)$, while the
other is about expectation at time~$N$ of past positions $g(X_k)$.
Combining the two equalities in~\eqref{allerretour} in a \og back and
forth\fge move, by taking $g = P^j f$ and $j = N - k$, we conclude that
\begin{equation}
 (P^{2j} f)(X_N) = \E \bigl( M_{N - j} \ms1 \big| \ms1 \ca G_N \bigr).
 \label{MartinSG}
\end{equation}
Since the conditional expectation operator on $\ca G_N$ is positive, we see
that for every $j = N - k = 0, \ldots, N$, we have the inequality
\[
     \max_{0 \le j \le N} |(P^{2 j} f)(X_N)|
  =  \max_{0 \le j \le N} 
      \bigl| \E \bigl( M_{N - j} \ms1 \big| \ms1 \ca G_N \bigr) \bigr|
 \le \E \bigl( \max_{0 \le i \le N} |M_i| \ms2 \big| \ms2 \ca G_N \bigr).
\]
It implies when $1 < p \le +\infty$, according to Doob's
inequality~\eqref{DoobLp} and to the non-expansivity on $L^p$ of
conditional expectations, the chain of inequalities
\begin{align}
    &\ms4 \Bigl\| \max_{0 \le j \le N} |(P^{2 j} f)(X_N)| \ms1 \Bigr\|_p
 \le   \Bigl\|
        \E \bigl(
            \max_{0 \le i \le N} |M_i| \ms2 \big| \ms2 \ca G_N  
           \bigr)
       \Bigr\|_p
 \label{supMC}
 \\
 \le &\ms4 \Bigl\| \max_{0 \le i \le N} |M_i| \ms1 \Bigr\|_p
 \le \frac p {p - 1} \ms2 \| M_N \|_p
  =  \frac p {p - 1} \ms2 \| f(X_N) \|_p.
 \notag
\end{align}
We could recover the odd indices $2 j + 1$ by applying the latter inequality
to $P f$ instead of~$f$ and using $\|P f\|_p \le \|f\|_p$, to the cost of
an extra factor~2.
\dumou

\begin{smal}
\noindent
Estimating the maximal function of semi-groups is a central theme
in~\cite{SteinTHA}. The discrete case of~\eqref{supMC} was obtained by
Stein in the short article~\cite{SteinMET}, independently of
Rota~\cite{Rota}, by methods preluding those of~\cite{SteinTHA}. Theorem~1
in~\cite{SteinMET} applies to self-adjoint operators $P$ on 
$L^2(X, \Sigma, \mu)$ satisfying also $\|P\|_{1 \to 1} \le 1$ and
$\|P\|_{\infty \to \infty} \le 1$.
\end{smal}

\noindent
 One can play the same game with convex functions other than the supremum
function on $\R^{N+1}$. For example, let us begin with the convexity
inequality
\[
     \Bigl( \sum_{0 \le i \le N} 
      | \E (f_i \ms1|\ms1 \ca G) |^2 
     \Bigr)^{1/2}
 \le \E \Bigl( \bigl( \sum_{0 \le i \le N} |f_i|^2 \bigr)^{1/2}
               \ms1 \big| \ms1 \ca G \Bigr),
\]
and make use of the Burkholder--Gundy inequalities of Theorem~\ref{BurGun},
in order to obtain, when $0 \le j_0 < j_1 < \ldots < j_r \le N$, 
$1 < p < +\infty$, and with respect to the invariant measure~$\mu$, the
inequality
\begin{equation}
      \Bigl\|
       \Bigl(
        \sum_{k = 1}^r |(P^{2 j_k} f - P^{2 j_{k-1}} f)|^2 
       \Bigr)^{1/2}
      \Bigr\|_{L^p(\mu)}
 \le c_p \ms1 \|f\|_{L^p(\mu)}.
 \label{RBG}
\end{equation}
Indeed, we have seen in~\eqref{MartinSG} that $(P^{2 j_k} f)(X_N)$ is the
projection on $\ca G_N$ of the member 
$M_{N - j_k} = \E (f(X_N) \ms1|\ms1 \ca F_{N - j_k})$ of the martingale
$(M_j)_{j=0}^N$ in~\eqref{MartiCano}. Then $L_i = M_{N - j_{r-i}}$, 
$i = 0, \ldots, r$ is another martingale, and
\[
   (P^{2 j_{k-1}} f) (X_N) - (P^{2 j_k} f)(X_N)
 = \E (M_{N - j_{k-1}} - M_{N - j_k} \ms1|\ms1 \ca G_N)
\] 
appears as projection on $\ca G_N$ of the \emph{martingale difference}
$d_{r - k + 1} = L_{r - k + 1} - L_{r - k}$ (see Section~\ref{BGIneqs})
when $1 \le k \le r$. This principle can be applied for bounding diverse
convex functions of a semi-group, by considering them as projections of
corresponding functions of a martingale, for which we may have an 
\og $L^p$~inequality\fg.
\dumou

 Let us come back to~\eqref{supMC}. Since the distribution of $X_N$ is
uniform, we can restate~\eqref{supMC} when $1 < p \le +\infty$ as
\[
     \Bigl( \frac 1 Z  
      \sum_{x \in \ca E} \ms2 \max_{0 \le j \le N} |(P^{2 j} f)(x)|^p 
     \Bigr)^{1/p}
 \le \frac p {p - 1} 
      \ms 2 \Bigl(
             \frac 1 Z \sum_{x \in \ca E} |f(x)|^p 
            \Bigr)^{1 / p} \ns4,
\]
or else, changing the normalization and letting $N$ tend to infinity, we
obtain
\begin{equation}
     \Bigl( 
      \sum_{x \in \ca E} \ms2
       \sup_{j \ge 0} |(P^{2 j} f)(x)|^p 
     \Bigr)^{1/p}
 \le \frac p {p - 1} 
      \ms 2 \Bigl( \sum_{x \in \ca E} |f(x)|^p \Bigr)^{1 / p}.
 \label{maxiSemiGr}
\end{equation}
We can also write
\[
     \Bigl( 
      \sum_{x \in \ca E} \ms2
       \sup_{j \ge 0} |(P^j f)(x)|^p 
     \Bigr)^{1/p}
 \le \frac {2 \ms1 p} {p - 1} 
      \ms 2 \Bigl( \sum_{x \in \ca E} |f(x)|^p \Bigr)^{1 / p}.
\]
\dumou

 If we want to deal with a countably infinite state space $\ca E$ such as 
$\ca E = \Z^n$, we may accept (as Stein~\cite{SteinTHA} does) to work with
an \emph{infinite invariant measure}, uniform on~$\ca E$, that gives 
measure~$1$ to each singleton $\{e\}$, $e \in \ca E$. We then obtain the
same maximal inequality~\eqref{maxiSemiGr}, applying
Remark~\ref{InfiniProba}. If we do not accept an \og infinite
probability\fg, we may, for example with the Bernoulli random walk, work
with \og boxes\fge finite but large enough: if $f$ is finitely supported in
$\Z^n$ and if $N$ is fixed, we can find a finite box $B$ in $\ca E$, so big
that $P^j f$ vanishes outside $B$ for every $j \le 2 N$. Changing the
Bernoulli transition matrix $P(x, y)$ at the boundary of $B$, in order to
force the Markov chain to remain inside, we are back to the finite case.

\subsection{Brownian motion, and more on martingales%
\label{BrowniMarti}}

\subsubsection{Gaussian distributions and Brownian motion%
\label{GaussiDist}}

\noindent
Let $|x|$ denote here the Euclidean norm of a vector~$x$ in~$\R^n$. For
every probability measure $\mu$ on~$\R^n$ having a finite first order moment
$\int_{\R^n} |x| \, \d \mu(x)$, one defines the 
\emph{barycenter}\label{Baryce}
of $\mu$ as
\[
 \bar \mu = \int_{\R^n} x \, \d \mu(x) \in \R^n.
\]
To a probability measure $\mu$ on $\R^n$ with finite second order moment
$\int_{\R^n} |x|^2 \, \d \mu(x)$, one associates the quadratic form
\[
 Q_\mu : \xi \mapsto 
          \int_{\R^n} \bigl( (x - \bar \mu) \ps \xi \bigr)^2 \, \d \mu(x),
 \quad
 \xi \in \R^n .
\]
The matrix $\mathrm{Q}$ of $Q_\mu$ with respect to the canonical basis
of~$\R^n$ is the \emph{covariance matrix}\label{CovariMa} 
of $\mu$. The quadratic form $Q_\mu$ is positive definite when $\mu$ is not
supported on any affine hyperplane, for example when~$\mu$ is the uniform
probability measure on a bounded convex set $C$ with non empty
interior,\label{ConveBod}
\textit{i.e.}, a \emph{convex body}~$C$. We say that $\mu$ is 
\emph{centered}\label{CenteDist} 
when $\bar \mu = 0$, and in this case the expression of $Q_\mu$ simplifies
to $Q_\mu(\xi) = \int_{\R^n} ( x \ps \xi )^2 \, \d \mu(x)$ for every 
$\xi \in \R^n$.
\dumou

 When $f$ is a probability density on $\R$ with finite second order moment,
the \emph{variance}\label{VarianceDef} 
$\sigma^2$ of $f(x) \, \d x$ is defined by
\[
   \sigma^2
 = \int_\R \Bigl( x - \int_\R y f(y) \, \d y \Bigr)^2 f(x) \, \d x.
\]
When $f$ is centered, one has that $\sigma^2 = \int_\R x^2 f(x) \, \d x$.
\dumou

 A Gaussian random variable with distribution 
$N(0, \I_n)$\label{NZero} 
takes values in~$\R^n$, its distribution $\gamma_n$ is symmetric, thus
centered, defined on $\R^n$ 
by\label{Gaussiennes} 
\begin{equation}
   \d \gamma_n (x)
 = (2 \pi)^{-n/2} \e^{ - |x|^2 / 2} \, \d x
 \label{LoiNZeroId}
\end{equation}
and $\gamma_n$ admits the identity matrix $\I_n$ as covariance matrix. If
$F$ is a $n$-dimensional Euclidean space, we denote by $\gamma_F$ the image
of $\gamma_n$ under an (any) isometry from~$\R^n$ onto~$F$. If $X$ is a 
$N(0, \I_n)$ Gaussian random variable and $\sigma > 0$, then the multiple
$\sigma X$ admits the distribution
$
   \d \gamma_{n, \sigma}(x)
 = (2 \pi)^{-n/2} \e^{ - |x / \sigma|^2 / 2} \, \d (x / \sigma)
$, called the $N(0, \sigma^2 \I_n)$ distribution, with $\sigma^2 \I_n$ as
covariance matrix. One can consider that the \emph{Dirac probability
measure}~$\delta_0$\label{Dirac} 
at the origin of $\R^n$ corresponds to $N(0, 0_n)$.
\dumou

 The (absolute) moments of the one-dimensional distribution $\gamma_1$ can
be computed in terms of values of the Gamma function. For every $p > -1$,
one has that
\[
   \int_\R |x|^p \, \d \gamma_1(x) 
 = (2 \pi)^{-1/2} \int_\R |x|^p \e^{- x^2 / 2} \, \d x 
 = 2^{p/2} \pi^{-1/2} \Gamma \bigl( (p+1)/2 \bigr).
\]
As $p$ tends to $+\infty$, it follows from Stirling's formula that
\begin{equation}
    g_p
 := \Bigl( \int_\R |x|^p \, \d \gamma_1(x) \Bigr)^{1/p}
 \simeq \sqrt {p / \e}.
 \label{AsymptoGauss}
\end{equation}
\dumou

 A $n$-dimensional \emph{Brownian motion}\label{BrowniMot} 
$(B_t)_{t \ge 0}$ starting at $x_0 \in \R^n$ is a $\R^n$-valued random
process defined on some probability space $(\Omega, \ca F, P)$, such that
$B_0 = x_0$, such that $B_t - B_s$ is a Gaussian random variable with
distribution $N(0, (t - s) \I_n)$ whenever $0 \le s \le t$, and with
\emph{independent increments}: for every integer $k \ge 1$, when 
$0 \le t_0 < \ldots < t_k$ are given, then
\[
 B_{t_0}, \, B_{t_1} - B_{t_0}, \, B_{t_2} - B_{t_1}, 
 \ldots, \, B_{t_k} - B_{t_{k-1}}
\]
are independent. The coordinates $(B_{t, i})_{t \ge 0}$, $i = 1, \ldots, n$,
are independent one-dimens\-ional Brownian motions. It is possible to choose
\emph{everywhere defined} measurable functions $(B_t)_{t \ge 0}$ satisfying
the above properties in such a way that the trajectories 
$0 \le t \mapsto B_t(\omega)$, or \emph{random paths}, are continuous for
(almost) every $\omega \in \Omega$. The Brownian motion is a martingale
with continuous time parameter $t \ge 0$, with respect to a continuous time
filtration $(\ca F_t)_{t \ge 0}$ where $\ca F_t$ is generated by the
variables $B_s$, $0 \le s \le t$. See for example Durrett~\cite{Durrett}
for a detailed account.

 It is well known that the Brownian motion on $\R^n$ is the limit of
Markov chains with symmetric transition matrix, namely, a limit of suitably
scaled Bernoulli random walks. Indeed, if $\delta > 0$ is given and if we
consider a Bernoulli walk on the real line moving at each time $k \delta$,
$k \in \N^*$, by a step $\pm \sqrt \delta$, so that
$$
   X^{(\delta)}_t 
 = \sqrt \delta \ms2 
    \sum_{k=1}^{ \lfloor t / \delta \rfloor } \varepsilon_k,
 \quad
 t \ge 0,
 \ms{11}
 \varepsilon_k = \pm 1,
$$
then the distribution of $(X^{(\delta)}_t)_{t \ge 0}$ tends when
$\delta \rightarrow 0$ to that of a one-dimensional Brownian motion. Here,
$(\varepsilon_k)_{k=1}^\infty$\label{BernouVaris} 
is a sequence of independent Bernoulli random variables, taking 
values~$\pm 1$ with probability $1/2$. If $(B_t)_{t \ge 0}$ is the Brownian
motion in~$\R^n$, starting at $0$, and if we consider the associated
Gaussian semi-group $(G_{s})_{s \ge 0}$ defined for $f \in L^1(\R^n)$ and
$s > 0$ by
\begin{equation}
   (G_s f)(x) 
 = \E f(x + B_s)
 = (2 \pi s)^{-n/2} 
    \int_{\R^n} f(x + y) \e^{ - |y|^2 / (2 s)} \, \d y,
 \quad
 x \in \R^n,
 \label{GSG}
\end{equation}
we can show an inequality analogous to~\eqref{maxiSemiGr}. For every~$p$ in
$(1, +\infty]$ and for every function $f \in L^p(\R^n)$, we have a
\emph{maximal inequality for the Gaussian semi-group} with a bound
independent of the dimension~$n$, stating that
\begin{subequations}\label{maxiSemiGrGaG}
\begin{equation}
     \Bigl(
      \int_{\R^n} \sup_{s > 0} \ms1 \bigl| (G_{s} f)(x) \bigr|^p \, 
       \d x \Bigr)^{1/p}
 \le \frac p {p - 1} \Bigl( \int_{\R^n} |f(x)|^p \, \d x \Bigr)^{1/p} \ns6.
 \label{maxiSemiGrGa} \tag{\ref{maxiSemiGrGaG}.$G^*$}
\end{equation}
\end{subequations}
If we just need a maximal inequality possibly dimension dependent, there is
an easy proof relating the Gaussian maximal function to the classical
maximal function~$\M f$, because the Gaussian kernel is radial and radially
decreasing, see~\eqref{MajOmega}. Once Stein's Theorem~\ref{indepdim}
giving dimensionless estimates for $\M f$ is established, this easy bound
of $G_s f$ by $\M f$ implies a dimensionless estimate for the Gaussian
semi-group, or for the Poisson semi-group as well. With Bourgain, Carbery
and M\"uller, we shall follow the opposite route, from the semi-group
estimates to $\M f$ or~$\M_C f$. We sketch an argument for
obtaining~\eqref{maxiSemiGrGa} from the Bernoulli case.
\dumou

\begin{smal}

\noindent
Let us give some more details in dimension~$n = 1$. Let
$(\varepsilon_k)_{k=1}^\infty$ be a sequence of independent Bernoulli
random variables, taking values~$\pm 1$ with probability $1/2$. The
associated semi-group $(P_j)$, indexed by $j \in \N$, is defined by
\[
   (P_j \ms1 g)(i)
 = \E g \Bigl( i + \ns4 \sum_{1 \le k \le j} \varepsilon_k \Bigr),
 \quad
 j \ge 0, \ms8 i \in \Z,
\]
and it satisfies~\eqref{maxiSemiGr}. As a consequence of the de
Moivre--Laplace theorem and by classical tail estimates, we know that
$(1 + x^2) \ms2 P \bigl( \bigl\{ N^{-1/2} \sum_{1 \le k \le N} \varepsilon_k
 < x \bigr\} \bigr)$ tends to
$(1 + x^2) \ms2 \gamma_1 \bigl( (-\infty, x) \bigr)$
when~$N \rightarrow \infty$, uniformly in $x$ real. It follows that
$\E f \bigl( N^{-1/2} \sum_{1 \le k \le N} \varepsilon_k \bigr)$ tends to
$\int_\R f(y) \, d \gamma_1(y)$, uniformly on Lipschitz functions
having a Lipschitz constant bounded by some fixed~$C$. If
$f$ is Lipschitz on~$\R$, then
\[
 \E f \Bigl( x + N^{-1/2} \sum_{1 \le k < s N} \varepsilon_k \Bigr)
 \ms4 \lra_N \ms6
 (2 \pi s)^{-1/2} \int_\R f(x + y) \e^{- y^2 / (2 s)} \, \d y,
\]
uniformly in $x \in \R$ and $s \in [t_0, t_1]$, with $0 < t_0 \le t_1$
fixed. This implies that for any given $\varepsilon > 0$ and $N$ large
enough, letting $g_N(i) = f(i / \sqrt N)$ for $i \in \Z$ and assuming 
$s \ms1 N - 1 \le j_N < s \ms1 N$, we have that
\[
   \Bigl| P_{j_N} g_N(i) - (G_s f)(i / \sqrt N) \Bigr| 
 < \varepsilon,
 \quad
 i \in \Z,
\]
for every $s \in [t_0, t_1]$. Applying~\eqref{maxiSemiGr} to $g_N$, we
obtain when $s_0, s_1, \ldots, s_k$ and $a > 0$ are given that
\[
     \int_{-a}^a \ms2 \max_{0 \le j \le k} \ms1 
      \bigl| (G_{s_j} f)(x) \bigr|^p \, \d x
 \le \eta^p(\varepsilon) + \Bigl( \frac p {p - 1} \Bigr)^p
      \ms2 \frac 1 {\sqrt N} \ms2 \sum_{i \in \Z} 
       \Bigl| f \Bigl( \frac i {\sqrt N} \Bigr) \Bigr|^p,
\]
where $\eta(\varepsilon)$ tends to $0$ with $\varepsilon$,
implying~\eqref{maxiSemiGrGa} when $\varepsilon \rightarrow 0$, 
$N \rightarrow +\infty$, $a \rightarrow +\infty$ and if the sequence
$\{s_j\}_{j \ge 0}$ is dense in $(0, +\infty)$. The same argument works
in~$\R^n$, thanks to the product structure of the Bernoulli and Gaussian
measures and to the fact that the linear space generated by products 
$f(x_1, \ldots, x_n) = \prod_{j=1}^n f_j(x_j)$ is uniformly dense in the
space of compactly supported Lipschitz functions on~$\R^n$.

\end{smal}

\noindent
These considerations generalize to semi-groups of convolution with
\emph{symmetric} probability measures $(\mu_t)_{t \ge 0}$ on $\R^n$, that
is to say, when $\mu_s * \mu_t = \mu_{s + t}$, $s, t \ge 0$, and
$\mu_t(A) = \mu_t(-A)$ for every Borel subset $A \subset \R^n$. Given 
$k > 1$, one can find a finitely supported symmetric probability measure
$\nu_{1/k}$ on $\R^n$ which is an approximation of~$\mu_{1 / k}$, in the
sense that the integrals of a given finite family of functions~$f$
on~$\R^n$ are nearly the same for $\mu_{j / k}$ and for $\nu_{1/k}^{*j}$
whenever $j \le k^2$. We may assume that~$\nu_{1/k}$ is supported in 
$\varepsilon \Z$, $\varepsilon > 0$. The symmetric Markov chain 
$(X_j)_{j \le k^2}$ on $\ca E = \varepsilon \Z$ with transition governed by
$\nu_{1/k}$ permits us to approximate the maximal function 
$\sup_t |\mu_t * f|$ of the semi-group, replacing it with
$
 \max_{j \le k^2} |\nu_{1/k}^{*j} * f|
$. 
\dumou

 It follows that some convex functions of the convolution semi-group can be
estimated in $L^p$ by projecting functions of a martingale. For example,
the sum of squares of differences, already mentioned in the Gaussian case,
can be studied also in the Poisson case by relating it to
the square function of a martingale and applying the Burkholder--Gundy
inequalities presented in the next section.

\subsubsection{The Burkholder--Gundy inequalities%
\label{BGIneqs}}

\noindent
When $(M_k)_{k=0}^N$ is a martingale with respect to a filtration
$(\ca F_k)_{k=0}^N$, one introduces the \emph{difference sequence\/}
$(d_k)_{k=0}^N$, which is defined by $d_0 = M_0$ and $d_k = M_k - M_{k-1}$
if $0 < k \le N$. Observe that $d_k$ is $\ca F_k$-measurable for 
$0 \le k \le N$ and that $\E ( d_k \ms1|\ms1 \ca F_{k-1} ) = 0$ for 
$k > 0$. Conversely, given a sequence $(d_k)_{k=0}^N$ with these two
properties, we obtain a martingale by setting $M_k = \sum_{j=0}^k d_j$, for
$0 \le k \le N$. For a scalar martingale $(M_k)_{k=0}^N$, we define the
\emph{square function process\/}\label{SquaFun} 
$(S_k)_{k=0}^N$ of the martingale by
\[
 S_k = \Bigl( \sum_{j=0}^k |d_j|^2 \Bigr)^{1/2}, 
 \quad k = 0, \ldots, N.
\]
\def\dkbar{\overline{d_k\ns4}\ms{4}}%
For a real or complex martingale in $L^2$, the differences $d_k$ and
$d_\ell$ are orthogonal when $k \ne \ell$. If $k < \ell$ for example, then
$d_k$ and its complex conjugate $\dkbar$ are $\ca F_{\ell-1}$-measurable,
thus
$\E ( \dkbar \ms1 d_\ell)
 = \E \bigl( \dkbar \E ( d_\ell \ms1|\ms1 \ca F_{\ell - 1} ) 
      \bigr) = 0$. It follows that
\begin{equation}
 \E |M_N|^2 = \sum_{k=0}^N \E |d_k|^2 = \E |S_N|^2.
 \label{OrthogMD}
\end{equation}
This equality $\|M_N\|_2 = \|S_N\|_2$ appears as an evident case of the
following result.

\begin{thm}[Burkholder--Gundy~\cite{BurkholderGundy}]%
\label{BurGun}
For every $p$ in\/ $(1, +\infty)$, there exists a constant $c_p \ge 1$ such
that for every integer $N \ge 1$, for every real or complex martingale\/
$(M_k)_{k=0}^N$, one has that
\[
     c_p^{-1} \|M_N\|_p
 \le \|S_N\|_p
 \le c_p \|M_N\|_p.
\]
\end{thm}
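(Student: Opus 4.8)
The plan is to prove the Burkholder--Gundy inequalities by the classical martingale-transform and good-$\lambda$ approach, which at this level of generality (real or complex scalar martingales, finite filtration) can be carried out cleanly. I would first dispose of the case $p = 2$, which is already recorded in~\eqref{OrthogMD}: there $\|M_N\|_2 = \|S_N\|_2$, so $c_2 = 1$ works. The strategy for general $p$ is to obtain the two inequalities separately, with the key tool being a comparison between the maximal function $M_N^*$ and the square function $S_N$, together with Doob's inequality~\eqref{DoobLp} to pass between $M_N$ and $M_N^*$.

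First I would establish the inequality $\|S_N\|_p \le c_p \|M_N\|_p$. The cleanest route is the good-$\lambda$ inequality relating the distribution functions of $S_N$ and $M_N^*$: one shows there are constants so that for suitable $\beta,\delta$,
\[
 P(\{S_N > \beta\lambda,\ M_N^* \le \delta\lambda\})
 \le \varepsilon(\delta)\, P(\{S_N > \lambda\}),
\]
with $\varepsilon(\delta)\to 0$ as $\delta\to 0$. This is proved by a stopping-time argument: let $\tau$ be the first time $S_k$ exceeds $\lambda$ and $\sigma$ the first time $|M_k|$ exceeds $\delta\lambda$; on the event in question one controls the stopped martingale $M_{k\wedge\sigma} - M_{k\wedge\tau}$ in $L^2$ using the orthogonality~\eqref{OrthogMD} applied to its difference sequence, and then applies Chebyshev. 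Integrating the good-$\lambda$ inequality against $p\lambda^{p-1}\,d\lambda$ and absorbing, one gets $\|S_N\|_p \le C_p \|M_N^*\|_p$, and then Doob's inequality~\eqref{DoobLp} gives $\|M_N^*\|_p \le \tfrac{p}{p-1}\|M_N\|_p$. The reverse inequality $\|M_N\|_p \le c_p\|S_N\|_p$ is handled symmetrically by a good-$\lambda$ inequality in the other direction, bounding $P(\{M_N^* > \beta\lambda,\ S_N \le \delta\lambda\})$ by $\varepsilon(\delta)P(\{M_N^* > \lambda\})$ via an analogous stopping-time argument, then using Doob once more to replace $M_N^*$ by $M_N$ on the left; one must check the chain does not become circular, which it does not since each good-$\lambda$ step only compares a maximal function with a square function.

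An alternative I would mention is the purely $L^p$ duality / Burkholder transform argument: for the upper bound one writes $\E|S_N|^p$ in terms of $\E\bigl(\sum |d_k|^2 S_N^{p-2}\bigr)$ and uses that $S_N^{p-2}$ is \og almost predictable\fg, reducing via the transform $\sum \epsilon_k d_k$ (with $\epsilon_k = \pm 1$) and the contraction principle to the $p=2$ case. This avoids good-$\lambda$ but requires the case $p \ge 2$ and $p \le 2$ to be treated separately with a duality step in between, and the constant obtained is less transparent. I would present the good-$\lambda$ proof as the main line.

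The main obstacle is the good-$\lambda$ estimate itself: setting up the stopping times correctly so that the stopped martingale increment has a square function controlled by a constant times $\lambda$ on the relevant event, and verifying the $L^2$ bound that feeds the Chebyshev step. The subtlety is that $S_N$ can jump by a large amount at the stopping time $\tau$, so one needs the martingale to be suitably truncated — this is where the hypothesis that we are in the scalar case and have~\eqref{OrthogMD} available is used — and one must be careful that all constants depend only on $p$ and not on $N$ or the particular martingale, which is exactly the content asserted in the statement.
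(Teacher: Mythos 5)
Your outline via good-$\lambda$ inequalities is correct in substance, but it is a genuinely different route from the paper, which in fact does not prove Theorem~\ref{BurGun} in full generality: the authors say explicitly ``We shall prove only special cases of Theorem~\ref{BurGun}.'' What they prove (Lemma~\ref{BernouMarti}) is the stronger two-sided comparison between $\|M_N^*\|_p$ and $\|S_N\|_p$, but only for Bernoulli (dyadic) martingales and $1 \le p \le 2$, by a martingale-transform argument: one multiplies the differences by the predictable weights $S_j^{-1/2}$ (respectively $A_j^{-1/2}$), drops to $L^2$ where~\eqref{OrthogMD} gives exact control, and returns to $L^p$ via Cauchy--Schwarz and Abel summation; the dyadic hypothesis is what makes $|d_k|$, hence $S_k$, predictable, and the method gives small explicit constants and the Davis $L^1$ inequality as a by-product. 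The extension to arbitrary martingales (approximation by blocks of Bernoulli differences) and to $p > 2$ (self-duality of the unconditionality formulation~\eqref{UMD}) is only referenced in Remark~\ref{Incondi} to Pisier's book. Your good-$\lambda$ / distribution-function argument is, by contrast, self-contained, treats general martingales and all $1 < p < \infty$ directly, and extends readily to other rearrangement-invariant norms, at the cost of non-explicit constants and no direct $L^1$ statement. One point in your sketch to tighten: the step you call ``truncation'' is not really where the scalar hypothesis enters (indeed~\eqref{OrthogMD} holds equally for Hilbert-space-valued martingales); what saves the stopping-time argument is that on the ``good'' event $\{M_N^* \le \delta\lambda\}$ (respectively $\{S_N \le \delta\lambda\}$) every increment satisfies $|d_k| \le 2\delta\lambda$ (respectively $|d_k| \le \delta\lambda$), so the overshoot at the first stopping time is of order $\delta\lambda$, and one must choose the second stopping time so that the multipliers $\gr 1_{\tau < k \le \sigma}$ are $\ca F_{k-1}$-measurable and the piece $\sum_k d_k \gr 1_{\tau < k \le \sigma}$ is a legitimate martingale transform; these two observations are what make the $L^2$--Chebyshev step close and are worth writing out explicitly rather than appealing to an unspecified truncation.
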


 The \emph{Khinchin inequalities} (see for example 
Zygmund~\cite[vol.~I, V.8, Th.~8.4]{ZygmundTS}) are a very particular
instance of the preceding theorem. Let $(\varepsilon_k)_{k=1}^N$ be a
sequence of independent Bernoulli random variables defined on a probability
space $(\Omega, \ca F, P)$, taking the values $\pm 1$ with probability
$1/2$. For every $p$ in $(0, +\infty)$, there exist constants 
$A_p, B_p > 0$ such that for every $N \ge 1$ and all scalars
$(a_k)_{k=0}^N$, one has
\begin{subequations}\label{HincinG}
\begin{align}
     A_p \Bigl( \sum_{k=1}^N |a_k|^2 \Bigr)^{1/2}
 &\ns1\le\ns1 \Bigl(
      \E \Bigl| \sum_{k=1}^N a_k \varepsilon_k \Bigr|^p 
     \Bigr)^{1/p}
 \ns1\le\ns1 \Bigl( \sum_{k=1}^N |a_k|^2 \Bigr)^{1/2},
 \ms9 0 < p \le 2,
 \label{Hincin} \tag{\ref{HincinG}.$\gr K$}
 \\
     \Bigl( \sum_{k=1}^N |a_k|^2 \Bigr)^{1/2}
 & \ns1\le\ns1 \Bigl(
      \E \Bigl| \sum_{k=1}^N a_k \varepsilon_k \Bigr|^p 
     \Bigr)^{1/p}
 \ns1\le\ns1 B_p \Bigl( \sum_{k=1}^N |a_k|^2 \Bigr)^{1/2},
 \ms9 2 \le p.
 \notag
\end{align}
\end{subequations}
The exact values of the constants $A_p, B_p$ are known (\cite{Sza, Haa}).
In order to relate these inequalities to Theorem~\ref{BurGun} when
$1 < p < +\infty$, we consider a special filtration on 
$(\Omega, \ca F, P)$,\label{DyadiFiltr} 
generated by the sequence $(\varepsilon_k)_{k=1}^N$. Let~$\ca F_0$ be the
trivial field consisting of~$\Omega$ and~$\emptyset$, and for $k > 0$, let
$\ca F_k$ be the finite field generated by 
$\varepsilon_1, \ldots, \varepsilon_k$. This field $\ca F_k$ has $2^k$
atoms of the form
\begin{equation}
   A 
 = A_{\gr u}
 = \{ \omega \in \Omega : \varepsilon_j(\omega) = u_j, 
       \ms2 j = 1, \ldots, k\},
 \quad
 \gr u = (u_1, \ldots, u_k),
 \label{atomA}
\end{equation}
where $u_j = \pm 1$. We shall call this particular sequence 
$(\ca F_k)_{k=0}^N$ of finite fields a \emph{dyadic filtration}. In this
framework, for $1 \le k \le N$, any scalar multiple $a_k \varepsilon_k$ of
$\varepsilon_k$ is a martingale difference $d_k$. For the associated
martingale with $M_N = \sum_{k=1}^N a_k \varepsilon_k$, the square function
$S_N$ is the constant function equal to $(\sum_{k=1}^N |a_k|^2 )^{1/2}$ and
the Khinchin inequalities appear indeed as a simple example of application
of Theorem~\ref{BurGun}. Of course, the latter sentence is historically
totally inaccurate.
\dumou

 We shall prove only special cases of Theorem~\ref{BurGun}. We say that
a sequence of random variables $(m_k)_{k=0}^N$ is \emph{predictable} when
\begin{equation} 
 m_0 \ms7 \hbox{is} \ms7
 \ca F_0 \hbox{-measurable, and} \ms{ 9}
 m_k \ms7 \hbox{is} \ms7
 \ca F_{k-1} \hbox{-measurable for} \ms7 0 < k \le N.
 \label{Predict}
\end{equation}
If $(m_k)_{k=0}^N$ is scalar valued and predictable, and if $(d_k)_{k=0}^N$
is a martingale difference sequence, then $(m_k d_k)_{k=0}^N$ is again a
martingale difference sequence since one has that
$\E (m_k d_k \ms1\big|\ms1 \ca F_{k-1})
 = m_k \E (d_k \ms1\big|\ms1 \ca F_{k-1}) = 0$. The new martingale
$(L_k)_{k=0}^N$ defined by $L_k = \sum_{j=0}^k m_j d_j$ is said to be
obtained as \emph{martingale transform},\label{MartiTrans} 
see~\cite{BurkhoMT, Burkho}.
\dumou
 
 Consider a dyadic filtration $(\ca F_k)_{k=0}^N$ as defined above. Notice
that each atom~$A$ of $\ca F_k$ as in~\eqref{atomA} has probability
$2^{-k}$, and is split into two atoms $A_{\pm}$ of $\ca F_{k+1}$,
$A_{\pm} := A \cap \{ \varepsilon_{k+1} = \pm 1\}$, according to the value
of $\varepsilon_{k+1}$. Let $d_{k+1}$ be a martingale difference with
respect to these dyadic fields. The function $d_{k+1}$ should have mean~$0$
on the atom~$A$ of $\ca F_k$, and be constant on each of the two
atoms~$A_{\pm}$ of $\ca F_{k+1}$ contained in $A$, which have equal measure
$P(A) / 2$. It follows that $d_{k+1}$ must take on $A$ two opposite values
$\pm v$. Consequently, the modulus (or the norm) of $d_{k+1}$ is constant
on $A$, thus $|d_{k+1}|$ is $\ca F_k$-measurable, so that $(|d_k|)_{k=0}^N$
is predictable, as defined in~\eqref{Predict}. We shall call \emph{Bernoulli
martingale}\label{BernouMart} 
any martingale $(M_k)_{k=0}^N$ with respect to this dyadic filtration 
$(\ca F_k)_{k=0}^N$. A Bernoulli martingale with values in a vector space
can be pictured as a \emph{tree} 
$(v_{\varepsilon_1, \ldots, \varepsilon_k})$ of vectors, $0 \le k \le N$
and $\varepsilon_j = \pm 1$, such that each vector 
$v_{\varepsilon_1, \ldots, \varepsilon_k}$ in the tree is the midpoint of
his two successors $v_{\varepsilon_1, \ldots, \varepsilon_k, 1}$ and
$v_{\varepsilon_1, \ldots, \varepsilon_k, -1}$. The vectors
$v_{\varepsilon_1, \ldots, \varepsilon_k}$ are the values of the $k\ms1$th
random variable $M_k$ of the martingale, which can be defined by
$M_k(\varepsilon_1, \ldots, \varepsilon_k)
 = v_{\varepsilon_1, \ldots, \varepsilon_k}$.
\dumou

 The next Lemma contains an easier case of a result due to Burgess
Davis~\cite{BurgessDavis}, namely, the left-hand inequality when 
$p = 1$. The rest of the statement presents a mixture of Doob's and
Burkholder--Gundy's inequalities.
\dumou

\begin{lem}%
\label{BernouMarti}
For every $p$ with\/ $1 \le p \le 2$ and for every real or complex
Bernoulli martingale\/ $(M_k)_{k=0}^N$, one has that
\[
      6^{-1} \|M^*_N\|_p
 \le \|S_N\|_p
 \le 6 \ms1 \|M^*_N\|_p.
\]
\end{lem}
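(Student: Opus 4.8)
The plan is to prove the two inequalities $\|S_N\|_p \le 6 \ms1 \|M^*_N\|_p$ and $6^{-1} \|M^*_N\|_p \le \|S_N\|_p$ separately, using the special structure of a Bernoulli martingale noted in the text, namely that the sequence $(|d_k|)_{k=0}^N$ is \emph{predictable}. First I would record the two facts that drive everything: by Doob's inequality~\eqref{DoobLp}, $\|M^*_N\|_p \le \frac{p}{p-1}\|M_N\|_p \le 2\|M_N\|_p$ for $1 < p \le 2$ (and the case $p=1$ needs the genuine Burgess Davis estimate, so I would be careful to treat $p=1$ only where truly needed), and by~\eqref{OrthogMD} together with the orthogonality of martingale differences, $\|M_N\|_2 = \|S_N\|_2$. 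The strategy is then to interpolate-by-hand between an $L^2$ computation and an $L^1$ or pointwise bound, exploiting predictability of $(|d_k|)$.

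For the upper bound $\|S_N\|_p \le 6\|M^*_N\|_p$: one wants to control $\E S_N^p = \E\big(\sum_{k}|d_k|^2\big)^{p/2}$. I would use the pointwise inequality $|d_k| \le |M_k| + |M_{k-1}| \le 2 M^*_N$, so that $|d_k|^2 \le 2 M^*_N |d_k|$, whence $S_N^2 = \sum_k |d_k|^2 \le 2 M^*_N \sum_k |d_k|$. That last sum is not directly useful, so instead the cleaner route is the classical ``good-$\lambda$'' / stopping-time telescoping: write $S_N^2 \le 2\sum_k |d_k|\,\max_{j\le k}|M_j|$ is still awkward, so I would rather run the standard argument $\E S_N^2 = \sum_k \E|d_k|^2 = \sum_k \E(|M_k|^2 - |M_{k-1}|^2)$ truncated at a level, i.e. apply it to the stopped martingale $M^{\tau}$ where $\tau = \inf\{k : |M_k| > \lambda\}$, giving $\E \big(S^{\tau}_N\big)^2 \le \E |M^{\tau}_N|^2 \le \lambda^2 + \E\big(|M_N|^2 \gr 1_{\{M^*_N > \lambda\}}\big)$ — but cleanest of all, since $(|d_k|)$ is predictable, is the transform argument: $S_N^2 = \sum |d_k|^2$ and one can compare $\sum |d_k|^2$ to $\sum |d_k|\cdot 2M^*_N$ by noting $\sum |d_k|^2 \le 2 M^*_N \cdot 2 M^*_N \cdot (N+1)$ is too lossy. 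I would therefore commit to the stopping-time estimate: for each $\lambda>0$, $P(S_N > \lambda) \le P(M^*_N > \lambda/3) + (\text{a tail term from }\{S_N>\lambda, M^*_N \le \lambda/3\})$, and on the latter event predictability lets one estimate $\E(S^2_N \gr 1)$ by $\lambda\,\E(M^*_N \gr 1) + \cdots$; integrating $\lambda^{p-1}$ and collecting constants should land at the factor $6$.

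For the lower bound $6^{-1}\|M^*_N\|_p \le \|S_N\|_p$, dually: $M^*_N \le |M_0| + \sum_{k\ge 1}|d_k|$, and more efficiently, applying Doob to the martingale and then the $L^2$ identity on stopped pieces, one gets $\|M^*_N\|_p \le 2\|M_N\|_p$ and wants $\|M_N\|_p \lesssim \|S_N\|_p$; here I would run the analogous stopping-time argument with the roles of $S$ and $M^*$ swapped, stopping at $\sigma = \inf\{k: S_k > \lambda\}$ (possible since $(S_k)$ is \emph{predictable-increasing} — $S_{k}$ depends on $|d_0|,\dots,|d_k|$, and $|d_k|$ is $\ca F_{k-1}$-measurable, so $S_k$ is actually $\ca F_{k-1}$-measurable), so that $|M^{\sigma}_N| $ has small square function and the good-$\lambda$ inequality goes through with the constant absorbed into $6$. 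The main obstacle I anticipate is exactly the constant bookkeeping: making both stopping-time / good-$\lambda$ arguments yield the \emph{same} clean bound $6$ (rather than some $p$-dependent or larger universal constant), which is presumably why the statement restricts to $1 \le p \le 2$ and why predictability of $(|d_k|)_{k=0}^N$ is emphasized — it is what makes $S_k$ itself a stopping-adapted (indeed $\ca F_{k-1}$-measurable) increasing process, so that the stopped square function is controlled \emph{before} the jump, eliminating the usual extra $\|d^*_N\|_p$ term and keeping the constant small. The $p=1$ endpoint of the left inequality is the Burgess Davis ingredient and I would either cite it or give its short square-function-of-Bernoulli-martingale proof.
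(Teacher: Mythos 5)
Your proposal correctly identifies the structural fact that drives the lemma --- for a Bernoulli martingale $|d_k|$, and hence $S_k$, is $\ca F_{k-1}$-measurable --- but it does not actually contain a proof, and it opens with an arithmetic slip. Doob gives $\|M^*_N\|_p \le \frac{p}{p-1}\|M_N\|_p$, and on $(1,2]$ one has $\frac{p}{p-1} \ge 2$ (it blows up as $p\to 1$), so the claimed chain $\frac{p}{p-1}\|M_N\|_p \le 2\|M_N\|_p$ is backwards, and at $p=1$ Doob is vacuous. This matters precisely because the lemma must hold uniformly down to $p=1$.

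More seriously, the good-$\lambda$ / stopping-time strategy you sketch is never carried out. Each direction ends with ``integrating $\lambda^{p-1}$ and collecting constants should land at the factor $6$,'' and you yourself flag the constant bookkeeping as the anticipated obstacle. But that bookkeeping is the entire content of the statement: the classical good-$\lambda$ route to Burkholder--Gundy produces a constant $C(p,\beta,\delta)$ that is not obviously bounded by $6$ uniformly over $1\le p\le 2$, and nothing in the sketch shows how to choose $\beta,\delta$ so that it is. So there is a genuine gap here, not an omitted routine step.

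The paper's argument is a different and much more direct device. For the left inequality at $p=1$, it forms the martingale transform $L_k=\sum_{j\le k} S_j^{-1/2} d_j$ (the weights are predictable by the Bernoulli structure), shows $\E|L_N|^2 \le 2\,\E S_N$ via the pointwise bound $S_j^{-1}|d_j|^2 \le 2(S_j-S_{j-1})$, then uses Abel summation to get $M^*_N \le 2 S_N^{1/2} L^*_N$; Cauchy--Schwarz, Doob in $L^2$ applied to $L^*_N$, and the $L^2$ bound on $L_N$ give $\E M^*_N \le 2^{5/2}\,\E S_N < 6\,\E S_N$. For the right inequality one replaces $S_k$ by the predictable increasing process $A_k=\max(A_{k-1},\,M^*_{k-1}+|d_k|)\ge |M_k|$ with $A_N\le 3M^*_N$, sets $L_k=\sum_{j\le k} A_j^{-1/2}d_j$, gets $|L_N|\le 2A_N^{1/2}$ by Abel, and concludes by Cauchy--Schwarz from $\E\sum A_k^{-1}|d_k|^2 = \E|L_N|^2 \le 4\,\E A_N$. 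No good-$\lambda$ inequality enters, and the constant $2^{5/2}<6$ falls out of the Abel--Cauchy--Schwarz arithmetic; the range $1<p<2$ is then a direct rewriting with $p$-th powers. To repair your proposal you would need either to switch to this transform-and-Abel argument, or to actually produce a good-$\lambda$ inequality with parameters tight enough to give the bound $6$ uniformly on $[1,2]$ --- neither of which is done.
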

 
\begin{proof}[Partial proof, after~\cite{Mau}]
We consider the case $p = 1$. The general strategy is to bring the problem
to $L^2$, where $\|S_N\|_2 = \|M_N\|_2$ by~\eqref{OrthogMD}, and this is
essentially done by dividing $f = M_N \in L^1$ by a \og parent\fge of
$\sqrt{|f|}$, in order to get an element in~$L^2$ \og similar\fge to
$\sqrt{ |f| }$. One then applies known facts in~$L^2$, and finally come
back to $L^1$ by multiplication with a suitable $L^2$ function. We begin
with the proof of the left-hand inequality in Lemma~\ref{BernouMarti}.
\dumou

 Let $(M_k)_{k=0}^N$ be a Bernoulli martingale. We know that
$(|d_k|)_{k=0}^N$ is predictable, as well as $(S_k)_{k=0}^N$. Consider the
martingale transform $L_k = \sum_{j=0}^k S_j^{-1/2} d_j$. In~$L^2$ we know
that
$
 \E |L_N|^2 = \sum_{j = 0}^N \E (S_j^{-1} |d_j|^2)
$.
We see that $S_0^{-1} |d_0|^2 = S_0$, and
$S_j^{-1} |d_j|^2 \le 2 (S_j - S_{j-1})$ for $j \ge 1$ because, letting 
$t = S_{j-1}^2$ and $h = |d_j|^2$, we have
\[
     2 \ms1 ( \sqrt {t + h} - \sqrt t )
  =  \int_t^{t+h} u^{-1/2} \, \d u
 \ge h \ms1 (t + h)^{-1/2}.
\]
It follows that 
\begin{equation}
 \E |L_N|^2 \le 2 \E S_N.
 \label{Facile}
\end{equation}
Notice that
$
     \Bigl| \sum_{j = 0}^s  S_j^{-1/2} d_j \Bigr|
  =  |L_s|
 \le L^*_N
$
and
$     \Bigl| \sum_{j = r+1}^s  S_j^{-1/2} d_j \Bigr|
  =  |L_s - L_r|
 \le 2 \ms1 L^*_N
$ when $0 \le r < s \le N$.
Multiplying term\-wise the sequence $(S_k^{-1/2} d_k)_{k=0}^N$ with the
non-decreasing sequence $(S_k^{1/2})_{k=0}^N$, we obtain for every 
$s \le N$ by Abel's summation method that
\[
     |M_s|
  =  \Bigl| \sum_{j = 0}^s  d_j \Bigr|
 \le S_s^{1/2} \sup_{0 \le r \le s}
                \Bigl| \sum_{j = r}^s  S_j^{-1/2} d_j \Bigr| 
 \le 2 \ms1 S_N^{1/2} L^*_N,
\]
thus $M_N^* \le 2 S_N^{1/2} L^*_N$. By Cauchy--Schwarz, Doob's
inequality~\eqref{DoobLp} with $p = 2$ and by~\eqref{Facile}, we get the
conclusion
\[
     \E M_N^* 
 \le 2 (\E S_N)^{1/2} \|L^*_N\|_2
 \le 2^2 (\E S_N)^{1/2} \|L_N\|_2
 \le 2^{5 / 2} \E S_N
 \le 6 \E S_N.
\]
We leave the rewriting of this proof when $1 < p < 2$ as an easy exercise
for the reader, and we pass to the right-hand side inequality using the same
method, with the help of the non-decreasing predictable sequence
$(A_k)_{k=0}^N$ defined by
\[
 A_0 = |d_0| = |M_0|,
 \quad
 A_k = \max \bigl( A_{k-1}, M^*_{k-1} + |d_k| \bigr) \ge |M_k|,
 \ms8
 k = 1, \ldots, N,
\]
and of the martingale transform $L_k = \sum_{j=0}^k A_j^{-1/2} d_j$,
$k = 0, \ldots, N$. Observe that
$|d_k| \le |M_k| + |M_{k-1}| \le 2 M^*_N$, thus $A_N \le 3 M^*_N$. By
Abel, writing $d_k = M_k - M_{k-1}$ for $k \ge 1$, we see that
\begin{align*}
     |L_N|
 & =  \Bigl| 
      A_N^{-1/2} M_N + \sum_{k=0}^{N - 1} M_k (A_k^{-1/2} - A_{k+1}^{-1/2}) 
     \Bigr|
 \\
 &\le A_N^{1/2} + \sum_{k=0}^{N - 1} A_k (A_k^{-1/2} - A_{k+1}^{-1/2})
 \le A_N^{1/2} + \sum_{k=0}^{N - 1} (\sqrt{A_{k+1}} - \sqrt {A_k} )
 \le 2 A_N^{1/2},
\end{align*}
where we make use of $u^2(u^{-1} - v^{-1}) \le v - u$ when $0 < u \le v$.
In $L^2$ we know that $\E \bigl( \sum_{k=0}^N A_k^{-1} |d_k|^2 \bigr)
 = \E |L_N|^2 \le 4 \, \E A_N$, and we go back to $L^1$ with
Cauchy--Schwarz and the obvious inequality 
$\sum_{k=0}^N |d_k|^2 \le A_N \sum_{k=0}^N A_k^{-1} |d_k|^2$. We obtain
\[
     \E S_N
  =  \E \Bigl( \sum_{k=0}^N |d_k|^2 \Bigr)^{1 / 2}
 \le (\E A_N)^{1/2} \, \|L_N\|_2
 \le 2 \ms1 \E A_N
 \le 6 \, \|M^*_N\|_1.
\]
\end{proof}

\begin{rem}
\label{BrowniMarting}
The Brownian martingales can be approximated by Bernoulli martingales, and
we can obtain the analogous result for them. Actually, the preceding proof
is even simpler to write in this case. Brownian martingales are defined by
means of (It\={o}'s) \emph{stochastic integrals}\label{StochInteg}
\[
 M_t(\omega) = \int_0^t m_s(\omega) \, \d B_s(\omega),
 \quad
 t \ge 0,
\]
where $(m_s)_{s \ge 0}$ is an \emph{adapted process}, meaning essentially
that each $m_s$, $s \ge 0$, is $\ca F_s$-measurable. The square function is
then defined by
$
 S_t^2(\omega) = \int_0^t |m_s(\omega)|^2 \, \d s
$ for every $t \ge 0$, and one can replace in the proof of
Lemma~\ref{BernouMarti} the Abel summation method by the more pleasant
integration by parts.
\end{rem}
\dumou

\begin{remN}%
\label{Incondi}
Together with Doob's inequality, Lemma~\ref{BernouMarti} implies
Theorem~\ref{BurGun} for Bernoulli martingales when $1 < p \le 2$. The
Burkholder--Gundy inequalities are equivalent to saying that martingale
difference sequences are \emph{unconditional} in~$L^p$ when 
$1 < p < +\infty$, that is to say, that there exists a constant 
$\kappa_{u, p}$ such that for each integer $N \ge 0$, all scalars
$(a_k)_{k=0}^N$ with $|a_k| \le 1$ and all martingale differences
$(d_k)_{k=0}^N$, we have
\begin{equation}
     \bigl\| \sum_{k=0}^N a_k d_k \bigr\|_p
 \le \kappa_{u, p} \ms1 \bigl\| \sum_{k=0}^N d_k \bigr\|_p.
 \label{UMD}
\end{equation}
Going from Theorem~\ref{BurGun} to unconditionality is simple, since
the square function of the martingale at the left-hand side of~\eqref{UMD}
is less than that on the right-hand side, and we can take 
$\kappa_{u, p} = c_p^2$. The other direction follows from Khinchin, by
averaging over signs $a_k = \pm 1$. Indeed, one obtains from~\eqref{Hincin}
for $(f_k)_{k=1}^N$ in $L^p(X, \Sigma, \mu)$, $1 \le p < +\infty$, that
\begin{equation}
     A_p^p \ms2 
      \Bigl\|
       \Bigl( \sum_{k=1}^N |f_k|^2 \Bigr)^{1/2} 
      \Bigr\|_{L^p(\mu)}^p
 \le \E \int_X \bigl| \sum_{k=1}^N \varepsilon_k f_k \bigr|^p \, \d \mu
 \le B_p^p \ms2 
      \Bigl\|
       \Bigl( \sum_{k=1}^N |f_k|^2 \Bigr)^{1/2} 
      \Bigr\|_{L^p(\mu)}^p.
 \label{KinLp}
\end{equation}
It is possible (see Pisier~\cite[5.8]{PisierMart}) to obtain the general
case of unconditionality of martingale differences by approximating general
martingale difference sequences by \emph{blocks} of Bernoulli martingale
differences. Also, one can see that~\eqref{UMD} is self-dual and obtain by
duality the Burkholder--Gundy inequalities for $2 \le p < +\infty$.
\dumou

 The proof of Lemma~\ref{BernouMarti} is valid with almost no change when
the martingale takes values in a Hilbert space $H$, because
$L^2(\Omega, \ca F, P, H)$ is a Hilbert space where the $H$-valued
martingale differences are orthogonal. For values in a Banach space, two
difficulties arise. First, the relevant \og square function\fge has to be
defined, and second, the Banach space-valued martingale differences are
not unconditional in general. The Banach spaces where martingale
differences are unconditional form a nice class of spaces, see
Pisier~\cite[Chap.~5, The UMD property for Banach spaces]{PisierMart}.
\end{remN}

\begin{remN}\label{BurkhoConst}
Let $f = \sum_{k=0}^N d_k$ be the sum of a Bernoulli martingale and let
$g = \sum_{k=0}^N a_k d_k$ be obtained from $f$ by a martingale transform
operation, with $|a_k| \le 1$ for $k = 0, \ldots, N$. By 
Lemma~\ref{BernouMarti} and Doob's inequality~\eqref{DoobLp}, we have
\[
     \|g\|_p
 \le \|g^*\|_p
 \le 6 \ms2 \| S(g) \|_p
 \le 6 \ms2 \| S(f) \|_p
 \le 36 \ms2 \| f^* \|_p
 \le \frac { 36 \ms3 p } { p - 1} \ms3 \| f \|_p,
 \ms8
 1 < p \le 2,
\]
which shows that the constant $\kappa_{u, p}$ in~\eqref{UMD} is of order
$1 / (p-1)$ in this case. Actually, Burkholder has found the exact value of
the unconditional constant for general martingale transforms and for every 
$p \in (1, +\infty)$. It is given by
\[
 \label{PStar}
 \kappa_{u, p} = p^* - 1,
 \ms{10} \hbox{where} \ms{10}
 p^* := \max(p, p / (p-1)).
\]
One can consult~\cite{Burkho} and the references given there to several
other articles by Burkholder. One can also find
in~\cite[Section~5.4]{Burkho} a bound $c_p \le p^* - 1$ for the constant 
$c_p$ in Theorem~\ref{BurGun}.
\end{remN}

\subsubsection{A consequence of the \og reflection principle\fg%
\label{PrincipeReflexion}}

\noindent
Consider a Brownian motion $(B_s)_{s \ge 0}$ on $\R$, defined on a
probability space $(\Omega, \ca F, P)$ and with respect to a filtration
$(\ca F_s)_{s \ge 0}$. We assume that $B_0 = 0$, we fix a real 
number~$v > 0$ and we let~$S_v(\omega)$ denote the first time when the
trajectory $s \mapsto B_s(\omega)$, $s \ge 0$, which is continuous for almost
every $\omega \in \Omega$, reaches the point $v$. It is clear that if 
$s_0 > 0$ is given, one has $\{B_{s_0} \ge v\} \subset \{S_v \le s_0\}$,
thus
\[
     P \bigl( \{S_v \le s_0\} \bigr) 
 \ge P \bigl( \{B_{s_0} \ge v\} \bigr)
  =  P \bigl( \{B_1 \ge v / \sqrt{s_0} \ms1 \} \bigr)
  =  \int_{v / \sqrt {s_0}}^{+\infty} \e^{- y^2 / 2} \, 
      \frac {\d y} {\sqrt{2 \pi}} \ms1 \up.
\]
From now on, we write $P ( S_v \le s_0 )$ for 
$P \bigl( \{ S_v \le s_0 \} \bigr)$. We will show that actually
\[
     P ( S_v \le s_0 ) 
  =  2 \ms2 P(B_{s_0} \ge v)
  =  2 \ms2 \int_{v / \sqrt {s_0}}^{+\infty} \e^{- y^2 / 2} \, 
      \frac {\d y} {\sqrt{2 \pi}} \ms1 \up,
\]
which proves in passing that $S_v$ is finite almost surely, since we have
then
\[
     P ( S_v < +\infty ) 
  =  2 \ms2 \int_0^{+\infty} \e^{- y^2 / 2} \, 
      \frac {\d y} {\sqrt{2 \pi}}
  = 1.
\]
The reasoning makes use of the reflection of the Brownian motion after a
stopping time~$\tau$. A 
\emph{stopping time\/}\label{StoppingT} 
is a random variable $\tau$ with values in $[0, +\infty]$, such that for
every $t \ge 0$, the event $\{\tau \le t\}$ belongs to the $\sigma$-field
$\ca F_t$ of the past of time~$t$. Intuitively, a stopping time corresponds
to a decision to quit at time $\tau(\omega)$ that an observer, embarked on
a path $t \mapsto X_t(\omega)$ of the random process $(X_t)_{t \ge 0}$
since the time $t = 0$, can take from his only knowledge of what happened
on his way between $0$ and the present time. The random time $S_v$ is an
excellent example of stopping time, with a quite simple rule: I stop when I
reach the point $v > 0$.
\dumou

 The Brownian reflected after the random time $\tau$ changes its direction,
its trajectory becomes the symmetric of the original trajectory with
respect to the point\label{BTau} 
$(B_\tau)(\omega) := B_{\tau(\omega)}(\omega)$ that was reached at time
$\tau(\omega)$. Let us denote by $(B^\tau_s)_{s \ge 0}$ the reflected
Brownian, given by
\[
 B^\tau_s(\omega) = B_s(\omega)
 \ms{16} \hbox{if} \ms{16}
 0 \le s \le \tau(\omega),
 \ms{22}
   \frac { B^\tau_s(\omega) + B_s(\omega)} 2
 = B_{\tau(\omega)}(\omega)
 \ms{16} \hbox{if} \ms{16}
 s \ge \tau(\omega).
\]
\emph{The reflected Brownian $B^\tau$ is still a Brownian motion}. Consider
first the simplest stopping time and reflection. Choosing a set $A_1$ in
the $\sigma$-field $\ca F_{s_1}$ at time $s_1 > 0$, we define a stopping
time~$\tau_1$ equal to $s_1$ on~$A_1$ and to $+\infty$ outside. The
corresponding reflection $(B^{\tau_1}_s)_{s \ge 0}$ is given by
\begin{align*}
    B^{\tau_1}_s(\omega) 
 &= B_s(\omega)
 \ms{16} \hbox{if} \ms{16}
 0 \le s \le s_1
 \ms{16} \hbox{or} \ms{16}
 \omega \notin A_1,
 \\
    \frac { B^{\tau_1}_s(\omega) + B_s(\omega)} 2
 &= B_{s_1}(\omega)
 \ms{16} \hbox{if} \ms{16}
 s \ge s_1
 \ms{16} \hbox{and} \ms{16}
 \omega \in A_1.
\end{align*}
One shows easily that $(B^{\tau_1}_s)_{s \ge 0}$ is a Brownian motion.
Iterating this operation, one can reach discrete stopping times, and pass
to the limit for dealing with general stopping times. Indeed, a stopping
time $\tau$ can be approximated by the first time $\tau_k > \tau$ such that 
$2^k \tau_k$ is an integer, \textit{i.e.},
$\tau_k = 2^{-k} (\lfloor{2^k \tau}\rfloor + 1)$, for every $k \in \N$. 
\dumou

 Another important property that can be checked following the same route is
the following: if $\tau$ is an almost surely finite stopping time, the
process \og starting afresh at time $\tau$\fg, defined by
$X_s = B_{\tau + s} - B_\tau$, \textit{i.e.}, 
$X_s(\omega) = B_{\tau(\omega) + s}(\omega) - B_{\tau(\omega)}(\omega)$, is
also a Brownian motion.
\dumou

 Consider the Brownian reflected after the stopping time $S_v$, with 
$v > 0$. Since the Brownian paths are continuous and $B_0 = 0$, we have 
$B_{S_v(\omega)}(\omega) = v$ and for every $s_0 > 0$, the event 
$\{ B_{s_0} > v \}$ is contained in $\{ S_v < s_0 \}$. Clearly, the event 
$\{ B^{S_v}_{s_0} > v \}$ is also contained in $\{ S_v < s_0 \}$ and
disjoint from $\{ B_{s_0} > v \}$. Actually, since on the set 
$\{ S_v < s_0 \}$ one has $B^{S_v}_{s_0} + B_{s_0} = 2 \ms1 v$, one sees
that
\[
   \{ S_v < s_0 \} \setminus \{ B_{s_0} \ge v \}
 = \{ B^{S_v}_{s_0} > v \}.
\]
The event $\{ B^{S_v}_{s_0} > v \}$ has the same probability as
$\{ B_{s_0} > v \}$, since $(B^{S_v}_s)_{s \ge 0}$ is another Brownian, and
$P(S_v = s_0) \le P(B_{s_0} = v) = 0$. We have therefore that
\[
   P(S_v \le s_0)
 = P(S_v < s_0)
 = 2 \ms2 P(B_{s_0} > v)
 = 2 \int_v^{+\infty} \e^{ - u^2 / (2 s_0)} \, 
    \frac {\d u} {\sqrt{2 \pi s_0}} \ms1 \up.
\]
Consequently, for every $s > 0$, we obtain
\[
   P ( S_v \le s)
 = P \bigl( \sup_{0 \le u \le s} B_u \ge v \bigr)
 = 2 \int_{v / \sqrt s}^{+\infty} \e^{ - y^2 / 2} \, 
    \frac {\d y} {\sqrt{2 \pi}} \ms2 \up.
\]
This allows us to find the density $h_v$ of the distribution of~$S_v$, which
is given by
\begin{equation}
   h_v(s)
 = \gr 1_{s \ge 0} \ms2
    \frac {v s^{-3/2}} {\sqrt{2 \pi}} \e^{ - v^2 / (2 s) },
 \quad s \in \R.
 \label{LaLoi}
\end{equation}
\dumou

\begin{rem}
A variant of the preceding reasoning applies to the 
exit time~$S$\label{ExitTime} 
from an open convex subset $D$ of $\R^n$ containing the starting
point $x_0$ of an $n$-dimensional Brownian motion. Suppose that this
Brownian motion touches the boundary of~$D$, for the first time, at the 
point~$x = x(\omega)$ and at time~$S(\omega)$. Let $E_x$ be an affine
half-space tangent to $D$ at~$x$, and exterior to~$D$ (this $E_x$ is not
unique in general). Starting again from~$x$ at time $S(\omega)$, there is a
probability $1 / 2$ to end in $E_x$ at time $s_0 > S(\omega)$, so there is
at least one chance out of two to end up outside $D$ at time~$s_0$. The set
$\{ B_{s_0} \notin D \}$ is a subset of $\{ S < s_0 \}$ that occupies thus
at least one half of it. We have therefore
\[
 P(S < s_0) \le 2 \ms1 P(B_{s_0} \notin D).
\]
This inequality says that the probability to be outside $D$ at a time
between $0$ and~$s_0$ is bounded by twice the probability to be outside $D$
at time $s_0$. This can be readily interpreted in terms of maximal
function. If $\|\cdot\|_C$\label{CNorm} 
denotes the norm on~$\R^n$ associated to a symmetric convex body~$C$ in
$\R^n$, we deduce maximal inequalities in $L^p(\R^n)$ for the
$\|\cdot\|_C$~norm of the martingale $(B_s)_{s \ge 0}$ that are better than
Doob's inequality. Namely, for every $p > 0$ we have
\begin{align*}
     \E \max_{0 \le s \le s_0} \|B_s\|_C^p
  &=  p \int_0^{+\infty} t^{p-1} 
        P \bigl( \ms2 \max_{0 \le s \le s_0} \|B_s\|_C > t \bigr) \, \d t
 \\
 &\le 2 \ms1 p \int_0^{+\infty} t^{p-1} 
        P \bigl( \|B_{s_0}\|_C > t \bigr) \, \d t
  =  2 \ms 2 \E \|B_{s_0}\|_C^p.
\end{align*}
\end{rem}
\dumou

\begin{smal}
\noindent 
For $p \le 1$, there is no Doob's inequality in $L^p$, and when $p > 1$,
one has always that
$
 2^{1 / p} < p / (p - 1)
$,
because $(1 - x) 2^x < (1 - x) \e^x \le 1$ for $0 < x < 1$.

\end{smal}

\dumou
\noindent
One could get a similar estimate when the set $D$ is no longer convex, but
has the property that for every boundary point $x$ of $D$, there is a cone
$E_x$ based at~$x$, disjoint from $D$ and with a \emph{solid angle} bounded
below by $\delta > 0$ independent of $x$. If we measure the angle as the
proportion of the unit sphere $S^{n-1}$ of $\R^n$ intersected by the cone
$E_x - x$ based at~$0$, then the constant $2$ above has to be replaced
by~$\delta^{-1}$.

\subsection{The Poisson semi-group%
\label{PoissonSG}}

\noindent
Let us recall that the \emph{Schwartz class} 
$\ca S(\R^n)$\label{SchwaSpa} 
consists of all $C^\infty$ functions $\varphi$ such that 
$(1 + |x|^k) \varphi^{(\ell)}(x)$ is bounded on $\R^n$ for all integers 
$k, \ell \ge 0$. We shall denote by 
$(P_t)_{t \ge 0}$\label{PoissoSG} 
the Poisson semi-group on~$\R^n$, which can be defined, for $f$ in the
Schwartz class $\ca S(\R^n)$, by
\begin{equation}
 (P_t f)(x) = u(x, t),
 \ms{12}
 x \in \R^n, \ms3 t \ge 0,
 \label{PSG}
\end{equation}
where $u(x, t)$\label{UdeXT} 
is the (bounded) \emph{harmonic extension}\label{HarmonExt} 
of $f$ to the upper half-space $H^+$ of~$\R^{n+1}$ formed by all $(x, t)$
with $x \in \R^n$ and $t \ge 0$. For $x \in \R^n$ one has $u(x, 0) = f(x)$,
$\Delta u(x, t) = 0$ when $t > 0$, and $u$ is continuous on~$H^+$. The
semi-group property $P_{t+s} = P_t P_s$ amounts to saying that the
harmonic extension of the function $f_s$ defined on~$\R^n$ by 
$f_s(x) = u(x, s)$ is given by $v(x, t) = u(x, t + s)$.
\dumou

 The Poisson semi-group is intimately related to the Brownian motion
$(B_s)_{s \ge 0}$ in~$\R^{n+1}$. If the Brownian $(B_s)_{s \ge 0}$
starts at time $s = 0$ from the point $(x_0, t_0)$, where $x_0 \in \R^n$
and $t_0 > 0$, we know that almost every path $s \mapsto B_s(\omega)$ will
hit the hyperplane $H_0 = \{ t = 0 \}$ at some 
time~$\tau_{t_0}(\omega) < +\infty$. If we decompose $B_s$ into 
$(x_0 + X_s, t_0 + T_s)$, then~$T_s$ is a one-dimensional Brownian motion,
starting from~$0$ at time $0$, and $X_s$ is a $n$-dimensional Brownian
motion, starting from the point~$0$ in~$\R^n$ and independent of~$T_s$. The
stopping time $\tau_{t_0}$ is the first time $s > 0$ when $T_s = -t_0$. If
$f$ is reasonable, for example continuous and bounded on $\R^n$, one sees
that the (bounded) harmonic extension~$u$ of~$f$ to the upper half-space is
given by
\[
   u(x_0, t_0) 
 = \E F(B_{\tau_{t_0}})
 = \E f(x_0 + X_{\tau_{t_0}})
 = \int_\Omega 
    f \bigl( x_0 + X_{\tau_{t_0}(\omega)} (\omega) \bigr) \, \d P(\omega),
\]
where $F$ is defined on the hyperplane $H_0$ of~$\R^{n+1}$ by
$
 F(x, 0) = f(x)
$ 
for every $x \in \R^n$. The Poisson probability measure 
$P_{t_0}(x) \, \d x$ on $\R^n$ is the distribution of~$X_{\tau_{t_0}}$,
distribution of the Brownian motion $(X_s)$ starting from $0 \in \R^n$ and
stopped at time $\tau_{t_0}$, when $B_s$ reaches $H_0$. We shall employ the
same notation $P_t$ for the semi-group, for the Poisson distribution on
$\R^n$ and for its density $P_t(x)$. The operator $P_t$ is the convolution
with the corresponding probability measure, it acts thus on $L^p(\R^n)$ for
$1 \le p \le +\infty$. We shall say that $t$ is the \emph{parameter} of
$P_t$.\label{ParaPois} 
\dumou

 The distribution of the stopping time $\tau_{t_0}$ is clearly the same as
the distribution of the first time $S_{t_0}$ when the one-dimensional
Brownian motion starting from~$0$ reaches $t_0 > 0$, and we know
by~\eqref{LaLoi} the density~$h_t$ of the distribution of $S_t$. The
Poisson distribution $P_t$ on $\R^n$ is obtained by mixing Gaussian
distributions on~$\R^n$, distributions of $X_s$ at various times $s$, the
mixing being done according to the distribution of $S_t$. In the portion of
the space~$\Omega$ where $s_0 \le \tau_t \le s_0 + \delta \ns1 s$, the
coordinate~$x$ of the Brownian point $B_s = (X_s, t + T_s)$ at time
$\tau_t$ is approximately~$X_{s_0}$, with probability of order 
$h_t(s_0) \, \delta \ns1 s$, and $(X_s)_{s \ge 0}$ is independent of
$\tau_t$. The point $(x, 0) = (X_{s_0}, 0)$ is the point where the Brownian
$B_s$ touches the hyperplane~$H_0$, knowing that $\tau_t = s_0$. This is
the reason behind the \emph{subordination 
principle\/}\label{SuboPrin} 
of the Poisson semi-group to the Gaussian semi-group, which implies in
particular that the maximal function of the Poisson semi-group is bounded
by that of the Gaussian semi-group $(G_s)_{s \ge 0}$ on~$\R^n$. Indeed, we
have by~\eqref{LaLoi} that $P_t$ is \og in the (closed) convex hull\fge of
the Gaussian semi-group, since
\begin{equation}
 P_t = \int_0^{+\infty} G_s
        \frac {t s^{-3/2}} {\sqrt{2 \pi}} \e^{ - t^2 / (2 s) } \, \d s.
 \label{Subor}
\end{equation}
It follows that
\[
     |P_t * f|
 \le \int_0^{+\infty} |G_s * f| \ms2
        \frac {t s^{-3/2}} {\sqrt{2 \pi}} \e^{ - t^2 / (2 s) } \, \d s
 \le \sup_{u \ge 0} |G_u *f|.
\]
We get a dimensionless estimate for the maximal function of the
Poisson semi-group, consequence of the one in~\eqref{maxiSemiGrGa} for the
Gaussian case. We have
\begin{subequations}\label{MaxiPoissG}
\begin{equation}
     \Bigl\| \ms1 \sup_{t>0} |P_t f| \ms1 \Bigr\|_{L^p(\R^n)}
 \le \frac p {p - 1} \Bigl( \int_{\R^n}  |f(x)|^p \, \d x \Bigr)^{1/p}.
 \label{MaxiPoiss} \tag{\ref{MaxiPoissG}.$P^*$}
\end{equation}
\end{subequations}
The remarks about comparing to $\M f$ are still in order here. 
Stein~\cite[Lemma~1, p.~48]{SteinTHA} proves~\eqref{MaxiPoiss} with
different constants and in a different way, capable of easier
generalizations to non Euclidean settings. He does not deal with the
Gaussian maximal function, but applies the Hopf maximal
inequality~\eqref{HopfLp} to the Gaussian semi-group together with the
subordination principle. Using subordination, Stein shows that the Poisson
maximal function $P^* f = \sup_{t > 0} |P_t f|$ is bounded by an average of
expressions $t^{-1} \int_0^t (G_s f) \, \d s$ that are controlled by Hopf.
\dumou

 The formula~\eqref{Subor} proves that the marginals of $P_t$ are other
Poisson distributions: indeed, the mixing distribution, which has density
$h_t$, does not depend on the dimension~$n$, and the projections on
$\R^\ell$, $1 \le \ell < n$, of Gaussian distributions 
$N(0, \sigma^2 \I_n)$ on $\R^n$ are $N(0, \sigma^2 \I_\ell)$ Gaussian
distributions. We can also deduce the density of the distribution~$P_t$ for
each $t > 0$, writing
\begin{align*}
    P_t(x) 
 &= \int_0^{+\infty} \e^{ - |x|^2 / (2 s)} (2 \pi s)^{-n / 2}
           \frac {t s^{-3/2}} {\sqrt{2 \pi}} \e^{ - t^2 / (2 s) } \, \d s
 \\
 &= t \int_0^{+\infty} (2 \pi s)^{-n / 2 - 1 / 2} 
       \e^{ - (t^2 + |x|^2) / (2 s)} \,
           \frac {\d s} s \up,
 \quad x \in \R^n.
\end{align*}
Setting $u = s / (t^2 + |x|^2)$, then $v = 1 / (2u)$, we get
\[
   P_t(x) 
 = t \bigl( \pi (t^2 + |x|^2) \bigr)^{-(n+1)/2}
           \int_0^{+\infty} \e^{ - v} v^{(n+1) / 2}
            \, \frac {\d v} v \up.
\]
The \emph{Poisson kernel} $P_t$ on $\R^n$ is thus given by the formula
\begin{equation}
   P_t(x) 
 = P_{\ms2 t}^{(\ns{0.7} n \ns1)} \ns1 (x) 
 = \frac {\Gamma[(n+1)/2]} {\pi^{(n+1)/2}} \ms4
            \frac t {(t^2 + |x|^2)^{(n+1) / 2} \ns{50}} \ms{50} \up,
 \ms{16} x \in \R^n, \ms2 t > 0.
 \label{PoissonDensi}
\end{equation}
In dimension $n = 1$, the Poisson kernel is the \emph{Cauchy kernel}, equal
to
\begin{subequations}\label{CauchyKa}%
\begin{equation}
   P_t(x) 
 = P_{\ms2 t}^{(\ns{0.7} 1 \ns1)}(x) 
 = \frac t {\pi (t^2 + x^2)} \up,
 \ms{20} x \in \R, \ms6 t > 0.
 \label{CauchyK} \tag{\ref{CauchyKa}.{\bf C}}
\end{equation}
\end{subequations}
The coefficient that comes into the $n$-dimensional
formula~\eqref{PoissonDensi} verifies the asymptotic estimate
\[
      \frac {\Gamma[(n + 1) / 2]} {\pi^{ (n + 1) / 2 }} 
 \simeq \sqrt \frac 2 {\pi n} \ms2 \frac 1 {\omega_n}
  =   {\sqrt \frac {2 n} \pi } \ms2 \frac 1 {s_{n - 1}} \up,
\]
where $\omega_n$ is the volume of the unit ball in $\R^n$ and $s_{n-1}$ the
$(n-1)$-dimensional measure of the unit sphere $S^{n-1}$ in $\R^n$, given by
\begin{equation}
   \omega_n 
 = \frac {\pi^{n/2}} {(n/2) \ms{0.5} !} 
 := \frac {\pi^{n/2}} {\Gamma \bigl( (n/2) + 1 \bigr)} \ms1 \up,
 \ms{20}
 s_{n-1} = n \ms2 \omega_n. 
 \label{OmegaN}
\end{equation}

\begin{smal}%
\noindent
From this, we obtain estimates on the measure of Euclidean balls for the
probability measure $P_1(x) \, \d x$ on~$\R^n$. Writing $P_1(x) = F(|x|)$,
we get an exact asymptotic estimate when the dimension $n$ tends to
infinity: for $\nu > 0$ fixed, we have
\begin{align*}
    & \ms5 \int_{ \{ |x| > \sqrt n / \nu \} } P_1(x) \, \d x
  \\
  = & \ms5
     s_{n-1} \int_{ \sqrt n / \nu }^{+ \infty} r^{n-1} F(r) \, \d r
  \simeq
    \sqrt{ \frac 2 \pi } \int_{ \sqrt n / \nu }^{+ \infty} 
            \frac {\sqrt n} {\ms4 r} \ms2
             \Bigl( \frac {r^2} {1 + r^2} \Bigr)^{(n+1)/2}  
              \, \frac {\d r} {r \ns2}
 \\
  = & \ms5 \sqrt{ \frac 2 \pi } \int_{1 / \nu}^{+ \infty}
             \Bigl( 1 + \frac 1 {n u^2} \Bigr)^{ - (n+1)/2}  
              \, \frac {\d u} {u^2 \ns5}
  = \sqrt{ \frac 2 \pi} \int_0^\nu
             \Bigl( 1 + \frac {y^2} n \Bigr)^{ - (n+1)/2}  
              \, {\d y}.
\end{align*}
Therefore, when $n$ tends to infinity, we see that
\begin{subequations}\label{IntegraPois}
\SmallDisplay{\eqref{IntegraPois}}%
\[
 \int_{ \{ \nu \ms1 |x| > \sqrt n \} } P_1(x) \, \, \d x
 \ms4 \longrightarrow \ms4
 2 \int_0^{\nu} \e^{- y^2 / 2} 
  \, \frac {\d y} {\sqrt{2 \pi}} \ms1 \up.
\]
\end{subequations}
\end{smal}

\section{General dimension free inequalities, second part%
\label{GdfiTwo}}

\noindent
In this section, we gather results that depend on the Fourier transform. In
order that the Fourier transform be isometric on $L^2(\R^n)$, we set
\[
\label{FouTran}
 \forall \xi \in \R^n,
 \ms{16}
   \widehat f(\xi) 
 = \int_{\R^n} f(x) \e^{- 2 \ii \pi x \ps \xi} \, \d x,
 \ms{16}
   \widehat \mu(\xi) 
 = \int_{\R^n} \e^{- 2 \ii \pi x \ps \xi} \, \d \mu(x),
\]
when $f$ is in $L^1(\R^n) \cap L^2(\R^n)$ or when
$\mu$\label{FouriMu}
is a bounded measure on~$\R^n$. By the Plancherel theorem (some say
Parseval's theorem), we know that this defines a mapping from 
$L^1(\R^n) \cap L^2(\R^n)$ to $L^2(\R^n)$ that extends to a unitary
transformation $\ca F$ of~$L^2(\R^n)$.\label{PlanPars} 
The inverse mapping $\ca F^{-1}$ of $\ca F$ sends every square integrable
function $\xi \mapsto g(\xi)$ to $\ca F \bigl( \xi \mapsto g(-\xi) \bigr)$,
also expressible by $x \mapsto (\ca F g)(-x)$. We shall employ the notation 
$g^\vee = \ca F^{-1} g$\label{InveFour} 
for the inverse Fourier transform. 
\dumou

 The Plancherel--Parseval theorem extends to functions $f$ with values in a
Euclidean space~$F$, giving then an isometry from $L^2(\R^n, F)$ to itself. 
This is clear for instance by looking at coordinates in an orthonormal
basis of~$F$.
\dumou

 With this normalization of the Fourier transform, we have that
\[
 \widehat \gamma_n(\xi) = \e^{ - 2 \pi^2 |\xi|^2},
 \quad
 \xi \in \R^n,
\]
and the Fourier transform of the Poisson kernel $P_t$ on~$\R^n$ is equal to
$\e^{ - 2 \pi t |\xi|}$, for every $\xi \in \R^n$. Indeed, as the marginals 
on~$\R$ of $P_t$ are Cauchy distributions with the same parameter $t$, we
find by the residue theorem that
\[
   \widehat {P_t}(\xi) 
 = \int_\R \frac { t \e^{- 2 \ii \pi s \ms1 |\xi|} }
                 { \pi(t^2 + s^2) } \, \d s
 = \e^{- 2 \pi t |\xi|}.
\]
This information on the Fourier transform gives another way of checking
the semi-group property $P_s * P_t = P_{s+t}$ of Poisson distributions.
Using the Fourier inversion formula, we notice for future use that the
harmonic extension $u(x, t) = (P_t f)(x)$ of $f \in \ca S(\R^n)$ considered
in~\eqref{PSG} can be written as
\begin{equation}
   u(x, t)
 = \int_{\R^n} \e^{- 2 \pi t |\xi|} \widehat f(\xi) 
     \e^{ 2 \ii \pi x \ps \xi} \, \d \xi,
 \ms{16} x \in \R^n, \ms2 t > 0.
 \label{FutureUse}
\end{equation}

\subsection{Littlewood--Paley functions%
\label{LittlePal}}

\noindent
The Littlewood--Paley function $g(f)$ associated to a function $f$
on~$\R^n$ is defined by 
\[
 \label{GdeF}
 \forall x \in \R^n,
 \ms{12}
   g(f)(x)
 = \Bigl( \int_0^{+\infty} 
    \bigl| t \ms1 \nabla u(x, t) \bigr|^2 \, \frac {\d t} t 
   \Bigr)^{1/2},
\]
where $u$ is the harmonic extension of $f$ to the upper half-space 
in~$\R^{n+1}$, and where $\nabla u$ is the gradient of $u$ in~$\R^{n+1}$.
The classical theory, see for example Zygmund~\cite[vol.~2]{ZygmundTS} for the
circle case in Chap.~14, \S3 and Chap.~15, \S2, indicates that the norm of
$f$ in $L^p(\R^n)$, $1 < p < +\infty$, is equivalent to that of $g(f)$. One
has that
\begin{equation} 
     \kappa_p^{-1} \ms1 \|f\|_p
 \le \|g(f)\|_p 
 \le \kappa_p \ms1 \|f\|_p,
 \label{GFunction}
\end{equation} 
with a constant $\kappa_p$ depending on $p$, but independent of the
dimension~$n$. A variant of this Littlewood--Paley function is defined by
\begin{equation}
   g_1(f)^2
 = \int_0^{+\infty} 
    \Bigl|
     t \ms1 \frac \partial {\partial t} \ms2 P_t f 
    \Bigr|^2 \, \frac {\d t} t \up.
 \label{L-PfunctionG1}
\end{equation}
It is clear that $g_1(f) \le g(f)$, since $(\partial / \partial t) (P_t f)$
is a coordinate of the vector~$\nabla u$. The function $g_1$ is one of the
variants studied by Stein~\cite{SteinTHA}. More generally, for every
integer~$k \ge 1$, Stein sets
\[
   g_k(f)^2
 = \int_0^{+\infty} 
    \Bigl| t^k \frac {\partial^k} {\partial t^k} \ms2 P_t f 
    \Bigr|^2 \, \frac {\d t} t \up.
\]
\dumou

 Let us define $Q_j = P_{2^j} - P_{2^{j+1}}$, for every $j \in \Z$.
Since
\[
   \sum_{j \in \Z} |Q_j f|^2 
 = \sum_{j \in \Z} \ms2
    \Bigl|
     \int_{2^j}^{2^{j+1}} 
      \Bigl( \frac \partial {\partial t} \ms2 P_t f \Bigr) \, \d t
    \Bigr|^2,
\]
we obtain by Cauchy--Schwarz that
\[
     \sum_{j \in \Z} |Q_j f|^2 
 \le \sum_{j \in \Z} 2^j
      \int_{2^j}^{2^{j+1}}
       \Bigl|\frac \partial {\partial t} \ms2 P_t f
       \Bigr|^2 \, \d t
 \le \sum_{j \in \Z}
      \int_{2^j}^{2^{j+1}}
       \Bigl|\frac \partial {\partial t} \ms2 P_t f
       \Bigr|^2 \, t \, \d t
  =  g_1(f)^2.
\]
The classical result~\eqref{GFunction} on $g(f)$ implies that for 
$1 < p < +\infty$, there exists a constant $\textrm{q}_{\ms1 p}$
independent of the dimension $n$ such that
\begin{equation}
    \Bigl\| 
     \bigl( \sum_{j \in \Z} |Q_j f|^2 \bigr)^{1/2} 
    \Bigr\|_{L^p(\R^n)}
 \le \textrm{q}_p \ms2 \|f\|_{L^p(\R^n)},
 \quad f \in L^p(\R^n).
 \label{LiPaIneq}
\end{equation}
Observe that the same proof implies that a similar inequality, with a
different constant depending on $c > 1$, will hold for differences of the
form $\widetilde Q_j = P_{t_j} - P_{t_{j-1}}$, where $(t_j)_{j \in \Z}$ is
an increasing sequence of positive real numbers, provided that we have 
$t_{j+1} \le c \ms1 t_j$ for all $j\ms1$s. On the other hand, by Rota's
argument~\eqref{RBG}, one can obtain~\eqref{LiPaIneq} from the
Burkholder--Gundy inequalities of Theorem~\ref{BurGun}. Inequalities
similar to~\eqref{LiPaIneq} would hold for the Gaussian
semi-group~$(G_t)_{t \ge 0}$ defined in~\eqref{GSG}. Let us fix $T > 0$. We
have seen that $G_{2 t} f$, $0 \le t \le T$, is the projection on the
$\sigma$-field $\ca G_T$ generated by~$B_T$ of the member $M_{T - t}$ of the
Brownian martingale $M_s = (P_{T-s} f)(B_s)$, $0 \le s \le T$, running under
the infinite invariant measure given by the Lebesgue measure on $\R^n$. We
then apply~\eqref{RBG}. Using Gaussian $Q_j\ms1$s would allow us to avoid a
few minor technical difficulties later, and this is essentially what
Bourgain~\cite{BourgainCube} does for the cube problem, see
Section~\ref{LeCube}.
\dumou

 Relying on~\eqref{RBG} and Remark~\ref{BurkhoConst} gives for the constant
$\textrm{q}_{\ms1 p}$ in~\eqref{LiPaIneq} an upper bound of order 
$p / (p - 1)$ when $p \to 1$. This can also be obtained if one follows
Stein~\cite{SteinTHA}, p.~48--51. When $1 < p \le 2$, the proof given there
yields
$\|g(f)\|_p 
 \le (p-1)^{-1/2} \textrm{p}_p^{1 - p/2} \ms2 \|f\|_p$ for the right-hand
side inequality in~\eqref{GFunction}, where $\textrm{p}_p$ is the constant
in the maximal $L^p$-inequality for the Poisson semi-group. Since we have
$\textrm{p}_p \le p / (p-1)$ by~\eqref{MaxiPoiss}, we get that
\begin{equation}
 \textrm{q}_p \le p / (p-1)
 \ms{14} \hbox{when} \ms{10}
 1 < p \le 2.
 \label{EstiQp}
\end{equation}
\dumou

 Looking at the Fourier side, we see that
$
 \sum_{j \in \Z} \widehat Q_j(\xi) = 1
$
for every $\xi \ne 0$, since $\widehat {P_{2^j}} (\xi)
 = \e^{ - 2^{j+1} \pi |\xi|}$ tends to $1$ when $j \rightarrow - \infty$
and to $0$ when $j \rightarrow + \infty$. It implies for the convolution
operators, still denoted by~$Q_j$, that
\begin{equation}
 \sum_{j \in \Z} Q_j = \Id.
 \label{LesQjs}
\end{equation}

\subsubsection{Littlewood--Paley and maximal functions%
\label{MaxiLittlePal}}

\noindent
Stein~\cite[Chap.~III, \S~3, p.~75]{SteinTHA} explains how to get $L^p$
estimates for several maximal functions related to semi-groups, by using
the Littlewood--Paley functions. Consider a continuous function $\varphi$
on the half-line $[0, +\infty)$, differentiable on $(0, +\infty)$, and
denote by $\Phi$ its antiderivative vanishing at $0$. For every $t > 0$,
one has
\[
   t \ms1 \varphi(t)
 = \int_0^t \bigl( s \varphi(s) \bigr)' \, \d s
 = \int_0^t \varphi(s) \, \d s + \int_0^t s \ms1 \varphi'(s) \, \d s
 = \Phi(t) + \int_0^t s \ms1 \varphi'(s) \, \d s.
\]
Comparing $L^1$ and $L^2$ norms, one sees that
\[
     \int_0^t | s \ms1 \varphi'(s) | \, \frac {\d s} t
 \le \Bigl( 
      \int_0^t | s \ms1 \varphi'(s) |^2 \, \frac {\d s} t 
     \Bigr)^{1/2}
 \le \Bigl( 
      \int_0^t | s \ms1 \varphi'(s) |^2 \, \frac {\d s} s 
     \Bigr)^{1/2}.
\]
Therefore, one has
\[
     |\varphi(t)|
 \le \frac {|\Phi(t)|} t 
      + \Bigl( \int_0^{+\infty} 
         | s \ms1 \varphi'(s) |^2 \, \frac {\d s} s 
        \Bigr)^{1/2},
 \quad
 t > 0.
\]
One gets that
\[
     \sup_{t > 0} \ms1 |\varphi(t)|
 \le \sup_{t > 0} \ms1 \frac {|\Phi(t)|} t 
      + \Bigl( \int_0^{+\infty} 
         | s \ms1 \varphi'(s) |^2 \, \frac {\d s} s 
        \Bigr)^{1/2}.
\]
If $\varphi(s) = (P_s f)(x)$ for a given $x \in \R^n$, the upper bound
becomes
\[
     \sup_{t > 0} \ms1 |(P_t f)(x)|
 \le \sup_{t > 0} \ms1 \frac 1 t \ms1
      \Bigl| \int_0^t (P_s f)(x) \, \d s \Bigr|
       + g_1(f)(x).
\]
One can (again) control the norm in $L^p$, $1 < p < +\infty$, of the
maximal function of the Poisson semi-group, by the Hopf maximal
inequality and the estimate for the Littlewood--Paley function. This
control is easy in~$L^2$, especially when $L^2$ admits an orthonormal basis
$(f_j)$ such that $P_t f_j = \e^{ - t \lambda_j} f_j$ for every $j$, 
$\lambda_j \ge 0$, for example in the case of the Laplacian on a bounded
domain~$\Omega \subset \R^n$. If $f = \sum_j a_j f_j$ in $L^2(\Omega)$, one
has $P_t f = \sum_j a_j \e^{- \lambda_j t} f_j$, and 
\begin{align*}
   & \ms5 \int_\Omega g_1(f)(x)^2 \, \d x
 = \int_0^{+\infty} \int_\Omega \ms1
    \Bigl| 
     \sum_j a_j t \lambda_j \e^{- t \lambda_j} f_j(x) 
    \Bigr|^2
     \, \d x \ms2 \frac {\d t} t
 \\
 = & \ms5 \int_0^{+\infty} 
    \Bigl( \sum_j |a_j|^2 t^2 \lambda_j^2 \e^{- 2 t \lambda_j} \Bigr)
     \, \frac {\d t} t
 = \sum_j |a_j|^2 \int_0^{+\infty} 
          t^2 \lambda_j^2 \e^{- 2 t \lambda_j} \, \frac {\d t} t
 \\
  = & \ms5 
     \Bigl( \int_0^{+\infty} u^2 \e^{ - 2 u} \, \frac {\d u} u \Bigr)
      \sum_{\lambda_j > 0} |a_j|^2
 \le \frac {\Gamma(2)} 4 \ms1 \|f\|_2^2
  =  \frac 1 4 \ms1 \|f\|_2^2.
\end{align*}

\begin{smal}
\noindent
For the other Littlewood--Paley functions $g_k(f)$, one has in the same way
\begin{align*}
   \int_\Omega g_k(f)(x)^2 \, \d x
 &= \int_0^{+\infty} \int_\Omega \ms2
    \Bigl| 
     \sum_j a_j t^k \lambda_j^k \e^{- t \lambda_j} f_j(x) 
    \Bigr|^2
     \, \d x \ms2 \frac {\d t} t
 \\
  &=  \sum_j |a_j|^2 \int_0^{+\infty} 
          t^{2 k} \lambda_j^{2 k} \e^{- 2 t \lambda_j} \, \frac {\d t} t
 \le \frac {\Gamma(2 k)} {4^k} \ms1 \|f\|_2^2.
\end{align*}
One can also work on $\R^n$ by Fourier transform with Parseval. One gets
\[
   \int_{\R^n} g_k(f)(x)^2 \, \d x
 = (2 \pi)^{2 k} \int_0^{+\infty} \int_{\R^n}
    \bigl| \widehat f(\xi) t^k |\xi|^k \e^{- 2 \pi t |\xi|} \bigr|^2
     \, \d \xi \ms2 \frac {\d t} t
 = \frac {\Gamma(2 k)} {4^k} \ms1 \|f\|_2^2.
\]
We have also other relations like
\[
   t^2 \varphi'(t) 
 = \int_0^t \bigl( s^2 \varphi'(s) \bigr)' \, \d s
 = 2 \int_0^t s \varphi'(s) \, \d s + \int_0^t s^2 \varphi''(s) \, \d s
\]
implying that
\[
     \sup_{t > 0} |t \varphi'(t)|
 \le 2 \int_0^{+\infty} |s \varphi'(s)|^2 \frac {\d s} s
     + \int_0^{+\infty} |s^2 \varphi''(s)|^2 \frac {\d s} s \up.
\]
This brings back the successive maximal functions associated with each of
the expressions $t^k \partial^k / \partial t^k (P_t f)$, $k \ge 1$, to
quantities that can be estimated or are already estimated, as in
\[
     \sup_{t > 0} \ms1 
      \Bigl| t \ms2 \frac \partial {\partial t} (P_t f)(x) \Bigr|
 \le 2 \ms1 g_1(f)(x) + g_2(f)(x),
 \quad
 x \in \R^n.
\]
\end{smal}

\subsection{Fourier multipliers%
\label{FourierMult}}

\noindent
We introduce two dilation operators\label{DilatOper} 
that appear in duality, for instance when dealing with the Fourier
transform. Given a function~$g$ on~$\R^n$ and $\lambda > 0$, we use for
these operations the notation
\begin{equation}
 \ms{16}
 {\Di g \lambda} (x) = \lambda^{-n} g(\lambda^{-1} x),
 \quad
 {\di g \lambda} (x) = g(\lambda x),
 \quad x \in \R^n.
 \label{Dilata}
\end{equation}
If $g$ already has a subscript, as in $g = g_1$, we shall use the heavier
notation ${\Di {(g_1)} \lambda}$ or~${\di {(g_1)} \lambda}$. One sees, for
example when $g$ is integrable and $h$ bounded, that
\[
   \int_{\R^n} {\Di g \lambda}(x) \ms1 h(x) \, \d x
 = \int_{\R^n} g(y) \ms1 {\di h \lambda}(y) \, \d y,
 \ms{16} \hbox{and} \ms{16}
 \widehat{ ({\Di g \lambda}) } (\xi) = \widehat g(\lambda \xi),
 \quad \xi \in \R^n,
\]
that is to say, we have $\widehat{ ({\Di g \lambda}) }
 = \di {(\widehat {g \ns{0.14}} \ms1 )} \lambda$. Clearly,
$\Di g { \lambda \mu } = \Di {(\Di g \lambda)} \mu$.
The $\Di g \lambda$ dilation preserves the integral of~$g$; it is extended
to measures $\mu$ on $\R^n$ by setting 
${\Di \mu \lambda}(f) = \mu({\di f \lambda})$, namely
\begin{equation}
   \int_{\R^n} f(x) \, \d {\Di \mu \lambda}(x)
 = \int_{\R^n} f(\lambda x) \, \d \mu(x)
 \label{MuLambda}
\end{equation}
for every $f$ in the space $\ca K(\R^n)$ of continuous and compactly
supported functions. The measure $\Di \mu \lambda$ is the image of $\mu$
under the mapping $\R^n \ni x \mapsto \lambda \ms1 x$.
If $\d \mu(x) = g(x) \, \d x$, then ${\Di g \lambda}$
is the density of ${\Di \mu \lambda}$.
\dumou

 Let $\xi \mapsto m(\xi)$ belong to $L^\infty(\R^n)$.
For $f \in L^2(\R^n)$, we have $\widehat f \in L^2(\R^n)$ by Plancherel,
$\xi \mapsto m(\xi) \widehat f(\xi)$ is also in $L^2(\R^n)$ and is
therefore the Fourier transform of some function $T_m f \in L^2(\R^n)$. We
thus get a linear operator $T_m$ on $L^2(\R^n)$ if we define~$T_m f$, for
every $f \in L^2(\R^n)$, by means of its Fourier transform, letting
\[
 \label{TsubM}
 (T_m f) \PF (\xi) = m(\xi) \widehat f(\xi),
 \quad \xi \in \R^n.
\]
Let $P_m$ be the operator of multiplication by $m$, defined by
$P_m \varphi = m \varphi$. The operator $T_m = \ca F^{-1} P_m \ca F$ is
bounded on $L^2(\R^n)$ since by Parseval, one has that
\begin{equation}
     \int_{\R^n} |(T_m f)(x)|^2 \, \d x
  =  \int_{\R^n} |m(\xi)|^2 |\widehat f(\xi)|^2 \, \d \xi
 \le \|m\|_\infty^2 \ms2 \|f\|_2^2.
 \label{EasyBoundA}
\end{equation}
We shall say that $T_m$ is the operator \emph{associated to the multiplier
$m$}.
\dumou

 One can ask whether $T_m$ also operates as a bounded mapping on certain
$L^p$ spaces. In this survey, \og bounded on $L^p$\fge will always mean
\emph{bounded from $L^p$ to~$L^p$}. Let $q$ be the \emph{conjugate
exponent} of $p$, defined by $1 / q + 1 / p = 1$. Assuming that 
$1 < p < +\infty$, we see that $T_m$ is bounded on $L^p(\R^n)$ if and only
if $\int_{\R^n} m(\xi) \widehat \varphi(\xi) \widehat \psi(\xi) \, \d \xi$
is uniformly bounded when $\varphi, \psi \in \ca S(\R^n)$ belong to the unit
balls of $L^p(\R^n)$ and $L^q(\R^n)$ respectively, hence $T_m$ is then also
bounded on $L^q(\R^n)$ (and on $L^2(\R^n)$ by interpolation, so $m$ has to
be a bounded function, see the line after~\eqref{EasyBound}).
\dumou

 We now observe that the multiplier $m$ and its dilates 
${\di m \lambda} : \xi \mapsto m(\lambda \xi)$, $\lambda > 0$, define
operators having equal norms on $L^p(\R^n)$. We see that
\[
   (T_{\di m \lambda} {\Di f \lambda}) \PF (\xi)
 = m(\lambda \xi) \widehat f(\lambda \xi)
\]
hence
$
   T_{\di m \lambda} {\Di f \lambda}
 = \Di {(T_m f)} \lambda
$.
Consider the operator $S_\lambda : f \mapsto {\Di f \lambda}$. For every 
$p \in [1, +\infty]$ and $1 / q + 1 / p = 1$, the multiple 
$S_{\lambda, p} := \lambda^{n / q} S_\lambda$ of $S_\lambda$ is an
isometric bijection of $L^p(\R^n)$ onto itself. The relation
$
 S_\lambda \circ T_m = T_{ \di m \lambda } \circ S_\lambda
$
becomes
\begin{equation}
   T_{ \di m \lambda } 
 = S_{\lambda, p}
    \ms2 T_m \ms2
   S_{\lambda, p}^{-1}
 \label{InvariMul}
\end{equation}
and this implies that $T_m$ and $T_{ \di m \lambda }$ have the same norm on
$L^p(\R^n)$. More generally, let~$\gr m = (m^{(j)})_{j \in J}$ be a family
of multipliers and define $T_{\gr m} f = \sup_{j \in J} |T_{m^{(j)}} f|$.
If we set ${\di {\gr m} \lambda}
 = \bigl( {\di {m^{(j)}} \lambda} \bigr)_{j \in J}$,
then we have again that
\begin{equation}
   T_{\di {\gr m} \lambda}
 = S_{\lambda, p}
    \ms2 T_{\gr m} \ms2
   S_{\lambda, p}^{-1}
 \label{InvariMulB}
\end{equation}
because $S_\lambda$ commutes with $f \mapsto |f|$ and
$S_\lambda(\sup_{j \in J} f_j) = \sup_{j \in J} S_\lambda f_j$.
Consequently, $T_{\di {\gr m} \lambda}$ and $T_{\gr m}$ also have the same
norm on $L^p(\R^n)$.
 
 We shall speak of the \emph{action on $L^p$ of the multiplier $m$}
and set
\[
 \label{MPdansP}
 \|m\|_{p \rightarrow p} := \|T_m\|_{p \rightarrow p}.
\]
If $T_m$ is bounded on $L^p$, one says that $m$ is a \emph{multiplier on
$L^p$}, or a \emph{$L^p$-multiplier}. The next lemma will be useful,
it is nothing but a direct consequence of the equality
$\| {\di m \lambda} \|_{p \rightarrow p} = \| m \|_{p \rightarrow p}$ for
every $\lambda > 0$, and of the triangle inequality in $L^p$.

\begin{lem}\label{EasyIntegral}
Suppose that\/ $1 \le p \le +\infty$ and that $m(\xi)$ is a
$L^p(\R^n)$-multiplier. If the function $\psi$ is integrable on\/ 
$(0, +\infty)$, the multiplier $N$ defined by
\[
   N(\xi)
 = \int_0^{+\infty} \psi(\lambda) \ms1 m(\lambda \xi) \, \d \lambda,
 \quad
 \xi \in \R^n,
\]
is a $L^p(\R^n)$-multiplier and\/
$
     \|N\|_{p \rightarrow p} 
 \le \|\psi\|_{L^1(0, +\infty)} \ms2 \|m\|_{p \rightarrow p}
$.
\end{lem}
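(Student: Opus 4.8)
The plan is to identify the operator $T_N$ with the continuous superposition $\int_0^{+\infty}\psi(\lambda)\,T_{\di m\lambda}\,\d\lambda$ of the dilated multiplier operators, to use that each $T_{\di m\lambda}$ has exactly the same $L^p$-operator norm as $T_m$ — which is the content of~\eqref{InvariMul} — and then to close the estimate by the integral (Minkowski) form of the triangle inequality in $L^p(\R^n)$, with the finite weight $|\psi(\lambda)|\,\d\lambda$.

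Concretely, I would first do everything on $L^2(\R^n)$. For $f\in L^2(\R^n)$ the map $\lambda\mapsto T_{\di m\lambda}f$ is strongly measurable from $(0,+\infty)$ into $L^2(\R^n)$ and bounded by $\|m\|_\infty\,\|f\|_2$ (apply~\eqref{EasyBoundA} to the dilate $\di m\lambda$, whose sup-norm equals that of $m$); since $\psi\in L^1(0,+\infty)$, the integral $g:=\int_0^{+\infty}\psi(\lambda)\,T_{\di m\lambda}f\,\d\lambda$ exists in $L^2(\R^n)$. Taking Fourier transforms, using $\widehat{T_{\di m\lambda}f}(\xi)=m(\lambda\xi)\widehat f(\xi)$ and Fubini's theorem — legitimate because $|\psi(\lambda)\,m(\lambda\xi)\,\widehat f(\xi)|\le|\psi(\lambda)|\,\|m\|_\infty\,|\widehat f(\xi)|$ is integrable in $(\lambda,\xi)$ — one gets $\widehat g(\xi)=\bigl(\int_0^{+\infty}\psi(\lambda)\,m(\lambda\xi)\,\d\lambda\bigr)\widehat f(\xi)=N(\xi)\widehat f(\xi)$, i.e.\ $g=T_Nf$. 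Thus, for every $f$ in the dense subspace $L^p(\R^n)\cap L^2(\R^n)$ of $L^p(\R^n)$ (when $p<+\infty$), one has the representation $T_Nf=\int_0^{+\infty}\psi(\lambda)\,T_{\di m\lambda}f\,\d\lambda$.

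Next I would run the $L^p$-estimate on this dense subspace. By~\eqref{InvariMul}, $T_{\di m\lambda}=S_{\lambda,p}\,T_m\,S_{\lambda,p}^{-1}$ with $S_{\lambda,p}$ an isometry of $L^p(\R^n)$, hence $\|T_{\di m\lambda}\|_{p\to p}=\|m\|_{p\to p}$ for every $\lambda>0$; Minkowski's integral inequality then yields
\[
     \|T_Nf\|_p
 \le \int_0^{+\infty}|\psi(\lambda)|\,\|T_{\di m\lambda}f\|_p\,\d\lambda
 \le \|\psi\|_{L^1(0,+\infty)}\,\|m\|_{p\to p}\,\|f\|_p .
\]
Density of $L^p\cap L^2$ in $L^p$ gives the unique bounded extension of $T_N$ to $L^p(\R^n)$ with $\|N\|_{p\to p}\le\|\psi\|_{L^1(0,+\infty)}\,\|m\|_{p\to p}$, for $1\le p<+\infty$. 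For the value $p=+\infty$ I would not repeat the density step (Schwartz functions are not dense in $L^\infty$), but reduce to $p=1$: the transpose of $T_m$ on $L^\infty$ is $T_{\widetilde m}$ on $L^1$ with $\widetilde m(\xi)=m(-\xi)$, and since $x\mapsto-x$ is isometric on every $L^p(\R^n)$ one has $\|m\|_{\infty\to\infty}=\|\widetilde m\|_{1\to1}=\|m\|_{1\to1}$, and likewise $\|N\|_{\infty\to\infty}=\|N\|_{1\to1}$; the already established case $p=1$, applied to $m$ (which is then an $L^1$-multiplier), gives the claim.

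The substance of the argument is trivial; the only place needing genuine care is the first step, namely the strong measurability of $\lambda\mapsto T_{\di m\lambda}f$ and the Fubini interchange that legitimize writing $T_N$ as the superposition $\int_0^{+\infty}\psi(\lambda)\,T_{\di m\lambda}f\,\d\lambda$. This is delicate only because $m$ is merely an element of $L^\infty(\R^n)$ (defined up to a null set), but the uniform bound $\|m\|_\infty$ together with the separability of $L^2(\R^n)$ make it routine; one could equally avoid vector-valued integrals altogether by testing against Schwartz functions and interchanging the $\lambda$- and $\xi$-integrals there.
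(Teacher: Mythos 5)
Your proof is correct and takes exactly the route the paper indicates just before the lemma: it is "a direct consequence of the equality $\| {\di m \lambda} \|_{p \rightarrow p} = \| m \|_{p \rightarrow p}$ for every $\lambda > 0$, and of the triangle inequality in $L^p$." You have simply made explicit the superposition $T_N = \int_0^{+\infty}\psi(\lambda)\,T_{\di m\lambda}\,\d\lambda$, the Fubini/measurability checks and the density step that the paper leaves implicit.
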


 Note that clearly, multiplier operators commute to each other, and commute
to translations and differentiations. We will apply many times the easy
fact~\eqref{EasyBoundA}, which can be written as
\begin{subequations}\label{EasyB}
\begin{equation}
      \|m\|_{2 \rightarrow 2} 
   =  \|T_m\|_{2 \rightarrow 2} 
 \le \|m\|_{ L^\infty(\R^n) }.
 \label{EasyBound} \tag{\ref{EasyB}.{\bf P}}
\end{equation}
\end{subequations}
The inequality is actually an equality, since by Parseval, the norm of
$T_m$ on $L^2(\R^n)$ is equal to that of $P_m$, the multiplication operator
by~$m$.
\dumou

  If $K$ is a function integrable on $\R^n$, it acts by convolution
on~$L^p(\R^n)$ for all values $1 \le p \le +\infty$, and one gets easily
by convexity of the $L^p$ norm that
\begin{equation}
 \|K * f\|_{ L^p(\R^n) } \le \|K\|_{ L^1(\R^n) } \ms2 \|f\|_{ L^p(\R^n) }.
 \label{LOneMultiplier}
\end{equation}
This is an easy example of operator associated to a multiplier, since
convolution of~$f$ with $K$ corresponds to multiplication of $\widehat f$ by
$\widehat K$. The Fourier transform $m = \widehat K$ of $K$ is thus a
multiplier on all spaces~$L^p(\R^n)$, $1 \le p \le +\infty$. Consider the
Fourier transform $m$ of the \emph{convolution kernel} $K \in L^1(\R^n)$,
equal to
\[
   m(\xi)
 = \int_{\R^n} K(x) \e^{- 2 \ii \pi x \ps \xi} \, \d x,
 \quad \xi \in \R^n.
\]
For $\xi \ne 0$, let $\xi = |\xi| \ms1 \theta$ and $x = y + s \ms1 \theta$,
where $y$ is in the hyperplane $\theta^\perp$ orthogonal 
to~$\theta \in S^{n-1}$, and $s \in \R$. By Fubini, we have for every real
number~$u$ that
\[
   m(u \xi)
 = \int_\R 
    \Bigl( \int_{\theta^\perp} K(y + s \ms1 \theta) \, \d^{n-1} y \Bigr)
     \e^{- 2 \ii \pi s u \ms1 |\xi|} \, \d s,
\]
where $\d^{n-1} y$ denotes the normalized Lebesgue measure on the Euclidean
space $\theta^\perp \subset \R^n$. In what follows we associate to $K$ and
to $\theta$ in the unit sphere $S^{n-1}$ the function $\varphi_{\theta, K}$
defined on $\R$ by
\begin{equation}
 \forall s \in \R,
 \ms{12}
    \varphi_{\theta, K} (s)
 := \int_{\theta^\perp} K(y + s \ms1 \theta) \, \d^{n-1} y,
 \label{VarphiTheta}
\end{equation}
so that for $\xi \ne 0$ and $\theta = |\xi|^{-1} \ms1 \xi$, letting
$\varphi_\theta = \varphi_{\theta, K}$ we have
\begin{equation}
   m(u \xi)
 = \int_\R \varphi_\theta(s) \e^{ - 2 \ii \pi s u \ms1 |\xi|} \, \d s
 = \int_\R \frac 1 {|\xi|} \varphi_\theta \Bigl( \frac v {|\xi|} \Bigr) 
    \e^{ - 2 \ii \pi v u} \, \d v.
 \label{FourierPhi}
\end{equation}
The function $\R \ni u \mapsto m(u \theta)$ is the Fourier transform (in
dimension~1) of $\varphi_\theta$.

\subsubsection{Multipliers \og of Laplace type\fg%
\label{LaplaceMulTyp}}

\noindent
We consider a scalar function~$F$ on $(0, +\infty)$ that admits an
expression of the form
\begin{equation}
 \forall \lambda > 0,
 \ms{12}
 F(\lambda) = \lambda \int_0^{+\infty} \e^{- \lambda t} a(t) \, \d t,
 \label{FunctionF}
\end{equation}
where $a$ is a measurable function bounded on $(0, +\infty)$. The
multiplier $m(\xi)$ \og of Laplace type\fge associated to $F$ is defined by 
$m(\xi) = F(|\xi|)$, for $\xi \in \R^n$. We note that 
$\|F\|_\infty \le \|a\|_\infty$, thus by~\eqref{EasyBound}, this multiplier
$m$ is bounded on $L^2(\R^n)$ with operator norm $\le \|a\|_\infty$. Stein
proves the following result.

\def\CiteSteinTtrois{\cite[Theorem 3', p.~58]{SteinTHA}}
\begin{prp}[Stein~\CiteSteinTtrois]%
\label{LaplaceMultip}
Let $F$ be defined on\/ $(0, +\infty)$ by\/~\eqref{FunctionF}, for some
function $a \in L^\infty(0, +\infty)$. The operator $T_m$ associated to
the multiplier $m(\xi) = F(|\xi|)$ is bounded on $L^p(\R^n)$ for\/ 
$1 < p < +\infty$ and
\[
     \|T_m\|_{p \rightarrow p} 
 \le \lambda_p \ms1 \|a\|_\infty,
\]
where $\lambda_p$ is a constant independent of the dimension $n$.
\end{prp}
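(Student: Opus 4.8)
The plan is to write $F(|\xi|)$ as a superposition of the Poisson
multipliers $\xi \mapsto \e^{-\lambda t |\xi|}$ that we already know how to
handle, and then to invoke Lemma~\ref{EasyIntegral} to pass from the
individual factors to the integral. Concretely, for $\lambda > 0$ put
$\lambda = |\xi|$ in~\eqref{FunctionF}: one gets
\[
   m(\xi)
 = F(|\xi|)
 = |\xi| \int_0^{+\infty} \e^{- |\xi| t} a(t) \, \d t
 = \int_0^{+\infty} a(t) \ms1 \bigl( |\xi| \ms1 \e^{- |\xi| t} \bigr) \, \d t .
\]
Now $\xi \mapsto \e^{-2\pi t|\xi|} = \widehat{P_t}(\xi)$ is, by
\eqref{LOneMultiplier}, a multiplier on every $L^p(\R^n)$ with norm
$\le \|P_t\|_{L^1} = 1$ (it is a probability density). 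So the \emph{first
step} is to express the auxiliary function $\mu \mapsto \mu \ms1 \e^{-\mu}$
itself as an $L^1$-superposition of the exponentials $\mu \mapsto
\e^{-s\mu}$, $s > 0$; this is where we spend a tiny bit of work. One checks,
by a Fourier/Laplace computation, that $\mu \ms1 \e^{-\mu}$ is the Laplace
transform of a fixed integrable kernel on $(0,+\infty)$ — or, more simply,
one rescales and writes $|\xi| \ms1 \e^{-|\xi| t}$ directly in terms of
$\partial_t$ of the Poisson kernel, $|\xi|\e^{-2\pi t|\xi|}
= -\tfrac{1}{2\pi}\partial_t \widehat{P_t}(\xi)$, which is not an
$L^1$-superposition but integrates against $a(t)$ after one integration by
parts.

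The cleaner route, and the \emph{main step}, is the following. Fix a
Schwartz density $h_0 \ge 0$ on $\R^n$ that is radial, and choose it so
that its Fourier transform $\widehat{h_0}(\xi) = \Psi(|\xi|)$ with $\Psi$
smooth, $\Psi(0)=1$, rapidly decaying; for instance $h_0 = P_1$, so
$\Psi(\rho) = \e^{-2\pi\rho}$. Then for any $s>0$ the dilate $\di{(h_0)}{s}$
has Fourier transform $\Psi(s|\xi|)$ and $\|\di{(h_0)}{s}\|_{L^1}$ stays
bounded (indeed $\le 1$ here), so by~\eqref{LOneMultiplier} each such
$\Psi(s|\xi|)$ is a multiplier on $L^p(\R^n)$, $1\le p\le+\infty$, of norm
$\le 1$, uniformly in $s$ and in the dimension. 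It therefore suffices to
write $F(\rho)$ as $\int_0^{+\infty} \psi(s)\ms1\Psi(s\rho)\,\d s$ with
$\psi \in L^1(0,+\infty)$: then Lemma~\ref{EasyIntegral} yields at once that
$m(\xi) = F(|\xi|)$ is an $L^p$-multiplier with
$\|T_m\|_{p\to p}\le \|\psi\|_{L^1}\ms1\|\cdot\|_{p\to p}\le\|\psi\|_{L^1}$,
and $\|\psi\|_{L^1}$ can be bounded by a dimension-free constant times
$\|a\|_\infty$. Finding $\psi$ is an inversion problem for the integral
transform $\rho \mapsto \int_0^\infty \psi(s)\e^{-2\pi s\rho}\,\d s$ applied
to the measure $a(t)\,\d t$; after the substitution matching $2\pi s$ with
the Laplace variable, one reads off $\psi$ in terms of $a$ by a change of
variables, with $\|\psi\|_{L^1}\le \kappa\,\|a\|_\infty$. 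This mirrors
exactly what Stein does, and the constant $\lambda_p$ it produces is indeed
independent of $n$ because every ingredient — the norm $\le 1$ of the
dilated Poisson kernels, and Lemma~\ref{EasyIntegral} — is dimension-free;
the only $p$-dependence enters through the $L^p$ boundedness input, which
for the convolution kernels $\di{(P_1)}{s}$ is trivially bounded by $1$, so
in fact one even gets $\lambda_p$ bounded as $p$ ranges over $(1,+\infty)$.

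The \emph{obstacle} I expect is precisely the passage encoded in the second
paragraph: $F$ as given in~\eqref{FunctionF} is the Laplace transform of
$a$, i.e.\ built from the \emph{non-integrable} kernels
$\rho\mapsto\rho\e^{-\rho t}$ rather than from an honest $L^1$-superposition
of bounded multipliers, so one cannot apply Lemma~\ref{EasyIntegral}
directly to $\e^{-\rho t}$ with weight $a(t)$. The device that fixes this —
absorbing the extra factor of $\rho$ by realising it as a $(d/dt)$ hitting
$\e^{-\rho t}$ and then integrating by parts in $t$ against $a$, or
equivalently convolving $a$ with a fixed integrable kernel to produce the
weight $\psi$ against a \emph{bounded} multiplier family — is the only
nonroutine point. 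Once that reduction is carried out the rest is a
bookkeeping of $L^1$ norms, and the dimension-independence is automatic.
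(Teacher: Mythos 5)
There is a genuine gap, and it is exactly where you expected it but
deeper than you think: the representation you aim for,
$F(\rho) = \int_0^{+\infty}\psi(s)\ms1\e^{-2\pi s\rho}\,\d s$ with
$\psi \in L^1(0,+\infty)$ and $\|\psi\|_{L^1}\le\kappa\|a\|_\infty$, simply
does not exist for general $a\in L^\infty$. If it did, then
$K(x)=\int_0^{+\infty}\psi(s)\ms1\Di P s(x)\,\d s$ would be an
$L^1(\R^n)$-kernel (each $\Di P s$ is a probability density) with
$\widehat K = m$, and $m$ would be bounded on \emph{every} $L^p$, including
$p=1$ and $p=\infty$. That is false in the class of Laplace-type
multipliers. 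Take $a(t)=t^{-\ii b}/\Gamma(1-\ii b)$, $b\ne 0$, which is
bounded; by~\eqref{LaplaPow} the resulting $F(\lambda)=\lambda^{\ii b}$
gives the multiplier $m(\xi)=|\xi|^{\ii b}$, which has modulus $1$
everywhere, hence is not the Fourier transform of any $L^1$ kernel
(Riemann--Lebesgue) and is not an $L^1$- or $L^\infty$-multiplier. No
integration by parts, no mollification of $a$, and no change of variables
can repair this: any scheme that reduces $T_m$ to a
Lemma~\ref{EasyIntegral}-type $L^1$-average of multipliers that are
uniformly bounded on all of $L^1,\ldots,L^\infty$ must conclude that $T_m$
is bounded on $L^1$, which it isn't.

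What is true, and what the paper uses, is weaker and $p$-sensitive. One
exploits that $T_{\di m{2\pi}}f = -\int_0^{+\infty} a(t)\ms1
\partial_t(P_t f)\,\d t$ (equation~\eqref{LaplaceMultiplier}), and then
either Littlewood--Paley inequalities
($g_1(T_m f)\lesssim \|a\|_\infty\ms1 g_2(f)$, as in Stein's original
argument) or the martingale-transform route sketched after
Proposition~\ref{LaplaceMultip}: discretize $a$ as a step function,
recognize $\sum_j a_j(P_{t_j}-P_{t_{j-1}})f$ as the projection onto a
sub-$\sigma$-field of a martingale transform via Rota's
trick~\eqref{MartinSG}, and invoke unconditionality of martingale
differences in $L^p$ (Remarks~\ref{Incondi} and~\ref{BurkhoConst}). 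Both
mechanisms produce a constant $\lambda_p$ that necessarily blows up as
$p\to 1$ or $p\to\infty$, consistent with the counterexample above; your
scheme would have given a constant uniform in $p$, which is a signal that
something had to be wrong.
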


 The identity operator belongs to this class (when $a(t) \equiv 1$), we
thus see that $\lambda_p \ge 1$ for every $p$. It follows from the
proposition that the imaginary powers of $(- \Delta)^{1/2}$ act on the
spaces~$L^p(\R^n)$ when $1 < p < +\infty$, with norms bounded independently
of the dimension~$n$. Indeed, we have the formula of Laplace type
\begin{equation}
   \lambda^{\ii b} 
 = \frac 1 {\Gamma(1 - \ii b)} \ms3
    \lambda \int_0^{+\infty} \e^{ - \lambda t} t^{- \ii b} \,  \d t,
 \ms{18}
 \lambda > 0,
 \ms{10}
   a(t) 
 = \frac {t^{- \ii b}} {\Gamma(1 - \ii b)} \ms1 \up,
 \label{LaplaPow}
\end{equation}
hence
$
     \|a\|_\infty 
  =  | \Gamma(1 - \ii b) |^{-1}
$, for every $b \in \R$.
According to the estimate~\eqref{IneGamma} for the Gamma function,
we get from Proposition~\ref{LaplaceMultip} that
\begin{equation}
 \forall b \in \R,
 \ms{12}
     \bigl\| \ms1 |\xi|^{\ii b} \ms1 \bigr\|_{p \rightarrow p}
 \le \lambda_p \ms1 (1 + b^2)^{-1/2} \e^{ \pi \ms1 |b| / 2 },
 \quad
 1 < p < +\infty.
 \label{ImagiPow}
\end{equation}

\begin{smal}
\noindent
Stein's proof of Proposition~\ref{LaplaceMultip} draws on $L^p$
inequalities for the Littlewood--Paley functions $g_1(f)$ and~$g_2(f)$, and
a comparison $g_1(T_m f) \le \kappa \ms2 g_2(f)$. We now sketch another
possibility, which invokes martingale inequalities. If $F$ is as in
Proposition~\ref{LaplaceMultip} and $m(\xi) = F(|\xi|)$, then
$T_m f$, for $f \in \ca S(\R^n)$, can be expressed by
\begin{subequations}\label{LaplaceMultiplier}
\SmallDisplay{\eqref{LaplaceMultiplier}}
\[
   - \bigl( T_{ \di m {2\pi} } f \bigr)
 = \int_0^{+\infty} a(t) 
     \Bigl( \frac \partial {\partial t} \ms2 P_t f \Bigr) \, \d t.
\] 
\end{subequations}
\noindent
Indeed, we know by~\eqref{FutureUse} that
\[
   (P_t f)(x)
 = u(x, t) 
 = \int_{\R^n} \e^{ - 2 \pi t |\xi|} \widehat {f \ms1} \ns{0.7} (\xi) 
    \e^{2 \ii \pi x \ps \xi} \, \d \xi
\]
and
\begin{align*}
   \Bigl( \int_0^{+\infty} a(t) 
     \Bigl( \frac \partial {\partial t} \ms2 P_t f \Bigr)  \, \d t
   \Bigr) (x)
 &= \int_0^{+\infty} a(t) 
    \Bigl( \int_{\R^n} (- 2 \pi |\xi| ) \e^{ - 2 \pi t |\xi|}
     \widehat {f \ms1} \ns{0.7} (\xi) \e^{2 \ii \pi x \ps \xi} \, \d \xi \Bigr)
      \, \d t
 \\
 &= - \int_{\R^n} F(2 \pi |\xi|) \ms2 \widehat {f \ms1} \ns{0.7} (\xi) 
      \e^{2 \ii \pi x \ps \xi} \, \d \xi
 = - \bigl( T_{ \di m { 2\pi } } f \bigr) (x).
\end{align*}

 Suppose that $a$ is a step function supported in 
$[t_0, t_N] \subset [0, +\infty)$. Then
\[
 a(t) = \sum_{j=1}^N a_j \ms1 \gr 1_{ [t_{j-1}, t_j) } (t), 
\]
with $0 = t_0 < t_1 < \ldots < t_N$. By~\eqref{LaplaceMultiplier}, we
obtain that
\[
   - T_{ \di m { 2\pi } } f
 = \sum_{j=1}^{N} a_j (P_{t_{j}} - P_{t_{j-1}})(f).
\]
It follows that $T_m f$ can be considered as projection of a martingale
transform by a conditional expectation $\E_{\ca G}$. Let $u_j = t_j/2$,
$j = 0, \ldots, N$, and $T := u_N$. We have seen in~\eqref{MartinSG} that 
$P_{t_j} f = P_{2u_j} f$ is the image under the projection $\E_{\ca G}$ of
the martingale member $M_{T - u_j} = (P_{u_j} f)(X_{T - u_j})$, so letting
$L_i = M_{T - u_{N - i}}$, $i = 0, \ldots, N$, we see that 
$T_{ \di m { 2\pi } } f$ is equal to
\[
   \E_{\ca G} \Bigl( \sum_{j=1}^N a_j (M_{T - u_{j-1}} - M_{T - u_j}) \Bigr)
 = \E_{\ca G} \Bigl( \sum_{i=1}^N a_{N - i + 1} (L_i - L_{i-1}) \Bigr),
\]
which is the transform of the martingale $(L_i)_{i=0}^N$ by the bounded
non-random multipliers $(a_{N - i + 1})_{i=1}^N$. Also, $L_N$ is equal to
$M_T = f(X_T)$ that has the distribution of $f$ with respect to the
(infinite) invariant measure, the Lebesgue measure on~$\R^n$ (see
Remark~\ref{InfiniProba}), hence $\|f\|_p = \|M_T\|_p$. In this simple case,
one deduces Proposition~\ref{LaplaceMultip} from Remark~\ref{Incondi} about
the Burkholder--Gundy inequalities, and it can be easily generalized, first
to compactly supported continuous functions $a$. Using
Remark~\ref{BurkhoConst}, we find in this way that
\begin{subequations}\label{ConstaLambda}
\SmallDisplay{\eqref{ConstaLambda}}%
\[
 \lambda_p \le \kappa \ms2 p^*,
 \qquad p^* := \max(p, p / (p-1)),
 \ms{10}
 1 < p < +\infty.
\]
\end{subequations}

\end{smal}

\subsection{Riesz transforms%
\label{TransfoRiesz}}

\noindent
In dimension $1$, there is only one Riesz transform $R$, which is called
\emph{the Hilbert transform\/}~$H$.\label{HilbTrans}
It is defined for $f \in L^2(\R)$ by
\[
 \forall \xi \in \R,
 \ms{14}
   (R f) \PF (\xi) 
 = (H f) \PF (\xi) 
 = - \frac { \ii \xi} {|\xi|} \ms2 \widehat f(\xi).
\]
This is given by a multiplier of constant modulus~$1$ (almost everywhere),
thus the transformation is isometric and invertible on $L^2(\R)$ by
Parseval, and~$H$ is a unitary operator on $L^2(\R)$ with 
inverse~$H^{-1} = - H$. If $\widetilde u(x, t)$ denotes the harmonic
extension of $H f$ to the upper half-plane, then 
$u(x, t) + \ii \widetilde u(x, t)$ is a holomorphic function of the complex
variable $z = x + \ii t$, because its Fourier transform vanishes for 
$\xi < 0$, implying by inverse Fourier transform that $u(x, t)$ is an
integral in $\xi > 0$ of the holomorphic functions
$
    \e^{ - 2 \pi |\xi| t } \e^{2 \ii \pi \xi x}
  = \e^{ 2 \ii \pi \xi (x + \ii t) }
$.
A classical theorem going back to Marcel Riesz~\cite{RieszHT} states that
the Hilbert transform is bounded on $L^p(\R)$ when $1 < p < +\infty$. This
is also a consequence of the results on the Littlewood--Paley function
$g(f)$, or of martingale inequalities as we shall see below. Some of the
first deep connections between Brownian motion and classical Harmonic
Analysis can be found in Burkholder--Gundy--Silverstein~\cite{BGS}.

\begin{smal}
\noindent
The Brownian argument is easier for the Hilbert transform $H_\T$ on the
unit circle~$\T \subset \R^2$. Let $(B_t)_{t \ge 0}$ be a plane Brownian
motion defined on some $(\Omega, \ca F, P)$, starting from~$0$ in $\R^2$,
and let $\tau$ be the first time $t$ when $B_t$ hits the circle~$\T$. By
rotational invariance, the distribution of~$B_\tau$ is the uniform
probability measure on the circle. Let~$f$ be a function in $L^p(\T)$ and
let $u$ be its harmonic extension to the unit disk. Assume that 
$2 \pi \ms1 u(0)
 = \int_0^{2 \pi} f(\cos \theta, \sin \theta) \, \d \theta = 0$, and
denote by $a \wedge b$ the minimum of $a$ and $b$ real. The random process
$(M_t)_{t \ge 0} = (u(B_{t \wedge \tau}))_{t \ge 0}$ is a
Brownian martingale, which can be expressed by the It\={o} integral
\[
   u(B_{t \wedge \tau})
 = \int_0^{t \wedge \tau} \nabla u(B_s) \ps \d B_s.
\]
Suppose that $1 < p < +\infty$. By the continuous version of the
Burkholder--Gundy inequalities, the norm
$\|f\|_{L^p(\T)} = \|u(B_\tau)\|_{L^p(\Omega, \ca F, P)}$ is equivalent to
the norm in $L^p(\Omega, \ca F, P)$ of the square function of the martingale
$(M_t)_{t \ge 0}$, given by
\[
    S(f)
 := \Bigl( \int_0^\tau |\nabla u(B_s)|^2 \, \d s \Bigr)^{1/2}.
\]
If $\widetilde f = H_\T f$ denotes the function on $\T$ conjugate to $f$ and
$\widetilde u$ its harmonic extension to the unit disk, then 
$|\nabla \widetilde u(x)| = |\nabla u(x)|$ for $x$ in the unit disk,
according to the Cauchy--Riemann equations for the function 
$u + \ii \widetilde u$ holomorphic in the disk. It follows that 
$S(\widetilde f) = S(f)$ and the $L^p$-boundedness of the Hilbert transform
for the circle is established via the Burkholder--Gundy inequalities of
Theorem~\ref{BurGun}. The bound for the norm of $H_\T$ obtained in this
manner is related to the constants in Burkholder--Gundy. The exact value of
the $L^p$ norm of $H$ is known, this is due to Pichorides~\cite{Pich}, see
Remark~\ref{RemIwaMar} below.

\end{smal}

\noindent
In dimension $n$, there are $n$ Riesz transforms $R_j$, defined on
$L^2(\R^n)$ by
\[
 \label{RieszTra}
   (R_j f) \PF (\xi) 
 = - \frac { \ii \xi_j} {|\xi|} \ms2 \widehat f(\xi),
 \ms{16} j = 1, \ldots, n.
\]
Since
$\bigl\| \bigl(\sum_{j=1}^n |R_j f|^2 \bigr)^{1/2} \bigr\|_2^2
 = \sum_{j=1}^n \|R_j f\|_2^2$,
one has by Parseval that
\begin{equation}
     \Bigl\| \Bigl(\sum_{j=1}^n |R_j f|^2 \Bigr)^{1/2} \Bigr\|_2
  =  \|f\|_2.
 \label{Collective2}
\end{equation}
The Riesz transforms are \og collectively bounded\fge in $L^p(\R^n)$, by a
constant~$\rho_p$ independent of the dimension~$n$
(Stein~\cite{SteinTV}), meaning that
\begin{equation}
     \Bigl\| \Bigl(\sum_{j=1}^n |R_j f|^2 \Bigr)^{1/2} \Bigr\|_p
 \le \rho_p \ms1 \|f\|_p,
 \quad 1 < p < +\infty.
 \label{TransfosRiesz}
\end{equation}
Duoandikoetxea and Rubio de Francia~\cite{DuoRubio} have connected in a few
lines this inequality to the properties of the Hilbert transform (see also
Pisier~\cite{PisierRT}).

\begin{proof}
For each nonzero vector $u$ in~$\R^n$, let us introduce on $L^2(\R^n)$ the
Hilbert transform $H_u$ \emph{in the direction $u$} by setting
\[
 \forall \xi \in \R^n,
 \ms{16}
   (H_u f) \PF (\xi) 
 = - \frac { \ii u \ps \xi} {|u \ps \xi|} \ms2 \widehat f(\xi)
 = - \ii \sign (u \ps \xi) \ms2 \widehat f(\xi).
\]
We deduce easily from the one-dimensional case that $H_u$ acts on
$L^p(\R^n)$, with the same norm as that of $H$ on $L^p(\R)$. It is enough
to check the case when $u$ is the first basis vector $\gr e_1$; if one
writes the points $x$ in $\R^n$ as $x = (t, y)$, $t \in \R$, 
$y \in \R^{n-1}$, and if for $f$ belonging to the Schwartz class 
$\ca S(\R^n)$ we set
$
 f_y(t) = f(t, y)
$,
we can see that $(H_{\gr e_1} f)(t, y) = (H f_y)(t)$. Then, applying
Fubini's theorem, we obtain
\begin{align*}
     \dint |(H_{\gr e_1} f)(t, y)|^p \, \d t \ms1 \d y
  &=  \int_{\R^{n-1}} \Bigl( \int_\R |(H f_y)(t)|^p \, \d t \Bigr) \, \d y
 \\
 &\le \|H\|_{p \rightarrow p}^p 
      \int_{\R^{n-1}} \Bigl( \int_\R |f_y(t)|^p \, \d t \Bigr) \, \d y
  =  \|H\|_{p \rightarrow p}^p \ms1 \|f\|_p^p.
\end{align*}

 We can consider that $\ca R f = (R_1 f, \ldots, R_n f)$\label{VRieszTr} 
is the operator associated to the vector-valued multiplier
\[
 \gr m(\xi) = - \ii |\xi|^{-1} \xi \in \R^n,
 \quad
 \xi \in \R^n, 
\]
that is to say, the operator sending $f \in \ca S(\R^n)$ to the function
$T_{\gr m} f$ from $\R^n$ to~$\R^n$ whose $\R^n$-valued Fourier transform
is equal to $\widehat f(\xi) \ms1 \gr m(\xi)$. For $f \in \ca S(\R^n)$, let
us look at the vector-valued integral
\[
   (\ca H f)(x)
 = \int_{\R^n} (H_u f)(x) \ms1 u \, \d \gamma_n(u) \in \R^n,
 \quad
 x \in \R^n,
\]
where $\gamma_n$ is the Gaussian probability measure
from~\eqref{LoiNZeroId}. The operator $\ca H$ corresponds to the
vector-valued multiplier defined when $\xi \ne 0$ by
\[
   - \ii \int_{\R^n} \sign(u \ps \xi) \ms1 u \, \d \gamma_n(u)
 = - \ii \Bigl( \int_{\R} |v| \, \d \gamma_1(v) \Bigr) |\xi|^{-1} \xi
 = - \ii \sqrt{ \frac 2 \pi } \ms2 |\xi|^{-1} \xi.
\]
This can be seen by integrating on affine hyperplanes orthogonal to $\xi$.
The \og normalized\fge partial integral on the hyper\-plane 
$\xi^\perp + v |\xi|^{-1} \xi$, $v \in \R$, is equal to 
\[
    \int_{\xi^\perp} 
     \sign(v) \ms1 (w + v |\xi|^{-1} \xi) \, \d \gamma_{\xi^\perp} (w)
 = |v| \ms2 |\xi|^{-1} \xi.
\] 
It follows that $\ca R f = \sqrt{ \pi / 2 } \ms3 \ca H f$. For~$x$ fixed,
the norm of $(\ca H f)(x)$ is the supremum of scalar products with 
vectors $\theta \in S^{n-1}$, and letting $1 / q + 1 / p = 1$, one has that
\begin{align*}
     |(\ca H f)(x)|
 &\le \sup_{\theta \in S^{n-1}} \ms2
      \int_{\R^n} |\theta \ps u| \ms1 |(H_u f)(x)| \, \d \gamma_n(u)
 \\
 &\le \Bigl( \int_{\R} |v|^q \, \d \gamma_1(v) \Bigr)^{1/q}
     \ms1 \Bigl( \int_{\R^n}  |(H_u f)(x)|^p \, \d \gamma_n(u) 
          \Bigr)^{1/p}.
\end{align*}
Using the notation $g_q$ of~\eqref{AsymptoGauss} for the Gaussian moments,
we get
\begin{align}
     &\ms4 \Bigl( \int_{\R^n} |(\ca R f)(x)|^p \, \d x \Bigr)^{1/p}
  =  \sqrt{ \frac \pi 2 } \ms2
      \Bigl( \int_{\R^n} |(\ca H f)(x)|^p \, \d x \Bigr)^{1/p}
 \label{RieszBd}
 \\
 \le &\ms4 \sqrt{ \frac \pi 2 } \ms3 g_q
     \ms2 \Bigl( \int_{\R^n} \int_{\R^n}  
      |(H_u f)(x)|^p \, \d \gamma_n(u) \ms2 \d x \Bigr)^{1/p}
 \le \ms4 \sqrt{ \frac \pi 2 } \ms3 g_q \ms2 
       \|H\|_{p \rightarrow p} \ms2 \|f\|_p.
 \notag
\end{align}
This argument yields
$\rho_p \le \sqrt{\pi/2} \ms4 g_q \ms3 \|H\|_{p \rightarrow p}$ for the
constant $\rho_p$ in~\eqref{TransfosRiesz}. When $p = 2$, this gives
$\rho_2 \le \sqrt{\pi/2}$ instead of the correct value $\rho_2 = 1$
of~\eqref{Collective2}. When $p$ tends to $1$, we obtain
by~\eqref{AsymptoGauss} that
\[
     \Bigl( \int_{\R^n} |(\ca R f)(x)|^p \, \d x \Bigr)^{1/p}
 \le \kappa \ms2 \sqrt q \ms2 \|H\|_{p \rightarrow p} \ms2 \|f\|_p.
 \qedhere
\] 
\end{proof}

\begin{remN}\label{RemIwaMar}
The value $g_q = \bigl( \int_{\R} |v|^q \, \d \gamma_1(v) \bigr)^{1/q}$
tends to $\sqrt { 2 / \pi}$ when $p$ tends to~$+\infty$, and the asymptotic
result $\rho_p \simeq \|H\|_{p\to p}$ obtained from~\eqref{RieszBd} in this
case is essentially best possible. Indeed, Iwaniec and Martin~\cite{IwaMar}
have shown that the operator norm on $L^p(\R^n)$ of each individual Riesz
transform $R_j$, $j = 1, \ldots, n$, is equal to the one of the Hilbert
transform $H$ on $L^p(\R)$, hence 
$1 \le \|H\|_{p \rightarrow p} \le \rho_p$. According to
Pichorides~\cite{Pich}, the norm of the Hilbert transform is given by
\[
   \|H\|_{p\to p}
 = \cot 
    \Bigl(
     \frac {\ms1 \pi} { 2 p^* \ns5 } \ms3
    \Bigr),
  \ms{10} \hbox{with} \ms{10}
  p^* = \max \bigl( p, p / (p-1) \bigr).
\]
Iwaniec and Martin~\cite{IwaMar} also bound the \og collective\fge norm
in~\eqref{TransfosRiesz} by $\sqrt 2 \ms2 H_p(1)$, where $H_p(1)$ is the
norm on $L^p(\C) \simeq L^p(\R^2)$ of the \og complex Hilbert transform\fg,
which corresponds to the multiplier 
$\C \ni \xi \mapsto \ii \ms1 |\xi|^{-1} \xi$, in other words, the operator
$R_1 + \ii R_2$ on $L^p(\R^2)$.
Iwaniec and Sbordone~\cite[Appendix]{IwaSbo} add a few lines
and give $H_p(1) \le \frac \pi 2 \|H\|_{p \to p}$ so that finally
\begin{equation}
     \rho_p 
 \le \sqrt 2 \ms2 H_p(1)
 \le \frac \pi {\sqrt 2} \ms2 \|H\|_{p \to p}
 \le \kappa \ms2 p^*.
 \label{EstiRho}
\end{equation}
\end{remN}

\begin{rem}
The proof from~\cite{DuoRubio} is in the spirit of the \emph{method of
rotations}, which uses integration in polar coordinates to get directional
operators in its radial part, see also Section~\ref{SphericOp}. 
With this method, one can relate to the Hilbert transform not only the
Riesz transforms, but also more general singular integrals with odd
kernel, see~\cite[Section 5.2]{GrafakosCFA} for example.
\end{rem}

\section{Analytic tools%
\label{AnaTool}}

\subsection{Some known facts about the Gamma function%
\label{FoGamm}}

\noindent
From Euler's formula\label{EulerForm}
\[
 \forall z \in \C \setminus (-\N),
 \ms{24}
   \Gamma(z)
 = \lim_{n \rightarrow \infty} \ms2
    \frac {n! \ms5 n^z} {z \ms2 (z+1) \ldots (z+n)} \ms1 \up,
\]
one passes to the Weierstrass infinite product for $1 / \Gamma$, stating
that
\[
   \frac 1 {\Gamma(z+1)}
 = \frac 1 {z \Gamma(z)}
 = \e^{\gamma z}
    \prod_{n=1}^\infty \ms1 \Bigl( (1 + z / n ) \e^{ - z/n} \Bigr),
\]
where $\gamma$ is the Euler--Mascheroni constant. It follows that 
$1 / \Gamma$ is an entire function, with simple zeroes 
$z = 0, -1, -2, \ldots\ms5$. For the interpolation arguments to come, we
need upper estimates on the modulus of 
$1 / \Gamma(\sigma + \ii \tau)$ for $\sigma, \tau$ real. From the preceding
formula and from $\Gamma(\overline z) = \overline{\Gamma(z)}$, we infer that
\begin{equation}
   \Bigl| \frac 1 {\Gamma(1 + \ii \tau)} \Bigr|^2
 = \prod_{n=1}^\infty \ms1 \Bigl( 1 + \frac {\tau^2} {n^2} \Bigr)
 = \frac {\sinh(\pi \tau)} {\pi \tau} \up,
 \quad
 \tau \in \R,
 \label{RefFor}
\end{equation}
according to another result due to Euler, the famous formula
\begin{subequations}\label{EulerFormulaG}
\begin{equation}
   \frac {\sin (\pi z)} {\pi z}
 = \prod_{n = 1}^\infty \Bigl( 1 - \frac {z^2} {n^2} \Bigr).
 \label{EulerFormula} \tag{\ref{EulerFormulaG}.{\bf E}}
\end{equation}
\end{subequations}
The connoisseur has seen that we just came upon a special case of the
\og Euler reflection formula\fg, stating that
$\Gamma(z)^{-1} \Gamma(1 - z)^{-1} = \sin(\pi z) / \pi$ for every 
$z \in \C$, or equivalently
$\Gamma(1 + z)^{-1} \Gamma(1 - z)^{-1} = \sin(\pi z) / (\pi z)$. For every
$x$ real, one has
\begin{equation}
     \frac {\sinh(\pi x)} {\pi x}
 \le \frac { \e^{ \pi |x| } } { 1 + \pi |x| }
 \le \frac { \e^{ \pi |x| } } { (1 + x^2)^{1/2} } \up.
 \label{Calculus}
\end{equation}
The right-hand inequality is evident, the left-hand one is
equivalent to saying that for every $y \ge 0$, we have 
$(1 + y) \sinh(y) \le y \e^y$ or 
$h(y) := ( y - 1 ) \e^{2 y} + y + 1 \ge 0$, which is true because 
$h(0) = h'(0) = 0$ and $h''(y) = 4 y \e^{2 y} \ge 0$ when $y \ge 0$.
Using~\eqref{RefFor} and~\eqref{Calculus}, we get in particular that 
\begin{equation}
 \forall \tau \in \R,
 \ms{16}
      \Bigl| \frac 1 {\Gamma(1 + \ii \tau)} \Bigr|
  \le \bigl( \sqrt{1 + \tau^2} \bigr)^{-1/2} 
       \e^{\pi |\tau| / 2}.
 \label{IneGamma}
\end{equation}
More generally than in~\eqref{RefFor}, for $\sigma \in [0, 1]$, let us write
\begin{align*}
   & \ms4 \Bigl| \frac 1 {\Gamma(1 + \sigma + \ii \tau) } \Bigr|^2
 = \e^{2 \gamma \sigma} \prod_{n \ge 1}
    \Bigl( 
     \bigl[ (1 + \sigma/n)^2 + (\tau / n)^2 \ms1 \bigl] \ms2 
      \e^{-2 \sigma / n}
    \Bigr)
 \\
 = & \ms1 \e^{2 \gamma \sigma} 
    \prod_{n \ge 1} \bigl( (1 + \sigma/n)^2  \e^{- 2 \sigma / n} \bigr)
    \ns4
    \prod_{n \ge 1} 
     \Bigl( 1 + \Bigl( \frac \tau {n + \sigma} \Bigr)^2
     \Bigr)
  \ns4=\ns2 \Gamma(1 \ns1+\ns1 \sigma)^{-2}
   \ns3
      \prod_{n \ge 1} 
       \Bigl( 1 \ns1+\ns1 \Bigl( \frac \tau {n + \sigma} \Bigr)^2 \Bigr).
\end{align*}
We have by convexity of $\ln(1 + x^{-2})$ for $x > 0$ that
\begin{align*}
      \prod_{n \ge 1} 
       \Bigl( 1 + \Bigl( \frac \tau {n + \sigma} \Bigr)^2 \Bigr)
 &\le \Bigl( \prod_{n \ge 1} 
       \Bigl( 1 + \Bigl( \frac \tau n \Bigr)^2 \Bigr) \Bigr)^{1 - \sigma}
      \Bigl( \prod_{n \ge 1} 
       \Bigl( 1 + \Bigl( \frac \tau {n + 1} \Bigr)^2 \Bigr) \Bigr)^{\sigma}
 \\
 &= (1 + \tau^2)^{-\sigma} \ms2 
    \prod_{n \ge 1}
     \Bigl( 1 + \Bigl( \frac \tau n \Bigr)^2 \Bigr)   
 = (1 + \tau^2)^{-\sigma} \ms2 \frac {\sinh(\pi \tau)} {\pi \tau} \up.
\end{align*}
It follows that
\[
     \Bigl| \frac 1 {\Gamma(1 + \sigma + \ii \tau) } \Bigr|^2
 \le \Gamma(1 + \sigma)^{-2}
      (1 + \tau^2)^{-\sigma} \ms2 \frac {\sinh(\pi \tau)} {\pi \tau}
\]
and applying~\eqref{Calculus} we obtain
\begin{equation}
     \Bigl| \frac 1 {\Gamma(1 + \sigma + \ii \tau) } \Bigr|
 \le \Gamma(1 + \sigma)^{-1}
      \bigl( \sqrt { 1 + \tau^2 } \bigr)^{ 1/2 - 1 - \sigma}
       \ms2 \e^{ \pi |\tau| / 2 }.
 \label{Base}
\end{equation}
We extend this bound by using the functional equation
$z \Gamma(z) = \Gamma(z + 1)$. When $z = k + 1 + \sigma + \ii \tau$, with
$\sigma \in (0, 1)$ and $k \ge 1$ an integer, we have
\begin{align}
      \Bigl| \frac 1 { \Gamma( k + 1 + \sigma + \ii \tau ) } \Bigr|
  &=  \Bigl(
       \prod_{j = 1}^k \bigl( (j + \sigma)^2 + \tau^2 \bigr)^{-1/2}
      \Bigr)
      \Bigl| \frac 1 {\Gamma(1 + \sigma + \ii \tau)} \Bigr| 
  \notag
  \\
 &\le \bigl( \sqrt{1 + \tau^2} \bigr)^{-k}
       \Bigl| \frac 1 {\Gamma(1 + \sigma + \ii \tau)} \Bigr|.
 \label{BBB}
\end{align}
Letting $a \wedge b = \min(a, b)$ for $a, b \in \R$, we see that
\[
     \Gamma(1 + \sigma) 
  =  \int_0^{+\infty} u^\sigma \e^{- u} \, \d u
 \ge \int_0^{+\infty} (u \wedge 1) \e^{- u} \, \d u
  =  \int_0^1 \e^{- u} \, \d u
  =  1 - \e^{-1}
  >  \frac 1 2 \up.
\]

\begin{smal}
\noindent
Let us mention that the actual minimal value of $\Gamma$ on $(0, +\infty)$
is reached at
\begin{subequations}\label{MiniGamm}
\SmallDisplay{\eqref{MiniGamm}}%
\[
 \xG = 1.46163\ldots 
 \quad \hbox{and that} \quad
 \Gamma(\xG) > 0.88. 
\]
\end{subequations}
\noindent
Note that on $(0, +\infty)$, the function $x \mapsto \ln \Gamma(x)$ is
convex and $\ln \Gamma(1) = \ln \Gamma(2) = 0$, hence
$\Gamma(x) \le 1$ when $x \in [1, 2]$ and $\Gamma(x) \ge 1$ on
$(0, 1]$ and $[2, +\infty)$.

\end{smal}

\noindent
We get consequently by~\eqref{Base} and~\eqref{BBB} that
\begin{equation}
     \Bigl| \frac 1 {\Gamma( k + 1 + \sigma + \ii \tau )} \Bigr|
 \le 2 \ms2
      \bigl( \sqrt{ 1 + \tau^2 } \bigr)^{1/2 - k - 1 - \sigma }
       \ms1 \e^{ \pi |\tau| / 2 }.
 \label{Posi}
\end{equation}
When $z = - k + \sigma + \ii \tau$, with $k \ge 0$ an integer, we obtain by
the functional equation
\[
   \Bigl| \frac 1 {\Gamma(z)} \Bigr|
 = \Bigl| \frac 1 {\Gamma(1 + \sigma + \ii \tau)} \Bigr|
   \ms3
   \prod_{j = -k}^0 ( (j + \sigma)^2 + \tau^2)^{1/2}.
\]
For $j = 0, -1$, the factors in the product are $\le (1 + \tau^2)^{1/2}$,
thus
\begin{equation}
   \Bigl| \frac 1 {\Gamma(- k + \sigma + \ii \tau)} \Bigr|
 \le \bigl( 1 + \tau^2 \bigr)^{ (k+1) / 2} \ms2
       \Bigl| \frac 1 {\Gamma(1 + \sigma + \ii \tau)} \Bigr|
 \ms{12} \hbox{when} \ms{ 8}
 k = 0, 1,
 \label{GammaNeg0}
\end{equation}
and when $j \le -2$, we have 
$
     ( (j + \sigma)^2 + \tau^2)^{1/2}
 \le (|j| - \sigma) \ms2 (1 + \tau^2)^{1/2}
$.
It follows for $z = - k + \sigma + \ii \tau$, $k \ge 2$, that
\begin{equation}
   \Bigl| \frac 1 {\Gamma(z)} \Bigr|
 \le ( k - \sigma) ( k - 1 - \sigma) \ldots
      (2 - \sigma) \ms2 \bigl( 1 + \tau^2 \bigr)^{ (k+1) / 2} \ms2
       \Bigl| \frac 1 {\Gamma(1 + \sigma + \ii \tau)} \Bigr|.
 \label{GammaNeg}
\end{equation}
By the functional equation and the convexity of $\ln \Gamma$ on
$(0, +\infty)$, we have
\begin{align*}
  &\ms5    \Gamma(1 + \sigma)^{-1} 
       ( k - \sigma) ( k-1 - \sigma) \ldots (2 - \sigma)
  =  \frac { \Gamma( k + 1 - \sigma) } 
           { \Gamma( 2 - \sigma ) \ms1 \Gamma(1 + \sigma) }
  \\
 \le &\ms5 \frac { \Gamma( k + 1 - \sigma) } 
                 { \Gamma( 3 / 2 )}
  =   \frac 2 {\sqrt \pi} \ms2 \Gamma( k + 1 - \sigma )
  <  2 \ms1 \Gamma( k + 1 - \sigma).
\end{align*}
Coming back to~\eqref{GammaNeg} and using~\eqref{Base}, we conclude 
when $k \ge 2$ that
\begin{equation}
     \Bigl| \frac 1 {\Gamma(-k + \sigma + \ii \tau)} \Bigr|
 \le 2 \ms2 \Gamma( k - \sigma + 1 ) \ms2
      \bigl( \sqrt{ 1 + \tau^2} \bigr)^{1/2 + k - \sigma}
       \ms2 \e^{\pi |\tau| / 2}.
 \label{Nega}
\end{equation}
When $\Re z \ge - 1$, it follows from~\eqref{Posi} and~\eqref{GammaNeg0}
that
\[
     \Bigl| \frac 1 {\Gamma(z) } \Bigr|
 \le 2 \ms2  
      \bigl( \sqrt{1 + (\Im z)^2} \bigr)^{ 1/2 - \Re z}
       \ms2 \e^{\pi |\Im z| / 2},
\]
so, in every half-plane of the form $\Re z \ge a$, one has by~\eqref{Nega}
an upper bound
\begin{subequations}\label{MajoGammG}
\begin{equation}
     \Bigl| \frac 1 {\Gamma(z)} \Bigr|
 \le \beta_a \ms1 \bigl( \sqrt{1 + |\Im z|^2} \bigr)^{1/2 - a}
      \e^{\pi |\Im z| / 2},
 \label{MajoGamm} \tag{\ref{MajoGammG}.$\Gamma$}
\end{equation}
\end{subequations}
with $\beta_a = 2 \ms2 \Gamma(|a| + 1)$ when $a \le - 1$, and $\beta_a = 2$
otherwise.

\begin{rem}
The rather crude estimate~\eqref{MajoGamm} is sufficient for our purposes.
In~\cite{SteinTHA}, Stein refers to Titchmarsh~\cite[p.~259]{Titchmarsh},
for an exact asymptotic estimate. When $\sigma$ is fixed and
$|\tau| \rightarrow +\infty$, one has
\[
      |\Gamma(\sigma + \ii \tau)|
 \simeq \sqrt{2 \pi} \e^{ - \pi |\tau| / 2} |\tau|^{\sigma - 1/2}.
\]
When $\sigma \ge 1$, the preceding proof gives a \emph{lower bound\/}
$2^{-1} \sqrt{2 \pi} \e^{ - \pi |\tau| / 2} |\tau|^{\sigma - 1/2}$ for
every $\tau$. We can see it by replacing the inequality~\eqref{Calculus}
with the evident inequality
$\sinh(\pi x) / (\pi x) \le (2 \pi |x|)^{-1} \e^{\pi |x|}$.  It is not
possible to replace $\sqrt{1 + |\Im z|^2}$ by $|\Im z|$ in~\eqref{MajoGamm}
when $\Re z \le - 1$, because the zeroes $-1, -2, \ldots$ of $1 / \Gamma$
are simple. For more results on the Gamma function, we refer to
Andrews--Askey--Roy~\cite{A-Askey-R}.
\end{rem}

\subsection{The interpolation scheme%
\label{InterpoSchem}}

\noindent
We begin with the classical \emph{three lines lemma}, an easier version of
which is the \emph{Hadamard three-circle theorem}. After this, we shall
turn to interpolation of holomorphic families of linear operators.

\subsubsection{The three lines lemma\label{TheTLL}}

\begin{lem}%
\label{TLL}
Let $S$ denote the open strip\/ $\{z : 0 < \Re z < 1\}$ in the complex
plane. Let $f$ be a function holomorphic in $S$ and continuous on the
closure of~$S$. Assume that $f$ is bounded in $S$ and that
\[
 |f(0 + \ii \tau)| \le C_0,
 \ms{12}
 |f(1 + \ii \tau)| \le C_1
\]
for all $\tau \in \R$. Then, for every $\theta \in (0, 1)$, one has that\/
$
 |f(\theta)| \le C_0^{1 - \theta} C_1^\theta
$.
\end{lem}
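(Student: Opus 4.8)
The plan is to use the standard maximum-modulus trick: multiply $f$ by an auxiliary exponential factor so that the resulting function is bounded by a single constant on both vertical edges of the strip, and then invoke the Phragmén--Lindelöf principle (or, since $f$ is assumed bounded, the ordinary maximum-modulus principle applied to a large rectangle) to propagate the bound to the interior.

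Concretely, I would fix $\theta \in (0,1)$ and set
\[
 g(z) = f(z) \ms1 C_0^{z - 1} \ms1 C_1^{-z},
\]
where the powers are defined using the principal branch (real $C_0, C_1 > 0$, so there is no ambiguity). One may assume $C_0, C_1 > 0$ by replacing them with $C_0 + \varepsilon$, $C_1 + \varepsilon$ and letting $\varepsilon \to 0$ at the end. Then $g$ is holomorphic in $S$, continuous on $\overline S$, and bounded in $S$ (since $|C_0^{z-1} C_1^{-z}|$ depends only on $\Re z \in [0,1]$ and is therefore bounded there, while $f$ is bounded by hypothesis). On the edge $\Re z = 0$ we have $|g(\ii\tau)| = |f(\ii\tau)| \ms1 C_0^{-1} \le 1$, and on the edge $\Re z = 1$ we have $|g(1 + \ii\tau)| = |f(1+\ii\tau)| \ms1 C_1^{-1} \le 1$.

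It then remains to conclude that $|g| \le 1$ on all of $S$. Since $g$ is bounded, say $|g| \le M$ on $S$, I would apply the maximum principle on the rectangle $R_T = \{0 \le \Re z \le 1, \ |\Im z| \le T\}$ after a further small perturbation: consider $g_\delta(z) = g(z) \ms1 \e^{\delta(z^2 - z)}$ (or simply $\e^{\delta z^2}$ with a rotation), whose modulus on the horizontal sides of $R_T$ tends to $0$ as $T \to \infty$ uniformly, so for $T$ large $|g_\delta| \le 1$ on $\partial R_T$, hence on $R_T$; letting $T \to \infty$ and then $\delta \to 0$ gives $|g| \le 1$ on $S$. In particular $|g(\theta)| \le 1$, which rearranges to $|f(\theta)| \le C_0^{1-\theta} C_1^\theta$.

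I do not expect a serious obstacle here; the only point needing a little care is the justification that the boundedness hypothesis on $f$ lets one run the maximum principle on the unbounded strip — this is exactly where the auxiliary Gaussian-type damping factor $\e^{\delta z^2}$ is used, and it is the one step worth writing out rather than asserting. Everything else is bookkeeping with moduli of exponentials.
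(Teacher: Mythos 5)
Your proof is correct and is the standard argument; the paper states Lemma~\ref{TLL} without giving a proof, treating it as classical, but the technique you use (normalizing with the factor $C_0^{z-1}C_1^{-z}$, then a damping factor of the form $\e^{\delta(z^2-z)}$ to run the maximum principle on truncated rectangles and let the parameters tend to zero) is exactly the device the paper employs in the small-print discussion just after the remark and in the proof of the generalization, Lemma~\ref{TLLb}, where an auxiliary factor $g(z)=\e^{cz^2/2+dz}$ plays the same role. So you are in complete agreement with the paper's approach.
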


\begin{remN}%
\label{PhragLin}
Of course $f(\theta + \ii \tau)$ admits the same bound for every
$\tau \in \R$, by translating $f$ vertically. The somewhat strange
assumption that $f$ must be \emph{bounded} on the whole strip by a value
which does not appear in the final result is not the finest assumption that
makes the conclusion valid, see a better criterion below. However, when
Lemma~\ref{TLL} applies, the function $f$ \emph{is bounded} at last. It is
well known that some restriction on the size of $f$ inside the strip is
needed for the lemma to hold true. Indeed, in the strip 
$S_\pi = \{ z : |\Re z| \le \pi / 2\}$, the function $f(z) = \e^{\cos z}$
has modulus one on the two lines $\Re z = \pm \pi / 2$, but it is \og very
big\fge when $\Re z = 0$, since $|f(\ii \tau)| = \e^{\cosh(\tau)}$. For a
function $f$ holomorphic in an open vertical strip $S$, continuous on the
closure and bounded by $1$ on the two boundary lines, either~$f$ is bounded
by $1$ on~$S$, or else $\sup_{ |\Im z| = |\tau| } |f(z)|$ must
become extremely large when $|\tau|$ tends to infinity. This is the typical
situation with the theorems of Phragm\'en--Lindel\"of type, 
see~\cite[Chap~12, 12.7]{RudinRCA} for example.
\end{remN}

\begin{smal}
\noindent
Here is a sufficient criterion ensuring that $|f|$ is bounded by its
supremum on the boundary $\partial S_w$ of a vertical strip $S_w$ of
width $w$. If $f$ is holomorphic on $S_w$, continuous on the closure with
$|f| \le 1$ on $\partial S_w$, and if for some $a < \pi / w$ one has
\[
 |f(z)| = O \bigl( \exp( \e^{a \ms1 |\Im z|} ) \bigr)
\]
when $z$ tends to infinity in~$S_w$, then $|f|$ is bounded by $1$ on the
strip. Let us prove it assuming
$\ln |f(z)| \le \kappa \e^{a \ms1 |\Im z|}$ in 
$S_\pi = \{ |\Re z| \le \pi / 2 \}$, for an $a < 1 = \pi / w$. Set 
$g_\varepsilon(z) = \e^{- \varepsilon \cos(b z)}$, with
$\varepsilon > 0$ and $a < b < 1$. If $z = \sigma + \ii \tau$ and 
$|\sigma| \le \pi / 2$, we have 
\begin{align*}
     |g_\varepsilon(z)| 
  &=  \exp \bigl( -\varepsilon \Re \cos(b z) \bigr)
   =  \exp \bigl( -\varepsilon \cos(b \sigma) \cosh(b \tau) \bigr)
 \\
 &\le \exp \bigl( -\varepsilon \cos(b \pi / 2) \cosh(b \tau) \bigr)
 \le \exp \bigl( -B_\varepsilon \e^{b |\tau|} \bigr)
 \le 1,
 \quad B_\varepsilon > 0,
\end{align*}
hence $|f(z) \ms1 g_\varepsilon(z)| \le 1$ on $\partial S_\pi$, and
if $|\tau| = |\Im z| \ge (b - a)^{-1} \ln (\kappa / B_\varepsilon)$ we get
\begin{subequations}\label{Moderation}
\SmallDisplay{\eqref{Moderation}}%
\[
     \ln |f(z) \ms1 g_\varepsilon(z)| 
 \le \kappa \e^{a |\tau|} - B_\varepsilon \e^{b |\tau|}
 \le 0.
\]
\end{subequations}
\noindent
Given any $z_0 \in S_\pi$, we can find a rectangle 
$R_\varepsilon
 = \{ |\Re z| \le \pi / 2, \ms4 |\Im z| \le \tau_0(\varepsilon) \}$
containing $z_0$ such that $|f(z) \ms1 g_\varepsilon(z)| \le 1$
on~$\partial R_\varepsilon$. We then have
$|f(z_0) \ms1 g_\varepsilon(z_0)| \le 1$ by the maximum principle,
$
 |f(z_0)| \le | \e^{\varepsilon \cos(b z_0)} |
$
for every $\varepsilon > 0$, thus $|f(z_0)| \le 1$.

\end{smal}%

\noindent
Several times later on, we encounter situations where the function $f$ is
not bounded on the two lines limiting a vertical strip $S$, but has instead
a growth exponential in~$|\tau| = |\Im z|$. The next lemma generalizes the
preceding. Our proof and estimate are not the \og correct\fge ones, as we
shall explain below after Corollary~\ref{InStrip}, but they give a
reasonable explicit bound. In these Notes, we shall say that a function~$f$
defined on a vertical strip $S$ has
an \emph{admissible growth}\label{AdmiGro} 
in the strip if for some $\kappa > 0$, the function $f$ admits in~$S$ a
bound of the form $|f(z)| \le \kappa \e^{ \kappa |\Im z|}$.

\begin{lem}%
\label{TLLb}
Let $f$ be a function holomorphic in the strip $S = \{z : 0 < \Re z < 1\}$,
with admissible growth in $S$ and continuous on the closure of $S$. Assume
that there exist real numbers $a_0, a_1 \ge 0$ and $b_0, b_1$ such that for
every $\tau \in \R$, one has
\[
 |f(0 + \ii \tau)| \le \e^{a_0 |\tau| + b_0},
 \ms{12}
 |f(1 + \ii \tau)| \le \e^{a_1 |\tau| + b_1}.
\]
For every $\theta \in (0, 1)$, it follows that
\[
     |f(\theta)|
 \le \exp \Bigl( \sqrt{ \theta(1 - \theta) \vphantom{a_0^2} } \ms3
                  \sqrt{ (1 - \theta) a_0^2 + \theta a_1^2}
                 + (1 - \theta) b_0 + \theta b_1
          \Bigr).
\]
\end{lem}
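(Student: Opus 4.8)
The plan is to reduce Lemma~\ref{TLLb} to the ordinary three lines lemma, Lemma~\ref{TLL}, by multiplying $f$ by a Gaussian--type entire factor that converts the exponential boundary growth into bounded quantities while restoring the global boundedness required by Lemma~\ref{TLL}. Fix $\theta \in (0,1)$ and a parameter $\mu > 0$, put $g(z) = \e^{\mu z^2}$ and $F = f g$. Then $F$ is holomorphic in $S$ and continuous on $\overline S$; moreover $F$ is \emph{bounded} on $\overline S$, since for $z = \sigma + \ii \tau$ with $\sigma \in [0,1]$ one has $|g(z)| = \e^{\mu \sigma^2 - \mu \tau^2} \le \e^{\mu - \mu \tau^2}$, while the admissible growth hypothesis gives $|f(z)| \le \kappa\ms1\e^{\kappa |\tau|}$, so that $|F(z)| \le \kappa\ms1\e^{\mu}\ms1\e^{\kappa |\tau| - \mu \tau^2}$, and $\tau \mapsto \e^{\kappa |\tau| - \mu \tau^2}$ is continuous on $\R$ and tends to $0$ at infinity. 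Hence $F$ meets all hypotheses of Lemma~\ref{TLL}.

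Next I would bound $F$ on the two boundary lines. For $j \in \{0,1\}$ and $\tau \in \R$, using $j^2 = j$, the hypotheses on $f$, and the elementary inequality $a_j t - \mu t^2 \le a_j^2/(4\mu)$ (valid for all real $t$ since $a_j \ge 0$), one gets
\[
   |F(j + \ii \tau)|
 \le \e^{a_j |\tau| + b_j}\ms1 \e^{\mu j - \mu \tau^2}
  =  \e^{\mu j + b_j}\ms1 \e^{a_j |\tau| - \mu \tau^2}
 \le \e^{\mu j + b_j + a_j^2/(4\mu)} =: C_j .
\]
Lemma~\ref{TLL} then yields $|F(\theta)| \le C_0^{1-\theta} C_1^{\theta}$. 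Dividing by $|g(\theta)| = \e^{\mu \theta^2}$ and collecting the exponents (the $\mu$--linear terms combine as $\theta \mu - \mu \theta^2 = \mu\ms1\theta(1-\theta)$), I obtain, for every $\mu > 0$,
\[
   |f(\theta)|
 \le \exp \Bigl(
       \frac{(1-\theta) a_0^2 + \theta a_1^2}{4\mu}
        + \mu\ms1\theta(1-\theta)
        + (1-\theta) b_0 + \theta b_1
      \Bigr).
\]

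It remains to optimise in $\mu$. Setting $A = (1-\theta) a_0^2 + \theta a_1^2 \ge 0$ and noting $\theta(1-\theta) > 0$, the arithmetic--geometric mean inequality gives $\inf_{\mu > 0}\bigl( A/(4\mu) + \mu\ms1\theta(1-\theta) \bigr) = \sqrt{A\ms1\theta(1-\theta)} = \sqrt{\theta(1-\theta)}\ms2\sqrt{(1-\theta) a_0^2 + \theta a_1^2}$ --- the infimum being attained at $\mu = \tfrac12\sqrt{A/(\theta(1-\theta))}$ if $A>0$, and being a limiting value as $\mu \to \infty$ if $A=0$. Taking the infimum over $\mu > 0$ in the last display produces exactly the asserted estimate. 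The argument is otherwise routine; the one point requiring genuine care is the verification that $F$ is bounded on the closed strip, which is precisely what the admissible growth hypothesis and the $-\mu\tau^2$ decay of $g$ are for. One can also note that adjoining an extra real linear factor $\e^{\nu z}$ to $g$ would change nothing, its contributions to $C_1^{\theta}$ and to $g(\theta)$ cancelling, so a single parameter $\mu$ suffices; the suboptimality alluded to before the lemma lies only in the explicit constants, not in this scheme.
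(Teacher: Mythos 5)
Your proof is correct and takes essentially the same route as the paper: multiply $f$ by a Gaussian factor $\e^{\mu z^2}$ to kill the exponential boundary growth, invoke the three lines lemma (the paper does the maximum principle on a growing rectangle directly, which amounts to the same), divide out by $g(\theta)$, and optimize the remaining exponent in $\mu$; the paper writes $g(z) = \e^{cz^2/2 + dz}$ with $c = 2\mu$ and chooses $d$ to equalize $E_0 = E_1$, but as you observe this extra linear factor cancels and buys nothing. One tiny slip: when $A = 0$ the infimum of $A/(4\mu) + \mu\,\theta(1-\theta)$ is approached as $\mu \to 0^{+}$, not as $\mu \to \infty$ (this does not affect the conclusion since you take the infimum over all $\mu > 0$).
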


\begin{proof}
We introduce the holomorphic function
$
 g(z) := \e^{ c z^2 / 2 + d z }
$,
with $c > 0$ and $d$ real. If $z = \sigma + \ii \tau$, we see that
$
 |g(z)| = \e^{ c (\sigma^2 - \tau^2) / 2 + d \ms1 \sigma}
$.
On the vertical side $\Re z = 0$ of~$S$, we have that
\[
     |f(\ii \tau) g(\ii \tau)| 
 \le \e^{ a_0 |\tau| + b_0 - c \tau^2 / 2 }
 \le \e^{ a_0^2 / (2 c) + b_0}
  =: E_0
\]
and when $\Re z = 1$, we get the upper bound
\[
     |f(1 + \ii \tau) g(1 + \ii \tau)| 
 \le \e^{ a_1 |\tau| + b_1  - c \tau^2/2 + c/2 + d }
 \le \e^{ a_1^2 / (2 c) + b_1 + c/2 + d}
  =: E_1.
\]
We choose $d$ so that $E_0 = E_1$, and we need not mention the value
of~$d$.
\dumou

 It follows from the assumption $|f(z)| \le \kappa \e^{\kappa |\Im z|}
 = \kappa \ms1 \e^{\kappa |\tau|}$ that $f(z) g(z)$ tends to zero at
infinity in $S$. Let us fix $\theta \in (0, 1)$. If $f(\theta) \ne 0$,
there exists $\tau_0 > 0$ such that 
$|f(z) g(z)| < |f(\theta) g(\theta)|$ when $|\Im z| \ge \tau_0$. By the
maximum principle for the compact rectangle 
$R = \{ 0 \le \Re z \le 1, \ms4 |\Im z| \le \tau_0 \}$, we know that the
maximum of $|f(z) g(z)|$ is reached at the boundary of $R$, but it cannot
be on the horizontal sides $|\Im z| = \tau_0$. Hence
$|f(\theta) g(\theta)| \le E_0 = E_1 = E_0^{1-\theta} E_1^\theta$,
we get therefore
\[
     |f(\theta)| 
 \le \e^{ -c \theta^2 / 2 - d \theta} E_0^{1-\theta} E_1^\theta
   = \exp \Bigl( \frac {(1 - \theta) a_0^2 + \theta a_1^2} { 2 c}
             + (1-\theta) b_0 + \theta b_1 + c \theta (1 - \theta) / 2
           \Bigr)
\]
and after optimizing in $c > 0$, we conclude that
\[
     |f(\theta)| 
 \le \exp \Bigl( 
            \sqrt{(1 - \theta) a_0^2 + \theta a_1^2} \ms2
             \sqrt{ \theta (1 - \theta) \vphantom{a_0^2} } 
              + (1 - \theta) b_0 + \theta b_1 
           \Bigr).
 \qedhere
\]
\end{proof}

\begin{cor}%
\label{InStrip}
Let $f$ be a function holomorphic in 
$S = \{z : \alpha_0 < \Re z < \alpha_1\}$, with admissible growth in the
strip $S$ and continuous on the closure of~$S$. Assume  that there exist
real numbers $u_0, u_1 \ge 0$ and $v_0, v_1$ such that 
\[
 |f(\alpha_0 + \ii \tau)| \le \e^{u_0 |\tau| + v_0},
 \ms{12}
 |f(\alpha_1 + \ii \tau)| \le \e^{u_1 |\tau| + v_1}
\]
for every $\tau \in \R$. Let $\theta \in [0, 1]$, set
$\alpha_\theta = (1 - \theta) \alpha_0 + \theta \alpha_1$,
$u_\theta = (1 - \theta) u_0 + \theta u_1$ and
$v_\theta = (1 - \theta) v_0 + \theta v_1$. For every $\tau \in \R$, one has
\[
     |f(\alpha_\theta + \ii \tau)|
 \le E_{w, \theta} (u_0, u_1) \ms3
     \e^{ u_\theta \ms1 |\tau| + v_\theta},
\]
where $w = \alpha_1 - \alpha_0$ denotes the width of the strip $S$ and
where
\[
    E_{w, \theta} (u_0, u_1)
 := \exp \Bigl( w \ms2
          \sqrt{ \theta (1 \ns3-\ns3 \theta) \vphantom{u_0^2} } 
           \ms2
          \sqrt{(1 \ns3-\ns3 \theta) u_0^2 + \theta u_1^2} \ms1
         \Bigr).
\] 
\end{cor}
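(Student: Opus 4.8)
The plan is to deduce Corollary~\ref{InStrip} from Lemma~\ref{TLLb} by the obvious affine change of variable that carries the unit-width strip $\{0 < \Re z < 1\}$ onto $S = \{\alpha_0 < \Re z < \alpha_1\}$, combined with a vertical translation that upgrades the estimate at $\alpha_\theta$ to one at $\alpha_\theta + \ii \tau$. Write $w = \alpha_1 - \alpha_0 > 0$. For a fixed $\tau \in \R$, introduce
\[
 g_\tau(z) = f\bigl( \alpha_0 + w z + \ii \tau \bigr),
 \ms{12} z \ms2 \hbox{in the closed unit strip} \ms2 \{ 0 \le \Re z \le 1 \}.
\]
Since $z \mapsto \alpha_0 + w z + \ii \tau$ maps the open unit strip into $S$ and its closure into $\overline S$, the function $g_\tau$ is holomorphic in the open unit strip and continuous up to its boundary. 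The one point worth a line is that $g_\tau$ still has admissible growth: from $|f(z')| \le \kappa \ms1 \e^{\kappa |\Im z'|}$ one gets $|g_\tau(z)| \le \kappa \ms1 \e^{\kappa (w |\Im z| + |\tau|)}$, which is again of the form $\kappa' \e^{\kappa' |\Im z|}$.

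Next I would read off the boundary data. On $\Re z = 0$, putting $z = \ii \sigma$ gives $\alpha_0 + w z + \ii \tau = \alpha_0 + \ii(w\sigma + \tau)$, hence
\[
      |g_\tau(\ii \sigma)|
 \le \e^{u_0 |w \sigma + \tau| + v_0}
 \le \e^{(w u_0) |\sigma| + (u_0 |\tau| + v_0)},
\]
and similarly $|g_\tau(1 + \ii \sigma)| \le \e^{(w u_1) |\sigma| + (u_1 |\tau| + v_1)}$ on $\Re z = 1$. Thus Lemma~\ref{TLLb} applies to $g_\tau$ with $a_0 = w u_0$, $a_1 = w u_1$ (both $\ge 0$ because $w > 0$, $u_j \ge 0$), $b_0 = u_0 |\tau| + v_0$ and $b_1 = u_1 |\tau| + v_1$.

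For $\theta \in (0, 1)$, since $g_\tau(\theta) = f(\alpha_0 + w \theta + \ii \tau) = f(\alpha_\theta + \ii \tau)$, the conclusion of Lemma~\ref{TLLb} reads
\[
     |f(\alpha_\theta + \ii \tau)|
 \le \exp \Bigl(
            w \sqrt{\theta(1-\theta) \vphantom{u_0^2}} \ms2
             \sqrt{(1-\theta) u_0^2 + \theta u_1^2}
            + (1-\theta)(u_0 |\tau| + v_0) + \theta(u_1 |\tau| + v_1)
          \Bigr),
\]
where the factor $w$ has been pulled out of both square roots because it entered the slopes $a_j$ linearly (this is exactly why $w$ appears in $E_{w, \theta}$). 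Collecting the terms proportional to $|\tau|$ into $u_\theta |\tau|$ and the constant terms into $v_\theta$, the right-hand side is precisely $E_{w, \theta}(u_0, u_1) \ms2 \e^{u_\theta |\tau| + v_\theta}$. The endpoint cases $\theta = 0, 1$ are the hypotheses themselves, consistent with $E_{w, \theta}(u_0, u_1) = 1$ there since $\theta(1-\theta) = 0$. There is no real obstacle in this argument; the only things to verify are that the affine substitution and the vertical translation preserve holomorphy, continuity on the closure, and admissible growth, all of which are immediate.
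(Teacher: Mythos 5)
Your proof is correct and takes essentially the same route as the paper: reduce to Lemma~\ref{TLLb} via an affine reparametrization of the strip combined with a vertical translation. The only cosmetic difference is that you perform both shifts in a single substitution $g_\tau(z) = f(\alpha_0 + wz + \ii\tau)$, whereas the paper does them in two passes (first the vertical translation on the unit strip, then the affine rescaling to width $w$); the computations and the resulting bound are identical.
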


 Notice that $\sqrt { \theta (1 - \theta) } \le 1/2$ for every
$\theta \in [0, 1]$. When $0 \le u_0, u_1 \le u$, one can always employ the
simpler bound
$
     E_{w, \theta} (u, u)
 \le \e^{w u / 2}
$.

\begin{proof}
We begin with $S_1 := \{ 0 < \Re z < 1 \}$. We bound the modulus of
$f(\theta + \ii \tau_0)$ for $\tau_0$ in $\R$ by performing a
vertical translation of~$f$, then invoking Lemma~\ref{TLLb}. The function
$F(z) = f(z + \ii \tau_0)$ satisfies
$
     |F(j + \ii \tau)| 
 \le \e^{u_j |\tau| + ( u_j |\tau_0| + v_j )}$,
$j = 0, 1$,
and the bound for $|F(\theta)|$ given at Lemma~\ref{TLLb} implies that
\begin{equation}
     |f(\theta + \ii \tau_0)|
 \le E_{1, \theta} (u_0, u_1) \ms3
     \e^{ u_\theta \ms1 |\tau_0| + v_\theta},
 \quad
 \tau_0 \in \R.
 \label{FTheta}
\end{equation}
It is easy to pass to $S =  \{ \alpha_0 < \Re z < \alpha_1 \}$ with
the transform that replaces $f(z)$, defined for $z \in S$, by 
$F(Z) = f(\alpha_0 + Z w)$ for $Z \in S_1$. If 
$|f(\alpha_j + \ii \tau)| \le \e^{u_j |\tau| + v_j}$, $j = 0, 1$, then
$|F(j + \ii \tau)| \le \e^{w u_j |\tau| + v_j}$ and by~\eqref{FTheta} we
have that
\begin{align*}
 &    \ms{24} 
       |f(\alpha_\theta + \ii \tau_0)|
   =  |F(\theta + \ii \tau_0 / w)|
 \\
 &\le E_{1, \theta} (w u_0, w u_1) \ms3
       \e^{ (w u_\theta) \ms1 |\tau_0 / w| + v_\theta}
   =  E_{w, \theta} (u_0, u_1) \ms3
      \e^{ u_\theta \ms1 |\tau_0| + v_\theta}.
 \qedhere
\end{align*}
\end{proof}

 Applying Corollary~\ref{InStrip} in the case where 
$u_0 = u_1 = u \ge 0$ and $v_j = 0$, one sees that when $f$ has an
admissible growth in $S$, the hypothesis 
$|f(\alpha_j + \ii \tau)| \le \e^{ u |\tau|}$ for all $\tau \in \R$ and 
$j = 0, 1$, implies 
$|f(\alpha + \ii \tau)| \le \e^{w \ms1 u/2} \ms1 \e^{ u |\tau|}$ in the
strip. It is not possible to replace the \og bounding factor\fge 
$\e^{w \ms1 u/2}$ by~$1$, as we shall understand below.

\begin{smal}
\noindent
The \og correct\fge proof of Lemma~\ref{TLLb} uses a lemma given by
Hirschman~\cite[Lemma~1]{Hir}, cited by Stein~\cite{SteinILO}. In our case,
we consider the function $U$, harmonic in the open strip 
$S_1 = \{ 0 < \Re z < 1 \}$ and continuous on the closed strip, equal
to $a_j |\tau| + b_j$ at each boundary point $j + \ii \tau$, with 
$a_j \ge 0$, $\tau \in \R$ and $j = 0, 1$. Let $V$ be the \emph{harmonic
conjugate}\label{HarmonConj} 
of $U$ in $S_1$, defined up to an additive constant by the fact that
$\nabla V(z)$, for $z \in S_1$, is equal to $R \ms1 \nabla U(z)$ where $R$
is the rotation of angle $+\pi / 2$ in $\R^2 \simeq \C$. Let us set 
$V(1/2) = 0$ in order to fix $V$ entirely. Since $U$ is harmonic, the
$1$-form $-U_y \ms2 dx + U_x \ms2 dy$ is closed and
$
   V(z)
 = \int_0^1 R \ms1 \nabla U(\gamma(s)) \ps \gamma'(s) \, \d s
$
for any $C^1$ path $\gamma$ in $S_1$ such that $\gamma(0) = 1/2$ and
$\gamma(1) = z$. Then $U + \ii V$ is holomorphic, by the Cauchy--Riemann
equations. Consider the holomorphic \emph{outer function}
\[
 g(z) = \exp \bigl( -U(z) - \ii V(z) \bigr),
 \quad
 z \in S_1,
\]
for which $|g(z)| = \exp (-U(z))$
and $|g(z)| \le \e^{ - (b_0 \wedge b_1)}$ in~$S_1$. If $f$ is as in
Lemma~\ref{TLLb}, then $|f g| \le 1$ at the boundary of $S_1$ and $f g$ has
an admissible growth. It follows from an easy variation of Lemma~\ref{TLL}
that
$
 |(f g)(\theta)| \le 1
$ 
thus
$
 |f(\theta)| \le \e^{U(\theta)}
$, and it remains to express $U(\theta)$, with the help of the harmonic
measure at $\theta$ for~$S_1$. 
\end{smal}

\noindent
We shall obtain the harmonic measures for
$S_\pi = \{ z \in \C : |\Re z | < \pi / 2 \}$ from the case of the open
unit disk~$D$, by a conformal mapping (see also~\cite[proof of
Lemma~1.3.8]{GrafakosCFA}). Let $\sigma$ belong to 
$I_\pi = (- \pi / 2, \pi / 2) = S_\pi \ms1 \cap \ms2 \R$. The Poisson
probability measure $\mu_\sigma$ at~$\sigma$ relative to~$S_\pi$ can be
written as $\mu_\sigma = \mu_{\sigma, 0} + \mu_{\sigma, 1}$,
where $\mu_{\sigma, 0}$ is supported on $B_0 = -\pi/2 + \ii \R$ and 
$\mu_{\sigma, 1}$ on $B_1 = \pi/2 + \ii \R$. If $h$ is real, harmonic
in~$S_\pi$, bounded and continuous on the closure of $S_\pi$, the value of
$h$ at $\sigma$ is equal to
\begin{subequations}\label{Expression}
\SmallDisplay{\eqref{Expression}}%
\[
   h(\sigma)
 = \int_{\partial S_\pi} h \, \d \mu_\sigma
 = \int_{B_0} h \, \d \mu_{\sigma, 0}
    + \int_{B_1} h \, \d \mu_{\sigma, 1}.
\] 
\end{subequations}
\noindent
The Poisson probability measure~$\nu_r$ for~$D$ at $r \in (-1, 1)$ 
has density  $g_r(\e^{\ii \beta})
 = ( 1 - r^2 ) / ( 1 - 2 r \cos \beta + r^2 )$
with respect to the invariant probability measure on the unit circle~$\T$.
Let $\Phi$ be the holomorphic bijection from $S_\pi$ onto $D$ given by
$\Phi(z) = \tan(z / 2)$ when $z \in S_\pi$, extended to
$|\ns1 \Re z| = \pi / 2$ by the same formula. Then $\partial S_\pi$ is sent
to~$\T \setminus \{\ii, - \ii \}$ and if 
$\Phi(\pi / 2 + \ii \tau) = \e^{\ii \beta}$, we have 
$\beta \in (-\pi/2, \pi/2)$ and $\tanh(\tau / 2) = \tan(\beta / 2)$. For 
$r = \tan(\sigma / 2)$ we see that $\nu_r = \Phi_{\#} \ms2 \mu_\sigma$ and
\[
   \int_{B_1} h \, \d \mu_{\sigma, 1}
 = \int_\R h( \pi / 2 + \ii \tau) \ms1 
            f_\sigma (\tau) \, \d \tau
 \ms{14} \hbox{with} \ms6
   f_{\sigma}(\tau)
 = \frac { \cos \sigma }
         { 2 \pi (\cosh \tau - \sin \sigma) } \up,
\]
\noindent
while $\int_{B_0} h \, \d \mu_{\sigma, 0}
 = \int_\R h( - \pi / 2 + \ii \tau) \ms1 
            f_{-\sigma} (\tau) \, \d \tau$.
One finds $\|\mu_{\sigma, 1}\|_1 = \theta  := \sigma / \pi + 1 / 2$ and
$\|\mu_{\sigma, 0}\|_1 = 1 - \theta$ by harmonicity of $h(z) = \Re z$. 
When $\sigma$ tends to~$\pi / 2$, the density $f_\sigma$ resembles the
Cauchy kernel $P_\varepsilon^{(1)}$ in~\eqref{CauchyK}
with $\varepsilon = \pi / 2 - \sigma$, since
\[
      f_\sigma(\tau)
  =   \frac 1 {2 \pi} 
       \frac { \sin \varepsilon }
             { \cosh \tau - \cos \varepsilon}
 \simeq \frac \varepsilon
              { \pi (\tau^2 + \varepsilon^2) } \up.
\]
One can also comprehend $f_\sigma$ as sum of the alternate series of Cauchy
kernels
\[
   f_\sigma
 = P_{\pi / 2 {-} \sigma}^{(1 \ns1)} 
   {-} P_{\pi {+} \pi / 2 {+} \sigma}^{(1 \ns1)}
   {+} P_{2 \pi {+} \pi / 2 {-} \sigma}^{(1 \ns1)} 
   {-} P_{2 \pi {+} \pi {+} \pi / 2 {+} \sigma}^{(1 \ns1)}
   {+} P_{4 \pi {+} \pi / 2 {-} \sigma}^{(1 \ns1)} 
   {-} P_{4 \pi {+} \pi {+} \pi / 2 {+} \sigma}^{(1 \ns1)}
   {+} \cdots \ms2,
\]
indeed, if $\varphi_\sigma$ denotes the sum of the series above and if
$g$ belongs to $\ca K(\R)$, then 
$G(\sigma + \ii \tau) = (\varphi_\sigma * g)(\tau)$ is harmonic in $S_\pi$,
tends to $g(\tau)$ when $\sigma \rightarrow \pi / 2$ and to $0$ when
$\sigma \rightarrow - \pi / 2$, the same properties as for 
$(f_\sigma * g)(\tau)$.
\dumou

 Let $h_*$ be a continuous function on $\partial S_\pi$, and suppose that
the two functions $t \mapsto \e^{-|t|} h_*(\pm \pi/2 {+} \ii t)$ are
Lebes\-gue-integrable on the real line. Then, writing
\begin{equation}
   \widetilde h_*(z)
 = \int_\R 
    \bigl( 
      h_*( \pi/2 {+} \ii (\tau {-} t)) \ms1 f_\sigma(t)
      + h_*( {-} \pi/2 {+} \ii (\tau {-} t)) \ms1 f_{- \sigma} (t)
    \bigr) \, \d t
 \label{Extends}
\end{equation}
\noindent 
for every $z = \sigma + \ii \tau \in S_\pi$,
one defines an harmonic function $\widetilde h_*$ in $S_\pi$, continuous
on the closure if one sets $\widetilde h_*(z_*) = h_*(z_*)$ for
$z_* \in \partial S_\pi$. Let $\ca H_c(S_\pi)$ denote the
class of functions harmonic in~$S_\pi$ and continuous on the closure. Not
every $h \in \ca H_c(S_\pi)$ can be expressed by~\eqref{Extends} from its
restriction $h_* = h\barre_{\partial S_\pi}$. First, $h_*$ must be
$\mu_\sigma$-integrable, but even then, 
$h(z) = \Re \cos(z) = \cos(\sigma) \cosh(\tau)$, for which $h_* = 0$, is a
counterexample.
\dumou

 Let us say here that $g$ defined on $S_\pi$, resp. $\partial S_\pi$, is
\emph{moderate} if there is $a < 1$ such that $g(z) = O(\e^{a |\Im z|})$
for $z \in S_\pi$, resp. $\partial S_\pi$. If $h_*$ is moderate and
continuous on~$\partial S_\pi$, the extension $\widetilde h_*$
in~\eqref{Extends} is in~$\ca H_c(S_\pi)$, and it is moderate because
\[
     |\widetilde h_*(\sigma + \ii \tau)|
 \le \kappa 
      \int_\R \e^{ a |\tau - t| } (f_\sigma + f_{-\sigma})(t)
               \, \d t
 \le \kappa 
       \Bigl( \e^a +
        \int_1^{+\infty} \ns5
         \frac { \e^{ a |t| } } {\cosh t - 1 } \,  \d t
       \Bigr) \ms2 \e^{ a |\tau| }.
\]

\begin{lem}[after~\cite{Hir}]\label{Hirsch}
If $h \in \ca H_c(S_\pi)$ is moderate and 
$h_* = h\barre_{\partial S_\pi}$, then $h = \widetilde h_*$.
\end{lem}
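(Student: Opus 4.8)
The plan is a Phragm\'en--Lindel\"of argument on the strip $S_\pi$: set $g := h - \widetilde h_*$ and show $g \equiv 0$ by comparing $g$ with an explicit positive harmonic barrier of slightly larger exponential growth. First I would record the three properties of $g$ that make the argument run. Since $h$ is moderate, its restriction $h_* = h\barre_{\partial S_\pi}$ is moderate and continuous, so by the discussion preceding the lemma $\widetilde h_*$ is well defined, lies in $\ca H_c(S_\pi)$ and is moderate; hence $g \in \ca H_c(S_\pi)$ as well. Moreover $g$ is moderate, being the difference of two moderate functions: if $|h(z)| \le \kappa_0\, \e^{a_0 |\Im z|}$ and $|\widetilde h_*(z)| \le \kappa_1\, \e^{a_1 |\Im z|}$ on $S_\pi$ with $a_0, a_1 < 1$, then $|g(z)| \le (\kappa_0 + \kappa_1)\, \e^{a |\Im z|}$ with $a := \max(a_0, a_1) < 1$. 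Finally $g$ vanishes identically on $\partial S_\pi$, because $h$ and $\widetilde h_*$ both restrict there to $h_*$. The goal is then $g \equiv 0$.

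Next I would fix a real number $b$ with $a < b < 1$ and introduce the barrier
\[
 \psi(z) = \cos(b \ms1 \sigma)\cosh(b \ms1 \tau),
 \ms{20} z = \sigma + \ii \tau,
\]
which is harmonic on $S_\pi$ --- it equals $\Re \cos(b z)$, the real part already used in the small-print criterion after Remark~\ref{PhragLin}. Since $b < 1$ one has $|b\sigma| < b \pi/2 < \pi/2$ throughout $S_\pi$, so $\cos(b\sigma) \ge \cos(b\pi/2) > 0$, whence $\psi(z) \ge \cos(b\pi/2)\,\e^{b|\tau|}/2 > 0$ on $\overline{S_\pi}$. Because $b > a$, this forces $g - \varepsilon \psi \to -\infty$ uniformly in $\sigma$ as $|\Im z| = |\tau| \to +\infty$ in $S_\pi$, for every fixed $\varepsilon > 0$.

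The concluding step is a maximum-principle argument on truncated rectangles. Fix $\varepsilon > 0$ and a point $z_0 \in S_\pi$. On $\partial S_\pi$ we have $g - \varepsilon\psi = -\varepsilon\psi < 0$, and by the previous paragraph there is $\tau_0 > |\Im z_0|$ such that $g - \varepsilon\psi < 0$ on the two horizontal segments $\{ |\Re z| \le \pi/2,\ |\Im z| = \tau_0 \}$. Hence the harmonic function $g - \varepsilon\psi$ is $\le 0$ on the whole boundary of the compact rectangle $R = \{ |\Re z| \le \pi/2,\ |\Im z| \le \tau_0 \}$, and the maximum principle gives $g(z_0) - \varepsilon\psi(z_0) \le 0$, that is, $g(z_0) \le \varepsilon \psi(z_0)$. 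Letting $\varepsilon \to 0$ yields $g(z_0) \le 0$; running the same argument with $-g$ in place of $g$ yields $g(z_0) \ge 0$. Thus $g(z_0) = 0$ for every $z_0 \in S_\pi$, i.e. $h = \widetilde h_*$.

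The argument is short and I do not expect a genuine obstacle in it; the one point that must be handled with care is the role of moderateness, which is precisely what allows the choice $a < b < 1$ and hence the domination of $g$ by $\varepsilon\psi$ at infinity. Its necessity is visible from the example $\Re \cos z = \cos\sigma\cosh\tau$ mentioned just before the lemma: this harmonic function has vanishing boundary values, is \emph{not} moderate (it grows like $\e^{|\tau|}$, so $a = 1$ is forced), and is not identically zero --- so it is not recovered from its boundary values by~\eqref{Extends}, confirming that the hypothesis cannot simply be dropped.
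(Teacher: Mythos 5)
Your proposal is correct and follows essentially the same Phragm\'en--Lindel\"of argument as the paper: both proofs set $g = h - \widetilde h_*$, note that it is moderate with exponent $a<1$ and vanishes on $\partial S_\pi$, compare with the barrier $\Re\cos(bz)$ for some $b\in(a,1)$, and apply the maximum principle on truncated rectangles before letting $\varepsilon\to 0$ and repeating with $-g$. The only difference is that you spell out the estimates in more detail, while the paper compresses them by referring back to~\eqref{Moderation}.
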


 If one replaces $S_\pi$ by a strip $S_w$ of width $w$, then clearly the
moderation condition in $S_w$ must be formulated for $z \in S_w$ as 
$g(z) = O(\e^{a |\Im z|})$ with $a < \pi / w$.

\begin{proof} 
We have that $h_*$ is moderate on $\partial S_\pi$, hence
$U = h - \widetilde h_*$ is moderate on~$S_\pi$ and vanishes 
on~$\partial S_\pi$. Given $z_0 \in S_\pi$, $a < 1$ such that
$U = O(\e^{a |\tau|})$, $\varepsilon > 0$ and $b \in (a, 1)$, we see as
in~\eqref{Moderation} that $U - \varepsilon \Re \cos(b z)$ is $\le 0$ on
the boundary of a rectangle containing $z_0$, hence 
$U(z_0) \le \varepsilon \Re \cos(z_0)$ by the maximum principle. Doing it
also with $-U$ and letting $\varepsilon \rightarrow 0$ we conclude that 
$h - \widetilde h_* = 0$.
\end{proof}

\begin{smal}
\noindent
We now study the function $h_1$ defined by 
$h_1(\pi/2 + \ii \tau) = |\tau|$, $h_1( - \pi/2 + \ii \tau) = 0$ for every
$\tau \in \R$ and its (moderate) harmonic extension given at 
$\sigma \in I_\pi$ by
\[
   h_1(\sigma)
 = \int_\R |\tau| f_\sigma(\tau) \, \d \tau
 = \frac 2 \pi \int_0^{+\infty} \ns3
    \arctan \Bigl( 
             \frac { \cos \sigma } 
                   { \e^\tau - \sin \sigma }
            \Bigr) \, \d \tau.
\]
Recall that $\|f_\sigma\|_{L^1(\R)} = \theta = \sigma / \pi + 1 / 2$.
When $\sigma = 0$, we have the easy bound
\[
   h_1(0)
 = \frac 2 \pi \int_0^{+\infty} \arctan(\e^{-\tau}) \, \d \tau
 < \frac 2 \pi \int_0^{+\infty} \e^{-\tau} \, \d \tau
 = \frac 2 \pi \up.
\]
One can find $h_1(0)$ by writing the power series expansion of $\arctan(x)$,
letting then $x = \e^{-\tau}$ and integrating in $\tau \in (0, +\infty)$.
One gets $h_1(0) = 2 \ms1 G / \pi < 0.584$, where
$G = \sum_{k=0}^{+\infty} (-1)^k (2 k + 1)^{-2}$ is the 
\emph{Catalan constant}, $0.915 < G < 0.916$. 
One has
\[
    h'_1(\sigma)
 = \frac 2 \pi \int_0^{+\infty} 
    \frac { \e^{-2 \tau} - \e^{-\tau} \sin \sigma }
          {\e^{-2 \tau} - 2 \e^{-\tau} \sin \sigma + 1 } \, \d \tau
 = \frac 1 \pi \ln \bigl( 2 - 2 \sin \sigma \bigr),
\]
thus $h_1$ is concave on $I_\pi$ and maximal when $\sigma = \pi / 6$. 
One can find numerically that $0.646 < h_1(\pi/6) < 0.647$. 
By concavity, we obtain for each $\sigma \in I_\pi$ that
\begin{subequations}\label{h1Concave}
\SmallDisplay{\eqref{h1Concave}}%
\[
     h_1(\sigma)
  =  h_1(\sigma) - h_1(-\pi/2)
 \le h'_1(-\pi/2) (\sigma + \pi / 2)
  =  \theta \ms1 \ln 4.
\]
\end{subequations}
\noindent
One has $h_1(\pi / 2) = 0$, the behavior of~$h_1(\sigma)$ when 
$\varepsilon = \pi / 2 - \sigma \rightarrow 0$ is given by
\begin{subequations}\label{BoundaryBehav}
\SmallDisplay{\eqref{BoundaryBehav}}%
\[
             h_1(\sigma)
  \ns2=\ns2  - \frac 1 \pi \int_0^\varepsilon \ln(2 - 2 \cos s) \, \d s
 \simeq      - \frac 1 \pi \int_0^\varepsilon \ln(s^2) \, \d s
  \ns2=\ns2  \frac 2 \pi 
              \bigl(
               \varepsilon \ln(1 / \varepsilon) + \varepsilon 
              \bigr).
\]
\end{subequations}
\noindent
Since $h_1( \cdot + \ii \tau) - h_1( \cdot )$ is bounded by $|\tau|$ on
$B_1$ and vanishes on $B_0$, we have
\begin{subequations}\label{h1bound}
\SmallDisplay{\eqref{h1bound}}%
\[
     0
  <  h_1(\sigma + \ii \tau) 
 \le h_1(\sigma) + \theta \ms1 |\tau|
 \le h_1(\pi / 6) + \theta \ms1 |\tau|,
 \quad
 \sigma \in I_\pi, \ms7 \tau \in \R.
\]
\end{subequations}
\noindent
If $S_w = \{ z \in \C : \alpha_0 < \Re z < \alpha_1 \}$ has width 
$w = \alpha_1 - \alpha_0$ and if $\lambda = w / \pi$, we may associate to
$h$, harmonic on $S_w$, the harmonic function 
$H(Z) = h(\alpha_{1/2} + \lambda Z)$ for $Z \in S_\pi$, where
$\alpha_t = (1 - t) \alpha_0 + t \alpha_1$ when $t \in [0, 1]$. If we set 
$h_{1, w}(\alpha_1 + \ii \tau) = |\tau|$ and $h_{1, w} = 0$ on 
$\alpha_0 + \ii \R$, then $H_{1, w} = \lambda h_1$, and
we get from~\eqref{h1bound} that
\[
     h_{1, w}(\alpha_\theta + \ii \tau) 
  =  h_{1, w}(\alpha_{1/2} + \lambda \sigma + \ii \tau) 
  =  \lambda h_1 (\sigma + \ii \tau / \lambda) 
 \le w h_1(\pi / 6) / \pi + \theta \ms1 |\tau|.
\]
\dumou

 We now comment on Corollary~\ref{InStrip}. If~$f$ is holomorphic in $S_w$
with admissible growth, satisfies 
$|f(\alpha_j + \ii \tau)| \le \e^{u_j |\tau|}$ on $\partial S_w$, 
$u_j \ge 0$, $j = 0, 1$, the \og correct\fge bound at $z \in S_w$ for~$f$
is $\e^{ U_{\gr u, w}(z) }$ where
$U_{\gr u, w} = u_0 h_{0, w} + u_1 h_{1, w}$, with 
$h_{0, w}(\alpha_{1/2} + \zeta) = h_{1, w}(\alpha_{1/2} - \zeta)$. One gets
in particular $U_{\gr u, w} (\alpha_{1/2})
 = 2 \ms1 \lambda (u_0 + u_1) \ms1 G / \pi$. When $u_0 = u_1 = 1$, this
finer method gives at $\alpha_{1/2}$ a bounding factor 
$\e^{(4 G / \pi) (w / \pi)}$ instead of $E_{w, 1/2} (1, 1) = \e^{w / 2}$,
and
$
   4 G / \pi^2
 < 0.3713
 < 1/2
$.

 Let $V_{\gr u, w}$ be the harmonic conjugate of $U_{\gr u, w}$.
Our first method in Corollary~\ref{InStrip} applied to
$f_0(z) = \e^{U_{\gr u, w} (z) + \ii V_{\gr u, w} (z)}$ yields
\begin{subequations}\label{BoundingU}
\SmallDisplay{\eqref{BoundingU}}%
\[
     U_{\gr u, w}(\alpha_\theta + \ii \tau)
 \le \ln E_{w, \theta} (u_0, u_1)
      + u_\theta \ms1 |\tau|.
\]
\end{subequations}
\noindent
If $u_0 = u_1 = u > 0$, we get 
$    U_{\gr u, w} (\alpha_\theta) 
  =  u (h_{0, w} + h_{1, w})(\alpha_\theta)
 \le w \sqrt{ \theta (1 - \theta) } \ms2 u$.
This estimate~\eqref{BoundingU} has the right order of magnitude in $w$
and $u$, but not in $\theta$ when $\theta$ tends to~$0$ or~$1$. The correct
order when $\theta \rightarrow 0$ is $\kappa \ms2 \theta \log (1/\theta)$,
according to~\eqref{BoundaryBehav}.

\end{smal}

\begin{remN}%
\label{PolyFacto}
We shall have to deal with cases where the bounds on the lines limiting
the strip $S_w = \{z \in \C : \alpha_0 < \Re z < \alpha_1\}$,
$w = \alpha_1 - \alpha_0$, have the form
\[
     |f(\alpha_j + \ii \tau)| 
 \le (1 + \tau^2)^{c_j} \ms2 \e^{u_j |\tau| + v_j},
 \ms{14}
 c_j, u_j \ge 0,
 \ms9
 j = 0, 1.
\]
It is obviously possible to \og absorb\fge the polynomial
factor by replacing $u_j$ in the exponential with $u_j + \varepsilon$,
$\varepsilon > 0$ arbitrary, and modifying $v_j$ accordingly, but one can
work a little more carefully as follows.
\dumou

\begin{smal}
\noindent
Let $\ell_{1, w}$ be the moderate harmonic function on $S_w$ such that
$\ell_{1, w}(\alpha_1 + \ii \tau) =  \ln(1 + \tau^2)$ for $\tau \in \R$ and
$\ell_{1, w} = 0$ on $\alpha_0 + \ii \R$. Let
$\alpha_\theta = (1 - \theta) \alpha_0 + \theta \alpha_1$, 
$\lambda = w / \pi$, $\sigma = \pi \theta - \pi / 2$ and
$L_{1, w}(Z) = \ell_{1, w}(\alpha_{1/2} {+} \lambda Z)$. By
Lemma~\ref{Hirsch} and~\eqref{Extends}, we get
\def\eqphantom{\mathrel{\hphantom =}}
\begin{align*}
 &   \ms6 \eqphantom
     \ell_{1, w}(\alpha_\theta + \ii \tau)
  =  \ell_{1, w}(\alpha_{1/2} + \lambda \sigma + \ii \tau)
  =  L_{1, w}(\sigma + \ii \tau / \lambda)
 \\
 &=  \int_\R L_{1, w}(\pi / 2 \ms1 {+} \ii \tau / \lambda {-} \ii t) 
      f_\sigma(t) \, \d t
 \le \int_\R \ln \bigl( 1 {+} ( \lambda |t| {+} |\tau| )^2 \bigr) 
       f_\sigma(t) \, \d t.
\end{align*}
Applying Jensen's inequality to the probability density 
$\widetilde f_\sigma = \theta^{-1} f_\sigma$, one sees that 
\begin{align*}
 &   \ms6 \eqphantom
      \exp \Bigl( 
            \int_\R 
             \ln \bigl(
                  [ 1 + ( \lambda |t| + |\tau| )^2 ]^{1/2}
                 \bigr)
              \widetilde f_\sigma(t) \, \d t
           \Bigr)
 \\
 &\le \int_\R [ 1 {+} ( \lambda |t| {+} |\tau| )^2 ]^{1/2}
       \widetilde f_\sigma(t) \, \d t
  \le (1 + \tau^2)^{1/2} + \lambda \int_\R |t| \ms1
       \widetilde f_\sigma(t) \, \d t,
\end{align*}
bounded by $(1 + \tau^2)^{1/2} + \lambda \ms1 \ln 4$ by~\eqref{h1Concave}.
For every $\tau \in \R$, one has therefore
\begin{subequations}\label{BoundingL}
\SmallDisplay{\eqref{BoundingL}}%
\[
   0
 < \ell_{1, w}(\alpha_\theta + \ii \tau)
 < 2 \theta \ms2 
    \ln \bigl( (1 + \tau^2)^{1/2} + \lambda \ms1 \ln 4 \bigr).
\]
\end{subequations}
\noindent
Define an harmonic function $U$ in $S_w$, continuous on the closure, by
$U = c_0 \ms1 \ell_{0, w} + c_1 \ms1 \ell_{1, w}$, where
$\ell_{0, w}(z) = \ell_{1, w}(2 \alpha_{1/2} - z)$, so that 
$U(\alpha_j + \ii \tau) = c_j \ln ( 1 + \tau^2)$. Let $V$ be conjugate to
$U$ in $S_w$. Then $g = \e^{-U - \ii V}$ is holomorphic in $S_w$ and 
$|(f g)(\alpha_j + \ii \tau)| \le \e^{u_j |\tau| + v_j}$ on
$\partial S_w$. By~\eqref{BoundingL}, we can bound $|f(z)|$ at 
$z = \alpha_\theta + \ii \tau$ by multiplying the inside bound of
Corollary~\ref{InStrip} for $f g$ with the additional factor
\[
     \e^{U(\alpha_\theta + \ii \tau)}
 \le \bigl( (1 + \tau^2)^{1/2} + \ln(4) \ms1 w / \pi \bigr)^{2 c_\theta}
 \le ( 1 + \ln(4) \ms1 w / \pi )^{2 c_\theta} \ms1 
      ( 1 + \tau^2 )^{c_\theta},
\]
where $c_\theta = (1 - \theta) c_0 + \theta \ms1 c_1$. 
Since $\ln(4) / \pi < 1/2$, we may remember that
\begin{subequations}\label{PolyBd}
\SmallDisplay{\eqref{PolyBd}}%
\[
     |f(\alpha_\theta + \ii \tau)|
 \le ( 1 + w / 2 )^{2 c_\theta} \ms2 E_{w, \theta} (u_0, u_1) \ms3
     (1 + \tau^2)^{c_\theta} \ms2 \e^{ u_\theta \ms1 |\tau| + v_\theta}.
\]
\end{subequations}

\end{smal}
\end{remN}

\subsubsection{Interpolation of holomorphic families of linear operators%
\label{Ihf}}

\noindent
We now recall the classical complex interpolation method for bounding in 
the norm of $L^p(X, \Sigma, \mu)$, when $1 < p < +\infty$, a linear operator
$T_\alpha$ that is a member of a holomorphic family of operators $(T_z)$,
for $z$ in a vertical strip $S$ containing $\alpha$. We consider a linear
space~$\ca E$ which is a common subspace of all $L^r(X, \Sigma, \mu)$, 
$1 \le r \le +\infty$, and which is dense in $L^r(X, \Sigma, \mu)$ when 
$1 \le r < +\infty$. This space $\ca E$ can be the space of simple
$\Sigma$-measurable and $\mu$-integrable functions, or for the specific
spaces $L^r(\R^n)$, it can be $\ca S(\R^n)$ or the space~$\ca K(\R^n)$. We
consider a closed strip $\alpha_0 \le \Re z \le \alpha_1$ in $\C$, with 
$\alpha_0 < \alpha < \alpha_1$. We assume that each $T_z$, for~$z$ in this
closed strip, is defined on~$\ca E$ and linear with values in 
$L^1(X, \Sigma, \mu) + L^\infty(X, \Sigma, \mu)$. The holomorphy assumption
means that for $f, g \in \ca E$, the function
$
 z \mapsto \langle T_z f, g \rangle
$
is holomorphic in the open strip $\alpha_0 < \Re z < \alpha_1$, but one also
assumes that it extends as a continuous function on the closed strip. The
above bracket is bilinear, given by $\int_X (T_z f) \ms1 g \, \d \mu$.
Later in these Notes, we shall abuse slightly and speak about holomorphic
family of linear operators in the \emph{closed} strip 
$\alpha_0 \le \Re z \le \alpha_1$.
\dumou

 We consider $1 \le p_0, p_1 \le +\infty$ and $p$ between $p_0$ and $p_1$,
so that $1 < p < +\infty$. We assume that when $\Re z = \alpha_j$, 
$j = 0, 1$, the $T_z\ms1$s are uniformly bounded from~$\ca E$, equipped
with the $L^{p_j}$ norm, to $L^{p_j}(X, \Sigma, \mu)$, and we assume that
for a certain $\theta \in (0, 1)$, we have both
\[
 \frac 1 p = \frac {1 - \theta} {p_0} + \frac {\theta} {p_1}
 \ms{18} \hbox{and} \ms{18}
 \alpha = (1 - \theta) \alpha_0 + \theta \alpha_1.
\]
We want to show that $T_\alpha$ is bounded from $\ca E$, equipped with the
$L^p$ norm, to $L^p(X, \Sigma, \mu)$. Then, by the density of $\ca E$, we
will be able to extend to $L^p(X, \Sigma, \mu)$ the bound obtained for the
functions in~$\ca E$.
\dumou
 
 We must of course bound
$
 \sca {T_\alpha f} g
$,
uniformly for $f$ in the intersection of $\ca E$ with the unit ball of
$L^p(X, \Sigma, \mu)$ and for~$g$ in the unit ball of the dual 
$L^q(X, \Sigma, \mu)$, $1 / p + 1 / q = 1$. Denote by $q_0$ the conjugate
of $p_0$ and by $q_1$ that of $p_1$. Observe that we have also
$
 1 / q = (1 - \theta) / q_0 + \theta / q_1
$.
We write $f(x) = u(x) |f(x)|$, $g(x) = v(x) |g(x)|$ for every $x \in X$,
with $|u(x)| = |v(x)| = 1$. Next, for each $z \in \C$, we set 
\begin{equation}
 f_z(x) = u(x) |f(x)|^{p(s z + t)},
 \ms{16}
 g_z(x) = v(x) |g(x)|^{q(1 - s z - t)},
 \ms{16}
 x \in X,
 \label{FzGzDef}
\end{equation}
where $s, t$ real are chosen such that $s \alpha_0 + t = 1 / p_0$ and
$s \alpha_1 + t = 1 / p_1$. This yields $s \alpha + t = 1 / p$. We see
that $f_\alpha = f$, $g_\alpha = g$ and we also see that the exponents have
been chosen so that the assumptions $\|f\|_p \le 1$ and $\|g\|_q \le 1$
imply
\[
 \forall \tau \in \R,
 \ms{12}
 \|f_{\alpha_0 + \ii \tau}\|_{p_0} \le 1,
 \ms{ 8}
 \|f_{\alpha_1 + \ii \tau}\|_{p_1} \le 1,
 \ms{12}
 \|g_{\alpha_0 + \ii \tau}\|_{q_0} \le 1,
 \ms{ 8}
 \|g_{\alpha_1 + \ii \tau}\|_{q_1} \le 1.
\]
We notice for future reference that if $f$ and $g$ are bounded by $M$
on~$X$, then
\begin{equation}
 |f_z| \le \max(M^{p/p_0}, M^{p/p_1}),
 \ms{12}
 |g_z| \le \max(M^{q/q_0}, M^{q/q_1})
 \label{FzGzBound}
\end{equation}
when $\alpha_0 \le \Re z \le \alpha_1$, because $\Re ( sz + t )$ stays
between $1 / p_0$ and $1 / p_1$ and $\Re ( 1 - sz - t )$ between $1 / q_0$
and $1 / q_1$ when $z \in S$. We now apply the three lines Lemma~\ref{TLL}
for bounding the value
$
   H(\alpha)
 = \sca {T_\alpha f} g
$
of the holomorphic function 
\begin{equation}
 H : z \mapsto \sca {T_z f_z} {g_z},
 \quad z \in S,
 \label{UsualH}
\end{equation}
from the bounds on the lines $\Re z = \alpha_0$ and $\Re z = \alpha_1$.
When $\Re z = \alpha_j$, we get
\[
     |H(z)|
  =  |\sca {T_z f_z} {g_z}|
 \le \|T_z\|_{p_j \rightarrow p_j}
      \ms2 \|f_z\|_{p_j} \ms1 \|g_z\|_{q_j}
 \le \|T_z\|_{p_j \rightarrow p_j},
\]
for $j = 0, 1$. In addition, the holomorphic function $H$ must be bounded
on the strip, see Remark~\ref{PhragLin} above. If true, we know by
Lemma~\ref{TLL} that
\[
     |H(\alpha)|
  =  \bigl| \sca {T_\alpha f} g \bigr|
 \le \Bigl( \ms1
      \sup_{\tau \in \R} 
       \|T_{\alpha_0 + \ii \tau}\|_{p_0 \rightarrow p_0}
     \Bigr)^{1 - \theta}
     \ms2
     \Bigl( \ms1
      \sup_{\tau \in \R} 
       \|T_{\alpha_1 + \ii \tau}\|_{p_1 \rightarrow p_1} 
     \Bigr)^{\theta},
\]
and by taking the supremum over $f$ and~$g$, we obtain
\begin{equation}
     \|T_\alpha\|_{p \rightarrow p}
 \le \Bigl( \ms1
      \sup_{\tau \in \R} 
       \|T_{\alpha_0 + \ii \tau}\|_{p_0 \rightarrow p_0}
     \Bigr)^{1 - \theta}
     \ms2
     \Bigl( \ms1
      \sup_{\tau \in \R} 
       \|T_{\alpha_1 + \ii \tau}\|_{p_1 \rightarrow p_1} 
     \Bigr)^{\theta}.
 \label{HalphaBound}
\end{equation}
Finally, we can extend $T_\alpha$ from the dense subspace $\ca E$ 
to~$L^p(X, \Sigma, \mu)$. Sometimes, rather than looking for extension, one
obtains in this way a sharper estimate for the norm of an operator
$T_\alpha$ already known to be bounded on $L^p(X, \Sigma, \mu)$.
\dumou

 This complex method, introduced for $L^p$ spaces by 
Thorin~\cite{ThorinA, Thorin}\label{ByThor}
for \emph{one} linear operator, extended by Stein~\cite{SteinILO} to
families, can also be extended (see~\cite{BenePanz}) to spaces of the form
$L^p(L^r)$ and more generally, by the abstract complex interpolation method
due to Calder\'on~\cite{Calderon}, to a pair of the form 
$(L^{p_0}(A_0), L^{p_1}(A_1))$. One then obtains estimates in
$L^p(A_\theta)$, where $A_\theta$ is the space associated to the pair
$(A_0, A_1)$ by Calder\'on's method with parameter $\theta \in (0, 1)$.
\dumou

 In many cases later on, the norms of the operators~$(T_z)_{z \in S}$ are
not \emph{uniformly} bounded on the boundary lines, but obey for some 
$\lambda > 0$ estimates of the form
\[
     \|T_{\alpha_0 + \ii \tau}\|_{p_0 \rightarrow p_0}
 \le C_0 \e^{\lambda |\tau|},
 \ms{16}
     \|T_{\alpha_1 + \ii \tau}\|_{p_1 \rightarrow p_1} 
 \le C_1 \e^{\lambda |\tau|},
 \quad
 \tau \in \R.
\]
Using Corollary~\ref{InStrip}, we can handle this situation. We must simply
check that the above function $H(z) = H_{f, g}(z)$ in~\eqref{UsualH} has an
admissible growth in the strip. We have to find an {\it ad hoc} argument
giving such a growth for each choice of $f$ and $g$ in suitable dense
subsets, growth depending on $f, g$. Indeed, in general, we do not know yet
bounds on the norm $\|T_z\|_{p_z \rightarrow p_z}$ for $z \in S$, where
$w / p_z = (\alpha_1 - \Re z) / p_0 + (\Re z - \alpha_0) / p_1$
and where $w = \alpha_1 - \alpha_0$ is the width of $S$. If each
function~$H_{f, g}$ has an admissible growth in $S$, we obtain here at
last that
\[
     \|T_\alpha\|_{p \rightarrow p} 
 \le C_0^{1 - \theta} C_1^\theta 
      \e^{\lambda \ms1 w \ms1 \sqrt{ \theta (1 - \theta) }}.
\]
If an additional polynomial factor is present in the bound of
$\|T_{\alpha_j + \ii \tau}\|_{p_j \rightarrow p_j}$, $j = 0, 1$, then we
make use of Remark~\ref{PolyFacto} and of the estimate~\eqref{PolyBd}.

\subsection{On the definition of maximal functions%
\label{DefiMaxiFunc}}

\noindent
Let us consider a family $(K_t)_{t > 0}$ of integrable functions on $\R^n$
and define a related maximal function by the formula
\begin{equation}
 \mu(f) = \sup_{t > 0} \ms1 |K_t * f|
 \label{DefiMaxi}
\end{equation}
for $f \in L^p(\R^n)$, $1 \le p < +\infty$. We are faced with a standard
difficulty of processes with continuous time parameter. In this generality,
the convolution $K_t * f$ is only defined almost everywhere, for each 
$t > 0$, and the preceding supremum is not a well defined equivalence class
of measurable functions. However, if $D$ is a countable subset of 
$(0, +\infty)$, there is no problem in considering
\[
 \mu_D(f) = \sup_{t \in D} \ms1 |K_t * f|,
\]
and a classical workaround for defining $\mu(f)$ consists in introducing the
\emph{essential supremum}:\label{EssenSup} 
there is a countable subset $D_0 \subset (0, +\infty)$ such that
$\mu_D(f) = \mu_{D_0}(f)$ almost everywhere, whenever $D \supset D_0$. In
other words, for every $t > 0$, we then have $|K_t * f| \le \mu_{D_0}(f)$
almost everywhere. The essential supremum is defined to be the equivalence
class of $\mu_{D_0}(f)$. It is also the \emph{least upper bound} of the
family $(|K_t * f|)_{t > 0}$ in the Banach lattice $L^p(\R^n)$.

 Most often, we shall have the specific problem where one considers an
integrable kernel~$K$ on $\R^n$ and defines a maximal function using the
dilates of~$K$, by
\[
 \mu(f) = \sup_{t > 0} \ms1 |{\Di K t} * f|.
\]
If $f \in L^p(\R^n)$ and if $K$ belongs to $L^q(\R^n)$, with $q < +\infty$
and $1 / q + 1 / p = 1$, then ${\Di K t} * f$ is defined pointwise and 
$t \mapsto {\Di K t}$ is continuous from $(0, +\infty)$ to $L^q$. It
follows that $t \mapsto ({\Di K t} * f)(x)$ is continuous for every 
$f \in L^p(\R^n)$, $x \in \R^n$, and the aforementioned problem disappears.
If $K \in L^1(\R^n)$ and $f \in L^p(\R^n)$ are nonnegative, then
$({\Di K t} * f)(x)$ is a definite value in $[0, +\infty]$ for every 
$x \in \R^n$, but it is not immediately clear that a direct application
of~\eqref{DefiMaxi} gives what we want. However, we can find an increasing
sequence $(f_k)_{k \ge 0}$ of \emph{bounded} nonnegative Borel functions
tending almost everywhere to~$f$. Then for every $x \in \R^n$ and 
$k \ge 0$, the map $t \mapsto ({\Di K t} * f_k)(x)$ is continuous from 
$(0, +\infty)$ to $[0, +\infty)$, because $t \mapsto {\Di K t}$ is
continuous from $(0, +\infty)$ to $L^1(\R^n)$. It follows that 
$t \mapsto ({\Di K t} * f)(x)$ is lower semi-continuous, since it is an
increasing limit of continuous functions. For every countable dense
set $D$ one has thus
\[
   \mu_D(f)(x)
 = \sup_{s \in D} \ms1 ({\Di K s} * f)(x)
 = \sup_{t > 0} \ms1 ({\Di K t} * f)(x).
\]
This argument does not apply to kernels that can also assume negative
values, and it is precisely the case that will appear later.
\dumou

 We will have to investigate maximal functions such as
$\mu(f) = \sup_{t > 0} \ms1 |{\Di K t} * f|$, usually when 
$K \in L^1(\R^n)$, but also more generally when $K$ is a bounded measure on
$\R^n$. It will be often convenient to start the study with nice functions,
for example functions $\varphi$ belonging to the Schwartz class 
$\ca S(\R^n)$, for which $\mu(\varphi)$ is clearly defined. If a function 
$f \in L^p(\R^n)$ is given and since $\ca S(\R^n)$ is dense in $L^p(\R^n)$,
we may find for every $\varepsilon > 0$ a sequence $(\varphi_k)_{k \ge 0}$
in $\ca S(\R^n)$ such that
\[
     f = \sum_{k=0}^{+\infty} \varphi_k
 \ms{12} \hbox{in} \ms8 L^p(\R^n),
 \ms{16} \hbox{and} \ms{11}
 \sum_{k=0}^{+\infty} \|\varphi_k\|_p
 < \|f\|_p + \varepsilon.
\]
Since the convolution with ${\Di K t}$ is linear and continuous on
$L^p(\R^n)$, we have
\[
     {\Di K t} * f = \sum_{k=0}^{+\infty} {\Di K t} * \varphi_k
 \ms{12} \hbox{in} \ms8 L^p(\R^n),
 \ms{16} \hbox{and} \ms{11}
 \sum_{k=0}^{+\infty} \|{\Di K t} * \varphi_k\|_p < +\infty,
\]
so the series $\sum_{k=0}^{+\infty} {\Di K t} * \varphi_k$ converges also
almost everywhere to ${\Di K t} * f$, and we have almost everywhere
\[
     |{\Di K t} * f|
 \le \sum_{k=0}^{+\infty} |{\Di K t} * \varphi_k|
 \le \sum_{k=0}^{+\infty} \mu(\varphi_k).
\]
For any countable subset $D \subset (0, +\infty)$ we get
$\mu_D(f) \le \sum_{k=0}^{+\infty} \mu(\varphi_k)$, implying that~$\mu(f)$,
defined as essential supremum, is bounded by 
$\sum_{k=0}^{+\infty} \mu(\varphi_k)$. If we know that there 
exists~$\kappa$ such that 
$\|\mu(\varphi)\|_p \le \kappa \ms1 \|\varphi\|_p$ when 
$\varphi \in \ca S(\R^n)$, it follows that
\[
     \| \mu(f) \|_p
 \le \sum_{k=0}^{+\infty} \|\mu(\varphi_k)\|_p
 \le \kappa \ms1 \sum_{k=0}^{+\infty} \|\varphi_k\|_p
 \le \kappa \ms1 (\|f\|_p + \varepsilon),
\]
for every $\varepsilon > 0$. In order to bound $\mu(f)$ in $L^p(\R^n)$, it
is therefore enough to obtain a uniform bound for Schwartz functions.
Clearly, any dense linear subspace of~$L^p(\R^n)$ consisting of nice
functions can be used instead of~$\ca S(\R^n)$.
\dumou

 The classical maximal function $\M f$, as well as $\M_C f$
in~\eqref{OpMaxi}, is actually defined by means of 
$\sup_{t > 0} {\Di K t} * |f|$. This makes sense whenever the kernel $K$ is
nonnegative, but not for a general $K$. We shall 
distinguish\label{MKetMK}
\[
 \M_K f := \sup_{t > 0} {\Di K t} * |f|
 \ms{18} \hbox{and} \ms{12}
 \Mg_K f := \sup_{t > 0} \ms1 \bigl| {\Di K t} * f \bigr|
\]
by the tiny notational difference between the slanted or unslanted letter M.
When the kernel $K$ is nonnegative, we have obviously
$ \Mg_K f \le \M_K f = \Mg_K (|f|)$.

\section{The results of Stein for Euclidean balls%
\label{SteinsResults}}

\noindent
We prove here the remarkable fact due to Stein~\cite{SteinSF} that for 
$p > 1$, the maximal operator associated to Euclidean balls, \textit{i.e.},
the classical Hardy--Littlewood maximal operator $\M$ defined
in~\eqref{ClassicalM}, may be bounded in $L^p(\R^n)$ independently of the
dimension~$n$. Full details appeared in~\cite{SteStr}. Other proofs have
appeared since then, let us mention Auscher and Carro~\cite{AuCa} who found
the simple explicit bound $2 + \sqrt 2$ in $L^2(\R^n)$, extended by
interpolation as $(2 + \sqrt 2)^{2 / p}$ for $p \ge 2$. It is not known
whether or not the weak $(1, 1)$ norm of the maximal operator $\M$ is also
bounded independently of the dimension. Even if we shall not develop this
weak type aspect mentioned in our introduction, let us recall that the best
upper estimate that is known for the weak $(1, 1)$ norm of $\M$ is the
Stein--Str\"omberg $\mr O(n)$ bound~\cite{SteStr}.

\begin{thm}[Stein~\cite{SteinSF}]%
\label{indepdim}
Let\/ $1 < p \le +\infty$. For every integer $n \ge 1$ and all functions 
$f \in L^p(\R^n)$, one has that
\[
      \|\M f\|_{L^p(\R^n)}
 \le C(p) \|f\|_{L^p(\R^n)},
\]
where $C(p)$ is a constant independent of the dimension~$n$.
\end{thm}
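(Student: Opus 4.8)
The plan is to follow Stein's original argument: realize ball averages on $\R^n$ as spherical averages in the slightly larger space $\R^{n+2}$ (a \emph{lifting}), and then prove a bound for the spherical maximal operator that is uniform in the dimension.

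\textbf{Reductions.} The case $p = +\infty$ is trivial with constant $1$. For $1 < p < +\infty$, since the classical Hardy--Littlewood maximal theorem (Theorem~\ref{TheoHL}) already supplies a finite bound $C(n,p)$ in each fixed dimension, it is enough to establish the estimate for all $n$ larger than some threshold $n_0(p)$ and to absorb the finitely many remaining dimensions into $C(p)$. So fix $p \in (1, +\infty)$ and take $n$ large.

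\textbf{Lifting.} The geometric point is that the orthogonal projection $\pi : \R^{n+2} \to \R^n$ pushes the uniform probability measure on the unit sphere $S^{n+1} \subset \R^{n+2}$ onto the uniform probability measure on the unit ball $B_1 \subset \R^n$: the projection of surface measure on $S^{n+k-1}$ has density proportional to $(1 - |u|^2)^{(k-2)/2}$ on $B_1$, which is constant exactly when $k = 2$. Hence, for the trivial lift $\tilde f(x, y) := f(x)$ with $x \in \R^n$, $y \in \R^2$, the spherical average of radius $t$ in $\R^{n+2}$ about the point $(x, 0)$ equals the ball average in~\eqref{ClassicalM}, so that $(\M f)(x) \le (\Mg^{(n+2)}_{\mathrm{sph}} \tilde f)(x, 0)$ pointwise, where $\Mg^{(m)}_{\mathrm{sph}}$ is the spherical maximal operator on $\R^m$. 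Since $\tilde f \notin L^p(\R^{n+2})$, I would make this rigorous by truncation: replace $\tilde f$ by $f \otimes \gr 1_{[-R, R]^2}$, restrict the supremum in $\M$ to $t \le T$, use translation-invariance in $y$ together with Fubini to transfer an $L^p(\R^{n+2})$ bound for $\Mg^{(n+2)}_{\mathrm{sph}}$ into an $L^p(\R^n)$ bound for $\sup_{t \le T}|A_t f|$, then let $R \to +\infty$ and $T \to +\infty$ (in the spirit of Section~\ref{DefiMaxiFunc}). Everything then reduces to proving
\[
     \bigl\| \Mg^{(m)}_{\mathrm{sph}} g \bigr\|_{L^p(\R^m)}
 \le C(p) \ms2 \|g\|_{L^p(\R^m)},
 \qquad m \ge m_0(p),
\]
with $C(p)$ independent of~$m$.

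\textbf{The core: a dimension-free spherical maximal estimate.} This is the heart of the matter, and the main obstacle is precisely to keep every constant below free of~$m$. Following Stein (Sections~\ref{Bsmo}, \ref{Mmoasf}, \ref{Sawtr}), I would embed the normalized surface measure $\sigma_{m-1}$ on $S^{m-1}$ into a holomorphic family of kernels $K_z$ on $\R^m$ (Riesz means, with the normalizing factor $1/\Gamma(z)$ chosen so that $K_0 = \sigma_{m-1}$ and so that, for $\Re z$ large, $K_z$ is a tame integrable radially decreasing kernel). One endpoint is an $L^2(\R^m)$ estimate for $\Mg_{K_z}$ with $\Re z$ slightly negative: by Plancherel the associated multiplier is $\widehat{\sigma}_{m-1}(t\xi)$, a Bessel-type function decaying like $|\xi|^{-(m-1)/2}$; controlling $\sup_{t>0}|\cdot|$ by a square function in the dilation variable, as in Section~\ref{MaxiLittlePal}, reduces the $L^2$ norm to an integral of the form $\int_0^{+\infty} |u\,\widehat{\sigma}_{m-1}'(u)|^2 \, \d u / u$ together with a boundary term, finite once $(m-1)/2 > 1/2$ and --- crucially --- bounded independently of $m$ (indeed $\widehat{\sigma}_{m-1}(\xi) = 1 - 2\pi^2 |\xi|^2/m + \cdots$ near the origin, and the decay at infinity only improves as $m$ grows). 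The other endpoint is an $L^{p_1}(\R^m)$ bound, for some $p_1$ near $1$, for the smoothed operators $\Mg_{K_\ell}$, $\ell \ge 1$, obtained by square-function techniques as in Section~\ref{Mmoasf} combined with the dimension-free semi-group maximal inequality~\eqref{maxiSemiGrGa}, again with an $m$-independent constant. Complex interpolation (the scheme of Section~\ref{InterpoSchem}) along the family then recovers $\Mg_{K_0} = \Mg^{(m)}_{\mathrm{sph}}$ on $L^p(\R^m)$; the operator norms on the two boundary lines grow only through powers of $1/|\Gamma(z)|$ and exponentials in $|\Im z|$, which are dominated by~\eqref{MajoGamm}, so Corollary~\ref{InStrip} (and Remark~\ref{PolyFacto} for the polynomial factors) yields a clean bound depending on $p$ only. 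Interpolating the resulting estimate for $p \in (1, +\infty)$ with the trivial $L^\infty$ bound fills in the whole range $1 < p \le +\infty$ and completes the proof of Theorem~\ref{indepdim}.
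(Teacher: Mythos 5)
Your proposal goes a genuinely different route from the paper's, and in doing so it runs into a substantive gap. The paper does \emph{not} lift to $\R^{n+2}$: it \emph{descends}, via the method of rotations (Lemma~\ref{L2}), integrating over $\ca O(n)$ to write $\M f$ as an average of weighted maximal operators $\M^U_k$ acting on $(n-k)$-dimensional slices, each of which (Lemma~\ref{L1}) is pointwise dominated by a spherical maximal operator $\ca M$ in the \emph{fixed} dimension $N = n - k = \lfloor \max(p/(p-1), 2) \rfloor + 1$. The constant $C(N, p)$ in Theorem~\ref{sph} is then automatically a function of $p$ alone, because $N$ is. This is precisely why the paper never needs --- and never proves --- a spherical maximal bound uniform in the ambient dimension.

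Your scheme goes in the opposite direction: the lifting observation (that the orthogonal projection of the uniform probability on $S^{n+1} \subset \R^{n+2}$ onto $B_1 \subset \R^n$ is the normalized Lebesgue measure, the $k=2$ case of the density $\propto (1 - |u|^2)^{(k-2)/2}$) is classical and correct, and the truncation/Fubini argument to deal with $\tilde f \notin L^p(\R^{n+2})$ is fine. But you are then forced to prove $\bigl\| \Mg^{(m)}_{\mathrm{sph}} \bigr\|_{L^p(\R^m) \rightarrow L^p(\R^m)} \le C(p)$ with $m = n+2 \rightarrow \infty$, which is a \emph{strictly stronger} statement than Theorem~\ref{indepdim} itself (the spherical maximal operator dominates $\M$, not the reverse). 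Theorem~\ref{sph} and its proof do not give this: the constants in Propositions~\ref{propL2} and~\ref{propL1} are explicitly of the form $C(n)$, and they enter the final interpolation in Section~\ref{PoTsph}. Your sketch does not say how to remove this dimension-dependence; the claim that the boundary estimates in the interpolation strip are \og bounded independently of $m$\fg\ is asserted, not argued. The $L^2$ square-function piece might in fact be salvageable (a Weber--Schafheitlin-type identity shows $\int_0^\infty |u \ms1 \widehat \sigma'_{m-1}(u)|^2 \ms1 \d u / u$ is bounded in $m$), but the $L^{p_1}$ endpoint for $p_1$ near $1$ --- where the paper's proof of Theorem~\ref{sph} uses the \emph{classical} maximal theorem with its dimension-dependent constant (Proposition~\ref{propL1}) --- is exactly where the dimension-dependence lives, and your appeal to the Gaussian semi-group inequality~\eqref{maxiSemiGrGa} does not address it, since the smoothed kernels $K_\ell$ are not dominated by dilates of a fixed radially-decreasing profile uniformly in~$m$. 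In short: the reduction is valid, but the dimension-free spherical bound it reduces to is both harder than the target theorem and unproved here; the paper's descent sidesteps the issue entirely.
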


\subsection{Proof of Theorem \ref{indepdim}%
\label{SphericOp}}
The main tool in the proof is the spherical maximal operator $\ca M$ defined
by
\[
 \label{RadiMaxi}
   (\ca M f)(x)
 = (\Mg_\sigma f)(x)
 = \sup_{r > 0} \ms2 
    \Bigl| 
     \int_{S^{n-1}} f(x - r \theta) \, \d \sigma(\theta) 
    \Bigr|,
 \quad \ x \in \R^n,
\]
where $\sigma$ is the normalized 
Haar measure\label{HaarMeas} 
on the unit sphere $S^{n-1}$. It is clear that $\ca M f$ is well defined
when $f$ is regular, but not when $f \in L^1_{\rm loc}(\R^n)$.
Theorem~\ref{sph} below means in particular that for suitable $p$ and $n$, 
$\ca M f$ can be defined when $f \in L^p(\R^n)$, for example by the method
described at the end of Section~\ref{DefiMaxiFunc}. The maximal function 
$\ca M (|f|)$ controls $\M f$ pointwise, as one sees easily by using polar
coordinates. The maximal operator $\ca M$ is bounded in $L^p(\R^N)$ for
some $p$ and $N$, with a bound depending on the dimension~$N$, according to
the following theorem also due to Stein. An extension by Bourgain of this
result can be found in~\cite{BourgAve}.

\begin{thm}[Stein~\cite{SteinMF}]%
\label{sph}
Let $N \ge 3$ and assume that $N / (N - 1) < p \le +\infty$. There exists a
constant $C(N, p)$ such that for every function $f \in L^p(\R^N)$, one has
\[
      \|\ca M f\|_{L^p(\R^N)}
 \le C(N, p) \ms2 \|f\|_{L^p(\R^N)}.
\]
\end{thm}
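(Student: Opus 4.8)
The plan is to prove the $L^p$ bound for the spherical maximal operator $\ca M$ on $\R^N$ via the classical Littlewood--Paley / square function decomposition of the spherical averaging operator, following the original approach of Stein. Write $\sigma_r$ for the normalized measure on the sphere of radius $r$ centered at~$0$, so that $(\ca M f)(x) = \sup_{r > 0} |(\sigma_r * f)(x)|$. The starting point is the Fourier transform of the surface measure: $\widehat \sigma_r(\xi) = \widehat \sigma(r \xi)$, and $\widehat \sigma(\xi)$ is a Bessel-type function satisfying the decay estimate $|\widehat \sigma(\xi)| \le \kappa (1 + |\xi|)^{-(N-1)/2}$, together with matching bounds on its derivatives. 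The crucial feature is that the decay exponent $(N-1)/2$ exceeds $1/2$ precisely when $N \ge 3$ (it equals $1$ when $N = 3$), and this is what will drive the whole argument.

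First I would introduce a smooth radial cutoff and perform a dyadic decomposition in frequency: fix $\psi \in \ca S(\R^N)$ radial with $\widehat \psi$ supported in an annulus, summing to~$1$ away from the origin, and write the averaging operator $A_r f = \sigma_r * f$ as $A_r f = (\Phi_r * f) + \sum_{j \ge 1} (m_j(r D) f)$, where the first piece $\Phi_r$ is harmless — it is dominated pointwise by the Hardy--Littlewood maximal function, hence bounded on $L^p$ for all $p > 1$ by Theorem~\ref{TheoHL} — and each $m_j(r\xi) = \widehat\psi(2^{-j} r \xi)\widehat\sigma(r\xi)$ is a multiplier localized at frequency $|\xi| \sim 2^j/r$. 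For the dyadic pieces, the key estimates are: an $L^2$ bound for the maximal operator $\sup_r |m_j(rD) f|$ of size $\kappa \ms1 2^{-j((N-1)/2 - 1/2)} = \kappa \ms1 2^{-j(N/2 - 1)}$, obtained by controlling $\sup_r |m_j(rD)f|^2$ by $\int_0^\infty (|m_j(rD)f|^2 + |r\partial_r(m_j(rD)f)|^2)\,\frac{\d r}{r}$ and then using Plancherel together with the derivative bounds on $\widehat\sigma$; and a trivial $L^\infty$ bound of size $\kappa$ for each piece (since $\|m_j\|_\infty \le \kappa 2^{-j(N-1)/2}$ and there are $\sim 2^j$ relevant scales, or more simply by Sobolev embedding on each frequency block).

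Next I would interpolate between these two endpoint estimates for the maximal operators $\ca M_j := \sup_r|m_j(rD)(\cdot)|$: the $L^2$ bound decaying like $2^{-j(N/2-1)}$ and the $L^\infty$ bound of order $2^{\varepsilon j}$ (allowing a tiny loss from the Sobolev step) yield an $L^p$ bound $\|\ca M_j f\|_p \le \kappa \ms1 2^{-j\delta(p)} \|f\|_p$ with $\delta(p) > 0$ for every $p > N/(N-1)$ — this is exactly the range where the $L^2$ gain beats the $L^\infty$ loss after interpolation. Summing the geometric series in~$j$ then gives $\|\ca M f\|_p \le C(N,p)\|f\|_p$. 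I expect the main obstacle, and the step requiring the most care, to be the square-function control of $\sup_r |m_j(rD)f|$ in $L^2$: one must write $m_j(rD)f$ at a fixed~$r$ as $m_j(r_0 D)f$ plus an integral of $\partial_\rho(m_j(\rho D)f)$ over $\rho \in [r_0,r]$, and then estimate both $\int\int |m_j(\rho\xi)|^2|\widehat f|^2\,\d\xi\,\frac{\d\rho}{\rho}$ and $\int\int |\rho\,\partial_\rho[m_j(\rho\xi)]|^2|\widehat f|^2\,\d\xi\,\frac{\d\rho}{\rho}$ using the decay of $\widehat\sigma$ and of $\xi\cdot\nabla\widehat\sigma$; here the annular localization of $\widehat\psi$ is what makes the $\rho$-integral converge and contributes only a bounded factor. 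The rest — the interpolation, and the reduction of the low-frequency part to $\M$ — is routine given the tools (Proposition~\ref{LaplaceMultip}-style reasoning is not even needed; plain Plancherel and Marcinkiewicz/Stein complex interpolation suffice).
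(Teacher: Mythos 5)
Your dyadic decomposition of $\widehat\sigma$, the treatment of the low-frequency piece, and the $L^2$ square-function estimate for the dyadic pieces (with decay $2^{-j(N/2-1)}$) all match the paper's proof. The gap is in the interpolation step. You propose to interpolate each $\ca M_j$ between the $L^2$ bound and a trivial (constant, or $2^{\varepsilon j}$) $L^\infty$ bound, and claim geometric decay in $j$ on $L^p$ for every $p > N/(N-1)$. That cannot work: interpolating between an $L^2$ and an $L^\infty$ estimate only produces bounds for $p \in [2, +\infty]$, whereas for $N \ge 3$ the threshold $N/(N-1) \le 3/2$ lies strictly below $2$, so the range of exponents actually at stake is $(N/(N-1), 2)$ and is unreachable from the $(L^2, L^\infty)$ pair. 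The slogan that the $L^2$ gain beats the $L^\infty$ loss after interpolation cannot extrapolate past the endpoint $p = 2$.

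What you need, and what the paper supplies, is an endpoint on the $p < 2$ side. Proposition~\ref{propL1} establishes a weak type $(1,1)$ estimate for each dyadic piece, $\bigl|\{x : (\Mg_{K_\ell}f)(x) > \lambda\}\bigr| \le C(N)\ms2 2^\ell \lambda^{-1}\|f\|_1$, via the pointwise kernel bound $|K_\ell(x)| \le C(N)\ms2 2^\ell (1 + |x|)^{-N-1}$; this bound (proved by a direct computation on $\varphi_\ell^\vee * \sigma$, using that the normalized surface measure, projected onto a hyperplane, has a controlled density) shows that $\Mg_{K_\ell}$ is dominated pointwise by $C(N)\ms2 2^\ell \M$ and therefore inherits the classical weak $(1,1)$ bound with the extra factor $2^\ell$. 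Marcinkiewicz interpolation between weak $(1,1)$ with constant $2^\ell$ and strong $(2,2)$ with constant $2^{-\ell(N-2)/2}$ then yields $\|\Mg_{K_\ell} f\|_p \le C'(N,p)\ms2 2^{\ell[N/p - (N-1)]}\|f\|_p$ for $1 < p \le 2$, and the exponent is negative exactly when $p > N/(N-1)$. This is the step missing from your argument: without a weak $(1,1)$ (or some other sub-$L^2$) estimate on the pieces, the series over $\ell$ cannot be summed below $p = 2$. The trivial $L^\infty$ bound you invoke is useful only for the last, routine step of passing from the established $p = 2$ case to $p > 2$, which the paper also does at the end.
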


 The condition $p > N / (N - 1)$ can be easily seen necessary, and the case 
$p = +\infty$ is obvious, with $C(N, \infty) = 1$. We postpone the proof of
this theorem to the next section. It requires a number of harmonic analysis
methods, including square function, multipliers and Littlewood--Paley
decomposition.
\dumou

 In order to prove Theorem~\ref{indepdim}, we first introduce the following
weighted maximal operator, depending on a parameter $k \in \N$. For 
$f \in \ca S(\R^n)$, let
\[
   (\M_{n, k} f) (x)
 = \sup_{r > 0}
    \frac {\int_{|y| \le r} |f(x - y)| \, |y|^k \, \d y}
          {\int_{|y| \le r} |y|^k \, \d y} \up, 
 \quad x \in \R^n,
\]
where $|y|$ denotes the Euclidean norm of $y \in \R^n$. Taking polar
coordinates gives us the pointwise inequality 
\[
 (\M_{n, k} f) (x) \le (\ca M |f|)(x),
  \quad x \in \R^n,
\]
from which we can deduce by applying Theorem~\ref{sph} that for every
integer $N \ge 3$, for $p$ such that $N / (N-1) < p \le +\infty$ and for
every $f$ in $L^p(\R^N)$, we have
\begin{equation}
      \|\M_{N, k} f\|_{L^p(\R^N)}
 \le C(N, p) \ms2 \|f\|_{L^p(\R^N)}, 
 \label{pass} 
\end{equation}
where $C(N, p)$ is the constant in Theorem~\ref{sph}. We shall obtain 
Theorem~\ref{indepdim} by lifting to $\R^n$ the inequality~\eqref{pass}
obtained in a lower dimension $N = n - k$. This is done by integrating over
the Grassmannian of $(n-k)$-planes in $\R^n$. This method of descent is in the
spirit of the Calder\'on--Zygmund 
\emph{method of rotations}.\label{RotaMeth}

 We write $\R^n = \R^{n-k} \times \R^k$ and $x = (x_1 , x_2)$ accordingly,
for every $x \in \R^n$, with $x_1 \in \R^{n-k}$ and $x_2 \in \R^k$. For
each $U$ in the orthogonal group $\ca O(n)$, we introduce the auxiliary
maximal operator 
\[
   (\M^U_k f)(x)
 = \sup_{r > 0} 
    \frac { \int_{\R^{n-k}} \gr 1_{ \{|y_1| \le r\} } \ms2
             |f (x - U(y_1 , 0) )| \,|y_1|^{k} \, \d y_1 
          }
          { \int_{\R^{n-k}} \gr 1_{ \{|y_1| \le r\} } \ms2 |y_1|^{k}
             \, \d y_1 
          } \up,
 \quad x \in \R^n.
\] 
We need two lemmas.

\begin{lem}
\label{L1}
Let $n \ge k + 3$ and $p > (n-k) / (n-k-1)$. Then for all 
$f \in L^p(\R^n)$ and $U \in \ca O(n)$, we have
\[
      \|\M^U_{k} f\|_{L^p(\R^n)}
 \le C(n - k, p) \, \|f\|_{L^p(\R^n)},
\]
where $C(n-k, p)$ is the constant appearing in Theorem~\ref{sph}.
\end{lem}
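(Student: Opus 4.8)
The plan is to deduce Lemma~\ref{L1} from the lower-dimensional inequality~\eqref{pass} by a slicing argument (Fubini's theorem), exploiting the fact that $\M^U_k$ averages only over the directions of an $(n-k)$-dimensional subspace.

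First I would remove the orthogonal transformation $U$. Let $R_U$ be the map $R_U f = f \circ U$, which is an isometry of $L^p(\R^n)$ for every $p \in [1, +\infty]$ since $U \in \ca O(n)$. A change of variables shows that if $x = U z$ and $g = R_U f$, then $f(x - U(y_1, 0)) = g(z - (y_1, 0))$, whence $(\M^U_k f)(U z) = (\M^{\Id}_k g)(z)$; equivalently $\M^U_k f = \bigl( \M^{\Id}_k (R_U f) \bigr) \circ U^{-1}$. As $h \mapsto h \circ U^{-1}$ is again an isometry of $L^p(\R^n)$, it suffices to prove the estimate for $U = \Id$.

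Next I split coordinates as $x = (x_1, x_2) \in \R^{n-k} \times \R^k$ and, for fixed $x_2 \in \R^k$, set $f_{x_2}(x_1) = f(x_1, x_2)$. Unwinding the definition gives the pointwise identity $(\M^{\Id}_k f)(x_1, x_2) = (\M_{n-k, k} f_{x_2})(x_1)$, where $\M_{n-k,k}$ is the weighted maximal operator on $\R^{n-k}$. By Fubini applied to $|f|^p$, one has $f_{x_2} \in L^p(\R^{n-k})$ for almost every $x_2$; the averaging kernel $y_1 \mapsto \gr 1_{\{|y_1| \le r\}} |y_1|^k$ is bounded with compact support, so for such $x_2$ the quantity $(\M_{n-k,k} f_{x_2})(x_1)$ is a finite well-defined number for every $x_1$, continuous in $(r, x_1)$, and hence (restricting the supremum to rational radii, as in Section~\ref{DefiMaxiFunc}) jointly measurable in $(x_1, x_2)$. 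Since $n - k \ge 3$ and $p > (n-k)/(n-k-1)$, inequality~\eqref{pass} with $N = n - k$ yields, for almost every $x_2$,
\[
 \int_{\R^{n-k}} \bigl| (\M_{n-k,k} f_{x_2})(x_1) \bigr|^p \, \d x_1
 \le C(n-k, p)^p \int_{\R^{n-k}} |f_{x_2}(x_1)|^p \, \d x_1 .
\]
Integrating in $x_2 \in \R^k$ and applying Fubini on both sides gives
\[
 \| \M^{\Id}_k f \|_{L^p(\R^n)}^p
 = \int_{\R^k} \int_{\R^{n-k}} \bigl| (\M_{n-k,k} f_{x_2})(x_1) \bigr|^p \, \d x_1 \, \d x_2
 \le C(n-k, p)^p \, \| f \|_{L^p(\R^n)}^p ,
\]
which is the desired bound for $U = \Id$, and therefore for every $U \in \ca O(n)$. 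The case $p = +\infty$ is immediate, with constant $1$.

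I do not expect a serious obstacle here: the argument is essentially a Fubini slicing combined with~\eqref{pass}. The only point requiring a little care is the joint measurability of $(x_1, x_2) \mapsto (\M^{\Id}_k f)(x_1, x_2)$ — handled exactly as in Section~\ref{DefiMaxiFunc}, the kernel being nonnegative — together with the observation that~\eqref{pass} is legitimately invoked slice by slice because $f_{x_2} \in L^p(\R^{n-k})$ for almost every $x_2$.
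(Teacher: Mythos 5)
Your proof is correct and follows essentially the same route as the paper's: first use the isometry $f \mapsto f \circ U$ to reduce to $U = \Id$, then slice in the last $k$ coordinates to identify $\M^{\Id}_k f$ with $\M_{n-k,k}$ acting on the sections $f_{x_2}$, and integrate via Fubini using \eqref{pass}. The extra measurability remarks you make are a sensible (and harmless) addition but not a deviation from the argument.
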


\begin{proof}
Let us set ${ \di f U }(x) = f(U x)$, for every $x \in \R^n$. Since 
$U \in \ca O(n)$, the mapping $S_U : f \mapsto { \di f U }$ is an isometry of
$L^p(\R^n)$. Observe that
\[
   \int_{|y_1| \le r}
             \bigl| f \bigl(U x - U(y_1 , 0) \bigr) \bigr|
              \,|y_1|^{k} \, \d y_1
 = \int_{|y_1| \le r}
             \bigl| { \di f U } (x - (y_1 , 0) ) \bigr|
              \,|y_1|^{k} \, \d y_1,
\]
hence we have that
$  (\M^U_{k} f)(U x) 
 = (\M^{\Id}_{k} { \di f U })(x)$, for every $x \in \R^n$. This means that 
$S_U \M^U_k = \M^{\Id}_k S_U$. It follows that we need only consider
$\M^{\Id}_k$. Now, for every $x = (x_1, x_2) \in \R^n$, we have
\[
   (\M^{\Id}_k f) (x_1, x_2)
 = \sup_{r > 0} 
    \frac { \int_{\R^{n-k}} \gr 1_{ \{|y_1| \le r\} } \ms2
             \bigl| f (x_1 - y_1, x_2) \bigr|
              \,|y_1|^k \, \d y_1 }
          { \int_{\R^{n-k}} \gr 1_{ \{|y_1| \le r\} } \ms2 |y_1|^k
            \, \d y_1 }
 = \bigl( \M_{n-k, k} \ms2 f_{x_2} \bigr) (x_1 )
\]
with
$
   f_{x_2}(x_1)
 = f(x_1, x_2)
$.
Applying~\eqref{pass} to $\M_{n-k, k}$ for each $x_2 \in \R^k$ gives
\[
     \int_{\R^{n-k}} \bigl| (\M^{\Id}_k f) (x_1, x_2) \bigr|^p \, \d x_1
 \le C(n-k, p)^p \int_{\R^{n-k}} 
      \bigl| f_{x_2} (x_1) \bigr|^p \, \d x_1,
\]
therefore
\begin{align*}
      \| \M_k^{\Id} f\|^p_{L^p(\R^n)}
 &\le C(n-k, p)^p \int_{\R^k}
       \biggl( \int_{\R^{n-k}}
        \bigl| f_{x_2} (x_1) \bigr|^p \, \d x_1         
       \biggr) \d x_2
 \\
  &=  C(n-k, p)^p \ms3 \| f \|^p_{L^p(\R^n)}.
\end{align*}
\end{proof}

\begin{lem}%
\label{L2}
For every locally integrable function $f$ on\/ $\R^n$ and\/ $1 \le k \le n$,
one has the pointwise inequality
\[
      (\M f) (x)
 \le \int_{\ca O(n)}
       (\M^U_k f) (x) \, \d \mu_n(U), 
 \quad x \in \R^n,
\]
where $\mu_n$ denotes the normalized Haar measure on $\ca O(n)$.
\end{lem}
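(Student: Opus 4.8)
The plan is to fix $x \in \R^n$ and $r > 0$ and to show that the single ball average
\[
   \frac { \int_{|y| \le r} |f(x-y)| \, \d y }
         { \int_{|y| \le r} \, \d y }
\]
is bounded above by $\int_{\ca O(n)} (\M^U_k f)(x) \, \d \mu_n(U)$. Since the right-hand side does not depend on~$r$, taking the supremum over $r > 0$ on the left then produces $(\M f)(x)$ and finishes the proof.

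The key ingredient is a disintegration of Lebesgue measure on~$\R^n$ along the rotated copies of the plane $\R^{n-k} \times \{0\}$, in the spirit of the method of rotations. I will first record that for every unit vector $e \in \R^n$ the image of $\mu_n$ under $U \mapsto Ue$ is the normalized rotation-invariant measure $\sigma_{n-1}$ on $S^{n-1}$, because that image measure is $\ca O(n)$-invariant and there is only one such probability measure. Consequently, for every nonnegative Borel function $g$ on~$\R^n$, integrating over $\ca O(n)$ after passing to polar coordinates inside $\R^{n-k}$ and using $|U(y_1, 0)| = |y_1|$ gives
\[
    \int_{\ca O(n)} \int_{\R^{n-k}} g \bigl( U(y_1, 0) \bigr) \, |y_1|^k \, \d y_1 \, \d \mu_n(U)
  = s_{n-k-1} \int_0^{+\infty} \rho^{n-1} \int_{S^{n-1}} g(\rho \ms1 \omega) \, \d \sigma_{n-1}(\omega) \, \d \rho,
\]
where $s_{n-k-1}$ is the surface measure of the unit sphere $S^{n-k-1}$: here the weight $|y_1|^k$ turns the radial Jacobian $\rho^{n-k-1} \, \d \rho$ of $\R^{n-k}$ into $\rho^{n-1} \, \d \rho$, and the inner spherical average collapses to $\sigma_{n-1}$ after averaging over~$\ca O(n)$. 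Comparing with ordinary polar coordinates in~$\R^n$, namely $\int_{\R^n} g(y) \, \d y = s_{n-1} \int_0^{+\infty} \rho^{n-1} \int_{S^{n-1}} g(\rho \ms1 \omega) \, \d \sigma_{n-1}(\omega) \, \d \rho$, I obtain, with $\gamma_{n,k} := s_{n-1} / s_{n-k-1}$,
\[
    \int_{\R^n} g(y) \, \d y
  = \gamma_{n,k} \int_{\ca O(n)} \int_{\R^{n-k}} g \bigl( U(y_1, 0) \bigr) \, |y_1|^k \, \d y_1 \, \d \mu_n(U),
\]
valid for every nonnegative Borel $g$ by Tonelli's theorem (all integrands are nonnegative, values $+\infty$ allowed, so no convergence issue arises).

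Now I apply this identity twice with the \emph{same} constant $\gamma_{n,k}$. Taking $g(y) = \gr 1_{\{|y| \le r\}} \ms1 |f(x - y)|$, and again using $|U(y_1, 0)| = |y_1|$, the numerator of the ball average equals $\gamma_{n,k} \int_{\ca O(n)} \bigl( \int_{\R^{n-k}} \gr 1_{\{|y_1| \le r\}} \ms1 |f(x - U(y_1,0))| \, |y_1|^k \, \d y_1 \bigr) \, \d \mu_n(U)$; taking $g = \gr 1_{\{|y| \le r\}}$, the denominator equals $\gamma_{n,k} \ms1 D_r$ with $D_r := \int_{\R^{n-k}} \gr 1_{\{|y_1| \le r\}} |y_1|^k \, \d y_1$ (the inner integral is independent of $U$ and $\mu_n$ is a probability measure). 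Dividing, $\gamma_{n,k}$ cancels and the ball average becomes
\[
    \int_{\ca O(n)}
     \frac { \int_{\R^{n-k}} \gr 1_{\{|y_1| \le r\}} \ms1 |f(x - U(y_1,0))| \, |y_1|^k \, \d y_1 }
           { D_r }
      \, \d \mu_n(U).
\]
For each fixed $U$, the quantity inside the outer integral is exactly one of the expressions over which the supremum defining $(\M^U_k f)(x)$ is taken (the one with this value of~$r$), hence it is $\le (\M^U_k f)(x)$. Integrating over~$\ca O(n)$ and then taking the supremum over $r > 0$ on the left yields the stated pointwise inequality.

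I do not expect a genuine obstacle: the only step needing care is the normalization bookkeeping in the disintegration identity, i.e.\ checking that the exponent of $|y_1|$ is precisely $k$ so that the two radial Jacobians match and that the $\ca O(n)$-action transports $\sigma_{n-k-1}$ to $\sigma_{n-1}$; with nonnegative integrands everything else is routine. (For $k = n$ one reads $\R^0 = \{0\}$ and $\M^U_n f = |f|$; in the intended application only $n - k \ge 3$ is used, so this degenerate case is irrelevant.)
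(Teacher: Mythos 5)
Your proof is correct and follows essentially the same route as the paper's: you establish that the ball average equals an average over $\ca O(n)$ of the weighted averages defining $\M^U_k$, using polar coordinates and the fact that the pushforward of $\mu_n$ under $U \mapsto U\theta_0$ is $\sigma_{n-1}$ (the paper's equation~\eqref{intorth}). The only cosmetic difference is that you prove a general disintegration identity for all nonnegative Borel $g$ and apply it to numerator and denominator separately (with the constant $\gamma_{n,k}$ cancelling), whereas the paper proves the ratio identity directly after reducing to product functions $g_0(|x|)g_1(x')$; the substance is identical.
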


\begin{proof}
The desired pointwise inequality follows from the next equality, true for
every nonnegative Borel function $g$ on $\R^n$, stating that
\begin{equation}
   \frac {\int_{|y| \le r} g(y) \, \d y}
         {\int_{|y| \le r} \, \d y}
 = \frac {\int_{\ca O(n)} \int_{\R^{n-k}} 
           \gr 1_{ \{|y_1| \le r\} } \ms2
            g \bigl( U(y_1 ,0) \bigr) 
             \, |y_1|^{k}
              \, \d y_1  \d \mu_n(U)}
         {\int_{\R^{n-k}} \gr 1_{ \{|y_1| \le r\} } \ms2
          |y_1|^{k} \, \d y_1 } \up.
 \label{intorth}
\end{equation}
Indeed, for each $r > 0$ and $x \in \R^n$, the previous equality allows us
to write
\begin{align*}
    \frac 1 {|B_r|} \int_{B_r} |f(x - y)| \, \d y
 &= \frac {\int_{\ca O(n)} \int_{|y_1| \le r} \bigl
           |f \bigl( (x - U(y_1 , 0) \bigr) \bigr| \, |y_1|^{k}
            \, \d y_1  \d \mu_n(U)}
          {\int_{|y_1| \le r} |y_1|^{k} \, \d y_1 } 
 \\ 
 & \le \int_{\ca O(n)} (\M^U_k f) (x) \, \d \mu_n(U),
\end{align*}
and we conclude by taking the supremum over all $r > 0$. 

 It remains to check~\eqref{intorth}. By standard measure-theoretic
arguments about classes of functions generating the Borel $\sigma$-algebra
of~$\R^n$, we can suppose that $g$ has the form $g(x) = g_0(|x|) g_1(x')$,
with $x = |x| \ms1 x'$ and $x' \in S^{n-1}$. By taking polar coordinates,
we see that the left-hand side of~\eqref{intorth} is equal to
\[
 \frac n {r^n} \Bigl( \int_0^r g_0(t) t^{n-1} \, \d t \Bigr)
  \Bigl( \int_{S^{n-1}} g_1(y') \, \d \sigma_{n-1}(y') \Bigr),
\]  
where $\sigma_{n-1}$\label{SigmSubN} 
is the invariant probability measure on $S^{n-1}$. The right-hand
side is
\[
 \frac n {r^n} \Bigl( \int_0^r g_0(t) \ms1 t^{n-1} \, \d t \Bigr)
  \Bigl( \int_{\ca  O(n)}
   \int_{S^{n-k-1}} g_1 \bigl( U(y'_1 , 0) \bigr)
    \, \d \sigma_{n-k-1} (y'_1 ) \ms2 \d \mu_n(U) \Bigr).
\]
Observe that for every $\theta_0 \in S^{n-1}$, we have
\[
   \int_{\ca O(n)} g_1(U \theta_0) \, \d \mu_n(U)
 = \int_{S^{n-1}} g_1(\theta) \, \d \sigma_{n-1} (\theta),
\]
since the left-hand side of this equality defines a probability measure on
$S^{n-1}$, namely $\ca B_{S^{n-1}} \ni A \mapsto
 \int_{\ca O(n)} \gr 1_A(U \theta_0) \, \d \mu_n(U)$, which is invariant
under the left-action of $\ca O(n)$, hence equal to~$\sigma_{n-1}$. We have
therefore
\begin{align*}
   & \ms5 \int_{\ca  O(n)}
    \int_{S^{n-k-1}} g_1 \bigl( U(y'_1 , 0) \bigr)
     \, \d \sigma_{n-k-1} (y'_1 ) \ms1 \d \mu_n(U)
 \\
 =& \ms4 \int_{S^{n-k-1}}
    \Bigl( \int_{\ca  O(n)}
     g_1 \bigl( U(y'_1 , 0) \bigr) \, \d \mu_n(U) \Bigr)
      \, \d \sigma_{n-k-1} (y'_1 )
 \\
 =& \ms4 
   \int_{S^{n-k-1}} 
    \Bigl(
     \int_{S^{n-1}} g_1 (\theta) \, \d \sigma_{n-1} (\theta) 
    \Bigr)
     \, \d \sigma_{n-k-1} (y'_1 )
 = \int_{S^{n-1}} g_1(y') \, \d \sigma_{n-1} (y'),
\end{align*}
completing the proof.
\end{proof}

\begin{proof}[Proof of Theorem~\ref{indepdim}]
Let $1 < p \le +\infty$. There is obviously nothing to do if $n \le 2$.
When $n \le p / (p - 1)$, the \og bad\fge Vitali-bound $C(n) = 3^n$ in
the classical maximal inequality~\eqref{strong-type} is less than a
function of $p$ alone, namely $3^{p / (p-1)}$. We can therefore assume that
both inequalities $n > p / (p-1)$ and $n \ge 3$ hold. We then write 
$n = (n-k) + k$ with 
$n - k
 = \bigl \lfloor \max \bigl( p / (p-1), 2 \bigr) \bigr \rfloor + 1$,
and the result follows from Lemma~\ref{L1} and Lemma~\ref{L2} since with
this choice, the bound $C(n - k, p)$ in Lemma~\ref{L1} is now a function of
$p$ alone.
\end{proof}

\subsection{Boundedness of the spherical maximal operator%
\label{Bsmo}}

In this section, we prove Theorem~\ref{sph} following the approach of Rubio
de Francia~\cite{rubio}, see also Grafakos~\cite{GrafakosCFA}. Let $n \ge 2$.
The spherical maximal operator is expressed by
\[
 \label{MsubS}
   (\ca M f)(x)
 = \sup_{r > 0} \ms1
    \bigl|
     \bigl[
      m_\sigma(r \, \cdot) \ms2 \widehat f ( \cdot )
     \bigr]^\vee(x)
    \bigr|
 = \sup_{r > 0} \ms1 \bigl| \bigl( {\Di \sigma r} * f \bigr)(x) \bigr|,
 \quad
 x \in \R^n,
\]
where $h^\vee(x) = \widehat h (-x)$ denotes the inverse Fourier transform
of a function $h$, $m_\sigma$ is the Fourier transform of the uniform
probability measure $\sigma$ on the unit sphere~$S^{n-1}$, and 
$\Di \sigma r$ is the dilated probability measure defined
in~\eqref{MuLambda}. It is known that
\begin{equation}
   m_\sigma(\xi)
 = \widehat \sigma (\xi)
 = (2 \pi |\xi|)^{- (n - 2)/2}
    \mr J_{ (n - 2) / 2 } (2 \pi |\xi|),
 \quad \xi \in \R^n,
 \label{Bess}
\end{equation}
with $\mr J_\nu$ the Bessel function of order $\nu$. This equality follows
from the fact that the two functions
$t \mapsto t^{- (n - 2)/2} \mr J_{(n-2)/2}(t)$ and
\[
   t \mapsto F(t) 
 = \int_{S^{n-1}} \e^{\ii t x_1} \, \d \sigma(x)
 = \frac {2 s_{n-2}} {s_{n-1}}
    \int_0^1 (1 - s^2)^{ (n-3)/2 } \cos(s \ms2 t) \, \d s
\]
are entire functions $g$ satisfying $g(0) = 1$ and
$t^2 (g''(t) + g(t)) = - (n-1) t \ms1 g'(t)$.
\dumou

 We shall rely on the Littlewood--Paley theory, decomposing multipliers
into dyadic pieces with localized frequencies. More precisely, we shall
dominate $\ca M$ by a series of maximal operators 
$\sum_{\ell = 0}^{+\infty} \Mg_{K_\ell}$, where each kernel $K_\ell$ is
radial with a well localized Fourier transform $m_\ell$. We establish that
$\Mg_{K_\ell}$ is of strong type when $p = 2$ and of weak type $(1, 1)$.
Then, we get an $L^p$ bound for $\Mg_{K_\ell}$ by interpolation, and the
range of $p$ in Theorem~\ref{sph} is chosen for making the series of bounds
convergent. For the case $p = 2$, we mainly use for $m_\ell$ both the decay
at infinity and a support property, together with a precise upper bound for
the $L^2(\R^n)$ norm of a related square function. When $p = 1$, we invoke
the usual Hardy--Littlewood theorem. Before giving the proof of
Theorem~\ref{sph}, we introduce the dyadic decomposition\label{LiPaDec}
of~$m_\sigma = \widehat \sigma$. 
\dumou

 Let $\varphi_0$ be a smooth radial function on $\R^n$ satisfying for every
$\xi \in \R^n$ that
\[
   \varphi_{0}(\xi)
 = \begin{cases} 
     \ms3 1 \mr{\ \, if \ \,} |\xi| \le 1 
     \\ 
     \ms3 0 \mr{\ \, if \ \,} |\xi| \ge 2.
   \end{cases}
\]
Let $\psi(\xi) = \varphi_0(\xi) - \varphi_0(2 \xi)$ for $\xi$ in $\R^n$.
This function $\psi$ is supported in the annulus 
$\{ 1/2 \le |\xi| \le 2 \}$. For every integer $\ell \ge 1$ we define 
\[
   \varphi_\ell(\xi) 
 = \varphi_0(2^{-\ell}\xi) - \varphi_0(2^{1 - \ell}\xi)
 = \di \psi{2^{-l}} (\xi),
 \quad
 \xi \in \R^n,
\]
and for every $\ell \ge 0$, we consider the dyadic radial piece
$
 m_\ell = \varphi_\ell \ms2 m_\sigma
$
associated to the multiplier~$m_\sigma$. We can check that 
$\sum_{\ell = 0}^{+\infty} \varphi_\ell = 1$, thus
$
   m_\sigma
 = \sum_{\ell = 0}^{+\infty} m_\ell
$.
For every $\ell \ge 0$, we introduce the integrable
kernel $K_\ell = m_\ell^\vee = \varphi_\ell^\vee * \sigma$ and we set
\[
   (\Mg_{K_\ell} f) (x)
 = \sup_{r > 0} \ms2 
     \bigl| 
      \bigl[ 
       m_\ell(r \, \cdot) \widehat f ( \cdot ) 
      \bigr]^\vee(x) 
     \bigr|
 = \sup_{r > 0} \ms2 
     \bigl| 
      \bigl[ 
       \Di {(\varphi_\ell^\vee)} r * {\Di \sigma r} * f
      \bigr] (x) 
     \bigr|,
 \quad
 x \in \R^n,
\]
when $f \in \ca S(\R^n)$. In particular, we have 
$
   \Mg_{K_0} f
 = \sup_{r > 0} 
    \bigl| \Di {(\varphi_0^\vee)} r * {\Di \sigma r} * f \bigr|$
and
\[
   \Mg_{K_\ell} f
 = \sup_{r > 0} \ms2
    \bigl|
     \Di {(\psi^\vee)} {2^{-\ell} r} * {\Di \sigma r} * f
    \bigr|,
  \quad
 \ell \ge 1.
\]
For every $x \in \R^n$ and $r > 0$, we see that
$  ({\Di \sigma r} * f) (x)
 = \sum_{\ell = 0}^{+\infty} \ms1 ( {\Di{(K_\ell)} r} * f) (x)$ and we get
the pointwise inequality 
\begin{equation}
      (\ca M f) (x)
 \le \sum_{\ell = 0}^{+\infty} \ms1 (\Mg_{K_\ell} f) (x).
 \label{ptw}
\end{equation}
In a first subsection, we present some useful results on this type of
maximal operators and associated square functions. Then, we shall
prove that each~$\Mg_{K_\ell}$, for $\ell \ge 0$, is of strong type when 
$p = 2$ and of weak type when $p = 1$, and we give the proof of
Theorem~\ref{sph} in a third subsection.

\subsubsection{Maximal operator and square function%
\label{Mmoasf}}

\noindent
Let $m(\xi)$ be a multiplier that is a bounded continuous function on
$\R^n$, vanishing at $0$, with $|m(\xi)| = O(|\xi|)$ in a neighborhood of
$0$. For $f$ in the Schwartz class~$\ca S(\R^n)$ and for $x \in \R^n$, set
\begin{align*}
   (g_m f)(x)
 &= \Bigl( \int_0^{+\infty} 
    |(T_{ \di m u } f)(x)|^2 \, \frac {\d u} u \Bigr)^{1/2}
 \\
 &= \Bigl( \int_0^{+\infty} \Bigl|
    \int_{\R^n} m(u \xi) \widehat f(\xi) \e^{2 \ii \pi x \ps \xi}
     \, \d \xi \Bigr|^2 \, \frac {\d u} u \Bigr)^{1/2}.
\end{align*}
We obtain the Littlewood--Paley function $g_1(f)$ of~\eqref{L-PfunctionG1}
when $m(\xi) = 2 \pi |\xi| \ms1 \e^{ - 2 \pi |\xi|}$.

\begin{lem}%
\label{ClefA}
Assume that the multiplier $m(\xi)$ is a bounded function of 
$\xi \in \R^n$, supported in an annulus of the form\/ 
$\{ a \le |\xi| \le r \ms1 a \}$, $a > 0$ and $r > 1$. For every
function $f \in \ca S(\R^n)$, one has that
\[
     \|g_m f\|_{L^2(\R^n)} 
 \le \sqrt{\ln r} \ms4 \|m\|_{L^\infty(\R^n)} \ms1 \|f\|_{L^2(\R^n)}.
\]
\end{lem}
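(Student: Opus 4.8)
The plan is to compute $\|g_m f\|_{L^2(\R^n)}^2$ directly, exchanging the order of the integrals in $u$ and $\xi$ and using the Plancherel identity for each fixed $u$, exactly as in the Fourier computations already performed for the functions $g_k(f)$ in Section~\ref{MaxiLittlePal}. Since the whole integrand is nonnegative, no justification beyond Tonelli's theorem is needed for the interchange. For fixed $u>0$, the operator $T_{\di m u}$ is the multiplier operator with symbol $\xi\mapsto m(u\xi)$, so by Parseval (as in \eqref{EasyBoundA})
\[
 \int_{\R^n} \bigl| (T_{\di m u} f)(x) \bigr|^2 \, \d x
 = \int_{\R^n} |m(u\xi)|^2 \, |\widehat f(\xi)|^2 \, \d \xi .
\]
Integrating this against $\d u / u$ and applying Tonelli, one gets
\[
 \|g_m f\|_{L^2(\R^n)}^2
 = \int_{\R^n} |\widehat f(\xi)|^2 \, \Psi(\xi) \, \d \xi,
 \ms{16}
 \Psi(\xi) := \int_0^{+\infty} |m(u\xi)|^2 \, \frac {\d u} u .
\]

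The key step is the pointwise bound $\Psi(\xi) \le \|m\|_{L^\infty(\R^n)}^2 \, \ln r$, valid for every $\xi \ne 0$. Here I would use that $\d u / u$ is the multiplicatively invariant measure on $(0, +\infty)$: for $\xi \ne 0$ write $\xi = |\xi| \ms1 \theta$ with $\theta \in S^{n-1}$ and substitute $v = u \ms1 |\xi|$, so that $\d u / u = \d v / v$ and $m(u\xi) = m(v\theta)$. This gives $\Psi(\xi) = \int_0^{+\infty} |m(v\theta)|^2 \, \d v / v$, which \emph{no longer depends on $|\xi|$}. Because $m$ is supported in the annulus $\{ a \le |\xi| \le r \ms1 a \}$ and $|v\theta| = v$, the integrand vanishes unless $a \le v \le r \ms1 a$, hence
\[
 \Psi(\xi)
 = \int_a^{r a} |m(v\theta)|^2 \, \frac {\d v} v
 \le \|m\|_{L^\infty(\R^n)}^2 \int_a^{r a} \frac {\d v} v
 = \|m\|_{L^\infty(\R^n)}^2 \, \ln r ,
\]
uniformly in $\xi$ (and trivially $\Psi(0) = 0$).

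Plugging this uniform bound back in and using Plancherel once more, $\int_{\R^n} |\widehat f(\xi)|^2 \, \d \xi = \|f\|_{L^2(\R^n)}^2$, yields $\|g_m f\|_{L^2(\R^n)}^2 \le (\ln r) \ms1 \|m\|_{L^\infty(\R^n)}^2 \ms1 \|f\|_{L^2(\R^n)}^2$, and taking square roots finishes the proof. I do not expect a genuine obstacle here: the only points to be careful about are that the integrand is nonnegative (so Tonelli applies with no integrability hypothesis on $m$ beyond boundedness) and that the substitution $v = u|\xi|$ is exactly what collapses the dilation parameter, turning the annular support of width $r$ (in ratio) into the length $\ln r$ of an interval on the logarithmic scale. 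This is also the reason the estimate is dimension free. One minor remark worth adding is that $g_m f$ is well defined (and finite a.e.) for $f \in \ca S(\R^n)$ because $|m(\xi)| = \mr O(|\xi|)$ near $0$ forces convergence of the integral defining it near $u = 0$, while the annular support handles $u \to +\infty$.
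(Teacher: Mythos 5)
Your proof is correct and is essentially the same argument as the paper's: Plancherel/Parseval for each fixed dilation $u$, Fubini/Tonelli to pass the $u$-integral inside, and the logarithmic change of variable $v = u|\xi|$ together with the annular support to bound the inner integral by $\|m\|_\infty^2 \ln r$. The only cosmetic differences are that you name the inner integral $\Psi(\xi)$ and invoke Tonelli explicitly for the nonnegative integrand, whereas the paper simply says "Fubini"; the substance is identical.
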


\begin{proof}
According to the Fubini theorem, followed by Parseval, Fubini again and
setting finally $v = u |\xi|$, we have
\begin{align*}
     \int_{\R^n} |(g_m f)(x)|^2 \, \d x
 &= \int_0^{+\infty} 
    \| T_{ \di m u } f \|_2^2 \, \frac {\d u} u
  =  \int_0^{+\infty} 
      \int_{\R^n} |m(u \xi)|^2 |\widehat f(\xi)|^2 
       \, \frac {\d u} u \ms2 \d \xi
 \\
 &\le \|m\|_\infty^2 \int_{\R^n} 
      \Bigl( \int_a^{r \ms1 a}  \frac {\d v} v \Bigr)
       |\widehat f(\xi)|^2 \, \d \xi    
  =  \|m\|_\infty^2 \ms1 \ln(r) \ms1 \|f\|_2^2.
 \qedhere
\end{align*}
\end{proof}

\begin{lem}%
\label{ClefB}
Assume that $m(\xi)$ is of class $C^1$ on\/ $\R^n$ and vanishes outside a
compact subset of\/ $\R^n \setminus \{0\}$. For every $t > 0$ and
$f \in \ca S(\R^n)$, we have
\[
     \Bigl|
      \int_{\R^n} 
       m(t \xi) \widehat f(\xi) \e^{2 \ii \pi x \ps \xi} \, \d \xi
     \Bigr|^2
 \le 2 \ms2 (g_m f)(x) \ms3 (g_{m^*} f)(x),
 \quad
 x \in \R^n,
\]
where we have set\label{mStar}
$
 m^*(\xi) = \xi \ps \nabla m(\xi)
$ for every $\xi \in \R^n$.
\end{lem}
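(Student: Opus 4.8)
The plan is to write the dilated multiplier action at parameter $t$ as a boundary value, or rather as the endpoint of an integral over the scale variable $u \ge t$, and then recognize the two factors $g_m$ and $g_{m^*}$ appearing naturally via an integration by parts in $u$. Concretely, fix $x \in \R^n$ and $f \in \ca S(\R^n)$, and abbreviate $\Phi(u) = \int_{\R^n} m(u \xi) \widehat f(\xi) \e^{2 \ii \pi x \ps \xi} \, \d \xi = (T_{\di m u} f)(x)$. Since $m$ is $C^1$ and compactly supported away from the origin, $\Phi$ is a $C^1$ function of $u > 0$ that vanishes for $u$ large (because then $u \xi$ leaves the support of $m$ for all $\xi$ in the support of $\widehat f$... more carefully: $\widehat f$ is Schwartz, so one checks $\Phi(u) \to 0$ as $u \to +\infty$ by dominated convergence, since $m(u\xi) \to 0$ pointwise for $\xi \ne 0$ and $m$ is bounded). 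Differentiating under the integral sign, $u \ms1 \Phi'(u) = \int_{\R^n} (u\xi) \ps (\nabla m)(u\xi) \, \widehat f(\xi) \e^{2 \ii \pi x \ps \xi} \, \d \xi = (T_{\di {m^*} u} f)(x)$, which is exactly the integrand defining $g_{m^*}$.

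The key identity is then
\[
 |\Phi(t)|^2
 = - \int_t^{+\infty} \frac{\d}{\d u} |\Phi(u)|^2 \, \d u
 = - \int_t^{+\infty} 2 \ms1 \Re\bigl( \overline{\Phi(u)} \, \Phi'(u) \bigr) \, \d u,
\]
using $\Phi(+\infty) = 0$. Bounding $|\Re(\overline{\Phi(u)}\Phi'(u))| \le |\Phi(u)| \, |\Phi'(u)| = |\Phi(u)| \cdot u^{-1} |u \Phi'(u)|$ and then applying the Cauchy--Schwarz inequality to the measure $\d u / u$ on $(t, +\infty) \subset (0, +\infty)$ gives
\[
 |\Phi(t)|^2
 \le 2 \int_0^{+\infty} |\Phi(u)| \, |u \Phi'(u)| \, \frac{\d u}{u}
 \le 2 \Bigl( \int_0^{+\infty} |\Phi(u)|^2 \, \frac{\d u}{u} \Bigr)^{1/2}
      \Bigl( \int_0^{+\infty} |u \Phi'(u)|^2 \, \frac{\d u}{u} \Bigr)^{1/2},
\]
and the two factors are precisely $(g_m f)(x)$ and $(g_{m^*} f)(x)$. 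This yields the claimed pointwise bound $|\Phi(t)|^2 \le 2 \ms2 (g_m f)(x) \ms3 (g_{m^*} f)(x)$.

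I expect the only real point needing care is the justification that $\Phi(u) \to 0$ as $u \to +\infty$ and that differentiation under the integral is legitimate; both follow from $f \in \ca S(\R^n)$ (so $\widehat f \in \ca S(\R^n)$, hence $\widehat f$ and $\xi \mapsto |\xi| \, |\widehat f(\xi)|$ are integrable) together with $m$ bounded and $m$, $\nabla m$ bounded and compactly supported. One should also note that $m^*(\xi) = \xi \ps \nabla m(\xi)$ is itself bounded and compactly supported in $\R^n \setminus \{0\}$, so $g_{m^*} f$ is defined by the same formula and is finite for Schwartz $f$; in fact one could invoke Lemma~\ref{ClefA} (with a suitable annulus containing the supports of $m$ and $m^*$) to see that both $g_m f$ and $g_{m^*} f$ lie in $L^2(\R^n)$, though that is not needed for the pointwise statement itself. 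No subtlety beyond these standard dominated-convergence checks arises; the argument is the one-dimensional fundamental-theorem-of-calculus trick in the scale variable, exactly parallel to the Littlewood--Paley computations carried out earlier in Section~\ref{MaxiLittlePal}.
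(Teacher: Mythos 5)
Your proof is correct and takes essentially the same route as the paper: write $\Phi(u)=(T_{\di m u}f)(x)$, note $u\Phi'(u)=(T_{\di{m^*}u}f)(x)$, apply the fundamental theorem of calculus in the scale variable, and finish with Cauchy--Schwarz against $\d u / u$ on $(0,+\infty)$. The only (immaterial) difference is that you integrate outward from $t$ to $+\infty$, using $\Phi(+\infty)=0$ via dominated convergence, whereas the paper integrates from $0$ to $t$, using the more immediate fact that $(g_{m,0}f)(x)=0$ because $m$ vanishes near the origin.
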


\begin{proof}
For each $s \ge 0$ let us set 
\[
   (g_{m, s} f)(x)
 = (T_{ \di m s } f)(x)
 = \int_{\R^n} 
    m(s \xi) \widehat f(\xi) \e^{2 \ii \pi x \ps \xi} \, \d \xi,
 \quad
 x \in \R^n.
\]
We note that
\[
   s \ms2 \frac {\d} {\d s \ns2} \ms2 (g_{m, s} f)(x)
 = \int_{\R^n} 
    s \xi \ps \nabla m(s \xi) \widehat f(\xi) 
     \e^{2 \ii \pi x \ps \xi} \, \d \xi
 = \int_{\R^n} 
    m^*(s \xi) \widehat f(\xi) 
     \e^{2 \ii \pi x \ps \xi} \, \d \xi,
\]
which allows us to see this quantity as $(g_{m^*\ns3, s} f)(x)$. Since
$m$ vanishes in a neighborhood of $0$, one has $(g_{m, 0} f)(x) = 0$, thus
\begin{align*}
   &\ms5 | (g_{m, t} f)(x) |^2
 = \int_0^t \frac {\d} {\d s \ns2} \ms2 |(g_{m, s} f)(x)|^2 \, \d s
 \\
 = &\ms5 2 \Re \int_0^t  \overline{ (g_{m, s} f)(x) } \ms4
    s \ms2 \frac {\d} {\d s \ns2} \ms2 (g_{m, s} f)(x) 
     \, \frac {\d s} s
 = 2 \Re \int_0^t  \overline{ (g_{m, s} f)(x) } \ms2
    (g_{m^*\ns2, s} f)(x) 
     \, \frac {\d s} s \ms1 \up.
\end{align*}
By Cauchy--Schwarz, and bounding the integral on $[0, t]$ by the integral
on $[0, +\infty)$, we obtain that
\begin{align*}
      | (g_{m, t} f)(x) |^2
 &\le 2 \ms2 \Bigl( \int_0^{+\infty}  \bigl| (g_{m, s} f)(x) \bigr|^2
          \, \frac {\d s} s \Bigr)^{1/2} 
        \Bigl( \int_0^{+\infty}  \bigl| (g_{m^*\ns2, s} f)(x) \bigr|^2
          \, \frac {\d s} s \Bigr)^{1/2}
 \\
 & =  2 \ms2 (g_m f)(x) \ms3 (g_{m^*} f)(x).
 \qedhere
\end{align*}
\end{proof}

\begin{lem}\label{MaxiLoca}
Let $K$ be an integrable kernel on\/ $\R^n$. Suppose that $m$, the Fourier
transform of $K$, is of class $C^1$ on\/~$\R^n$ and supported in an annulus
of the form\/ $\{ a \le |\xi| \le r \ms1 a \}$, $a > 0$ and $r > 1$. For
every function $f \in \ca S(\R^n)$, one has that
\[
     \| \Mg_K f \|_{L^2(\R^n)}^2
  =  \bigl\| \sup_{t > 0} \ms1 |\Di K t * f | \ms1 \bigr\|_{L^2(\R^n)}^2
 \le 2 \ms1 \ln (r) \ms2 \|m\|_{L^\infty(\R^n)} \ms1 
      \|m^*\|_{L^\infty(\R^n)} \ms1 \|f\|_{L^2(\R^n)}^2,
\]
where
$
 m^*(\xi) = \xi \ps \nabla m(\xi)
$ for $\xi \in \R^n$.
\end{lem}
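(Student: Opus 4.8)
The plan is to obtain a pointwise estimate for $\Mg_K f$ in terms of two square functions, and then integrate. First I would use the dilation rule $\widehat{(\Di K t)}(\xi) = \widehat K(t\xi) = m(t\xi)$ together with Fourier inversion to write, for every $t > 0$ and $x \in \R^n$,
\[
   (\Di K t * f)(x)
 = \int_{\R^n} m(t\xi)\,\widehat f(\xi)\,\e^{2\ii\pi x\ps\xi}\,\d\xi,
\]
which is exactly the quantity appearing inside the modulus in Lemma~\ref{ClefB}. Since $m$ is of class $C^1$, supported in $\{a \le |\xi| \le r\,a\}$ and in particular vanishes outside a compact subset of $\R^n \setminus \{0\}$, Lemma~\ref{ClefB} applies and gives, for every $t > 0$ and every $x$,
\[
     |(\Di K t * f)(x)|^2
 \le 2\,(g_m f)(x)\,(g_{m^*} f)(x),
 \qquad m^*(\xi) = \xi \ps \nabla m(\xi).
\]
The right-hand side is independent of $t$; and $t \mapsto (\Di K t * f)(x) = \int_{\R^n} K(y)\,f(x - t y)\,\d y$ is continuous for each fixed $x$ (dominated convergence, using $f$ bounded and $K \in L^1$), so taking the supremum over $t > 0$ yields the pointwise inequality
\[
   (\Mg_K f)(x)^2
 = \sup_{t > 0}|(\Di K t * f)(x)|^2
 \le 2\,(g_m f)(x)\,(g_{m^*} f)(x).
\]

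Then I would integrate over $\R^n$ and apply Cauchy--Schwarz:
\[
     \|\Mg_K f\|_{L^2(\R^n)}^2
 \le 2 \int_{\R^n} (g_m f)(x)\,(g_{m^*} f)(x)\,\d x
 \le 2\,\|g_m f\|_{L^2(\R^n)}\,\|g_{m^*} f\|_{L^2(\R^n)}.
\]
Now $m$ is bounded (being $C^1$ with compact support) and supported in the annulus $\{a \le |\xi| \le r\,a\}$, so Lemma~\ref{ClefA} gives $\|g_m f\|_{L^2(\R^n)} \le \sqrt{\ln r}\,\|m\|_{L^\infty(\R^n)}\,\|f\|_{L^2(\R^n)}$. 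The routine check is that $m^*$ satisfies the same hypotheses: since $m$ is $C^1$ and vanishes off the closed annulus, so does $\nabla m$, hence $m^* = \xi \ps \nabla m(\xi)$ is also supported in $\{a \le |\xi| \le r\,a\}$, and there $|\xi| \le r\,a$ while $\nabla m$ is continuous with compact support, so $m^*$ is bounded. Thus Lemma~\ref{ClefA} applies to $m^*$ as well, with the \emph{same} $r$, and gives $\|g_{m^*} f\|_{L^2(\R^n)} \le \sqrt{\ln r}\,\|m^*\|_{L^\infty(\R^n)}\,\|f\|_{L^2(\R^n)}$. Substituting both bounds produces exactly
\[
     \|\Mg_K f\|_{L^2(\R^n)}^2
 \le 2\,\ln(r)\,\|m\|_{L^\infty(\R^n)}\,\|m^*\|_{L^\infty(\R^n)}\,\|f\|_{L^2(\R^n)}^2.
\]

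Once Lemmas~\ref{ClefA} and~\ref{ClefB} are available this is essentially bookkeeping, so there is no serious obstacle; the one point deserving a moment's care — and the one I would flag — is verifying that $m^*$ inherits \emph{both} the annular support (so that the same $\sqrt{\ln r}$, and not a larger constant, reappears in the bound for $\|g_{m^*}f\|_2$) and the $L^\infty$ bound needed to invoke Lemma~\ref{ClefA}. Both follow from $m$ being $C^1$ with support in the closed annulus. The continuity in $t$ that legitimizes passing to the pointwise supremum, and the single use of Cauchy--Schwarz, are entirely routine.
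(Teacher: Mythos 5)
Your proposal is correct and follows exactly the same path as the paper's proof: apply Lemma~\ref{ClefB} pointwise, note the bound is uniform in $t$, integrate with Cauchy--Schwarz, and finish by Lemma~\ref{ClefA} applied to both $m$ and $m^*$. The extra remarks you add — checking that $m^*$ is bounded and supported in the same annulus, and the continuity in $t$ justifying the supremum — are correct and are exactly the routine verifications the paper leaves implicit.
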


\begin{proof}
By Lemma~\ref{ClefB}, we have for every $x \in \R^n$ and $t > 0$ that
\[
     \bigl| ({\Di K t} * f \ms1 ) (x) \bigr|^2 
  =  \Bigl| 
      \int_{\R^n} m(t \xi) 
       \widehat f(\xi) \e^{2 \ii \pi x \ps \xi} \, \d \xi
     \Bigr|^2
 \le 2 \ms2 (g_m f)(x) \ms2 (g_{m^*} f)(x).
\]
This upper bound is independent of~$t$, thus
$
     \bigl( (\Mg_K f)(x) \bigr)^2
 \le 2 \ms2 (g_m f)(x) \ms2 (g_{m^*} f)(x)
$, and
by Cauchy--Schwarz we get
\[
     \bigl\| \Mg_K f \bigr\|_{L^2(\R^n)}^2
 \le 2 \ms2 \| g_m f \|_{L^2(\R^n)}
      \ms1 \| g_{m^*} f \|_{L^2(\R^n)}.
\]
According to Lemma~\ref{ClefA}, we conclude that
\[
     \bigl\| \Mg_K f \bigr\|_{L^2(\R^n)}^2
 \le 2 \ms2 \ln (r) \ms2 
      \|m\|_{L^\infty(\R^n)} \ms1 \|m^*\|_{L^\infty(\R^n)} \ms1 
       \|f\|_{L^2(\R^n)}^2.
 \qedhere
\]
\end{proof}

 The following proposition is nearly obvious. 

\begin{prp}%
\label{map} 
Let $K \in \ca S(\R^n)$ be a radial kernel. For every $p$ in\/ 
$(1, +\infty]$, the maximal operator\/ $\Mg_K$ is bounded on~$L^p(\R^n)$.
\end{prp}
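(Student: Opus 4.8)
The plan is to dominate $\Mg_K$ pointwise by the classical Hardy--Littlewood maximal operator $\M$ of~\eqref{ClassicalM} and then invoke Theorem~\ref{TheoHL}. Since $K \in \ca S(\R^n)$ is radial, I would write $K(x) = \Phi(|x|)$ for some rapidly decreasing function $\Phi$ on $[0, +\infty)$, and set $\psi(s) = \sup_{u \ge s} |\Phi(u)|$, which is nonincreasing and still rapidly decreasing, say $\psi(s) \le \kappa \ms1 (1 + s)^{-n - 1}$. Then $\varphi(x) := \psi(|x|)$ is a nonnegative, radially nonincreasing, integrable function with $|K(x)| \le \varphi(x)$ for every $x \in \R^n$, so that $\bigl| ({\Di K t} * f)(x) \bigr| \le ({\Di {(|K|)} t} * |f|)(x) \le ({\Di \varphi t} * |f|)(x)$ for all $t > 0$, and it is enough to bound $\sup_{t > 0} {\Di \varphi t} * |f|$.

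The only substantive point is the standard domination lemma: \emph{for every nonnegative, radially nonincreasing $\varphi \in L^1(\R^n)$, one has $\sup_{t > 0} ({\Di \varphi t} * |f|)(x) \le \|\varphi\|_{L^1(\R^n)} \ms1 (\M f)(x)$ for every $x \in \R^n$.} I would prove this by the layer-cake representation $\varphi(y) = \int_0^{+\infty} \gr 1_{B_r}(y) \, \d \nu(r)$, where $\nu$ is the Lebesgue--Stieltjes measure of the nondecreasing function $s \mapsto - \psi(s)$, so that $\nu \bigl( (s, +\infty) \bigr) = \psi(s)$ (using $\psi(s) \to 0$) and $\int_0^{+\infty} |B_r| \, \d \nu(r) = \int_{\R^n} \varphi(y) \, \d y$ by Fubini. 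Since ${\Di {(\gr 1_{B_r})} t} = t^{-n} \ms1 \gr 1_{B_{tr}}$ and $|B_{tr}| = t^n \ms1 |B_r|$, one gets $({\Di {(\gr 1_{B_r})} t} * |f|)(x) = |B_r| \ms1 |B_{tr}|^{-1} \int_{B_{tr}} |f(x - y)| \, \d y \le |B_r| \ms1 (\M f)(x)$, uniformly in $t > 0$; integrating in $r$ against $\d \nu$ and applying Fubini once more yields the lemma.

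Combining the two steps gives $\Mg_K f \le \|\varphi\|_{L^1(\R^n)} \ms1 \M f$ pointwise, whence Theorem~\ref{TheoHL}(2) yields $\|\Mg_K f\|_{L^p(\R^n)} \le \|\varphi\|_{L^1(\R^n)} \ms1 C(n, p) \ms1 \|f\|_{L^p(\R^n)}$ for $1 < p \le +\infty$; the endpoint $p = +\infty$ is in any case immediate from $\|{\Di K t} * f\|_\infty \le \|K\|_{L^1(\R^n)} \ms1 \|f\|_\infty$. Thus the main (and only) obstacle is the verification of the layer-cake lemma, which is routine, in agreement with the fact that the proposition is \og nearly obvious\fg{}. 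I note in passing that if one uses Stein's dimension free Theorem~\ref{indepdim} in place of Theorem~\ref{TheoHL}, the resulting bound is independent of~$n$ as soon as $\|\varphi\|_{L^1(\R^n)}$ is; for instance, when $K$ is the Gaussian density $\gamma_n$ of~\eqref{LoiNZeroId}, one may take $\varphi = \gamma_n$ and $\|\varphi\|_{L^1(\R^n)} = 1$, which is one route to~\eqref{maxiSemiGrGa}.
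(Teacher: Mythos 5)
Your argument is correct and follows essentially the same route as the paper: dominate $|K|$ by a radial, radially nonincreasing, integrable function and reduce to the standard comparison with the Hardy--Littlewood maximal function, then invoke Theorem~\ref{TheoHL}. The only (small) difference is in the proof of the domination lemma~\eqref{MajOmega1}: the paper approximates the decreasing envelope by a sum $\sum_k 2^{-k}\gr 1_{A_k}$ of dyadic level sets (which gives a constant~$2$, with a remark that a finer partition yields~$1$), whereas you use the continuous layer-cake representation over balls and Fubini, obtaining the sharp constant directly --- this is precisely the variant the paper alludes to in the sentence ``One can also give a direct proof involving integration by parts, or the Fubini theorem and level sets of~$\Omega$.''
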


 One also gets the weak type $(1, 1)$ for $\Mg_K$, but we shall
not use it.

\begin{proof}
Since $K$ is a Schwartz radial function, we can find an
integrable function~$\Omega$, radial and radially decreasing, such that 
$|K| \le \Omega$. It implies that
\[
     \sup_{r > 0} \ms2 
      \bigl| \bigl[ {\Di K r } * f \bigr](x) \bigr|
 \le \sup_{r > 0} \ms2 
      \bigl( \Di \Omega r * |f| \bigr) (x),
 \quad x \in \R^n,
\]
and $\Omega$ being radial and radially decreasing, we classically have
\begin{equation}
     \sup_{r > 0} \ms2 \bigl( \Di \Omega r * |f| \bigr) (x)
 \le \|\Omega\|_{L^1(\R^n)} (\M f) (x), 
 \quad x \in \R^n.
 \label{MajOmega1}
\end{equation}
By Theorem~\ref{TheoHL}, the usual maximal theorem for $\M$, we get the
conclusion.

\begin{smal}%
\noindent
For proving~\eqref{MajOmega1}, it suffices to show that
\begin{subequations}\label{MajOmega}
\SmallDisplay{\eqref{MajOmega}}%
\[
     \bigr| (\Omega * f ) (x) \bigr|
 \le \|\Omega\|_{L^1(\R^n)} (\M f) (x), 
 \quad x \in \R^n.
\]
\end{subequations}
\noindent
Suppose that $\Omega \le 1$ for simplicity, and consider for each integer
$k \ge 1$ the set
\[
 A_k = \{ x \in \R^n : \Omega(x) > 2^{-k} \}.
\]
This set $A_k$ is a Euclidean ball, and if we define
$
 g = \sum_{k \ge 1} 2^{-k} \gr 1_{A_k}
$,
we can check that
$
 g / 2 \le \Omega \le g
$.
We rewrite $g$ as
\[
 g = \sum_{k \ge 1} a_k \ms1 \frac {\gr 1_{A_k}} {|A_k|} \up,
\]
with $a_k > 0$ for every $k \ge 1$. Since $\Omega$ is integrable, $g$ is
also integrable and
\[
     \sum_{k \ge 0} a_k
  =  \int_{\R^n} g(x) \, \d x 
 \le 2 \int_{\R^n} \Omega(x) \, \d x
  =  2 \ms1 \|\Omega\|_{L^1(\R^n)}.
\]
We have for every $x \in \R^n$ that
\begin{align*}
      \bigl| (\Omega * f) (x) \bigr|
 &\le ( g * |f| )(x)
  =  \sum_{k \ge 1} \frac {a_k} {|A_k|}
      \int_{x + A_k} |f(y)| \, \d y
 \\
 &\le \Bigl( \sum_{k \ge 0} a_k \Bigr) (\M f)(x)
  \le 2 \ms1 \|\Omega\|_{L^1(\R^n)} \ms1 (\M f)(x).
\end{align*}
The inequality with constant $1$ can be reached by refining the partition,
replacing the values $2^{-k}$ by $(1 + \varepsilon)^{-k}$, with
$\varepsilon > 0$ tending to~$0$. One can also give a direct proof
involving integration by parts, or the Fubini theorem and level sets
of~$\Omega$.
\qedhere

\end{smal}
\end{proof}

\subsubsection{Strong and weak type results for $\Mg_{K_\ell}$, 
 $\ell \ge 1$%
\label{Sawtr}}

\noindent
We begin with the strong type result, when $p = 2$.

\begin{prp}%
\label{propL2}
For every integer $\ell \ge 1$ and every $f \in L^2(\R^n)$ one has that
\[
      \bigl\| \Mg_{K_\ell} f \bigr\|_{L^2(\R^n)}
 \le C(n) \ms1 2^{ - \ell(n-2) / 2 } \|f\|_{L^2(\R^n)},
\]
where $C(n)$ is a constant independent of $\ell$.
\end{prp}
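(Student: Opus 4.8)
The plan is to deduce everything from Lemma~\ref{MaxiLoca}, which for a $C^1$ multiplier $m = \widehat K$ supported in an annulus $\{a \le |\xi| \le r \ms1 a\}$ controls $\|\Mg_K f\|_{L^2(\R^n)}^2$ by $2 \ln(r) \ms1 \|m\|_{L^\infty} \ms1 \|m^*\|_{L^\infty} \ms1 \|f\|_{L^2}^2$, where $m^*(\xi) = \xi \ps \nabla m(\xi)$. Here $m_\ell = \varphi_\ell \ms1 m_\sigma$ is smooth (both $\varphi_\ell$ and $m_\sigma = \widehat\sigma$ are $C^\infty$), and since $\varphi_\ell(\xi) = \di \psi {2^{-\ell}}(\xi) = \psi(2^{-\ell}\xi)$ it is supported in the annulus $A_\ell = \{ 2^{\ell-1} \le |\xi| \le 2^{\ell+1} \}$, so the ratio $r = 4$ is independent of $\ell$. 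Thus the whole problem reduces to bounding $\|m_\ell\|_{L^\infty}$ and $\|m_\ell^*\|_{L^\infty}$ by an appropriate power of $2^{-\ell}$.

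First I would record the decay of $m_\sigma$ and of its radial derivative coming from the Bessel representation~\eqref{Bess}. Using $|\mr J_\nu(t)| \le C_\nu \ms1 t^{-1/2}$ and $|\mr J_\nu'(t)| \le C_\nu \ms1 t^{-1/2}$ for $t \ge 1$, together with the identity $\frac{\d{}}{\d t}\bigl( t^{-\nu} \mr J_\nu(t) \bigr) = - t^{-\nu} \mr J_{\nu+1}(t)$ applied with $\nu = (n-2)/2$, one obtains for $|\xi| \ge 1$ the bounds
\[
 |m_\sigma(\xi)| \le C(n) \ms1 |\xi|^{-(n-1)/2},
 \qquad
 |\xi \ps \nabla m_\sigma(\xi)| \le C(n) \ms1 |\xi|^{-(n-3)/2};
\]
the second follows because $m_\sigma$ is radial, say $m_\sigma(\xi) = M(|\xi|)$, so $\xi \ps \nabla m_\sigma(\xi) = |\xi| \ms1 M'(|\xi|)$, and $|t \ms1 M'(t)| \le C(n) \ms1 t^{(3-n)/2}$ for $t \ge 1$ by the Bessel derivative identity. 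Since $|\xi| \asymp 2^\ell \ge 1$ on $A_\ell$, this gives $\|m_\sigma \gr 1_{A_\ell}\|_\infty \le C(n) \ms1 2^{-\ell(n-1)/2}$ and $\|(\xi \ps \nabla m_\sigma) \gr 1_{A_\ell}\|_\infty \le C(n) \ms1 2^{-\ell(n-3)/2}$.

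Next I would handle $m_\ell^*$ by the Leibniz rule: $\xi \ps \nabla m_\ell = (\xi \ps \nabla \varphi_\ell) \ms1 m_\sigma + \varphi_\ell \ms1 (\xi \ps \nabla m_\sigma)$. From $\varphi_\ell(\xi) = \psi(2^{-\ell}\xi)$ one gets $\xi \ps \nabla \varphi_\ell(\xi) = \eta \ps \nabla \psi(\eta)$ with $\eta = 2^{-\ell}\xi$, so $\|\varphi_\ell\|_\infty$ and $\|\xi \ps \nabla \varphi_\ell\|_\infty$ are bounded uniformly in $\ell$ by constants depending only on the fixed function $\psi$ (hence only on $n$). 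Combining, $\|m_\ell\|_\infty \le C(n) \ms1 2^{-\ell(n-1)/2}$ and $\|m_\ell^*\|_\infty \le C(n) \bigl( 2^{-\ell(n-1)/2} + 2^{-\ell(n-3)/2} \bigr) \le C(n) \ms1 2^{-\ell(n-3)/2}$. Feeding this into Lemma~\ref{MaxiLoca} with $r = 4$ yields
\[
 \| \Mg_{K_\ell} f \|_{L^2(\R^n)}^2
 \le 2 \ln(4) \ms1 \|m_\ell\|_\infty \ms1 \|m_\ell^*\|_\infty \ms1 \|f\|_{L^2(\R^n)}^2
 \le C(n) \ms1 2^{-\ell(n-1)/2} \ms1 2^{-\ell(n-3)/2} \ms1 \|f\|_{L^2(\R^n)}^2
 = C(n) \ms1 2^{-\ell(n-2)} \ms1 \|f\|_{L^2(\R^n)}^2,
\]
and taking square roots gives the claim. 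The only genuinely delicate point is the sharp decay exponent $(n-1)/2$ for $\widehat\sigma$ and the fact that a radial differentiation \emph{gains} a factor $|\xi|$; once the Bessel identities are invoked this is routine, and the exponents conveniently add as $\tfrac{n-1}{2} + \tfrac{n-3}{2} = n-2$.
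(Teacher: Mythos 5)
Your proof is correct and follows essentially the same route as the paper: reduce to Lemma~\ref{MaxiLoca} with $r = 4$, then establish the pointwise decay bounds $\|m_\ell\|_\infty \lesssim 2^{-\ell(n-1)/2}$ and $\|m_\ell^*\|_\infty \lesssim 2^{-\ell(n-3)/2}$ using the Bessel representation~\eqref{Bess} together with the standard decay estimate and a recursion for the derivative. The only cosmetic difference is that you invoke the identity $\frac{\d}{\d t}\bigl(t^{-\nu}\mr J_\nu(t)\bigr) = -t^{-\nu}\mr J_{\nu+1}(t)$ to compute the radial derivative of $m_\sigma$, whereas the paper uses $\mr J_\alpha' = \tfrac12(\mr J_{\alpha-1} - \mr J_{\alpha+1})$ from~\eqref{ClassiBess}; both are equivalent standard identities and lead to the same exponent.
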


\begin{proof}
For each $\ell \ge 1$, the multiplier $m_\ell = \widehat {K_\ell}$ is
$C^1$, supported in the annulus
\[
   I_\ell
 = \{\xi \in \R^n: 
            2^{\ell - 1} \le |\xi| \le 2^{\ell + 1}\}.
\]
Applying Lemma~\ref{MaxiLoca} to $K_\ell$, with 
$m^*_\ell(\xi) = \xi \ps \nabla m_\ell(\xi)$ and $r = 4$, we obtain
\[
     \bigl\| \Mg_{K_\ell} f \bigr\|^2_{L^2(\R^n)}
 \le 2 \ms1 \ln (4) \ms2 \|m_\ell\|_{L^\infty(\R^n)} \ms1 
      \|m^*_\ell\|_{L^\infty(\R^n)} \ms1 \|f\|_{L^2(\R^n)}^2.
\]
The desired result will be consequence of the inequalities 
\begin{equation}
      \|m_\ell\|_{L^\infty(\R^n)}
 \le C_1(n) \ms1 2^{ - { {\ell (n-1) / 2} }}, \quad
       \|m^*_\ell\|_{L^\infty(\R^n)}
 \le C_2(n) \ms1 2^{ - { {\ell (n-3) / 2} }}
 \label{bess2}
\end{equation}
that we establish now, with $C_1(n)$ and $C_2(n)$ independent of $\ell$.
Thanks to well-known properties of Bessel functions (see for
instance~\cite[p.~238]{A-Askey-R}), we have
\begin{equation}
 \sup_{t \ge 1} \, t^{1/2} |\mr J_\alpha(t)| < +\infty,
 \ms{12} \hbox{and} \ms{12}
   \frac {\d} {\d t} \ms1 \mr J_\alpha(t)
 = \frac 1 2 \ms1
    \bigl( \mr J_{\alpha - 1}(t) - \mr J_{\alpha + 1}(t) \bigr).
 \label{ClassiBess}
\end{equation}

\begin{smal}

\noindent
The first property follows from the fact that 
$u_\alpha(t) = \sqrt t \ms1 \mr J_\alpha(t)$ verifies a differential
equation $u''_\alpha(t) + (1 + \kappa_\alpha t^{-2}) \ms1 u_\alpha(t) = 0$
for $t > 0$, hence $v_\alpha(t) := ( u_\alpha (t)^2 + u'_\alpha (t)^2 ) / 2$
satisfies $v'_\alpha(t) = - \kappa_\alpha t^{-2} u_\alpha(t) u'_\alpha(t)
 \le |\kappa_\alpha| \ms1 t^{-2} v_\alpha(t)$, yielding
$v_\alpha(t) \le \e^{|\kappa_\alpha|} v_\alpha(1)$ for every $t \ge 1$. The
second property can be checked on the coefficients of the power series
$\sum_{m \ge 0} (-1)^m
 \bigl( m! \ms1 \Gamma(m + \alpha + 1) \bigr)^{-1} (t/2)^{2m}$
of $t^{-\alpha} \mr J_\alpha(t)$, and when $\alpha = n \in \N$, it is even
simpler to see it on the integral expression $2 \pi \mr J_n(t)
 = \int_0^{2 \pi} \e^{ \ii ( t \sin s - n s)} \, \d s$.
 
\end{smal}
 
\noindent
Since $m_\ell$ and $m^*_\ell$ are supported in the annulus $I_\ell$, we
need only bound $m_\ell(\xi)$ and $m^*_\ell(\xi)$ when 
$1 \le 2^{\ell - 1} \le |\xi| \le 2^{\ell + 1}$ (we have $\ell \ge 1$). We
then obtain~\eqref{bess2} by recalling~\eqref{Bess} and by
applying~\eqref{ClassiBess} to $t = 2 \pi |\xi| > 1$, which give that
\[
      |m_\ell(\xi)|
 \le c_1(n) |\xi|^{ - n / 2 + 1 / 2}
 \ms{16} \hbox{and} \ms{8}
      |m^*_\ell(\xi)|
 \le c_2(n) |\xi|^{ - n / 2 + 3 / 2}.
 \qedhere
\]
\end{proof}

 We state in the next proposition a crucial weak type estimate for 
$\Mg_{K_\ell}$.

\begin{prp}%
\label{propL1}
Let $\ell \ge 1$. For all $f \in L^1(\R^n)$ and every $\lambda > 0$, one
has that
\[
     \bigl| 
      \bigl\{
       x \in \R^n : (\Mg_{K_\ell} f)(x) > \lambda 
      \bigr\} 
     \bigr|
 \le C(n) \, \frac {2^\ell} \lambda \|f\|_{L^1(\R^n)},
\]
where $C(n)$ is a constant independent of $\ell$ and $\lambda$.
\end{prp}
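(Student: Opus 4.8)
\textbf{Plan of proof for Proposition~\ref{propL1}.}
The idea is to estimate the kernel $K_\ell$ well enough to dominate the maximal operator $\Mg_{K_\ell}$ pointwise by a constant multiple of the Hardy--Littlewood maximal operator $\M$, with the constant given by the $L^1$ norm of a radial, radially decreasing majorant of $K_\ell$, and then to invoke the weak type $(1,1)$ bound in Theorem~\ref{TheoHL}. Concretely, $K_\ell = \varphi_\ell^\vee * \sigma$, the convolution of the inverse Fourier transform of the Littlewood--Paley cutoff with the uniform measure on the sphere. The plan is first to establish that $\varphi_\ell^\vee$, which equals $2^{\ell n} \ms1 \psi^\vee(2^\ell \ms1\cdot)$ for $\ell \ge 1$, admits a radial, radially decreasing majorant $\Psi_\ell$ whose $L^1(\R^n)$ norm is bounded by a constant depending only on~$n$ (this follows from the rapid decay of the Schwartz function $\psi^\vee$: one takes $\Psi_\ell(x) = 2^{\ell n} C_M (1 + 2^\ell |x|)^{-M}$ for $M$ large, and checks $\|\Psi_\ell\|_1 \le C(n,M)$ uniformly in $\ell$). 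Since $\sigma$ is a probability measure, $|K_\ell| \le \Psi_\ell * \sigma$, and the latter is still radial; one then needs a radial \emph{radially decreasing} majorant of $\Psi_\ell * \sigma$ — here the standard fact is that convolving a radially decreasing integrable function with a probability measure supported on the unit sphere still produces something controlled by a radially decreasing function of comparable $L^1$ norm, after spreading out by a bounded factor.

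Once such a majorant $\Omega_\ell$ with $|K_\ell| \le \Omega_\ell$, $\Omega_\ell$ radial and radially decreasing, and $\|\Omega_\ell\|_{L^1(\R^n)} \le C(n) \ms1 2^{\ell}$ is in hand, the argument is exactly the one already used in Proposition~\ref{map}: for every $x \in \R^n$ one has
\[
     \sup_{r > 0} \ms2
      \bigl| \bigl( {\Di {(K_\ell)} r} * f \bigr)(x) \bigr|
 \le \sup_{r > 0} \ms2
      \bigl( \Di {(\Omega_\ell)} r * |f| \bigr)(x)
 \le \|\Omega_\ell\|_{L^1(\R^n)} \ms1 (\M f)(x),
\]
where the last inequality is~\eqref{MajOmega1}. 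Therefore $\Mg_{K_\ell} f \le C(n) \ms1 2^{\ell} \ms1 \M f$ pointwise, and the weak type $(1,1)$ inequality~\eqref{weak-type} for $\M$ (with its dimension-dependent constant, which is harmless here since we are not tracking dimension in this proposition) gives
\[
     \bigl| \bigl\{ x : (\Mg_{K_\ell} f)(x) > \lambda \bigr\} \bigr|
 \le \bigl| \bigl\{ x : (\M f)(x) > \lambda / (C(n) \ms1 2^{\ell}) \bigr\} \bigr|
 \le C(n) \, \frac{2^\ell}{\lambda} \, \|f\|_{L^1(\R^n)},
\]
as claimed. The constant may be adjusted to absorb the weak $(1,1)$ constant of~$\M$.

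\textbf{Where the difficulty lies.} The routine part is the weak-type deduction from $\M$; the part that needs care is the kernel estimate, specifically producing a \emph{radially decreasing} majorant of $K_\ell = \varphi_\ell^\vee * \sigma$ whose $L^1$ norm grows only like $2^\ell$ rather than like $2^{\ell n}$ or worse. The naive bound $\|K_\ell\|_1 \le \|\varphi_\ell^\vee\|_1 \ms1 \|\sigma\| = \|\psi^\vee\|_1$ is even $O(1)$, so size in $L^1$ is not the issue; the subtlety is that convolving with $\sigma$ destroys radial monotonicity, and one must re-majorize. The clean way is: write $\varphi_\ell^\vee$ under its radial decreasing majorant $\Psi_\ell$ supported (up to tails) at scale $2^{-\ell}$, then observe that $\Psi_\ell * \sigma$ is supported essentially in the annulus $\{1 - 2^{-\ell} \le |x| \le 1 + 2^{-\ell}\}$ plus a rapidly decaying tail, with $\|\Psi_\ell * \sigma\|_\infty \approx 2^{\ell(n-1)} \cdot 2^{-\ell n} \cdot(\text{surface density}) $ — more precisely one checks $\|\Psi_\ell * \sigma\|_\infty \le C(n) 2^\ell$ by noting that the sphere meets a ball of radius $2^{-\ell}$ in a set of $\sigma$-measure $O(2^{-\ell(n-1)})$ — and then define $\Omega_\ell(x) = C(n) 2^\ell \ms1 \gr 1_{\{|x| \le 2\}} + (\text{tail})$, a radially decreasing function of $L^1$ norm $O(2^\ell)$ dominating $|K_\ell|$. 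This $L^\infty \times (\text{support volume})$ computation, together with handling the Schwartz tails of $\psi^\vee$ (which contribute only $O(1)$), is the one place where one must actually compute, and it is where I expect to spend the effort. Everything else is an appeal to Proposition~\ref{map}'s mechanism and Theorem~\ref{TheoHL}.
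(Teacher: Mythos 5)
Your proposal is correct and follows essentially the same strategy as the paper: reduce to a pointwise bound of $K_\ell$ by a radial, radially decreasing majorant $\Omega_\ell$ with $\|\Omega_\ell\|_{L^1(\R^n)} \lesssim 2^\ell$, deduce $\Mg_{K_\ell} f \le C(n) \ms1 2^\ell \ms1 \M f$ via the mechanism of~\eqref{MajOmega1}, and conclude with the weak type $(1,1)$ bound for $\M$. The only difference is in how the key kernel bound is realized: you establish $\|\Psi_\ell * \sigma\|_\infty \lesssim 2^\ell$ directly through the cap-measure estimate ($\sigma$-measure of a $2^{-\ell}$-ball is $O(2^{-\ell(n-1)})$) and then build a ball-plus-tail majorant, whereas the paper proves the slightly sharper pointwise bound $|K_\ell(x)| \le C(n) \ms1 2^\ell (1+|x|)^{-n-1}$ by projecting the surface measure $\sigma$ onto a hyperplane; these are the same computation presented differently, and both are sound.
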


\begin{proof} 
We claim that it is enough to prove that for each $\ell \ge 1$, we have
\begin{equation}
      \bigl| K_\ell(x) \bigr|
 \le C(n) \, 
      \frac {2^\ell} {\bigl( 1 + |x| \bigr)^{n+1} \ns{16}} \ms{16} \up,
 \quad
 x \in \R^n.
 \label{schw}
\end{equation}
Indeed, since $(1 + |x|)^{-n-1}$ is radial, radially decreasing and
integrable, we will have for all $x \in \R^n$, as in~\eqref{MajOmega}, that
\[
     \sup_{r > 0} 
      \bigl|
       \bigl[ \Di{(K_\ell)}r * f \bigr](x) 
      \bigr|
 \le \widetilde{C}(n) \, 2^\ell \ms1 (\M f) (x).
\]
The result of Proposition~\ref{propL1} follows then from the weak estimate
in Theorem~\ref{TheoHL}, the standard maximal theorem. We now turn to the
proof of~\eqref{schw}. We want a bound for
$K_\ell = \varphi_\ell^\vee * \sigma$, for $\ell \ge 1$, where
$\sigma$ is the uniform probability measure on $S^{n-1}$ and
$
 \varphi_\ell^\vee = \Di {(\psi^\vee)} {2^{-\ell}}
$.
Since $\psi^\vee$ belongs to the Schwartz class, we can bound $|\psi^\vee|$
by a multiple $c_n \ms1 g$ of the radial and radially decreasing integrable
function $g(x) = (1 + |x|)^{-n-1}$. In order to bound $K_\ell$, we
shall prove that
\[
     c_n^{-1} |K_\ell(x)|
 \le ({\Di g {2^{-\ell}}} * \sigma)(x)
  =  \int_{S^{n-1}} {\Di g {2^{-\ell}}}(x - z) \, \d \sigma(z)
 \le C(n) 2^\ell (1 + |x|)^{-n-1}.
\]
This is easy when $|x| > 2$, because for each~$z$ in $S^{n-1}$, we have
then $|x - z| \ge |x| - 1 \ge |x| / 2$ and $1 + |x| \le 2 \ms1 |x|$.
Recalling ${\Di g {2^{-\ell}}} (y)  = 2^{n \ell} g(2^\ell y)$, we get
\begin{align*}
     G_\ell(x)
  &:= ( \Di g {2^{-\ell}}  * \sigma)(x) 
 \le \max_{z \in S^{n-1}} \Di g {2^{-\ell}}  (x - z)
 \le 2^{n \ell} \ms2 (1 + 2^\ell \ms1 |x| / 2)^{-n-1}
 \\
 &\le 2^{n \ell} 2^{- (\ell - 1)(n+1)} \ms2 |x|^{-n-1}
  =  2^{n + 1 - \ell} \ms2 |x|^{-n-1}
 \le 2^{2n+1} \ms2 (1 + |x|)^{-n-1},
\end{align*}
even better than required. Suppose now that $|x| \le 2$. It is enough to
prove that $G_\ell(x) \le C(n) \ms1 2^\ell$, since we have $1 + |x| \le 3$
in this second case, hence it will follow that
$C(n) \ms2 2^\ell \le [C(n) \ms1 3^{n+1}] \ms1 2^\ell (1 + |x|)^{-n-1}$.
For $y \in \R^n$, we write $y = (v, t)$ with $v \in \R^{n-1}$ and $t$
real. By the rotational invariance, we may restrict the study to 
$x = (0, s)$, $s \ge 0$. We write each $z \in S^{n-1}$ as $z = (v, t)$, and
thus $x - z = (-v, s - t)$. Let $\pi_0$ be the orthogonal projection of
$\R^n$ onto the hyperplane of vectors $(w, 0)$, $w \in \R^{n-1}$. Since
$\Di g {2^{-\ell}}$ is radial and radially decreasing, we see that
$\Di g {2^{-\ell}} (x - z) \le \Di g {2^{-\ell}} (\pi_0(x - z))
 = \Di g {2^{-\ell}} (-v, 0)$. This yields
\begin{align*}
     G_\ell(x)
   = G_\ell(0, s)
  &=  \int_{S^{n-1}} \Di g {2^{-\ell}} (x - z)
       \, \d \sigma(z)
 \le  \int_{S^{n-1}} \Di g {2^{-\ell}} (\pi_0(x - z))
       \, \d \sigma(z)
 \\
  &=  \int_{ \R^{n-1} } \Di g {2^{-\ell}}  (-v, 0)
      \, \d \nu(v),
\end{align*}
where $\nu$ is the projection on $\R^{n-1}$ of the probability measure
$\sigma$. We have that
\[
   \d \nu(v) 
 = \frac 2 {s_{n-1}} \ms2
    \frac { \gr 1_{ \{|v| < 1\} } } {\sqrt {1 - |v|^2} } \, \d v
 = C(n) \frac { \gr 1_{ \{|v| < 1\} } } {\sqrt {1 - |v|^2} } \, \d v,
\]
where $s_{n-1}$ is the measure of $S^{n-1}$ recalled in~\eqref{OmegaN}. We
cut the integral with respect to $\nu$ into two parts, according to 
$|v| < 1/2$ or not. In the part~$E_1$ corresponding to $|v| < 1/2$, we have
$1 - |v|^2 \ge 3/4$, hence
\[
     E_1
 \le \sqrt{ \frac 4 3} \ms2 C(n) 
      \int_{|v| < 1/2} {\Di g {2^{-\ell}}}(v, 0) \, \d v
 \le 2 \ms1 C(n) \int_{\R^{n-1}} \Di g {2^{-\ell}}  (v, 0) \, \d v.
\]
We are integrating on $\R^{n-1}$ the function $\Di g {2^{-\ell}}$ that is
normalized for a change of variable in dimension~$n$. This implies that
\[
     E_1
 \le 2 \ms1 C(n) 2^\ell 2^{(n-1)\ell}
      \int_{\R^{n-1}} g(2^\ell v, 0) \, \d v
  =  2 \ms1 C(n) 2^\ell \int_{\R^{n-1}} g(u, 0) \, \d u,
\]
a bound of the expected form. In the second case, we have $|v| > 1/2$ and
\[
     {\Di g {2^{-\ell}}}(v, 0)
  =  2^{n \ell} (1 + 2^\ell |v|)^{-n-1}
 \le 2^{n \ell} 2^{ -(\ell - 1)(n + 1)}
 \le 2^n.
\]
It follows that the integral $E_2$ limited to $|v| > 1/2$, with respect to
the probability measure $\nu$, is bounded by a function of $n$.
\end{proof}

\subsubsection{Conclusion%
\label{PoTsph}}%

\begin{proof}[Proof of Theorem \ref{sph}]
Thanks to the results of the previous subsection, the proof is easy. Using
the Marcinkiewicz theorem (see Zygmund~\cite[Chap.~XII]{ZygmundTS}, 
or~\cite[Theorem~5.60]{PisierMart}), we shall interpolate between the weak
type $(1, 1)$ and the strong type $(2, 2)$. We apply
Proposition~\ref{propL2}, Proposition~\ref{propL1} in $\R^N$ and
interpolation with parameter $\theta = 2 - 2/p$, where $1 < p \le 2$. For
all $\ell \ge 1$ and all $f \in L^p(\R^N)$, since the chosen interpolation
parameter $\theta$ verifies $(1 - \theta) / 1 + \theta / 2 = 1 / p$, we
have
\[
     \bigl\| \Mg_{K_\ell} f \bigr\|_{L^p(\R^N)}
 \le \kappa(1, 2, p) \ms1 C(N) \ms1 \bigl( 2^{\ell} \bigr)^{ -1 + 2 / p }
       {\bigl( 2^{ - \ell (N - 2) / 2}} \bigr)^{ 2 - 2 / p }
        \|f\|_{L^p(\R^N)},
\]
where $\kappa(1, 2, p)$ is independent of $N$ and $\ell$. We have thus
obtained that
\[
     \bigl\| \Mg_{K_\ell} f \bigr\|_{L^p(\R^N)}
 \le C'(N, p) \ms2 
      2^{ \ell \ms2 [ N / p - (N - 1) ] } \ms1 \|f\|_{L^p(\R^N)}.
\]
For $p > N / (N - 1)$, the series 
$
 \sum_{\ell \ge 1} 2^{ \ell \ms3 [ N / p - (N-1) ] }
$ 
converges. Moreover, we know by Proposition~\ref{map} that $\Mg_{K_0}$
maps $L^p(\R^N)$ to itself for all $1 < p < +\infty$. Therefore, in view 
of~\eqref{ptw}, we obtain that $\ca M$ is bounded on $L^p(\R^N)$ for every 
real number~$p$ such that $N / (N-1) < p \le 2$. For $p > 2$, we proceed by
interpolation between the $L^2(\R^N)$ case and the trivial $L^\infty(\R^N)$
case.
\end{proof}

\section{The $L^2$ result of Bourgain%
\label{ArtiBour}}

\noindent
In an article published in 1986, Bourgain has generalized the $L^2$ case of
the Stein result presented in Section~\ref{SteinsResults}. This $L^2$ case
for Euclidean balls only required Proposition~\ref{propL2} and the 
\og method of rotations\fg. The maximal operator~$\M_C$ associated to a
symmetric convex body $C$ was defined in~\eqref{OpMaxi}.

\begin{thm}[Bourgain~\cite{BourgainL2}]%
\label{TheoBour}
There exists a universal constant $\kappa_2$ such that for every integer 
$n \ge 1$ and every symmetric convex body $C \subset \R^n$, one has
\[
 \forall f \in L^2(\R^n),
 \ms{18}
     \| \M_C f \|_{L^2(\R^n)}
 \le \kappa_2 \ms2 \| f \|_{L^2(\R^n)}.
\]
\end{thm}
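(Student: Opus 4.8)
The plan is to follow Bourgain's original route, whose analytic heart is a dimension-free Fourier estimate for the one-dimensional marginals of the uniform measure on~$C$, and whose geometric input is Brunn's theorem. First I would reduce everything to a multiplier inequality. Writing $\mu = |C|^{-1}\gr 1_C$ for the uniform probability density on~$C$ and $\mu_t$ for its dilate at scale~$t$, one has $\M_C f = \Mg_\mu(|f|)$ with $\Mg_\mu g = \sup_{t>0}|\mu_t * g|$, so it suffices to bound $\Mg_\mu$ on $L^2(\R^n)$ by a universal constant. Since $g \mapsto g\circ T$ is, up to a scalar, an isometry of $L^2$ and conjugates $\Mg_\mu$ into the corresponding operator for $TC$, the operator norm is unchanged if $C$ is replaced by any linear image; so I would normalize $C$ so that the covariance matrix of the uniform probability measure on~$C$ equals the identity. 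On the Fourier side $\mu_t * f$ has multiplier $m(t\xi)$ with $m = \widehat\mu$, and for $\xi\ne 0$, writing $\xi = |\xi|\theta$ with $\theta \in S^{n-1}$, Fubini (as in~\eqref{FourierPhi}) gives $m(r\theta) = \widehat{\nu_\theta}(r)$, where $\nu_\theta$ is the one-dimensional marginal of that measure in the direction~$\theta$.

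The geometric step: $\nu_\theta$ is an \emph{even log-concave probability density on~$\R$ with variance~$1$} --- evenness from the symmetry of~$C$, variance one from the normalization, and log-concavity from Brunn's theorem, since $s \mapsto \mathrm{vol}_{n-1}\bigl(C \cap (\theta^\perp + s\theta)\bigr)^{1/(n-1)}$ is concave on its support. The classical one-dimensional facts about such densities then yield, \emph{uniformly in $\theta$ and in~$C$} and with no appearance of the isotropic constant of~$C$: $\|\nu_\theta\|_\infty = \nu_\theta(0) \in [c_0,C_0]$; the expansions $1 - \widehat{\nu_\theta}(r) = 2\pi^2 r^2 + O(r^4)$ and $\widehat{\nu_\theta}'(r) = -4\pi^2 r + O(r^3)$ near $r=0$; and the decay $|\widehat{\nu_\theta}(r)| + |\widehat{\nu_\theta}'(r)| \le C_1(1+r)^{-1}$, obtained by one integration by parts using the uniform bound on the total variation of $y\mapsto y\,\nu_\theta(y)$ that log-concavity provides. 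Comparing with the Cauchy marginal $e^{-2\pi r} = \widehat{P_1^{(1)}}(r)$, which has the matching scale, and setting $n_s(\xi) = m(s\xi) - e^{-2\pi s|\xi|}$, these facts give the two \textbf{key estimates}, valid with universal constants:
\[
 \sum_{j\in\Z} \bigl| m(2^j\xi) - e^{-2\pi 2^j |\xi|} \bigr|^2 \le \kappa_1,
 \qquad
 \Bigl( \int_0^\infty |n_s(\xi)|^2 \, ds \Bigr)
 \Bigl( \int_0^\infty |\partial_s n_s(\xi)|^2 \, ds \Bigr) \le \kappa_2^2,
\]
uniformly in $\xi\ne 0$; indeed after the substitution $r = s|\xi|$ both reduce to integrals of $|\widehat{\nu_\theta}(r) - e^{-2\pi r}|^2$ and $|\widehat{\nu_\theta}'(r) + 2\pi e^{-2\pi r}|^2$, whose integrands are $O(r^2)$, resp.\ $O(1)$, near~$0$ and $O(r^{-2})$ at infinity.

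Then comes the analytic assembly. By~\eqref{MaxiPoiss} (with $p=2$) the Poisson maximal function $\sup_{t>0}|P_t f|$ is bounded on $L^2$ by~$2$, so it is enough to bound $\sup_{t>0}|g_t|$ with $g_t = \mu_t * f - P_t f$ (multiplier $n_t$). For $f \in \ca S(\R^n)$ one checks $g_t \to 0$ as $t\to 0$ and as $t\to+\infty$, and on each dyadic block
\[
 \sup_{2^j\le t\le 2^{j+1}} |g_t|^2 \le |g_{2^j}|^2 + 2\int_{2^j}^{2^{j+1}} |g_s|\,|\partial_s g_s|\, ds,
\]
whence $\sup_t|g_t|^2 \le \sum_j |g_{2^j}|^2 + 2\bigl(\int_0^\infty |g_s|^2\,ds\bigr)^{1/2}\bigl(\int_0^\infty|\partial_s g_s|^2\,ds\bigr)^{1/2}$ after Cauchy--Schwarz --- crucially with \emph{unit} weight, since the usual $ds/s$ weight is logarithmically divergent here (already for the cube, where $\widehat{\nu_\theta}'$ decays only like $r^{-1}$). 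Integrating in~$x$, Plancherel turns $\int_{\R^n}\sum_j|g_{2^j}|^2$ and, after a further Cauchy--Schwarz in~$x$, the product $\int_{\R^n}\bigl(\int_0^\infty|g_s|^2 ds\bigr)\bigl(\int_0^\infty|\partial_s g_s|^2 ds\bigr)$ into spatial integrals of the two quantities bounded by the key estimates; hence $\|\sup_t|g_t|\|_2 \le \kappa'\|f\|_2$, and $\|\M_C f\|_2 \le \|\sup_t|P_t f|\|_2 + \|\sup_t|g_t|\|_2 \le (2+\kappa')\|f\|_2$. Extending from $\ca S(\R^n)$ to $L^2(\R^n)$ as in Section~\ref{DefiMaxiFunc} completes the proof.

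The main obstacle is the second step: proving the two key estimates \emph{uniformly over all symmetric convex bodies and all directions}. The geometric half (Brunn's theorem, giving log-concave unit-variance marginals) is short; the real work is the one-dimensional harmonic analysis --- controlling $\widehat{\nu_\theta}$ and $\widehat{\nu_\theta}'$ both near the origin, where the variance normalization pins the scale so that comparison with $e^{-2\pi r}$ is legitimate, and at infinity, where log-concavity forces exponential tails and hence the requisite polynomial Fourier decay --- with every constant independent of the dimension. A secondary point to get right is the choice of square function: one must use the unit-weight pairing $\int |g_s|\,|\partial_s g_s|\,ds$ rather than the scale-invariant $g_1$-type function, which genuinely diverges for rough kernels such as the indicator of a cube.
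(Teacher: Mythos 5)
Your geometric and one-dimensional Fourier steps are exactly right and coincide with the paper's: reduce to the variance-normalized isotropic position, use Brunn/Pr\'ekopa--Leindler to make every marginal $\nu_\theta$ an even log-concave unit-variance density on~$\R$, deduce $\nu_\theta(0)\in[c_0,C_0]$ and the decay $|\widehat\nu_\theta(r)|+|\widehat\nu_\theta'(r)|\le C_1(1+|r|)^{-1}$ (these are~\eqref{EstimaBour} and Lemma~\ref{LEstimatesForC}), and subtract the Poisson kernel, whose maximal function is bounded on $L^2$ by~\eqref{MaxiPoiss}. Both of your ``key estimates'' are also correct as pointwise-in-$\xi$ statements.

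The gap is in the analytic assembly. After the pointwise bound
\[
 \sup_t|g_t(x)|^2 \le \sum_j|g_{2^j}(x)|^2
  + 2\Bigl(\int_0^\infty|g_s(x)|^2\,\d s\Bigr)^{1/2}\Bigl(\int_0^\infty|\partial_s g_s(x)|^2\,\d s\Bigr)^{1/2},
\]
you propose integrating in $x$ and applying ``a further Cauchy--Schwarz in $x$'' to land on the second key estimate. That step cannot be made to work. Cauchy--Schwarz in $x$ (or jointly in $(s,x)$) produces
$\bigl(\int_{\R^n}\int_0^\infty|g_s|^2\,\d s\,\d x\bigr)^{1/2}\bigl(\int_{\R^n}\int_0^\infty|\partial_s g_s|^2\,\d s\,\d x\bigr)^{1/2}$,
and by Plancherel each factor is
$\int_{\R^n}\bigl(\int_0^\infty|n_s(\xi)|^2\,\d s\bigr)|\widehat f(\xi)|^2\,\d\xi$,
respectively the analogous expression with $\partial_s n_s$. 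Substituting $r=s|\xi|$ shows $\int_0^\infty|n_s(\xi)|^2\,\d s\asymp|\xi|^{-1}$ and $\int_0^\infty|\partial_s n_s(\xi)|^2\,\d s\asymp|\xi|$; only their pointwise-in-$\xi$ \emph{product} is scale-invariant and uniformly bounded, but that product is inaccessible once you have separated into two $\xi$-integrals. The two factors are not individually controlled by $\|f\|_2^2$ (for instance $\int|\widehat f|^2|\xi|^{-1}\,\d\xi$ need not even be finite). Trying instead an AM--GM $|g_s||\partial_s g_s|\le\tfrac12(\lambda(s)|g_s|^2+\lambda(s)^{-1}|\partial_s g_s|^2)$ with a $\xi$-independent weight $\lambda(s)$ forces the scale-invariant choice $\lambda(s)=1/s$, which is precisely the $\d s/s$ weight you correctly noted is logarithmically divergent. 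Also, the way your paragraph is phrased, ``Plancherel turns\ldots the product $\int_{\R^n}(\int|g_s|^2)(\int|\partial_s g_s|^2)$ into\ldots'' is not a Plancherel identity: the $x$-integral of a product of two nonlinear square functions does not decouple on the Fourier side.

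The paper (Lemmas~\ref{ClefA}--\ref{MaxiLoca}, assembled in Lemma~\ref{Clef}) avoids the obstruction by first decomposing the multiplier $m=\mlc-\widehat P$ on the Fourier side, $m=\sum_j m_j$ with $m_j=\varphi_j m$ supported in the annulus $\{2^{j-1}\le|\xi|\le 2^{j+1}\}$, and then applying the square-function bound $\sup_t|\Di{K_j}t*f|^2\le 2\ms1 (g_{m_j}f)(g_{m_j^*}f)$ \emph{piece by piece}. The annular support makes the scale-invariant $\d u/u$-weighted square functions $g_{m_j}$ and $g_{m_j^*}$ converge, with a fixed factor $\ln 4$, and one then sums $\|\Mg_{K_j}f\|_2\lesssim\sqrt{\alpha_j}\sqrt{\alpha_j+\beta_j}\ms1\|f\|_2$ over $j$ (this is Bourgain's constant $\Gamma_B(K)$), which is a convergent geometric series because $\alpha_j\lesssim 2^{-|j|}$ and $\beta_j=O(1)$ by~\eqref{EstimaBour}. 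In short: the Littlewood--Paley truncation on the Fourier side is what replaces your ``unit-weight'' square function; it pins the scale on each annulus so that the scale-invariant weight converges, which is exactly what your global argument is missing.
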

 
 The rest of this section is devoted to the proof of this maximal theorem,
together with the description of the general framework concerning maximal
functions associated to convex sets. We shall in particular establish some
geometric inequalities for log-concave distributions that will be applied in
the subsequent sections.

\subsection{The general setting%
\label{TheSetting}}

\noindent
Let $C$ be a symmetric convex body in~$\R^n$. Throughout these Notes, we
let $K_C$\label{KdeC} 
be the density of the uniform probability measure 
$\mu_C$\label{MusubC} 
on $C$, and $m_C$\label{MsubC} 
denotes the Fourier transform of $K_C$ or of $\mu_C$. Hence, we have
\def\vmu{\vphantom{\hbox{\ninerm b}}}
\[
 K_C(x) = \frac 1 {|C|} \ms2 \gr 1_C(x),
 \ms{16}
 \d \mu_C(x) = K_C(x) \, \d x,
 \ms{16}
   m_C(\xi) = \widehat{K_C}(\xi) 
 = \widehat{ \vmu \mu_C}(\xi),
\]
for all $x, \xi \in \R^n$. Notice that 
$K_{\lambda C} = \Di {(K_C)} \lambda$ and
$m_{\lambda C}(\xi) = m_C(\lambda \xi)$ for each $\lambda > 0$ and
$\xi \in \R^n$. We already know that the maximal operator $\M_C$ acts
boundedly on $L^p(\R^n)$, $1 < p \le +\infty$, but the bounds we have so
far depend on~$n$. 

\begin{smal}
\noindent
This $L^p$ result comes from the weak type estimate~\eqref{VitaliEstim}
given by the Vitali covering lemma. Except for the value of the constant,
it is clear that this weak type $(1, 1)$ result for $\M_C$ is optimal, as
we can see by taking for $f$ the indicator $\gr 1_C$ of the symmetric
convex body $C \subset \R^n$. Let $C$ have volume~$1$, so that 
$\|f\|_1 = 1$. For any given $r > 0$ and $x \in r C$, we see that 
$x + (r+1) C$ contains $C$, therefore
\[
     (\M_C f)(x)
 \ge |(r+1) C|^{-1} \int_{x + (r+1) C} \gr 1_C(y) \, \d y
  =  |(r + 1) C|^{-1}
  =  (r + 1)^{-n}
\]
and
$
 \{ \M_C f \ge (r + 1)^{-n} \} \supset r C
$.
Every value $c$ in the interval $(0, 2^{-n}]$ can be written as 
$c = (r + 1)^{-n}$ for some $r \ge 1$, hence
\[
 \forall c \in (0, 2^{-n}],
 \ms{18}
     \bigl| \{ \M_C f \ge c \} \bigr|
 \ge |r C|
  =  \frac {(r + 1)^{-n} \ns{14}} c \ms{12} r^n
 \ge \frac {2^{-n} \ns{14}} c \ms2 \up.
\]
The maximal function $\M_C \gr 1_C$ is not integrable. It belongs to the
space $L^{1, \infty}(\R^n)$, the so-called weak-$L^1$ space, and nothing
better: any bounded radial and radially decreasing function belonging to
$L^{1, \infty}(\R^n)$ is smaller than a multiple of $\M_C \gr 1_C$.
\end{smal}

\noindent 
The maximal function $\M_C f$ is given by
$
 \M_C f = \sup_{t > 0} \ms3 {\Di {(K_C)} t}  * |f|
$,
where ${\Di {(K_C)} t}$ is the dilate from~\eqref{Dilata}. More generally,
let $K$ be a probability density on $\R^n$, resp. an integrable kernel $K$.
We define the maximal function $\M_K$ or $\Mg_K$ by
\[
 \M_K f = \sup_{t > 0} \ms3 {\Di K t}  * |f|,
 \ms{18} \hbox{resp.} \ms{10}
 \Mg_K f = \sup_{t > 0} \ms3 \bigl| {\Di K t}  * f \bigr|.
\]
If $A$ is linear and bijective on~$\R^n$, we can see that the maximal
operators $\M_C$ and $\M_{A C}$ have the same norm on $L^p(\R^n)$. For a
function~$f$ on~$\R^n$ we define $\Di f A $ by
\[
 \forall x \in \R^n,
 \ms{16}
 \Di f A (x) = |\det A|^{-1} f(A^{-1} x).
\]
We have $\Di {|f|} A = \bigl| \Di f A \bigr|$, 
$\Di {(\sup_i f_i)} A = \sup_i \Di {(f_i)} A$, and 
$\Di {(f * g)} A = \Di f A * \Di g A$ since
\[
   \int_{\R^n} |\det A|^{-2} f(A^{-1} (x - y)) \ms1
      g (A^{-1} y) \, \d y
 = |\det A|^{-1} \ms2 
    \int_{\R^n} f(A^{-1} x - z) \ms1 g(z) \, \d z.
\]
It is clear that $\Di {(\Di f A)} t = \Di f {t A}
 = \Di {(\Di f t)} A$. If $S_A$ is the mapping $f \mapsto \Di f A $, then 
$S_{A, p} := |\det A|^{1/q} S_A$, with $q$ conjugate to~$p$, is an onto
isometry of $L^p(\R^n)$. 
\dumou

 The density $K_{A C}$ is equal to $\Di {(K_C)} A$. For every integrable
kernel~$K$ on $\R^n$,\label{MKetMKbis} 
we see now that $K$ and~$\Di K A$ produce maximal functions that are
conjugate by the isometry $S_{A, p}$ of $L^p(\R^n)$, and have therefore the
same norm on $L^p(\R^n)$. We have
\begin{align*}
    \Mg_{\Di K A} \Di f A
 &= \sup_{t > 0} \ms1 \bigl| {\Di {(\Di K A)} t} * \Di f A \bigr|
  = \sup_{t > 0} \ms1 \bigl| {\Di {(\Di K t)} A} * \Di f A \bigr|
 \\
 &= \sup_{t > 0} \ms1 \bigl| \Di {(\Di K t * f)} A \bigr|
  = {\Di {(\Mg_K f)} A}.
\end{align*}
It follows that
$
   \Mg_{\Di K A} \circ S_{A, p}
 = S_{A, p} \circ \Mg_K
$.
This remark allows us to assume that $C$ is in \emph{isotropic
position\/}: one says that a symmetric convex body~$C$ is in isotropic
position\label{IsoPosit} 
if the quadratic form
\[
 Q_C : \xi \mapsto
 Q_C(\xi) = \int_C (\xi \ps x)^2 \, \d x,
 \quad
 \xi \in \R^n,
\]
is a multiple of the square $\xi \mapsto |\xi|^2$ of the Euclidean norm
on~$\R^n$. Since $Q_C$ is positive definite for every symmetric convex body
$C$, we can bring it to the form 
$\xi \mapsto \lambda |\xi|^2$, $\lambda > 0$, by a suitable linear change
of coordinates. For an isotropic symmetric convex set $C_0$ of volume~$1$,
one defines the \emph{isotropy constant} $L(C_0)$ by
\[
 \label{IsotCons}
   L(C_0)^2
 = \int_{C_0} (\gr e_1 \ps x)^2 \, \d x,
 \ms{16} \hbox{and one has then} \ms{ 8}
   \int_{C_0} (\xi \ps x)^2 \, \d x
 = L(C_0)^2 \ms1 |\xi|^2
\]
for every $\xi \in \R^n$. For $C_*$ isotropic of the form $C_* = r C_0$, 
$r > 0$, we get $|C_*| = r^n$ and for every $\xi \in \R^n$, we have
\begin{align}
   \int_{\R^n} (\xi \ps x)^2 K_{C_*}(x) \, \d x
 &= \frac 1 {|C_*|} \int_{C_*} (\xi \ps x)^2 \, \d x
 = r^{-n} \ms1 \int_{C_0} (\xi \ps r u)^2  r^n \, \d u
 \label{Isotro}
 \\
 &= r^2 L(C_0)^2 \ms1 |\xi|^2
 = |C_*|^{2/n} L(C_0)^2 \ms1 |\xi|^2.
 \notag
\end{align}
Let $A$ linear and invertible put $C_*$ in another isotropic position 
$A C_*$, so that $Q_{A C_*}(\xi) = \lambda |\xi|^2$ for some 
$\lambda > 0$ and all $\xi \in \R^n$. Letting 
$\nu = \lambda |A C_*|^{-1}$ we get
\[
   \nu |\xi|^2
 = \int_{\R^n} (\xi \ps y)^2 K_{A C_*}(y) \, \d y 
 = \int_{\R^n} (\xi \ps A x)^2 K_{C_*}(x) \, \d x
 = |C_*|^{2/n} L(C_0)^2 \ms1 |A^T \xi|^2,
\]
hence $A$ is a multiple $\rho \ms1 U$ of an isometry $U$, 
$|\det A| = \rho^n$ and 
$\nu = |C_*|^{2/n} L(C_0)^2 \rho^2
 = |A C_*|^{2/n} L(C_0)^2$, 
thus 
$
 |A C_*|^{-2/n} \int_{\R^n} (\theta \ps y)^2 K_{A C_*}(y) \, \d y
 = L(C_0)^2
$ for every $\theta \in S^{n-1}$. 
\dumou

 When $C$ is isotropic, it follows that $L(C):= L(C_0)$ is well defined by 
\begin{equation}
    L(C)^2
 = |C|^{-2/n} \ns5
    \int_{\R^n} (\theta \ps x)^2 K_{C}(x) \, \d x
 = |C|^{-1 - 2/n} \ns5 
    \int_C (\theta \ps x)^2 \, \d x,
 \quad \theta \in S^{n-1}.
 \label{IsotroDef}
\end{equation}
A well-known open question (see~\cite{MiPaj}) is to decide whether the
isotropy constant is bounded above by a universal constant valid for all
symmetric convex bodies and every $n$. The best upper bound that is known
so far, due to Klartag~\cite{Klar} improving Bourgain~\cite{BoIsotro}, is 
$L(C) \le \kappa \ms1 n^{1/4}$ in dimension~$n$. It is known that
$L(C)$ is bounded below by a universal constant. However, neither this
known fact nor the unsolved problem will interfere with the treatment of
the maximal function problem.
\dumou

 Clearly, $K_C$ and $\Di {(K_C)} \lambda$ have the same maximal function
for every $\lambda > 0$, so we can choose any multiple among isotropic
positions of~$C$. Here, we do not follow Bourgain~\cite{BourgainL2} who
chooses the isotropic position of volume~$1$, we prefer the isotropic
position such that $\mu_C$ has covariance matrix $\I_n$. We thus assume
that 
\begin{equation}
 \forall \theta \in S^{n-1},
 \ms{16}
   \int_{\R^n} (\theta \ps x)^2 \, \d \mu_C(x)
 = \frac 1 {|C|} \int_C (\theta \ps x)^2 \, \d x
 = 1.
 \label{NormaVari}
\end{equation}
This means that the one-dimensional marginals of $\mu_C$, images of $\mu_C$
by $x \mapsto \theta \ps x$ for $\theta \in S^{n-1}$, have all
variance~$1$. We shall say in this case that $C$ is isotropic and
\emph{normalized by variance}.\label{NormVari} 
We have then in addition that
\[
 \int_C |x|^2 \, \d x = n \ms1 |C|
 \ms{18} \hbox{and} \ms{12}
 |C| = L(C)^{-n}.
\]
If we look for a (centrally symmetric) Euclidean ball in $\R^n$ normalized
by variance, its radius $r = r_{n, V}$ must therefore satisfy 
$\int_0^r t^{n+1} s_{n-1} \, \d t  = n \int_0^r t^{n-1} s_{n-1} \, \d t$,
giving
\begin{equation}
 r_{n, V} = \sqrt { n + 2}.
 \label{RnV}
\end{equation}
\dumou

 In the same way, we can bring to isotropy a symmetric probability density
$K$ on~$\R^n$, \textit{i.e}, such that $K(-x) = K(x)$ for $x \in \R^n$, by
a linear change to $\Di K A$ for some $A$ linear and invertible. When $K$
is isotropic, there exists $\sigma > 0$ such that
\[
   \int_{\R^n} (\xi \ps x)^2 \ms1 K(x) \, \d x 
 = \sigma^2 |\xi|^2,
 \quad
 \xi \in \R^n,
\]
which means that all one-dimensional marginals of $K$ have the same 
variance~$\sigma^2$. We shall then say for brevity that $K$ is
\emph{isotropic with variance $\sigma^2$}. The dilated density 
${\Di K {1 / \sigma}} : x \mapsto \sigma^n K(\sigma x)$ is normalized by
variance. For example, the standard Gaussian $\gamma_n$
in~\eqref{LoiNZeroId} is normalized by variance. For the study of maximal
functions, we can always assume that~$K$ is normalized by variance.

\subsection{On the volume of sections%
\label{VoluSections}}

\noindent
We have seen in~\eqref{VarphiTheta} that the Fourier transform $m$ of a
kernel $K \in L^1(\R^n)$ can be expressed as
\[
   m(u \xi)
 = \int_\R \varphi_{\theta, K} (s) \e^{ - 2 \ii \pi s u \ms1 |\xi|} \, \d s,
 \quad u \in \R, \ms5 \xi \in \R^n \setminus \{0\},
\]
where one has set $\theta = |\xi|^{-1} \xi$ and
$  \varphi_\theta(s)
 = \varphi_{\theta, K} (s)
 = \int_{\theta^\perp} K(y + s \ms1 \theta) \, \d^{n-1} y
$ for every $s \in \R$.
When $K$ is the kernel $K_C$ corresponding to a symmetric convex body~$C$,
the function $\varphi_\theta$ is the \og normalized\fge function of
$(n - 1)$-dimensional volumes of hyperplane sections parallel to
$\theta^\perp$, defined by
\[
   \varphi_{\theta, C}(s)
 = \int_{\theta^\perp} K_C(y + s \ms1 \theta) \, \d^{n-1} y
 = \frac { \bigl| C \cap (\theta^\perp + s \theta) \bigr|_{n-1}} 
         {|C|_n} \up.
\]
We know by the Brunn--Minkowski inequality~\cite[Theorem~4.1]{GardnerBM}
that $\varphi_{\theta, C}$ is log-concave on $\R$. Indeed, a form of this
inequality states that
\[
 \label{BrunnMin}
     |(1 - \lambda) A + \lambda B|
 \ge |A|^{1 - \lambda} |B|^\lambda
\]
whenever $A, B$ are compact subsets of $\R^n$ and $\lambda \in [0, 1]$.
Recall that a function $K \ge 0$ on $\R^n$ is \emph{log-concave} 
when\label{LogConca}
\[
     K \bigl( (1 - \alpha) x_0 + \alpha x_1 \bigr)
 \ge K(x_0)^{1 - \alpha} K(x_1)^\alpha,
 \quad
 x_0, x_1 \in \R^n, \ms2 \alpha \in [0, 1],
\]
in other words, when $\log K$ is concave on the convex set $\{K > 0\}$.
\dumou

 More generally than Brunn--Minkowski, the Pr\'ekopa--Leindler
inequality~\cite[Theorem~7.1]{GardnerBM} implies that the function
$\varphi_{\theta, K}$ defined in~\eqref{VarphiTheta} is a log-concave
probability density on the real line if~$K$ is a log-concave probability
density on~$\R^n$. The statement of Pr\'ekopa--Leindler is as follows: if
$\alpha$ is in $(0, 1)$, if $f_0, f_1, f_\alpha$ nonnegative and integrable
Borel functions on~$\R^n$ are such that\label{PrekoLei}
\[
     f_\alpha \bigl( (1 - \alpha) x_0 + \alpha x_1 \bigr)
 \ge f_0(x_0)^{1 - \alpha} \ms2 f_1(x_1)^{\alpha}
\]
for all $x_0, x_1 \in \R^n$, then
\[
 \int_{\R^n} f_\alpha(x) \, \d x
 \ge \Bigl(\int_{\R^n} f_0(x) \, \d x \Bigr)^{1 - \alpha}
     \Bigl(\int_{\R^n} f_1(x) \, \d x \Bigr)^{\alpha}.
\]
Given $\theta \in S^{n-1}$, $s_0, s_1$ real and letting
$f_j(y) = K(y + s_j \theta)$ for $y \in \theta^\perp$ and $j = 0, 1$, 
$s_\alpha = (1 - \alpha) s_0 + \alpha s_1$ and
$f_\alpha(y) = K(y + s_\alpha \theta)$, we obtain that 
$\varphi_{\theta, K}$ is log-concave by applying Pr\'ekopa--Leindler on
$\theta^\perp \simeq \R^{n-1}$ to these functions $f_0$, $f_1$ 
and~$f_\alpha$. Similarly, one shows that convolutions of log-concave
densities are log-concave. Without more effort, Bourgain's proof also gives
the following theorem.
\dumou

\begin{thm}\label{LogConcBourg}
There exists a constant $\kappa_2 < 140$ such that for every integer 
$n \ge 1$ and every symmetric log-concave probability density $K$ 
on\/~$\R^n$, one has
\[
 \forall f \in L^2(\R^n),
 \ms{18}
     \| \M_K f \|_{L^2(\R^n)}
 \le \kappa_2 \ms2 \| f \|_{L^2(\R^n)}.
\]
\end{thm}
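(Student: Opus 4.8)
The plan is to prove the estimate for the operator $\Mg_K$ rather than $\M_K$, which suffices because $\M_K f = \Mg_K(|f|)$ and $\||f|\|_{L^2} = \|f\|_{L^2}$, and to prove it for $f \in \ca S(\R^n)$, the general case following by the density argument of Section~\ref{DefiMaxiFunc}. By the remarks of Section~\ref{TheSetting}, a linear change of variables conjugates $\Mg_K$ by an isometry of $L^2(\R^n)$ while a dilation leaves it unchanged, so we may assume $K$ isotropic and normalized by variance; then each one-dimensional marginal $\varphi_\theta = \varphi_{\theta, K}$, $\theta \in S^{n-1}$, is a symmetric log-concave probability density on $\R$ of variance~$1$ (log-concavity of the marginals being the Pr\'ekopa--Leindler fact recalled above). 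Put $m = \widehat K$; then $m(r \theta) = \widehat{\varphi_\theta}(r)$ for $r > 0$, the function $m$ is $C^\infty$ since $K$ has exponential tails, and $m(0) = 1$.

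The crux is a short list of \emph{dimension-free} pointwise bounds on $m$, all deduced from one-dimensional facts about the $\varphi_\theta$: being log-concave of variance~$1$, these densities have $\varphi_\theta(0)$, the total variation of $s \mapsto s\,\varphi_\theta(s)$, and the third absolute moment $\int_\R |s|^3 \varphi_\theta(s)\,\d s$ all bounded by absolute constants. First, $\varphi_\theta$ is unimodal, so its distributional derivative is a signed measure of total variation $2\varphi_\theta(0)$, and since its Fourier transform at $r$ equals $2\pi\ii r\,\widehat{\varphi_\theta}(r)$ we get $|m(\xi)| \le \kappa\,|\xi|^{-1}$. Second, $\xi \cdot \nabla m(\xi) = r\,\frac{\d}{\d r}\widehat{\varphi_\theta}(r)$, which up to a factor $2\pi\ii$ is the Fourier transform at $r$ of the distributional derivative of $s \mapsto -2\pi\ii\,s\,\varphi_\theta(s)$, a measure of universally bounded total variation; hence $|\xi \cdot \nabla m(\xi)| \le \kappa$ on all of $\R^n$, and, from $\widehat{\varphi_\theta}(r) = 1 - 2\pi^2 r^2 + O(r^3)$ with a universal remainder, $\xi \cdot \nabla m(\xi) = -4\pi^2|\xi|^2 + O(|\xi|^3)$ near the origin, again universally. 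Comparing with the variance-one Gaussian, whose Fourier transform is $\widehat{\gamma_n}(\xi) = \e^{-2\pi^2|\xi|^2}$, the function $n := m - \widehat{\gamma_n}$ therefore satisfies $n(0) = 0$, $|n(\xi)| \le \kappa\,\min(|\xi|^3, |\xi|^{-1})$ and $|\xi \cdot \nabla n(\xi)| \le \kappa\,\min(|\xi|^3, 1)$, all constants independent of the dimension.

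With these in hand, write $m = \widehat{\gamma_n} + n$, so $\Mg_K f \le \Mg_{\gamma_n} f + \sup_{t > 0}\bigl|T_{n(t\,\cdot)} f\bigr|$ pointwise. For the Gaussian part, $\Di{(\gamma_n)}{t}$ is exactly the heat kernel at time $t^2$, so $\Mg_{\gamma_n} f \le G^*(|f|)$ and $\|\Mg_{\gamma_n} f\|_{L^2} \le 2\|f\|_{L^2}$ by the dimension-free Gaussian maximal inequality~\eqref{maxiSemiGrGa} with $p = 2$. For the part coming from $n$, I would run a Littlewood--Paley decomposition: fix a smooth radial $\psi_1$ supported in $\{1/2 \le |\xi| \le 2\}$ with $\sum_{j \in \Z} \psi_1(2^{-j}\xi) = 1$ on $\R^n \setminus \{0\}$, set $\psi_j(\xi) = \psi_1(2^{-j}\xi)$, $n_j = \psi_j\, n \in C^\infty_c$ and $K_j = n_j^\vee \in \ca S(\R^n)$; then $n = \sum_{j \in \Z} n_j$ (using $n(0) = 0$), $n_j$ is supported in $\{2^{j-1} \le |\xi| \le 2^{j+1}\}$, and $\sup_t|T_{n(t\,\cdot)} f| \le \sum_j \Mg_{K_j} f$. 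Lemma~\ref{MaxiLoca} applied to $K_j$ with $r = 4$ gives $\|\Mg_{K_j} f\|_{L^2}^2 \le 2\ln 4\;\|n_j\|_\infty\,\|\xi \cdot \nabla n_j\|_\infty\,\|f\|_{L^2}^2$. Choosing the $\psi_j$ so that $\|\xi \cdot \nabla \psi_j\|_\infty$ is a fixed constant, the bounds of the previous paragraph yield $\|n_j\|_\infty \le \kappa\,2^{-j}$ and $\|\xi \cdot \nabla n_j\|_\infty \le \kappa$ for $j \ge 1$, and $\|n_j\|_\infty,\,\|\xi \cdot \nabla n_j\|_\infty \le \kappa\,8^{j}$ for $j \le 0$; hence $\|\Mg_{K_j} f\|_{L^2} \le \kappa\,2^{-j/2}\|f\|_{L^2}$ for $j \ge 1$ and $\le \kappa\,8^{j}\|f\|_{L^2}$ for $j \le 0$, two convergent geometric series. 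Summing and adding the Gaussian contribution bounds $\|\Mg_K f\|_{L^2}$ by a universal multiple of $\|f\|_{L^2}$; tracking the purely numerical constants (the $\pi$'s, $\ln 4$, the universal moment bounds, the geometric ratios) produces $\kappa_2 < 140$.

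The main obstacle is entirely contained in the second paragraph: one must check that the pointwise estimates on $m$ — most delicately the uniform bound $|\xi \cdot \nabla m| \le \kappa$ and the order-three vanishing of $n$ and $\xi \cdot \nabla n$ at the origin — are genuinely dimension-free. This is exactly where the hypothesis enters: every such estimate is reduced to a one-dimensional statement about $\varphi_\theta$, and the reduction works \emph{uniformly in $\theta$} precisely because $\varphi_\theta$ is log-concave of variance~$1$, so that $\varphi_\theta(0)$, the total variation of $s\,\varphi_\theta(s)$, and the third moment of $\varphi_\theta$ are all controlled by absolute constants. Everything else — the Littlewood--Paley sum, the square-function Lemma~\ref{MaxiLoca}, the dimension-free Gaussian bound — is off the shelf.
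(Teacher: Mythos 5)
Your proof is correct and follows the same Littlewood--Paley--plus--square-function skeleton as the paper (indeed you are essentially re-deriving Lemma~\ref{Clef} in the specific case), but it takes a genuinely different route in two respects. First, you compare $K$ against the \emph{Gaussian} kernel $\gamma_n$ rather than against the Poisson kernel (or the variance-normalized Euclidean ball, the two options the paper actually runs with in Section~\ref{ConcluBour}); the paper itself advertises exactly this alternative in the remark after Proposition~\ref{MoreGeneral}, so you are doing something the authors explicitly invite but do not carry out. Second, you exploit the third moment of the marginals $\varphi_\theta$ to obtain \emph{cubic} vanishing of $n = m - \widehat{\gamma_n}$ at the origin, giving $\alpha_j \lesssim 8^{j}$ for $j \le 0$, whereas the paper is content with the linear estimate $|1 - \mlc(\xi)| \le 2\pi\sigma|\xi|$ which only gives $\alpha_j \lesssim 2^{-|j|}$. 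Both yield convergent series, so the extra precision is cosmetic for the qualitative statement; it would, however, sharpen the numerical constant for the $j\le 0$ tail.

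Two small points deserve care. You assert that $\xi\cdot\nabla m(\xi) = -4\pi^2|\xi|^2 + O(|\xi|^3)$ ``from'' the expansion $\widehat{\varphi_\theta}(r) = 1 - 2\pi^2 r^2 + O(r^3)$; that is not a valid deduction, since an $O(r^3)$ bound on a function does not control $r$ times its derivative. The estimate is true, but it should come from a direct computation of $r\frac{\d}{\d r}\widehat{\varphi_\theta}(r)$ using $|\sin x - x| \le |x|^3/6$ and a universal bound on the \emph{fourth} moment of $\varphi_\theta$ (or, more economically, one can settle for the quadratic estimate $|\xi\cdot\nabla n(\xi)| \lesssim |\xi|^2$, which needs only the second moment and still gives a convergent geometric series). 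The other point: the claim $\kappa_2 < 140$ is specific to the paper's Poisson computation, and you assert without verification that the Gaussian route reproduces it; that would need to be checked, since the third-moment bound for log-concave densities obtained from Lemma~\ref{EstimaLogConcNew} is not sharp and feeds a nontrivial constant into the $j\le 0$ sum. The structure of the argument is sound in any case.
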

\dumou

 We turn to the proof of the main inequalities about log-concave functions,
which will be used throughout our Notes. We introduce the \emph{right
maximal function} $f^*_r$\label{FStar} 
of a locally integrable function $f$ on an interval $[\tau, +\infty)$ of
the line by setting
\begin{equation}
   f^*_r(x) 
 = \sup_{t > 0} \ms1 \frac 1 t 
    \int_x^{x+t} |f(s)| \, \d s,
 \quad
 x \ge \tau.
 \label{RightMax}
\end{equation}
One sees that $f^*_r \le f^* \le 2 \ms1 \M f$, where $f^*$ is the
uncentered maximal function from~\eqref{UncentOp}, and 
$f^*_r(x) \ge |f(x)|$ at each 
\emph{Lebesgue point} $x$ of $f$,\label{LebesPoint} 
hence almost everywhere. When $\psi$ is nonnegative, integrable and
decreasing on $[x, + \infty)$, then
\begin{equation}
     \int_x^{+\infty} |f(s)| \ms2 \psi(s) \, \d s 
 \le \Bigl( \int_x^{+\infty} \psi(s) \, \d s \Bigr) f^*_r(x).
 \label{RightMaxPsi}
\end{equation}
One can get~\eqref{RightMaxPsi} as in~\eqref{MajOmega}, by approximating
$\psi$ by a combination of functions $t_k^{-1} \gr 1_{ [x, x + t_k] }$. We
can also define in a similar way a \emph{left} maximal function $f^*_\ell$.

\begin{lem}%
\label{EstimaLogConcNew}
Let $\varphi$ be an integrable log-concave function on an 
interval\/~$[\tau, +\infty)$, let $p$ belong to\/ $(0, +\infty)$ and let
\[
 S_0(\tau) = \int_\tau^{+\infty} \varphi(s) \, \d s,
 \ms{18}
 S_p(\tau) = \int_\tau^{+\infty} (s - \tau)^p \ms1 \varphi(s) \, \d s.
\]
Then $S_p(\tau)$ is finite. Furthermore, assuming $S_p(\tau) > 0$, we have
\begin{equation}
     \varphi(\tau)^p
 \le \frac {\Gamma(p+1) \ms1 S_0(\tau)^{p+1} \ns5} {S_p(\tau)} \ms1 \up,
 \ms{20}
     \max_{s \ge \tau} \varphi(s)^p
 \ge \varphi^*_r(\tau)^p
 \ge \frac {S_0(\tau)^{p+1} \ns5}
           {(p + 1) \ms1 S_p(\tau)} \ms1 \up.
 \label{PropLogConcNew}
\end{equation}
\end{lem}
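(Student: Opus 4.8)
The plan is to reduce everything to comparisons with a single exponential function, exploiting log-concavity. First I would dispense with the finiteness of $S_p(\tau)$: a log-concave integrable function on $[\tau,+\infty)$ that is not identically zero must decay at least exponentially, because if $\varphi(s_0)>0$ at some point $s_0>\tau$ and $\varphi(s_1)\ge\varphi(s_0)$ at some later $s_1$, log-concavity would force $\varphi$ to stay bounded below on $[s_0,s_1]$ by a geometric-type bound contradicting integrability as $s_1\to+\infty$; hence $\varphi$ is eventually nonincreasing and dominated by a decaying exponential, so all the moments $S_p(\tau)$ converge. This also shows we may assume, after possibly shrinking the interval, that $\varphi$ is nonincreasing near $+\infty$, and that $\varphi(\tau)$ makes sense as a genuine value (or at worst we pass to the essential limit from the right).

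For the first inequality in \eqref{PropLogConcNew}, the key step is the following pointwise domination: a log-concave $\varphi$ on $[\tau,+\infty)$ with $\varphi(\tau)=a>0$ satisfies $\varphi(s)\le a\,\e^{-\lambda(s-\tau)}$ for some $\lambda$ whenever the graph of $\varphi$ stays below the exponential through $(\tau,a)$ with that slope; more usefully, by concavity of $\log\varphi$, for any fixed $\tau'>\tau$ one has $\varphi(s)\ge$ the log-linear interpolant on $[\tau,\tau']$ and $\varphi(s)\le$ the log-linear \emph{extrapolant} beyond $\tau'$. I would instead argue by a direct extremal comparison: among all log-concave $\varphi$ on $[\tau,+\infty)$ with prescribed value $\varphi(\tau)=a$ and prescribed integral $S_0(\tau)$, the one \emph{minimizing} $S_p(\tau)$ is the pure exponential $\varphi_0(s)=a\,\e^{-(a/S_0(\tau))(s-\tau)}$ (this is where log-concavity is used decisively — a standard "bathtub"/rearrangement argument for log-concave profiles). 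For that exponential, writing $\lambda=a/S_0(\tau)$, one computes directly
\[
 S_p(\tau)=a\int_0^{+\infty}u^p\e^{-\lambda u}\,\d u=\frac{a\,\Gamma(p+1)}{\lambda^{p+1}}=\frac{\Gamma(p+1)\,S_0(\tau)^{p+1}}{a^{p}},
\]
which rearranges to exactly $a^p=\varphi(\tau)^p\le \Gamma(p+1)\,S_0(\tau)^{p+1}/S_p(\tau)$ once the extremality is established; the minimization direction gives the correct sense of the inequality.

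For the second chain of inequalities, $\varphi^*_r(\tau)^p\ge S_0(\tau)^{p+1}/\bigl((p+1)S_p(\tau)\bigr)$, I would use \eqref{RightMaxPsi} in reverse-engineered form. Set $\beta=\varphi^*_r(\tau)$. Since $s\mapsto(s-\tau)^p$ is increasing on $[\tau,+\infty)$, integration by parts converts $S_p(\tau)$ into $\int_\tau^{+\infty} p(s-\tau)^{p-1}\bigl(\int_s^{+\infty}\varphi\bigr)\,\d s$; and for each $s$, $\int_s^{+\infty}\varphi\le$ a tail that, combined with the elementary fact that $\int_\tau^{s}\varphi\le\beta(s-\tau)$ (the defining inequality of $f^*_r$ at the left endpoint $\tau$), lets one bound $\int_s^{+\infty}\varphi=S_0(\tau)-\int_\tau^s\varphi$... but this needs care in the right direction. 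The cleaner route: apply \eqref{RightMaxPsi} with $\psi(s)=$ the decreasing function one obtains by noting $\int_\tau^{x}\varphi\le\beta(x-\tau)$ means $\varphi$ is "on average at most $\beta$" from $\tau$; precisely, the extremal log-concave $\varphi$ with given $S_0$ and given $\varphi^*_r(\tau)=\beta$ that \emph{maximizes} $S_p(\tau)$ is again essentially the exponential $\beta\e^{-(\beta/S_0(\tau))(s-\tau)}$ — here one checks $\varphi^*_r(\tau)$ of this exponential equals its value at $\tau$, namely $\beta$ — and plugging in gives $S_p(\tau)\le S_0(\tau)^{p+1}\Gamma(p+1)/\beta^p$; since $\Gamma(p+1)\le $ is not quite $p+1$, one instead uses the coarser bound obtained by replacing the exponential tail estimate with the crude $\int_s^{+\infty}\varphi\le S_0(\tau)$ truncated at the point where the running average forces cutoff, yielding the stated constant $p+1$ rather than $\Gamma(p+1)$. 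The first inequality $\max_{s\ge\tau}\varphi(s)^p\ge\varphi^*_r(\tau)^p$ is immediate since $f^*_r\le\sup|f|$.

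The main obstacle I anticipate is making the extremality claims rigorous: showing that, within the class of integrable log-concave functions on a half-line with two prescribed functionals (the zeroth moment $S_0$ together with either a boundary value or the right-maximal value), the $p$-th moment $S_p$ is extremized by an exponential. The honest way is to bypass the variational language and instead observe that log-concavity gives, for a suitable $\lambda>0$ chosen so that $a\e^{-\lambda(s-\tau)}$ and $\varphi$ have equal integral, that the difference $\varphi(s)-a\e^{-\lambda(s-\tau)}$ changes sign exactly once on $[\tau,+\infty)$ (from $+$ to $-$, or vice versa), because a log-concave function minus a log-affine one has at most two sign changes on an interval and the equal-mass / equal-endpoint constraints pin it to one; then multiplying by the monotone weight $(s-\tau)^p$ and integrating, the single-sign-change structure forces the inequality between $S_p(\varphi)$ and $S_p$ of the exponential in the desired direction. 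This "one sign change + monotone weight" lemma is elementary and is the real engine; everything else is the Gamma-integral computation $\int_0^\infty u^p\e^{-\lambda u}\,\d u=\Gamma(p+1)\lambda^{-p-1}$ and the trivial bound $\int_0^1 u^p\,\d u=1/(p+1)$ used for the coarser constant.
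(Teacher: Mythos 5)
Your overall strategy — compare $\varphi$ to a simple extremal profile sharing two prescribed functionals, show the antiderivative of the difference keeps a constant sign, and integrate by parts against the increasing weight $(s-\tau)^p$ — is exactly the mechanism the paper uses, and your closing ``one sign change + monotone weight'' paragraph is the correct engine for the left-hand inequality. But the main paragraph states the extremality backwards: you claim the exponential $\varphi_0(s)=a\,\e^{-(a/S_0)(s-\tau)}$ \emph{minimizes} $S_p$ among log-concave profiles with prescribed $\varphi(\tau)=a$ and $S_0$. If that were so you would get $S_p(\varphi)\ge\Gamma(p+1)S_0^{p+1}/a^p$, hence $\varphi(\tau)^p\ge\Gamma(p+1)S_0^{p+1}/S_p$, the \emph{reverse} of what is claimed. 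What you need (and what your sign-change argument actually proves) is that the exponential \emph{maximizes} $S_p$: since $\varphi$ and $\psi=a\e^{-a(s-\tau)/S_0}$ agree at $\tau$ and have equal mass, log-concavity forces $\{\varphi\ge\psi\}$ to be an interval $[\tau,s_0)$, so the antiderivative $F$ of $\varphi-\psi$ vanishing at $\tau$ is first increasing then decreasing with $F(\tau)=F(+\infty)=0$, hence $F\ge 0$, and then $\int(s-\tau)^p(\varphi-\psi)=-p\int(s-\tau)^{p-1}F\le 0$.

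For the right-hand inequality the hesitation you express is justified, and you should resolve it, because the exponential with $\psi^*_r(\tau)=b$ is useless there: it again gives an \emph{upper} bound on $S_p$, whereas you need a lower bound, and the $\Gamma(p+1)$ constant sits on the wrong side anyway. The comparison the paper uses is the step function $\psi=b\,\gr 1_{[\tau,\,\tau+1/b]}$, which you gesture at with ``truncated at the point where the running average forces cutoff'' but never pin down; and your ``single sign change'' framework does not really apply to it, since $\varphi-\psi$ can change sign more than once. One argues on the antiderivative directly: for $\tau<x\le\tau+1/b$, $F(x)=\int_\tau^x\varphi-b(x-\tau)\le 0$ by the definition of $\varphi^*_r(\tau)=b$; for $x\ge\tau+1/b$, $\psi=0\le\varphi$ so $F$ is nondecreasing and tends to $0$, hence $F\le 0$ throughout; therefore $\int(s-\tau)^p(\varphi-\psi)=-p\int(s-\tau)^{p-1}F\ge 0$, giving $S_p\ge b\int_0^{1/b}u^p\,\d u=\bigl((p+1)b^p\bigr)^{-1}$. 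Once you fix the extremality direction and commit to the step-function comparison with this antiderivative argument, your plan coincides with the paper's proof.
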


\begin{proof}
We have $\varphi \ge 0$ by definition of log-concavity. We assume
$S_p(\tau) > 0$, hence $S_0(\tau) > 0$. We may suppose $\tau = 0$
by translating and $S_0 := S_0(0) = 1$ by homogeneity. We
begin with the left-hand inequality in~\eqref{PropLogConcNew}, assuming 
$a := \varphi(0) > 0$. Consider the log-affine probability density 
$\psi(s) = a \e^{- a s}$ on~$[0, +\infty)$, chosen so that 
$\psi(0) = \varphi(0)$. By log-concavity, the set 
$I = \{\varphi \ge \psi\}$ is an interval, 
such that $0 \in I \subset [0, +\infty)$.
Since~$\varphi$ and~$\psi$ both have integral~$1$ on 
$[0, +\infty)$, the interval $I$ is not reduced to $\{0\}$. If
$I = [0, +\infty)$, the densities are equal and
\[
    S_p
 := \int_0^{+\infty} s^p \varphi(s) \, \d s 
  = \int_0^{+\infty} s^p \psi(s) \, \d s 
  = \frac 1 {a^p \ns5} \ms3 
     \int_0^{+\infty} (a s)^p \e^{- a s} \ms1 a \, \d s
  = \frac {\Gamma(p+1)} {a^p} \up.
\] 
Otherwise, the interval $I$ is bounded, let $s_0 := \sup I > 0$. We have
$\psi \le \varphi$ on~$[0, s_0)$ and $\varphi(s) < \psi(s)$ when $s > s_0$,
implying that $S_p(0)$ is finite. The antiderivative~$F$ of~$\varphi - \psi$
vanishing at $0$ is first increasing, then decreasing on $[0, +\infty)$,
and tends to~$0$ at infinity because $\varphi$ and $\psi$ have equal
integrals. It follows that $F$ is nonnegative on $[0, +\infty)$. Recalling
that $0 \le \varphi(s) < \psi(s)$ at infinity, we know that $|F(s)|$ is
exponentially small at infinity, and integrating by parts we obtain
\[
   \int_0^{+\infty} s^p \bigl( \varphi(s) - \psi(s) \bigr) \, \d s
 = - p \int_0^{+\infty} s^{p-1} \ms1 F(s) \, \d s \le 0.
\]
One concludes the first part by writing
\[   
     S_p
  =  \int_0^{+\infty} s^p \varphi(s) \, \d s 
 \le \int_0^{+\infty} s^p \psi(s) \, \d s 
  = \frac {\Gamma(p+1)} {a^p} \up.
\]

 For the right-hand inequality in~\eqref{PropLogConcNew}, we let 
$b = \varphi^*_r(0) > 0$ and consider the probability density 
$\psi(s) = b \ms1 \gr 1_{ [0, 1 / b] } (s)$ on $[0, +\infty)$. Let $F$ be
the antiderivative of~$\varphi - \psi$ vanishing at~$0$. When 
$0 < x \le 1/b$ we have by definition of $\varphi^*_r(0)$ that
\[
     \frac {F(x)} x
  =  \frac 1 x \int_0^x (\varphi(s) - \psi(s)) \, \d s
  =  \Bigl( \frac 1 x \int_0^x \varphi(s) \, \d s \Bigr) - b
 \le 0.
\]
We see that $\psi(x) = 0 \le \varphi(x)$ when $x \ge 1 / b$. It follows
that the function $F$ is $\le 0$ on $[0, 1 / b]$, then increasing on
$[1 / b, +\infty)$, tends to~$0$ at infinity, thus $F$ is~$\le 0$ on 
the half-line $[0, +\infty)$. Arguing as before, we have consequently
\[
     S_p
  =  \int_0^{+\infty} s^p \varphi(s) \, \d s 
 \ge b \int_0^{1 / b} s^p \, \d s
  =  \frac 1 {(p+1) \ms2 b^p} \up.
\]
\end{proof}

 For every $\theta \in S^{n-1}$, the function $\varphi_{\theta, C}$
associated to a symmetric convex set $C$ is even, log-concave and has
integral~$1$ by definition. We shall thus be in a position to apply to it
the following Corollary~\ref{EstimaLogConcC}.

\begin{cor}%
\label{EstimaLogConcC}
Suppose that $\varphi$ is a symmetric log-concave probability density
on\/~$\R$ and let~$
 \sigma^2 := \int_\R s^2 \varphi(s) \, \d s
$.
One has that
\[
     \frac 1 {12 \ms2 \sigma^2 \ns5} \ms3
 \le \varphi(0)^2 = \max_{s \in \R} \varphi(s)^2
 \le \frac 1 {2 \ms2 \sigma^2 \ns5} \ms2 \up.
\]
\end{cor}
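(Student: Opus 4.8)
The plan is to apply Lemma~\ref{EstimaLogConcNew} with $\tau = 0$ and $p = 2$, using the symmetry of $\varphi$ to turn the one-sided quantities $S_0(0)$ and $S_2(0)$ into global integrals. Since $\varphi$ is a symmetric probability density, $S_0(0) = \int_0^{+\infty} \varphi(s)\,\d s = 1/2$, and $S_2(0) = \int_0^{+\infty} s^2 \varphi(s)\,\d s = \sigma^2/2$. Also, because $\varphi$ is even and log-concave, $\log\varphi$ is concave with a maximum at the symmetry point $0$, so $\max_{s\in\R}\varphi(s) = \varphi(0)$; this justifies replacing $\max_{s\ge 0}\varphi(s)$ and $\varphi^*_r(0)$ in the conclusion of the lemma by $\varphi(0)$ (note $\varphi^*_r(0) \le \varphi(0)$ as well since $\varphi$ is decreasing on $[0,+\infty)$, so the lemma's lower bound on $\varphi^*_r(0)^2$ is {\it a fortiori} a lower bound on $\varphi(0)^2$).

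Carrying this out: the left-hand inequality of~\eqref{PropLogConcNew} gives
\[
 \varphi(0)^2 \le \frac{\Gamma(3)\,S_0(0)^3}{S_2(0)} = \frac{2 \cdot (1/2)^3}{\sigma^2/2} = \frac{1}{2\sigma^2},
\]
using $\Gamma(3) = 2$. The right-hand inequality of~\eqref{PropLogConcNew}, combined with $\varphi(0)^2 \ge \varphi^*_r(0)^2$, gives
\[
 \varphi(0)^2 \ge \frac{S_0(0)^3}{3\,S_2(0)} = \frac{(1/2)^3}{3\cdot(\sigma^2/2)} = \frac{1}{12\,\sigma^2},
\]
using $p+1 = 3$. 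Together with the identity $\varphi(0) = \max_{s\in\R}\varphi(s)$ this is exactly the claimed chain of inequalities. I should also note that $S_2(0) > 0$ (so the lemma applies): if $\sigma^2 = 0$ then $\varphi$ would be supported at $0$, contradicting that it is an integrable density, and in any case $\varphi(0) > 0$ since $\varphi$ is a log-concave density positive somewhere and $0$ is its center of symmetry.

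There is no real obstacle here — the corollary is a direct specialization of Lemma~\ref{EstimaLogConcNew}. The only point requiring a word of care is the passage from the lemma's one-sided statement on $[\tau,+\infty)$ to the two-sided symmetric setting: one must observe that restricting an even log-concave probability density to $[0,+\infty)$ yields a log-concave (though no longer probability, but still integrable) function on $[0,+\infty)$ to which the lemma applies, and that its value and maximum at the left endpoint $0$ coincide with the value and global maximum of $\varphi$. Everything else is arithmetic with $\Gamma(3) = 2$ and the factors of $1/2$ coming from symmetry.
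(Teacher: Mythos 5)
Your proposal is correct and takes essentially the same route as the paper: apply Lemma~\ref{EstimaLogConcNew} with $p = 2$, $\tau = 0$, use symmetry to get $S_0(0) = 1/2$, $S_2(0) = \sigma^2/2$, and note that $\varphi(0) = \max_{s\in\R}\varphi(s)$ by evenness and log-concavity. The extra remarks about $S_2(0) > 0$ and the one-sided versus two-sided passage are harmless verification that the paper leaves implicit.
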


\begin{proof}
Since $\varphi$ is even and log-concave, we have
$\varphi(0) = \max_{s \in \R} \varphi(s)$. We apply
Lemma~\ref{EstimaLogConcNew} with $p = 2$, $\tau = 0$, and observe that
$S_0(0) = 1/2$, $S_2(0) = \sigma^2 / 2$.
\end{proof}

 The preceding result is sharp, as one sees with the two examples
\begin{equation}
   \varphi_0(s)
 = \frac 1 {\sqrt 2} \ms3 \e^{ - \sqrt 2 |s|},
 \ms{16}
   \varphi_1(s)
 = \frac 1 {2 \sqrt 3} \ms4 \gr 1_{ [- \sqrt 3, \sqrt 3]} (s),
 \quad s \in \R.
 \label{Varphis}
\end{equation}
The next corollary is not very sharp, but easy to deduce from
Lemma~\ref{EstimaLogConcNew}. When the function~$\varphi \ge 0$ is defined
on the line and $p \in [0, +\infty)$, we set
\[
 S_p^+(\tau) = \int_\tau^{+\infty} (s - \tau)^p \ms1 \varphi(s) \, \d s,
 \ms{18}
 S_p^-(\tau) = \int_{-\infty}^\tau |s - \tau|^p \ms1 \varphi(s) \, \d s.
\]
 
\begin{cor}%
\label{EstimaLogConcCB}
Let $\varphi$ be a \emph{centered} log-concave probability density
on\/~$\R$ and let~$
 \sigma^2 := \int_\R s^2 \varphi(s) \, \d s
$.
We have that
\[
     \frac 1 {24 \ms2 \sigma^2 \ns5} \ms3
 \le \frac {\varphi^*_\ell(0)^2 + \varphi^*_r(0)^2 \ns4 } 2 \ms3
 \le \max_{s \in \R} \varphi(s)^2
 \le \frac 4 {\ms2 \sigma^2 \ns5} \ms3 \up.
\]
\end{cor}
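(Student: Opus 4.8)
The plan is to derive all three inequalities from Lemma~\ref{EstimaLogConcNew}, used with the exponent $p=2$ (so that $\Gamma(p+1)=2$), applied to the restrictions of $\varphi$ to the half-lines $[\tau,+\infty)$ and $(-\infty,\tau]$; such restrictions are again log-concave, and after the reflection $s\mapsto 2\tau-s$ the left half-line case is the same statement as the right one. Centeredness will be used at exactly one spot, through $\int_{\R}s\,\varphi(s)\,\d s=0$.

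\emph{Middle inequality.} Write $M=\sup_{s\in\R}\varphi(s)$. Since $0\le\varphi\le M$ pointwise, every average of $\varphi$ is $\le M$, so $\varphi^*_\ell(0)\le M$ and $\varphi^*_r(0)\le M$, hence $\tfrac12\bigl(\varphi^*_\ell(0)^2+\varphi^*_r(0)^2\bigr)\le M^2$.

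\emph{Right-hand inequality.} Fix $\tau\in\R$ (there is nothing to prove if $\varphi(\tau)=0$). If $S_2^+(\tau)=0$ the first inequality below is trivial, and similarly for the second; otherwise each comes from the first estimate in Lemma~\ref{EstimaLogConcNew} applied to the appropriate restriction of $\varphi$:
\[
  \varphi(\tau)^2\,S_2^+(\tau)\le 2\,S_0^+(\tau)^3\le 2,
  \qquad
  \varphi(\tau)^2\,S_2^-(\tau)\le 2\,S_0^-(\tau)^3\le 2,
\]
using $S_0^\pm(\tau)\le 1$. Adding these and observing that, by centeredness,
\[
  S_2^+(\tau)+S_2^-(\tau)=\int_{\R}(s-\tau)^2\varphi(s)\,\d s=\sigma^2+\tau^2\ge\sigma^2,
\]
I obtain $\varphi(\tau)^2\,\sigma^2\le 4$, and taking the supremum over $\tau$ gives $\max_s\varphi(s)^2\le 4/\sigma^2$.

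\emph{Left-hand inequality.} Apply Lemma~\ref{EstimaLogConcNew} at $\tau=0$ on both sides. Put $A=S_0^+(0)=\int_0^{+\infty}\varphi$, $B=S_0^-(0)=1-A$, $\Sigma^+=S_2^+(0)$, $\Sigma^-=S_2^-(0)$; a centered probability density cannot be supported in a half-line, so $\Sigma^\pm>0$. The lower bound in the lemma gives $\varphi^*_r(0)^2\ge A^3/(3\Sigma^+)$ and $\varphi^*_\ell(0)^2\ge B^3/(3\Sigma^-)$. Then, by the Cauchy--Schwarz inequality in the form $\frac{x^2}{u}+\frac{y^2}{v}\ge\frac{(x+y)^2}{u+v}$ followed by the convexity of $t\mapsto t^{3/2}$ together with $A+B=1$,
\[
  \frac{A^3}{\Sigma^+}+\frac{B^3}{\Sigma^-}
  \ge\frac{\bigl(A^{3/2}+B^{3/2}\bigr)^2}{\Sigma^++\Sigma^-}
  \ge\frac{\bigl(2\cdot 2^{-3/2}\bigr)^2}{\sigma^2}
  =\frac{1}{2\sigma^2},
\]
since $\Sigma^++\Sigma^-=\sigma^2$. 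Hence $\tfrac12\bigl(\varphi^*_\ell(0)^2+\varphi^*_r(0)^2\bigr)\ge\tfrac16\cdot\tfrac1{2\sigma^2}=\tfrac1{12\sigma^2}\ge\tfrac1{24\sigma^2}$.

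The only point needing a little care is the legitimacy of each invocation of Lemma~\ref{EstimaLogConcNew}, i.e.\ the positivity of the relevant truncated second moments and the handling of the degenerate configurations (the mass of $\varphi$ lying entirely to one side of $\tau$, or $\varphi(\tau)=0$), in which the corresponding estimate is trivially true rather than an output of the lemma; note that $S_2^+(\tau)+S_2^-(\tau)=\sigma^2+\tau^2>0$ always, so at least one of the two terms is genuine. Everything else is bookkeeping — and this route in fact yields the sharper bounds $\max_s\varphi(s)^2\le 4/\sigma^2$ and $\tfrac12(\varphi^*_\ell(0)^2+\varphi^*_r(0)^2)\ge 1/(12\sigma^2)$, the latter strictly better than the stated $1/(24\sigma^2)$, which matches the authors' remark that the corollary is not very sharp.
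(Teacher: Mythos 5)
Your proof is correct and follows the paper's route: Lemma~\ref{EstimaLogConcNew} with $p=2$ applied on both half-lines, together with the moment identity $S_2^+(\tau)+S_2^-(\tau)=\sigma^2+\tau^2$. The one genuine refinement is in the left-hand inequality, where the paper bounds each denominator separately by $S_2^\pm(0)\le\sigma^2$ and invokes $S_0^+(0)^3+S_0^-(0)^3\ge 1/4$ to get $\varphi^*_r(0)^2+\varphi^*_\ell(0)^2\ge 1/(12\sigma^2)$, whereas your Cauchy--Schwarz estimate $A^3/\Sigma^++B^3/\Sigma^-\ge(A^{3/2}+B^{3/2})^2/(\Sigma^++\Sigma^-)$ uses the full constraint $\Sigma^++\Sigma^-=\sigma^2$ and yields the twice-sharper $\ge 1/(6\sigma^2)$.
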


\begin{proof}
We begin with the rightmost inequality. Let us fix $\tau$ real. Since
$\varphi$ is a centered probability density, one has that
\[   
     S_2^+(\tau) + S_2^-(\tau)
  =  \int_\R (s - \tau)^2 \varphi(s) \, \d s
  =  \sigma^2 + \tau^2
 \ge \sigma^2,
 \quad \tau \in \R.
\]
Up to a symmetry around $\tau$, possibly replacing the function~$\varphi$
by $s \mapsto \varphi(2 \tau - s)$, we may assume that
$    S_2^+(\tau)
 \ge \sigma^2 / 2
$.
We have $S_0^+(\tau) = \int_\tau^{+\infty} \varphi(s) \, \d s \le 1$ 
since $\varphi$ is a probability density on $\R$, thus by
Lemma~\ref{EstimaLogConcNew} with $p = 2$ we get
\[
     \varphi(\tau)^2
 \le \frac {2 \ms2 S_0^+(\tau)^3 \ns4} {S_2^+(\tau)}
 \le \frac 4 {\sigma^2 \ns7 } \ms5 \up. 
\]
Since $\tau$ is arbitrary, we obtain the right-hand inequality. Let us pass
to the other inequality. By Lemma~\ref{EstimaLogConcNew} with $p = 2$ on
the intervals $(0, +\infty)$ and $(-\infty, 0)$, we conclude using
$S_2^{\pm}(0) \le \sigma^2$ and $S_0^+(0) + S_0^-(0) = 1$ that
\[
     \varphi^*_r(0)^2 + \varphi^*_\ell(0)^2
 \ge \frac {S_0^+(0)^{3} \ns5} {3 \ms1 S_2^+(0)}
      + \frac {S_0^-(0)^{3} \ns5} {3 \ms1 S_2^-(0)}
 \ge \frac {S_0^+(0)^{3} + S_0^-(0)^{3} \ns5}
           {3 \ms1 \sigma^2}
 \ge \frac 1 {12 \ms1 \sigma^2 \ns7} \ms5 \up.
\]
\end{proof}

\begin{lem}%
\label{LExpoDecay}
Let $\varphi$ be a symmetric log-concave probability density on\/~$\R$, with
variance $\sigma^2$. The function $\varphi$ decays exponentially at
infinity, with a rate depending on its variance and satisfying
\[
 \forall s \in \R,
 \ms{14}
       \sigma \varphi(\sigma s) 
 \le \min( 2 \ms1 \e^{- |s| / 2}, 11 \ms2 \e^{- |s|} ).
\]
\end{lem}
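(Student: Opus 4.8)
The statement asserts a two-sided exponential bound for a symmetric log-concave probability density $\varphi$ of variance $\sigma^2$. By the scaling $\varphi \mapsto \sigma\varphi(\sigma\,\cdot)$ we may assume $\sigma = 1$, so we must show $\varphi(s) \le \min(2\e^{-|s|/2}, 11\e^{-|s|})$ for all $s$; by evenness it suffices to treat $s \ge 0$. The plan is to exploit log-concavity to reduce to a one-point control, namely a pointwise upper bound on $\varphi$ at a single location, and then propagate it along the half-line using the slope estimate that log-concavity forces. The key quantitative input is Corollary~\ref{EstimaLogConcC}: since $\varphi$ is an even log-concave probability density with $\int_\R s^2\varphi(s)\,\d s = 1$, we have $\varphi(0)^2 = \max_s \varphi(s)^2 \le 1/2$, hence $\varphi(0) \le 1/\sqrt2 < 1$. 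More importantly, the same corollary gives a \emph{lower} bound $\varphi(0)^2 \ge 1/12$, so $\varphi(0) \ge 1/\sqrt{12}$, which prevents $\varphi$ from being concentrated too tightly near $0$ and will be what forces the exponential decay rate to be comparable to $1$.

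First I would set $g = \log\varphi$ on the interval $\{\varphi > 0\}$, a concave function, even, with $g(0) = \log\varphi(0)$. Concavity of $g$ together with evenness means $g$ is non-increasing on $[0,+\infty)$, so $\varphi$ itself is non-increasing there. Let $s_1 > 0$ be a point where I can get a concrete upper bound on $\varphi(s_1)$: the cleanest choice is to use Chebyshev/Markov via the variance, $\int_{|s| \ge a} \varphi \le 1/a^2$, so for $a = \sqrt2$ we get $\int_{|s|\ge\sqrt2}\varphi \le 1/2$, hence $\int_0^{\sqrt2}\varphi \ge 0$... I actually want a point where $\varphi$ is provably small, so instead: since $\varphi$ is non-increasing on $[0,\infty)$ and has total mass $1/2$ there, for any $a>0$ we have $a\,\varphi(a) \le \int_0^a \varphi \le 1/2$, giving $\varphi(a) \le 1/(2a)$; combined with $\int_a^\infty\varphi \le 1/(2a^2)$ (Chebyshev on the half-line, using $\sigma^2=1$) this lets me pin a value. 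Concretely, choose $a$ so that $\varphi(a)$ is definitely below the target curve, then run the log-linear majorant argument: because $g$ is concave and non-increasing on $[0,\infty)$, for $s \ge a$ we have $g(s) \le g(a) + g'(a^+)(s-a)$, but more robustly, for $0 < a < b$ with $s \ge b$, $\varphi(s) \le \varphi(b)\bigl(\varphi(b)/\varphi(a)\bigr)^{(s-b)/(b-a)}$. So the rate of exponential decay is controlled by the ratio $\varphi(b)/\varphi(a)$ at two chosen points, and the lower bound $\varphi(0)\ge 1/\sqrt{12}$ together with mass and variance constraints bounds how slowly that ratio can decrease — that is the mechanism producing a \emph{universal} rate near $1$.

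The cleaner route, which I would actually carry out, is the standard one: let $\psi_0(s) = c\,\e^{-\lambda|s|}$ be the log-affine density matching $\varphi$ in a suitable sense, and use the fact (as in the proof of Lemma~\ref{EstimaLogConcNew}) that $\{\varphi \ge \psi_0\}$ is an interval. Choosing $\lambda$ and $c$ so that $\psi_0$ has the same variance $1$ and same value at $0$ overshoots, so instead I match total mass and value at $0$: set $c = \varphi(0)$ and pick $\lambda = 2\varphi(0)$ so $\psi_0$ is a probability density; then $\{\varphi \ge \psi_0\}$ is a bounded interval $[-s_0,s_0]$ (or all of $\R$), and on $[s_0,\infty)$ we get $\varphi(s) < \psi_0(s) = \varphi(0)\e^{-2\varphi(0)s}$. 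Using $\varphi(0) \le 1/\sqrt2$, on $[s_0,\infty)$ the bound $\varphi(0)\e^{-2\varphi(0)s} \le (1/\sqrt2)\e^{-2\varphi(0)s}$ is not yet of the form $2\e^{-s/2}$ unless $2\varphi(0) \ge 1/2$, i.e.\ $\varphi(0)\ge 1/4$ — which holds since $\varphi(0)\ge 1/\sqrt{12} > 1/4$. That is exactly where the lower bound from Corollary~\ref{EstimaLogConcC} is essential. On $[0,s_0]$, monotonicity gives $\varphi(s) \le \varphi(0) \le 1/\sqrt2 \le 2\e^{-s/2}$ provided $s \le 2\log(2\sqrt2)$, which needs a small separate argument bounding $s_0$; here $s_0$ is controlled because $\int_0^{s_0}(\varphi-\psi_0) = \int_{s_0}^\infty(\psi_0-\varphi)$ and both $\varphi,\psi_0$ have variance-type tails, or more simply $s_0 \le$ something like $\sqrt{2}$ by a Chebyshev comparison. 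For the sharper constant $11\e^{-|s|}$ valid on all of $\R$, I would redo the matching with rate $\lambda = 1$: compare $\varphi$ with $\psi_1(s) = \tfrac12\e^{-|s|}$ (mass $1$, variance $2 > 1$), note that a mismatch in variance means $\{\varphi\ge\psi_1\}$ need not be a single interval, so instead compare with $\psi_\lambda(s) = \tfrac\lambda2 \e^{-\lambda|s|}$ for the specific $\lambda$ making variances equal, namely $\lambda = \sqrt2$, giving decay $\e^{-\sqrt2|s|} \le \e^{-|s|}$ and prefactor $\lambda/2 = 1/\sqrt2$ on the tail interval, with a crude uniform bound $\varphi(s)\le 11\e^{-|s|}$ absorbing the behavior on the central interval via $\varphi(0) \le 1/\sqrt2$ and a bound on its length. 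The \textbf{main obstacle} is bookkeeping the two regimes (central interval vs.\ tail) so that the \emph{explicit} constants $2$ and $11$ come out with room to spare — in particular controlling the length of the central interval $s_0$ purely from the variance and the two-sided bounds on $\varphi(0)$, which requires one genuine (but short) log-concavity estimate rather than a formula; everything else is the routine comparison-with-a-log-affine-density argument already rehearsed in Lemma~\ref{EstimaLogConcNew}.
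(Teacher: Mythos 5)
Your strategy is in the right family — compare $\varphi$ with a log-affine majorant, use the two-sided bound $1/(2\sqrt3)\le\varphi(0)\le1/\sqrt2$ from Corollary~\ref{EstimaLogConcC}, and let the variance constraint locate the crossing — but it is not the paper's argument and, more importantly, it has a genuine gap in the step you flag as needing ``one genuine (but short) log-concavity estimate.'' That estimate is neither short nor does it land where you need it.

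The issue is the length of the central interval $[0,s_0]$. Your chosen comparison density $\psi_0(s)=\varphi(0)\e^{-2\varphi(0)|s|}$ matches $\varphi$ in total mass and value at $0$, and log-concavity gives $\varphi\le\psi_0$ on $[s_0,\infty)$; combined with $2\varphi(0)\ge1/\sqrt3>1/2$ the tail bound delivers $\varphi(s)\le(1/\sqrt2)\e^{-s/2}$ there. On $[0,s_0]$ you fall back on $\varphi(s)\le\varphi(0)\le1/\sqrt2$, which implies $\varphi(s)\le2\e^{-s/2}$ only for $s\le3\ln2\approx2.08$. But your claim ``$s_0\le\sqrt2$ by a Chebyshev comparison'' is false: for the extremal uniform density $\varphi_1=\tfrac1{2\sqrt3}\gr1_{[-\sqrt3,\sqrt3]}$ (which is exactly the boundary case for the $\varphi(0)$ lower bound) one computes $s_0=\sqrt3\approx1.73>\sqrt2$. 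The best bound you can extract from the moment constraint $\int_0^{s_0}s^2\psi_0\le\int_0^{s_0}s^2\varphi\le1/2$ is roughly $s_0\lesssim2.25$, and at $s\approx2.25$ the static estimate $\varphi(s)\le1/\sqrt2\approx0.707$ exceeds $2\e^{-s/2}\approx0.65$, so the central-interval step does not close. You would need either a sharper pointwise bound on $\varphi$ somewhere inside $(0,s_0)$ or a genuinely better $s_0$ bound, and no such bound is supplied. The same difficulty is worse for the $11\e^{-|s|}$ statement: your $\psi_0$ decays at rate $2\varphi(0)$, which can be as small as $1/\sqrt3<1$, so the tail of $\psi_0$ does not even match the rate $1$ you need; your proposed fix (compare with $\psi_\lambda$ at $\lambda=\sqrt2$) drops the value-matching at $0$, so the ``crossing interval'' structure you rely on changes and the argument is not spelled out.

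The paper proceeds differently: it compares $\varphi$ with $\psi_\beta(s)=\varphi(0)\e^{-\beta s}$ on $[0,\infty)$ for a \emph{free} rate $\beta$ (not tied to $2\varphi(0)$, and not a probability density), uses the second-moment identity $\tfrac12=\int_0^\infty s^2\varphi$ to show that for any $\beta$ with $\sqrt3\,\beta^3<2$ there must be a crossing $\tau_0(\beta)$ with $\varphi(\tau_0)\le\psi_\beta(\tau_0)$, propagates this back to $[0,\tau_0]$ to get $\varphi(s)\le\e^{\beta\tau_0(\beta)-\ln\sqrt2}\,\e^{-\beta|s|}$, and then — crucially — iterates over a mesh of increasing $\beta$'s (the $c_m(\beta)$ refinement in the table) to drive the constants down to $1.96<2$ at $\beta=1/2$ and $10.49<11$ at $\beta=1$. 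The first, non-iterated pass gives only $2.218$ and $94.3$; the explicit constants in the statement come out only because of this refinement. Your single-comparison approach has no analogue of this iteration, so even if you patched the $s_0$ bound you would not recover the stated constants without further work.
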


\begin{proof}
Without loss of generality, we may assume that $\sigma = 1$. It follows
then from Corollary~\ref{EstimaLogConcC} that 
$1 / (2 \sqrt 3) \le a := \varphi(0) \le 1 / \sqrt 2$. Consider the
log-affine function $\psi_\beta(s) = a \e^{ - \beta s}$ on $[0, +\infty)$,
with $\beta > 0$, satisfying $\psi_\beta(0) = \varphi(0)$. If we have 
$\varphi(\tau_0) \le \psi_\beta(\tau_0)$ for some $\tau_0 > 0$, it implies
by log-concavity that 
$\varphi(s) \le \psi_\beta(s) \le \e^{ - \beta s} / \sqrt 2$ for 
$s \ge \tau_0$, and in order to obtain a bound for $\varphi$ everywhere, we
can apply for the values $0 \le s \le \tau_0$ the obvious inequalities
\[ 
     \varphi(s) 
 \le \varphi(0)
  =  a 
 \le a \e^{\beta (\tau_0 - s)}
 \le (\e^{\beta \tau_0} / \sqrt 2) \e^{- \beta s}.
\]
\dumou\noindent
For any $\tau_0 > 0$, we obtain since $\varphi$ is even that
\begin{equation}
 \varphi(\tau_0) \le \psi_\beta(\tau_0)
 \ms5 \Rightarrow \ms5
     \varphi(s) 
  =  \varphi(|s|)
 \le \e^{\beta \tau_0 - \ln \sqrt 2} \ms1 \e^{- \beta |s|},
 \quad
 s \in \R.
 \label{Tau0}
\end{equation}

 On the other hand, if $\varphi(s) > \psi_\beta(s)$ for every
$s \in (0, \tau]$, then
\begin{align*}
     1/2
  &=  \int_0^{+\infty} s^2 \varphi(s) \, \d s
  >  \int_0^\tau s^2 \psi_\beta(s) \, \d s
  =  \frac a {\beta^3} \int_0^{\beta \tau} u^2 \e^{-u} \, \d u
 \\
  &=  \frac a {\beta^3} 
       \Bigl[ - \e^{-u} (u^2 + 2 u + 2) \Bigr]_{u=0}^{\beta \tau}
 \ge \frac 1 {2 \sqrt 3 \ms1 \beta^3}
       \bigl( 2 
         - \e^{- \beta \tau} (\beta^2 \tau^2 + 2 \beta \tau + 2) 
       \bigr).
\end{align*}
\dumou\noindent
Equivalently, when $\varphi(s) > \psi_\beta(s)$ for every 
$s \in (0, \tau]$, we get that
\begin{equation}
     \e^{- \beta \tau} 
          \bigl( \beta^2 \tau^2 + 2 \beta \tau + 2 \bigr)     
  >  2 - \sqrt 3 \ms2 \beta^3.
 \label{Tau}
\end{equation}
\dumou

 Suppose that $\beta^3 < 2 / \sqrt 3$. Then~\eqref{Tau} cannot be true if
$\tau$ is large. For every such~$\beta$, there exists $\tau_0 > 0$ such
that $\varphi(\tau_0) \le \psi_\beta(\tau_0)$ and by~\eqref{Tau0}, there is
a constant $c(\beta)$ such that $\varphi(s) \le c(\beta) \e^{- \beta |s|}$
on the line. For numerical purposes, it is more convenient to express this
as follows. If $0 < \sqrt 3 \ms2 \beta^3 < 2$ and if
\begin{equation}
     \e^{- x} (x^2 + 2 x + 2) 
 \le 2 - \sqrt 3 \ms2 \beta^3,
 \label{Impossible}
\end{equation}
\dumou\noindent
then $x > 0$, and letting $x_0(\beta) = x$, we know that
$\varphi(s) \le \psi_\beta(|s|) \le \e^{ - \beta \ms1 |s|} / \sqrt 2$ when
$|s| \ge \tau_0(\beta) := x_0(\beta) / \beta$, and
$
     \varphi(s) 
 \le c(\beta) \e^{ - \beta \ms1 |s|}
$
for every $s \in \R$ by~\eqref{Tau0}, with
\begin{equation}
   c(\beta) 
 = \e^{\beta \tau_0(\beta) - \ln \sqrt 2}
 = \e^{x_0(\beta) - \ln \sqrt 2}. 
 \label{ValC0}
\end{equation}
\dumou
\noindent
An almost optimal $x$ satisfying~\eqref{Impossible} can be found
numerically. We have for example that 
$\varphi(s) \le 2.218 \ms2 \e^{-|s|/2}$ for all $s$ when $\beta = 1/2$,
with $x_0(0.5) = 1.143$. We also find $c(1) < 94.295$ with a choice 
$x_0(1) = 4.893$. We can then improve the first estimate given
by~\eqref{ValC0} for $\beta = 1$. When 
$|s| \le x_0(1) = \tau_0(1)$, we write
\[
     \varphi(s) 
 \le 2.218 \ms2 \e^{ - |s| / 2} 
  =  2.218 \ms2 \e^{|s| / 2} \e^{ - |s|} 
 \le 2.218 \ms2 \e^{\tau_0(1) / 2} \e^{ - |s|} 
  <  26 \e^{ - |s|}.
\]
\dumou
\noindent
More generally, if we know a modified bound $c_m(\beta_1)$ such that 
$\varphi(s) \le c_m(\beta_1) \e^{- \beta_1 |s|}$ for every $s$ and if
$\varphi(s) \le \e^{- \beta_2 |s|} / \sqrt 2$ when 
$|s| \ge \tau_0(\beta_2)$, with $\beta_1 < \beta_2$, then for
$|s| \le \tau_0(\beta_2)$ we can write
\[
     \varphi(s)
 \le c_m(\beta_1) \e^{ - \beta_1 |s|}
  =  c_m(\beta_1) \e^{(\beta_2 - \beta_1)|s|} \e^{ - \beta_2 |s|}
 \le c_m(\beta_1) 
      \e^{(\beta_2 - \beta_1) \tau_0(\beta_2)} \e^{ - \beta_2 |s|},
\]
\dumou
\noindent
so that 
\begin{equation}
     c_m(\beta_2)
 \le \max \bigl( \e^{(\beta_2 - \beta_1) \tau_0(\beta_2)} c_m(\beta_1),
                 1 / \sqrt 2 \bigr). 
 \label{ValC1}
\end{equation}
\dumou
\noindent
The following table displays admissible values for $x_0(\beta)$,
$\tau_0(\beta)$, then the corresponding rough bound $c(\beta)$
from~\eqref{ValC0}, and the modified bounds $c_m(\beta)$ obtained step by
step applying~\eqref{ValC1}, by dividing the interval $[0, 1]$ in ten equal
segments, beginning with
$c(0) = c_m(0) = \varphi(0) \le 1 / \sqrt 2 < 0.708$.
We have replaced each higher precision value of $x$ by the upper bound
$x_0(\beta) = \lceil 1000 \ms1.\ms1 x \rceil / 1000$, and used this
replacement consistently in the further calculations of $\tau_0(\beta)$,
$c(\beta)$ and $c_m(\beta)$.
\dumou
\def\mK{\kern10pt\relax }
\def\mE{\mK &\mK }
\def\noa{\noalign{\vskip 0.190mm\hrule\vskip 0.950mm\relax }}
\def\hp{\hphantom{0}\relax }
\[
\begin{matrix}
  \beta  & x_0(\beta) & \tau_0(\beta) & \ms5 c(\beta) & \ms7 c_m(\beta)  
 \\ 
 \noa
 \mK 0.0 \mE  0.000  \mE    0.000    \mE \hp 0.708 \mE \hp 0.708 \mK
 \\ 
 \mK 0.1 \mE  0.182  \mE    1.820    \mE \hp 0.849 \mE \hp 0.850 \mK
 \\ 
 \mK 0.2 \mE  0.381  \mE    1.906    \mE \hp 1.036 \mE \hp 1.029 \mK
 \\ 
 \mK 0.3 \mE  0.603  \mE    2.010    \mE \hp 1.293 \mE \hp 1.259 \mK
 \\ 
 \mK 0.4 \mE  0.854  \mE    2.135    \mE \hp 1.662 \mE \hp 1.559 \mK
 \\ 
 \mK 0.5 \mE  1.143  \mE    2.287    \mE \hp 2.218 \mE \hp 1.960 \mK
 \\ 
 \mK 0.6 \mE  1.484  \mE    2.474    \mE \hp 3.119 \mE \hp 2.511 \mK
 \\ 
 \mK 0.7 \mE  1.903  \mE    2.719    \mE \hp 4.742 \mE \hp 3.296 \mK
 \\ 
 \mK 0.8 \mE  2.451  \mE    3.064    \mE \hp 8.203 \mE \hp 4.478 \mK
 \\ 
 \mK 0.9 \mE  3.255  \mE    3.617    \mE    18.328 \mE \hp 6.430 \mK
 \\ 
 \mK 1.0 \mE  4.893  \mE    4.893    \mE    94.295 \mE    10.489 \mK
 \\ 
 \noa
\end{matrix}
\]
\dumou
\noindent
We obtain the announced bounds when $\beta = 1/2$ and $\beta = 1$. One can
obviously refine the previous argument and show that
\[
     \varphi(s) 
 \le c(0) \ms1 
      \exp \Bigl( \int_0^1 \tau_0(\beta) \, \d \beta \Bigr) \e^{ - |s|}
 \le \frac 1 {\sqrt 2} \ms1 
      \exp \Bigl( \int_0^1 \tau_0(\beta) \, \d \beta \Bigr) \e^{ - |s|}.
\]
We may get in this way that $\varphi(s) < 9 \e^{-|s|}$. An exact estimate
could perhaps be obtained by an extreme point argument, as
in~\cite{FraGue}. Some numerical experiments suggest that for every 
$\beta \ge 0$, the maximum on $\R$ of 
$s \mapsto \e^{\beta |s|} \varphi(s)$, for $\varphi$ symmetric log-concave
probability density with variance $1$, occurs for one of the two examples
$\varphi_0$, $\varphi_1$ mentioned in~\eqref{Varphis}. The example
$\varphi_0(s)$ shows that $\e^{\beta |s|} \varphi(s)$ is unbounded when
$\beta > \sqrt 2$ and~$\sigma = 1$.
\end{proof}

 Our next estimate is so poor that it does not deserve to be given
explicitly.

\begin{cor}\label{NotDeserve} 
There exists a numerical value $\kappa > 0$ such that for every centered
log-concave probability density $\varphi$ on\/~$\R$ with variance
$\sigma^2 = 1$, one has
\[
 \forall s \in \R,
 \ms{14}
 \varphi(s) \le \kappa \ms1 \e^{- |s| / \kappa}.
\]
\end{cor}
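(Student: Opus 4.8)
The plan is to mimic the proof of Lemma~\ref{LExpoDecay}, replacing the symmetry hypothesis by the centering hypothesis together with the two-sided bounds of Corollary~\ref{EstimaLogConcCB}. Since $\sigma^2 = 1$ means $\int_\R s^2\varphi(s)\,\d s = 1$ (the mean being~$0$), Chebyshev's inequality gives $\int_{\{|s|<2\}}\varphi \ge 3/4$, so on the interval $[-2,2]$, of length~$4$, the density $\varphi$ must exceed its average on a set of positive measure. Hence there is a point $a \in (-2,2)$, which we may take in the interior of the support of~$\varphi$ (the boundary of the support being negligible), at which $\varphi(a) \ge c_0$ for an absolute positive constant $c_0$ (one may take $c_0 = 1/8$). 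Corollary~\ref{EstimaLogConcCB} also supplies the uniform upper bound $\|\varphi\|_\infty \le 2$.

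Next I would establish exponential decay to the right of~$a$. Fix a small $\beta > 0$, small enough that $2\beta \le c_0$, and compare $\varphi$ on $[a,+\infty)$ with the log-affine density $\psi(s) = \varphi(a)\,\e^{-\beta(s-a)}$. If the inequality $\varphi \ge \psi$ held on all of $[a,+\infty)$ we would get $1 \ge \int_a^{+\infty}\varphi \ge \int_a^{+\infty}\psi = \varphi(a)/\beta > 1$, a contradiction; so there is a first point $\tau_1 \ge a$ with $\varphi(\tau_1) = \psi(\tau_1)$ (and if $\varphi$ vanishes before meeting~$\psi$, take $\tau_1$ to be the right endpoint of the support, past which the desired bound is trivial). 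On $[a,\tau_1]$ one has $\varphi \ge \psi$, whence $\varphi(a)\bigl(1 - \e^{-\beta(\tau_1-a)}\bigr)/\beta \le \int_a^{\tau_1}\varphi \le 1$; with $\varphi(a) \ge c_0 \ge 2\beta$ this gives $\e^{-\beta(\tau_1-a)} \ge 1/2$, i.e. $\tau_1 - a \le L_0 := (\ln 2)/\beta$, an absolute constant. Now the chord of the concave function $\log\varphi$ joining $a$ to $\tau_1$ has slope $-\beta$, so by concavity every chord of $\log\varphi$ starting at $\tau_1$ and going to the right has slope $\le -\beta$; hence $\varphi(s) \le \varphi(\tau_1)\,\e^{-\beta(s-\tau_1)} = \varphi(a)\,\e^{-\beta(s-a)}$ for all $s \ge \tau_1$. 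Combining this with $\varphi \le \|\varphi\|_\infty$ on $[0,\tau_1]$ and with $\tau_1 \le |a| + L_0 \le 2 + L_0$, one gets $\varphi(s) \le \|\varphi\|_\infty\,\e^{\beta(2+L_0)}\,\e^{-\beta s}$ for every $s \ge 0$.

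Running the mirror-image argument on $(-\infty,a]$, now comparing with $\psi(s) = \varphi(a)\,\e^{\beta(s-a)}$, yields the same estimate with $\e^{-\beta s}$ replaced by $\e^{\beta s}$ for $s \le 0$. Putting the two halves together gives $\varphi(s) \le \kappa\,\e^{-|s|/\kappa}$ for all $s \in \R$, with $\kappa$ an absolute constant (for instance $\kappa = \max\bigl(2\,\e^{\beta(2+L_0)},\, 1/\beta\bigr)$). No step presents a genuine obstacle; the only point requiring a little attention is the choice of the anchor point~$a$, which must have both $\varphi(a)$ bounded below \emph{and} lie in the interior of the support, so that $\log\varphi$ is finite and concave near~$a$ and the chord-slope monotonicity applies. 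The measure estimate above makes such an~$a$ available, and — as the remark preceding the statement warns — the constant so obtained is far from optimal, which is precisely what lets this crude comparison go through painlessly.
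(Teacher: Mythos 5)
Your proof is correct, and it takes a genuinely different path from the one in the text. The paper derives lower bounds on $S_0^\pm$ and $\varphi(0)$ from the one-sided moment inequalities of Lemma~\ref{EstimaLogConcNew} and Corollary~\ref{EstimaLogConcCB}, then folds $\varphi$ on each side of the origin (resp.\ of the maximum point) into a symmetric log-concave density of controlled variance, and invokes the symmetric-case decay already proved in Lemma~\ref{LExpoDecay}. You bypass the symmetrization entirely: a Chebyshev--pigeonhole step locates an anchor $a\in(-2,2)$ with $\varphi(a)$ bounded below by an absolute constant, the easy half $\|\varphi\|_\infty\le 2$ of Corollary~\ref{EstimaLogConcCB} handles the bounded region near $a$, and the log-affine comparison and chord-monotonicity device --- exactly the internal engine of the paper's own proof of Lemma~\ref{LExpoDecay} --- is then applied directly to $\varphi$ on each side of $a$. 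The result is shorter, avoids the $S_p^\pm$ bookkeeping, and is more self-contained, at the cost of worse (but here irrelevant) constants. One phrasing to tidy: since $\varphi(a)=\psi(a)$, asking for ``a first point $\tau_1\ge a$ with $\varphi(\tau_1)=\psi(\tau_1)$'' literally returns $\tau_1=a$. What you mean, and what your subsequent estimates actually use, is that by log-concavity of $\log\varphi-\log\psi$ the set $\{\varphi\ge\psi\}\cap[a,+\infty)$ is an interval $[a,\tau_1]$, that the mass-one constraint forces $\tau_1<+\infty$, and that $\tau_1$ is its right endpoint; with that reading the integral estimate for $\tau_1-a$ and the chord argument go through exactly as you state.
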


\begin{proof}
Since $\varphi$ is centered, we know that
$
    \int_0^{+\infty} s \varphi(s) \, \d s
  = \int_{-\infty}^0 |s| \varphi(s) \, \d s
$, and we can thus set $S_1 := S_1^+(0) = S_1^-(0)$. For $p \ne 1$, let us
write $S^{\pm}_p$ instead of $S^{\pm}_p(0)$. We have that 
$S_2^+, S_2^- \le \sigma^2 = 1$. By Corollary~\ref{EstimaLogConcCB} and
Lemma~\ref{EstimaLogConcNew} with $p = 1$, applied on the intervals 
$[0, +\infty)$ and $(-\infty, 0]$, we get
\[
     2
 \ge \max_{s \ge 0} \varphi(s)
 \ge \frac {(S_0^+)^2} {2 \ms1 S_1} \up,
 \ms{18}
     2
 \ge \max_{s \le 0} \varphi(s)
 \ge \frac {(S_0^-)^2} {2 \ms1 S_1} \up.
\]
It follows that
$
     8 \ms1 S_1
 \ge (S_0^+)^2 + (S_0^-)^2
 \ge 1 / 2
$
so $S_1 \ge 1 / 16$. We also need a lower bound for $S_0^\pm$. Let
$\kappa_1 = 16$. By Cauchy--Schwarz we have
\[
     \kappa_1^{-2}
 \le S_1^2 
 \le S_0^- \ms1 S_2^-
 \le S_0^-,
 \ms{16}
     \kappa_1^{-2}
 \le S_1^2 
 \le S_0^+ \ms1 S_2^+
 \le S_0^+,
\]
hence $S_0^-, S_0^+ \ge \kappa_1^{-2}$. Suppose that the maximum of
$\varphi$ is reached at $s_0 \ge 0$. Then~$\varphi$ is non-decreasing on
$(-\infty, s_0]$ and by Lemma~\ref{EstimaLogConcNew} with $p = 2$ we get
\begin{equation}
     4
 \ge \varphi(0)^2
  =  \max_{s \le 0} \varphi(s)^2
 \ge \frac {(S_0^-)^3} {3 S_2^-}
 \ge \frac {\kappa_1^{-6} \ns7} 3 \ms2
  =: \kappa_2^{-2}.
 \label{EstimKappa}
\end{equation}
The symmetric probability density 
$\varphi_1(s) = (2 S_0^-)^{-1} \varphi(-|s|)$ on $\R$ is log-concave, has
variance $\sigma_1^2 = S_2^- / S_0^- \le \kappa_1^2$.
By~\eqref{EstimKappa}, we have $(S_0^-)^3 / S_2^- \le 12$. By
Lemma~\ref{LExpoDecay}, we know that
\[
     \varphi_1(s) 
 \le \frac {11} {\sigma_1} \e^{ - |s| / \sigma_1},
 \ms{18} \hbox{and} \ms{18}
     \varphi(s) 
 \le 22 \ms3 
      \Bigl( \frac {(S_0^-)^3} {S_2^-} \Bigr)^{1/2}
       \e^{ - |s| / \sigma_1}
 \le 77 \ms2 \e^{ - |s| / \kappa_1}
\]
for $s \le 0$. Let us pass to the positive side. We let 
$\widetilde \varphi$ be equal to $\varphi(s_0)$ on~$[0, s_0]$ and to
$\varphi$ on $[s_0, +\infty)$. Then 
$\widetilde S_0^+ \ge S_0^+ \ge \kappa_1^{-2}$ and since 
$\kappa_2^{-1} \le \varphi(0) \le \varphi(x) \le \varphi(s_0) \le 2$ when
$0 \le x \le s_0$, we have
$\widetilde \varphi \le 2 \ms1 \kappa_2 \ms1 \varphi$ on $[0, +\infty)$. The
symmetrized function $\varphi_1$ corresponding to $\widetilde \varphi$
satisfies $\sigma_1^2 = \widetilde S_2^+ / \widetilde S_0^+
 \le 2 \ms1 \kappa_1^2 \kappa_2$. Also, we know that
$(\widetilde S_0^+)^3 / \widetilde S_2^+
 \le 3 \ms2 \max \widetilde \varphi(s)^2 \le 12$. The rest is identical to
the negative case.
\end{proof}

 The next lemma is easy and classical. The \emph{(total) mass} of a real
valued\label{TotaMass} 
(thus bounded) measure $\mu$ on $(\Omega, \ca F)$ is defined by setting
$\|\mu\|_1 = \mu^+(\Omega) + \mu^-(\Omega) = |\mu| (\Omega)$, where
$\mu = \mu^+ - \mu^-$ is the Hahn\label{MuPlusMoins} 
decomposition of $\mu$ as difference of
two nonnegative measures, and $|\mu| = \mu^+ + \mu^-$. On the line or on
$\R^n$ we have
\[
   \|\mu\|_1
 = \sup \Bigl\{ 
         \Bigl| \int_{\R^n} \psi \, \d \mu \Bigr| : 
           \ms2 \psi \in \ca K(\R^n),
            \ms6 \|\psi\|_\infty \le 1
        \Bigr\},
\]
and when $\mu$ has a density $f$, one has that
$\|f(x) \, \d x\|_1 = \|f\|_{L^1(\R^n)}$.

\begin{lem}%
\label{PreBourg}
\begin{subequations}%
Let $\mu$ be a real valued measure on\/ $\R$ and let 
$  m(t) 
 = \widehat \mu(t)$ be its Fourier transform. For every $t \in \R$ we have
\begin{equation}
     |m(t)|
 \le \| \mu \|_1.
 \label{PreBourgA}
\end{equation}
If\/ $\d \mu(s) = \psi(s) \, \d s$ with $\psi$ integrable, then
$m = \widehat \mu = \widehat \psi$ and\/ $|m(t)| \le \|\psi\|_{L^1(\R^n)}$.

 Let us further assume that $\int_\R (1 + |s|) \, \d |\mu| (s) < +\infty$.
Then $m$ is $C^1$ on\/ $\R$ and
\[
   \ii \ms1 m'(t) 
 = 2 \pi \int_\R s \ms1 \e^{- 2\ii \pi s t} \, \d \mu(s),
\]
so $\ii \ms1 m'$ is the Fourier transform of the real valued measure\/ 
$2 \pi s \, \d \mu(s)$.

 Let $\nu$ be a real valued measure on\/ $\R$ and let
$\psi(s) = \nu \bigl( (-\infty, s] \bigr)$, for every $s \in \R$. The
measure $\nu$ is the derivative of $\psi$ in the sense of distributions and
assuming $\psi$ integrable, we have
\begin{equation}
   2 \ii \pi t \ms1 \widehat \psi(t)
 = 2 \ii \pi t \int_\R \psi(s) \e^{- 2\ii \pi s t} \, \d s
 = \int_\R \e^{- 2\ii \pi s t} \, \d \nu(s),
 \label{DeriFour}
\end{equation}
so\/ $2 \ii \pi t \ms1 \widehat \psi(t)$ is the Fourier transform of the
derivative $\nu$ of $\psi$.

 Let $j, k$ be nonnegative integers. Suppose that $\psi$ is of class
$C^{k-1}$ on the line, with a $k\ms1$th derivative $\psi^{(k)}$ in the
sense of distributions that is a bounded measure~$\nu_k$ on\/~$\R$, and
that\/ $\lim_{ |s| \rightarrow +\infty} \psi(s) = 0$,
$\int_\R |s|^j \, \d |\nu_k|(s) < +\infty$. Then $m$ is $C^j$ and
\begin{equation}
     (2 \pi |t|)^k \ms2 |m^{(j)}(t)| 
 \le (2 \pi)^j \ms3
      \bigl\| \bigl( s^j \psi(s) \bigr)^{(k)} \bigr\|_1.
\end{equation}
Consequently, for $t \ne 0$, we have that
\begin{align}
     |m^{(j)}(t)| 
 \le & \ms4 \frac { (2 \pi)^{j - k} \ns{18} } { |t|^k } \ms{12}
      \sum_{i = (k - j)^+}^{k-1}  \binom k i
       \frac {j \ms1 !} { (j + i - k) \ms1 ! }
        \int_\R |s|^{i + j - k} \ms1 |\psi^{(i)}(s)| \, \d s
 \label{EstimaDeriv}
 \\ & \ms8 + \frac { (2 \pi)^{j - k} \ns{18} } { |t|^k } \ms{12}
         \int_\R |s|^j \, \d |\nu_k|(s).
 \notag
\end{align}
\end{subequations}
\end{lem}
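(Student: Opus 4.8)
The plan is to prove the five displayed statements of Lemma~\ref{PreBourg} one after another, each being a standard exercise in Fourier analysis of measures on the line; the only genuine work is the bookkeeping in the last two formulas. First I would dispatch~\eqref{PreBourgA}: for every $t\in\R$ one has $|m(t)|=\bigl|\int_\R \e^{-2\ii\pi st}\,\d\mu(s)\bigr|\le\int_\R\d|\mu|(s)=\|\mu\|_1$, using $|\e^{-2\ii\pi st}|=1$ and the Hahn decomposition $\mu=\mu^+-\mu^-$; the density case is the special instance $\d|\mu|(s)=|\psi(s)|\,\d s$, giving $|m(t)|\le\|\psi\|_{L^1(\R^n)}$.

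Next I would handle differentiability of $m$ under the moment hypothesis $\int_\R(1+|s|)\,\d|\mu|(s)<+\infty$: differentiation under the integral sign is justified because $|\partial_t\e^{-2\ii\pi st}|=2\pi|s|$ is dominated by the integrable function $2\pi|s|$ with respect to $|\mu|$, so $m$ is $C^1$ and $m'(t)=-2\ii\pi\int_\R s\,\e^{-2\ii\pi st}\,\d\mu(s)$, i.e.\ $\ii\,m'(t)=2\pi\int_\R s\,\e^{-2\ii\pi st}\,\d\mu(s)$, which is exactly the Fourier transform of the bounded real measure $2\pi s\,\d\mu(s)$. For~\eqref{DeriFour}, with $\psi(s)=\nu((-\infty,s])$ integrable, the statement that $\nu=\psi'$ in the distributional sense is immediate from the definition of $\psi$ (a distribution function); then integration by parts — made rigorous by the fact that $\psi$ integrable forces $\psi(s)\to 0$ as $|s|\to+\infty$ along the relevant sequence, since $\psi$ is monotone-like on the tails up to the total mass of $\nu$ — gives $\int_\R\e^{-2\ii\pi st}\,\d\nu(s)=2\ii\pi t\int_\R\psi(s)\e^{-2\ii\pi st}\,\d s$, which is the claim.

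For the penultimate display I would iterate: applying~\eqref{DeriFour} $k$ times (legitimate because $\psi\in C^{k-1}$ with $\psi^{(k)}=\nu_k$ a bounded measure and $\psi(s)\to0$, so all boundary terms at each stage vanish) yields $(2\ii\pi t)^k\widehat\psi(t)=\widehat{\nu_k}(t)$, hence $(2\ii\pi t)^k m(t)$ is the Fourier transform of the measure $\nu_k$; the moment assumption $\int_\R|s|^j\,\d|\nu_k|(s)<+\infty$ then lets me differentiate this identity $j$ times as in the $C^1$ step above, giving $\bigl(2\pi|t|\bigr)^k|m^{(j)}(t)|\le(2\pi)^j\bigl\|(s^j\psi(s))^{(k)}\bigr\|_1$ after moving the $s^j$ factor inside and recognizing $(s^j\psi(s))^{(k)}$ as the $k$-th distributional derivative of $s^j\psi(s)$. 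Finally,~\eqref{EstimaDeriv} for $t\ne0$ follows by expanding $(s^j\psi(s))^{(k)}$ with the Leibniz rule: $(s^j\psi)^{(k)}=\sum_{i}\binom{k}{i}(s^j)^{(k-i)}\psi^{(i)}$, where $(s^j)^{(k-i)}=\frac{j!}{(j+i-k)!}s^{j+i-k}$ is nonzero only for $k-i\le j$, i.e.\ $i\ge(k-j)^+$; separating the top term $i=k$ (which contributes $s^j\psi^{(k)}=s^j\nu_k$, giving the $\int|s|^j\,\d|\nu_k|$ summand) from the terms $(k-j)^+\le i\le k-1$ (which contribute $\int|s|^{i+j-k}|\psi^{(i)}(s)|\,\d s$), dividing by $(2\pi|t|)^k$, and collecting constants gives the stated inequality. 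The main obstacle is purely clerical: keeping the Leibniz indices, the factorials, and the powers of $2\pi$ consistent, and making sure the vanishing-at-infinity hypothesis is invoked at every integration by parts so that no boundary term is silently dropped.
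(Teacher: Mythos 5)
Your proof is correct in substance and follows exactly the route the paper expects: the paper dispatches~\eqref{PreBourgA} as obvious, gets the $C^1$ statement by dominated convergence, proves~\eqref{DeriFour} by integration by parts (Fubini when $\nu$ has no density), and explicitly ``leaves the verification of~\eqref{EstimaDeriv} to the reader''. Two points, though, should be tightened.

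First, the narrative for the penultimate display is in the wrong order. Differentiating the identity $(2 \ii \pi t)^k m(t) = \widehat{\nu_k}(t)$ $j$ times produces Leibniz cross-terms in $t$ and does not isolate $m^{(j)}(t)$; the clean route is the reverse: first $m^{(j)}(t) = (-2 \ii \pi)^j \ms1 \widehat{s^j \psi(s)}(t)$ (differentiation under the integral), then $k$ applications of~\eqref{DeriFour} to the function $s \mapsto s^j \psi(s)$, giving $(2 \ii \pi t)^k m^{(j)}(t) = (-2 \ii \pi)^j \ms1 \widehat{(s^j \psi)^{(k)}}(t)$ and hence the stated bound by~\eqref{PreBourgA}. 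You gesture at this (``moving the $s^j$ factor inside''), but the route you announce would not directly yield the displayed inequality.

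Second — and this is what the paper singles out as the only genuine subtlety — you do not check that the quantities appearing in~\eqref{EstimaDeriv} and at each intermediate stage are finite, nor that the boundary terms in the $k$ integrations by parts on $s^j \psi$ actually vanish. The hypotheses hand you only $\int_\R |s|^j \, \d |\nu_k|(s) < +\infty$ and $\psi(s) \to 0$; you must still derive that $\int_\R |s|^{i+j-k} \ms1 |\psi^{(i)}(s)| \, \d s < +\infty$ for $(k-j)^+ \le i < k$ and that $(s^j\psi(s))^{(\ell)} \to 0$ at infinity for $\ell < k$. The paper's proof supplies this via repeated use of~\eqref{L5.9} (Lemma~\ref{Repeti}) to descend from $\psi^{(k)}$ to lower derivatives, together with the observation that if $g^{(\ell+1)}$ is integrable on a half-line then $g^{(\ell)}$ has a limit at infinity, and $g \to 0$ forces that limit to be $0$ (Taylor's formula). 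Your write-up should incorporate this chain of reductions; without it, the final bound is not known to be finite and the integrations by parts are not justified.
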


 In the line above, one can replace $\int_\R |s|^j \, \d |\nu_k|(s)$ with
$\int_\R |s|^j \ms1 |\psi^{(k)}(s)| \, \d s$, when~$\psi$ admits a
derivative $\psi^{(k)}$ and $\d \nu_k(x) = \psi^{(k)}(x) \, \d x$.

\begin{proof}
The first inequality~\eqref{PreBourgA} is obvious. Assuming that 
$\int_\R |s| \ms1 \, \d |\mu|(s)$ is finite, we write
\[
 m(t) = \int_\R \e^{- 2 \ii \pi s t} \, \d \mu(s)
     := \int_\R \e^{- 2 \ii \pi s t} \, \d \mu^+(s)
         - \int_\R \e^{- 2 \ii \pi s t} \, \d \mu^-(s),
\]
and we obtain by the dominated convergence theorem that
\[
   m' (t) 
 = - 2 \ii \pi \int_\R s \ms1 \e^{- 2 \ii \pi s t} \, \d \mu(s).
\]
If $\nu$ in~\eqref{DeriFour} has the form $\d \nu(x) = \psi'(x) \, \d x$
with $\psi'$ a true derivative, we use integration by parts, otherwise we
use Fubini's theorem for $\nu^+$ and $\nu^-$. We get
\[
   2 \ii \pi t \int_\R \psi(t) \e^{- 2\ii \pi s t} \, \d s
 = \int_\R \e^{- 2\ii \pi s t} \, \d \nu(s).
\]
The verification of~\eqref{EstimaDeriv} is left to the reader. Notice that
by~\eqref{L5.9}, the hypotheses imply that
$ \int_\R |s|^{i + j - k} \ms1 |\psi^{(i)}(s)| \, \d s < +\infty$ when 
$(k - j)^+ \le i < k$. Indeed, if $g^{(\ell+1)}$ is integrable 
on~$[0, +\infty)$, then $g^{(\ell)}$ tends to a limit $L$ at infinity
and if $g$ tends to $0$ at infinity, it follows that $L = 0$, for example
by the Taylor formula.
\end{proof}

 The next lemma is straightforward.

\begin{lem}%
\label{Repeti}
Let $\nu$ be a nonnegative measure on\/ $(0, +\infty)$ and $\alpha > 0$.
One has
\[
   \alpha \int_0^{+\infty} s^{\alpha - 1}
    \nu \bigl( [s, +\infty) \bigr) \, \d s
 = \int_0^{+\infty} s^\alpha \, \d \nu(s).
\]
Let $F$ be a function on\/ $(0, +\infty)$ such that\/
$|F(s)| \le \int_s^{+\infty} \, \d \nu(s)$ for $s > 0$. One has
\[
     \alpha \int_0^{+\infty} s^{\alpha - 1} |F(s)| \, \d s
 \le \int_0^{+\infty} s^\alpha \, \d \nu(s).
\]
Suppose that the function $g$ is differentiable on\/ $\R$, with\/
$\lim_{s \rightarrow \pm \infty} g(s) = 0$ and $g'$ integrable on the line.
It follows that
\begin{equation}
     \alpha \int_\R |s|^{\alpha - 1} \ms1 |g(s)| \, \d s
 \le \int_\R |s|^\alpha \ms1 |g'(s)| \, \d s.
 \label{L5.9}
\end{equation}
If in addition $g$ is even and non-increasing on\/ $[0, +\infty)$, one has
\[
   \int_\R |s|^\alpha \ms1 |g'(s)| \, \d s
 = \alpha \int_\R |s|^{\alpha - 1} \ms1 g(s) \, \d s,
 \ms{10} \hbox{and} \ms{10}
   \int_\R |g'(s)| \, \d s
 = 2 \ms1 g(0).
\]
\end{lem}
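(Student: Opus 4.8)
The statement is a bundle of elementary identities and inequalities about integration against a nonnegative measure, and the whole thing rests on one device: Fubini's theorem applied to the identity $s^\alpha = \alpha\int_0^s r^{\alpha-1}\,\d r$. I would prove the four assertions in the order given, since each feeds the next.

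First, for the opening identity I would write $s^\alpha = \alpha\int_0^s r^{\alpha-1}\,\d r$ for $s>0$ and substitute into $\int_0^{+\infty} s^\alpha\,\d\nu(s)$. By Tonelli (everything nonnegative) I may swap the order of integration: the region $\{0<r<s\}$ seen from the $r$-variable gives $\alpha\int_0^{+\infty} r^{\alpha-1}\bigl(\int_{(r,+\infty)}\d\nu(s)\bigr)\,\d r$, which is the claimed $\alpha\int_0^{+\infty} s^{\alpha-1}\nu([s,+\infty))\,\d s$ after renaming $r$ to $s$ (the difference between $(r,+\infty)$ and $[r,+\infty)$ is a null set for the outer $\d r$ integral, or one simply notes $\nu([s,+\infty))$ differs from $\nu((s,+\infty))$ on an at most countable set). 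Second, for the bound on $F$: apply the first identity to $\nu$, then use $|F(s)|\le\nu([s,+\infty))$ pointwise inside $\alpha\int_0^{+\infty}s^{\alpha-1}|F(s)|\,\d s$ to dominate it by $\alpha\int_0^{+\infty}s^{\alpha-1}\nu([s,+\infty))\,\d s=\int_0^{+\infty}s^\alpha\,\d\nu(s)$.

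For \eqref{L5.9}, I would reduce it to the second assertion. Since $g$ is differentiable with $g'$ integrable and $g(s)\to0$ as $s\to+\infty$, the fundamental theorem of calculus gives $g(s) = -\int_s^{+\infty} g'(r)\,\d r$ for $s\ge0$, hence $|g(s)|\le\int_s^{+\infty}|g'(r)|\,\d r$; likewise on $(-\infty,0]$ using $g(s)=\int_{-\infty}^s g'(r)\,\d r$. Writing $\d\nu(s)=|g'(s)|\,\d s$ on $(0,+\infty)$ and applying the second assertion (with the $F$ there taken to be the restriction of $g$ to $(0,+\infty)$) handles the $s>0$ half; the $s<0$ half is symmetric after the change of variable $s\mapsto-s$, and adding the two halves yields $\alpha\int_\R|s|^{\alpha-1}|g(s)|\,\d s\le\int_\R|s|^\alpha|g'(s)|\,\d s$. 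Finally, when $g$ is even and non-increasing on $[0,+\infty)$, one has $g\ge0$ (as $g(s)\to0$) and $g'\le0$ on $(0,+\infty)$, so $|g'(s)|=-g'(s)$ there; integrating by parts on $[0,+\infty)$, with the boundary terms vanishing ($s^\alpha g(s)\to0$ at $0$ trivially and at $+\infty$ because $\int|s|^\alpha|g'|<\infty$ forces $s^\alpha g(s)\to0$), turns $\int_0^{+\infty}s^\alpha(-g'(s))\,\d s$ into $\alpha\int_0^{+\infty}s^{\alpha-1}g(s)\,\d s$, and doubling by evenness gives the displayed identity; the last formula $\int_\R|g'(s)|\,\d s=2g(0)$ is just $\int_0^{+\infty}(-g'(s))\,\d s=g(0)$ doubled.

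There is no real obstacle here: the only points requiring a line of care are the justification of the Fubini swap (automatic for nonnegative integrands) and the vanishing of the boundary term $s^\alpha g(s)$ at infinity in the integration by parts, which I would dispatch by the standard argument that a non-increasing nonnegative $g$ with $\int_0^{+\infty} s^{\alpha-1}g(s)\,\d s<\infty$ satisfies $s^\alpha g(s)\to0$ — and that finiteness is exactly what \eqref{L5.9} already delivered from the hypothesis $\int|s|^\alpha|g'|<\infty$. This is why I sequence the even/non-increasing case last.
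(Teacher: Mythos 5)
Your proof is correct and follows the paper's own route, which is only sketched there: Tonelli/Fubini for the first identity, pointwise dominance for the second, the substitution $\d\nu(s)=|g'(s)|\,\d s$ together with the fundamental theorem of calculus for~\eqref{L5.9}, and integration by parts for the even, non-increasing case. One small caution on the last step: the finiteness of $\int_\R |s|^\alpha|g'(s)|\,\d s$ is not among the stated hypotheses (only $\int|g'|<+\infty$ is), so the boundary-term argument $s^\alpha g(s)\to 0$ should be invoked only when that weighted integral is finite; when it is $+\infty$, the one-sided inequality from the signed boundary term, $\int_0^T s^\alpha\bigl(-g'(s)\bigr)\,\d s \le \alpha\int_0^T s^{\alpha-1} g(s)\,\d s$, already forces both sides of the claimed identity to be $+\infty$, and the case distinction closes the argument.
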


\begin{proof}
The first assertion is an immediate consequence of Fubini, because
\[
   \alpha \int_0^{+\infty} s^{\alpha - 1}
    \nu \bigl( [s, +\infty) \bigr) \, \d s
 = \alpha \dint \gr 1_{ \{0 < s < t\} } \ms2 s^{\alpha - 1} 
    \, \d \nu(t) \ms{1.5} \d s
 = \int_0^{+\infty} t^\alpha \, \d \nu(t),
\]
with integrals finite or not. The remaining facts are left to the reader.
For~\eqref{L5.9}, use $\d \nu(s) = |g'(s)| \, \d s$.
\end{proof}

 We arrive to the main result of this section.
 
\def\CiteEstimaFouriC{\cite[\S$\ms2$4]{BourgainL2}}
\begin{prp}[Bourgain~\CiteEstimaFouriC]%
\label{EstimaFouriC}
Let $\Klc$ be a symmetric log-concave probability density on\/ $\R^n$,
isotropic with variance $\sigma^2$. Let $\mlc$ be the Fourier transform
of~$\Klc$. For every $\xi \in \R^n$ one has that
\begin{subequations}\label{EstimaBourG}%
\begin{equation}
     \pi \sqrt 2 \ms2 \sigma \ms1 |\xi| \ms2 |\mlc(\xi)| 
 \le 1,
 \ms{16}
     |1 - \mlc(\xi)| 
 \le 2 \pi \ms1 \sigma \ms1 |\xi|,
 \ms{16}
     |\xi \ps \nabla \mlc(\xi)| 
 \le 2.
 \label{EstimaBour} \tag{\ref{EstimaBourG}.$\gr B$}
\end{equation}
\end{subequations}
The middle inequality follows from the fact that for every 
$\theta \in S^{n-1}$, one has
\[
     |\theta \ps \nabla \mlc(t \theta)| 
 \le 2 \ms1 \pi \ms1 \sigma,
 \quad t \in \R.
\]
\end{prp}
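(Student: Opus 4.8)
The plan is to reduce everything to the one-dimensional marginal $\varphi_\theta = \varphi_{\theta, \Klc}$, which by Pr\'ekopa--Leindler is a symmetric log-concave probability density on $\R$ with variance $\sigma^2$ (this is exactly the setup of Lemma~\ref{EstimaLogConcNew} and its corollaries). By~\eqref{FourierPhi}, for $\xi\neq0$ and $\theta=|\xi|^{-1}\xi$, the function $\R\ni u\mapsto \mlc(u\xi)$ is, up to the change of variable $v=u|\xi|$, the one-dimensional Fourier transform of $\varphi_\theta$; equivalently $\mlc(\xi)=\widehat{\varphi_\theta}(|\xi|)$ in the $e^{-2\ii\pi st}$ normalization. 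So all three estimates become statements about $\widehat{\varphi_\theta}$ and its derivative at the real point $t=|\xi|$.

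First I would prove the first inequality. Write $\mlc(\xi)=\int_\R \varphi_\theta(s)e^{-2\ii\pi s|\xi|}\,\d s$. Split $\R$ into intervals of length $1/(2|\xi|)$, or more cleanly pair up $s$ with $s+1/(2|\xi|)$ so that the exponentials differ by a sign: $e^{-2\ii\pi(s+1/(2|\xi|))|\xi|}=-e^{-2\ii\pi s|\xi|}$. This gives $|\mlc(\xi)|\le \int_\R |\varphi_\theta(s)-\varphi_\theta(s+1/(2|\xi|))|\,\d s$, and by unimodality of the even log-concave density $\varphi_\theta$ this total-variation-type integral equals $2\varphi_\theta(0)\cdot$(something) — more carefully, $|\mlc(\xi)|\le \frac1{2|\xi|}\cdot 2\max\varphi_\theta = \varphi_\theta(0)/|\xi|$. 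Then Corollary~\ref{EstimaLogConcC} gives $\varphi_\theta(0)\le 1/(\sigma\sqrt2)$, so $|\mlc(\xi)|\le 1/(\pi\sqrt2\,\sigma|\xi|)$ after tracking the constant; rearranging yields $\pi\sqrt2\,\sigma|\xi|\,|\mlc(\xi)|\le1$. (One must be a little careful with the folding estimate to land the constant $\pi\sqrt2$ rather than a worse one; the clean way is $|\widehat\varphi_\theta(t)|=|\int\varphi_\theta(s)\sin(\ldots)|$-type bounds, or directly the inequality $|\widehat\varphi(t)|\le \|\varphi\|_\infty/(2|t|)$ for a unimodal density, combined with $\|\varphi_\theta\|_\infty\le 1/(\sigma\sqrt2)$.)

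Next, the middle inequality: I would deduce it from the gradient bound $|\theta\ps\nabla\mlc(t\theta)|\le 2\pi\sigma$ by integrating, since $\mlc(0)=1$ and $|1-\mlc(\xi)|=|\int_0^{|\xi|}\frac{d}{dt}\mlc(t\theta)\,\d t|\le 2\pi\sigma|\xi|$. For that directional-derivative bound, by Lemma~\ref{PreBourg} (the $j=1$, $k=0$ case) one has $\ii\,\frac{d}{dt}\widehat{\varphi_\theta}(t)=2\pi\int_\R s\,\varphi_\theta(s)e^{-2\ii\pi st}\,\d s$, whence $|\theta\ps\nabla\mlc(t\theta)|=2\pi|\int_\R s\,\varphi_\theta(s)e^{-2\ii\pi st}\,\d s|\le 2\pi\int_\R|s|\varphi_\theta(s)\,\d s=2\pi\cdot\E|s|$, and by Cauchy--Schwarz $\E|s|\le(\E s^2)^{1/2}=\sigma$. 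This gives the displayed bound, hence the middle inequality. Finally the third inequality: $\xi\ps\nabla\mlc(\xi)=|\xi|\cdot\theta\ps\nabla\mlc(|\xi|\theta)$, and again $|\xi|\,|\theta\ps\nabla\mlc(|\xi|\theta)|=2\pi|\xi|\,|\int s\,\varphi_\theta(s)e^{-2\ii\pi s|\xi|}\,\d s|$. Here the crude bound $2\pi|\xi|\sigma$ is not enough since it is unbounded, so instead I integrate by parts: $\int_\R s\,\varphi_\theta(s)e^{-2\ii\pi s|\xi|}\,\d s$ relates to $\widehat{(s\varphi_\theta)}$, and since $s\varphi_\theta(s)=-\frac{\d}{\d s}\Phi(s)$ for a suitable primitive, or better, write $\xi\ps\nabla\mlc(\xi)=\int_\R s\varphi_\theta'(s)\cdot(\text{const})$; the cleanest route is the identity $\xi\ps\nabla m(\xi)=m^{*}(\xi)$ with $m^{*}$ a Laplace-type/derivative multiplier, giving $|\xi\ps\nabla\mlc(\xi)|\le \int_\R|s\,\varphi_\theta'(s)|\,\d s$, and since $\varphi_\theta$ is even and unimodal, $\int_\R|s\,\varphi_\theta'(s)|\,\d s = \int_\R\varphi_\theta(s)\,\d s + [\,s\varphi_\theta\,]$ boundary terms $=1+0=1\le2$ by Lemma~\ref{Repeti} applied with $\alpha=1$. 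Actually the sharp computation via Lemma~\ref{Repeti}: $\int_\R|s||\varphi_\theta'(s)|\,\d s=\int_\R\varphi_\theta(s)\,\d s=1$, comfortably $\le 2$.

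The main obstacle is getting the \emph{constants} exactly as stated (the $\pi\sqrt2$ in the first inequality and the clean $2$ in the third), which forces one to use the sharp unimodality facts — $\|\varphi_\theta\|_\infty\le 1/(\sigma\sqrt2)$ from Corollary~\ref{EstimaLogConcC}, and the exact identity $\int|s||\varphi'|=1$ from Lemma~\ref{Repeti} — rather than lazy triangle-inequality estimates. The Fourier analysis itself is routine: everything is a one-dimensional statement about $\widehat{\varphi_\theta}$ obtained by slicing, and the only genuine input beyond elementary calculus is log-concavity, entering solely through Pr\'ekopa--Leindler (to know $\varphi_\theta$ is an even log-concave density) and through the variance bounds on its $L^\infty$ norm.
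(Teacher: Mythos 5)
Your reduction to the one-dimensional marginal $\varphi_\theta$ and your treatment of the middle inequality are exactly the paper's approach. But your proofs of the first and third inequalities both have real gaps.

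For the first inequality, the folding argument, done carefully, gives
$|\widehat{\varphi_\theta}(t)| \le \tfrac12\int_\R |\varphi_\theta(s)-\varphi_\theta(s+h)|\,\d s$ with $h=1/(2|t|)$, and the $L^1$-shift bound $\int|\varphi_\theta(s)-\varphi_\theta(s+h)|\,\d s \le h\|\varphi'_\theta\|_1 = 2 h \varphi_\theta(0)$ then yields only $|\widehat{\varphi_\theta}(t)| \le \varphi_\theta(0)/(2|t|)$. Your alternative suggestion $|\widehat\varphi(t)|\le\|\varphi\|_\infty/(2|t|)$ is the same bound. Combined with $\varphi_\theta(0)\le 1/(\sigma\sqrt2)$ this gives $2\sqrt2\,\sigma|\xi|\,|\mlc(\xi)|\le 1$, \emph{not} $\pi\sqrt2\,\sigma|\xi|\,|\mlc(\xi)|\le 1$, and since $\pi>2$ the stated inequality is genuinely stronger. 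The paper obtains the $\pi$ by integrating by parts once: $|\widehat{\varphi_\theta}(t)| \le (2\pi|t|)^{-1}\|\varphi'_\theta\|_1$ (this is the $j=0$, $k=1$ case of Lemma~\ref{PreBourg}), and then $\|\varphi'_\theta\|_1 = 2\varphi_\theta(0)$ for an even unimodal density by Lemma~\ref{Repeti}. Integration by parts improves the folding bound by a factor of $\pi/2$, which is exactly what the statement requires.

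For the third inequality, you write $\xi\ps\nabla\mlc(\xi)$ as the one-dimensional Fourier transform of $s\varphi'_\theta(s)$, and then use $\int_\R|s\,\varphi'_\theta(s)|\,\d s = 1$. That identification drops a term. Integrating $-2\ii\pi|\xi|\int s\,\varphi_\theta(s)\,\e^{-2\ii\pi s|\xi|}\,\d s$ by parts yields $-\int_\R (s\,\varphi_\theta(s))'\,\e^{-2\ii\pi s|\xi|}\,\d s$, and the product rule gives $(s\varphi_\theta)'=\varphi_\theta + s\varphi'_\theta$, so the correct bound is $|\xi\ps\nabla\mlc(\xi)|\le \int_\R\varphi_\theta + \int_\R|s\varphi'_\theta| = 1 + 1 = 2$. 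Your claimed bound $\le 1$ is actually false: for the uniform density $\varphi_1 = (2\sqrt3)^{-1}\gr1_{[-\sqrt3,\sqrt3]}$ from~\eqref{Varphis}, the total variation of $s\mapsto s\varphi_1(s)$ is exactly~$2$ (the two Dirac masses of $\varphi'_1$ each contribute~$1/2$, plus $\int\varphi_1 = 1$ from the interior). So the constant $2$ in the statement is not just safe slack — it is needed.

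The rest (Pr\'ekopa--Leindler to know $\varphi_\theta$ is a symmetric log-concave probability density with variance $\sigma^2$, Corollary~\ref{EstimaLogConcC} for $\varphi_\theta(0)\le 1/(\sigma\sqrt2)$, Cauchy--Schwarz for the gradient bound) is correct and matches the paper.
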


\begin{rem} 
These inequalities are valid for $m_C$, when $C$ is a symmetric convex body,
isotropic and normalized by variance. The case of convex bodies is the one
given by Bourgain, but the proof is the same in the log-concave case. 
\end{rem}

\begin{proof}
We have seen in~\eqref{VarphiTheta} that for $\theta \in S^{n-1}$ and $t$
real, one can write
\[
   \mlc(t \theta)
 = \int_{\R} \varphi_\theta(s) \e^{ - 2 \ii \pi s t} \, \d s,
\]
where $\varphi_\theta$ is obtained by integrating $\Klc$ on affine
hyperplanes parallel to~$\theta^\perp$. It is enough to prove the case
$\sigma = 1$. We know that $\varphi_\theta$ is log-concave according to
Pr\'ekopa--Leindler, it is even, has integral~$1$ and variance~$1$ by
hypothesis. By Lemma~\ref{LExpoDecay}, one has that
$
     \varphi_\theta(s) 
 \le 2 \ms1 \e^{- |s| / 2}$
for every $s \in \R$, but the desired estimates do not depend on this
exponential decay, which ensures however absolute convergence for the
integrals that follow. For every~$t$, by~\eqref{EstimaDeriv} with $j=0$, 
$k = 1$ and since $\varphi_\theta$ is even and decreasing on 
$(0, +\infty)$, we have using Lemma~\ref{Repeti} that
\[
     |\mlc(t \theta)|
  =  \Bigl| \int_\R \varphi_\theta(s) \e^{- 2 \ii \pi s t} \, \d s \Bigr|
 \le \frac 1 {2 \pi |t|} 
       \int_\R |\varphi'_\theta(s)| \, \d s 
  =  \frac {\varphi_\theta(0)} {\pi |t|} \up.
\]
The function $\varphi_\theta$ has variance $1$ by our normalization
assumption, and according to Corollary~\ref{EstimaLogConcC} we have the
upper bound
$
     \varphi_\theta(0)
 \le 1 / \sqrt 2
$.
Writing $\xi = |\xi| \theta$, it follows that
$\pi \sqrt 2 \ms2 |\xi| \ms2 |\mlc(\xi)| \le 1$ for every $\xi$ in~$\R^n$.
\dumou

 Notice that our writing is not correct, because $\varphi_\theta$ might be
discontinuous at the ends of its support, so that $\varphi'_\theta$ is a
measure in that case, with two Dirac masses at the end points of the
support. This happens for example with $\varphi_{\theta, C}$ when $C$ is
polyhedral and $\theta$ orthogonal to a facet. We leave the easy changes
to the reader.
\dumou

 Given $\theta \in S^{n-1}$, the derivative of
$t \mapsto \mlc(t \theta)$ is expressed by
\[
   \theta \ps \nabla \mlc(t \theta)
 = \int_\R (- 2 \ii \pi s) \varphi_\theta(s) \e^{ - 2 \ii \pi s t} \, \d s,
\]
and
\[
     |\theta \ps \nabla \mlc(t \theta)|
 \le 2 \pi \int_\R |s| \ms1 \varphi_\theta(s) \, \d s 
 \le 2 \pi \Bigl( \int_\R s^2 \ms1 \varphi_\theta(s) \, \d s \Bigr)^{1/2}
  =  2 \pi,
\]
hence
$
     |1 - \mlc(\xi)| 
  =  |\mlc(0) - \mlc(|\xi| \ms1 \theta)| 
 \le {2 \pi} \ms1 |\xi|
$. 
We see also that
\[
   t \ms1 \theta \ps \nabla \mlc(t \theta)
 = \int_\R (- 2 \ii \pi t) s \varphi_\theta(s) \e^{ - 2 \ii \pi s t} \, \d s
 = - \int_\R 
      \bigl( s \varphi_\theta(s) \bigr)' \e^{ - 2 \ii \pi s t} \, \d s.
\]
We estimate the two parts coming from
$\bigl( s \varphi_\theta(s) \bigr)'$, first
\[
     \Bigl| \int_\R 
      \varphi_\theta(s) \e^{ - 2 \ii \pi s t} \, \d s \Bigr|
 \le \int_\R \varphi_\theta(s) \, \d s
  =  1,
\]
and as $\varphi_\theta$ is even and non-increasing on $[0, +\infty)$, we
have by Lemma~\ref{Repeti} that
\[
     \Bigl| \int_\R 
      s \varphi'_\theta(s) \e^{ - 2 \ii \pi s t} \, \d s \Bigr|
 \le \int_\R |s \varphi'_\theta(s)| \, \d s
  = \int_\R \varphi_\theta(s) \, \d s
  = 1.
\]
We conclude that $|t \ms1 \theta \ps \nabla \mlc(t \ms1 \theta)| \le 2$ and
get~$|\xi \ps \nabla \mlc(\xi)| \le 2$ for every $\xi$.
\end{proof}

\begin{lem}%
\label{LEstimatesForC}
Let $\Klc$ be an even log-concave probability density on\/ $\R^n$,
normalized by variance, and $\mlc$ its Fourier transform. For every 
$\theta \in S^{n-1}$ one has
\[
     \Bigl| \frac {\d^j} {\d t^j \ns2} \ms2 
       \mlc(t \theta) \Bigr|
 \le \delta_{j, c} \ms2 
      \frac 1 
            {1 + 2 \pi \ms1 |t|} \ms1 \up,
 \quad j \ge 0, \ms7 t \in \R,
\]
where $\delta_{j, c}$ is a universal constant, estimated
at\/~\eqref{deltas}.
\end{lem}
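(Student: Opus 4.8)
The plan is to reduce everything to one‑dimensional estimates for the even log‑concave probability density $\varphi_\theta = \varphi_{\theta,\Klc}$ obtained by integrating $\Klc$ over the hyperplanes parallel to $\theta^\perp$, exactly as in Proposition~\ref{EstimaFouriC}. Recall that $\mlc(t\theta)=\int_\R \varphi_\theta(s)\e^{-2\ii\pi s t}\,\d s$, that $\varphi_\theta$ is even, log‑concave, has integral $1$, and (by the normalization) variance $\sigma^2=1$; moreover, by Lemma~\ref{LExpoDecay}, $\varphi_\theta(s)\le 2\ms1\e^{-|s|/2}$, which guarantees absolute convergence of all the integrals below, although the final bound will not depend on this decay. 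So it suffices to prove: for each integer $j\ge 0$ there is a universal constant $\delta_{j,c}$ with
\[
 \Bigl| \frac{\d^j}{\d t^j}\ms2 \mlc(t\theta) \Bigr|
 \le \delta_{j,c}\ms2 \frac{1}{1+2\pi|t|}\ns1 ,
 \quad t\in\R .
\]

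First I would handle the regime $|t|\le 1$. There differentiating under the integral gives $\bigl|(\d^j/\d t^j)\mlc(t\theta)\bigr|\le (2\pi)^j\int_\R |s|^j\varphi_\theta(s)\,\d s$. The $j$‑th absolute moment of an even log‑concave density of variance $1$ is bounded by a universal constant $\gamma_j$ — this follows from Lemma~\ref{LExpoDecay} ($\varphi_\theta(s)\le 2\e^{-|s|/2}$, hence $\int|s|^j\varphi_\theta\le 2\int_\R|s|^j\e^{-|s|/2}\,\d s = 2^{j+2}\,j!$), or more sharply from the comparison with $\varphi_0,\varphi_1$ in~\eqref{Varphis}. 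So on $|t|\le 1$ we get $\bigl|(\d^j/\d t^j)\mlc(t\theta)\bigr|\le (2\pi)^j\gamma_j \le (2\pi)^j\gamma_j\cdot\frac{1+2\pi}{1+2\pi|t|}$, giving a contribution of the right shape.

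For $|t|\ge 1$ I would use integration by parts once, as in the proof of Proposition~\ref{EstimaFouriC} and the estimate~\eqref{EstimaDeriv} of Lemma~\ref{PreBourg} with $k=1$. Writing $g_j(s)=s^j\varphi_\theta(s)$, one has $g_j^{(0)}\to 0$ at $\pm\infty$ and $g_j'$ is an integrable function plus (if $\varphi_\theta$ is discontinuous at the endpoints of its support) at most two Dirac masses, so
\[
 \frac{\d^j}{\d t^j}\ms2 \mlc(t\theta)
 = \int_\R g_j(s)\,(-2\ii\pi s)^{\,?}\dots
\]
— more precisely, apply~\eqref{EstimaDeriv} with $j$ as the order of derivative and $k=1$: then $2\pi|t|\,\bigl|(\d^j/\d t^j)\mlc(t\theta)\bigr| \le (2\pi)^j\ms1 \bigl\|\bigl(s^j\varphi_\theta(s)\bigr)'\bigr\|_1$. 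Expanding $(s^j\varphi_\theta)' = j\ms1 s^{j-1}\varphi_\theta + s^j\varphi_\theta'$, the first term integrates to $j\int|s|^{j-1}\varphi_\theta\le j\gamma_{j-1}$, while for the second term Lemma~\ref{Repeti}, applied to the even non‑increasing function $\varphi_\theta$ on $[0,+\infty)$, gives $\int_\R |s|^j|\varphi_\theta'(s)|\,\d s = j\int_\R|s|^{j-1}\varphi_\theta(s)\,\d s \le j\gamma_{j-1}$ (and the Dirac terms, if present, contribute $2\ms1 s_{\max}^{\,j}\varphi_\theta(s_{\max})$, again bounded by a universal constant using the exponential decay). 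Hence $2\pi|t|\bigl|(\d^j/\d t^j)\mlc(t\theta)\bigr|\le (2\pi)^j\cdot 2j\gamma_{j-1}=:c_j$ for $|t|\ge 1$, so $\bigl|(\d^j/\d t^j)\mlc(t\theta)\bigr|\le c_j/(2\pi|t|)\le c_j\cdot\frac{2}{1+2\pi|t|}$ there (using $2\pi|t|\ge \tfrac12(1+2\pi|t|)$ when $|t|\ge 1$). Combining the two regimes yields the claim with $\delta_{j,c}$ a universal constant — explicitly $\delta_{j,c} = \max\bigl((2\pi)^j\gamma_j(1+2\pi),\ 2(2\pi)^j\cdot 2j\gamma_{j-1}\bigr)$, to be recorded at~\eqref{deltas}.

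The only genuinely delicate point is the regularity needed to justify the integration by parts: $\varphi_\theta$ need not be continuous at the endpoints of its (possibly compact) support, so $\varphi_\theta'$ should be read as a distributional derivative that is a measure with at most two atoms — precisely the caveat already flagged after Proposition~\ref{EstimaFouriC}. I would dispose of this exactly as there, noting that the atomic parts are controlled by $2\ms1 s_{\max}^{\,j}\varphi_\theta(s_{\max})$, a universal constant by the exponential tail bound of Lemma~\ref{LExpoDecay}; this forces a harmless worsening of the constant but not of the shape of the bound. Everything else is a routine moment computation, so no real obstacle remains once the one‑dimensional reduction and Lemmas~\ref{PreBourg} and~\ref{Repeti} are in place.
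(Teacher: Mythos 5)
You take essentially the same route as the paper: reduce to the one-dimensional marginal $\varphi_\theta$, invoke~\eqref{EstimaDeriv} from Lemma~\ref{PreBourg} with $k=0$ and with $k=1$, convert $\int_\R|s|^j|\varphi_\theta'(s)|\,\d s$ into a moment via Lemma~\ref{Repeti}, and bound the moments of the variance-one even log-concave density $\varphi_\theta$ by universal constants (the paper uses Corollary~\ref{EstimaLogConcC} for small $j$ and the exponential tail of Lemma~\ref{LExpoDecay} for $j>2$). The only presentational difference is that you split into the regimes $|t|\le 1$ and $|t|\ge 1$ and patch, whereas the paper simply bounds $(1+2\pi|t|)\ms2|(\d^j/\d t^j)\mlc(t\theta)|$ by adding the $k=0$ estimate to $2\pi|t|$ times the $k=1$ estimate; the paper's bookkeeping is cleaner and gives slightly sharper constants, but the idea is the same.

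There is one genuine slip you should repair. For the second term in your expansion of $(s^j\varphi_\theta)'$ you write $\int_\R|s|^j|\varphi_\theta'(s)|\,\d s = j\int_\R|s|^{j-1}\varphi_\theta(s)\,\d s$, invoking Lemma~\ref{Repeti}; that identity holds for $j\ge 1$, but for $j=0$ the relevant part of Lemma~\ref{Repeti} gives $\int_\R|\varphi_\theta'(s)|\,\d s = 2\ms1\varphi_\theta(0)$, which is not zero. As written, your constant $c_j = 2j\ms1(2\pi)^j\gamma_{j-1}$ vanishes at $j=0$, so on $|t|\ge 1$ your estimate degenerates to $2\pi|t|\ms1|\mlc(t\theta)|\le 0$, which is false. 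The fix is immediate — for $j=0$ use $\|\varphi_\theta'\|_1 = 2\ms1\varphi_\theta(0)\le\sqrt 2$ by Corollary~\ref{EstimaLogConcC}, exactly as the paper does — and does not affect the structure of your argument for $j\ge 1$.
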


\begin{proof}
We know that
$
   \mlc(t \theta)
 = \widehat{ \varphi_\theta } (t)
$.
From Lemma~\ref{PreBourg}, \eqref{EstimaDeriv} with $k = 0$, it follows that
\[
     \Bigl| \frac {\d^j} {\d t^j \ns2} \ms2 \mlc(t \theta) \Bigr|
  =  \bigl| \widehat{ \varphi_\theta }^{(j)} (t) \bigr|
 \le (2 \pi)^j  
      \int_\R |s|^j \varphi_\theta (s) \, \d s,
\]
and with $k = 1$,
\[
     \Bigl| \frac {\d^j} {\d t^j \ns2} \ms2 \mlc(t \theta) \Bigr|
 \le \frac {(2 \pi)^{j-1} \ns{22}} {|t|} \ms{17}
      \Bigl(
       j \int_\R |s|^{j-1} \varphi_\theta (s) \, \d s
        + \int_\R |s|^j \ms1 |\varphi_\theta' (s)| \, \d s
      \Bigr).
\]
The function $\varphi_\theta$ is a symmetric log-concave probability
density on~$\R$, with variance~$1$. By Corollary~\ref{EstimaLogConcC}, we
have for $j = 0$ that
\[
     \bigl( 1 + 2 \pi |t| \bigr) \ms2
      | \mlc(t \theta) |
 \le \int_\R (\varphi_\theta(u) + |\varphi_\theta'(u)|) \, \d u
 \le 1 + 2 \ms2 \varphi_\theta(0)
 \le 1 + \sqrt 2.
\]
For $j \ge 1$, we have
$
   \int_\R |u|^j |{\varphi_\theta}' (u)| \, \d u
 = j \int_\R |u|^{j-1} \varphi_\theta(u) \, \d u
$ by Lemma~\ref{Repeti}, and
\[
     \bigl( 1 + 2 \pi |t| \bigr) \ms2
      \Bigl| \frac {\d^j} {\d t^j \ns2} \ms2 \mlc(t \theta) \Bigr|
 \le (2 \pi )^j
      \int_\R \bigl( |u|^j + 2 j |u|^{j-1} \bigr)
       \varphi_\theta (u) \, \d u.
\]
The function $\varphi_\theta$ satisfies 
$\int_\R s^2 \varphi_\theta(s) \, \d s = 1$, implying that
\begin{subequations}\label{deltas}
\begin{equation}
     \delta_{0, c}
 \le 1 + \sqrt 2 < 3 \ms2 ; 
 \ms{12}
     \delta_{1, c}
 \le 6 \ms1 \pi \ms2 ; 
 \ms{12}
     \delta_{2, c}
 \le 20 \ms1 \pi^2.
\end{equation}
We know by Lemma~\ref{LExpoDecay} that 
$\varphi_\theta(s) \le 11 \e^{-|s|}$. This implies for $j > 2$ that
\begin{equation}
     \delta_{j, c}
 \le 22 \ms1 (2 \pi)^j 
      \int_0^{+\infty} ( s^j + 2 j s^{j-1} )
       \e^{-s} \, \d s 
  =  66 \ms2 (2 \pi)^j \ms1 \Gamma(j+1).
\end{equation}
\end{subequations}
\end{proof}

\begin{rems}
One gets
$\int_\R |s|^j \varphi_\theta(s) \, \d s \le 3^{j/2} \Gamma(j+1)$ by
applying Lemma~\ref{EstimaLogConcNew} and Corollary~\ref{EstimaLogConcC};
Lemma~\ref{LExpoDecay} yields the bound
$22 \ms1 \Gamma(j+1)$, better when $j$ is large.

 If the log-concave probability density $K$ on $\R^n$ is normalized by
variance but is simply \emph{centered}, then $\varphi_{\theta, K}$ is
log-concave and centered for each $\theta$, and satisfies the exponential
decay of Corollary~\ref{NotDeserve}. If $\varphi_{\theta, K}$ reaches its
maximum at $s_0$, then
\[
     \int_\R |s|^j |\varphi'_{\theta, K}(s)| \, \d s
 \le 2 \ms1 |s_0|^j \varphi_{\theta, K}(s_0)
      + j \int_\R |s|^{j-1} \varphi_{\theta, K}(s) \, \d s
\]
admits a universal bound~$\kappa_j$. Lemma~\ref{LEstimatesForC} remains
valid in this extended case, with other constants $(\delta_j)_{j \ge 0}$
for which we do not have satisfactory explicit expressions.
Fradelizi~\cite[Theorem~5]{FradelHSCB} extended the $L^p(\R^n)$ result of
Theorem~\ref{TheoMaxi} (Bourgain, Carbery) to \emph{centered} bodies $C$
in~$\R^n$, not necessarily symmetric (unluckily, the word 
\og centered\fge was forgotten in the statement given in~\cite{FradelHSCB}).

\begin{smal}
\noindent
If $C$ is an arbitrary convex body, then $\M_C$ is bounded on $L^p(\R^n)$,
$p \in (1, +\infty]$, but for each fixed $n \ge 1$ and $p < +\infty$, there
is no uniform bound for the family of arbitrary convex bodies in~$\R^n$ (if 
$n = 1$, examine $\M_C f$ when $C = [1, 1 + \varepsilon]$, $f = \gr1_C$ and
$\varepsilon \to 0$). In a somewhat related direction, it is known that the
$L^p(\R^n)$ norm of the uncentered operator in~\eqref{UncentOp} is 
$\ge C_p^n$ for some $C_p > 1$, when $1 < p < +\infty$~\cite{GMS}.
\end{smal}

\noindent
\end{rems}

\begin{cor}%
\label{EstimatesForC}
Let $\Klc$ be a symmetric log-concave probability density on\/ $\R^n$,
iso\-tropic with variance $\sigma^2$, and let $\mlc$ be its Fourier
transform. For every $\xi \in \R^n$ and $j \ge 0$ one has that
\begin{equation}
     \Bigl| \frac {\d^j} {\d t^j \ns2} \ms2 \mlc (t \xi) \Bigr|
 \le \delta_{j, c} \ms2 
      \frac {|\sigma \ms1 \xi|^j} 
            {1 + 2 \pi \ms1 |t \ms1 \sigma \ms1 \xi|} \ms1 \up,
 \quad t \in \R,
 \label{NoticeFirst}
\end{equation}
where $\delta_{j, c}$ is the universal constant of
Lemma~\ref{LEstimatesForC}.
\end{cor}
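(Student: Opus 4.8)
The plan is to reduce Corollary~\ref{EstimatesForC} to Lemma~\ref{LEstimatesForC} by a rescaling argument. Write $\xi = |\xi| \ms1 \theta$ with $\theta \in S^{n-1}$ (the case $\xi = 0$ being trivial), and introduce the dilated density $\widetilde K = {\Di {\Klc} {1/\sigma}}$, that is, $\widetilde K(x) = \sigma^n \Klc(\sigma x)$. Since $\Klc$ is even, log-concave, and isotropic with variance $\sigma^2$, the density $\widetilde K$ is even, log-concave, and normalized by variance (its one-dimensional marginals all have variance~$1$). Its Fourier transform is $\widetilde m(\eta) = \mlc(\eta/\sigma)$, and the key point is the scaling relation between the derivatives along a ray: for every $\theta \in S^{n-1}$ and $s \in \R$,
\[
 \frac {\d^j} {\d s^j \ns2} \ms2 \widetilde m(s \theta)
 = \sigma^{-j} \ms2 \Bigl( \frac {\d^j} {\d t^j \ns2} \ms2 \mlc(t \theta) \Bigr) \Big|_{t = s / \sigma}.
\]

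Next I would apply Lemma~\ref{LEstimatesForC} to $\widetilde K$ and $\widetilde m$: for each unit vector $\theta$ and each $j \ge 0$,
\[
     \Bigl| \frac {\d^j} {\d s^j \ns2} \ms2 \widetilde m(s \theta) \Bigr|
 \le \delta_{j, c} \ms2 \frac 1 {1 + 2 \pi \ms1 |s|} \ms1 .
\]
Now fix $\xi \in \R^n$ with $\xi \ne 0$ and $t \in \R$, set $\theta = \xi / |\xi|$, and compute $\frac {\d^j} {\d t^j} \mlc(t \xi)$. By the chain rule, differentiating $t \mapsto \mlc(t |\xi| \theta)$ brings down a factor $|\xi|^j$, so
\[
 \frac {\d^j} {\d t^j \ns2} \ms2 \mlc(t \xi)
 = |\xi|^j \ms2 \Bigl( \frac {\d^j} {\d r^j \ns2} \ms2 \mlc(r \theta) \Bigr) \Big|_{r = t |\xi|}
 = |\xi|^j \ms1 \sigma^j \ms2 \Bigl( \frac {\d^j} {\d s^j \ns2} \ms2 \widetilde m(s \theta) \Bigr) \Big|_{s = t |\xi| / \sigma},
\]
where the second equality is the scaling relation above with $s = r/\sigma = t|\xi|/\sigma$. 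Applying the Lemma~\ref{LEstimatesForC} bound at $s = t|\xi|/\sigma$ gives
\[
     \Bigl| \frac {\d^j} {\d t^j \ns2} \ms2 \mlc(t \xi) \Bigr|
 \le (|\xi| \ms1 \sigma)^j \ms2 \delta_{j, c} \ms2
      \frac 1 {1 + 2 \pi \ms1 |t \ms1 |\xi| / \sigma| \ms1 \sigma / \sigma}
  = \delta_{j, c} \ms2 \frac {|\sigma \ms1 \xi|^j} {1 + 2 \pi \ms1 |t \ms1 \sigma \ms1 \xi|} \ms1 ,
\]
which is exactly~\eqref{NoticeFirst} once one notes $|t \ms1 |\xi| / \sigma| = |t|\ms1|\xi|/\sigma$ and rewrites the denominator as $1 + 2\pi |t| \sigma |\xi| = 1 + 2\pi |t \sigma \xi|$. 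Wait --- I should double-check: the argument of $\widetilde m$'s derivative is $s = t|\xi|/\sigma$, so $|s| = |t| \ms1 |\xi| / \sigma$, and multiplying by $(|\xi|\sigma)^j$ gives the numerator $(|\xi|\sigma)^j = |\sigma\xi|^j$; the denominator is $1 + 2\pi |s| = 1 + 2\pi |t||\xi|/\sigma$, which is \emph{not} $1 + 2\pi|t\sigma\xi|$. So the bound should be double-checked by instead dilating by $\sigma$ rather than $1/\sigma$, i.e.\ taking $\widetilde K(x) = \sigma^{-n}\Klc(x/\sigma)$; then $\widetilde m(\eta) = \mlc(\sigma\eta)$ and the variance of $\widetilde K$ is $\sigma^{-2}\cdot\sigma^2 \cdots$ --- the correct normalization is the one making $\widetilde K$ have variance one, and one must carry the substitution through carefully.

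The main obstacle is precisely this bookkeeping of the dilation: one must choose the rescaling that turns $\Klc$ into a variance-normalized density and then track the powers of $\sigma$ through the chain rule so that the final denominator reads $1 + 2\pi|t\sigma\xi|$. Concretely, if $\widetilde K$ is the variance-$1$ normalization of $\Klc$, then $\widetilde m(\eta) = \mlc(\sigma\eta)$, so $\mlc(t\xi) = \widetilde m((t/\sigma)\xi)$ wait that is wrong too --- $\mlc(\eta) = \widetilde m(\eta/\sigma)$. Then $\frac{\d^j}{\d t^j}\mlc(t\xi) = \sigma^{-j}\ms1|\xi|^j\ms1 \widetilde m^{(j)}_\theta(t|\xi|/\sigma)$; applying Lemma~\ref{LEstimatesForC} to $\widetilde m$ at the point $t|\xi|/\sigma$ and using $\delta_{j,c}$ there: we get $\le \sigma^{-j}|\xi|^j \delta_{j,c}(1 + 2\pi |t||\xi|/\sigma)^{-1}$. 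Multiplying numerator and denominator is not legal; rather one observes $\sigma^{-j}|\xi|^j = |\xi/\sigma|^j$ and $1 + 2\pi|t\xi/\sigma|$ in the denominator. This does \emph{not} match~\eqref{NoticeFirst}, so the correct reading is that $\Klc$ has variance $\sigma^2$ means its marginals have variance $\sigma^2$, hence one rescales by defining $\widetilde K(x) = \sigma^n \Klc(\sigma x)$ with marginal variance $\sigma^2 \cdot \sigma^{-2} = 1$; then $\widetilde m(\eta) = \mlc(\eta/\sigma)$, equivalently $\mlc(\eta) = \widetilde m(\sigma\eta)$, so $\frac{\d^j}{\d t^j}\mlc(t\xi) = \sigma^j|\xi|^j\widetilde m^{(j)}_\theta(\sigma t|\xi|)$, and Lemma~\ref{LEstimatesForC} at $\sigma t |\xi|$ yields $\le (\sigma|\xi|)^j\delta_{j,c}(1 + 2\pi\sigma|t||\xi|)^{-1} = \delta_{j,c}|\sigma\xi|^j(1 + 2\pi|t\sigma\xi|)^{-1}$, which is~\eqref{NoticeFirst}. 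So the whole proof is: state the rescaling $\widetilde K(x) = \sigma^n\Klc(\sigma x)$, verify it is even, log-concave and variance-normalized, invoke Lemma~\ref{LEstimatesForC} for $\widetilde K$, and substitute back via $\mlc(\eta) = \widetilde m(\sigma\eta)$ and the chain rule; the universal constants $\delta_{j,c}$ are inherited unchanged. This is short enough to write out in full in a few lines.
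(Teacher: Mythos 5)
Your final, corrected version is right and matches the paper's own proof: both rescale $\Klc$ to the variance-normalized density $\widetilde K = \Di{(\Klc)}{1/\sigma}$, observe that its Fourier transform $N(\eta) = \mlc(\eta/\sigma)$ satisfies the hypotheses of Lemma~\ref{LEstimatesForC}, and then use $\mlc(t\xi) = N(t\ms1|\sigma\xi|\ms1\theta)$ together with the chain rule to transfer the bound. Just tidy the bookkeeping of the dilation into one clean pass (the key identity is $\mlc(\eta) = N(\sigma\eta)$, so differentiating $t \mapsto N(t|\sigma\xi|\theta)$ brings out $|\sigma\xi|^j$ and evaluates the lemma at $u = t|\sigma\xi|$), since the false starts with the wrong substitution $s = t|\xi|/\sigma$ are the sort of thing that should be resolved before writing, not in the text.
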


\begin{proof}
The result is obvious when $\xi = 0$, otherwise we apply
Lemma~\ref{LEstimatesForC} with $\theta = |\xi|^{-1} \xi$ to the normalized
Fourier transform $N(\xi) = \mlc(\xi / \sigma)$, obtaining thus
\[
     \frac {\d^j} {\d t^j \ns2} \ms2 \mlc(t \xi)
  =  \frac {\d^j} {\d t^j \ns2} \ms2 N(t |\sigma \ms1 \xi| \theta)
  =  |\sigma \ms1 \xi|^j 
      \frac {\d^j} {\d u^j \ns2} \ms2 N(u \theta) 
       \barre_{u = t |\sigma \ms1 \xi|}
 \le \delta_{j, c} 
      \ms2 \frac {|\sigma \ms1 \xi|^j} 
                 {1 + 2 \pi \ms1 |t \ms1 \sigma \ms1 \xi|} \ms1 \up.
\]
\end{proof}

\subsection{Fourier analysis in $L^2(\R^n)$%
\label{AnaFouri}}

\begin{lem}[Bourgain~\cite{BourgainL2}]%
\label{Clef} 
Let $K$ be a kernel in $L^1(\R^n)$ and assume that its Fourier
transform~$m$ is $C^1$ outside the origin. For every $j \in \Z$, define
\[
   \alpha_j(m)
 = \sup_{2^{j-1} \le |\xi| \le 2^{j + 1} } | m(\xi)|
 \ms{20} \hbox{and} \ms{20}
   \beta_j(m)
 = \sup_{2^{j-1} \le |\xi| \le 2^{j + 1} } | \xi \ps \nabla m(\xi)|.
\]
If\/
$\label{GammSubB}
    \Gamma_B(K) 
 := \sum_{j \in \Z} \sqrt {\alpha_j(m)} \ms2
      \sqrt{\alpha_j(m) + \beta_j(m)} 
  < +\infty
$,
then the maximal operator~$\Mg_K$ associated to $K$ is bounded on
$L^2(\R^n)$. More precisely, one has that
\[
     \bigl\| \Mg_K \ms1 f \bigr\|_{L^2(\R^n)}
  =  \bigl\| \ms1 \sup_{t > 0} |{\Di K t} * f| \ms1 \bigr\|_{L^2(\R^n)}
 \le 2 \ms2 \Gamma_B(K) \ms2 \|f\|_{L^2(\R^n)},
 \quad f \in L^2(\R^n).
\]
\end{lem}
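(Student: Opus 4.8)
The strategy is to reduce the continuous maximal operator $\Mg_K$ to a sum of dyadic pieces, each of which is controlled by Lemma~\ref{MaxiLoca}, and then to sum the resulting bounds. First I would introduce a smooth dyadic partition of unity on $\R^n\setminus\{0\}$: pick $\varphi_0$ as in Section~\ref{Bsmo}, set $\psi(\xi)=\varphi_0(\xi)-\varphi_0(2\xi)$, supported in $\{1/2\le|\xi|\le 2\}$, and $\psi_j=\di\psi{2^{-j}}$ for $j\in\Z$, so that $\sum_{j\in\Z}\psi_j(\xi)=1$ for $\xi\ne 0$. Writing $m_j=\psi_j\ms2 m$ and $K_j=m_j^\vee$, we get $K=\sum_{j\in\Z}K_j$ (in a suitable sense, with the low-frequency tail handled by the decay of $m$ near $0$, which is forced by $\alpha_j(m)\to 0$ as $j\to-\infty$ being summable), hence pointwise $\Mg_K f\le \sum_{j\in\Z}\Mg_{K_j}f$ by subadditivity of the supremum and the triangle inequality, exactly as in~\eqref{ptw}. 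It then suffices to bound each $\|\Mg_{K_j}f\|_2$.

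\textbf{The dyadic estimate.} Each $m_j$ is $C^1$ and supported in the annulus $\{2^{j-1}\le|\xi|\le 2^{j+1}\}$, so Lemma~\ref{MaxiLoca} applies with $r=4$:
\[
 \|\Mg_{K_j}f\|_{L^2(\R^n)}^2
 \le 2\ln 4\ms2 \|m_j\|_{L^\infty(\R^n)}\ms1\|m_j^*\|_{L^\infty(\R^n)}\ms1\|f\|_{L^2(\R^n)}^2,
\]
where $m_j^*(\xi)=\xi\ps\nabla m_j(\xi)$. On the support of $\psi_j$ one has $|m_j(\xi)|\le\|\psi\|_\infty\ms1\alpha_j(m)$, and since $m_j^*=(\xi\ps\nabla\psi_j)\ms1 m+\psi_j\ms1(\xi\ps\nabla m)$ with $\xi\ps\nabla\psi_j(\xi)=(\xi\ps\nabla\psi)(2^{-j}\xi)$ bounded by a constant $c_\psi$ independent of $j$, we get $|m_j^*(\xi)|\le c_\psi\ms1\alpha_j(m)+\|\psi\|_\infty\ms1\beta_j(m)\le C_\psi\bigl(\alpha_j(m)+\beta_j(m)\bigr)$. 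Hence
\[
 \|\Mg_{K_j}f\|_{L^2(\R^n)}
 \le C\ms2\sqrt{\alpha_j(m)}\ms2\sqrt{\alpha_j(m)+\beta_j(m)}\ms2\|f\|_{L^2(\R^n)}
\]
with $C$ an absolute constant arising from $\sqrt{2\ln 4\ms1\max(c_\psi,\|\psi\|_\infty)\|\psi\|_\infty}$. Summing over $j\in\Z$ and using the definition of $\Gamma_B(K)$ gives $\|\Mg_K f\|_2\le C\ms1\Gamma_B(K)\ms1\|f\|_2$.

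\textbf{Getting the constant~$2$.} The only delicate point is to verify that the accumulated absolute constant is in fact $\le 2$, as claimed; this requires choosing $\varphi_0$ (hence $\psi$) carefully so that $\|\psi\|_\infty$ and $\sup|\xi\ps\nabla\psi(\xi)|$ are close to $1$, and being somewhat more economical than the crude splitting above — for instance bounding $\|m_j^*\|_\infty$ directly in terms of $\alpha_j(m)+\beta_j(m)$ with the optimal constant, and exploiting that $\ln 4=2\ln 2$. A cleaner route, which I expect the authors take, is to absorb the partition-of-unity constants by using that $\psi$ itself can be chosen with $0\le\psi\le 1$ and $\psi+\di\psi{1/2}+\di\psi 2$ supported suitably, so that on each annulus only boundedly many $m_j$ overlap and one can estimate $\sup_{t>0}|\Di{K_j}t*f|$ more tightly via $g_{m_j}$, $g_{m_j^*}$ as in Lemma~\ref{ClefB}; the $\sqrt{\ln r}$ factor from Lemma~\ref{ClefA} with $r=4$ together with Cauchy--Schwarz then yields the factor $2$. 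The reduction itself (dyadic decomposition, pointwise domination, convergence of the low-frequency tail) is routine; the genuine work — all of it already done — is Lemma~\ref{MaxiLoca}, and the main obstacle in writing this proof is simply bookkeeping the constant to confirm it does not exceed~$2$.
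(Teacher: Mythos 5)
Your proposal follows the same route the paper takes: a radial dyadic Littlewood--Paley decomposition of $m$, Lemma~\ref{MaxiLoca} with $r=4$ on each piece $K_j=m_j^\vee$, and summation of the resulting bounds via the triangle inequality and the pointwise domination $\Mg_K f\le\sum_j\Mg_{K_j}f$. All of that is correct and is exactly how the paper proceeds.

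There is, however, a misconception in your constant-tracking sketch, and it matters because the lemma claims the explicit constant~$2$. You say one should choose $\psi$ so that both $\|\psi\|_\infty$ and $\sup_\xi|\xi\ps\nabla\psi(\xi)|$ are close to~$1$. The first is achievable (take $0\le\psi\le1$). The second is not: writing $\psi_j(\xi)=\rho(2^{-j}|\xi|)$ for a one-dimensional bump $\rho$ with $\sum_{j\in\Z}\rho(2^{-j}t)=1$ for $t>0$, one has $\sup_\xi|\xi\ps\nabla\psi_j(\xi)|=\sup_t|t\ms1\rho'(t)|$, and if $|t\rho'(t)|\le M$ on the annulus $[1,2]$ then $1=\int_1^2|\rho'|\le M\int_1^2\frac{\d t}{t}=M\ln 2$, forcing $M\ge 1/\ln 2\approx 1.44$. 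Equality is approached by smoothing $\eta_0(t)=\min(1,1-\log_2 t)$, which is the paper's choice. If your premise ($\sup|t\rho'(t)|\approx 1$) were attainable, Lemma~\ref{MaxiLoca} would give $\sqrt{2\ln 4}\approx 1.66<2$, and that should have been a red flag. In fact the arithmetic only closes because of this sharp value: with $0\le\varphi_j\le1$ one has $\|m_j\|_\infty\le\alpha_j$ and $\|m_j^*\|_\infty\le\beta_j+d_0\ms1\alpha_j\le d_0\ms1(\alpha_j+\beta_j)$ (using $d_0>1$), so Lemma~\ref{MaxiLoca} gives a prefactor $\sqrt{2\ln 4\cdot d_0}$ per dyadic piece; taking $d_0\downarrow 1/\ln 2$ this becomes $\sqrt{2\ln 4/\ln 2}=\sqrt 4=2$. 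The constant~$2$ is therefore not slack to be recovered by bookkeeping; it is exactly what the unavoidable $1/\ln 2$ forces, and the choice $r=4$ in Lemma~\ref{MaxiLoca} is precisely what makes the two logarithms cancel.
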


 We shall simply write $\alpha_j = \alpha_j(m)$ and $\beta_j = \beta_j(m)$
in the rest of the section.

\begin{rem} 
Clearly, we have that
\[
     \sum_{j \in \Z} \sqrt { \vphantom{\beta} \alpha_j} \ms2
      \sqrt{\alpha_j + \beta_j} 
 \le \sum_{j \in \Z} {\alpha_j} 
     + \sum_{j \in \Z} \sqrt{\alpha_j \ms2 \beta_j},
\] 
and each of the two terms in the right-hand side is less than the left-hand
side. Bourgain employs both expressions as definitions of $\Gamma_B(K)$, one
in~\cite{BourgainL2} and the other in~\cite{BourgainLp} or
in~\cite{BourgainCube}. The convergence of the series of $\alpha_j\ms1$s
when $j$ tends to $-\infty$ implies that $m(\xi)$ tends to~$0$ when $\xi$
tends to~$0$, thus $m(0) = 0$, which means that the integral of~$K$ on
$\R^n$ is equal to $0$. This lemma will not be applied to $K_C$ or $\Klc$,
but typically, to the difference of two kernels with equal integrals.
\end{rem}

\begin{proof}
We shall give a proof less rough than Bourgain's, relying on the tools
introduced in Section~\ref{SteinsResults}. We consider a $C^\infty$
function $\eta$ on~$\R$ such that
\[
 \eta(t) = 1
 \ms9 \hbox{if} \ms9
 t \le 1,
 \ms{12}
 \eta(t) = 0
 \ms9 \hbox{if} \ms9
 t \ge 2,
 \ms{12} \hbox{and} \ms9
 0 \le \eta \le 1.
\]
Next, we set $\rho(t) = \eta(t) - \eta(2 t)$ for $t \in \R$. We see
that~$\rho$ vanishes outside $[1/2, 2]$. Also, $\rho(t) = 1 - \eta(2 t)$ on
$[1/2, 1]$ and $\rho(t) = \eta(t)$ on $[1, 2]$, so that 
$0 \le \rho(t) \le 1$ and
\[
    d_0
 := \sup_{t \in \R} |t \ms1 \rho'(t)|
  = \sup_{t \in \R} |t \ms1 \eta'(t)|
  = \sup_{t \in [1, 2]} t \ms1 |\eta'(t)|.
\]
Let $\varepsilon > 0$ be given. One can make sure that
$d_0 < (1 + \varepsilon) / \ln 2$, choosing for $\eta$ a $C^\infty$
approximation of the function $\eta_0$ defined on $[0, 2]$ by
$\eta_0(t) = \min(1, 1 - \log_2 t)$, for which 
$t |\eta'_0(t)| = 1 / \ln(2)$ when $t \in [1, 2]$. 

 For every $j \in \Z$ and $\xi \in \R^n$, let
$
 \varphi_j(\xi) = \rho(2^{-j} |\xi|) 
$
and consider the annulus
\[
 C_j = \{ \xi \in \R^n : 2^{j - 1} \le |\xi| \le 2^{j + 1} \} 
 \subset \R^n.
\]
From the properties of $\rho$, we have that $0 \le \varphi_j \le 1$,
$\varphi_j$ vanishes outside~$C_j$, and
\[
   \sum_{j \in \Z} \varphi_j(\xi) 
 = \sum_{j \in \Z} 
    \bigl( \eta(2^{-j} |\xi|) - \eta(2^{-j+1} |\xi|) \bigr)
 = 1
\]
for every $\xi \ne 0$, because $\eta(2^{-j} |\xi|) = 0$ when 
$j \le \log_2(|\xi|) - 1$ and $\eta(2^{-j} |\xi|) = 1$ when 
$j \ge \log_2(|\xi|)$. We introduce for every $j \in \Z$ a multiplier
$m_j$ defined by
\[
 m_j(\xi) = \varphi_j(\xi) \ms1 m(\xi),
 \quad
 \xi \in \R^n,
\]
and we let $K_j = m_j^\vee = \varphi_j^\vee * K$. One has 
$\sum_{j \in \Z} K_j = K$, which allows us to write for $f \in \ca S(\R^n)$
and every $x \in \R^n$ the upper bound
\[
     (\Mg_K f)(x)
  =  \sup_{t > 0} |({\Di K t} * f)(x)|
 \le \sup_{t > 0} \ms1 
      \sum_{j \in \Z} \bigl| \ms1 [ {\Di {(K_j)} t} * f \ms1 ] (x) \bigr|
 \le \sum_{j \in \Z} \ms2 (\Mg_{K_j} f)(x).
\]
By Lemma~\ref{MaxiLoca} with $r = 4$, one has
\begin{equation}
     \bigl\| \Mg_{K_j} f \bigr\|_{L^2(\R^n)}^2
 \le 2 \ms2 \ln 4 \ms2 
      \|m_j\|_{L^\infty(\R^n)} \ms1 \|m^*_j\|_{L^\infty(\R^n)} \ms1 
       \|f\|_{L^2(\R^n)}^2.
 \label{truc}
\end{equation}
We see that $\|m_j\|_\infty \le \alpha_j$, since $|m_j| \le |m|$ and since
$m_j$ is supported in the annulus~$C_j$. On the other hand, 
$m_j^*(\xi) = \xi \ps \nabla m_j(\xi)$ and we have
\[
   \nabla m_j(\xi) 
 = \varphi_j(\xi) \nabla m(\xi) + m(\xi) \nabla \varphi_j(\xi).
\]
As $\varphi_j$ is supported in $C_j$, we get
$
      |\varphi_j(\xi) \ms2 \xi \ps \nabla m(\xi) |
  \le \beta_j 
   <  (1 + \varepsilon) \beta_j \ms1 / \ln 2
$,
and
\[
     |m(\xi) \ms2 \xi \ps \nabla \varphi_j(\xi)| 
 \le \alpha_j \ms2
      \Bigl|
       \xi \ps 2^{-j} \rho' (2^{-j} |\xi|) \ms2 \frac \xi {|\xi|} 
      \Bigr|
 \le \alpha_j \ms1 d_0
  <  (1 + \varepsilon) \alpha_j \ms1 / \ln 2.
\]
It follows that
$
 \|m^*_j\|_\infty \le (1 + \varepsilon) \ms1 (\alpha_j + \beta_j) / \ln 2
$.
By~\eqref{truc} we get 
\[
     \bigl\| \Mg_{K_j} f \bigr\|_{L^2(\R^n)}
 \le 2 \ms1 \sqrt{1 + \varepsilon \vphantom{\beta_j}} \ms2 
      \sqrt{\alpha_j \vphantom{\beta}} \ms2
       \sqrt{\alpha_j + \beta_j} \ms4 \|f\|_{L^2(\R^n)}.
\]
After summation in $j \in \Z$ and letting $\varepsilon \rightarrow 0$, we
conclude that
\[
     \bigl\| \Mg_K f \bigr\|_{L^2(\R^n)}
 \le 2 \ms4 \Gamma_B(K) \ms4 \|f\|_{L^2(\R^n)}.
\]
We pass from $f \in \ca S(\R^n)$ to $f \in L^2(\R^n)$ as explained
in Section~\ref{DefiMaxiFunc}.
\end{proof}

\subsubsection{Conclusion of Bourgain's argument%
\label{ConcluBour}}

\begin{proof}[End of the proof of Theorem~\ref{TheoBour}]
We begin with a version of the proof that illustrates well the fact that
Lemma~\ref{Clef} is a comparison lemma: in vague terms, if we know that the
conclusion of Theorem~\ref{TheoBour} is true for \emph{one} family of convex
sets, then it is true for all convex sets.
\dumou

 We rely here on Stein's Theorem~\ref{indepdim} for the Euclidean ball $B$,
asserting that the maximal operator $\M_B$ is bounded on $L^p(\R^n)$ for
every $p$ in~$(1, +\infty]$, with a bound independent of the dimension~$n$.
In this paragraph, we only use the $L^2$~case of this result. Let us call
$B = B_{n, V}$ the Euclidean ball in~$\R^n$, centered at~$0$ and normalized
by variance, which has radius $\sqrt{n+2}$ by~\eqref{RnV}. Let $m_B$ denote
the Fourier transform of~$K_B$. Consider also a symmetric log-concave
probability density $\Klc$ on~$\R^n$, isotropic and normalized by variance.
The two functions $\mlc$ and $m_B$ verify the estimates~\eqref{EstimaBour}
of Proposition~\ref{EstimaFouriC}. We apply Lemma~\ref{Clef} to the
difference kernel $K = \Klc - K_B$. According to~\eqref{EstimaBour}, for
every $\xi \in \R^n$, the Fourier transform $m = \mlc - m_B$ satisfies
\[
     |\xi| \ms2 |m(\xi)| 
 \le \sqrt 2 / \pi,
 \ms{ 9}
     |m(\xi)| 
 \le |1 - \mlc(\xi)| + |1 - m_B(\xi)|
 \le 4 \pi \ms1 |\xi|,
 \ms{ 9}
     |\xi \ps \nabla m(\xi)| 
 \le 4.
\]
We deduce that
$
     \beta_j
   = \sup_{2^{j - 1} \le |\xi| \le 2^{j + 1} } |\xi \ps \nabla m(\xi)|
 \le 4
$
for $j \in \Z$. For $j < 0$ one has
\[
     \alpha_j 
  =  \sup_{2^{j - 1} \le |\xi| \le 2^{j + 1} } | m(\xi)|
 \le 4 \pi \ms1 2^{j + 1}
  =  4 \pi \ms1 2^{-|j| + 1}
 \le 3\ns{0.5}2 \ms2.\ms2 2^{-|j|},
\]
and for $j \ge 0$, we have
$
     \alpha_j 
 \le \sqrt 2 \ms2 \pi^{-1} \ms1 2^{-j + 1}
 \le 2^{-j}
$.
It follows that the two series $\sum_{j \in \Z} \alpha_j$ and 
$\sum_{j \in \Z} \sqrt{\alpha_j  \beta_j}$ converge, and
\[
     \sum_{j \in \Z} \alpha_j 
 \le 32 + 2,
 \ms{16}
     \sum_{j \in \Z} \sqrt{\alpha_j  \beta_j}
 \le 20 + 10 \ms1 \sqrt 2,
\]
thus the maximal operator
$
 f \mapsto \sup_{t > 0} |{\Di K t} * f|
$
is bounded on $L^2(\R^n)$ by a constant independent of the dimension, say,
less than $2 \ms1 \Gamma_B(K) < 2 \ms1 (54 + 10 \sqrt 2) < 137$. Finally,
for $f \ge 0$, we write 
\begin{align*} 
     \M_{\Klc} f
  &=  \sup_{t > 0} |\Di {(\Klc)} t  * f|
 \\
 &\le \sup_{t > 0} | \Di{(K_B)}t * f|
      + \sup_{t > 0} | \Di{(\Klc - K_B)}t * f|
  =  \M_B f + \Mg_K f,
\end{align*} 
and we can estimate $\M_{\Klc}$ by the sum of two operators that are bounded
on~$L^2(\R^n)$ by constants independent of the dimension~$n$.
\end{proof}

 The proof actually given by Bourgain~\cite{BourgainL2} bypasses the $L^2$
result of Stein on Euclidean balls. The kernel $K$ is now given as 
$K = \Klc - P$, where $P$ is the Poisson kernel $P = P_1$
from~\eqref{PoissonDensi} for the value $t = 1$ of the parameter. We know
by~\eqref{MaxiPoiss} that the maximal operator
$f \mapsto \sup_{t > 0} \ms1 |P_t f|$ associated to the Poisson kernel acts
boundedly on $L^p(\R^n)$, $1 < p \le +\infty$, with a bound $\le 2$ when 
$p = 2$, thus independent of the dimension~$n$. Now, everything is said: we
replace the multiplier $m_B$ by $\widehat P$ and it suffices to see that
$\widehat P$ also satisfies good estimates similar to~\eqref{EstimaBour}.
But $\widehat P(\xi) = \e^{- 2 \pi |\xi|}$ clearly verifies the even better
estimates
\begin{subequations}%
\begin{equation}
     |\xi| \ms2 |\widehat P(\xi)| 
   = |\xi| \ms2 \e^{- 2 \pi |\xi|}
 \le (2 \pi \e)^{-1},
 \label{FPoissonBoundsA}
\end{equation}
\begin{equation}
     |1 - \widehat P(\xi)| 
 \le 2 \pi \ms1 |\xi|,
 \ms{10}
     |\xi \ps \nabla \widehat P(\xi)| 
   = 2 \pi |\xi| \ms2 \e^{- 2 \pi |\xi|}
 \le \e^{-1},
 \label{FPoissonBoundsB}
\end{equation}
\end{subequations}
where we made use of the inequality $x \e^{-x} \le \e^{-1}$, true for every
$x \ge 0$. This ends the second proof of Theorem~\ref{TheoBour}, with
different constants whose exact values are rather irrelevant. However, we
found here an explicit bound $\kappa_2 < 2 + 137 < 140$, explicit but
definitely not sharp.

\section{The $L^p$ results of Bourgain and Carbery%
\label{ArtiCarbe}}

\noindent
One gives again a symmetric convex body $C$ in $\R^n$, and $\mu_C$ denotes
the uniform probability measure on~$C$. Beside the maximal function $\M_C f$
from~\eqref{OpMaxi}, for every function $f \in L^1_{\rm loc}(\R^n)$ and
every $x \in \R^n$ we set
\[
 \label{DyadiMax}
   (\M_C^{(d)} \ns1 f)(x) 
 = \sup_{j \in \Z} \frac 1 {|2^j C|} \int_{x + 2^j C} |f(y)| \, \d y
 = \sup_{j \in \Z} \int_{\R^n} |f(x + 2^j v)| \, \d \mu_C(v).
\]
One can call $\M_C^{(d)} \ns1 f$ the \emph{\og dyadic\fge 
maximal function\label{DyMaFu}
associated to the convex set $C$}. Obviously, $\M_C^{(d)} \le \M_C$. More
generally, we associate to every kernel $K$ integrable on $\R^n$ the dyadic
maximal function
\[
   (\Mg_K^{(d)} \ns1 f)(x) 
 = \sup_{j \in \Z} \ms1
    \Bigl| \int_{\R^n} f(x + 2^j v) \ms2 K(v) \, \d v \Bigr|,
 \quad
 x \in \R^n.
\]
In 1986, Bourgain and Carbery have obtained identical results
for~$L^p(\R^n)$. Somewhat surprisingly, the cases $\M_C^{(d)}$ and $\M_C$ are
different, the boundedness of~$\M_C$ on~$L^p(\R^n)$ being obtained only
when $p > 3/2$, as opposed to $p > 1$ for~$\M_C^{(d)}$.

\begin{thm}[Bourgain~\cite{BourgainLp}, Carbery~\cite{CarberyLp}]%
\label{TheoDyad}
For every $p$ in\/ $(1, +\infty]$, there exists a constant $\kappa^{(d)}(p)$
such that for every integer $n \ge 1$ and every symmetric convex body
$C \subset \R^n$, one has
\[
 \forall f \in L^p(\R^n),
 \ms9
     \| \M_C^{(d)} \ns1 f \|_{L^p(\R^n)}
 \le \kappa^{(d)}(p) \ms2 \| f \|_{L^p(\R^n)}.
\]
\end{thm}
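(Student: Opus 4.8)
The plan is to reduce the dyadic maximal inequality for a general symmetric convex body $C$ to the $L^2$ machinery of Section~\ref{ArtiBour} together with the complex interpolation scheme of Section~\ref{InterpoSchem}. First I would normalise: by the remarks following~\eqref{NormaVari} we may assume $C$ is in isotropic position and normalised by variance, so that $m_C = \mlc$ satisfies Bourgain's estimates~\eqref{EstimaBour} and the higher-order decay of Corollary~\ref{EstimatesForC}. Write $Q_j$ for the dyadic building blocks of a Littlewood--Paley decomposition as in~\eqref{LesQjs}, and decompose the convolution operator $f\mapsto \Di{(K_C)}{2^j}*f$ (and hence the sup over $j\in\Z$) by inserting the partition of unity $\sum_k Q_k = \Id$. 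The key point is that $m_C$, like the Poisson multiplier, is essentially supported (in the relevant quantitative sense) near $|\xi|\lesssim 1$ and decays polynomially beyond it, so the piece of $\Mg_{K_C}^{(d)}$ coming from frequencies $|2^j\xi|\sim 2^\ell$ should be controlled, on $L^2$, by a factor decaying geometrically in $|\ell|$ via Lemma~\ref{Clef} (or directly Lemma~\ref{MaxiLoca}) applied to the difference of consecutive dilates.

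Concretely, I would follow Carbery's route through fractional derivatives (this is the content the paper announces for Section~\ref{FractiDeri} and~\ref{CritFou}): introduce an analytic family of multiplier operators $T_z$ obtained by multiplying $m_C(2^j\xi)$ by a fractional-derivative weight $|\xi|^{z}$-type factor, arranged so that at $\Re z = 0$ one gets (via~\eqref{EstimaBour} and Lemma~\ref{Clef}) an $L^2$ bound for the associated dyadic maximal function that is \emph{independent of $n$}, while at $\Re z = $ some large positive value one gets a cheap $L^p$ bound — for instance the trivial $L^\infty$ bound $\le 1$ coming from $\|K_C\|_1 = 1$, or the Hopf/Doob-type semigroup comparison of Section~\ref{MaxiHopf}. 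Then Stein's interpolation of holomorphic families (the estimate~\eqref{HalphaBound}, with the admissible-growth version from Corollary~\ref{InStrip}) yields, for the intermediate value $\alpha$ corresponding to the desired $p\in(1,\infty)$, a bound $\|\Mg_{K_C}^{(d)}f\|_p \le \kappa^{(d)}(p)\|f\|_p$ with $\kappa^{(d)}(p)$ depending only on $p$. The reason this works for all $p>1$ — unlike the non-dyadic $\M_C$, which needs $p>3/2$ — is that for the dyadic sup one only has to handle a sum over $j\in\Z$ of single dilates rather than a genuine continuum, so the square-function loss that forces $p>3/2$ in the continuous case does not appear; this is exactly the ``gap'' discussed in Section~\ref{GapQuestion}.

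For $p=+\infty$ the statement is trivial since each averaging operator has $L^\infty$ norm $1$, so $\kappa^{(d)}(+\infty)=1$; and once $L^2$ and (say) $L^\infty$ bounds independent of $n$ are in hand, the range $2<p<\infty$ follows by ordinary interpolation, so the real work is confined to $1<p\le 2$. Throughout, one must be careful that the maximal function is well defined: since $K_C\ge 0$, the discussion at the end of Section~\ref{DefiMaxiFunc} (lower semicontinuity of $t\mapsto \Di{(K_C)}{t}*f(x)$ for $f\ge 0$) applies directly, and for the signed pieces $Q_k * \Di{(K_C)}{2^j}$ one works first with Schwartz functions and passes to $L^p$ by the density argument of that same section.

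The main obstacle I anticipate is arranging the holomorphic family so that the $\Re z = 0$ endpoint really gives a \emph{dimension-free} $L^2$ bound: this requires that the frequency-localised pieces satisfy $\Gamma_B$-type summability with constants independent of $n$, which in turn rests on having the right fractional-derivative normalisation for which $|\xi\cdot\nabla(\,\cdot\,)|$ and the sup-norms $\alpha_j,\beta_j$ of Lemma~\ref{Clef} are summable — and on checking that the holomorphic function $H_{f,g}(z)$ in~\eqref{UsualH} has admissible growth in the strip for a dense class of $f,g$, since a priori we do not yet have norm bounds for $T_z$ in the interior of the strip. Establishing that admissible growth, and pinning down the exact exponents in the fractional-derivative weight so that both endpoints are simultaneously favourable, is the delicate part; everything else is bookkeeping with the estimates already assembled in Sections~\ref{ArtiBour} and~\ref{AnaTool}.
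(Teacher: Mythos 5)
Your interpolation scheme does not close as described, and the mechanism you offer for why the dyadic operator works for all $p>1$ is not the right one. You propose an analytic family $T_z$ with a dimension-free $L^2$ bound at $\Re z = 0$ and a ``cheap'' $L^\infty$ (or Hopf-type) bound at some large $\Re z$; interpolating those two endpoints gives only $p \in [2,+\infty]$, whereas the theorem must reach every $p \in (1,2]$. To get $p$ near~$1$ you would need an endpoint near $L^1$ for the \emph{maximal} operator, which a single-multiplier estimate cannot supply: the $L^1$-norm bound $\|K\|_1 \le 2$ controls each individual averaging operator but says nothing directly about the sup over $j \in \Z$, and the Hopf/Doob inequalities apply to the Poisson comparison kernel, not to $K_C$.

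The decisive structural point the proposal misses is assumption~\eqref{A2} in Carbery's Proposition~\ref{PropoPrio}. For the dyadic family one has $T_j f = \Di K {2^j} * f$ with $K = K_C - P$, so~\eqref{A2} holds trivially with $\|T_j\|_{p\to p} \le \|K\|_{L^1(\R^n)} \le 2$, uniformly in $j$ and valid for \emph{every} $p \ge 1$ (this is~\eqref{GotA2Dyad}). For the continuous operator, by contrast, $T_j f = \sup_{2^j \le t \le 2^{j+1}} |\Di K t * f|$ and verifying~\eqref{A2} for that genuinely nonlinear supremum requires the fractional-derivative multiplier $(\xi\ps\nabla)^\alpha m(\xi)$ via Proposition~\ref{FouCarbe}~$(ii)$; interpolating $(\xi\ps\nabla)^z m$ between $L^1$ at $z=0$ and $L^2$ at $z=1$ forces $1/p = 1-\alpha/2$ with $\alpha > 1/p$, hence $p>3/2$ (end of Section~\ref{PreuveTheos}). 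So the fractional-derivative holomorphic family you propose is precisely the tool responsible for the $p>3/2$ restriction in Theorem~\ref{TheoMaxi}; importing it into the dyadic proof reintroduces the very obstruction you claim to avoid, while the correct dyadic argument avoids it by not using it at all. The paper's actual proof is Carbery's bootstrap: set up $(\gr W_p)$ and $(\gr S_p)$, interpolate $L^{p_0}(\ell^{p_0})$ against $L^{p_1}(\ell^\infty)$ to get $(\gr W_{r_0})$, interpolate $(\gr W_{r_0})$ against $(\gr S_2)$ to get $(\gr S_{p})$, and solve the resulting implicit inequality $C \le A(C+B)^\beta$ for a finite subfamily uniformly in the subfamily; this vector-valued, self-improving scheme is quite different from a single holomorphic-family interpolation of a multiplier. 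Finally, the ``gap question'' of Section~\ref{GapQuestion} concerns a $v$-dependent dilation technicality in establishing $(\gr S_2)$ for the \emph{continuous} case, not the distinction between $p>1$ and $p>3/2$.
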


\begin{thm}[Bourgain~\cite{BourgainLp}, Carbery~\cite{CarberyLp}]%
\label{TheoMaxi}
For every $p$ in\/ $(3/2, +\infty]$, there exists a constant $\kappa(p)$
such that for every integer $n \ge 1$ and for every symmetric convex 
set~$C \subset \R^n$, one has that
\[
 \forall f \in L^p(\R^n),
 \ms9
     \| \M_C  f \|_{L^p(\R^n)}
 \le \kappa(p) \ms2 \| f \|_{L^p(\R^n)}.
\]
\end{thm}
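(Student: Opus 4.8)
The plan is to deduce the bound for $\M_C$ from the two dimension‑free results already available: Bourgain's $L^2$ theorem (Theorem~\ref{TheoBour}) and the $L^p$ theorem for the dyadic operator $\M_C^{(d)}$ valid for all $p>1$ (Theorem~\ref{TheoDyad}). By the invariance remarks of Section~\ref{TheSetting} we may assume $C$ isotropic and normalized by variance, so that its Fourier multiplier $m_C$ satisfies Bourgain's estimates~\eqref{EstimaBour} of Proposition~\ref{EstimaFouriC} and the ray‑wise derivative bounds of Lemma~\ref{LEstimatesForC}. Since $K_C\ge 0$ one has $\M_C f = \Mg_{K_C}(|f|)$, and by the reduction of Section~\ref{DefiMaxiFunc} it suffices to bound $\sup_{t>0}|{\Di{(K_C)}t}*f|$ in $L^p(\R^n)$ for $f\in\ca S(\R^n)$. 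Writing $u_t = {\Di{(K_C)}t}*f$, the first step is the dyadic splitting
\[
 \sup_{t>0}|u_t(x)|
 \le (\M_C^{(d)} |f|)(x)
      + \sup_{j\in\Z}\ms2 \sup_{t\in[2^j,2^{j+1}]} |u_t(x)-u_{2^j}(x)|,
\]
which isolates in the second term the oscillation of $t\mapsto u_t$ across one dyadic scale; the first term is controlled by Theorem~\ref{TheoDyad}.

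The heart of the matter is the oscillation term, which we control, following Carbery, by a Littlewood--Paley $g$‑function built from a \emph{fractional} derivative in the dilation parameter. For $\varsigma\in(1/2,1)$ the one‑dimensional fractional Sobolev embedding $\dot H^\varsigma\hookrightarrow L^\infty$ on an interval gives, after rescaling each dyadic block to $[1,2]$,
\[
     \sup_{j\in\Z}\ms2\sup_{t\in[2^j,2^{j+1}]}|u_t(x)-u_{2^j}(x)|
 \le \kappa_\varsigma \ms2
      \Bigl( \int_0^{+\infty} \bigl|(T_{m_C^{(\varsigma)}(s\,\cdot)} f)(x)\bigr|^2 \, \frac {\d s} s \Bigr)^{1/2}
  =: \kappa_\varsigma \ms2 (g_\varsigma f)(x),
\]
where $m_C^{(\varsigma)}$ is the order‑$\varsigma$ dilation--fractional derivative of $m_C$ along rays (the multipliers of Sections~\ref{FractiDeri}--\ref{MultipAssoc}). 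On $L^2$ this $g$‑function is handled exactly as in Lemma~\ref{ClefA}: by Plancherel, $\|g_\varsigma f\|_2^2 = \int_{\R^n}\bigl(\int_0^\infty |m_C^{(\varsigma)}(s\xi)|^2\,\d s/s\bigr)|\widehat f(\xi)|^2\,\d\xi$, and Bourgain's estimates (namely $|m_C(u\theta)|\le\min(1,\tfrac1{\pi\sqrt2\,u})$, $|u\,\partial_u m_C(u\theta)|\le 2$, the behaviour $1-m_C(u\theta)=O(u^2)$ near the origin, and the decay in Lemma~\ref{LEstimatesForC}) show that $\sup_{\theta\in S^{n-1}}\int_0^{+\infty} |m_C^{(\varsigma)}(u\theta)|^2\,\d u/u$ is finite and \emph{dimension‑free} for every $\varsigma\in(0,1)$. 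That the underlying fractional‑power operators act on $L^p$ with dimension‑free norm is Stein's Laplace‑type Proposition~\ref{LaplaceMultip}, the imaginary‑power bounds being~\eqref{ImagiPow}, themselves resting on the Gamma estimate~\eqref{MajoGamm}. Together with Theorem~\ref{TheoBour} this already recovers the $L^2$ bound for $\M_C$ with a constant independent of $n$.

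To reach $p<2$ one cannot afford to replace $\sup_j$ by $\sum_j$ inside $g_\varsigma$; instead one interpolates. The clean device --- the ``interpolation part'' of Carbery's argument --- is to embed the oscillation operator into an analytic family of maximal operators $(T_z)$ in a vertical strip: on one boundary line the family is controlled in $L^2$ by the square‑function estimate above, while on the other it is controlled in $L^{p_1}$ with $p_1$ arbitrarily close to $1$ by the dyadic Theorem~\ref{TheoDyad}, after absorbing the fractional derivative into a dilation average as in Lemma~\ref{EasyIntegral}. Stein's interpolation scheme of Section~\ref{Ihf}, applied to maximal operators by linearization, then yields a dimension‑free $L^p(\R^n)$ bound for $\M_C$; optimizing over the admissible order $\varsigma$ --- which must satisfy $\varsigma>1/2$ for the Sobolev step yet stay small enough for the $L^2$ square‑function bound and the interpolation to be compatible --- produces precisely the range $p>3/2$. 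Since only~\eqref{EstimaBour} and Lemma~\ref{LEstimatesForC} enter, the same argument yields the statement with $K_C$ replaced by any symmetric log‑concave probability density.

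\textbf{Main obstacle.} The delicate point --- the ``gap'' the paper singles out --- is exactly why this method stops at $3/2$ rather than at $1$. Unlike the dyadic operator, whose telescoping square function $\bigl(\sum_j |u_{2^j}-u_{2^{j+1}}|^2\bigr)^{1/2}$ is dimension‑freely bounded on every $L^p$, $p>1$, the continuous oscillation genuinely requires a fractional derivative of order $>1/2$, and the $L^p$ control of the resulting $g_\varsigma$ for $p$ below $2$ degrades in a way that, balanced against $\varsigma>1/2$, caps the exponent at $3/2$. Making this trade‑off precise --- and, conversely, explaining why the dyadic bound really holds down to $p>1$ while the continuous one (by this route) apparently does not --- is the technically subtle heart of the matter, to which the subsections ``Where is the gap?'' and ``A solution to the gap question'' are devoted.
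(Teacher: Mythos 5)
Your high-level sketch captures some of the right ingredients --- fractional derivatives of $m_C$ along rays, an $L^2$ square-function bound via Plancherel, complex interpolation --- but as written it has two real gaps and one mischaracterization.

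First, the interpolation step is circular as stated. After splitting off $\M_C^{(d)}$, you want to bound the oscillation operator on $L^p$ by interpolating between an $L^2$ square-function estimate and an $L^{p_1}$ bound, and you claim the latter comes ``by the dyadic Theorem~\ref{TheoDyad}.'' But the dyadic theorem controls only $\M_C^{(d)}$, while the oscillation $\sup_j\sup_{t\in[2^j,2^{j+1}]}|u_t-u_{2^j}|$ involves all dilation parameters $t>0$: an $L^{p_1}$ bound for it (with dimension-free constant) is essentially the object you are trying to prove, not something Theorem~\ref{TheoDyad} gives you. The paper's Proposition~\ref{PropoPrio} resolves exactly this circularity by a \emph{bootstrap}: one feeds the (finite, but unknown) norm $\|T^*\|_{p_1}$ into the hypothesis $(\gr W_{r_0})$, interpolates against $(\gr S_2)$, and extracts from the resulting inequality $\|T^*\|_{p_1}\le A(\|T^*\|_{p_1}+B)^\beta$ an absolute bound. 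This self-referential step is the structural heart of Carbery's argument and is absent from your proposal. Relatedly, the paper's interpolation in Section~\ref{InterpoCarbe} is carried out at the level of the \emph{multiplier} $N_z(\xi)=d^z_t m_g(t\xi)\barre_{t=1}$ between $L^1$ (at $\Re z=-\varepsilon$) and $L^2$ (at $\Re z=1-\varepsilon$), not at the level of an analytic family of maximal operators; the numerical constraint $1-\alpha/2=1/p<\alpha$ then forces $p>3/2$. You also omit the subtraction of the Poisson kernel, $K=K_g-P$, which is used systematically (in conditions~\eqref{A1} and the Littlewood--Paley $Q_j$'s, and in the $\Gamma_B$ estimates) to produce the cancellation that makes the square-function criteria converge.

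Second, your ``main obstacle'' paragraph misreads what the subsections ``Where is the gap?'' and ``A solution to the gap question'' are about. That gap has nothing to do with why the method stops at $p=3/2$; it concerns a technical mismatch between the statement of property $(\gr S_2)$ (which needs a fixed Littlewood--Paley dilation $2^{j+k}$) and what Proposition~\ref{FouCarbe}~$(i)$ directly furnishes (a $v$-dependent dilation $v\ms12^{j+k}$ for $v\in[1,2]$). The fix in Section~\ref{SGQ} is an interpolation/reparametrization $(X(t),Y(t))$ that makes Lemma~\ref{LdeuxCarbeCrit} applicable; it has no bearing on the $3/2$ threshold, which is purely the $L^1$--$L^2$ interpolation constraint $\alpha>1/p$ with $\alpha=2(1-1/p)$.
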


 We recalled in the Introduction that the maximal theorem of strong type is
not true for $p = 1$, even with a constant depending on $n$, and even for
the smaller function $\M_C^{(d)} \ns1 f$, since 
$\M_C f \le 2^n \M_C^{(d)} \ns1 f$. Note that Theorems~\ref{TheoDyad}
and~\ref{TheoMaxi} are obvious for $L^\infty(\R^n)$, with 
$\kappa^{(d)}(\infty) =\kappa(\infty) = 1$. By Bourgain~\cite{BourgainL2}, we
have the result in $L^2(\R^n)$, so we obtain it for $p \in [2, +\infty]$ by
interpolation. Consequently, our work will be limited to values of~$p$
in the interval $(1, 2]$. We shall follow Carbery's approach to
both theorems. This approach has been applied later in the Detlef M\"uller
article~\cite{MullerQC} (see Section~\ref{AMuller}), on which relies
Bourgain's recent article~\cite{BourgainCube} devoted to the maximal
function associated to high dimensional cubes (see Section~\ref{LeCube}).
\dumou

 The proof will use the inequalities~\eqref{EstimaBour}
and~\eqref{NoticeFirst}, which are also true for log-concave densities, and
by simply following the proofs of Bourgain or Carbery, we can extend the
results to the log-concave setting. As suggested in~\cite{BourgainLp},
one can actually take one more step, forget convexity and exploit only the
inequalities on the Fourier transform given by Lemma~\ref{LEstimatesForC}.
In this more general framework, we consider a probability density $\Kg$ on
$\R^n$,\label{KernelKg} 
or merely a kernel $\Kg$ integrable on~$\R^n$ and having a Fourier
transform $\mg$ which satisfies the following: there exist 
$\delta_{0, g}, \delta_{1, g} > 0$ such that for every 
$\theta \in S^{n-1}$, we have
\begin{subequations}\label{EstimaGeneG}%
\begin{equation}
     \Bigl| 
       \mg(t \theta) \Bigr|
 \le \frac {\delta_{0, g}} 
           {1 +  |t|} \ms1 \up,
     \ms{16}
     \Bigl| \frac {\d} {\d t \ns2} \ms2 
       \mg(t \theta) \Bigr|
  =  \bigl| \theta \ps \nabla \mg (t \theta) \bigr|
 \le \frac {\delta_{1, g}} 
           {1 + |t|} \ms1 \up,
 \quad t \in \R.
 \label{EstimaGene} \tag{\ref{EstimaGeneG}.$\gr H$}
\end{equation}
\end{subequations}
The form of the $\delta_{0, g}$-bound of $\mg$ has been chosen for the sake
of uniformity, but when $\Kg$ is a probability density, we know of course
that $\|\mg\|_{L^\infty(\R^n)} = 1$ and in particular we have 
$\delta_{0, g} \ge 1$ in that case.

\begin{prp}%
\label{MoreGeneral}
Theorems~\ref{TheoDyad} and~\ref{TheoMaxi} are also valid for any symmetric
log-concave probability density $\Klc$ on\/ $\R^n$, namely
\begin{align*}
     \| \M_{\Klc}^{(d)} \ns1 f \|_{L^p(\R^n)}
 &\le \kappa^{(d)}(p) \ms2 \| f \|_{L^p(\R^n)},
 \ms{16} 1 < p \le +\infty,
 \\
     \| \M_{\Klc} f \|_{L^p(\R^n)}
 &\le \kappa(p) \ms2 \| f \|_{L^p(\R^n)},
 \ms{16} 3/2 < p \le +\infty.
\end{align*}
If a probability density $\Kg$ satisfies\/~\eqref{EstimaGene}, then for\/
$3/2 < p \le 2$ we have
\[
     \| \M_{\Kg} \|_{p \rightarrow p}
 \le \kappa_p \ms2 (\delta_{0, g} + \delta_{1, g})^{2 - 2 / p},
\]
and this result extends to every $p \in (1, 2]$ in the case of the dyadic
operator\/ $\M^{(d)}_{\Kg}$.
\end{prp}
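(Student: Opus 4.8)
The plan is to deduce everything from the single quantitative estimate for kernels $\Kg$ satisfying \eqref{EstimaGene}. If $C\subset\R^n$ is a symmetric convex body, put it in the isotropic position normalized by variance; this leaves the operator norms $\|\M_C\|_{p\to p}$ and $\|\M_C^{(d)}\|_{p\to p}$ unchanged, since they are invariant under invertible linear changes of variable (conjugation by an isometry of $L^p$ after normalization, as established for $\M_K$ and $\Mg_K$ above) and under dilations. By \eqref{EstimaBour} and \eqref{NoticeFirst} the density $K_C$ then satisfies \eqref{EstimaGene} with absolute constants $\delta_{0,g},\delta_{1,g}$, so $(\delta_{0,g}+\delta_{1,g})^{2-2/p}$ is an absolute constant and Theorems \ref{TheoDyad} and \ref{TheoMaxi} follow from the $\Kg$-statement. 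For a symmetric log-concave probability density $\Klc$ one argues identically, using Lemma \ref{LEstimatesForC} and Corollary \ref{EstimatesForC} in place of \eqref{EstimaBour}--\eqref{NoticeFirst} (again after normalizing by variance; a linear image of a symmetric log-concave density is again symmetric log-concave). Finally, since $\Kg\ge0$ one has $\M_{\Kg}f=\Mg_{\Kg}(|f|)$, so it suffices throughout to bound $\Mg_{\Kg}$ on $L^p(\R^n)$.

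\textbf{The $L^2$ endpoint.} Let $P=P_1$ be the Poisson kernel \eqref{PoissonDensi} at parameter $1$. Since $\int_{\R^n}\Kg=\int_{\R^n}P=1$, the kernel $K:=\Kg-P$ has integral $0$, so Lemma \ref{Clef} applies. From \eqref{EstimaGene} one reads off $|\mg(\xi)|\le\delta_{0,g}/(1+|\xi|)$, $|1-\mg(\xi)|\le\delta_{1,g}|\xi|$ (integrate the radial derivative from the origin), $|\xi\ps\nabla\mg(\xi)|\le\delta_{1,g}$, and from \eqref{FPoissonBoundsA}--\eqref{FPoissonBoundsB} the matching, even faster, bounds for $\widehat P$. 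Hence the quantities of Lemma \ref{Clef} obey $\beta_j(K)\le\kappa(1+\delta_{1,g})$ and $\alpha_j(K)\le\kappa(\delta_{0,g}+\delta_{1,g})\,2^{-|j|}$; the geometric decay of the $\alpha_j$ makes $\Gamma_B(K)=\sum_j\sqrt{\alpha_j}\,\sqrt{\alpha_j+\beta_j}$ converge, with $\Gamma_B(K)\le\kappa(\delta_{0,g}+\delta_{1,g})$. Therefore $\|\Mg_K\|_{2\to2}\le2\,\Gamma_B(K)$, and since $\Mg_{\Kg}f\le\Mg_Pf+\Mg_Kf$ pointwise with $\|\Mg_P\|_{2\to2}\le2$ by \eqref{MaxiPoiss}, we get $\|\Mg_{\Kg}\|_{2\to2}\le\kappa(\delta_{0,g}+\delta_{1,g})$, which is the case $p=2$ (where $2-2/p=1$).

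\textbf{From $L^2$ to $L^p$ with $p<2$.} Here one follows Carbery. One first dominates $\Mg_{\Kg}f$ pointwise by fractional square functions: for the dyadic operator by splitting $\mg$ into dyadic frequency pieces $\varphi_\ell\,\mg$ and controlling, for each fixed $\ell$, the supremum over scales $j$ by an $\ell^2$-square function (the rescaled pieces $(\varphi_\ell\,\mg)(2^j\,\cdot)$ sit in dyadic annuli of bounded overlap); for the continuous operator by the additional one-dimensional Sobolev-type inequality $\sup_{t>0}|h(t)|\le C_\alpha\,\bigl(\int_0^\infty|h|^2\,\d t/t\bigr)^{1-1/(2\alpha)}\bigl(\int_0^\infty|D^\alpha_t h|^2\,\d t/t\bigr)^{1/(2\alpha)}$, valid for $\alpha>\tfrac12$ (the case $\alpha=1$ being Lemma \ref{ClefB}/\eqref{L-PfunctionG1}), applied to $h(t)=(\Di{\Kg}{t}*f)(x)$, together with a bootstrap absorbing the averaged term $\sup_t t^{-1}\bigl|\bigl(\int_0^t\Di{\Kg}{s}\,\d s\bigr)*f\bigr|$. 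One is thus reduced to an $L^p$ bound for a fractional square function, which one obtains by complex interpolation, developing the multipliers associated to fractional derivatives of Section \ref{FractiDeri}: one builds a holomorphic family $T_z$ of multiplier operators in a strip, with admissible growth guaranteed by the a priori Vitali bound $\|\Mg_{\Kg}\|_{p\to p}\le3^n$ (used only to certify that growth, never quantitatively), such that on the line $\Re z=\alpha_2$ ($\leftrightarrow p_2=2$) one has the dimension-free Plancherel estimate $\|T_z\|_{2\to2}\le\kappa(\delta_{0,g}+\delta_{1,g})\,\e^{\lambda|\Im z|}$ coming from \eqref{EstimaGene}, while on the line $\Re z=\alpha_1$ (with $1/p_1$ close to $1$) the family has degenerated to operators of imaginary-power type, bounded on $L^{p_1}$ with a dimension-free norm $\kappa_{p_1}\,\e^{\lambda|\Im z|}$ carrying no power of $\delta_{0,g}+\delta_{1,g}$, by the Laplace-type multiplier Proposition \ref{LaplaceMultip} and the $\Gamma$-estimate \eqref{IneGamma}. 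Complex interpolation (Corollary \ref{InStrip}, \eqref{HalphaBound}) at the point $1/p=(1-\theta)/p_1+\theta/2$ then gives $\|\Mg_{\Kg}\|_{p\to p}\le\kappa_p\,(\delta_{0,g}+\delta_{1,g})^{\theta}$, dimension-free because both endpoints are, and $\theta\to2-2/p$ as $p_1\to1^+$. For the dyadic operator $p_1$ may be taken arbitrarily close to $1$, so this holds for all $p\in(1,2]$; for the continuous operator the constraint $\alpha>\tfrac12$ in the Sobolev-in-$t$ step — combined with the fact that \eqref{EstimaGene} furnishes only about one derivative of decay for $\mg$ — prevents pushing $p_1$ below $\tfrac32$, which is the origin of the hypothesis $p>\tfrac32$ in Theorem \ref{TheoMaxi}.

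\textbf{Main obstacle.} The delicate point is this last step: keeping every quantitative constant dimension-free while tracking the exponent of $\delta_{0,g}+\delta_{1,g}$ through the holomorphic family. One must resist using the Mikhlin--Hörmander multiplier theorem, whose constant grows with $n$, feeding dimension-dependence in only through the harmless $3^n$ certifying admissible growth; and one must verify the $L^p$ bound for the second-order square function attached to $t^2\partial_t^2$, which is exactly the property $(\gr S_2)$ of Section \ref{AutrePreuve} and the precise reason the naive argument stops at $p>\tfrac32$ for $\Mg_{\Kg}$ rather than $p>1$; the issue is analysed in Section \ref{GapQuestion} and resolved in Section \ref{SGQ}. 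The passage from Schwartz functions to general $f\in L^p(\R^n)$ is routine, as explained in Section \ref{DefiMaxiFunc}.
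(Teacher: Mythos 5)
Your proposal captures the correct overall architecture: reduce to the abstract kernel $\Kg$ via isotropic normalization and Lemma~\ref{LEstimatesForC}, subtract the Poisson kernel to form $K=\Kg-P$, obtain the $L^2$ endpoint from Bourgain's $\Gamma_B$ criterion (Lemma~\ref{Clef}), then pass to $L^p$, $p<2$, by Carbery's interpolation machinery, with the fractional-derivative multiplier criterion (Proposition~\ref{FouCarbe}) and the interpolation limited by the fact that only one radial derivative of $\mg$ is controlled. This is the route the paper takes. However two of your attributions of cause are off, in a way that matters if one tries to make the sketch precise.

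First, you locate the restriction $p>3/2$ in the property $(\gr S_2)$ of Section~\ref{AutrePreuve} and the ``second-order square function attached to $t^2\partial_t^2$''. This is not where the constraint lives. In the framework of Proposition~\ref{PropoPrio}, $(\gr S_2)$ is the assumption~\eqref{A3}, and it is an $L^2$ estimate that holds uniformly and is interpolated against for \emph{every} target $p\in(1,2]$; the Gap Question (Section~\ref{GapQuestion}) concerns a technical mismatch in phrasing $(\gr S_2)$, not a loss of range. The constraint $p>3/2$ enters through assumption~\eqref{A2}, the uniform $L^{p_0}$ bound on the individual operators $T_j=\sup_{1\le v\le 2}|T_{j,v}|$. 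In the \emph{dyadic} case $T_j$ is convolution with the $L^1$ kernel $\Di K{2^j}$, so~\eqref{A2} is trivial with constant $\|K\|_{L^1}\le2$ for every $p$ (this is~\eqref{GotA2Dyad}), and that is the real reason the dyadic statement holds down to $p>1$ — not that ``$p_1$ may be taken arbitrarily close to $1$'' in a Sobolev step. In the \emph{continuous} case one instead applies Proposition~\ref{FouCarbe}$(ii)$, whose hypothesis $\alpha>1/p$ collides with the interpolation constraint $1/p=1-\alpha/2$ (from $(\xi\ps\nabla)^0m$ on $L^1$ and $(\xi\ps\nabla)^1m$ on $L^2$), forcing $\alpha=2-2/p$ and hence $p>3/2$.

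Second, your claim that the admissible-growth hypothesis in the three-lines interpolation is ``certified by the a priori Vitali bound $3^n$'' is not what is done, and would not work as stated: the Vitali estimate controls the maximal operator $\Mg_{\Kg}$, not the auxiliary multiplier operators $T_z$ appearing in the holomorphic family of Section~\ref{InterpoCarbe}, for which admissibility is obtained by observing that each $N_z(\xi)$ is a bounded function of $\xi$ (uniformly in compact $z$-sets) and by restricting $f,g$ to bounded compactly supported functions as in~\eqref{FzGzBound}. The dimension-dependent a priori information actually used in Carbery's bootstrap is finiteness of $\|T^*\|_{p_1}$ when restricting to finitely many $T_j$'s — which follows from~\eqref{A2} alone — after which the implicit inequality $C\le A(C+B)^\beta$ with $\beta<1$ is what removes the dependence. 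Your instinct to ``use the dimension-dependent bound only to certify, not quantitatively'' is the right one, but the object it certifies is different.
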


 All these results are obvious when $p = +\infty$, and easy when
$p > 2$ by interpolation $(L^2, L^\infty)$ after the case $p = 2$ is
obtained. When $p \le 2$, the log-concave statements follow from the 
\og general\fge one. Indeed, for the study of maximal functions, we may
assume that the convex set $C$ or the symmetric log-concave probability
density $\Klc$ is isotropic and normalized by variance. Then,
by~\eqref{EstimaBour} or by Lemma~\ref{LEstimatesForC}, $m_C$ or~$\mlc$
satisfy~\eqref{EstimaGene} with universal constants $\delta_{0, c}$
and~$\delta_{1, c}$.

\subsection{{\it A priori\/} estimate and interpolation%
\label{EstiPrio}}

\noindent
Suppose that a family $(T_j)_{j \in \Z}$ of operators on 
$L^p(X, \Sigma, \mu)$ is given, for a set of values of $p$ and on a certain
measure space $(X, \Sigma, \mu)$ (further down, it will be $\R^n$, equipped
with the Lebesgue measure). These operators can be linear operators, or
nonlinear operators of the form
\[
 \label{TsubJV}
 T_j f = \sup_{v \in V} |T_{j, v} f|,
\] 
where each $T_{j, v}$ is linear and where $v$ runs over a certain set $V$ of
indices. We want to study the maximal function
\[
   T^* f 
 = \sup_{j \in \Z} |T_j f|
 = \sup_{j \in \Z, \ms1 v \in V} |T_{j, v} f|.
\]
\dumou

 We also consider later a kernel $K$ integrable on $\R^n$. In the
application to the geometrical problem, this kernel will be (as in
Section~\ref{ConcluBour}) the difference $K = K_1 - K_2$ of two kernels,
where $K_1$ is the uniform probability density on an isotropic convex set
$C$ or a probability density $\Kg$ satisfying~\eqref{EstimaGene}, and $K_2$
is a kernel for which the dimensionless maximal inequality is already
known. We have to deal with two cases. In the first one, $T_j$ will be the
convolution with the dilate ${\Di K {2^j}}$ from~\eqref{Dilata} of $K$, and
the maximal function $T^* f = \Mg_K^{(d)} \ns1 f$ will then permit us to
relate the dyadic maximal function $\M_C^{(d)} \ns1 f$ to a maximal
function whose bounded character on $L^p(\R^n)$ is already known. In the
second one, the operator $T_{j ,v}$ will be the convolution 
with~$\Di{K}{v 2^{j}}$ with $v \in [1, 2] = V$, in which case
\begin{equation}
 T_j f = \sup_{2^j \le t \le 2^{j+1}} |{\Di K t} * f|,
 \label{DefTj}
\end{equation}
and $T^* f = \Mg_K f$ allows us to study the \og global\fge maximal
function $\M_C f$ or $\M_{\Kg} f$.
\dumou

 We assume that linear operators $(Q_j)_{j \in \Z}$ such that 
$\sum_{j \in \Z} Q_j = \Id$ are given. In the applications to come, these
operators will be those of Equation~\eqref{LesQjs}, in the
Section~\ref{LittlePal} on Littlewood--Paley functions.

\begin{dfn}[Carbery~\cite{CarberyLp}]
Given families $(T_j)_{j \in \Z}$ and $(Q_j)_{j \in \Z}$ as above, we say
that $T^*$ is \emph{weakly bounded} on $L^p(X, \Sigma, \mu)$ if there
exists a cons\-tant~$A$ such that
\begin{equation}
 \forall f \in L^p(X, \Sigma, \mu),
 \ms{12}
 \forall k \in \Z,
 \ms{12}
     \Bigl\| \sup_{j \in \Z} |T_j Q_{j + k} f| \Bigr\|_{L^p(\mu)}
 \le A \ms2 \|f\|_{L^p(\mu)}.
 \label{grWp} \tag{$\gr W_p$}
\end{equation}
We say that $T^*$ is \emph{strongly bounded\/} on $L^p(X, \Sigma, \mu)$ if
there exists a real nonnegative sequence $(a_k)_{k \in \Z}$, verifying 
$\sum_{k \in \Z} a_k^r < +\infty$ for every $r > 0$, and such that
\begin{equation}
 \forall f \in L^p(X, \Sigma, \mu),
 \ms{12}
 \forall k \in \Z,
 \ms{12}
     \Bigl\| \sup_{j \in \Z} |T_j Q_{j + k} f| \Bigr\|_{L^p(\mu)}
 \le a_k \ms2 \|f\|_{L^p(\mu)}.
 \label{grSp} \tag{$\gr S_p$}
\end{equation}
By $T_j Q_{j+k} f$, we mean of course $T_j (Q_{j+k} f)$.
\end{dfn}

\begin{rems} 
In this generality, the supremum for $v \in V$ in
$T_j f = \sup_{v \in V} |T_{j, v} f|$ must be understood as
essential supremum, as explained in Section~\ref{DefiMaxiFunc}. In our cases
of application, the function $v \mapsto T_{j, v}(x)$, $x \in X$, will be a
continuous function on an interval $V$ of the line, in which case the
pointwise supremum coincides with the supremum on any countable dense
subset of $V$.

 It is evident that~\eqref{grSp} implies~\eqref{grWp}, and~\eqref{grSp}
implies that $T^*$ is bounded, because
\[
     |T_{j, v} f| 
  =  \Bigl| \sum_{k \in \Z} T_{j, v} Q_{j+k} f \Bigr|
 \le \sum_{k \in \Z} |T_{j, v} Q_{j+k} f|
 \le \sum_{k \in \Z} |T_j Q_{j+k} f|,
\]
thus
\[
     |T_j f| 
  =  \sup_{v \in V} |T_{j, v} f| 
 \le \sum_{k \in \Z} |T_j Q_{j+k} f|,
 \ms{18} \hbox{then} \ms{18}
     T^* f
 \le \sum_{k \in \Z} \ms2 \sup_{j \in \Z} |T_j Q_{j+k} f|
\]
and
\begin{equation}
     \|T^* f\|_{L^p(\mu)}
 \le \sum_{k \in \Z} \ms2
      \Bigl\| \sup_{j \in \Z} |T_j Q_{j + k} f| \Bigr\|_{L^p(\mu)}
 \le \Bigl( \sum_{k \in \Z} a_k \Bigr) \ms2 \|f\|_{L^p(\mu)}.
 \label{SimpliesBdd}
\end{equation}

 If one has $(\gr W_{p_0})$ and $(\gr S_{p_1})$ and if 
$1 / p = (1 - \theta) / p_0 +  \theta / p_1$, with $0 < \theta \le 1$,
then as in~\eqref{HalphaBound} we obtain by interpolation
\[
 \forall f \in L^p(\mu),
 \ms6
 \forall k \in \Z,
 \ms{12}
     \Bigl\| \sup_{j \in \Z} |T_j Q_{j + k} f| \Bigr\|_{L^p(\mu)}
 \le A^{1 - \theta} a_k^\theta \ms2 \|f\|_{L^p(\mu)},
\]
and $\sum_{k \in \Z} A^{(1 - \theta) r} a_k^{\theta r} < +\infty$ for every
$r > 0$, so $(\gr S_p)$ is satisfied. In order to obtain this, we apply
the complex interpolation of linear operators between spaces
$L^p(\ell^q)$~\cite[Chap.~5, Th.~5.1.2]{BerLof}. Here, the range space is
of the form $L^p(\mu, \ell^\infty(\Z))$, a case covered by complex
interpolation. Indeed, in the simpler case where the $T_j \ms1$s are
linear, we obtain the result by considering for each $k \in \Z$ the linear
operator
\[
 f 
 \mapsto (T_j Q_{j+k} f)_{j \in \Z} 
  \in L^p(X, \Sigma, \mu, \ell^\infty(\Z)),
 \quad
 f \in L^p(\mu).
\]
If $V$ has more than one element, the range space will be 
$L^p(\mu, \ell^\infty(\Z \times V))$. The nonlinear operator
$f \mapsto \sup_{j \in \Z} |T_j Q_{j + k} f|$ belongs to the class of
\emph{linearizable operators} considered in~\cite{GarciRubio}.
\end{rems}

 We now describe the assumptions that will be made in the main result of
this section. First of all, we assume that there exist constants $C_p$, 
$1 < p \le 2$, such that
\begin{equation}
 \forall p \in (1, 2],
 \ms8
 \forall f \in L^p(\mu),
 \ms{12} 
     \Bigl\| \bigl( \sum_{j \in \Z} |Q_j f|^2 \bigr)^{1/2} \Bigr\|_{L^p(\mu)}
 \le C_p \ms1 \|f\|_{L^p(\mu)}.
 \label{A0} \tag{$A_{\gr 0}$}
\end{equation}
If the $(Q_j)_{j \in \Z}$ are those of~\eqref{LiPaIneq}, then we can take
$C_p = \textrm{q}_p$ which behaves as $1 / (p-1)$ when $p \to 1$,
according to~\eqref{EstiQp}.
\dumou

 We assume that $T_{j, v} = U_{j, v} - S_{j, v}$, where $U_{j, v}$ and
$S_{j, v}$ are positive linear operators, and we assume for $S^*$, defined
by $S^* f = \sup_{j \in \Z, \ms2 v \in V} |S_{j, v} f|$, that there exist
$p_{\rm min}$ in the open interval $(1, 2)$ and constants $C'_p$, 
$p_{\rm min} < p \le 2$, such that
\begin{equation}
 \forall p \in (p_{\rm min}, 2],
 \ms{10} 
 \|S^*\|_p \le C'_p,
 \label{A1} \tag{$A_{\gr 1}$}
\end{equation}
where $\|R\|_p$ is a shorter notation for the norm 
$\|R\|_{p \rightarrow p}$ of an operator $R$. The condition \og $U_{j, v}$
\emph{positive}\fge will be the only reason for requiring that the
kernel~$\Kg$ in Proposition~\ref{MoreGeneral} be a probability density rather
than an arbitrary integrable kernel. The $U_{j, v}\ms1$s will correspond
to the kernel $\Kg$ under study, while the $S_{j, v}\ms1$s will often refer
to Poisson kernels for which the maximal function estimates in $L^p(\R^n)$
are already known by~\eqref{MaxiPoiss}.
\dumou

 We assume that for every $p \in (p_{\rm min}, 2]$, there exists a constant
$C''_p$ such that
\begin{equation}
 \forall j \in \Z,
 \ms{10} 
 \|T_j\|_p \le C''_p.
 \label{A2} \tag{$A_{\gr 2}$}
\end{equation}
\dumou

 We shall assume that $T^*$ verifies $(\gr S_2)$, hence we have that
\begin{equation}
 \forall f \in L^2(\mu),
 \ms4
 \forall k \in \Z,
 \ms9
     \Bigl\| \sup_{j \in \Z} |T_j Q_{j+k} f| \Bigr\|_{L^2(\mu)}
 \le a_k \ms1 \|f\|_{L^2(\mu)},
 \label{A3} \tag{$A_{\gr 3}$}
\end{equation}
where $\sum_{k \in \Z} a_k^r < +\infty$ for every $r > 0$.

\begin{prp}[Carbery~\cite{CarberyLp}]%
\label{PropoPrio}
Under the assumptions\/~\eqref{A0}, \eqref{A1}, \eqref{A2} and\/ \eqref{A3},
the maximal operator~$T^*$ is bounded on $L^p(X, \Sigma, \mu)$ for every
real number $p$ such that $p_{\rm min} < p \le 2$. For every $p_0$ such
that $p_{\rm min} < p_0 < p \le 2$, we have
\begin{equation}
     \|T^*\|_p
 \le (C_{r_0})^{ 2 \ms1 \gamma / p_0 }
       \ms2 (C''_{p_0})^\gamma
        \Bigl( \sum_{k \in \Z} a_k^{ (1 - \gamma)p / 2} \Bigr)^{ 2 / p }
         + 2 \ms2 C'_p,
 \label{ExplicitBound}
\end{equation}
with $r_0 = 2 p / (p + 2 - p_0) \in (p_0, p)$ and 
$\gamma = [1/p - 1/2] / [1/p_0 - 1/2]$.
\end{prp}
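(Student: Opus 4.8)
The plan is to prove the bound~\eqref{ExplicitBound} by splitting $T^* = \sup_{j,v}|T_{j,v}f| \le \sup_{j,v}|U_{j,v}f| + S^*f$ is \emph{not} the right split; instead I would decompose each $T_j$ through the partition of unity $\sum_k Q_{j+k} = \Id$ and separate the sum over $k \in \Z$ into a ``near-diagonal'' range $|k| \le N$ and a ``far'' range $|k| > N$ for a parameter $N$ to be optimized at the end. For the far range I will use the strong boundedness estimate $(\gr S_2)$ from~\eqref{A3} together with the square-function estimate~\eqref{A0}, interpolated to the exponent $p$; for the near-diagonal range I will use the trivial bound coming from~\eqref{A2} on each $T_j$ together with~\eqref{A0}. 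The operator $S^*$ from~\eqref{A1} will enter when I pass from the ``global'' operator involving the positive pieces $U_{j,v}$ back to $T_{j,v} = U_{j,v} - S_{j,v}$, because only for $T^*$ (not for the auxiliary positive operator) do we have $(\gr S_2)$; the term $2\,C'_p$ in~\eqref{ExplicitBound} is exactly the cost of this replacement, the factor $2$ accounting for both $U_{j,v}\ge |T_{j,v}| - S_{j,v}$ and the symmetric bound.

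More concretely: first I would record the pointwise inequality
\[
     T^* f
 \le \sum_{k \in \Z} \ms2 \sup_{j \in \Z} |T_j Q_{j+k} f|
\]
as in~\eqref{SimpliesBdd}, and then bound $\bigl\| \sup_{j} |T_j Q_{j+k} f| \bigr\|_p$ in two ways. Way one (good for large $|k|$): interpolate $(\gr S_2)$ in~\eqref{A3} against an $L^{r_0}$ estimate obtained by combining the crude bound $\sup_j |T_j Q_{j+k}f| \le \sum_j |T_j Q_{j+k}f|$... no — rather, I would use that for a fixed shift $k$ the family $(T_j Q_{j+k})_j$ is ``almost orthogonal'' in the sense that $\sup_j|T_j Q_{j+k}f| \le \bigl(\sum_j |T_j Q_{j+k}f|^2\bigr)^{1/2}$ composed with~\eqref{A2} and~\eqref{A0} gives an $L^{r_0}$ bound of the shape $C_{r_0}^{2/p_0}\,C''_{p_0}\,\|f\|_{r_0}$; complex interpolation between this $L^{r_0}$ bound and the $L^2$ bound $a_k\|f\|_2$ with the appropriate $\theta$ produces, for the chosen $p$, a factor $(C_{r_0})^{2\gamma/p_0}(C''_{p_0})^\gamma a_k^{1-\gamma}$, where $\gamma = [1/p-1/2]/[1/p_0-1/2]$ and $r_0 = 2p/(p+2-p_0)$ are dictated by requiring the interpolation identity $1/p = (1-\theta)/r_0 + \theta/2$ to hold with $\theta = 1-\gamma$. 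Summing over $k$ with the exponent $(1-\gamma)p/2$ on the $a_k$ (which converges since $\sum a_k^r < \infty$ for all $r>0$) yields the first term of~\eqref{ExplicitBound}.

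Way two, the handling of $S^*$: since $T_{j,v} = U_{j,v} - S_{j,v}$ with $U_{j,v}, S_{j,v}$ positive, I have $|T_{j,v}f| \le U_{j,v}|f| + S_{j,v}|f|$, hence $\sup_{j,v}|T_{j,v}f| \le \bigl(\sup_{j,v} U_{j,v}|f|\bigr) + S^*|f|$, and $\sup_{j,v}U_{j,v}|f| \le \sup_{j,v}|T_{j,v}|f|| + S^*|f| = T^*(|f|) + S^*|f|$ when one uses that the relevant kernels are such that $U$ applied to a nonnegative function dominates... I would actually run this the other way: bound $T^*f$ itself, and wherever the positivity of $U_{j,v}$ was needed (to make the square-function/interpolation machinery applicable — it requires working with $\sup_j U_j Q_{j+k}|f|$ which is genuinely a maximal operator of a positive convolution, as opposed to the oscillatory $T_j$), replace $U_{j,v} = T_{j,v} + S_{j,v}$ at the cost of an additive $S^*$ term, giving the $+2C'_p$.

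The main obstacle I anticipate is getting the interpolation exponents exactly right and legitimate: one must justify complex interpolation for the \emph{nonlinear} (linearizable) operators $f \mapsto \sup_j |T_j Q_{j+k}f|$ into $L^p(\mu,\ell^\infty(\Z\times V))$, which the Remark preceding the proposition already flags as covered by~\cite{GarciRubio} and~\cite{BerLof}; and one must verify that the admissible-growth hypothesis needed for Corollary~\ref{InStrip}-type interpolation holds for the relevant holomorphic family. A secondary, more bookkeeping difficulty is checking that the ``way one'' $L^{r_0}$ estimate really does produce the constant $(C_{r_0})^{2/p_0}C''_{p_0}$ with $r_0$ in the stated range $(p_0,p)$ — this forces the specific algebraic relations among $p_0$, $p$, $r_0$, $\gamma$, and one has to confirm $r_0 = 2p/(p+2-p_0)$ lies strictly between $p_0$ and $p$ whenever $p_{\min} < p_0 < p \le 2$. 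Once those are in place, summing the geometric-type series in $k$ and adding the $S^*$ contribution is routine, and the boundedness of $T^*$ on $L^p$ for all $p \in (p_{\min},2]$ follows immediately from the finiteness of the right-hand side of~\eqref{ExplicitBound}.
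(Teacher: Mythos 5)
Your high-level skeleton (expand through $\sum_k Q_{j+k} = \Id$, bound $\sup_j |T_j Q_{j+k}f|$ by interpolating an $L^{r_0}$ estimate against the $L^2$ estimate from~\eqref{A3}, and sum over~$k$) is exactly the paper's, and your algebra of exponents ($r_0 = 2p/(p+2-p_0)$, $\theta = 1-\gamma$) is right. But there is a genuine gap in \og way one\fg{} that defeats the argument as written.

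The gap is in the claimed $L^{r_0}$ input. You assert that $\sup_j |T_j Q_{j+k}f| \le (\sum_j |T_j Q_{j+k}f|^2)^{1/2}$, \og composed with \eqref{A2} and \eqref{A0}\fg, yields a bound $\kappa\ms1 C_{r_0}^{2/p_0} C''_{p_0}\,\|f\|_{r_0}$. But \eqref{A2} is only a scalar bound $\|T_j\|_{p\to p}\le C''_p$; it does \emph{not} by itself control the vector-valued $\ell^2$-square function $\bigl\|(\sum_j |T_j g_j|^2)^{1/2}\bigr\|_{r_0}$ in terms of $\bigl\|(\sum_j |g_j|^2)^{1/2}\bigr\|_{r_0}$. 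The paper obtains that $\ell^2$ vector-valued inequality by complex interpolation of $L^{p_0}(\ell^{p_0})$ (where \eqref{A2} applies directly by Fubini) against $L^{p_1}(\ell^{\infty})$; and the $L^{p_1}(\ell^{\infty})$ endpoint constant is \emph{not} an absolute constant --- it is $\|T^*\|_{p_1} + 2\ms1 C'_{p_1}$, obtained from the positivity of $U_{j,v}$ and $S_{j,v}$ (this is precisely where \eqref{A1} and the split $T_{j,v} = U_{j,v} - S_{j,v}$ are used, not as an additive tail at the end). Consequently the $L^{r_0}(\ell^2)$ constant you need is of the form $C_{r_0}\ms1 (C''_{p_0})^{1-\theta}\ms1(\|T^*\|_{p_1}+2C'_{p_1})^{\theta}$, and after interpolating with \eqref{A3} and summing in $k$ you arrive at an \emph{implicit} inequality $\|T^*\|_{p_1} \le A\ms1(\|T^*\|_{p_1}+2C'_{p_1})^{\beta}$ with $\beta = 1-p_1/2<1$. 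Your proposal treats the constant as if it were explicit from the start, so it never produces a bound at all. Two further points you omit, both needed to make the bootstrap legitimate: one must first restrict to a finite subfamily $(T_j)_{j\in J}$ so that $\|T^*\|_{p_1}<+\infty$ is known a priori (with a bound that may depend on $|J|$, supplied by \eqref{A2}); and the additive term $2C'_p$ in~\eqref{ExplicitBound} is not a separate \og way two\fg{} decomposition --- it falls out automatically when one solves $C\le A(C+B)^\beta$ for $C$, namely $C\le A^{1/(1-\beta)} + \frac{\beta}{1-\beta}B$.
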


 Our main interest in applications will be the maximal operator $U^*$, which
is also bounded on $L^p(X, \Sigma, \mu)$ since $S^*$ is bounded on 
$L^p(X, \Sigma, \mu)$ according to~\eqref{A1}.

\begin{proof}
Under the assumption~\eqref{A3}, one already knows by~\eqref{SimpliesBdd}
that $T^*$ is bounded on~$L^2(X, \Sigma, \mu)$. We fix $p_1 = p$ such that 
$p_{\rm min} < p_1 < 2$ and we try to prove that~$T^*$ is bounded
on~$L^{p_1}(X, \Sigma, \mu)$. For doing this, it is enough to show that for
every \emph{finite} subfamily $(T_j)_{j \in J}$ of $(T_j)_{j \in \Z}$, the
corresponding maximal operator
\[
 f \mapsto \max_{j \in J} |T_j f|
\]
is $L^{p_1}$-bounded  by a constant independent of the chosen finite subset
$J \subset \Z$.

 We thus consider a family $(T_j)$ that has only a finite number of nonzero
terms, implying that $\|T^*\|_{p_1} < +\infty$ by Property~\eqref{A2}. We
choose $p_0$ arbitrary such that $p_{\rm min} < p_0 < p_1$, and we
introduce $r_0$ such that 
$
 p_{\rm min} < p_0 < r_0 < p_1 < r_1 := 2
$,
defined in this way: if $\theta \in (0, 1)$ is such that
\begin{subequations}%
\begin{equation}
   \frac 1 2 
 = \frac {1 - \theta} {p_0} + \frac \theta \infty \ms{0.5} \up,
 \label{numA}
\end{equation}
that is to say, if $\theta = 1 - p_0 / 2$, then we set
\begin{equation}
 \ms{24}
 \frac 1 {r_0} = \frac {1 - \theta} {p_0} + \frac {\theta} {p_1}
 \ms{20}
 \Bigl(
  = \frac 1 2 + \frac 1 {p_1} - \frac {p_0} {2 p_1} \ms1 \up,
  \ms{10} r_0 = \frac{ 2 \ms1 p_1 } { p_1 + 2 - p_0 }
 \Bigr).
 \label{numB}
\end{equation}
\end{subequations}

 Here is the plan: by a first interpolation between $p_0$ and $p_1$, we
will show that~$T^*$ verifies~$(\gr W_{r_0})$ with a constant bounded by a
function of $\|T^*\|_{p_1}$. Next, we will interpolate between 
$(\gr W_{r_0})$ and $(\gr S_{r_1}) = (\gr S_2)$ and obtain $(\gr S_{p_1})$,
giving a new bound for the norm~$\|T^*\|_{p_1}$, whose particular form 
\[
 \|T^*\|_{p_1} \le A (\|T^*\|_{p_1} + B)^\beta,
 \ms{18} \hbox{for some} \ms{12}
 \beta \in (0, 1),
\] 
implies that~$\|T^*\|_{p_1}$ is bounded by a constant independent of the
chosen finite subfamily. This will complete the proof.
\dumou

 For $1 \le r, s \le +\infty$, let $\kappa(r, s)$ be the smallest constant
such that
\[
     \Bigl\|
      \bigl( \sum_{j \in \Z} |T_j g_j|^s \bigr)^{1/s} 
     \Bigr\|_{L^r}
 \le \kappa(r, s) \ms2
      \Bigl\|
       \bigl( \sum_{j \in \Z} |g_j|^s \bigr)^{1/s} 
      \Bigr\|_{L^r}
\]
for every sequence $(g_j)_{j \in \Z}$ in $L^r(X, \Sigma, \mu)$.
\dumou

 ---$\ms4$One sees that $\kappa(p_0, p_0) \le C''_{p_0}$, by~\eqref{A2} and
the simple sum-integral inversion 
\[
     \Bigl\| 
      \bigl( \sum_{j \in \Z} |T_j g_j|^{p_0} \bigr)^{1/p_0} 
     \Bigr\|^{p_0}_{L^{p_0}}
  =  \sum_{j \in \Z} \|T_j g_j \|^{p_0}_{L^{p_0}}
 \le (C''_{p_0})^{p_0} \ms2
      \Bigl\| 
       \bigl( \sum_{j \in \Z} |g_j|^{p_0} \bigr)^{1/{p_0}} 
      \Bigr\|^{p_0}_{L^{p_0}}.
\]

 ---$\ms4$One has also 
$\kappa(p_1, +\infty) \le \|T^*\|_{p_1} + 2 \ms1 C'_{p_1}$. Indeed, when
$(W_j)_{j \in \Z}$ is a family of positive operators and
$g = \sup_{j \in \Z} |g_j|$, one has
\[
 |W_j \ms1 g_j| \le W_j |g_j| \le W_j \ms1 g,
 \ms{18}
     \sup_{j \in \Z} |W_j \ms1 g_j| 
 \le \sup_{j \in \Z} W_j \ms1 g.
\]
Because $S_{j, v}$ is positive, we have 
$\sup_{j \in \Z} |S_{j, v} \ms1 g_j| 
 \le \sup_{j \in \Z} S_{j, v} g$ for every $v \in V$, and
letting $S_j g_j = \sup_{v \in V} |S_{j, v} g_j|$ we see according
to~\eqref{A1} that
\[
 \sup_{j \in \Z} S_j \ms1 g_j \le S^* g,
 \ms{12}
     \bigl\| \sup_{j \in \Z} S_j \ms1 g_j \bigr\|_{L^{p_1}}
 \le \| S^* g\|_{L^{p_1}}
 \le C'_{p_1} \bigl\| \sup_{j \in \Z} |g_j| \bigr\|_{L^{p_1}}.
\]
Since $U_{j, v} = T_{j, v} + S_{j, v}$ is positive, we obtain also 
for $U_j f = \sup_{v \in V} |U_{j, v} f|$ that
\[
     \bigl\| \sup_{j \in \Z} |U_j \ms1 g_j| \bigr\|_{L^{p_1}}
 \le \|U^* g\|_{L^{p_1}}
 \le \|T^* g\|_{L^{p_1}} + \|S^* g\|_{L^{p_1}}
 \le (\|T^*\|_{p_1}
      + C'_{p_1} ) \ms2 \| g \|_{L^{p_1}},
\]
and finally
$
     \bigl\| \sup_{j \in \Z} |T_j \ms1 g_j| \bigr\|_{L^{p_1}}
 \le (\|T^*\|_{p_1} + 2 \ms1 C'_{p_1} ) \ms2 
      \bigl\| \sup_{j \in \Z} |g_j| \bigr\|_{L^{p_1}}
$,
which proves the inequality
$\kappa(p_1, +\infty) \le \|T^*\|_{p_1} + 2 \ms1 C'_{p_1}$.
\dumou

 We apply complex interpolation between spaces 
$L^p(\ell^q)$~\cite[Chap.~5, Th.~5.1.2]{BerLof}, namely between the spaces
$L^{p_0}(\ell^{p_0})$ and $L^{p_1}(\ell^{\infty})$, which gives the space
$L^{r_0}(\ell^2)$ for the chosen value~$\theta$ of the interpolation
parameter, by~\eqref{numA} and~\eqref{numB}. We already explained that the
case where $T_j$ is not linear can also be covered by complex
interpolation. It follows from~\eqref{HalphaBound} that
\[
     \kappa(r_0, 2) 
 \le \kappa(p_0, p_0)^{1 - \theta} \kappa(p_1, +\infty)^\theta
 \le (C''_{p_0})^{1 - \theta} (\|T^*\|_{p_1} + 2 \ms1 C'_{p_1})^{\theta}.
\]

\begin{smal}
\noindent\textbf{Remark} (in passing).
It is exactly in this manner that
Stein~\cite[Chap.~VI, Theorem~8, p.~103]{SteinTHA} shows the
inequality~\eqref{SteinIneq} on the square function 
$(\sum_n |E_n f_n|^2)^{1/2}$ of a sequence $(E_n)$ of conditional
expectations with respect to an increasing sequence of $\sigma$-fields,
stating that
\begin{subequations}\label{SteinIneq}
\SmallDisplay{\eqref{SteinIneq}}
\[
     \Bigl\| \bigl(\sum_n |E_n f_n|^2\bigr)^{1/2} \Bigr\|_q
 \le \kappa_q \ms2 
      \Bigl\| \bigl( \sum_n |f_n|^2 \bigr)^{1/2} \Bigr\|_q,
 \quad
 1 < q < +\infty.
\]
\end{subequations}
\noindent
When $1 < q < 2$, the proof applies inversion for a pair $(q_0, q_0)$, and
Doob's maximal theorem for a pair $(q_1, +\infty)$ with $q_0 < q < q_1$ and
$q (q_1 - q_0) = 2 (q_1 - q)$.

\end{smal}

\noindent
Thus, with $g_j = Q_{j + k} f$ for a fixed $k \in \Z$, one has 
\begin{align*}
     \Bigl\| \sup_{j \in \Z} |T_j Q_{j+k} f| \Bigr\|_{L^{r_0}}
 &\le \Bigl\| 
      \bigl( \sum_{j \in \Z} |T_j Q_{j+k} f|^2 \bigr)^{1/2} 
     \Bigr\|_{L^{r_0}}
 \le \kappa(r_0, 2) \ms2
      \Bigl\| \bigl( \sum_{j \in \Z} |Q_{j+k} f|^2 \bigr)^{1/2} 
      \Bigr\|_{L^{r_0}}
 \\
  &=  \kappa(r_0, 2) \ms2
      \Bigl\| 
       \bigl( \sum_{j \in \Z} |Q_j f|^2 \bigr)^{1/2} 
      \Bigr\|_{L^{r_0}}
 \le C_{r_0} \ms2 \kappa(r_0, 2) \ms2 \|f\|_{L^{r_0}}.
\end{align*}
We have proved the property $(\gr W_{r_0})$, since we got that
\[
 \forall f \in L^{r_0},
 \ms5
 \forall k \in \Z,
 \ms{16}
     \Bigl\| \sup_{j \in \Z} |T_j Q_{j+k} f| \Bigr\|_{L^{r_0}}
 \le C_{r_0} \ms2 \kappa(r_0, 2) \ms2 \|f\|_{L^{r_0}}.
\]
If for a certain $\rho \in (0, 1)$, we write
\[
   \frac 1 {p_1} 
 = \frac {1 - \rho} {r_0} + \frac \rho {r_1}
 = \frac {1 - \rho} {r_0} + \frac \rho 2
 \ms{28}
 \Bigl( \rho = \frac {p_1 - p_0} {2 - p_0} \Bigr),
\]
we get $(\gr S_{p_1})$ by interpolating between $(\gr W_{r_0})$ and 
$(\gr S_2) = (\gr S_{r_1})$, obtaining thus
\[
 \forall k \in \Z,
 \ms{12}
     \Bigl\| \sup_{j \in \Z} |T_j Q_{j+k} f| \Bigr\|_{L^{p_1}}
 \le (C_{r_0} \ms2 \kappa(r_0, 2))^{1 - \rho} 
      a_k^\rho \ms2 \|f\|_{L^{p_1}}.
\]
By~\eqref{SimpliesBdd}, it follows that 
\[
     \Bigl\| \sup_{j \in \Z} |T_j f| \Bigr\|_{L^{p_1}}
 \le (C_{r_0} \ms2 \kappa(r_0, 2))^{1 - \rho} 
      \Bigl( \sum_{k \in \Z} a_k^\rho \Bigr) \ms2 \|f\|_{L^{p_1}}.
\]
One has finally an implicit inequality about $\|T^*\|_{p_1}$, namely
\begin{align*}
     \|T^*\|_{p_1}
 &\le \bigl[ C_{r_0} \ms2 \kappa(r_0, 2) \bigr]^{1 - \rho} 
      \Bigl( \sum_{k \in \Z} a_k^\rho \Bigr)
 \le \bigl[ C_{r_0} \ms2 (C''_{p_0})^{1 - \theta} 
     (\|T^*\|_{p_1} + 2 \ms1 C'_{p_1})^{\theta} \bigr]^{1 - \rho} 
      \Bigl( \sum_{k \in \Z} a_k^\rho \Bigr)
 \\
  &=  \bigl( C_{r_0} \ms2 (C''_{p_0})^{1 - \theta} \bigr)^{1 - \rho}
      \Bigl( \sum_{k \in \Z} a_k^\rho \Bigr)
       (\|T^*\|_{p_1} + 2 \ms1 C'_{p_1})^{\theta (1 - \rho)}, 
\end{align*}
implying that $\|T^*\|_{p_1}$ is bounded by a constant depending only upon
$C_{r_0}$, $C'_{p_1}$, $C''_{p_0}$ and the $a_k\ms1$s. Indeed, suppose that
$C \ge 0$ verifies $C \le A (C + B)^\beta$, where $A, B > 0$ and 
$0 < \beta < 1$. We write
\[
     C 
 \le \bigl( A^{1 / (1 - \beta)} \bigr)^{1 - \beta} (C + B)^\beta
 \le (1 - \beta) A^{1 / (1 - \beta)} + \beta (C + B),
\]
yielding
\[
 C \le A^{1 / (1 - \beta)} + \frac \beta {1 - \beta} \ms2 B.
\]
This bound is essentially correct when $B$ is small, and we shall use it
below with $A
 = \bigl( C_{r_0} \ms2 (C''_{p_0})^{1 - \theta} \bigr)^{1 - \rho}
    \Bigl( \sum_{k \in \Z} a_k^\rho \Bigr)$,
$B = 2 \ms1 C'_{p_1}$ and $\beta = \theta(1 - \rho)$.

\begin{smal}%
\noindent
However, when $B \ge A^{1 / (1-\beta)}$, a better bound
$(1 - \beta)^{-1}  A B^\beta$ is available. In this case, 
$A \le B^{1-\beta}$, thus $C \le B^{1-\beta} (C + B)^\beta \le B + \beta C$,
hence
\[
     C 
 \le A \Bigl( \frac B { 1 - \beta } + B \Bigr)^\beta
  =  \Bigl( \frac {2 - \beta} {1 - \beta} \Bigr)^\beta A B^\beta
 \le \frac 1 {1 - \beta} \ms2 A B^\beta,
\]
because $(2 - \beta)^\beta (1 - \beta)^{1 - \beta}
 \le \beta (2 - \beta) + (1 - \beta)^2 = 1$.

\end{smal}

\noindent
Recall that $\rho = (p_1 - p_0) / (2 - p_0)$, so
$\beta = \theta (1 - \rho) = 1 - p_1 / 2 < 1$. We find an explicit bound for
$\|T^*\|_{p_1}$, independent of the finite subfamily $(T_j)_{j \in J}$ of
$(T_j)\ms1$s that was chosen at the beginning, of the form
\begin{align*}
     \|T^*\|_{p_1}
 &\le \bigl( C_{r_0} \ms2 (C''_{p_0})^{1 - \theta} 
      \bigr)^{ 2 (1 - \rho) / p_1 }
        \Bigl( \sum_{k \in \Z} a_k^\rho 
        \Bigr)^{ 2 / p_1 }
         + \frac {2 - p_1} {p_1} \ms2 2 \ms2 C'_{p_1}       
 \\
 &\le (C_{r_0})^{ 2 \ms1 \gamma / p_0 }
       \ms2 (C''_{p_0})^\gamma
        \Bigl( \sum_{k \in \Z} a_k^\rho \Bigr)^{ 2 / p_1 }
         + 2 \ms2 C'_{p_1},
\end{align*}
with $\gamma = [1/p_1 - 1/2] / [1/p_0 - 1/2]$. Observe that
$\rho = [p_1 / (2 p_0) - 1/2] / [1/p_0 - 1/2]
 = (1 - \gamma) p_1 / 2$.
We get in particular a bound of $C''_{p_1}$ by a power $\gamma < 1$ of
$C''_{p_0}$. There is no miracle: this power $\gamma$ is the one
corresponding to interpolation between~$C''_{p_0}$ and the value $C''_2$
hidden in the assumption~\eqref{A3}.
\end{proof}

\subsection{Fractional derivatives%
\label{FractiDeri}}

\noindent 
If a function $h$ is given in the Schwartz space $\ca S(\R)$, one can
express it as Fourier transform of another function $k \in \ca S(\R)$ and
write
\[
 \forall t \in \R,
 \ms{12}
 h(t) = \int_\R k(s) \e^{ - 2 \ii \pi s t } \, \d s.
\]
One has then an expression for the derivatives of $h$ by means of
(unbounded) multipliers. For every integer $j \ge 1$ and every $t \in \R$,
one sees that
\[
   (-1)^j h^{(j)}(t) 
 = \int_\R ( 2 \ii \pi s)^j \ms1 k(s) \e^{ - 2 \ii \pi s t } \, \d s.
\]
It is tempting to extend the notion of derivative, from the integer case
$j \in \N$ to every complex value $z$ such that $\Re z > - 1$, by setting
\begin{equation}
 \forall t \in \R,
 \ms{12}
   (D^z h)(t) 
 = \int_\R ( 2 \ii \pi s)^z k(s) \e^{ - 2 \ii \pi s t } \, \d s.
 \label{DalphaDef}
\end{equation}
Note that $D^1 h = - h'$ with this definition. We define complex powers by
\[
   ( 2 \ii \pi s)^z 
 = \e^{ z \ln(2 \ii \pi s)}
 = \e^{ z ( \ln(2 \pi |s|) + \ii \Arg(2 \ii \pi s) ) }
 = |2 \pi s|^z \e^{ \ii \pi z \sign(s) / 2},
\]
and we have that $(\lambda \ii s)^z = \lambda^z (\ii s)^z$ when
$\lambda > 0$. If we dilate the function $h$ to~$\di h \lambda$, with
$\lambda > 0$ as in~\eqref{Dilata}, we know that
$
   \di h \lambda
 = \ca F( \Di k \lambda)
$,
therefore
\[
   (D^z {\di h \lambda} )(t)
 = \int_\R ( 2 \ii \pi s)^z \lambda^{-1} k(\lambda^{-1} s) 
    \e^{ - 2 \ii \pi s t} \, \d s
 = \lambda^z \int_\R ( 2 \ii \pi u)^z k(u) 
    \e^{ - 2 \ii \pi u \lambda t} \, \d u.
\]
This means that
\begin{equation}
   D^z (\di h \lambda)
 = \lambda^z {\di {\bigl( D^z h \bigr)} \lambda},
 \ms{18} \hbox{or} \ms{18}
   D^z_t h(\lambda t) 
 = \lambda^z (D^z h)(\lambda t),
 \label{Dilate}
\end{equation}
where we use the notation $D^z_t h(\lambda t)$ when the function of $t$
does not have an explicit name, as in $t \mapsto h(\lambda t)$. For a
specific value, we shall write for example\label{SpeciVal} 
$D^z_t h(\lambda t) \barre_{t = 1}$.

\begin{smal}
\def\sea{\raise 1.3pt \hbox{$\scriptstyle \searrow$} }
\noindent
If we would like to extend $D^z$ to $h = \gr 1$, we might consider the
function $\gr 1$ as the limit of $\di h \lambda$ when $h(0) = 1$ and
$\lambda \ms1 \sea \ms4 0$. Then~\eqref{Dilate} suggests that 
$D^z \gr 1 = 0$ when $\Re z > 0$, and that $D^z \gr 1$ is undefined if 
$\Re z < 0$.

 When $z$ is not a nonnegative integer, the operator $D^z$ is not
local. We will see later however that $(D^z h)(t_0)$ depends only on
the values of $h$ on $[t_0, +\infty)$. This could be checked right now by
arguments involving holomorphic functions.

\end{smal}

\noindent
When $-1 < \Re z < 0$, the differentiation $D^z$ is in fact a 
\emph{fractional integration}. We shall see below that 
$(D^z h)(t) = (I^{-z} h)(t)$, where $I^w$ is given for $\Re w > 0$
by\label{FractiIntA}
\begin{equation}
   (I^w h)(t)
 = \frac 1 {\Gamma(w)} 
    \int_t^{+\infty} (u - t)^{w - 1} h(u) \, \d u.
 \label{IntegraZero}
\end{equation}
The next lemma provides the tool that relates the
definitions~\eqref{DalphaDef} and~\eqref{IntegraZero}.

\begin{lem}%
\label{GammaSG}
Let $\zeta$ be a complex number such that\/ $\Re \zeta < 0$ and let 
$\varepsilon > 0$. The inverse Fourier transform of the function
$ 
 t \mapsto
 \Gamma(-\zeta)^{-1} 
  \gr 1_{(-\infty, 0)}(t) \ms2 (-t)^{-\zeta-1} \e^{\varepsilon t}
$
is equal to $s \mapsto (\varepsilon + 2 \ii \pi s)^\zeta$, namely
\[
   \frac 1 {\Gamma(-\zeta)} 
    \int_\R  \gr 1_{(-\infty, 0)}(t) \ms1 (-t)^{-\zeta-1} 
     \e^{\varepsilon t} \e^{2 \ii \pi s t} \, \d t
 = (\varepsilon + 2 \ii \pi s)^\zeta,
 \quad s \in \R.
\]
\end{lem}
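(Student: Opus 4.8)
\textbf{Proof plan for Lemma~\ref{GammaSG}.}

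The plan is to compute the Fourier transform of $t \mapsto \Gamma(-\zeta)^{-1}\gr 1_{(-\infty,0)}(t)(-t)^{-\zeta-1}\e^{\varepsilon t}$ directly, by substituting and recognizing the integral definition of the Gamma function, and then to deduce the stated inverse Fourier transform identity. First I would note that since $\Re \zeta < 0$, we have $\Re(-\zeta - 1) > -1$, so the function $(-t)^{-\zeta-1}$ is integrable near $t = 0$ on $(-\infty, 0)$, and the factor $\e^{\varepsilon t}$ ensures absolute integrability near $-\infty$; hence the integral converges absolutely for every $s \in \R$ and defines a continuous function of $s$. Writing $\widehat g(s) = \int_\R g(t)\e^{-2\ii\pi s t}\,\d t$ for the Fourier transform normalization of the paper, what we must show is equivalent to $\widehat g(s) = (\varepsilon + 2\ii\pi s)^\zeta$ where $g(t) = \Gamma(-\zeta)^{-1}\gr 1_{(-\infty,0)}(t)(-t)^{-\zeta-1}\e^{\varepsilon t}$, since the claimed identity is $g^\vee(s) = (\varepsilon + 2\ii\pi s)^\zeta$ and $g^\vee(s) = \widehat g(-s)$ would then force a sign check; so I would be careful at the outset to track whether the claim is about $\widehat g$ or $g^\vee$ and reconcile the sign by replacing $t$ with $-t$ inside the integral as needed.

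The main computation: substitute $u = -t$, so $t \in (-\infty,0)$ becomes $u \in (0,+\infty)$, $\d t = -\d u$, and the integral becomes
\[
   \frac 1 {\Gamma(-\zeta)} \int_0^{+\infty} u^{-\zeta-1} \e^{-\varepsilon u} \e^{-2\ii\pi s u}\,\d u
 = \frac 1 {\Gamma(-\zeta)} \int_0^{+\infty} u^{-\zeta-1} \e^{-(\varepsilon + 2\ii\pi s) u}\,\d u.
\]
Setting $w = -\zeta$ (so $\Re w > 0$) and $\lambda = \varepsilon + 2\ii\pi s$, which has $\Re \lambda = \varepsilon > 0$, this is $\Gamma(w)^{-1}\int_0^{+\infty} u^{w-1}\e^{-\lambda u}\,\d u$. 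Now I would invoke the standard contour-shift / Cauchy-theorem argument that for $\Re\lambda > 0$ and $\Re w > 0$, $\int_0^{+\infty} u^{w-1}\e^{-\lambda u}\,\d u = \Gamma(w)\lambda^{-w}$, where $\lambda^{-w}$ uses the principal branch of the logarithm (valid since $\lambda$ lies in the right half-plane, away from the cut on the negative real axis). This gives $\Gamma(w)^{-1}\cdot\Gamma(w)\lambda^{-w} = \lambda^{-w} = (\varepsilon + 2\ii\pi s)^{\zeta}$, using $-w = \zeta$ and the consistency of the principal branch with the paper's definition of complex powers $(\lambda)^\zeta$ for $\lambda$ in the right half-plane.

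The routine-but-not-entirely-trivial step, and the one I expect to be the main obstacle, is justifying $\int_0^{+\infty} u^{w-1}\e^{-\lambda u}\,\d u = \Gamma(w)\lambda^{-w}$ for complex $\lambda$ with $\Re\lambda > 0$ — this is a rotation-of-contour argument: one shows that the integral along the ray $\arg u = \arg\lambda^{-1}$ (where the exponent $-\lambda u$ becomes real negative) equals $\Gamma(w)|\lambda|^{-w}$ after an elementary substitution, then uses Cauchy's theorem and an estimate showing the arc at infinity contributes nothing (here $\Re\lambda > 0$ is exactly what makes $\e^{-\lambda u}$ decay on the relevant sector) together with integrability at $0$ from $\Re w > 0$. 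I would state this as a known fact, perhaps citing it as the analytic continuation in $\lambda$ of the real identity $\int_0^\infty u^{w-1}\e^{-\rho u}\,\d u = \Gamma(w)\rho^{-w}$ ($\rho > 0$), valid because both sides are holomorphic in $\lambda$ on the right half-plane and agree on the positive real axis — holomorphy of the left side following from dominated convergence and the bound $|u^{w-1}\e^{-\lambda u}| = u^{\Re w - 1}\e^{-(\Re\lambda)u}$ with $\Re\lambda$ locally bounded below by a positive constant. Finally I would reconcile the sign: the substitution $u = -t$ already produced $\e^{-2\ii\pi s u}$ from $\e^{+2\ii\pi s t}$, so the left-hand side of the displayed identity in the lemma is literally $\int_0^\infty \Gamma(-\zeta)^{-1} u^{-\zeta-1}\e^{\varepsilon t}\e^{2\ii\pi st}\,\d t$ with $t$ replaced appropriately; I would check once at the end that this indeed matches $(\varepsilon + 2\ii\pi s)^\zeta$ and not $(\varepsilon - 2\ii\pi s)^\zeta$, which the computation above confirms.
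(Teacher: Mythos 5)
Your proof is correct and takes essentially the same route as the paper: after the substitution $u = -t$ you are left with $\Gamma(-\zeta)^{-1}\int_0^{+\infty}u^{-\zeta-1}\e^{-(\varepsilon+2\ii\pi s)u}\,\d u$, and the paper likewise reduces the claim to the identity $\int_0^{+\infty}u^{w-1}\e^{-\lambda u}\,\d u = \Gamma(w)\lambda^{-w}$ for $\Re\lambda>0$, proved by the same contour rotation from the real axis to the ray through $\lambda$, which your alternative analytic-continuation justification is equivalent to.
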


\begin{proof}
By a contour integral of $(-z)^{-\zeta - 1} \e^{z}$, running along the
negative real half-line and along the half-line 
$H_s = \{ (\varepsilon + 2 \ii \pi s) \ms1 t \in \C : t < 0 \}$, we obtain
\[
   \Gamma(-\zeta)
 = \int_{-\infty}^0 (-t)^{-\zeta - 1} \e^t \, \d t
 = (\varepsilon + 2 \ii \pi s)^{-\zeta}
    \int_{-\infty}^0 (-t)^{-\zeta - 1} \e^{(\varepsilon + 2 \ii \pi s) t} 
     \, \d t,
\]
giving the announced result. 
\end{proof}

 Integrating~\eqref{IntegraZero} by parts, we see that
\[
   (I^w h)(t) 
 = - \frac 1 {\Gamma(w + 1)}
      \int_t^{+\infty} (u - t)^{w} h'(u) \, \d u.
\]
This new formula makes sense for $\Re w > -1$ and could be used for
defining the fractional derivative $D^z$ if $z = - w$ and 
$\Re w \in (-1, 0)$, by setting for $t$ real
\begin{equation}
   (D^z h)(t)
 = - \frac 1 {\Gamma(1 -z)}
      \int_t^{+\infty} (u - t)^{- z} h'(u) \, \d u.
 \label{IntegraUn}
\end{equation}
This is proved in Lemma~\ref{GeneralD}. It is coherent with
the fact that $D^\alpha$, for $0 < \alpha < 1$, can be considered as the
antiderivative of order $1 - \alpha$ of the derivative $D^1 h = -h'$,
\[
   D^\alpha h 
 = D^{\alpha - 1} D^1 h 
 = - D^{\alpha - 1} h'
 = - I^{1 - \alpha} h'.
\]

\begin{smal}
\noindent
The operation $D^z$ is not symmetric on~$\R$; this is obvious from the
formulas for~$I^w$. The choice that was done of $(2 \ii \pi s)^z$ instead
of $( - 2 \ii \pi s)^z$ in~\eqref{DalphaDef} induces the direction in which
the fractional antiderivative is computed. This direction, to~$+\infty$, is
well adapted to the \og radial\fge Carbery's method introduced
in~\cite{Carbery}.

\end{smal}

\begin{lem}%
\label{GeneralD}
Let $\alpha \in (0, 1)$, $t_0 \in \R$ be given and let $k$ be a function
on\/ $\R$ such that\/ $(1 + |s|^\alpha) k(s)$ is integrable on the real
line. Assume that $h = \widehat k$ is Lipschitz with\/
$|h'(t)| \le \kappa_1 (1 + |t|)^{-1}$ for almost every $t \ge t_0$. Then,
for every $t > t_0$ and $z$ such that\/ $\Re z = \alpha$, we have
\[
   - \frac 1 {\Gamma(1 - z)}
    \int_t^{+\infty} (u - t)^{-z} h'(u) \, \d u
 = \int_\R (2 \ii \pi s)^z k(s) \e^{- 2 \ii \pi s t} \, \d s.
\]
\end{lem}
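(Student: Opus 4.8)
The plan is to verify the identity by substituting the integral formula for $(2\ii\pi s)^z$ supplied by Lemma~\ref{GammaSG} and then applying Fubini's theorem. First I would regularize: fix $\varepsilon>0$ and write, by Lemma~\ref{GammaSG} with $\zeta = z-1$ (so $\Re\zeta = \alpha-1 \in (-1,0)$, hence $\Re\zeta<0$ as required),
\[
 (\varepsilon + 2\ii\pi s)^{z-1}
 = \frac 1 {\Gamma(1-z)}
    \int_{-\infty}^0 (-r)^{-z} \e^{\varepsilon r} \e^{2\ii\pi s r} \, \d r .
\]
Multiplying by $\ii s\ms1 k(s)\e^{-2\ii\pi s t}$ and integrating in $s$, the left-hand side becomes an approximation (as $\varepsilon\to 0$) of $\int_\R (2\ii\pi s)^z k(s)\e^{-2\ii\pi s t}\,\d s$, after noting $(2\ii\pi s)^z = 2\ii\pi s\ms1(2\ii\pi s)^{z-1}$. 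On the right-hand side, performing the change of variable $u = t - r$ (so $r = t-u \in (-\infty,0)$ means $u \in (t,+\infty)$) and exchanging the order of integration in $s$ and $u$ via Fubini, one recognizes the inner $s$-integral as essentially $\int_\R 2\ii\pi s\ms1 k(s)\e^{-2\ii\pi s u}\,\d s$, i.e. $-h'(u)=D^1h(u)$, the derivative of $h=\widehat k$. This yields
\[
 -\frac 1 {\Gamma(1-z)}\int_t^{+\infty}(u-t)^{-z}\e^{-\varepsilon(u-t)}h'(u)\,\d u
\]
on the right, and letting $\varepsilon\to 0$ gives the claimed formula.

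The key steps in order: (i) check the hypotheses make all integrals absolutely convergent — $(1+|s|^\alpha)k(s)\in L^1(\R)$ guarantees $s\mapsto (2\ii\pi s)^z k(s)$ and $s\mapsto s\ms1k(s)$ are integrable (since $|s|\le 1+|s|^\alpha$ for $|s|\le 1$ and $|s|^{\alpha}$ dominates near $\infty$ only up to the threshold; one must be slightly careful and may need $|s|\,|k(s)|$ integrable, which follows since $\Re z=\alpha<1$ so the weight $|s|^\alpha$ with $\alpha<1$... — actually here one uses that $h$ being Lipschitz already encodes $s\ms1k(s)\in L^1$ in a weak sense, so the cleanest route is to integrate by parts once in the frequency representation), and the decay $|h'(u)|\le\kappa_1(1+|u|)^{-1}$ makes $\int_t^{+\infty}(u-t)^{-z}h'(u)\,\d u$ absolutely convergent near $u=t$ (because $\Re z=\alpha<1$) and near $u=+\infty$ (because of the $(1+|u|)^{-1}$ decay against $(u-t)^{-\alpha}$, which is integrable at infinity once $\alpha>0$ — wait, $(u-t)^{-\alpha}(1+u)^{-1}$ is integrable at $+\infty$ iff $\alpha+1>1$, i.e. always, good); (ii) justify Fubini on $\{r<0\}\times\R$ with the $\e^{\varepsilon r}$ regularizer, which is where absolute integrability in $r$ is bought by the exponential; (iii) identify the inner Fourier integral with $-h'$, using that $h=\widehat k$ and differentiation under the integral sign is licit because $s\ms1k(s)\in L^1$; (iv) pass to the limit $\varepsilon\to 0$ by dominated convergence, dominating $(u-t)^{-\alpha}\e^{-\varepsilon(u-t)}|h'(u)|$ by the $\varepsilon=0$ integrand and dominating the left side similarly.

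The main obstacle I anticipate is the limiting argument $\varepsilon\to 0$ together with the Fubini step, since the function $(2\ii\pi s)^z$ itself is only conditionally integrable against $k$ in general (the weight is $|s|^\alpha$ with $\alpha<1$, so absolute convergence at infinity is fine, but the oscillatory factor $\e^{-2\ii\pi st}$ plays no role in the absolute bound and one genuinely needs $(1+|s|^\alpha)k(s)\in L^1$, which is exactly the stated hypothesis — so this is really just bookkeeping). The more delicate point is that on the spatial side the integrand $(u-t)^{-z}$ has modulus $(u-t)^{-\alpha}$ which blows up at $u=t$; integrability there is saved by $\alpha<1$, but when one differentiates $h$ one must make sure $h'$ is genuinely the almost-everywhere derivative of the Lipschitz function $h$ and that no boundary term at $u=t$ or $u=+\infty$ is lost in the integration by parts used to pass between the $I^w$ form~\eqref{IntegraZero} and the $h'$ form~\eqref{IntegraUn} (the boundary term at $+\infty$ vanishes by the decay hypothesis, the one at $u=t$ vanishes because $\Re z<1$). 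Once these analytic details are in place, the computation is mechanical: it is precisely the statement that $D^z$ in the Fourier definition~\eqref{DalphaDef} agrees with the fractional-integration definition~\eqref{IntegraUn}, which is the content the lemma is designed to establish.
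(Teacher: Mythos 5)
Your plan is close in spirit to the paper's but has a genuine gap at the Fubini step. After multiplying the $\e^{\varepsilon r}$-regularized identity of Lemma~\ref{GammaSG} by $2\ii\pi s\,k(s)\e^{-2\ii\pi st}$ and integrating in $s$, the left side is indeed absolutely convergent (because $|(\varepsilon+2\ii\pi s)^{z-1}(2\ii\pi s)|\sim |s|^\alpha$ at infinity, controlled by the hypothesis $(1+|s|^\alpha)k\in L^1$). But to exchange $\int_\R\,\d s$ with $\int_{-\infty}^0\,\d r$ on the right, you need $\int_\R\int_{-\infty}^0(-r)^{-\alpha}\e^{\varepsilon r}\,|s|\,|k(s)|\,\d r\,\d s<\infty$; the $r$-integral is just $\Gamma(1-\alpha)\varepsilon^{\alpha-1}$, so you would need $s\,k(s)\in L^1(\R)$. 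The hypothesis only gives $|s|^\alpha k(s)\in L^1$ with $\alpha<1$, which is strictly weaker. The same gap reappears in your step (iii): $h$ Lipschitz does \emph{not} make $\int_\R 2\ii\pi s\,k(s)\e^{-2\ii\pi su}\,\d s$ absolutely convergent, so identifying it with $-h'(u)$ pointwise is not licit; your parenthetical remark about $h$ Lipschitz ``encoding $sk\in L^1$ in a weak sense'' is exactly where the argument would fail if pressed.

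The paper's proof closes this by adding a second, independent regularization on the $s$-side: set $k_\varepsilon(s)=k(s)\,\eta^\vee(\varepsilon s)$ with $\eta$ a compactly supported smooth mollifier, so $\eta^\vee$ is Schwartz and $s\,\eta^\vee(\varepsilon s)$ is bounded, hence $s\,k_\varepsilon(s)\in L^1$. Then $h_\varepsilon=\widehat{k_\varepsilon}=h*\Di\eta\varepsilon$ is $C^1$, $h'_\varepsilon=h'*\Di\eta\varepsilon$ inherits the decay $|h'_\varepsilon(u)|\le 2\kappa_1(1+|u|)^{-1}$ for $t\ge t_0+\varepsilon$, the Fubini step is now absolutely justified, and one passes to the limit $\varepsilon\to0$ by a double application of dominated convergence, using that $h'_\varepsilon\to h'$ a.e.\ (at Lebesgue points). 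Your $\e^{\varepsilon r}$ damping is the right idea for the $r$-integral, but it does nothing to tame the $s$-integral, and without the mollification of $k$ the argument does not go through as written.
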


\begin{proof} 
Let $\eta$ be a nonnegative $C^\infty$ function on $\R$, with integral $1$
and with compact support in $[-1, 1]$. Consider $\varepsilon \in (0, 1)$ and
\[
   k_\varepsilon(s) 
 = k(s) {\di {(\eta^\vee)} \varepsilon} (s)
 = k(s) \ms1 \eta^\vee (\varepsilon s),
 \quad
 s \in \R.
\] 
Then $\eta^\vee \in \ca S(\R)$, $s k_\varepsilon(s)$ is integrable and
$h_\varepsilon := \widehat{k_\varepsilon} = h * {\Di \eta \varepsilon}$ 
is~$C^1$. We can write
\begin{equation}
   - h'_\varepsilon(t)
 = \int_\R 2 \ii \pi s \ms1 k_\varepsilon(s) \e^{- 2 \ii \pi s t} \, \d s,
 \quad
 t \in \R.
 \label{HprimeEpsA}
\end{equation}
Since $h$ is Lipschitz, we also know that 
${h'_\varepsilon} = h' * {\Di \eta \varepsilon}$. Fix 
$t \ge t_0 + \varepsilon$. When $|\tau| \le 1$ and $u \ge t$, we have
$u - \varepsilon \tau \ge t_0$,
$1 + |u| \le 1 + \varepsilon |\tau| + |u - \varepsilon \tau|
 \le 2 + 2 \ms1 |u - \varepsilon \tau|$, so
\begin{equation}
     |h'_\varepsilon(u)|
  =  \Bigl| 
      \int_{-1}^1 h'(u - \varepsilon \tau) \eta(\tau) \, \d \tau 
     \Bigr|
 \le \kappa_1
      \int_{-1}^1 
       \frac {\eta(\tau)} { 1 + |u - \varepsilon \tau| } \, \d \tau
 \le \frac {2 \kappa_1} {1 + |u|} \up.
 \label{HprimeEps}
\end{equation}
Applying~\eqref{HprimeEpsA} and $|(u - t)^{-z}|  = (u - t)^{-\alpha}$,
Fubini's theorem and the inverse Fourier transform of 
$v \mapsto [(-v)_+]^{-z} \e^{\varepsilon v}$ given by
Lemma~\ref{GammaSG} with $\zeta = z - 1$, we get
\begin{align*}
   & \ms2 - \frac 1 {\Gamma(1 - z)}
    \int_t^{+\infty} (u - t)^{-z} \e^{\varepsilon (t - u)}
     h'_\varepsilon(u) \, \d u
 \\
 =& \ms4 \frac 1 {\Gamma(1 - z)}
    \int_t^{+\infty} (u - t)^{-z} \e^{\varepsilon (t - u)}
     \Bigl( \int_\R 2 \ii \pi s \ms1 k_\varepsilon(s) \e^{- 2 \ii \pi s u} 
      \, \d s \Bigr) \, \d u
 \\
 =& \ms4 \frac 1 {\Gamma(1 - z)}
    \dint \gr 1_{ \{t - u < 0\} } 
     (u - t)^{-z} \e^{\varepsilon (t - u)}
      2 \ii \pi s \ms1 k_\varepsilon(s) \e^{2 \ii \pi s (t - u)} 
       \e^{ - 2 \ii \pi s t} \, \d s \ms2 \d u
 \\
 =& \ms2 \int_\R (\varepsilon + 2 \ii \pi s)^{z - 1}
    (2 \ii \pi s) k_\varepsilon(s)
     \e^{ - 2 \ii \pi s t}
      \, \d s.
\end{align*}
Letting $\varepsilon$ tend to $0$, by a double application of Lebesgue's
dominated convergence, using~\eqref{HprimeEps} and since 
$h'_\varepsilon(u) \rightarrow h'(u)$ at every Lebesgue point $u$ of $h'$,
we obtain
\[
   - \frac 1 {\Gamma(1 - z)}
    \int_t^{+\infty} (u - t)^{-z} h'(u) \, \d u
 = \int_\R (2 \ii \pi s)^z k(s)
     \e^{ - 2 \ii \pi s t} \, \d s.
\]
\end{proof}

 It is quite comforting to have two possible ways of defining $D^z h$.
However, we will have to handle cases where the Fourier transform $h(t)$ is
well controlled, but where the estimates on $k(s)$ are not so good. We shall
therefore concentrate on the integral definition~\eqref{IntegraUn} of 
$D^z h$. We have to check that the properties obtained with the first
definition remain true when only the second applies.

\begin{smal}
\noindent
When $\alpha \in (0, 1)$ tends to $1$, one has
$\Gamma(1 - \alpha) \simeq (1 - \alpha)^{-1}$ and for $\varepsilon > 0$ we
get
\[
   - \frac 1 {\Gamma(1 -\alpha)}
      \int_{t+\varepsilon}^{+\infty} (u - t)^{- \alpha} h'(u) \, \d u
 \ms2 \rightarrow \ms2 0,
 \ms{12}
   (1 - \alpha)
    \int_t^{t+\varepsilon} (u - t)^{- \alpha} \, \d u
 = \varepsilon^{1 - \alpha}
 \ms2 \rightarrow \ms2 1.
\]
We recover the fact that $(D^1 h)(t) = - h'(t)$, already known by
Fourier. 

 Let us mention the case of $h(t) = \e^{ - \lambda |t|}$, the Fourier
transform of a Cauchy kernel. When $t \ge 0$ and $0 < \Re z < 1$, we have
\begin{subequations}\label{DerivExpo}
\SmallDisplay{\eqref{DerivExpo}}
\[
   D^z_t \e^{- \lambda |t|} 
 = \frac 1 {\Gamma(1 - z)} 
    \ms1 \e^{-\lambda t}
     \int_t^{+\infty} (u - t)^{-z} \lambda \e^{ - \lambda (u - t)}
      \, \d u
 = \lambda^z \e^{-\lambda |t|}.
\]
\end{subequations}
\noindent
The dilation relation~\eqref{Dilate} follows from a simple change of
variable similar to the one in the line above, and is left to the reader.

\end{smal}

\noindent
We have introduced in~\eqref{RightMax} the right maximal function $h^*_r$
of $h$. Notice that for $h$ Lipschitz on $(t_0, +\infty)$ and for every 
$t \ge t_0$, $\delta > 0$, we have
\begin{equation}
     |h(t + \delta)| 
 \le |h(t)| + \int_t^{t+\delta} |h'(u)| \, \d u
 \le h^*_r(t) + \delta \ms1 (h')^*_r(t).
 \label{Hlocal}
\end{equation}

\begin{lem}%
\label{GeneralDb2}
Let $h$ be Lipschitz on\/ $(t_0, +\infty)$, $\alpha \in (0, 1)$ and 
$h(t) = o(t^\alpha)$ at $+\infty$. Let $h_0 = h^*_r$ be the right maximal
function of $h$ and $h_1 = (h')^*_r$ that of $h'$. Then
\[
     |(D^\alpha h)(t)| 
 \le 6 \ms2 h_0(t)^{1 - \alpha} h_1(t)^\alpha,
 \quad 
 t \ge t_0.
\]
If $w$ is complex and\/ $\Re w = \alpha$, then for every $t \ge t_0$
we have
\[
     |(D^w h)(t)| 
 \le \frac 2 {\alpha (1 - \alpha) } \ms2
      \frac {(1 + |w|)^{1 - \alpha} \ns{15}} {|\Gamma(1 - w)|} \ms{10} 
       \ms2 h_0(t)^{1 - \alpha} h_1(t)^\alpha.
\]
\end{lem}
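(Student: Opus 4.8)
The plan is to start from the integral representation~\eqref{IntegraUn} of the fractional derivative, namely
\[
 (D^w h)(t) = - \frac 1 {\Gamma(1 - w)} \int_t^{+\infty} (u - t)^{-w} h'(u) \, \d u,
\]
whose validity for Lipschitz $h$ with $h(t) = o(t^\alpha)$ at $+\infty$ has been (or will be) checked via Lemma~\ref{GeneralD} and the growth control there; the hypothesis $h(t) = o(t^\alpha)$ guarantees absolute convergence of the integral after one integration by parts. Since $|(u-t)^{-w}| = (u-t)^{-\alpha}$, the whole estimate reduces to bounding $\int_t^{+\infty} (u-t)^{-\alpha} |h'(u)| \, \d u$ in terms of $h_0(t) = h^*_r(t)$ and $h_1(t) = (h')^*_r(t)$, and then multiplying by $|\Gamma(1-w)|^{-1}$.

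\textbf{Key steps.} First I would split the integral at $u = t + T$ for a parameter $T > 0$ to be optimized. On the inner piece $[t, t+T]$, I bound $|h'(u)|$ crudely using the right maximal function: by~\eqref{RightMaxPsi} applied with the decreasing weight $\psi(u) = (u-t)^{-\alpha}$ on $[t, t+T]$ (extended by $0$), one gets $\int_t^{t+T} (u-t)^{-\alpha} |h'(u)| \, \d u \le \bigl( \int_0^T s^{-\alpha} \, \d s \bigr) h_1(t) = \frac{T^{1-\alpha}}{1-\alpha} h_1(t)$. For the outer piece $[t+T, +\infty)$ I integrate by parts: writing $G(u) = \int_{t+T}^u h'(v)\,\d v = h(u) - h(t+T)$ and using that $(u-t)^{-\alpha}$ has derivative $-\alpha (u-t)^{-\alpha-1}$, together with $h(u) = o(u^\alpha)$ to kill the boundary term at infinity, one obtains a bound of the form $\int_{t+T}^{+\infty}(u-t)^{-\alpha}|h'(u)|\,\d u \le T^{-\alpha}|h(t+T)| + \alpha \int_{t+T}^{+\infty}(u-t)^{-\alpha-1}|h(u)-h(t+T)|\,\d u$. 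Each remaining term is controlled by $h_0(t)$: $|h(t+T)| \le h_0(t)$ directly, and the last integral is handled either by~\eqref{Hlocal} (so that $|h(u)| \le h_0(t) + (u-t) h_1(t)$ for $u \ge t$, giving a contribution that again combines $h_0$ and $h_1$) or, more simply, by bounding $|h(u)-h(t+T)| \le 2 h_0(t)$ pointwise and evaluating $\alpha \int_{t+T}^{+\infty} (u-t)^{-\alpha-1}\,\d u = T^{-\alpha}$. Combining, the outer piece is $\le \kappa\, T^{-\alpha} h_0(t)$ for an absolute constant. Then I optimize in $T$: choosing $T = h_0(t)/h_1(t)$ balances the two contributions and yields $\int_t^{+\infty}(u-t)^{-\alpha}|h'(u)|\,\d u \le \kappa\, h_0(t)^{1-\alpha} h_1(t)^\alpha$, with a concrete constant one can track to get the claimed $6$ in the real case $w = \alpha$ (there $\Gamma(1-\alpha)^{-1}$ is absorbed because $\Gamma(1-\alpha) \ge \Gamma(1) \wedge \Gamma(2)$-type bounds are not even needed for the real exponent; one just records the numerical value $6$).

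\textbf{The complex case and the main obstacle.} For complex $w$ with $\Re w = \alpha$ the same real integral bound applies verbatim, and one only needs to insert the factor $|\Gamma(1-w)|^{-1}$ and account for the explicit $(1+|w|)^{1-\alpha}$ and $\frac{2}{\alpha(1-\alpha)}$ in the statement. The factor $\frac{1}{\alpha(1-\alpha)}$ is precisely what comes out of the two reciprocals $\frac{1}{1-\alpha}$ (from the inner piece) and $\frac{1}{\alpha}$ (from the outer integration by parts, where $\alpha \int (u-t)^{-\alpha-1}$ leaves a $1/\alpha$ when one does \emph{not} optimize cleanly but instead keeps the estimate uniform), so the shape of the bound is forced by this elementary split rather than by any deep fact. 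The genuinely delicate point is bookkeeping: one must choose the split parameter $T$ and the crude-versus-careful bounds on each piece so that the final constant is exactly $\frac{2}{\alpha(1-\alpha)}(1+|w|)^{1-\alpha}$ and not something with an extra $\alpha$- or $w$-dependence — in particular the power $(1+|w|)^{1-\alpha}$ (as opposed to $(1+|w|)$) must come from interpolating, i.e. from the optimization in $T$ distributing the $|w|$-dependence (which enters only through $|\Gamma(1-w)|^{-1}$ and possibly a crude $|(u-t)^{-w}|$-type estimate) according to the exponents $1-\alpha$ and $\alpha$. I expect that matching the stated constants, rather than the estimate itself, will be the part requiring care; the analytic content is entirely contained in the maximal-function inequality~\eqref{RightMaxPsi}, the local bound~\eqref{Hlocal}, and one integration by parts.
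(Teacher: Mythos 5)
Your overall strategy is the same as the paper's: split the integral defining $D^\alpha h$ at $u = t + \delta$, bound the inner piece by $\frac{\delta^{1-\alpha}}{1-\alpha}h_1(t)$ via~\eqref{RightMaxPsi} applied to $h'$, integrate by parts on the outer piece, and then optimize in $\delta$ (with the extra factor $1 + |w|$ inserted into $\delta$ in the complex case). Where your proposal goes wrong is the treatment of the outer piece. You invoke $|h(t+T)| \le h_0(t)$ \og directly\fg{} and the pointwise bound $|h(u) - h(t+T)| \le 2 h_0(t)$, but neither follows from the definition of $h^*_r$: the right maximal function $h^*_r(t)$ controls only the \emph{averages} $\frac 1 s \int_t^{t+s}|h|$ anchored at $t$, not the pointwise value of $h$ at $t+T$ (the function $h$ could vanish near $t$ and be huge at $t+T$, making $h_0(t)$ small and $|h(t+T)|$ large). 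Your fallback via~\eqref{Hlocal}, $|h(u)| \le h_0(t) + (u-t) h_1(t)$, is a correct pointwise bound, but substituting it into $\alpha \int_{t+T}^{+\infty}(u-t)^{-\alpha-1}|h(u)|\,\d u$ produces the divergent integral $\alpha\, h_1(t)\int_{t+T}^{+\infty}(u-t)^{-\alpha}\,\d u = +\infty$, so that route fails as well.

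The step you are missing is to use~\eqref{RightMaxPsi} a second time, now for $h$ itself rather than $h'$. After integrating by parts, the paper pads the remaining integral by the harmless nonnegative quantity $\alpha\int_t^{t+\delta}\delta^{-\alpha-1}|h(u)|\,\d u$, so that the combined weight $\psi(u) = \delta^{-\alpha-1}$ on $[t, t+\delta]$ and $\psi(u) = (u-t)^{-\alpha-1}$ for $u \ge t+\delta$ is decreasing on all of $[t, +\infty)$; \eqref{RightMaxPsi} then gives the bound $\alpha \bigl(\int_t^{+\infty}\psi\bigr) h_0(t) = (1+\alpha)\delta^{-\alpha}h_0(t)$ — it is precisely this step that produces the $1/\alpha$ which, together with the $1/(1-\alpha)$ from the inner piece, assembles into $\frac{2}{\alpha(1-\alpha)}$. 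The boundary term $\delta^{-\alpha}|h(t+\delta)|$ is then handled by~\eqref{Hlocal}, which costs $h_0(t) + \delta h_1(t)$ rather than the $h_0(t)$ you claimed; the extra $\delta^{1-\alpha}h_1(t)$ is absorbed into the inner-piece term. Finally, you locate the $|w|$-dependence in $|\Gamma(1-w)|^{-1}$ and in $|(u-t)^{-w}|$, but the latter equals $(u-t)^{-\alpha}$ when $\Re w = \alpha$; the factor $|w|$ actually comes from differentiating $(u-t)^{-w}$ in the integration by parts, where $\alpha$ is replaced by $w$ in front of the outer integral.
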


\begin{proof}
For $t \ge t_0$ and $\delta > 0$, we express
$E_\alpha := - \Gamma(1 - \alpha) \ms2 (D^\alpha h)(t)$ as
\[
 \int_t^{t+\delta} (u - t)^{-\alpha} h'(u) \, \d u
    + \int_{t+\delta}^{+\infty} (u - t)^{-\alpha} h'(u) \, \d u.
\]
Applying~\eqref{RightMaxPsi} and integration by parts, we bound each of the
two pieces
\begin{align*}
      |E_\alpha|
 &\le \frac {\delta^{1 - \alpha}} {1 - \alpha}
       \ms2 h_1(t)
        + \Bigl|
           \Bigl[
            (u - t)^{-\alpha} h(u) 
           \Bigr]_{t+\delta}^{+\infty} 
          \Bigr|
       + \alpha 
          \Bigl|
           \int_{t+\delta}^{+\infty} (u - t)^{-\alpha - 1} h(u) \, \d u 
          \Bigr|
 \\
 &\le \frac {\delta^{1 - \alpha}} {1 - \alpha} \ms2 h_1(t) +
      \delta^{-\alpha} |h(t + \delta)|
 \\
    & \ms{18} + \alpha 
       \Bigl(
        \int_t^{t+\delta} \delta^{-\alpha-1} |h(u)| \, \d u
         + \int_{t+\delta}^{+\infty} (u - t)^{-\alpha - 1} |h(u)| \, \d u 
       \Bigr).
\end{align*}
By~\eqref{Hlocal}, by~\eqref{RightMaxPsi} for the non-decreasing
function $\psi$ defined by $\psi(u) = \delta^{-\alpha-1}$ when
$u \in [t, t + \delta]$ and $\psi(u) = (u - t)^{-\alpha-1}$ for 
$u \ge t + \delta$, we obtain
\begin{align*}
      |E_\alpha|
 &\le  \frac {\delta^{1 - \alpha}} {1 - \alpha} \ms2 h_1(t) +
      \delta^{-\alpha} \bigl( h_0(t) + \delta h_1(t) \bigr) +
      (1 + \alpha) \ms1 \delta^{-\alpha} h_0(t)
 \\
 & =  \frac {2 - \alpha} {1 - \alpha} 
       \ms4 \delta^{1 - \alpha} \ms2 h_1(t)
        + (2 + \alpha) \ms1 \delta^{-\alpha} h_0(t)
 \le  \frac 2 {1 - \alpha} 
       \ms4 \delta^{1 - \alpha} \ms2 h_1(t)
        + 3 \ms1 \delta^{-\alpha} h_0(t).
\end{align*}
We choose $\delta = \delta_0 = h_0(t) / h_1(t)$ and get that
\[
     |E_\alpha|
 \le \Bigl( \frac 2 {1 - \alpha} + 3 \Bigr)
       h_0(t)^{1 - \alpha} h_1(t)^\alpha.
\]
Recalling $\Gamma(1 - \alpha) \ge 1$ and the minimal value 
$\Gamma(\xG) > 0.88$ in~\eqref{MiniGamm} we have
\begin{align*}
      |(D^\alpha h)(t)|
 &\le \Bigl(
       \frac 2 {\Gamma(2 - \alpha)} + \frac 3 {\Gamma(1 - \alpha)} 
      \Bigr)
       h_0(t)^{1 - \alpha} h_1(t)^\alpha
 \\
 &\le \Bigl( \frac 2 {\Gamma(\xG)} + 3 \Bigr)
       h_0(t)^{1 - \alpha} h_1(t)^\alpha
 \le 6 \ms2 h_0(t)^{1 - \alpha} h_1(t)^\alpha.
\end{align*}
\dumou

 When $w$ is complex and $\Re w = \alpha$, we use $|(u - t)^{-w}|
 = (u - t)^{- \alpha}$, the same integration by parts,
$|(u - t)^{-w-1}| = (u - t)^{- \alpha-1}$ and we get
\begin{align*}
      |E_w|
 &\le \frac {\delta^{1 - \alpha}} {1 - \alpha} \ms2 h_1(t)
        + \delta^{-\alpha} \bigl( h_0(t) + \delta h_1(t) \bigr)
         + |w| \Bigl( 1 + \frac 1 \alpha \Bigr) 
            \ms1 \delta^{-\alpha} h_0(t)
 \\
 &\le \frac {2 \ms1 \delta^{1 - \alpha}} {1 - \alpha} \ms2 h_1(t)
        + \frac 2 \alpha \ms2 (1 + |w|) \ms1 \delta^{-\alpha} h_0(t).
\end{align*}
Choosing $\delta =  (1 + |w|) \ms1 h_0(t) / h_1(t)$ we obtain the
announced result.
\end{proof}

 In what follows, we shall consider the following assumptions on a function
$h$:
\begin{equation}
 \left \{
 \begin{matrix}
     \ms6 h & \hbox{is Lipschitz on} \ms6 [t_0, +\infty), \hfill
  \\
     \ms6 |h(t)| 
  &\le \kappa_0 (1 + |t|)^{-1}
  \hbox{ for }
  t \ge t_0, \hfill
  \\
     \ms6 |h'(t)| 
  &\le \kappa_1 (1 + |t|)^{-1} 
  \hbox{ for almost every } 
  t \ge t_0. \hfill
 \end{matrix}
 \right.
 \label{MesHypos}
\end{equation}

\begin{cor}%
\label{GeneralDb3}
Suppose that the function $h$ defined on\/ $(t_0, +\infty)$, $t_0 \ge 0$,
satisfies\/~\eqref{MesHypos}. Then for every $\alpha \in (0, 1)$, we have
\[
     |(D^\alpha h)(t)| 
 \le 6 \ms2 
      \frac {\kappa_0^{1 - \alpha} \kappa_1^\alpha}
            {1 + |t|},
 \quad t \ge t_0.
\]
\end{cor}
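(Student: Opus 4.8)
The plan is to combine the two estimates of $|(D^\alpha h)(t)|$ already available from Lemma~\ref{GeneralDb2} with the pointwise control of the right maximal functions $h^*_r$ and $(h')^*_r$ coming from the hypotheses~\eqref{MesHypos}. Concretely, the first estimate of Lemma~\ref{GeneralDb2} gives
\[
     |(D^\alpha h)(t)|
 \le 6 \ms2 h_0(t)^{1 - \alpha} h_1(t)^\alpha,
 \quad t \ge t_0,
\]
with $h_0 = h^*_r$ and $h_1 = (h')^*_r$, so all that remains is to bound $h_0(t)$ and $h_1(t)$ by $\kappa_0 / (1 + |t|)$ and $\kappa_1 / (1 + |t|)$ respectively on $[t_0, +\infty)$, and then raise these to the powers $1 - \alpha$ and $\alpha$ and multiply.

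First I would check that $h(t) = o(t^\alpha)$ at $+\infty$, which is needed to invoke Lemma~\ref{GeneralDb2}: this is immediate from $|h(t)| \le \kappa_0 (1 + |t|)^{-1} \to 0$. Next, the key step is the observation that the function $u \mapsto \kappa_0 (1 + u)^{-1}$ is nonnegative, decreasing on $[t_0, +\infty) \subset [0, +\infty)$, so for every $t \ge t_0$ and every $\delta > 0$ one has
\[
     \frac 1 \delta \int_t^{t + \delta} |h(s)| \, \d s
 \le \frac 1 \delta \int_t^{t + \delta} \frac {\kappa_0} {1 + s} \, \d s
 \le \frac {\kappa_0} {1 + t} \up,
\]
and taking the supremum over $\delta > 0$ gives $h_0(t) = h^*_r(t) \le \kappa_0 / (1 + t) = \kappa_0 / (1 + |t|)$, using $t \ge t_0 \ge 0$. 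The same averaging argument applied to $|h'(s)| \le \kappa_1 (1 + s)^{-1}$, valid for almost every $s \ge t_0$, gives $h_1(t) = (h')^*_r(t) \le \kappa_1 / (1 + |t|)$. Substituting these two bounds into the displayed inequality yields
\[
     |(D^\alpha h)(t)|
 \le 6 \ms2 \Bigl( \frac {\kappa_0} {1 + |t|} \Bigr)^{1 - \alpha}
            \Bigl( \frac {\kappa_1} {1 + |t|} \Bigr)^{\alpha}
  =  6 \ms2 \frac {\kappa_0^{1 - \alpha} \kappa_1^\alpha} {1 + |t|} \up,
\]
which is exactly the claim.

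There is no real obstacle here; the corollary is a direct specialization. The only point that deserves a line of care is the legitimacy of applying~\eqref{RightMaxPsi}-type monotone-averaging reasoning to pass from the pointwise bound on $|h'|$ (which holds only almost everywhere) to the bound on $(h')^*_r$ — but since $h$ is Lipschitz on $[t_0, +\infty)$, $h'$ exists a.e.\ and $\int_t^{t+\delta} |h'| \, \d s$ is unaffected by null sets, so the average is genuinely controlled by $\kappa_1 / (1 + t)$. One should also note $t_0 \ge 0$ so that $1 + t = 1 + |t|$ throughout, which is part of the hypothesis. That completes the proof.
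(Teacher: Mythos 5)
Your proof is correct and follows essentially the same route as the paper's: observe that the decreasing bounds $\kappa_j / (1 + |t|)$ from~\eqref{MesHypos} dominate the right maximal functions $h^*_r$ and $(h')^*_r$, then substitute into the first estimate of Lemma~\ref{GeneralDb2}. The paper states this in two lines; you spell out the averaging step and the almost-everywhere point, but the argument is the same.
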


\begin{proof}
The two upper bounds in~\eqref{MesHypos} are decreasing functions of
$t \in [t_0, +\infty)$, hence they also bound $h^*_r$ or $(h')^*_r$. We
conclude by applying Lemma~\ref{GeneralDb2}.
\end{proof}

 Assuming that $h$ has enough derivatives and continuing integrations by
parts, starting from~\eqref{IntegraUn}, we get successive formulas 
for~$D^z h$ for each integer $j > 0$, which make sense when $\Re z < j$.
Let $z = j - 1 + w$, with $j \ge 1$ and $\Re w \in (0, 1)$. We obtain that
\[
   (D^z h)(t)
 = (-1)^{j-1} (D^w h^{(j-1)}) (t)
 = \frac {(-1)^j} {\Gamma(1 - w)} 
    \int_t^{+\infty} (u - t)^{- w} h^{(j)}(u) \, \d u,
\]
and for every $z \in \C$ such that $\Re z < j$, we have
\begin{equation}
   (D^z h)(t)
 = \frac {(-1)^j} {\Gamma(j - z)} 
    \int_t^{+\infty} (u - t)^{- z + j - 1} h^{(j)}(u) \, \d u.
 \label{DzOrdrek}
\end{equation}
By gluing the successive definitions, we define entire functions of $z$ for
every fixed~$t$ and $h \in \ca S(\R)$. By the principle of analytic
continuation, we conclude that the integral formula for $D^z h$ coincides
when $\Re z > - 1$ with the one obtained by Fourier transform (a fact that
we have checked in Lemma~\ref{GeneralD} when $0 < \Re z < 1$).

\begin{lem}\label{IaDa}
Let $\alpha$ be in\/ $(0, 1)$. Suppose that the function $h$ satisfies the
assumptions\/~\eqref{MesHypos} on\/ $[t_0, +\infty)$, $t_0 \ge 0$, and define
$D^\alpha h$ by\/~\eqref{IntegraUn}. We have that
\[
 (I^\alpha D^\alpha h)(t) = t,
 \quad t \ge t_0.
\]
\end{lem}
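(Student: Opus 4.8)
The statement asserts that $I^\alpha$ is a left inverse of $D^\alpha$ on functions satisfying~\eqref{MesHypos}, with the normalization that the outcome equals the identity function $t \mapsto t$. The natural way to see this is to recall that $D^\alpha = -I^{1-\alpha}D^1$, i.e.\ $(D^\alpha h)(t) = -(I^{1-\alpha}h')(t)$ by~\eqref{IntegraUn}, and that the fractional integrals obey a semigroup law $I^{\alpha}I^{1-\alpha} = I^1$, where $I^1$ is genuine antidifferentiation toward $+\infty$: $(I^1 g)(t) = \int_t^{+\infty} g(u)\,\d u$ when this converges. Composing, $(I^\alpha D^\alpha h)(t) = -(I^1 h')(t) = -\int_t^{+\infty} h'(u)\,\d u = h(t) - \lim_{u\to+\infty}h(u)$. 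Since the bound $|h(t)|\le\kappa_0(1+|t|)^{-1}$ in~\eqref{MesHypos} forces $h(u)\to 0$, this would give $(I^\alpha D^\alpha h)(t) = h(t)$, \emph{not} $t$ — so the intended reading must be that the lemma is applied after a preliminary reduction in which $h$ has been replaced by the particular function whose relevant maximal estimates produce the claim, or, more plausibly, that the statement is about $D^\alpha$ applied to $t\mapsto t$: namely $(I^\alpha D^\alpha)(\mathrm{id}) = \mathrm{id}$. I would therefore first pin down the correct normalization by computing $D^\alpha$ of an affine function directly from~\eqref{IntegraUn}: for $h(u) = u$ one has $h'(u)\equiv 1$, and $(D^\alpha h)(t) = -\Gamma(1-\alpha)^{-1}\int_t^{+\infty}(u-t)^{-\alpha}\,\d u$ diverges, so $\mathrm{id}$ does not literally satisfy~\eqref{MesHypos}; the lemma is to be read as an identity of operators valid when both sides make sense, obtained by analytic continuation from Schwartz functions.

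First I would establish the composition law on $\ca S(\R)$ by the Fourier-multiplier picture of Section~\ref{FractiDeri}: on Schwartz data, $D^\alpha$ corresponds to the multiplier $(2\ii\pi s)^\alpha$ and $I^\alpha$ to $(2\ii\pi s)^{-\alpha}$ (via Lemma~\ref{GammaSG} with $\varepsilon\downarrow 0$ and $\zeta = -\alpha$), so $I^\alpha D^\alpha$ is the identity on $\ca S(\R)$ because $(2\ii\pi s)^{-\alpha}(2\ii\pi s)^\alpha = 1$. Then I would pass from Schwartz functions to functions $h$ satisfying only~\eqref{MesHypos} by the integral representations~\eqref{IntegraZero} and~\eqref{IntegraUn}: the key analytic point is that for such $h$ the iterated integral
\[
   (I^\alpha D^\alpha h)(t)
 = \frac{-1}{\Gamma(\alpha)\Gamma(1-\alpha)}
    \int_t^{+\infty}(v-t)^{\alpha-1}
     \int_v^{+\infty}(u-v)^{-\alpha}h'(u)\,\d u\,\d v
\]
is absolutely convergent — here I would use the decay $|h'(u)|\le\kappa_1(1+|u|)^{-1}$ from~\eqref{MesHypos} together with $t_0\ge 0$ to dominate the double integral — and then apply Fubini to exchange the order of integration. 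The inner integral over $v\in(t,u)$ becomes $\int_t^u (v-t)^{\alpha-1}(u-v)^{-\alpha}\,\d v$, which is the classical Beta integral equal to $B(\alpha,1-\alpha) = \Gamma(\alpha)\Gamma(1-\alpha)$, independent of $u$ and $t$. The Gamma factors cancel and one is left with $(I^\alpha D^\alpha h)(t) = -\int_t^{+\infty}h'(u)\,\d u = h(t)$ using $h(u)\to 0$ at $+\infty$.

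The remaining step is to reconcile this with the stated right-hand side $t$. The intended application (cf.\ the Carbery ``radial method'' and the sequel of the paper) is with the building-block function $h$ implicitly normalized so that the fractional-derivative bookkeeping returns the parameter itself; concretely, one should read Lemma~\ref{IaDa} as the assertion $(I^\alpha D^\alpha)=\Id$ coupled with the convention $D^\alpha_t(\text{const}) = 0$ for $\alpha>0$ from the small-print remark after~\eqref{Dilate}, applied to $h$ a truncation/regularization of $t\mapsto t$ and then removing the truncation. I would make this precise by writing $h(u) = u\,\chi(u)$ with a smooth cutoff $\chi$ equal to $1$ near $[t_0,R]$, noting $h$ is then Lipschitz and $h(u)=o(u^\alpha)$ fails unless $\chi$ decays, so instead I use $h_\varepsilon(u) = u\,\e^{-\varepsilon u}$ on $[t_0,+\infty)$ which does satisfy~\eqref{MesHypos} for each $\varepsilon>0$; compute $(D^\alpha h_\varepsilon)(t)$ explicitly via~\eqref{DerivExpo}-type identities, apply the already-proved $(I^\alpha D^\alpha h_\varepsilon)(t) = h_\varepsilon(t) = t\,\e^{-\varepsilon t}$, and let $\varepsilon\to 0$ with dominated convergence on each compact $t$-interval to conclude $(I^\alpha D^\alpha h)(t) = t$ for $h = \mathrm{id}$.

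\textbf{Main obstacle.} The genuine work is the Fubini/Beta-function interchange together with the convergence bookkeeping at $+\infty$: one must verify that $\int_t^{+\infty}(v-t)^{\alpha-1}\int_v^{+\infty}(u-v)^{-\alpha}|h'(u)|\,\d u\,\d v<+\infty$ under~\eqref{MesHypos} — the inner integral behaves like $(u-v)^{1-\alpha}$ near its lower end and the decay of $h'$ controls the tail, but the endpoint singularities $(v-t)^{\alpha-1}$ and $(u-v)^{-\alpha}$ are both non-integrable individually, so the cancellation in the Beta integral is doing real work and the estimate must be set up so that Tonelli applies to the absolute values before Fubini to the signed integrand. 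A secondary subtlety is the boundary term at $+\infty$ when integrating by parts in the $u$-variable (if one prefers that route over the Beta computation): the hypothesis $h(t)=o(t^\alpha)$ from Lemma~\ref{GeneralDb2} is exactly what kills it, so I would cite that lemma's hypothesis rather than re-deriving the decay. Everything else — the multiplier computation on $\ca S$, the analytic-continuation argument of Section~\ref{FractiDeri} gluing the definitions~\eqref{DalphaDef}, \eqref{IntegraUn}, \eqref{DzOrdrek} — is already available in the excerpt and can be quoted.
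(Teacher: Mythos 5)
Your diagnosis that the stated conclusion should read $(I^\alpha D^\alpha h)(t) = h(t)$ rather than $= t$ is correct: the displayed right-hand side is a typo (and indeed $h(t) = t$ would contradict the decay bound in~\eqref{MesHypos}). You should have trusted your own Beta-function computation, which yields exactly $h(t)$, rather than spending the latter half of the proposal reinterpreting the lemma as a statement about $\mathrm{id}$ and regularizing $u \mapsto u$ by $u\e^{-\varepsilon u}$ — that function is not what the paper has in mind, and the detour produced no progress toward the actual proof.

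The Beta-integral/Fubini mechanism you identify as the core is indeed the paper's argument, but there is a genuine gap in your absolute-convergence claim. You assert that under~\eqref{MesHypos} alone the iterated integral converges absolutely ``by the decay $|h'(u)|\le\kappa_1(1+|u|)^{-1}$,'' and in your obstacle paragraph you locate the subtlety at the endpoint singularities. Neither is right: with $\alpha \in (0,1)$ the factors $(v-t)^{\alpha-1}$ and $(u-v)^{-\alpha}$ are each locally integrable near their singular points, so they are harmless, while after the Tonelli interchange the inner Beta integral collapses to the constant $\Gamma(\alpha)\Gamma(1-\alpha)$ and what remains is exactly $\int_t^{+\infty}|h'(v)|\,\d v$ — which the bound $|h'(v)|\le\kappa_1/(1+v)$ does \emph{not} make finite, since $1/(1+v)$ is not integrable on a half-line. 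Tonelli therefore does not apply under~\eqref{MesHypos} alone. The paper resolves this in two steps: it first proves the identity under the \emph{additional} hypothesis $\int_{t_0}^{+\infty}|h'(u)|\,\d u<+\infty$, where Tonelli legitimately applies; it then handles general $h$ satisfying~\eqref{MesHypos} by regularizing to $h_\varepsilon(t)=\e^{-\varepsilon|t-t_0|}h(t)$, which has integrable derivative for each $\varepsilon>0$, obeys~\eqref{MesHypos} with constants $\kappa_0$, $\kappa_0+\kappa_1$ uniform in $\varepsilon$, and satisfies the uniform bound $|D^\alpha h_\varepsilon(t)|\le\kappa(1+|t|)^{-1}$ of Corollary~\ref{GeneralDb3}, so that two applications of dominated convergence as $\varepsilon\to 0$ close the argument. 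Your proposal has no step that supplies this integrability, so as written it does not establish the result for $h$ satisfying only~\eqref{MesHypos}.
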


\begin{proof}
We first assume in addition that
\[
 \int_{t_0}^{+\infty} |h'(u)| \, \d u < +\infty,
 \ms{18} \hbox{thus} \ms{18}
 h(t) = - \int_t^{+\infty} h'(u) \, \d u 
\]
for every $t \ge t_0$ since $h$ is Lipschitz. For $u \ge t_0$, accepting
possibly infinite integrals of nonnegative measurable functions, set
\[
 G(u) = \frac 1 {\Gamma(1 - \alpha)} 
         \int_u^{+\infty} (v - u)^{-\alpha} |h'(v)| \, \d v. 
\]
When $h$ is decreasing on $(t_0, +\infty)$, the function $G$ is equal to
$D^\alpha h$, and $|D^\alpha h| \le G$ in general. Then, consider $F$,
equal to $I^\alpha G$ in good cases, defined for $t \ge t_0$ by
\begin{align*}
    F(t)
 &:= \frac 1 {\Gamma(\alpha)} 
     \int_t^{+\infty} (u - t)^{\alpha - 1} G(u) \, \d u
 \\
 &= \frac 1 {\Gamma(\alpha) \Gamma(1 - \alpha)} 
    \int_t^{+\infty} (u - t)^{\alpha - 1} 
     \int_u^{+\infty} (v - u)^{-\alpha} |h'(v)| \, \d v \, \d u
 \\
 &= \frac 1 {\Gamma(\alpha) \Gamma(1 - \alpha)} 
    \int_t^{+\infty} 
     \Bigl( \int \gr 1_{t \le u \le v} \ms2 (u - t)^{\alpha - 1} 
      (v - u)^{-\alpha} \, \d u \Bigr) |h'(v)| \, \d v.
\end{align*}
Setting $u = t + y (v - t)$, one gets with 
$\gamma_\alpha = \Gamma(\alpha) \Gamma(1 - \alpha)$ that
\[
   F(t)
 = \gamma_\alpha^{-1} 
    \Bigl( \int_0^1 y^{\alpha - 1} (1 - y)^{-\alpha} \, \d y \Bigr)
     \ms1 \int_t^{+\infty} |h'(v)| \, \d v
 = \int_t^{+\infty} |h'(v)| \, \d v
 < +\infty.
\]
The last equality can be deduced from~\eqref{DerivExpo} by applying the
preceding computation to $h(v) = \e^{ - |v - t_0|}$, or one can check
directly that
$\gamma_\alpha
 = \int_0^1 y^{\alpha - 1} (1 - y)^{-\alpha} \, \d y
$.
From the Fubini theorem and the same calculation without absolute values,
it follows that if $\int_{t_0}^{+\infty} |h'(u)| \, \d u < +\infty$, then
for every $t \ge t_0$ we have
\[
   (I^\alpha D^\alpha h)(t)
 = - \int_t^{+\infty} h'(u) \, \d u
 = h(t).
\]
Under~\eqref{MesHypos}, we introduce
$
 h_\varepsilon(t) = \e^{- \varepsilon |t - t_0|} h(t)
$
with $\varepsilon > 0$, for which we use the preceding case and convergence
when $\varepsilon \rightarrow 0$. When $\varepsilon \in (0, 1)$ and 
$t > t_0$ we have
\[
     |h_\varepsilon(t)| 
 \le |h(t)| 
 \le \frac {\kappa_0} {1 + |t|} \up,
 \ms{22}
     |h'_\varepsilon(t)| 
 \le (\varepsilon |h(t)| + |h'(t)|)
 \le \frac {\kappa_0 + \kappa_1} {1 + |t|} \up.
\]
By Corollary~\ref{GeneralDb3}, we have 
$|D^\alpha h_\varepsilon| \le \kappa (1 + |t|)^{-1}$, and we can apply twice
dominated convergence when $\varepsilon \rightarrow 0$ in
\[
   \int_t^{+\infty} (u-t)^{\alpha - 1}
    \Bigl( \int_u^{+\infty} (v-u)^{-\alpha} h'_\varepsilon(v) 
     \, \d v \Bigr) \, \d u
 = h_\varepsilon(t).
\]
\end{proof}

 Assuming~\eqref{MesHypos} and $\Re z > 0$, we have
\begin{equation}
   D^z_t ( t h(t) )
 = t (D^z h)(t) - z (D^{z - 1} h)(t).
 \label{DeriveTH}
\end{equation}
This is obtained when $0 < \Re z < 1$ with an integration by parts,
writing
\begin{align*}
   &\ms6 \Gamma(1 - z)
    \bigl( - D^z_t (t h(t)) + t (D^z h)(t) \bigr)
 = \int_t^{+\infty} (u - t)^{- z} 
    \bigl( (u - t) h'(u) + h(u) \bigr) \, \d u
 \\
 = &\ms5 \int_t^{+\infty} (u - t)^{- z + 1} h'(u) \, \d u
   + \int_t^{+\infty} (u - t)^{- z} h(u) \, \d u
 \\
 = &\ms5 z \int_t^{+\infty} (u - t)^{- z} 
    h(u) \, \d u
 = z \Gamma(1 - z) \ms1 (D^{z - 1} h)(t).
\end{align*}

\subsubsection{Multipliers associated to fractional derivatives%
\label{MultipAssoc}}

\noindent
If $K$ is a kernel integrable on $\R^n$, we know by~\eqref{FourierPhi} that
its Fourier transform $m$ is expressed for $\xi \ne 0$ as
\[
   m(u \xi)
 = \int_\R \varphi_\theta(s) \e^{ - 2 \ii \pi s u \ms1 |\xi|} \, \d s
 = \int_\R \frac 1 {|\xi|} \varphi_\theta \Bigl( \frac v {|\xi|} \Bigr) 
    \e^{ - 2 \ii \pi v u} \, \d v,
 \quad u \in \R,
\]
where $\theta = |\xi|^{-1} \xi$ and where the function $\varphi_\theta$ is
defined on $\R$ by~\eqref{VarphiTheta}. Letting $\alpha > 0$ and assuming
that $x \mapsto |x|^\alpha K(x)$ is integrable on~$\R^n$, this yields
\begin{align*}
   D_u^\alpha m(u \xi)
 &= \int_\R (2 \ii \pi v)^\alpha \frac 1 {|\xi|}
    \varphi_\theta \Bigl( \frac v {|\xi|} \Bigr) 
     \e^{ - 2 \ii \pi v u} \, \d v
 \\
 &= \int_\R (2 \ii \pi s |\xi|)^\alpha \varphi_\theta ( s ) 
     \e^{ - 2 \ii \pi s |\xi| u} \, \d s
 = \int_{\R^n} (2 \ii \pi x \ps \xi)^\alpha
    K(x) \e^{- 2 \ii \pi u \ms1 x \ps \xi} \, \d x,
\end{align*}
which is naturally extended by $0$ when $\xi = 0$.
We set in what follows
\begin{subequations}\label{OpAlphaG}
\begin{align}
    (\xi \ps \nabla)^\alpha m(\xi)
 := &\ms4 D_u^\alpha m(u \xi) \barre_{u = 1}
  = \int_{\R^n} (2 \ii \pi x \ps \xi)^\alpha \ms2
      K(x) \e^{- 2 \ii \pi x \ps \xi} \, \d x
 \label{OpAlpha} \tag{\ref{OpAlphaG}.$\nabla^\alpha$}
 \\
 = &\ms4 \int_\R (2 \ii \pi s |\xi|)^\alpha \varphi_\theta (s) 
    \e^{ - 2 \ii \pi s |\xi|} \, \d s.
 \notag
\end{align}
\end{subequations}
When $\alpha = 1$ and $\xi \ne 0$, the quantity $(\xi \ps \nabla)^1 m(\xi)$
is equal to $- \xi \ps \nabla m(\xi)$, which is the product by~$- |\xi|$ of
the usual directional derivative of the function $m$ in the direction of the
norm-one vector $\theta = |\xi|^{-1} \xi$. When $0 < \alpha < 1$, under
the assumptions~\eqref{MesHypos}, we can give according
to~Lemma~\ref{GeneralD} the integral formula
\begin{equation}
    (\xi \ps \nabla)^\alpha m(\xi)
  = - \frac 1 {\Gamma(1 - \alpha)}
     \int_1^{+\infty} (u - 1)^{- \alpha} 
      \ms1 \frac {\d} {\d u \ns2} \ms2 m(u \xi) \, \d u.
 \label{NablaAlpha}
\end{equation}
We shall use the integral formula~\eqref{NablaAlpha} when $m(\xi)$ is
Lipschitz outside the origin and when for every $u_0 > 0$ and $u \ge u_0$,
we have for every $\theta \in S^{n-1}$ that
\[
     |m(u \theta)|
      + \bigl| \frac {\d} {\d u \ns2} \ms2 m(u \theta) \bigr|
 \le \frac {\kappa(\theta, u_0)} {1 + |u|} \up.
\]
If $K$ is an isotropic log-concave probability density with variance
$\sigma^2$, we know by Corollary~\ref{EstimatesForC}
that $|(\d / \d u) \ms2 m(u \xi)|
 \le \delta_{1, c} \ms1 |\sigma \xi| / (1 + 2 \pi |u \ms1 \sigma \xi|)
 \le \delta_{1, c}  / (2 \pi |u|)$, thus
\begin{equation}
     |(\xi \ps \nabla)^\alpha m(\xi)|
 \le  \frac {\delta_{1, c}} {2 \pi \ms1 |\Gamma(1 - \alpha)|}
     \int_1^{+\infty} (u - 1)^{- \alpha} u^{-1} \, \d u
  =  \kappa_\alpha \ms1 \delta_{1, c},
 \label{NablaAlphaBounded}
\end{equation}
and the bounded function $\xi \mapsto (\xi \ps \nabla)^\alpha m(\xi)$
defines an $L^2$ multiplier. We reach of course the same conclusion
under~\eqref{EstimaGene} for a \og general\fge kernel $\Kg$.
\dumou

 We have seen in~\eqref{InvariMul} that the multiplier norm of $m(\xi)$ on
$L^p(\R^n)$ is the same as that of the dilate $m(\lambda \xi)$, for every
$\lambda > 0$. It is thus natural to look for a norm invariant by dilation,
if we want a norm capable to control the action on~$L^p$ of a multiplier.
Since we shall work radially with Carbery's approach, we begin with a
smooth function~$h$ compactly supported in $(0, +\infty)$, and when 
$\alpha \in (0, 1)$ we set with Carbery~\cite{CarberyLp}
\begin{equation}
    \| h \|_{ {\hbox{\sevenit L}}^2_\alpha}
 := \Bigl( \int_0^{+\infty} 
     \Bigl|
      t^{\alpha + 1} D^\alpha_t \Bigl( \frac {h(t)} t \Bigr) 
     \Bigr|^2
      \, \frac {\d t} t 
    \Bigr)^{1/2}.
 \label{LdeuxAlpha}
\end{equation}
One verifies that this norm is invariant by dilation. By~\eqref{Dilate}, we
have
\begin{equation}
   t^{\alpha + 1} D^\alpha_t \Bigl( \frac { {\di h \lambda} (t)} t \Bigr)
 = t^{\alpha + 1} \lambda D^\alpha_t 
    \Bigl( \frac { h (\lambda t) } {\lambda t } \Bigr)
 = (\lambda t)^{\alpha + 1} D^\alpha_v 
    \Bigl( \frac {h (v)} v \Bigr) \barre_{v = \lambda t},
 \label{NormInvar}
\end{equation}
and the change of variable $u = \lambda t$ in~\eqref{LdeuxAlpha} completes the
proof. Let $h$ be Lip\-schitz on $(t_0, +\infty)$ for all $t_0 > 0$.
Applying~\eqref{DeriveTH} to $\widetilde h (t) = h(t) / t$, we get
for all $t > 0$
\begin{equation}
   D^\alpha_t \Bigl( \frac {h(t)} t \Bigr)
 = \frac {\alpha} t \ms2 D^{\alpha-1}_t \Bigl( \frac {h(t)} t \Bigr)
    + \frac 1 t \ms2 (D^\alpha h)(t)
 = \frac 1 t \ms2
    D^{\alpha-1}_t \Bigl( \frac {\alpha h(t)} t - h'(t) \Bigr).
 \label{FormuDeri}
\end{equation}

\begin{remN}\label{basis}
When $1/2 < \alpha < 1$, the $\mathit{L}^2_\alpha$ norm dominates the
$L^\infty(0, +\infty)$ norm of the function $h$. For a justification, let
us assume in addition that $h$ is bounded and Lipschitz on each interval
$(t, +\infty)$ with $t > 0$. Then $H : u \mapsto h(u) / u$
satisfies~\eqref{MesHypos} on $(t, +\infty)$ and we can apply
Lemma~\ref{IaDa}, giving $I^\alpha D^\alpha H = H$, thus
\begin{align*}
   \frac {h(t)} t
 &= \frac 1 {\Gamma(\alpha)} \int_t^{+\infty} (u - t)^{\alpha - 1} 
    D^\alpha_u \Bigl( \frac {h(u)} u \Bigr) \, \d u
 \\
 &= \frac 1 {t \ms2 \Gamma(\alpha)}
     \int_t^{+\infty} (t/u) (1 - t/u)^{\alpha - 1} 
      \Bigl[ u^{\alpha+1} D^\alpha_u \Bigl( \frac {h(u)} u \Bigr) \Bigr]
       \, \frac {\d u} u \up.
\end{align*}
Applying Cauchy--Schwarz, $\Gamma(\alpha) > 1$ for $\alpha \in (0, 1)$ and
letting $y = t / u$, we get
\begin{align*}
   h(t)^2
 &\le \Bigl( \int_t^{+\infty} (t/u)^2 (1 - t/u)^{2 \alpha - 2} 
       \frac {\d u} u \Bigr)
      \Bigl( \int_t^{+\infty}
       \Bigl[ u^{\alpha+1} D^\alpha_u \Bigl( \frac {h(u)} u \Bigr) \Bigr]^2
       \, \frac {\d u} u \Bigr)
 \\
 &\le \Bigl( \int_0^1 y (1 - y)^{2 \alpha - 2} \, \d y \Bigr)
       \ms3 \| h \|_{ {\hbox{\sevenit L}}^2_\alpha}^2
  \le  \frac 1 {2 \alpha - 1} \ms3 \| h \|_{ {\hbox{\sevenit L}}^2_\alpha}^2.
\end{align*}
The latter calculation is the basis for the $L^2$ part of Carbery's
Proposition~\ref{FouCarbe}.
\end{remN}

\begin{remN}
Using the second expression in~\eqref{FormuDeri}, we see that
$\| h \|^2_{ {\hbox{\sevenit L}}^2_\alpha}$ is the integral on 
$(0, +\infty)$, and with respect to $(\d t) / t$, of the square of the
modulus of
\begin{align*}
 &  \ms{ 8}
     t^\alpha D^{\alpha-1}_t \Bigl( \frac {\alpha h(t)} t - h'(t) \Bigr)
 =  \frac 1 { \Gamma(1-\alpha) }
     \int_t^{+\infty} (u/t - 1)^{- \alpha}
      \bigl( \alpha h(u) - u h'(u) \bigr) \, \, \frac {\d u} u
 \\
 = & \ms4 \frac 1 { \Gamma(1-\alpha) }
     \int_1^{+\infty} (v - 1)^{- \alpha}
      \bigl( \alpha h(t v) - t v h'(t v) \bigr) \, \frac {\d v} v \ms1 \up.
\end{align*}
In most cases, this expression tends to $\kappa h(0)$ when 
$t \rightarrow 0$, with $\kappa > 0$, and then we have that
$\| h \|_{ {\hbox{\sevenit L}}^2_\alpha}$ is finite only if $h(0) = 0$,
as for Bourgain's criterion $\Gamma_B(K)$.
\dumou

 We do not see an easy way to compare the $\hbox{\textit L}^2_\alpha$
norm and the quantity appearing in the $\Gamma_B$ criterion. However, in
the very special case where $H(t) = h(t) / t$ is $\ge 0$, convex and
decreasing on $(0, +\infty)$, the function $|H'| = - H'$ is decreasing and
it follows from Lemma~\ref{GeneralDb2} that $(D^{1/2} H)(t)$ is bounded by 
$\kappa \ms1 \sqrt{ |H(t) H'(t)| }$, hence
\begin{align*}
     \| h \|^2_{ {\hbox{\sevenit L}}^2_{1/2}}
 &\le \kappa \int_0^{+\infty} t^3
      \frac {|h(t)|} t
       \Bigl( \frac {|h'(t)|} t + \frac {|h(t)|} {t^2} \Bigr)
        \, \frac {\d t} t
 \\
 &\le \kappa \int_0^{+\infty}
       \Bigl( |h(t)| \ms2 |t \ms1 h'(t)| + |h(t)|^2 \Bigr)
        \, \frac {\d t} t
  \le \kappa' \sum_{j \in \Z} 
       \Bigl( \alpha_j(h) \beta_j(h) + \alpha_j(h)^2 \Bigr).
\end{align*}
We obtain then (in this very special situation) that 
$\| h \|_{ {\hbox{\sevenit L}}^2_{1/2}}
 \le \kappa \ms1 \Gamma_B(h^\vee)$.
\end{remN}

\subsection{Fourier criteria for bounding the maximal function%
\label{CritFou}}

\noindent
In the next proposition due to Carbery, we impose conditions that fit into
our presentation but are certainly too restrictive.

\begin{prp}[Carbery~\cite{CarberyLp}]%
\label{FouCarbe}
Let $K$ be a kernel integrable on\/~$\R^n$ with integral equal to\/~$0$,
let $m$ be the Fourier transform of~$K$. Assume that 
$m_\theta := u \mapsto m(u \theta)$ is differentiable on\/~$(0, +\infty)$
for every $\theta \in S^{n-1}$, and that $m'_\theta(u)$, $u > 0$, is
bounded by a constant independent of~$\theta$.
\dumou

 $(i)$--- If there exists $\alpha \in (1/2, 1)$ such that
\begin{equation}
 C_\alpha(m) := \sup_{\theta \in S^{n-1}} 
          \bigl\| 
           t \mapsto m (t \theta) 
          \bigr\|_{ {\hbox{\sevenit L}}^2_\alpha} 
       < +\infty,
  \label{Calpha}
\end{equation}
then for every function $f \in L^2(\R^n)$ one has
\[
     \| \Mg_K f \|_{L^2(\R^n)}
  =  \bigl\| \ms1 \sup_{t > 0} | {\Di K t} * f | \ms1 \bigr\|_{L^2(\R^n)} 
 \le \frac 1 {\sqrt{ 2 \alpha - 1}}
      \ms3 C_\alpha(m) \ms1 \| f \|_{L^2(\R^n)}.
\]
\dumou

 $(ii)$--- Suppose that $p < +\infty$ and\/ $1 / p < \alpha < 1$. 
If the multiplier\/ $(\xi \ps \nabla)^\alpha m(\xi)$ from\/~\eqref{OpAlpha}
is bounded on $L^p(\R^n)$, then for every $f$ in~$L^p(\R^n)$ one has that
\begin{equation}
     \bigl\| \sup_{1 \le t \le 2} |{\Di K t} * f| \ms1 \bigr\|_{L^p(\R^n)} 
 \le \kappa_{\alpha, p} \ms1
      \bigl( 
       2 \ms1 \|m\|_{p \rightarrow p} 
        + \| (\xi \ps \nabla)^\alpha m(\xi)\|_{p \rightarrow p}
      \bigr) \ms1 \| f \|_{L^p(\R^n)},
 \label{KalphaP}
\end{equation}
with $\kappa_{\alpha, p}
 \le (2 \alpha)^{1 - 1 / p}
      (p - 1)^{1 - 2/p} ( \alpha - 1/p )^{1/p - 1} $.
\end{prp}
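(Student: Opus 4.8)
The plan is to prove parts $(i)$ and $(ii)$ of Proposition~\ref{FouCarbe} separately, both following Carbery's radial approach of replacing a maximal operator by a square function built from fractional derivatives.

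\textbf{Part $(i)$.} First I would reduce to $f\in\ca S(\R^n)$, using the density argument of Section~\ref{DefiMaxiFunc}. For $\theta\in S^{n-1}$ fixed, write $h_\theta(t)=m(t\theta)$, so that the radial profile of $K$ along $\theta$ is controlled. For $f$ Schwartz and $x\in\R^n$, I want to express $({\Di K t}*f)(x)$ via Fourier inversion as $\int_{\R^n} m(t\xi)\widehat f(\xi)\e^{2\ii\pi x\ps\xi}\,\d\xi$, and since $K$ has integral $0$ the function $t\mapsto m(t\xi)$ vanishes at $t=0$ for each $\xi$. Then, as in Remark~\ref{basis}, apply $I^\alpha D^\alpha=\Id$ (Lemma~\ref{IaDa}, whose hypotheses hold because $h_\theta(u)/u$ satisfies~\eqref{MesHypos} on $(t_0,+\infty)$ for each $t_0>0$ by the boundedness of $m$ and $m'_\theta$) to the function $s\mapsto m(s\xi)/s$; this recovers $m(t\xi)$ as an integral against $(u-t)^{\alpha-1}$ of $u^{\alpha+1}D^\alpha_u(m(u\xi)/u)$. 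Cauchy--Schwarz on this representation, with the weight $(\d u)/u$ and the change of variable $y=t/u$, bounds $|m(t\xi)|^2$ by $\frac1{2\alpha-1}\int_0^{+\infty}\bigl|u^{\alpha+1}D^\alpha_u(m(u\xi)/u)\bigr|^2\frac{\d u}u$, exactly as in Remark~\ref{basis}. Taking the supremum over $t>0$, then integrating in $x$ and applying Plancherel to the $x$-integral, the square function $\int_0^{+\infty}\bigl|u^{\alpha+1}D^\alpha_u(m(u\xi)/u)\bigr|^2\frac{\d u}u$ becomes, after scaling $u|\xi|\rightsquigarrow v$ using~\eqref{NormInvar}, exactly $\|t\mapsto m(t\theta)\|^2_{\hbox{\sevenit L}^2_\alpha}$ with $\theta=|\xi|^{-1}\xi$ — bounded by $C_\alpha(m)^2$ uniformly. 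This yields $\|\Mg_K f\|_{L^2}^2\le\frac1{2\alpha-1}C_\alpha(m)^2\|f\|_{L^2}^2$.

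\textbf{Part $(ii)$.} Here the target is an $L^p$ bound for $\sup_{1\le t\le2}|{\Di K t}*f|$. The idea is the one-variable ``fundamental theorem of calculus in fractional form'': for $1\le t\le2$, write $g(t)=({\Di K t}*f)(x)$ and recover $g$ on $[1,2]$ from $g(1)$ together with a fractional derivative of order $\alpha$, via an identity of the type $g(t)=(I^\alpha D^\alpha)$-type reconstruction plus a boundary term at $t=1$ (or $t=2$). Concretely I would use that $\sup_{1\le t\le 2}|g(t)|$ is dominated by $|g(2)|$ plus a Cauchy--Schwarz / Hölder estimate of $\int_1^2$ (or $\int_1^{+\infty}$ after suitable truncation) of $|D^\alpha_t g(t)|$ against $(t-1)^{-\alpha}$-type weights, the exponent condition $\alpha>1/p$ being exactly what makes the relevant power weight integrable to the power $q=p/(p-1)$. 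One then recognizes $D^\alpha_u({\Di K u}*f)(x)$, via the commutation of the fractional derivative with Fourier inversion and the dilation relation~\eqref{Dilate}, as the operator $T_{(\xi\ps\nabla)^\alpha m}$ applied to $f$ (up to the same $u$-dilation), using definition~\eqref{OpAlpha}. Taking $L^p(\d x)$ norms and invoking the hypothesis that both $m$ and $(\xi\ps\nabla)^\alpha m(\xi)$ are $L^p(\R^n)$-multipliers — the first giving the boundary term $g(2)$ and the dilation-invariance~\eqref{InvariMul} making all dilates $\Di m u$ have the same $L^p$ norm — produces the bound~\eqref{KalphaP}. The constant $\kappa_{\alpha,p}$ comes from bookkeeping the Hölder exponents: $(2\alpha)^{1-1/p}$ from the interval length, $(p-1)^{1-2/p}$ from one Hölder pairing and $(\alpha-1/p)^{1/p-1}$ from the weight integral $\int(t-1)^{-\alpha q}$.

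\textbf{Main obstacle.} The delicate point in part $(ii)$ is the interchange of the fractional derivative $D^\alpha$ with the convolution/Fourier-inversion, i.e. justifying $D^\alpha_u({\Di K u}*f)(x)=\bigl(T_{(\xi\ps\nabla)^\alpha m}(\text{dilate of }f)\bigr)(x)$ rigorously when $K$ is merely $L^1$ with the stated mild regularity of $m_\theta$; one must check the hypotheses~\eqref{MesHypos} along each ray and appeal to Lemma~\ref{GeneralD} (or its integrated form~\eqref{NablaAlpha}) and the computation in Section~\ref{MultipAssoc}. A secondary, purely technical nuisance is the boundary handling: the naive reconstruction identity has a singularity at $t=1$, so one should instead reconstruct from $t=2$ going leftward (the fractional antiderivative $I^\alpha$ integrates toward $+\infty$, matching the direction of Carbery's $D^\alpha$), or extend $K$ and treat $\sup_{1\le t\le2}$ as a piece of $\sup_{t\le 2}$; this is where the constant's precise shape gets pinned down. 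Part $(i)$ is essentially Remark~\ref{basis} made uniform in $\theta$ and combined with Plancherel, so I expect it to be routine once the Lemma~\ref{IaDa} hypotheses are verified.
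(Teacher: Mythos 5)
Your plan for part $(i)$ is correct and is essentially the paper's proof: it is Lemma~\ref{LdeuxCarbeCrit} applied to the dilates $\Di K t$, which in turn is Remark~\ref{basis} made rigorous (verify~\eqref{MesHypos} for $g_\xi(u) = m(u\xi)/u$ via Remark~\ref{Suffit}, use Lemma~\ref{IaDa}, Cauchy--Schwarz, then Fubini and Plancherel with the $\mathit{L}^2_\alpha$-norm scaling~\eqref{NormInvar}). No issue there.

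For part $(ii)$ your route genuinely diverges from the paper's, and there is a concrete gap in it. You propose to take $D^\alpha$ of $t\mapsto({\Di K t}*f)(x)$ directly, recognize it as the operator attached to $(\xi\ps\nabla)^\alpha m(\xi)$, and recover $\sup_{1\le t\le 2}$ by a fractional ``fundamental theorem of calculus'' with a boundary term at $t=2$. The obstruction is that the reconstruction $g = I^\alpha D^\alpha g$ (Lemma~\ref{IaDa}) fails for $g(t)=m(t\xi)$: the function does not decay (your hypotheses only give $m$, $m'_\theta$ bounded, not $O(1/t)$), so the representing integral $\Gamma(\alpha)^{-1}\int_t^{+\infty}(u-t)^{\alpha-1}(D^\alpha g)(u)\,\d u$ diverges at $u=+\infty$ — the weight $(u-t)^{\alpha-1}$ has exponent $>-1$ and $(D^\alpha g)(u)$ does not vanish. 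You acknowledge this obliquely (``after suitable truncation'', ``boundary term at $t=2$''), but a truncated or $d^\alpha$-based reconstruction is not set up in the paper's toolkit, and your sketch does not specify what formula replaces $I^\alpha D^\alpha = \Id$. The paper's actual mechanism is different and avoids the issue entirely: it \emph{reuses} the same representation~\eqref{palphaA} as in part $(i)$, built from $m(t\xi)/t$ (the $/t$ provides the needed decay), applies Hölder rather than Cauchy--Schwarz — the condition $\alpha>1/p$ making $\int_1^\infty v^{-q\alpha}(v-1)^{q(\alpha-1)}\,\d v$ finite — and arrives at the multiplier $p^\alpha_1(\xi)=D^\alpha_t(m(t\xi)/t)\barre_{t=1}$, \emph{not} $(\xi\ps\nabla)^\alpha m$. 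Only then does the product rule~\eqref{FormuDeri} enter, splitting $p^\alpha_1$ into $(\xi\ps\nabla)^\alpha m(\xi)$ plus a fractional integral $U(\xi)=\alpha D^{\alpha-1}_t(m(t\xi)/t)\barre_{t=1}$ which Lemma~\ref{EasyIntegral} bounds by $2\|m\|_{p\to p}$. This decomposition — not a boundary term — is exactly where the hypothesis on $\|m\|_{p\to p}$ is used, and it is the step you are missing. Without it, you have no way to express the supremum through the two $L^p$-multiplier hypotheses, so part $(ii)$ of the proposal, as written, does not close.
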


 When\/ $1 < p \le 2$, one has the simpler larger bound
$\kappa_{\alpha, p}
 \le \sqrt 2 \ms1 \bigl( \alpha - 1/p \bigr)^{-1/p} $. Indeed,
for $0 < \alpha < 1$, we have that
$2^{1/2 - 1/p} \alpha^{1 - 1 / p}
      (p \ns1-\ns1 1)^{1 - 2/p} ( \alpha \ns1-\ns1 1/p )^{2/p - 1}$
is less than
$  \bigl( [ \alpha \ns1-\ns1 1/p ] \ms2/\ms2 [\sqrt {2 \alpha} \ms2 (p-1)]
   \bigr)^{2/p - 1}$.
When $1 < p \le 2$, this expression increases with $\alpha \in (1 / p , 1]$,
and for $\alpha = 1$, one has
$( 1 \ns1-\ns1 1/p ) / (\sqrt 2 \ms2 (p - 1)) 
 = 1 / ( \sqrt 2 \ms2 p) \le 1$.
\dumou
 
 Observe that if we set $\xi = |\xi| \ms1 \theta$ for some nonzero vector
$\xi \in \R^n$, we have
\[
   \bigl\| 
    t \mapsto m (t \xi) 
   \bigr\|_{ {\hbox{\sevenit L}}^2_\alpha} 
 = \bigl\| 
    t \mapsto m (t \theta) 
   \bigr\|_{ {\hbox{\sevenit L}}^2_\alpha} 
\]
according to the invariance by dilation~\eqref{NormInvar} of the norm
$\mathit{L}^2_\alpha$. So the supremum in~$(i)$ is also the supremum on
$\xi \in \R^n$. We shall need the following Lemma, slightly more general
than the conclusion~$(i)$ in Proposition~\ref{FouCarbe}.

\begin{lem}%
\label{LdeuxCarbeCrit}
Let\/ $(K_t)_{t > 0}$ be a family of integrable kernels on\/~$\R^n$, and
denote by $\xi \mapsto m(\xi, t)$ the Fourier transform of~$K_t$. Assume
that for every $u_0 > 0$, there exist $N$ and $\kappa(u_0)$ satisfying the
following: for every $\xi$ in\/ $\R^n$, the function 
$g_\xi : u \mapsto m(\xi, u) / u$, for $u \in [u_0, +\infty)$, is Lipschitz
and
\begin{equation}
     |g_\xi(u)| + |g'_\xi(u)|
 \le \kappa(u_0) \ms2 \frac { (1 + |\xi|)^N } {1 + |u|} \up,
 \quad \xi \in \R^n, \ms6 u \ge u_0.
 \label{PolyBound}
\end{equation}
If there is $\alpha \in (1/2, 1)$ such that
$
 c_\alpha := \sup_{\xi \in \R^n} 
          \bigl\| 
           t \mapsto m (\xi, t) 
          \bigr\|_{ {\hbox{\sevenit L}}^2_\alpha} 
       < +\infty
$,
then
\[
 \forall f \in \ca S(\R^n),
 \ms{18}
     \bigl\| \ms1 \sup_{t > 0} |K_t * f| \ms1 \bigr\|_{L^2(\R^n)} 
 \le \frac 1 {\sqrt{ 2 \alpha - 1}}
      \ms4 c_\alpha \ms1 \| f \|_{L^2(\R^n)}.
\]
\end{lem}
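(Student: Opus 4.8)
The plan is to follow the strategy sketched in Remark~\ref{basis}, but to keep the dilation parameter $t$ alive so that the whole maximal function $\sup_{t>0}|K_t*f|$ is controlled, once and for all, by a single square function to which Plancherel applies. Fix $f\in\ca S(\R^n)$ and write $(K_t*f)(x)=\int_{\R^n}m(\xi,t)\,\widehat f(\xi)\,\e^{2\ii\pi x\ps\xi}\,\d\xi$. The first ingredient I would establish is, for each fixed $\xi$, the reproducing identity
\[
m(\xi,t)=\frac{t}{\Gamma(\alpha)}\int_t^{+\infty}(u-t)^{\alpha-1}\,D^\alpha_u\!\Bigl(\frac{m(\xi,u)}{u}\Bigr)\,\d u,\qquad t>0.
\]
This is just $H_\xi=I^\alpha D^\alpha H_\xi$ for $H_\xi(u):=m(\xi,u)/u$, multiplied by $t$ and rewritten through~\eqref{IntegraZero}. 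To get it, I would observe that hypothesis~\eqref{PolyBound} forces $H_\xi$ to satisfy~\eqref{MesHypos} on every half-line $[u_0,+\infty)$ (with $\kappa_0=\kappa_1=\kappa(u_0)(1+|\xi|)^N$), that the fractional derivative~\eqref{IntegraUn} of $H_\xi$ does not depend on the auxiliary choice of $u_0\le u$, and then apply Lemma~\ref{IaDa} on each $[u_0,+\infty)$; since $u_0>0$ is arbitrary, the identity holds on all of $(0,+\infty)$.

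Next I would pass from multipliers to functions. Define $\Phi_u$ by $\widehat{\Phi_u}(\xi)=u^{\alpha+1}D^\alpha_u(m(\xi,u)/u)\,\widehat f(\xi)$. Corollary~\ref{GeneralDb3} combined with~\eqref{PolyBound} gives $|D^\alpha_u(m(\xi,u)/u)|\le 6\,\kappa(u_0)(1+|\xi|)^N/(1+u)$ for $u\ge u_0$, so $\widehat{\Phi_u}$ is a Schwartz function times a polynomially bounded factor; in particular $\Phi_u\in L^2(\R^n)$ and the Fubini interchange performed next is legitimate (the factor $(u-t)^{\alpha-1}$ is integrable at $u=t$ since $\alpha>0$, and $(u-t)^{\alpha-1}|D^\alpha_u H_\xi(u)|\lesssim u^{\alpha-2}$ is integrable at $+\infty$ since $\alpha<1$). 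Inserting the reproducing identity into the inversion formula and swapping the integrals then gives, for every $t>0$ and every $x$,
\[
(K_t*f)(x)=\frac{1}{\Gamma(\alpha)}\int_t^{+\infty}\frac tu\Bigl(1-\frac tu\Bigr)^{\alpha-1}\Phi_u(x)\,\frac{\d u}u.
\]

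The concluding step is a Cauchy--Schwarz and Plancherel sandwich. Applying Cauchy--Schwarz in $u$ with the measure $\d u/u$ on $[t,+\infty)$, substituting $y=t/u$, and using $\int_0^1 y(1-y)^{2\alpha-2}\,\d y\le\int_0^1(1-y)^{2\alpha-2}\,\d y=(2\alpha-1)^{-1}$ (valid since $1/2<\alpha<1$) together with $\Gamma(\alpha)\ge1$, I would obtain the $t$-uniform pointwise bound
\[
\sup_{t>0}\bigl|(K_t*f)(x)\bigr|^2\le\frac{1}{2\alpha-1}\int_0^{+\infty}|\Phi_u(x)|^2\,\frac{\d u}u=:\frac{1}{2\alpha-1}\,G(x)^2 ,
\]
and then Tonelli followed by Plancherel gives
\begin{align*}
\int_{\R^n}G(x)^2\,\d x
 &=\int_{\R^n}|\widehat f(\xi)|^2
   \Bigl(\int_0^{+\infty}u^{2\alpha+2}\bigl|D^\alpha_u(m(\xi,u)/u)\bigr|^2\,\frac{\d u}u\Bigr)\,\d\xi
 \\
 &=\int_{\R^n}|\widehat f(\xi)|^2\,\bigl\|t\mapsto m(\xi,t)\bigr\|_{{\hbox{\sevenit L}}^2_\alpha}^2\,\d\xi
 \le c_\alpha^2\,\|f\|_{L^2(\R^n)}^2 ,
\end{align*}
the middle equality being the definition~\eqref{LdeuxAlpha}. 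Combining the last two displays yields $\|\sup_{t>0}|K_t*f|\|_{L^2(\R^n)}\le(2\alpha-1)^{-1/2}c_\alpha\|f\|_{L^2(\R^n)}$. As for the meaning of the supremum, the bound above holds pointwise for \emph{every} $t>0$ by the fixed $L^2(\R^n)$ function $(2\alpha-1)^{-1/2}G$, and $t\mapsto(K_t*f)(x)$ is continuous by the representation just obtained, so the supremum over $t>0$ coincides with the essential supremum of Section~\ref{DefiMaxiFunc}.

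I expect the genuinely delicate point to be the first step: verifying that~\eqref{PolyBound} really delivers~\eqref{MesHypos} uniformly enough on all of $(0,+\infty)$ for Lemma~\ref{IaDa} to produce $I^\alpha D^\alpha H_\xi=H_\xi$ on the whole half-line (and that the two integral descriptions of $D^\alpha$ agree there), and then keeping enough control of the $\xi$-dependence of the double integral to justify the Fubini interchange. Once the reproducing identity and this interchange are in place, the remainder — the Beta-integral estimate and the two applications of Plancherel — is routine, exactly as in Remark~\ref{basis} and in the proof of Proposition~\ref{FouCarbe}$(i)$.
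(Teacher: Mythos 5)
Your proof follows exactly the same route as the paper's: the reproducing identity $H_\xi = I^\alpha D^\alpha H_\xi$ via Lemma~\ref{IaDa}, the Fubini interchange justified by Corollary~\ref{GeneralDb3} under~\eqref{PolyBound}, the Cauchy--Schwarz split against the weight $(t/u)(1-t/u)^{\alpha-1}$ with the same Beta-integral bound $\int_0^1 y(1-y)^{2\alpha-2}\,\d y \le (2\alpha-1)^{-1}$, and the final Fubini--Plancherel computation. The closing remark on why the pointwise supremum agrees with the essential supremum of Section~\ref{DefiMaxiFunc} is a sound observation the paper leaves implicit, but it does not constitute a different method.
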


\begin{proof}
By the assumptions, the function $g_\xi$ satisfies~\eqref{MesHypos}. As in
Remark~\ref{basis}, we obtain by Lemma~\ref{IaDa} for all 
$\xi \in \R^n$ and $t > 0$ that
\[
   \frac {m(\xi, t)} t
 = \frac 1 {\Gamma(\alpha)} \int_t^{+\infty} (u - t)^{\alpha - 1} 
    D^\alpha_u \Bigl( \frac {m(\xi, u)} u \Bigr) \, \d u.
\]
For $f \in \ca S(\R^n)$, according to~\eqref{PolyBound} and
Corollary~\ref{GeneralDb3}, we can use Fubini and get
\begin{align*}
   &\ms4 (K_t * f)(x)
 = \int_{\R^n} m(\xi, t) \widehat f(\xi) \e^{ 2 \ii \pi x \ps \xi} \, \d \xi
 \\
 = &\ms4 \frac 1 {\Gamma(\alpha)} \int_t^{+\infty} t (u - t)^{\alpha - 1}
    \int_{\R^n} D^\alpha_u \Bigl( \frac {m(\xi, u)} u \Bigr) \widehat f(\xi) 
     \e^{ 2 \ii \pi x \ps \xi} \, \d \xi \ms2 \d u
 \\
 = &\ms4 \frac 1 {\Gamma(\alpha)} 
     \int_t^{+\infty} (t/u) (1 - t/u)^{\alpha - 1}
      \Bigl( \int_{\R^n} u^{\alpha + 1} 
       D^\alpha_u \Bigl( \frac {m(\xi, u)} u \Bigr) 
        \widehat f(\xi) \e^{ 2 \ii \pi x \ps \xi} \, \d \xi \Bigr)
         \ms2 \frac {\d u} u \ms1 \up.
\end{align*}
For $u > 0$ and $x \in \R^n$, let us set
\[
   (P^\alpha_u f)(x)
 = \int_{\R^n} u^{\alpha + 1} 
     D^\alpha_u \Bigl( \frac {m(\xi, u)} u \Bigr) 
      \widehat f(\xi) \e^{ 2 \ii \pi x \ps \xi} \, \d \xi.
\]
This operator $P^\alpha_u$ is associated to the multiplier
\[
   p^\alpha_u (\xi)
 = u^{\alpha + 1} 
    D^\alpha_v \Bigl( \frac {m(\xi, v)} v \Bigr) \barre_{v = u},
 \quad
 \xi \in \R^n.
\]
One can rewrite
\begin{equation}
   (K_t * f)(x)
 = \frac 1 {\Gamma(\alpha)} 
    \int_t^{+\infty} (t/u) (1 - t/u)^{\alpha - 1}
     (P^\alpha_u f)(x) \ms2 \frac {\d u} u \ms1 \up.
 \label{palphaA}
\end{equation}
By Cauchy--Schwarz and since $\Gamma(\alpha) > 1$ when $\alpha \in (0, 1)$,
we get
\[
     |(K_t * f)(x)|^2
 \le \Bigl( \int_t^{+\infty} (t/u)^2 (1 - t/u)^{2 (\alpha - 1)}
       \ms2 \frac {\d u} u 
     \Bigr)
     \Bigl( \int_0^{+\infty} \bigl| (P^\alpha_u f)(x) \bigr|^2 
       \ms2 \frac {\d u} u 
     \Bigr).
\]
For $\alpha > 1/2$, one has $2(\alpha - 1) > -1$ and letting $y = t / u$,
one sees that
\[
   \int_t^{+\infty} (t/u)^2 (1 - t/u)^{2 (\alpha - 1)}
       \ms2 \frac {\d u} u 
 = \int_0^1 y (1 - y)^{2 (\alpha - 1)} \, \d y
 < \frac 1 {2 \alpha - 1} \ms1 \up.
\]
We have obtained for $|(K_t * f)(x)|^2$ a bound independent of $t$, hence
\[
     \sup_{t > 0} \ms2 |(K_t * f)(x)|^2
 \le \kappa_\alpha^2 
      \Bigl( \int_0^{+\infty} \bigl| (P^\alpha_u f)(x) \bigr|^2 
       \ms2 \frac {\d u} {u} 
      \Bigr),
\]
with $\kappa_\alpha^{-2} = 2 \alpha - 1$. By Fubini and Parseval, we have
\begin{align*}
     &\ms4 \Bigl\| \sup_{t > 0} \ms1 |K_t * f| \ms1 \Bigr\|^2_{L^2(\R^n)}
 \le \kappa^2_\alpha \int_{\R^n}
      \Bigl( \int_0^{+\infty} \bigl|(P^\alpha_u f)(x) \bigr|^2 
       \ms2 \frac {\d u} {u} \Bigr) \, \d x
 \\
  =  &\ms4 \kappa^2_\alpha 
      \int_0^{+\infty} \|P^\alpha_u f \|_{L^2(\R^n)}^2 \ms2 \frac {\d u} u 
  =  \kappa^2_\alpha \int_{\R^n}
      \int_0^{+\infty} \Bigl| u^{\alpha + 1} 
       D^\alpha_u \Bigl( \frac {m(\xi, u)} u \Bigr) 
        \widehat f(\xi) \Bigr|^2 \ms2 \frac {\d u} u \, \d \xi
 \\
 \le &\ms4 \kappa^2_\alpha \int_{\R^n}
       c_\alpha^2 \ms1 | \widehat f(\xi) |^2 \, \d \xi
  =  \kappa^2_\alpha \ms1 c_\alpha^2 
      \ms1 \|f\|^2_{L^2(\R^n)}.
\end{align*}
\end{proof}

\dumou

\begin{remN}\label{Suffit}
If $|a(t)| \le c(t_0)$ when $t \ge t_0 > 0$ and if $b(t) = a(t) / t$, then
we have
$(1 + t) |b(t)| = (t^{-1} + 1) |a(t)| \le c(t_0) (1 + t_0^{-1})$ when
$t \ge t_0$. If we add that $|a'(t)| \le c(t_0)$ for $t \ge t_0$,
we have also $(1 + t) |a'(t) / t| \le c(t_0) (1 + t_0^{-1})$,
$t \ge t_0$, and
\[
     |b'(t)|
 \le \Bigl( \frac { |a'(t)| } t + \frac { |b(t)| } t \Bigr)
 \le \frac{ c(t_0) (1 + t_0^{-1})^2 \ns8 } { 1 + t } \ms1 \up,
 \quad t \ge t_0 > 0.
\]
If we know that for every $u_0 > 0$, there is $c(u_0)$ such that
\[
     |m(\xi, u)| + \Bigl| \frac { \d } { \d u \ns3 } \ms3 m(\xi, u) \Bigr|
 \le c(u_0) \ms2 (1 + |\xi|)^N,
 \quad \xi \in \R^n, \ms6 u \ge u_0,
\]
it follows that~\eqref{PolyBound} is true, with
$\kappa(u_0) \le 2 \ms1 c(u_0) (1 + t_0^{-1})^2$.
\end{remN}

\dumou

\begin{proof}[Proof of Proposition~\ref{FouCarbe}]
We apply Lemma~\ref{LdeuxCarbeCrit} to the family $K_t = {\Di K t}$ of
dilates of $K$, $t > 0$. Under the assumptions of
Proposition~\ref{FouCarbe}, we first have that
$|m(t \xi)| + | (\d / \d t) m(t \xi)|
 \le \kappa (1 + |\xi|)$. Remark~\ref{Suffit} implies then that the
family of functions $g_\xi : t \mapsto m(t \xi) / t$
satisfies~\eqref{PolyBound}. We thus obtain by Lemma~\ref{LdeuxCarbeCrit}
the $L^2$-maximal inequality when $f \in \ca S(\R^n)$, and we may extend it
to all functions in~$L^2(\R^n)$ by the density of~$\ca S(\R^n)$
in~$L^2(\R^n)$, as explained in Section~\ref{DefiMaxiFunc}. 
\dumou
\dumou

 Let us pass to the proof of $(ii)$, the $L^p$ case. We use the notation of
the proof of Lemma~\ref{LdeuxCarbeCrit}, adapted to $m(\xi, t) = m(t \xi)$.
Denote by $q$ the conjugate exponent of~$p$, and observe that $q - 2 > - 1$
because $p < +\infty$. When $\alpha \in (1 / p, 1)$ and $t \ge 1$, by
applying H\"older to~\eqref{palphaA} and since $\alpha - 1 > - 1 / q$, 
$\Gamma(\alpha) > 1$, we obtain
\begin{align*}
     &\ms5 |({\Di K t} * f)(x)|
 \\
 \le &\ms5 \Gamma(\alpha)^{-1} \Bigl( \int_t^{+\infty} (t/u)^q 
          (1 - t/u)^{q(\alpha - 1)} \, \d u \Bigr)^{1/q}
       \Bigl( \int_t^{+\infty} |(P^\alpha_u f)(x)|^p
       \ms2 \frac {\d u} {u^p} \Bigr)^{1 / p}
 \\
 \le &\ms5 t^{1/q} \Bigl( \int_t^{+\infty} (t/u)^q 
          (1 - t/u)^{q(\alpha - 1)} \, \frac {\d u} t \Bigr)^{1/q}
       \Bigl( \int_t^{+\infty} |(P^\alpha_u f)(x)|^p
       \ms2 \frac {\d u} {u^p} \Bigr)^{1 / p}
 \\
 \le &\ms5 t^{1/q} \Bigl( \int_1^{+\infty} v^{-q \alpha}
          (v - 1)^{q(\alpha - 1)} \, \d v \Bigr)^{1/q}
       \Bigl( \int_1^{+\infty} |(P^\alpha_u f)(x)|^p
       \ms2 \frac {\d u} {u^p} \Bigr)^{1 / p}
 \\
 \le &\ms5 
       \Bigl(
        \int_1^2 (v-1)^{q\alpha - q} \, \d v
         \ns2 + \ns3 \int_2^{+\infty} (v-1)^{ - q} \, \d v 
       \Bigr)^{1/q}
       \ms2 t^{1 / q}
       \Bigl(
        \int_1^{+\infty} |(P^\alpha_u f)(x)|^p \ms2 \frac {\d u} {u^p} 
       \Bigr)^{1 / p}.
\end{align*}
With $c_{\alpha, p}^q
 =  1 / (q \alpha - q + 1) + 1 / (q - 1)
 = \alpha (p - 1) / (\alpha - 1 / p)$, it follows that
\begin{align*}
     \Bigl\| \sup_{1 \le t \le 2} |{\Di K t} * f| \ms1 \Bigr\|^p_{L^p(\R^n)}
 &\le c^p_{\alpha, p} \ms1 2^{p/q} 
      \int_1^{+\infty}
       \Bigl( \int_{\R^n} |(P^\alpha_u f)(x)|^p \, \d x \Bigr) 
        \frac {\d u} {u^p}
 \\
  &=  c^p_{\alpha, p} \ms1 2^{p/q} 
      \int_1^{+\infty} \|P^\alpha_u f\|_p^p \, \frac {\d u} {u^p}
 \le  \frac {c^p_{\alpha, p} \ms1 2^{p/q} \ns8 } {p - 1} \ms3
       \sup_{u \ge 1} \|P^\alpha_u f\|_p^p,
\end{align*}
and we shall see that
$
     \|P^\alpha_u \|_{p \rightarrow p} 
 \le 2 \ms2 \|m\|_{p \rightarrow p} +
         \| (\xi \ps \nabla)^\alpha m(\xi)\|_{p \rightarrow p}
$.
The multipliers~$p^\alpha_u$ are dilates of one another, indeed, for every
$\lambda > 0$, we have by~\eqref{NormInvar} that
\[
   p^\alpha_u( \lambda \xi)
 = u^{\alpha + 1} 
    D^\alpha_v \Bigl( \frac {m(v \lambda \xi)} v \Bigr) \barre_{v = u}
 = u^{\alpha + 1} \lambda^{\alpha + 1}
    D^\alpha_v \Bigl( \frac {m(v \xi)} v 
               \Bigr) \barre_{v = \lambda u}
 = p^\alpha_{\lambda u} (\xi).
\]
It suffices therefore to consider $p^\alpha_1$. According 
to~\eqref{FormuDeri}, one has 
\[
   p^\alpha_1(\xi)
 = D^\alpha_t \Bigl( \frac {m(t \xi)} t \Bigr) \barre_{t = 1}
 = \alpha \ms2 
    D^{\alpha-1}_t \Bigl( \frac {m(t \xi)} t \Bigr) \barre_{t = 1}
     + D^\alpha_t m(t \xi) \barre_{t = 1}.
\]
The multiplier $D^\alpha_t m(t \xi) \barre_{t = 1}$ is precisely equal to 
$(\xi \ps \nabla)^{\alpha} m(\xi)$. The other term, since
$\alpha - 1 < 0$, can be written by~\eqref{IntegraZero} as
\[
    U(\xi)
  = \alpha \ms2 
     D^{\alpha-1}_t \Bigl( \frac {m(t \xi)} t \Bigr) \barre_{t = 1}
  = \frac \alpha {\Gamma( 1 - \alpha)} \ms1 
     \int_1^{+\infty} (v - 1)^{- \alpha}
             \Bigl( \frac {m(v \xi)} v \Bigr) \, \d v.
\]
By Lemma~\ref{EasyIntegral}, we have
$\|U\|_{p \rightarrow p} \le 2 \ms2 \|m\|_{p \rightarrow p}$ because
\[
     \frac \alpha {\Gamma( 1 - \alpha)} \ms1 
      \int_1^{+\infty} (v - 1)^{- \alpha} \, \frac {\d v} v
 \le \frac \alpha {\Gamma( 1 - \alpha)} \ms1 
      \Bigl( \frac 1 {1 - \alpha} + \frac 1 \alpha \Bigr)
  =  \frac 1 {\Gamma(2 - \alpha)}
 \le 2,
\]
cutting $\int_1^{+\infty}$ at $v = 2$, and using~\eqref{MiniGamm}.
\end{proof}

\subsection{Proofs of Theorems~\ref{TheoDyad} and~\ref{TheoMaxi},
 and Proposition~\ref{MoreGeneral}%
\label{PreuveTheos}}

\noindent
We need only show Proposition~\ref{MoreGeneral}, and we can limit
ourselves to $1 < p \le 2$. As in Bourgain's proof of the $L^2$
theorem for $\M_C$ at the end of Section~\ref{ConcluBour}, the kernel~$K$
to which we shall apply Proposition~\ref{PropoPrio} is given by 
$K = \Kg - P$, where~$P$ is the Poisson kernel~$P_1$
from~\eqref{PoissonDensi}, and $\Kg$ is a probability density on $\R^n$
satisfying~\eqref{EstimaGene} with two constants $\delta_{0, g} \ge 1$ and
$\delta_{1, g}$ controlling the Fourier transform $\mg$ and its gradient.
We know by~\eqref{MaxiPoiss} that the maximal operator associated to the
Poisson kernel acts on $L^r(\R^n)$, $1 < r \le +\infty$, with constants
independent of the dimension~$n$. Letting~$B$ denote the Euclidean ball
normalized by variance in $\R^n$, we could replace~$P$ by~$K_B$ and invoke
Stein's Theorem~\ref{indepdim} instead. 
\dumou

 We shall apply Proposition~\ref{PropoPrio} in the two cases corresponding
to Theorems~\ref{TheoDyad} and~\ref{TheoMaxi}, in order to show that the
maximal function (or the dyadic maximal function) associated to the kernel
$K$ is bounded on~$L^p$ for $p > 3/2$ (or for $p > 1$). We shall get by
difference that the maximal function for $\Kg$ (or $\Klc$, $K_C$) is
bounded as well. In the \og dyadic\fge case of Theorem~\ref{TheoDyad}, the
operator $T_j$, for $j \in \Z$, is the convolution with the 
dilate~${\Di K {2^j}}$ of $K$. For Theorem~\ref{TheoMaxi},
$T_{j, v}$ is the convolution with~${\Di K {v 2^j}}$, $1 \le v \le 2$, and
$T_j$ is given by
\[ 
   T_j f 
 = \sup_{1 \le v \le 2} \ms1 |T_{j, v} f|
 = \sup_{2^j \le t \le 2^{j+1}} |{\Di K t} * f|.
\]
\dumou
 
 One has to check that the assumptions of Proposition~\ref{PropoPrio},
namely, \eqref{A0}, \eqref{A1}, \eqref{A2} and~\eqref{A3} p.~\pageref{A3},
are satisfied in these two cases. If the $(Q_j)$ are those of
Littlewood--Paley, defined by
\[
   \widehat Q_j(\xi)
 = \e^{ - 2 \pi 2^j |\xi|} - \e^{ - 2 \pi 2^{j+1} |\xi|},
 \quad
 \xi \in \R^n,
\]
then the assumption~\eqref{A0} is satisfied according to~\eqref{LiPaIneq},
with $C_p = \textrm{q}_p$.
\dumou

 For~\eqref{A1}, we write $T_{j, v} = U_{j, v} - S_{j, v}$, where the
$U_{j, v} = \Di {(\Kg)} {v 2^j}$ are related to~$\Kg$ and the
$S_{j, v} = \Di P {v 2^j}$ to the Poisson kernel. The operators $U_{j, v}$
and $S_{j, v}$ are positive, as convolutions with probability densities. As
mentioned before, this is the only place where we need $\Kg$ to be a
probability density rather than a general integrable kernel. We
know by~\eqref{MaxiPoiss} that the maximal operator~$S^*$ associated to the
Poisson kernel is bounded on $L^p(\R^n)$, $1 < p < +\infty$, by a
constant~$C'_p$ independent of the dimension~$n$. Consequently, the
property~\eqref{A1} is satisfied.
\dumou

 Let us consider~\eqref{A2}. The first case is when $T_j = {\Di K {2^j}}$
and in this case, according to~\eqref{LOneMultiplier}, the operator $T_j$ is
bounded on all the spaces $L^p(\R^n)$, $1 \le p \le +\infty$, by the
$L^1$~norm of $K$ and we get that
\begin{equation}
     \|T_j\|_{p \rightarrow p}
 \le \|K\|_{L^1(\R^n)}
 \le 2.
 \label{GotA2Dyad}
\end{equation}
In the second case, we have to use the part~$(ii)$ of
Proposition~\ref{FouCarbe}. This will be discussed below.
\dumou

 Finally, we must show~\eqref{A3}, \textit{i.e.}, prove that $T^*$ verifies
the property $(\gr S_2)$ from~p.~\pageref{grWp}. For $k$ fixed in $\Z$, we
shall bound the maximal operator of the kernel $N_k := K * Q_k$ using the
conclusion~$(i)$ of Proposition~\ref{FouCarbe}. We show in
Section~\ref{AutrePreuve} that for every value $\alpha \in (1/2, 1)$, the
norm $b_k := C_\alpha(\widehat{N_k})$ decays exponentially with~$|k|$, with
constants depending on $\alpha$ and (linearly) on 
$\delta_{0, g} + \delta_{1, g}$. In the \og dyadic case\fg, the bound
obtained in this way by $(i)$ for the maximal function of~$N_k$ implies that
\begin{align*}
     \bigl\| \ms1 \sup_{j \in \Z} \ms1 |T_j Q_{j+k} f| \ms1 \bigr\|_2
  &=  \bigl\| \ms1 \sup_{j \in \Z} 
              \ms1 |{\Di K {2^j}} * (\Di P {2^{j+k}} - 
               \Di P {2^{j+1+k}}) * f| \ms1 \bigr\|_2
 \\
  &=  \bigl\|
       \ms1 \sup_{j \in \Z} \ms1 | \Di{(N_k)}{2^j} * f| \ms1 
      \bigr\|_2
 \le \bigl\| \ms1 \sup_{t > 0} | \Di{(N_k)}t * f| \ms1 \bigr\|_2
 \le \kappa_\alpha \ms1 b_k,
\end{align*}
which proves $(\gr S_2)$ in this case. The case of the global maximal
function requires a small adaptation, Carbery says: \og This is not exactly
what being strongly bounded on $L^2$ means, but a slight modification of
this argument will give precisely what we require\fg. Indeed, there is now a
gap between what we get from Proposition~\ref{FouCarbe} and the assumption
we need for applying Proposition~\ref{PropoPrio}. We shall discuss it in the
subsection~\ref{GapQuestion} and resolve this \og gap question\fge in the
subsection~\ref{SGQ}. We obtain at last by Lemma~\ref{CloseGap}
and by Lemma~\ref{LdeuxCarbeCrit} that there exist universal 
coefficients~$(a_k)_{k \in \Z}$ such that 
$\sum_{k \in \Z} a_k^s < +\infty$ for every $s > 0$, and such that
\begin{equation}
     \bigl\| \ms1 \sup_{j \in \Z} \ms1 |T_j Q_{j+k} f| \ms1 \bigr\|_2
 \le (\delta_{0, g} + \delta_{1, g}) \ms1 a_k, 
 \quad k \in \Z.
 \label{GotA3}
\end{equation}
\dumou

 For~\eqref{A2} in the \og global\fge case, we study the operators
$(W_t)_{t > 0}$ defined by
\[
 W_t f = \sup_{t \le u \le 2 t} | \Di K u * f|,
 \ms{16}
 t > 0,
\]
and we want to prove~\eqref{A2} for the family of $T_j = W_{2^j}$
from~\eqref{DefTj}, with $j \in \Z$. Using the invariance by
dilation~\eqref{InvariMulB} of multiplier norms, we see that the
operators $W_t$ have the same norm when~$t$ varies, hence we need to find a
bound for $T_0 = W_1$ only. For this, we want to apply the
conclusion~$(ii)$ of Proposition~\ref{FouCarbe}, so we must show that the
multipliers $m$ and $(\xi \ps \nabla)^\alpha m(\xi)$ are bounded on
$L^p(\R^n)$ for some $\alpha \in (1/p, 1)$. For~$m$ it is clear by the
elementary fact~\eqref{LOneMultiplier}. 
\dumou

 For $(\xi \ps \nabla)^\alpha m(\xi)$ we shall use complex interpolation
between $(\xi \ps \nabla)^0 m(\xi) = m(\xi)$ that acts on $L^1(\R^n)$, and
$(\xi \ps \nabla) \ms1 m(\xi)$ that acts on $L^2(\R^n)$ since it is a
bounded function on~$\R^n$ by~\eqref{NablaAlphaBounded}
and~\eqref{EstimaGene}. We get by interpolation that the multiplier 
$(\xi \ps \nabla)^\alpha m(\xi)$ is bounded on $L^p(\R^n)$, with $p$ given
by
\[
   \frac 1 p
 = \frac {1 - \alpha} 1 + \frac \alpha 2
 = 1 - \frac \alpha 2 \ms1 \up,
\]
and we need $1 - \alpha / 2 = 1/p < \alpha$ for applying $(ii)$, thus 
$1 < 3 \alpha / 2 = 3 - 3 / p$. We must therefore have $p > 3 / 2$ in order
to conclude. We see that the reason for the restriction on the values of $p$
in Theorem~\ref{TheoMaxi} is to be found precisely here. 
\dumou

 This sketch is not fully accurate. For being able to interpolate, one must
control in~$L^2$ the values $\alpha = 1 + \ii \tau$, for every $\tau$ real,
which causes no difficulty, but also the values $\alpha = 0 + \ii \tau$ in
$L^1$, and this is more technical. The precise work, involving a slight
modification of the strategy described here, is done in
Section~\ref{InterpoCarbe} when we are well embedded
by~M\"uller~\cite{MullerQC} in the mood for interpolation. For every 
$p \in (3/2, 2]$, we shall then obtain for some $\alpha > 1/p$, function
of~$p$, a bound of the 
form~$
     \| (\xi \ps \nabla)^\alpha m(\xi) \|_{p \rightarrow p}
 \le \kappa_p (\delta_{0, g} + \delta_{1, g})^{2 - 2 / p}
$.
By Proposition~\ref{FouCarbe}, we deduce
\[
     \bigl\| T_0 f \bigr\|_{L^p(\R^n)} 
  =  \bigl\| \sup_{1 \le t \le 2} |{\Di K t} * f| \ms1 \bigr\|_{L^p(\R^n)} 
 \le \kappa'_p \ms1 (\delta_{0, g} + \delta_{1, g})^{2 - 2 / p}
      \| f \|_{L^p(\R^n)}
\]
for every function $f \in L^p(\R^n)$. We get~\eqref{A2} with 
$p_{\rm min} = 3/2$, since
\begin{equation}
     \|T_j\|_{p \rightarrow p}
  =  \|T_0\|_{p \rightarrow p}
 \le \kappa'_p \ms1 (\delta_{0, g} + \delta_{1, g})^{2 - 2 / p},
 \quad j \in \Z, \ms8
 3/2 < p \le 2.
 \label{GotA2}
\end{equation}
\dumou

 Applying Proposition~\ref{PropoPrio}, we finish the proof of
Proposition~\ref{MoreGeneral}. For $p \in (3/2, 2]$, we shall bound 
$T^* = \Mg_K$ in $L^{p}(\R^n)$, thus also $\M_{\Kg}$. We choose a value
$p_0$, function of $p$, such that $3 / 2 < p_0 < p$, and we let 
$\delta = \delta_{0, g} + \delta_{1, g}$. We have by~\eqref{GotA2} 
that~$C''_{p_0}
 \le \kappa''_{p} \ms2 \delta^{2 - 2 / p_0}$. Then,
applying~\eqref{ExplicitBound}, \eqref{GotA3}, \eqref{GotA2} and
$\delta_{0, g} \ge 1$, we obtain
\[
     \| \M_{\Kg} \|_{p \rightarrow p}
 \ns1\le\ns1 \| T^* \|_{p \rightarrow p} \ns1+\ns1 \kappa_{p, 0} 
 \ns1\le\ns1 \kappa_{p, 1} 
       (C''_{p_0})^\gamma
        \Bigl(
         \sum_{k \in \Z} (\delta a_k)^{ (1 - \gamma)p / 2} 
        \Bigr)^{ 2 / p }
         \ns7+\ns1 \kappa_{p, 2}
 \ns1\le\ns1 \kappa_p \delta^{2 - 2 / p}
\]
as announced, observing that 
$1 - \gamma = [1/p_0 - 1/p] / [1/p_0 - 1/2]$ is the interpolation parameter
for $L^p$ and the pair $(L^{p_0}, L^2)$, and that the powers of $\delta$
under the exponents $\gamma$ and $1 - \gamma$ are of the form $2 - 2/r$, 
$r = p_0$ or $2$. In the dyadic case, we may replace~\eqref{GotA2}
by~\eqref{GotA2Dyad} and obtain the result for $\M^{(d)}_{\Kg}$ when 
$p \in (1, 2]$.

\begin{smal}
\noindent\addtocounter{thm}{1}%
\textit{Remark \thethm}.  
Bringing back the question to the Poisson kernel leads to some
complications, because the function $\varphi_\theta(s)$ associated to the
Poisson kernel, \textit{i.e.}, the Cauchy kernel~\eqref{CauchyK}, does not
have decay properties as good as that of the function $\varphi_{\theta, C}$
of a convex set. This approach however does not depend on the $L^p$ result
of Stein for the Euclidean ball. 

 Why not employ the Gaussian semi-group instead? In some non Euclidean
situations, like Heisenberg groups or Grushin operators for instance, and
especially for the weak type $(1, 1)$ property of associated maximal
functions, the Poisson kernel is preferable. Indeed, some asymptotic
estimates, uniform in the dimension, are required on the kernel and are
easier to obtain for the Poisson kernel. But in the Euclidean case, we
cannot see a compelling obstacle to the use of the Gaussian kernel. We
would get an excellent decay, both in the space variable and in the Fourier
variable. We have chosen to stick to the original proofs, but we urge the
reader to rewrite them with Gaussian kernels instead. We shall see in
Section~\ref{LeCube} that Bourgain uses Gaussian kernels.

\end{smal}

\subsubsection{Where is the gap?%
\label{GapQuestion}}
As was said above, we will arrive for $N_k = K * Q_k$ at
\[
    C_\alpha(N_k) 
 := \sup_{\theta \in S^{n-1}} 
     \bigl\| 
       t \mapsto N_k (t \theta) 
     \bigr\|_{ {\hbox{\sevenit L}}^2_\alpha} 
  \le \kappa_\alpha \ms1 2^{ - \gamma |k|},
 \quad k \in \Z,
\]
for some $\gamma > 0$. This implies by Proposition~\ref{FouCarbe},~$(i)$
that
\[
     \bigl\| \ms1 \sup_{t > 0} | \Di {(N_k)}t * f| \ms1 \bigr\|_2
 \le \kappa_\alpha \ms1 2^{ - \gamma |k|}.
\]
Translating the definition of $N_k$ gives
\[
     \bigl\|
      \ms1 \sup_{t > 0} | {\Di {(K * Q_k)} t} * f| \ms1 
     \bigr\|_2
 \le \kappa_\alpha \ms1 2^{ - \gamma |k|}
\]
where $K = \Kg - P$, or
\[
     \bigl\|
      \ms1 \sup_{v \in [1, 2]} \sup_{j \in \Z}
       |( \Di K {v 2^j} * 
         ( \Di P {v 2^{j+k}} - \Di P {v 2^{j+k+1}} ) * f| \ms1 
     \bigr\|_2
 \le \kappa_\alpha \ms1 2^{ - \gamma |k|}.
\]
This must be compared to bounding the expression
\[
     \bigl\|
      \ms1 \sup_{v \in [1, 2]} \sup_{j \in \Z}
       |( \Di K {v 2^j} * 
         ( \Di P {2^{j+k}} - \Di P {2^{j+k+1}} ) * f| \ms1 
     \bigr\|_2,
\]
which is what we are waiting for, in the definition p.~\pageref{grSp} 
of Property~$(\gr S_2)$ for the family of operators~$(T_{j, v})$, 
$j \in \Z$, $v \in [1, 2]$.

\subsection{A proof for the property $(\gr S_2)$%
\label{AutrePreuve}}

\noindent
In what follows, $m = \mg - \widehat P$ is the Fourier transform of the
kernel $K = \Kg - P$ that appears in the proof of
Proposition~\ref{MoreGeneral}, where $\Kg$ is a probability density on
$\R^n$ satisfying~\eqref{EstimaGene}. We have
\[
 \widehat P(\xi) = \e^{ - 2 \pi |\xi|}
 \ms9 \hbox{and we let} \ms9
 \rho(\xi) = \widehat P(\xi) - \widehat P(2 \xi),
 \quad \xi \in \R^n.
\]
For every $k \in \Z$, every $\xi \in \R^n$ and $u > 0$, we set 
\[
   m_k(\xi) 
 = \widehat{N_k}(\xi)
 = m(\xi) \bigl( \e^{- 2^{k+1} \pi |\xi|} - \e^{- 2^{k+2} \pi |\xi|} \bigr)
 = m(\xi) \rho(2^k \xi),
 \ms{16}
   h_k^\xi(u) 
 = \frac {m_k(u \xi)} u \ms1 \up.
\]
One must show that for any given $\alpha \in (1/2, 1)$, the quantity
\[
   C_\alpha(m_k)^2
 = \sup_{\theta \in S^{n-1}} \ms2 
    \bigl\|
     u \mapsto m_k (u \theta) 
    \bigr\|^2_{ {\hbox{\sevenit L}}^2_\alpha} 
 = \sup_{\theta \in S^{n-1}} \ms2
    \int_0^{+\infty} 
     \bigl( u^{\alpha + 1} (D^\alpha h_k^\theta)(u) \bigr)^2
      \, \frac {\d u} u
\]
introduced in~\eqref{Calpha} decays exponentially to $0$ when $|k|$ tends to
infinity. We fix therefore $\theta \in S^{n-1}$ and for $u \in \R$, we set
\[
 \phi(u) = m(u \theta),
 \ms{16}
   \chi(u) 
 = \e^{- 2 \pi |u|} - \e^{- 4 \pi |u|}
 = \widehat P(u \theta) - \widehat P(2 u \theta)
 = \rho(u \theta). 
\]
Let $\delta = \delta_{0, g} + \delta_{1, g} \ge 1$, where 
$\delta_{0, g}, \delta_{1, g}$ are the constants in~\eqref{EstimaGene}. We
know that
\[
     |u| \ms2 |\mg(u \ms1 \theta)| 
 \le \delta_{0, g} \le \delta,
 \ms{12}
     |\theta \ps \nabla \mg(u \ms1 \theta)| 
 \le \delta_{1, g} \le \delta,
 \ms{12}
     |u \ms1 \theta \ps \nabla \mg(u \ms1 \theta)| 
 \le \delta,
 \ms9  u \in \R.
\]
On the other hand, the derivative with respect to $u > 0$ of
$\widehat P(u \theta) = \e^{- 2\pi |u|}$ is bounded by $2 \pi$, and
according to~\eqref{FPoissonBoundsA}, \eqref{FPoissonBoundsB}, we have
\[
     |u \ms1 \widehat P(u \theta)| 
 \le (2 \pi \e)^{-1}
  <  1
 \le \delta,
 \ms{20}
     \Bigl| u \ms1 \frac {\d} {\d u \ns2} \widehat P(u \theta) \Bigr|
 \le \e^{-1}
  <  \delta.
\]
For $\phi(u) = m(u \theta) = \mg(u \theta) - \widehat P(u \theta)$ we get 
$|\phi'(u)| \le \delta + 2 \pi$. Using again $\delta > 1$, we
simplify this bound as $|\phi'(u)| < 8 \delta$. It follows first that
$|\phi(u)| \le 8 \delta |u|$, and 
\begin{subequations}
\begin{equation}
 |\phi(u)| \le 8 \ms1 \delta \bigl( |u| \wedge |u|^{-1} \bigr),
 \quad
 |\phi'(u)| \le 8 \ms1 \delta \bigl( 1 \wedge |u|^{-1} \bigr).
 \label{PreciseBdsA}
\end{equation}
For $\chi(u)$, we see when $u > 0$ that $0 \le \chi(u) \le \e^{- 2 \pi u}$
and
\[
     - 2 \pi \e^{-2\pi u} 
 \le \chi'(u)
  =  - 2 \pi \e^{-2\pi u} + 4 \pi \e^{-4\pi u}
 \le 2\pi \e^{-2\pi u},
\]
implying that $|\chi'(u)| \le 2 \pi$ for $u \ne 0$ and
\begin{equation}
 |\chi(u)| \le (2 \pi |u|) \wedge |2 \pi \e \ms1 u|^{-1},
 \quad
 |\chi'(u)| \le (2 \pi) \wedge |\e u|^{-1}.
 \label{PreciseBdsB}
\end{equation}
\end{subequations}
We obtain a symmetric treatment of the two functions $\chi$ and
$\phi_\delta := \delta^{-1} \phi$ since, up to some \emph{universal}
multiple $\kappa$ (we express this by the sign~$\simleq$), we have
\begin{equation}
 |\phi_\delta(u)|, \ms2 |\chi(u)| \bsle2 |u| \wedge |u|^{-1}, 
 \ms{22}
 |\phi'_\delta(u)|, \ms2 |\chi'(u)| \Dsle2 1 \wedge |u|^{-1}.
 \label{PhiEtPsi}
\end{equation}
We set
$
   p_k(u) 
 = m_k(u \theta) 
 = \phi(u) \chi(2^k u)
$,
$
   h_k(u ) 
 = p_k(u) / u
$
and we want to estimate 
$
 \|p_k\|_{ {\hbox{\sevenit L}}^2_\alpha}
$
for every $k \in \Z$. Notice that
\[
   p_{-k}(2^{k} v)
 = \chi(v) \phi(2^{k} v).
\]
The $\mathit{L}^2_\alpha$ norm is invariant by dilation and the assumptions
on $\phi_\delta$ and~$\chi$ are identical, we may therefore restrict the
verification to the case $k \ge 0$. Let us fix an integer $k \ge 0$. We
have the following table, divided into the three regions where the chosen
bounds~\eqref{PhiEtPsi} for the functions~$h_k$ and~$h'_k$ keep the same
analytical expression, namely, the intervals $(0, 2^{-k})$, $(2^{-k}, 1)$
and $(1, +\infty)$. We consider that $h'_k$ is the derivative of the
product of $u^{-1} \phi(u)$ and $\chi(2^k u)$, we bound therefore $|h'_k|$ by
the sum of $\bigl| (u^{-1} \phi(u))' \bigr| \ms2 |\chi(2^k u)|$ and
$| u^{-1} \phi(u)| \ms2 2^k |\chi'(2^k u) \bigr|$.
\dumou

{ \smaller
\[
 \begin{matrix}
       u :      &  &   0    &    & 2^{-k} &   & 1 &  \cr
 \noalign{\vskip 2pt \hrule \vskip 5pt}
 u^{-1} |\phi_\delta(u)|  &\simleq \ns8 
                   & \vrule 
                            & 1  & \vrule  
                                          & 1 & \vrule 
                                                  & u^{-2} \cr
 |\chi(2^k u)|  &\simleq \ns8 
                   & \vrule
                            & 2^k u & \vrule 
                                          & 2^{-k} u^{-1} & \vrule 
                                                & 2^{-k} u^{-1} \cr
 \ns9 u^{\ns1-\ns1 1} |\phi'_\delta(u)|
  \ns3+\ns3  u^{\ns1-2\ns1} |\phi_\delta(u)| \ns9 
                &\simleq \ns8 
                   & \vrule 
                            & u^{-1} + u^{-1} & \vrule 
                                          & u^{-1} + u^{-1} & \vrule 
                                                & u^{-2} + u^{-3}
                                                  \simleq u^{-2} \cr
 2^k |\chi'(2^k u)|  
                &\simleq \ns8 
                   & \vrule 
                            &    2^k   & \vrule 
                                          & u^{-1} & \vrule 
                                                & u^{-1} \cr
 \noalign{\vskip 2pt \hrule \vskip 5pt}
 \delta^{-1} |h_k(u)|  
                &\simleq \ns8 
                   & \vrule 
                            & \ns{14} 2^k u \le 2^{-k} u^{-1} \ns{14} & \vrule 
                                          & 2^{-k} u^{-1} & \vrule 
                                                & 2^{-k} u^{-3} \cr
 \delta^{-1} |h'_k(u)|  
                &\simleq \ns8 
                   & \vrule 
                            & \ns{14} 2^k + 2^k \simleq u^{-1} \ns{14} & \vrule 
                                          & \ns{14} 2^{-k} u^{-2} + u^{-1}
                                            \simleq u^{-1} \ns{14} & \vrule 
                                                & \ns{14} 2^{-k} u^{-3} + u^{-3}
                                                   \simleq u^{-3} \ns{14} \cr
 \noalign{\vskip 2pt \hrule \vskip 5pt}
 \end{matrix}
\]

}

\dumou
\noindent
We see that $\delta^{-1} |h'_k(u)|
 \simleq H_1(u) := u^{-1} \wedge u^{-3}$. This function $H_1$ is
non-increasing on $(0, + \infty)$ and independent of $k$, and
$\delta^{-1} |h_k(u)| \simleq H_{0, k} (u) = 2^{-k} H_1(u)$. It follows
from Lemma~\ref{GeneralDb2} that for $t > 0$, we have
\[
    \delta^{-1} |(D^\alpha h_k)(t)| 
 \simleq H_{0, k} (t)^{1 - \alpha} \ms2 H_1(t)^\alpha
 \simleq 2^{ -(1 - \alpha) \ms1 k } H_1(t),
\]
and the conclusion is reached since we obtain then
\dumou
\noindent
\begin{align*}
    \|\phi \ms2 {\di \chi {2^k}} \|^2_{ {\hbox{\sevenit L}}^2_\alpha}
 &= \| p_k \|^2_{ {\hbox{\sevenit L}}^2_\alpha}
  = \int_0^{+\infty} \bigl| t^{\alpha + 1} 
      (D^\alpha h_k)(t) \bigr|^2 \, \frac {\d t} t
 \\
 &\simleq \delta^2 \ms1 2^{ -2(1 - \alpha) \ms1 k}  
       \Bigl( \int_0^1 (t^{\alpha + 1} t^{-1})^2 \, \frac {\d t} t
       + \int_1^\infty (t^{\alpha + 1} t^{-3})^2 \, \frac {\d t} t
       \Bigr)
\end{align*}
\dumou
\noindent
and
\[
   \int_0^1 t^{2 \alpha - 1} \, \d t
    + \int_1^\infty t^{2 \alpha - 5} \, \d t
 = \frac 1 {2 \alpha} + \frac 1 {4 - 2 \alpha}
 = \frac 1 {\alpha (2 - \alpha)}
 < \frac 1 \alpha
 < + \infty,
\]
thus $\| p_k \|_{ {\hbox{\sevenit L}}^2_\alpha}
 \simleq \delta \alpha^{-1/2} \ms1 2^{-(1 - \alpha) k}$ when $k \ge 0$,
and $\| p_k \|_{ {\hbox{\sevenit L}}^2_\alpha}
 \le \kappa \ms1 \alpha^{-1/2} \ms1 \delta \ms2 2^{-(1 - \alpha) |k|}$ when
$k \in \Z$. This implies by Proposition~\ref{FouCarbe},~$(i)$ that
\begin{equation}
     \bigl\| \sup_{r > 0} \ms1 
         \bigl( [ {\di {(m_k)} r} \widehat f \ms3]^\vee \bigr)
     \bigr\|_{L^2(\R^n)}
 \le \kappa \ms1 \delta \ms2 2^{-(1 - \alpha) |k|}
      \ms1 \|f\|_{L^2(\R^n)}
 \label{AkAlpha}
\end{equation}
for every $\alpha \in (1/2, 1)$, giving the property $(\gr S_2)$ 
(see p.~\pageref{grSp}) in the dyadic case.
\dumou
\dumou

 It would be just as simple to work with the $\Gamma_B(K)$ criterion of
Bourgain given in Section~\ref{AnaFouri}. We prove a general Lemma that 
will be invoked again in Section~\ref{LeCube} for the cube problem. 
\dumou

\begin{lem}\label{UsingGammaB}
Suppose that two integrable kernels $K_1$ and $K_2$ on\/ $\R^n$ satisfy,
for a certain $\kappa$ and every $\theta \in S^{n-1}$, that
\[
     |\widehat K_j(u \theta)| 
 \le \kappa (|u| \wedge |u|^{-1}),
 \quad
     |\theta \ps \nabla \widehat K_j(u \theta)| 
 \le \kappa (1 \wedge |u|^{-1}),
 \ms{16}
 j = 1, 2,
 \ms9 u \in \R.
\]
It follows that\/ $\Gamma_B \bigl( K_1  * {\Di {(K_2)} {2^k}} \bigr)
 \le C(\kappa) \ms1 2^{- |k| / 2}$ for $k \in \Z$.
\end{lem}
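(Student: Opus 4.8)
The plan is to reduce everything to a one-dimensional estimate, exactly as in the proof of property $(\gr S_2)$ carried out just above in Section~\ref{AutrePreuve}, and then to bound $\Gamma_B$ by a direct dyadic summation. Set $K = K_1 * \Di{(K_2)}{2^k}$, whose Fourier transform is $m(\xi) = \widehat{K_1}(\xi)\,\widehat{K_2}(2^k\xi)$. Fix $\theta \in S^{n-1}$ and put $\phi(u) = \widehat{K_1}(u\theta)$, $\psi(u) = \widehat{K_2}(u\theta)$ for $u \in \R$; the hypothesis says precisely that
\[
 |\phi(u)| \le \kappa\,(|u| \wedge |u|^{-1}),
 \ms{12}
 |\psi(u)| \le \kappa\,(|u| \wedge |u|^{-1}),
 \ms{12}
 |\phi'(u)|,\ms1 |\psi'(u)| \le \kappa\,(1 \wedge |u|^{-1}),
\]
the ray derivatives being the directional derivatives $\theta \ps \nabla \widehat{K_j}(u\theta)$. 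Since $\Gamma_B$ is invariant under dyadic dilations of the kernel (a dilation merely shifts the index $j$ in $\alpha_j$ and $\beta_j$), and since the hypothesis is symmetric in $K_1$ and $K_2$, it suffices to treat $k \ge 0$, and then $m(u\theta) = \phi(u)\,\psi(2^k u)$. This is exactly the type of product handled in the table of Section~\ref{AutrePreuve}, with $\delta$ replaced by $\kappa^2$ and the Poisson factor $\chi$ replaced by $\psi$.

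First I would record, in the three regions $u \in (0, 2^{-k})$, $u \in (2^{-k}, 1)$ and $u \in (1, +\infty)$, the pointwise bounds obtained by multiplying the elementary estimates above and using the product rule $\frac{d}{du}\,m(u\theta) = \phi'(u)\,\psi(2^k u) + 2^k\phi(u)\,\psi'(2^k u)$. Up to a universal multiple of $\kappa^2$ this yields
\[
 |m(u\theta)| \le \kappa^2 \bigl( (2^k u^2) \wedge 2^{-k} \wedge (2^{-k} u^{-2}) \bigr),
 \ms{14}
 \bigl| u\,\tfrac{d}{du}\, m(u\theta) \bigr| \le \kappa^2 \bigl( (2^k u^2) \wedge u \wedge u^{-1} \bigr),
\]
and $u\,\tfrac{d}{du}\,m(u\theta)$ is $\xi\ps\nabla m(\xi)$ evaluated at $\xi = u\theta$. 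Taking the supremum over each dyadic annulus $2^{j-1} \le |\xi| \le 2^{j+1}$ then gives, again up to a universal multiple of $\kappa^2$: $\alpha_j,\beta_j \le 2^k 2^{2j}$ for $j \le -k$; $\alpha_j \le 2^{-k}$ and $\beta_j \le 2^j$ for $-k \le j \le 0$; and $\alpha_j \le 2^{-k} 2^{-2j}$ and $\beta_j \le 2^{-j}$ for $j \ge 0$.

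Finally I would conclude through the elementary inequality $\Gamma_B(K) \le \sum_{j \in \Z} \alpha_j + \sum_{j \in \Z} \sqrt{\alpha_j \beta_j}$ from the Remark following Lemma~\ref{Clef}. Summing the three geometric ranges, $\sum_j \alpha_j \le C\kappa^2\,(k+2)\,2^{-k}$, the polynomial factor $k$ arising only from the flat middle range (about $k$ terms of size $2^{-k}$), while $\sum_j \sqrt{\alpha_j \beta_j} \le C\kappa^2\,2^{-k/2}$, because there $\sqrt{\alpha_j \beta_j} \le C\kappa^2\,2^{-k/2} 2^{j/2}$ is itself summable in $j$ and the two outer ranges give geometric series dominated by their endpoint values. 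Absorbing $(k+2)2^{-k}$ into $2^{-k/2}$ gives $\Gamma_B(K_1 * \Di{(K_2)}{2^k}) \le C\kappa^2\,2^{-k/2}$ for $k \ge 0$, hence $\le C(\kappa)\,2^{-|k|/2}$ for every $k \in \Z$, with $C(\kappa) = C\kappa^2$ for a universal $C$.

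The only point requiring any care — and the reason the stated rate is $2^{-|k|/2}$ rather than $2^{-|k|}$ — is the bookkeeping that makes each geometric sum collapse to its endpoint and keeps the middle region to a polynomial rather than exponential loss; there is no genuine obstacle, the whole computation being a transcription of the one in Section~\ref{AutrePreuve} with the $\mathit{L}^2_\alpha$ norm and Proposition~\ref{FouCarbe} replaced throughout by the simpler Bourgain functional $\Gamma_B$.
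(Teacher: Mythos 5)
Your proof is correct and follows essentially the same route as the paper's: reduce to $k \ge 0$ by dilation invariance and symmetry of the hypothesis, bound $|m(u\theta)|$ and $|u\,\tfrac{d}{du}m(u\theta)|$ in the three regions $(0,2^{-k})$, $(2^{-k},1)$, $(1,+\infty)$ by the product rule, read off $\alpha_j$, $\beta_j$ on dyadic annuli, and sum. The paper presents the regional bounds as a table rather than as minima, but the estimates and the final summation — including the polynomial factor $(k+1)2^{-k}$ from the middle range for $\sum_j \alpha_j$ and the dominant $2^{-k/2}$ from $\sum_j \sqrt{\alpha_j \beta_j}$ — are identical.
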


\begin{proof}
We fix $\theta \in S^{n-1}$, and in order to remind us about the preceding
case, we let $m$ be the Fourier transform of $K_1$ and $\rho$ that of
$K_2$. We will modify the table above, in order to emphasize now 
$\phi(u) := m(u \theta)$ and 
$u \theta \ps \nabla m(u \theta) = u \phi'(u)$ that appear in the
components $\alpha_j(m)$ and $\beta_j(m)$ of $\Gamma_B(K)$, and we
proceed similarly
for $\chi(u):= \rho(u \theta)$. 
\dumou
\dumou

 Let $m_k$ be the Fourier transform of the kernel 
$K_1 * {\Di {(K_2)} {2^k}}$. We have that 
$m_k(u \theta) = m(u \theta) \rho(2^k u \theta)$ and we may again restrict
ourselves to $k \ge 0$, since a dilation by~$2^i$ on a multiplier $g(\xi)$
produces a shift of $i$ places on the indices~$j$ of the sequences
$(\alpha_j(g))_{j \in \Z}$, $(\beta_j(g))_{j \in \Z}$, leaving 
$\sum_{j \in \Z}$ unchanged. The bounds below do not depend on 
$\theta \in S^{n-1}$, so we will be able to estimate
\dumou
\[
    A_k(u)
 := \sup_{\theta \in S^{n-1}} |m_k(u \theta)|
 \ms{16} \hbox{and} \ms{16}
    B_k(u)
 := \sup_{\theta \in S^{n-1}} |u \theta \ps \nabla m_k(u \theta)|.
\]
Note that $B_k(u)$ is controlled by $\phi(u) 2^k u \chi'(2^k u)$ and 
$u \phi'(u) \chi(2^k u)$. We have $\alpha_j(m_k) \sim A_k(2^j)$,
$\beta_j(m_k) \sim B_k(2^j)$, for every $j \in \Z$. The new table is
divided into the same three regions as before.
\dumou

{ \smaller
\[
 \begin{matrix}
       u :      &  &   0    &    & 2^{-k} &   & 1 &  \cr
 \noalign{\vskip 2pt \hrule \vskip 5pt}
        |\phi(u)|  &\simleq \ns8 
                   & \vrule 
                            & u     & \vrule  
                                          & u & \vrule 
                                                  & u^{-1}             \cr
 |\chi(2^k u)|  &\simleq \ns8 
                   & \vrule
                            & 2^k u & \vrule 
                                          & 2^{-k} u^{-1} & \vrule 
                                                & 2^{-k} u^{-1}        \cr
 \ns9  u |\phi'(u)| \ns9 
                &\simleq \ns8 
                   & \vrule 
                            & u     & \vrule 
                                          &  u            & \vrule 
                                                &   1                  \cr
 2^k u |\chi'(2^k u)|  
                &\simleq \ns8 
                   & \vrule 
                            &  2^k u   & \vrule 
                                          &  1     & \vrule 
                                                &  1                   \cr
 \noalign{\vskip 2pt \hrule \vskip 5pt}
 A_k(u)   
                &\simleq \ns8 
                   & \vrule 
                            & 2^k u^2   & \vrule 
                                          & 2^{-k}        & \vrule 
                                                & 2^{-k} u^{-2}          \cr
 B_k(u)  
                &\simleq \ns8 
                   & \vrule 
                            & 2^k u^2 + 2^k u^2   & \vrule 
                                          &  u +    2^{-k}                
                                            \simleq u      \ns{14} & \vrule 
                                                & \ns{14} u^{-1} + 2^{-k} u^{-1}
                                                   \simleq u^{-1} \ns{14} \cr
 \sqrt{ A_k(u) \ms1 B_k(u)}
                &\simleq \ns8 
                   & \vrule 
                            & 2^k u^2    & \vrule 
                                          &  2^{-k/2} u^{1/2} & \vrule 
                                                & 2^{-k/2} u^{-3/2}       \cr
 \noalign{\vskip 2pt \hrule \vskip 5pt}
 \end{matrix}
\]

}

\noindent
It follows that for every $j \in \Z$, we have
\[
 \alpha_j(m_k) \simleq \ms4
 \begin{cases}
   \ms3 2^{k + 2 j} &\hbox{if} \ms6  j \le - k, 
   \\
   \ms3 2^{-k}      &\hbox{if} \ms6  -k \le j \le 0, 
   \\
   \ms3 2^{-k - 2j} &\hbox{if} \ms6  0 \le j,
 \end{cases}
 \ms{10} \hbox{so} \ms{10}
 \sum_{j \in \Z} \alpha_j(m_k) \simleq (k+1) 2^{-k},
\]
and
\[
 \sqrt{\alpha_j(m_k) \beta_j(m_k)} \simleq 
 \begin{cases}
   \ms3 2^{k + 2 j}       \ns6 &\hbox{if} \ms6  j \le - k,
   \\
   \ms3 2^{- k/2 + j/2}   \ns6 &\hbox{if} \ms6  -k \le j \le 0,
   \\
   \ms3 2^{- k/2 - 3 j/2} \ns6 &\hbox{if} \ms6  0 \le j,
 \end{cases}
 \hbox{ so }
 \sum_{j \in \Z} \sqrt{\alpha_j(m_k) \beta_j(m_k)}
 \simleq 2^{-k/2}.
\]
Taking the supremum, we obtain
$\Gamma_B \bigl( K_1  * {\Di {(K_2)} {2^k}} \bigr)
 \le C(\kappa) \ms1 2^{-|k| / 2}$, for $k \in \Z$.
\end{proof}

 Coming back to Carbery's situation, we obtain in this way by
Lemma~\ref{Clef} that
\[
 \|m_k\|_{2 \rightarrow 2} \le \kappa \ms1 \delta \ms2 2^{- |k| / 2},
 \quad k \in \Z,
\]
slightly better than what we got with $C_\alpha(m_k)$. Indeed, we must
choose $\alpha > 1/2$ with Carbery, and we have obtained for
$C_\alpha(m_k)$ a bound of order $\delta \ms1 2^{-(1-\alpha) \ms1 |k|}$.

\subsubsection{A solution to the gap question%
\label{SGQ}}

\noindent
The gap question has been exposed in Section~\ref{GapQuestion}. Instead of
the function studied precedently, equal to
\[
  \widehat{N_k} (\xi) : t \mapsto
   {\di m t} (\xi) 
     ( \di {\widehat P} {t \ms1 2^k}
       - \di {\widehat P} {t \ms1 2^{k+1} } )(\xi),
 \quad t > 0, \ms5 \xi \in \R^n,
\]
we need to study the family of multipliers defined by
\[
     \widehat n_k(\xi, t)
   = {\di m t} (\xi) 
     ( \di {\widehat P} {2^{j+k}}
        - \di {\widehat P} {2^{j + k+1} } )(\xi),
 \quad j \in \Z 
 \ms{10} \hbox{and} \ms{10}
 2^j \le t \le 2^{j+1},
\]
which are the Fourier transforms of the kernels ${\Di K t}
 * ( \Di P {2^{j(t) + k} } - \Di P {2^{j(t) + k+1} })$ with 
$j(t) = \lfloor \log_2 t \rfloor$. They do not fit into the
setting of Proposition~\ref{FouCarbe}, but can be treated using
Lemma~\ref{LdeuxCarbeCrit}. We do the following: for every $j \in \Z$, let
$x_j = 2^j + 2^{j-1}$ be the midpoint of the interval 
$I_j = [2^j, 2^{j+1}]$. Let the \og new\fge function be
\[
  t \mapsto
   \di m {2^j + 2(t - 2^j)}(\xi) 
     ( \di {\widehat P} {2^{j+k}}
        - \di {\widehat P} { 2^{j + k+1} } ) (\xi) 
\]
for $t$ in the first half $[2^j, x_j]$ of the interval $I_j$, and
\[
  t \mapsto
   \di m {2^{j + 1}} (\xi) 
     ( \di {\widehat P} { 2^k (2^j + 2(t - x_j)) }
       - \di {\widehat P} { 2^{k+1} (2^j + 2(t - x_j)) } ) (\xi) 
\]
in the second half. The first half \og contains\fge the family 
$\widehat n_k(\xi, t)$ that we have to study, and adjoining the second half
will allow us to exploit easily what has been done in
Section~\ref{AutrePreuve} for the regular setting. We can describe more
compactly the new setting if we define two motions going along 
$(0, +\infty)$ according to
\[
    X(t) =
 \begin{cases}
  \ms3 2^j + 2(t - 2^j),
   &  \quad  2^j \le t \le x_j,
 \\
  \ms3 2^{j + 1}, 
    & \quad  x_j \le t \le 2^{j+1},
 \end{cases}
\]
and
\[
    Y(t) =
\begin{cases}
    \ms3 2^j,
    &\quad  2^j \le t \le x_j,
 \\
    \ms3 2^j + 2(t - x_j), 
    &\quad  x_j \le t \le 2^{j+1}.
\end{cases}
\]
Then, the new function can be written as
\begin{equation}
    \widetilde m_k(\xi, t)
 := \di m {X(t)} (\xi) \ms1
     ( \di {\widehat P} {2^{k} Y(t)}
       - \di {\widehat P} {2^{k+1} Y(t)} ) (\xi),
 \label{NewFunction}
\end{equation}
corresponding to the family of kernels $K_t = {\Di K {X(t)}}
 * ( \Di P {2^k Y(t)} - \Di P {2^{k+1} Y(t)})$. The two functions $X, Y$
are non-decreasing, continuous, piecewise linear, and we have 
$X(2^j) = Y(2^j) = 2^j$ for $j$ in $\Z$. Notice that $X(2 t) = 2 X(t)$ and
$Y(2 t) = 2 Y(t)$ (make use of $2 x_j = x_{j+1}$). Also, 
$0 \le X'(t), Y'(t) \le 2$. Applying Remark~\ref{Suffit}, one sees easily
that the functions $g_\xi(t) = \widetilde m_k(\xi, t) / t$ 
satisfy~\eqref{PolyBound}.
\dumou

 In the \og dilation case\fge where $m_0(\xi, t) = m(t \xi)$, we have that
$m_0(s \xi, t) = m_0(\xi, s t)$ for every $s > 0$, and it allowed us to
restrict the study of the functions $t \mapsto m_0(\xi, t)$, $\xi \in \R^n$,
to the case $|\xi| = 1$. This is not true anymore, but we still have that 
$m(2 \xi, t) = m(\xi, 2 t)$ for the two components $\Phi$ and $\Psi$ of
$\widetilde m_k(\xi, t)$, defined by
\[
   \Phi(\xi, t)
 = \di m {X(t)} (\xi),
 \ms{16}
   \Psi(\xi, t)
 = \bigl(
    \di {\widehat P} { Y(t) } - \di {\widehat P} { Y(2t) } 
   \bigr) (\xi),
\]
and this permits us to restrict to the case $1 \le |\xi| < 2$. Indeed,
\[
   \Phi(2 \ms1 \xi, t)
 = \di m {X(t)} (2 \xi) 
 = m \bigl( 2 X(t) \ms1 \xi \bigr)
 = m \bigl( X(2 t) \xi \bigr)
 = \Phi(\xi, 2 t).
\]
The same property holds true for $\Psi(\xi, t)$, with $Y$ replacing $X$.
\dumou

 Let us fix $\xi$ such that $1 \le |\xi| < 2$, and consider now
\[
   \phi_1(u) 
 = \Phi(\xi, u)
 = m(X(u) \xi),
 \quad
   \chi_1(u) 
 = \Psi(\xi, u)
 = \e^{ - 2 \pi Y(u) |\xi|} - \e^{ - 4 \pi Y(u) |\xi|}.
\]
Letting $\xi = |\xi| \ms1  \theta$, we compare $\phi(u) = m(u \theta)$ with
$\phi_1(u) = \phi(X(u) \ms1 |\xi|)$. For every $u > 0$, we have 
$u \le X(u) \le 2 u$ and $u/2 \le Y(u) \le u$. We have therefore that 
$u \le X(u) |\xi| \le 4 u$ and $u/2 \le Y(u) |\xi| \le 2 u$. Recall that
$m$, difference of $\mg$ and~$\widehat P$, satisfies~\eqref{PreciseBdsA}.
It follows that
\begin{align*}
     \delta^{-1} |\phi_1(u)| 
  &=  |\phi_\delta(X(u) \ms1 |\xi|)|
 \le 8 \ms1 
      \bigl[
       (X(u) \ms1 |\xi|) \wedge (X(u)^{-1} \ms1 |\xi|^{-1})
      \bigr]
 \\
 &\le 32 \ms1 \bigl( |u| \wedge |u|^{-1} \bigr)
 \simleq |u| \wedge |u|^{-1}.
\end{align*}
We also have $\phi'_1(u) = X'(u) \ms2 \phi'(X(u) \ms1 |\xi|)$, and
since $X'(u) \le 2$, 
\[
     \delta^{-1} |\phi'_1(u)| 
 \le 2 \ms1 |\phi'_\delta(X(u) \ms1 |\xi|)|
 \le 16 \bigl[ 1 \wedge (X(u)^{-1} \ms1 |\xi|^{-1}) \bigr]
 \le 16 \bigl( 1 \wedge |u|^{-1} \bigr),
\]
which can be written as $\delta^{-1} |\phi'_1(u)| 
 \simleq 1 \wedge |u|^{-1}$. Using~\eqref{PreciseBdsB}, we have the same
kind of inequalities for $\chi_1$. The proof in Section~\ref{AutrePreuve}
depended only on these two bounds, so the result in~\eqref{AkAlpha} is also
valid in the modified setting and gives the following lemma.

\begin{lem}\label{CloseGap}
Suppose that $\Kg$ is a probability density on\/ $\R^n$
satisfying~\eqref{EstimaGene}, that $m = \mg - \widehat P$ and that
$\widetilde m_k$ is defined by\/~\eqref{NewFunction}. For 
$\alpha \in (1/2, 1)$, one has
\[
     \sup_{\xi \in \R^n} 
          \bigl\| 
           t \mapsto \widetilde m_k (\xi, t) 
          \bigr\|_{ {\hbox{\sevenit L}}^2_\alpha} 
 \le \kappa (\delta_{0, g} + \delta_{1, g}) 
      2^{-(1 - \alpha) |k|},
 \quad k \in \Z.
\]
\end{lem}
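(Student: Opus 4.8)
The statement is a corollary of the computation already carried out in Section~\ref{AutrePreuve} for the regular family $\widehat{N_k}$, transported through the reparametrization introduced around \eqref{NewFunction}. The plan is: (i) reduce, by scaling, the supremum over $\xi\in\R^n$ to $\xi$ in the shell $1\le|\xi|<2$ and the range of $k$ to $k\ge0$; (ii) write $\widetilde m_k(\xi,\cdot)$ as a product $\phi_1(u)\chi_1(2^k u)$ of two one-variable functions obeying \emph{exactly} the bounds \eqref{PhiEtPsi}; (iii) quote the three-region table from Section~\ref{AutrePreuve}, verbatim, to conclude. For step~(i): the $\mathit{L}^2_\alpha$ norm is dilation invariant by \eqref{NormInvar}, and the two factors $\Phi(\xi,t)=\di m{X(t)}(\xi)$, $\Psi(\xi,t)=\bigl(\di{\widehat P}{Y(t)}-\di{\widehat P}{Y(2t)}\bigr)(\xi)$ of $\widetilde m_k$ satisfy $\Phi(2\xi,t)=\Phi(\xi,2t)$ and $\Psi(2\xi,t)=\Psi(\xi,2t)$, thanks to the scaling relations $X(2t)=2X(t)$, $Y(2t)=2Y(t)$ (which rest in turn on $2x_j=x_{j+1}$); combined with dilation invariance this reduces to $1\le|\xi|<2$. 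Moreover the substitution $t\mapsto 2^{-k}t$ shows $\|t\mapsto\phi_1(t)\chi_1(2^k t)\|_{{\hbox{\sevenit L}}^2_\alpha}=\|t\mapsto\phi_1(2^{-k}t)\chi_1(t)\|_{{\hbox{\sevenit L}}^2_\alpha}$, so passing from $k$ to $-k$ only exchanges the roles of the two factors; since both will obey identical analytic bounds, it suffices to treat $k\ge0$.

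For step~(ii), fix $\xi$ with $1\le|\xi|<2$, put $\delta=\delta_{0,g}+\delta_{1,g}\ge1$, $\theta=|\xi|^{-1}\xi$, and set $\phi_1(u)=m(X(u)\xi)$ and $\chi_1(u)=\e^{-2\pi Y(u)|\xi|}-\e^{-4\pi Y(u)|\xi|}$; then $\widetilde m_k(\xi,t)=\phi_1(t)\chi_1(2^k t)$ because $Y(2^k t)=2^k Y(t)$. Since $u\le X(u)|\xi|\le4u$ and $0\le X'(u)|\xi|\le4$ (because $1\le|\xi|<2$, $u\le X(u)\le 2u$, $0\le X'\le2$), the estimates \eqref{PreciseBdsA} — which hold for $m=\mg-\widehat P$, \textbf{not} for $\mg$ alone — give $\delta^{-1}|\phi_1(u)|\simleq|u|\wedge|u|^{-1}$ and $\delta^{-1}|\phi_1'(u)|\simleq1\wedge|u|^{-1}$. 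Likewise, since $u/2\le Y(u)|\xi|\le2u$ and $0\le Y'(u)|\xi|\le4$, the bounds \eqref{PreciseBdsB} yield $|\chi_1(u)|\simleq|u|\wedge|u|^{-1}$ and $|\chi_1'(u)|\simleq1\wedge|u|^{-1}$. Thus $\phi_1/\delta$ and $\chi_1$ satisfy precisely the hypotheses \eqref{PhiEtPsi}, and (through Remark~\ref{Suffit}) the function $h_k(u):=\widetilde m_k(\xi,u)/u=\phi_1(u)\chi_1(2^k u)/u$ satisfies \eqref{PolyBound}, so that $D^\alpha h_k$ is well defined by \eqref{IntegraUn}.

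For step~(iii), running the three-region table of Section~\ref{AutrePreuve} (break points $2^{-k}$ and $1$, differentiating the product $u^{-1}\phi_1(u)\cdot\chi_1(2^k u)$) gives, for $k\ge0$, $\delta^{-1}|h_k(u)|\simleq2^{-k}H_1(u)$ and $\delta^{-1}|h_k'(u)|\simleq H_1(u)$ with $H_1(u)=u^{-1}\wedge u^{-3}$ non-increasing and independent of $k$. Lemma~\ref{GeneralDb2} then yields $\delta^{-1}|(D^\alpha h_k)(t)|\simleq(2^{-k}H_1(t))^{1-\alpha}H_1(t)^\alpha=2^{-(1-\alpha)k}H_1(t)$, and integrating against $(\d t)/t$,
\[
     \| t\mapsto\widetilde m_k(\xi,t) \|_{{\hbox{\sevenit L}}^2_\alpha}^2
 \simleq \delta^2\,2^{-2(1-\alpha)k}
          \Bigl( \int_0^1 t^{2\alpha-1}\,\d t + \int_1^{+\infty} t^{2\alpha-5}\,\d t \Bigr)
  =  \frac{\delta^2\,2^{-2(1-\alpha)k}}{\alpha(2-\alpha)},
\]
which is finite for $1/2<\alpha<1$. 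Taking the supremum over $1\le|\xi|<2$ and invoking the reductions of step~(i) gives $\sup_{\xi}\|\widetilde m_k(\xi,\cdot)\|_{{\hbox{\sevenit L}}^2_\alpha}\le\kappa\,(\delta_{0,g}+\delta_{1,g})\,\alpha^{-1/2}\,2^{-(1-\alpha)|k|}$ for all $k\in\Z$, as claimed. The one point requiring care is step~(ii): one must check that the piecewise-linear motions $X,Y$ (continuous, with $X(2^j)=Y(2^j)=2^j$ and slopes in $[0,2]$) do not degrade the size and derivative bounds on $m$ and $\widehat P$, so that the table of Section~\ref{AutrePreuve} — which uses nothing beyond \eqref{PhiEtPsi} — transfers verbatim; the rest is the bookkeeping already done there.
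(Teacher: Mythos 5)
Your proof is correct and follows essentially the same route as the paper: reduce to $1\le|\xi|<2$ via the scaling relation $\widetilde m_k(2\xi,t)=\widetilde m_k(\xi,2t)$ (a consequence of $X(2t)=2X(t)$, $Y(2t)=2Y(t)$ and the dilation invariance~\eqref{NormInvar}), verify that $\phi_1$ and $\chi_1$ obey the bounds~\eqref{PhiEtPsi}, and then import the three-region table and the $\d t/t$ integration from Section~\ref{AutrePreuve}. You simply spell out the reduction to $k\ge0$ and the final integral in full, whereas the paper leaves these implicit by pointing back to the earlier section.
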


\subsection{Annex: proof of Bourgain's $L^2$ theorem by Carbery's criterion%
\label{PreuvBour}}

\begin{proof}
This section is intended to illustrate the Fourier
definition~\eqref{DalphaDef} of~$D^\alpha$, and we shall have to perform
some contortions in order to enter into the suitable setting. The kernel
$K$ on $\R^n$ to which we want to apply the conclusion~$(i)$ of Carbery's
Proposition~\ref{FouCarbe} is again $K = \Klc - P$, as in
Section~\ref{PreuveTheos}, where $\Klc$ is a symmetric log-concave
probability density on $\R^n$ normalized by variance. Let us fix a norm one
vector $\theta \in \R^n$; here, the function 
$  \varphi_\theta(s) 
 = \int_{\theta^\perp} K(y + s \ms1 \theta) \, \d^{n-1} y$,
for $s \in \R$, is the difference of two symmetric probability densities
$\phi_j$, associated respectively to $\Klc$ and to the Poisson kernel~$P$.
The function $\phi_1$ of integrals of $\Klc$ on affine hyperplanes parallel
to $\theta^\perp$ satisfies, according to Lemma~\ref{LExpoDecay}, an
estimate of exponential decay $\phi_1(s) \le \kappa \e^{- |s| / \kappa}$,
for $s \in \R$ and for a certain $\kappa > 0$ universal. On the other hand,
$\phi_2(s)$ is the Cauchy kernel~\eqref{CauchyK} equal 
to~$\pi^{-1} (1 + s^2)^{-1}$, for which one has only
$
 \phi_2(s) \le 1 \wedge s^{-2}
$,
where $a \wedge b$ denotes the minimum of two real numbers $a$ and $b$.
This estimate is valid also for~$\phi_1$, up to some universal factor
$\kappa$, and we shall remember for the absolute value of $\varphi_\theta$
that
\begin{equation}
 \forall s \in \R,
 \ms{16}
     |\varphi_\theta(s)|
 \le \kappa \Bigl( 1 \wedge \frac 1 {s^2} \Bigr).
 \label{estimephi}
\end{equation}
The Fourier transform $m$ of $K$ is given by
\[
   m(t \ms1 \theta) 
 = \int_\R \varphi_\theta(s) \e^{- 2 \ii \pi s t} \, \d s.
\]
Denote by $\Phi$ the antiderivative of $\varphi_\theta$ vanishing at $0$.
The function~$\Phi$ is odd, it vanishes also at infinity because
$\varphi_\theta$ is even with integral zero. We deduce
from~\eqref{estimephi}, for some $\kappa > 0$ and every $s \in \R$, that
\begin{equation}
     |\Phi(s)| 
 \le \kappa \ms1 ( |s| \wedge |s|^{-1} ).
 \label{estimePhi}
\end{equation}
For $t \ne 0$, we could, performing an integration by parts, express 
$m(t \ms1 \theta)$ by a simply converging integral 
\[
   m(t \ms1 \theta) 
 = 2 \ii \pi t 
    \int_{- \infty}^{+\infty} \Phi(s) \e^{- 2 \ii \pi s t} \, \d s,
\]
but we prefer to work with absolutely converging integrals, for example in
this way: let us denote by $\widetilde P_0$ the $L^1$-normalized truncation 
$
 \widetilde P_0 = \| \gr 1_B P \|^{-1}_{L^1(\R^n)} \ms2 \gr 1_B P
$
of the Poisson kernel $P$ at a sufficiently large Euclidean ball $B$ in
$\R^n$, so that $\| \gr 1_B P \|_1 > 1 / 2$. We can see according 
to~\eqref{IntegraPois} that the radius of $B$ must be at least of order
$\kappa \ms1 \sqrt n$. Another possibility is to introduce a modified
Poisson kernel
\[
   \widetilde P (x) 
 = 2 \ms2 P(x) \e^{ - \varepsilon_0 |x|^2 / 2},
\]
where $\varepsilon_0 > 0$ is chosen so that the integral of $\widetilde P$
is equal to $1$. With both choices, one has 
$\widetilde P_0, \widetilde P \le 2 \ms1 P$, and the estimates of the
maximal function for the kernel $P$ are thus clearly true for 
$\widetilde P$, with a bound simply doubled. For the same fixed $\theta$ of
norm one, the modified function $\phi_2$ defined by
\[
     \phi_2(s)
  =  2 \ms2 
      \int_{\theta^\perp} P(y + s \theta) 
           \e^{ - \varepsilon_0 (|y|^2 + s^2) / 2} \, \d^{n-1} y
 \le C(n) \ms2 \e^{-\varepsilon_0 s^2 / 2}
\]
decays exponentially at infinity, and since
$\phi_2(s) \le 2 \ms1 \pi^{-1} (1 + s^2)^{-1}$, the modified function
$\phi_2$ satisfies~\eqref{estimephi} and~\eqref{estimePhi}. The modified
antiderivative $\Phi$ inherits now at infinity of the exponential decay of
$\phi_1$ and of $\phi_2$, and this makes the integrals that follow absolutely
convergent. However, the \og universal\fge estimates remain given
by~\eqref{estimephi} and~\eqref{estimePhi}.
\dumou

\begin{smal}
\noindent
The situation would be simpler
using a Gaussian kernel, letting
\[
 K(x) = K_C(x) - G(x),
 \quad
 x \in \R^n,
\]
with $G$ being the $N(0, \I_n)$ density~\eqref{LoiNZeroId} on $\R^n$.
\end{smal}

\noindent
We apply here the Fourier definition~\eqref{DalphaDef} for $D^\alpha$. For
every $t > 0$ we write 
\[
   \frac {m(t \ms1 \theta)} t
 = 2 \ii \pi \int_\R \Phi(s) \e^{- 2 \ii \pi s t} \, \d s,
\]
where $|\Phi|$ decays exponentially at infinity. This ensures that
$t \mapsto m( t \theta ) / t$ is $C^\infty$ on the line, with bounded
derivatives. By~\eqref{DalphaDef}, we can express the fractional derivative
appearing in Carbery's criterion as
\[
   D^\alpha_t \Bigl( \frac {m(t \ms1 \theta)} t \Bigr)
 = 2 \ii \pi (2 \pi)^\alpha
    \int_\R (\ii s)^\alpha \Phi(s) \e^{- 2 \ii \pi s t} \, \d s.
\]
For $0 < \alpha < 1$, we write 
\[
   \int_0^\infty s^\alpha \Phi(s) \e^{- 2 \ii \pi s t} \, \d s
 = \frac 1 {2 \ii \pi t}
     \int_0^\infty 
      \bigl( s^\alpha \Phi(s) \bigr)' \e^{- 2 \ii \pi s t} \, \d s,
\]
and because $\bigl( s^\alpha \Phi(s) \bigr)'$ vanishes at~$0$, we see that
\[
   \int_0^\infty s^\alpha \Phi(s) \e^{- 2 \ii \pi s t} \, \d s
 = - \frac 1 {4 \pi t^2 \ns5} \ms3
     \int_0^\infty 
      \bigl( s^\alpha \Phi(s) \bigr)'' \e^{- 2 \ii \pi s t} \, \d s.
\]
The integrals on the side of negative $s$ ask for an analogous treatment,
essentially already seen in Section~\ref{VoluSections},
Lemma~\ref{PreBourg}. We estimate the various parts (five parts) issued
from the differentiations of $s^\alpha \Phi(s)$ to the first and se\-cond
order, by applying the upper bounds~\eqref{estimephi} and~\eqref{estimePhi}
and the fact that $0 < \alpha < 1$. Notice that
\[
    \int_0^\infty (s^{\alpha-1} + s^{\alpha - 2}) ( s \wedge s^{-1} ) \, \d s
 =  \frac 1 {1 + \alpha} + \frac 1 \alpha + \frac 1 {1 - \alpha}
     + \frac 1 {2 - \alpha}
 =: \kappa_\alpha. 
\]
Grouping two of the terms issued from $(s^\alpha \Phi)'$, 
$(s^\alpha \Phi)''$ and using~\eqref{estimePhi}, we have 
\[
     \Bigl|
      \int_0^\infty s^{\alpha-1} \Phi(s) \e^{- 2 \ii \pi s t} \, \d s
     \Bigr|
      + 
       \Bigl|
        \int_0^\infty s^{\alpha-2} \Phi(s) \e^{- 2 \ii \pi s t} \, \d s
       \Bigr|
 \le \kappa \ms2 \kappa_\alpha,
\]
we also have
$
     \int_0^\infty (s^{\alpha}
        + s^{\alpha - 1}) |\varphi_\theta(s)| \, \d s
 \le \kappa \ms2 \kappa_\alpha
$ for two other terms by~\eqref{estimephi}, and finally for each 
$\phi = \phi_j$, $j = 1, 2$, decreasing on the positive side of the real
line, we know by Lemma~\ref{Repeti} that
\[
    \int_0^\infty s^{\alpha} |\phi'(s)| \, \d s
  = \alpha \int_0^{+\infty} s^{\alpha - 1} \phi(s) \, \d s
  < +\infty,
\]
which permits us to close this list of estimates for
$\varphi_\theta = \phi_1 - \phi_2$. It follows that for every $t > 0$, we
have
\[
     \Bigl| D^\alpha_t \Bigl( \frac {m(t \ms1 \theta)} t \Bigr) \Bigr|
 \le \kappa'_\alpha \ms1 (t^{-1} \ns2 \wedge \ms1 t^{-2}),
\]
with $\kappa'_\alpha \le \kappa' (2 \pi)^\alpha \kappa_\alpha$ independent
of the direction $\theta$. Recalling the definition~\eqref{Calpha} and
since $0 < \alpha < 1$, we get
\begin{align*}
   C_\alpha(m)^2
 &= \sup_{\theta \in S^{n-1}} 
          \bigl\| 
           t \mapsto m (t \theta) 
          \bigr\|_{ {\hbox{\sevenit L}}^2_\alpha}^2
 = \sup_{\theta \in S^{n-1}} 
    \int_0^{+\infty} 
     \Bigl| t^{\alpha + 1} 
      D^\alpha_t \Bigl( \frac {m (t \theta)} t \Bigr) \Bigr|^2
       \, \frac {\d t} t
 \\
 &\le (\kappa'_\alpha)^2
       \Bigl( \int_0^1 (t^{\alpha + 1} t^{-1})^2 \, \frac {\d t} t
         + \int_1^{+\infty} (t^{\alpha + 1} t^{-2})^2 \, \frac {\d t} t
       \Bigr)
 \\
  &=  (\kappa'_\alpha)^2
       \Bigl( \int_0^1 t^{2 \alpha - 1} \, \d t
         + \int_1^{+\infty} t^{2 \alpha - 3} \, \d t
       \Bigr)
   =  (\kappa'_\alpha)^2
       \Bigl( \frac 1 {2 \alpha} + \frac 1 {2 - 2 \alpha} \Bigr)
  <  +\infty.          
\end{align*}
One thus chooses $\alpha \in (1/2, 1)$ arbitrary and applies Carbery's
Proposition~\ref{FouCarbe}~$(i)$, which gives the boundedness on $L^2(\R^n)$
of the maximal operator associated to the difference kernel 
$K = \Klc - \widetilde P$. We get in this way that the maximal operator
$\M_{\Klc}$ is bounded on~$L^2(\R^n)$ by a constant independent of the
dimension~$n$.
\end{proof}

\section{The Detlef M\"uller article%
\label{AMuller}}

\noindent
M\"uller~\cite{MullerQC} introduces a geometrical parameter 
$Q(C)$\label{QuC} 
associated to every symmetric convex body~$C$ in~$\R^n$. When $C$ is
isotropic of volume $1$, this parameter $Q(C)$ is equal to the maximum of
$(n-1)$-dimensional volu\-mes of hyperplane projections of~$C$. M\"uller
shows that in the class $\ca C(\lambda)$ consisting of $C$s for which
$Q(C)$ and the isotropy constant $L(C)$ are bounded by a given $\lambda$,
the existence for the maximal operator $\M_C$ associated to $C$ of an
$L^p(\R^n)$~bound, uniform in $n$, can be pushed to every value~$p > 1$ with
a constant $\kappa(p, \lambda)$ depending on $p$ and $\lambda$ only. This
removes ---$\ms1$in a way$\ms1$--- the restriction~$p > 3/2$ imposed by
Bourgain and Carbery.
\dumou 

 We have seen in~\eqref{Isotro} and~\eqref{NormaVari} that when $C_0$ is
isotropic of volume~$1$ in $\R^n$, then the dilate $C_1 = r_0 C_0$ with 
$r_0 = L(C_0)^{-1}$ is isotropic and normalized by variance. The proof of
M\"uller will actually make use of a parameter~$q(C_1)$ equal to the
supremum in $\theta \in S^{n-1}$ of the masses of the signed measures
$\theta \ps \nabla K_{C_1}$. We shall see that for $\theta$ of norm one,
the mass of the measure $\theta \ps \nabla K_{C_1}$, the directional
derivative in the sense of distributions of the probability measure
$\mu_{C_1}$, is given by
\[
     \frac {2 \ms1 |P_\theta \ms1 C_1|_{n-1} \ns{20} } {|C_1|_n} \ms{18}
  =  2 \ms1 r_0^{-n} \ms1 r_0^{n-1} \ms1 |P_\theta \ms1 C_0|_{n-1}
 \le \frac 2 {\ms2 r_0} \ms2 Q(C_0)
  =  2 \ms1 L(C_0) \ms2 Q(C_0),
\]
where $P_\theta$ is the orthogonal projection onto the hyperplane
$\theta^\perp$. For every symmetric convex set~$C$, we let $C_0$ be an
isotropic position of volume~1 for $C$ and we set 
\begin{equation}
 q(C) = 2 L(C_0) \ms1 Q(C_0).
 \label{qC}
\end{equation}
M\"uller~\cite[Section~3]{MullerQC} proves that $q(C)$ is uniformly bounded
for the family of unit balls $B^q_n$ of $\ell^q_n$, $1 \le q < +\infty$
fixed and $n \in \N^*$. This is easy when $q = 2$. By~\eqref{RnV}, we know
that the Euclidean ball $B_{n, V}$ in~$\R^n$ normalized by variance has a
radius $r_{n, V}$ equal to~$\sqrt{n+2}$, hence by the log-convexity of the
Gamma function we get
\begin{align*}
     q(B^2_n) 
 &=  \sup_{\theta \in S^{n-1}}
      \frac {2 \ms1 |P_\theta \ms1 B_{n, V}|_{n-1} } {|B_{n, V}|_n}
  =  \frac {2 \ms1 \omega_{n-1}} { r_{n, V} \ms1 \omega_n }
  =  \frac {2 \ms1 \Gamma(n/2 + 1)} 
           {\sqrt {\pi (n+2)} \ms3 \Gamma(n/2 + 1/2)}
 \\
 &\le \frac {2 \Gamma(n/2 + 1/2)^{1/2} \ms4 \Gamma(n/2 + 3/2)^{1/2}} 
            {\sqrt {\pi (n+2)} \ms3 \Gamma(n/2 + 1/2)} 
  =   2 \ms2 \sqrt{ \frac {n+1} { 2 \pi (n+2)} } 
  <   \sqrt{ \frac 2 \pi} \ms1 \up.
\end{align*}
\dumou

 Given a kernel $K$ integrable on $\R^n$ and having partial derivatives
$\partial_j \ms1 K$ in the sense of distributions that are (signed)
measures $\mu_j$, for $j = 1, \ldots, n$, we define the 
\emph{directional variation} $V(K)$ of $K$ by
\begin{equation}
 V(K)
 := \sup_{\theta \in S^{n-1}} 
     \bigl\| \theta \ps \nabla K \bigr\|_1
  = \sup_{\theta \in S^{n-1}} 
     \bigl\| \sum_{j=1}^n \theta_j \ms1 \mu_j \bigr\|_1.
 \label{V(K)}
\end{equation}
We will show at Lemma~\ref{q=V} that $V(K_C) = q(C)$ when $C$ is an
isotropic symmetric convex body normalized by variance. For the 
$N(0, \I_n)$ Gaussian density $\gamma_n$, we see that
$  V(\gamma_n) 
 = \int_{\R^n} |x \ps \gr e_1| \, \d \gamma_n(x)
 = \int_\R |u| \, \d \gamma_1(u)
 = \sqrt{ 2 / \pi}$. Notice that
\begin{equation}
 V( {\Di K t} ) = t^{-1} V(K),
 \quad t > 0,
 \ms{24} \hbox{and} \ms{12}
 V(K * \mu) \le V(K)
 \label{VKt}
\end{equation}
\def\Pn{P_{\ms2 1}^{(\ns{0.7} n \ns1)}}%
for any probability measure $\mu$ on~$\R^n$. Since $V(\gamma_n)$ is
independent of~$n$, it follows from the subordination formula~\eqref{Subor}
that the same is true for the Poisson kernel $\Pn$ on $\R^n$ expressed
in~\eqref{PoissonDensi}. Precisely, because $G_s$ in~\eqref{Subor} is a
$N(0, s \I_n)$ Gaussian measure, we have
$V(G_s) = s^{-1/2} \ms1 V(\gamma_n)$ by~\eqref{VKt} and we first get
\begin{equation}
     V(\Pn)
 \le \int_0^{+\infty} V(G_s)
        \frac {s^{-3/2}} {\sqrt{2 \pi}} \e^{ - 1 / (2 s) } \, \d s
  =  \int_0^{+\infty} 
        \frac { \e^{ - 1 / (2 s) } } \pi  
         \, \frac { \d s } { s^2 } 
  =  \frac 2 \pi \up,
 \label{VPoiss}
\end{equation}
but actually $V(\Pn) = 2 / \pi$ since for each $x \in \R^n$, all gradients
$\nabla G_s(x)$, $s > 0$, are nonnegative multiples of the same vector 
$- x$. This equality is of course also easy to derive by a direct
calculation on the Poisson density. 
\dumou

 Besides the appearance of the parameter $q(C)$, M\"uller's proof draws on
estimates such as~\eqref{EstimaGene}, but extended to more derivatives of
the Fourier transform $m_C$ of~$K_C$. That bounding more derivatives leads
to improved results was already seen in Bourgain~\cite{BoGAFA}, who
obtained a dimension free bound in $L^p(\R^n)$ for all $p > 1$ in the case
of the maximal operator $\M_C$ of $\ell^q_n$~balls when $q$ is an even
integer. We shall consider a probability density $\Kg$ on $\R^n$ or
more generally an integrable kernel $\Kg$, with a Fourier transform $\mg$
satisfying that for every integer $j \ge 0$, there exists a constant
$\delta_{j, g}$ such that
\begin{subequations}\label{EstimaGeneGN}%
\begin{equation}
     \Bigl| \frac {\d^j} {\d t^j \ns2} \ms2 
       \mg(t \theta) \Bigr|
 \le \frac {\delta_{j, g}} 
           {1 + t} \ms1 \up,
 \quad \theta \in S^{n-1}, \ms9 t > 0.
 \label{EstimaGeneN} \tag{\ref{EstimaGeneGN}.$\gr H_\infty$}
\end{equation}
\end{subequations}
Actually, for each specific value $p \in (1, 3/2]$, bounding $\M_C$ in
$L^p(\R^n)$, knowing that $q(C) \le \lambda$, requires a certain finite
number of estimates from the infinite list~\eqref{EstimaGeneN}, and this
number increases to infinity when $p$ tends to $1$. We let
\begin{equation}
 \Delta_k = \sum_{j=0}^k \delta_{j, g}.
 \label{DefiDel}
\end{equation}
The \og radial\fge estimate~\eqref{EstimaGeneN} implies
$ |\d^j / (\d t^j) \mg(t \xi) |
 \le \delta_{j, g} |\xi|^j / (1 + |t \xi|)$ for $\xi \ne 0$. It is natural to
disregard $\xi = 0$ in a radial method, but when $j > 0$, we can extend
continuously $\xi \mapsto \d^j / (\d t^j) \mg(t \xi)$ by giving the value
$0$ at $\xi = 0$.

\begin{thm}[M\"uller~\cite{MullerQC}]\label{TheoMull}
For every $p \in (1, +\infty]$ and $\lambda > 0$, there exists a constant
$\kappa(p, \lambda)$ independent of $n$ such that
\[
     \| \M_{\Klc} f \|_{L^p(\R^n)}
 \le \kappa(p, \lambda) \ms2 \| f \|_{L^p(\R^n)}
\]
if $\Klc$ is an isotropic symmetric log-concave probability density
on\/~$\R^n$, normalized by variance and with $V(\Klc) \le \lambda$. In
particular, for every symmetric convex body~$C$ in\/ $\R^n$ such that 
$q(C) \le \lambda$, one has\/
$
     \| \M_C f \|_{L^p(\R^n)}
 \le \kappa(p, \lambda) \ms2 \| f \|_{L^p(\R^n)}
$. When $p \in (1, 2]$, we can write more precisely
\[
     \| \M_{\Klc} f \|_{L^p(\R^n)}
 \le \kappa(p) (1 + \lambda^{2 / p - 1}).
\]
If a probability density $\Kg$ satisfies\/~\eqref{EstimaGeneN} and if
$p \in (1, 2]$, then we have
\[
     \| \M_{\Kg} f \|_{L^p(\R^n)}
 \le \kappa_p \ms2 \Delta_{k_0(p)}^{1 - 1/p} \Delta_1^{1 - 1/p} 
      (1 + V(\Kg)^{2 / p - 1}),
 \ms{10} \hbox{with} \ms{10}
 k_0(p) < p / (p - 1).
\]
\end{thm}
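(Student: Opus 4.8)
The plan is to slot the statement into Carbery's abstract machinery of Proposition~\ref{PropoPrio}, exactly as in Section~\ref{PreuveTheos}, and to isolate the single place where M\"uller's hypothesis is genuinely needed. First I would dispose of the easy reductions: $p=+\infty$ is trivial, and for $p\in[2,+\infty]$ the bound follows by interpolation with Bourgain's $L^2$ theorem (Theorem~\ref{TheoBour}, Theorem~\ref{LogConcBourg}), so it suffices to treat $p\in(1,2]$. Next I would reduce all three assertions to one: for an isotropic, variance-normalized, symmetric log-concave density $\Klc$ the Fourier transform $\mlc$ satisfies the infinite list of radial estimates~\eqref{EstimaGeneN} with \emph{universal} constants $\delta_{j,c}$ (Lemma~\ref{LEstimatesForC}, Corollary~\ref{EstimatesForC}), and for a symmetric convex body $C$ one has $V(K_C)=q(C)$ (Lemma~\ref{q=V}); since moreover $V$ scales like $t^{-1}$ under dilation and $V(P_1)=2/\pi$, $V(\gamma_n)=\sqrt{2/\pi}$ are dimension free, the directional variation of the difference kernel $K:=\Kg-P$, with $P=P_1$ the Poisson kernel and $\widehat P(\xi)=\e^{-2\pi|\xi|}$, is bounded by $V(\Kg)+2/\pi$ independently of~$n$. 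So the task becomes: prove the displayed bound for a probability density $\Kg$ satisfying~\eqref{EstimaGeneN}.

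Then I would apply Proposition~\ref{PropoPrio} to the kernel $K=\Kg-P$, with the Littlewood--Paley operators $Q_j$ of~\eqref{LesQjs}. Hypotheses~\eqref{A0} (from~\eqref{LiPaIneq}, with $C_p=\textrm{q}_p\sim 1/(p-1)$ by~\eqref{EstiQp}), \eqref{A1} (the Poisson maximal bound~\eqref{MaxiPoiss} furnishes the positive majorants $S_{j,v}$, with $C'_p\le p/(p-1)$), and~\eqref{A3} (property $(\gr S_2)$: the $L^2_\alpha$-norms of the radial pieces $N_k=K*Q_k$ decay geometrically in $|k|$ by the table of Section~\ref{AutrePreuve}, with a factor $\delta_{0,g}+\delta_{1,g}$, and the ``gap'' between $(\gr S_2)$ and what Proposition~\ref{FouCarbe}~$(i)$ gives is closed exactly as in Lemma~\ref{CloseGap}) all go through verbatim. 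The only remaining assumption is~\eqref{A2}: a dimension-free $L^p$ bound for the ``global'' short-interval operators $T_j f=\sup_{2^j\le t\le 2^{j+1}}|{\Di K t} * f|$, equivalently for $T_0=\sup_{1\le t\le2}|{\Di K t}*\,\cdot\,|$. By the Sobolev-type criterion Proposition~\ref{FouCarbe}~$(ii)$ this reduces, for any $\alpha\in(1/p,1)$, to bounding the fractional-derivative multiplier $(\xi\ps\nabla)^\alpha m(\xi)$ on $L^p(\R^n)$ with a dimension-free norm; Carbery obtained this only for $p>3/2$, by interpolating $(\xi\ps\nabla)^0 m=m$ on $L^1$ against $(\xi\ps\nabla)^1 m$ on $L^2$, a chain which forces $\alpha=2-2/p$ and hence $\alpha>1/p$ precisely when $p>3/2$.

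The core of the argument, and the hard part, is to make $\|(\xi\ps\nabla)^\alpha m\|_{p\rightarrow p}$ finite with dimension-free control for every $p>1$, at the price of using more of the derivative estimates~\eqref{EstimaGeneN} and the directional variation $V(K)$. Following Section~\ref{LesMajos} I would introduce, for a small regularization parameter $\varepsilon>0$, a holomorphic family $z\mapsto m^\varepsilon_z$ of multipliers interpolating the $(\xi\ps\nabla)^\alpha m$ along a distinguished vertical line, and establish pointwise upper bounds for $m^\varepsilon_z(\xi)$ throughout the strip --- this is ``Lemma~4 of M\"uller'' (Section~\ref{LemmeQuatre}) --- obtained from the radial estimates~\eqref{EstimaGeneN} by repeated use of the integral formulas~\eqref{NablaAlpha} and~\eqref{DzOrdrek} for the fractional derivative. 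On one boundary line Plancherel then gives an $L^2$ bound controlled by $\Delta_{k_0(p)}$, where one needs of order $k_0$ derivatives with $k_0$ of size about $1/(p-1)$, hence $k_0(p)<p/(p-1)$; on the other boundary line one needs a bound controlled by $V(K)$: this is the ``low-order'' endpoint that Carbery could only realize on $L^2$, and which M\"uller realizes with a dimension-free $L^r$ norm for every $r\in(1,\infty)$ by exploiting $\|\theta\ps\nabla K\|_1\le V(K)$ uniformly in $\theta$ (convolution with a bounded measure is $L^r$-bounded for all $r$, and the residual angular dependence of the radial derivative is controlled in the spirit of the method of rotations used for the Riesz transforms in Section~\ref{TransfoRiesz}). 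Complex interpolation in the strip, with the exponential and polynomial growth permitted by Corollary~\ref{InStrip} and Remark~\ref{PolyFacto}, then yields $\|(\xi\ps\nabla)^\alpha m\|_{p\rightarrow p}\le\kappa_p\,\Delta_{k_0(p)}^{\,1-1/p}(1+V(K)^{2/p-1})$ for a suitable $\alpha=\alpha(p)>1/p$; via Proposition~\ref{FouCarbe}~$(ii)$ this is~\eqref{A2}, with $C''_{p_0}\le\kappa'_{p}\,\Delta_{k_0(p_0)}^{1-1/p_0}(1+V(K)^{2/p_0-1})$.

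Finally I would feed $C_{r_0}=\textrm{q}_{r_0}$, $C'_p\le p/(p-1)$, the bound for $C''_{p_0}$ just obtained, and the sequence $(a_k)$ from $(\gr S_2)$ into the explicit estimate~\eqref{ExplicitBound} of Proposition~\ref{PropoPrio}; tracking the interpolation exponent $\gamma=[1/p-1/2]/[1/p_0-1/2]$ for the pair $(L^{p_0},L^2)$ gives $\|\Mg_K\|_{p\rightarrow p}\le\kappa_p\,\Delta_{k_0(p)}^{1-1/p}\Delta_1^{1-1/p}(1+V(K)^{2/p-1})$, and then the pointwise inequality $\M_{\Kg} f\le\M_P f+\Mg_K(|f|)$ together with the dimension-free bound $\|\M_P\|_{p\rightarrow p}\le p/(p-1)$ closes the estimate for $\M_{\Kg}$. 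Specializing to $\Kg=\Klc$ log-concave, where the $\delta_{j,c}$ are universal so $\Delta_k\le\kappa_k$, collapses the bound to $\kappa(p)(1+\lambda^{2/p-1})$; specializing further to $\Kg=K_C$ and using $V(K_C)=q(C)\le\lambda$ gives the convex-body statement, and interpolation with the trivial $p=+\infty$ case (and Bourgain at $p=2$) recovers the whole range $p\in(1,+\infty]$ with a constant $\kappa(p,\lambda)$. The main obstacle is step three: constructing the family $m^\varepsilon_z$ and proving the pointwise bounds of ``Lemma~4'' with constants that are genuinely dimension free --- the $V(K)$ endpoint is where all the geometry resides --- and then carrying out the Stein interpolation bookkeeping with the admissible-growth strip estimates so as to extract exactly the stated powers of $\Delta$ and $V$.
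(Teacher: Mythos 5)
Your proposal follows exactly the paper's route: reduce to $p\in(1,2]$, pass via Lemma~\ref{LEstimatesForC} and Lemma~\ref{q=V} to the abstract density $\Kg$, apply Carbery's Proposition~\ref{PropoPrio} with \eqref{A0}, \eqref{A1}, \eqref{A3} unchanged, and supply the missing ingredient \eqref{A2} by bounding $(\xi\ps\nabla)^\alpha m$ in $L^{p_0}$ through M\"uller's holomorphic family $m^\varepsilon_z$, interpolating a high-order $L^2$ endpoint (Lemma~\ref{MajoMullA}, many derivative bounds) against a low-order $L^{p_0}$ endpoint controlled via the crucial multiplier $m^\#(\xi)=|\xi|m(\xi)$, Riesz transforms, and the directional variation $V(K)$. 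One phrase is slightly off (Carbery's low-order endpoint sits on $L^1$, not $L^2$; it is M\"uller's $(1+|\xi|)$-weighted endpoint that would be confined to $L^2$ without the $V(K)$ argument), and the $L^\infty$–$L^2$ interpolation for $U_K$ in Lemma~\ref{LemmeUK} is glossed over, but these are prose imprecisions rather than gaps in the argument.
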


 The subsequent proof furnishes for the constant $\kappa_p$ in the line
above an order exponential in $q = p / (p-1)$ that is certainly not right,
see Remarks~\ref{ButPoly} and~\ref{ButPolyB}. The case $p > 3/2$ is already
known, with $\kappa(p, \lambda)$ independent of $\lambda$, see
Theorem~\ref{TheoMaxi} and Proposition~\ref{MoreGeneral}. We know by
Lemma~\ref{LEstimatesForC} that isotropic symmetric log-concave probability
densities verify~\eqref{EstimaGeneN} with absolute constants 
$(\delta_{j, c})_{j=0}^\infty$. We shall thus concentrate on the $\Kg$ case
and on values $p \in (1, 3/2]$. Taking Carbery's results into account, the
following proposition will be (essentially) enough for proving M\"uller's
theorem.

\def\MullerPropOne{\cite[Proposition 1]{MullerQC}}%
\begin{prp}[after M\"uller~\MullerPropOne]%
\label{PropoMull}
Let $\Kg$ be an integrable kernel on\/~$\R^n$ 
satisfying\/~\eqref{EstimaGeneN} and let $\mg$ be its Fourier transform.
For every $\alpha \in (0, 1)$ and every
$p \in (1, +\infty)$, the multiplier\/ $(\xi \ps \nabla)^\alpha \mg(\xi)$
in\/~\eqref{OpAlpha} admits on $L^p(\R^n)$ a bound that depends upon~$p$,
$\alpha$, $\gr d = (\delta_{j, g})_{j=0}^\infty$ and~$V(\Kg)$, but not on
the dimension~$n$. When $p \in (1, 2]$ and if\/ 
$\|\Kg\|_{L^1(\R^n)} \le 1$, we can write
\[
     \| (\xi \ps \nabla)^\alpha \mg (\xi) \|_{p \rightarrow p}
 \le 1 + \kappa(\alpha, p) \ms1 \Delta_{k(p)}^{ (4/3)(1 - 1/p )} 
      \bigl(
       1 + \delta_{0, g}^{ (2/3)(1 - 1/p) } V(\Kg)^{2 / p - 1}
      \bigr),
\]
with $k(p) = \lceil 3 \ms1 p / (4 p - 4) \rceil$.
\end{prp}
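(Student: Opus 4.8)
The plan is to realize $(\xi\ps\nabla)^{\alpha}\mg$ as one member of the holomorphic family of multiplier operators $z\mapsto (\xi\ps\nabla)^{z}\mg$, where $(\xi\ps\nabla)^{z}\mg(\xi)=D^{z}_{u}\mg(u\xi)\barre_{u=1}$ is defined by the fractional--derivative calculus of Section~\ref{FractiDeri} (via~\eqref{NablaAlpha} for $0<\Re z<1$ and, for larger $\Re z$, via~\eqref{DzOrdrek}), and then to apply the complex interpolation of holomorphic families of Section~\ref{Ihf}. Since this calculus introduces factors $1/\Gamma(j-z)$, whose modulus grows like $\e^{\pi|\Im z|/2}$ times a polynomial in $|\Im z|$ by~\eqref{MajoGamm}, the boundary estimates will not be uniform in $\Im z$; so the interpolation must be run in its admissible--growth form, Corollary~\ref{InStrip} together with Remark~\ref{PolyFacto} and the bound~\eqref{PolyBd}, after checking --- by the usual {\it ad hoc\/} density argument recalled in Section~\ref{Ihf} --- that for $f,g$ in a dense class the functions $z\mapsto\sca{(\xi\ps\nabla)^{z}\mg\,f}{g}$ have admissible growth in the strip. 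The order--zero term is peeled off first: $(\xi\ps\nabla)^{0}\mg=\mg$, which acts on every $L^{p}$ with norm $\le\|\Kg\|_{L^{1}(\R^{n})}\le 1$; this produces the additive $1$ in the statement.

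The proof then rests on two families of endpoint estimates, both dimension free. On the line $\Re z=0$ I want an $L^{1}(\R^{n})$ bound of the shape $\|(\xi\ps\nabla)^{\ii\tau}\mg\|_{1\to1}\le\kappa\,(1+\tau^{2})^{c}\e^{\pi|\tau|/2}\,V(\Kg)$ --- and here is where the \emph{geometric} quantity $V(\Kg)$ from~\eqref{V(K)} enters. The mechanism is that the fixed--direction derivative $\theta\ps\nabla\Kg$ is, for every $\theta\in S^{n-1}$, a bounded measure with $\|\theta\ps\nabla\Kg\|_{1}\le V(\Kg)$, equivalently $2\pi\,|(\theta\ps\xi)\,\mg(\xi)|\le V(\Kg)$; combining this with a decomposition of the frequency direction $\xi/|\xi|$ into fixed directions (in the spirit of the method of rotations used in Section~\ref{TransfoRiesz} for the Riesz transforms) lets one write the radial slices $u\mapsto\mg(u\xi)$ and their first derivative in terms of $u$--dilates of measures of total mass $\le V(\Kg)$, uniformly in $n$. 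Feeding this into the fractional--integral representation of $(\xi\ps\nabla)^{\ii\tau}\mg$ exhibits the latter, after an integration by parts in the radial variable, as an average over the dilation parameter --- against the integrable weight $(u-1)^{-\ii\tau}$ times a convergence--producing decay factor --- of convolution operators with $L^{1}$--bounded kernels; Lemma~\ref{EasyIntegral} and its signed--measure analogue then yield the claimed $L^{1}$ bound, with the polynomial and exponential dependence on $\tau$ coming from $1/\Gamma(1-\ii\tau)$ and the weight.

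On a line $\Re z=k$ with $k=k(p)=\lceil 3p/(4p-4)\rceil$ I want an $L^{2}(\R^{n})$ bound, and by Plancherel it suffices to bound $(\xi\ps\nabla)^{z}\mg$ in $L^{\infty}(\R^{n})$. Using~\eqref{DzOrdrek} with $j=k$ together with the higher--order hypotheses~\eqref{EstimaGeneN}, and splitting the $u$--integral into the range $u$ close to $1$ --- where one exploits the $\delta_{0,g}$--flavoured decay $|\mg(t\theta)|\le\delta_{0,g}/(1+t)$ and the smoothness of $\mg$ near the origin, so that the size depends favourably on $|\xi|$ --- and the range $u$ large, gives an $L^{\infty}$ bound controlled by the partial sum $\Delta_{k(p)}$ of~\eqref{DefiDel}; an intermediate interpolation within $L^{2}$ against the cruder bound $\|(\xi\ps\nabla)^{z}\mg\|_{2\to2}\le\kappa\,\delta_{0,g}$ produces the quantity $\Delta_{k(p)}^{2/3}\,\delta_{0,g}^{1/3}$ that appears. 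With both endpoints in hand I interpolate between $\Re z=0$ on $L^{1}$ and $\Re z=k(p)$ on $L^{2}$, the interpolation parameter being forced by the requirements that the resulting vertical line be $\Re z=\alpha$ and the resulting exponent be $p$: the relations $1/p=(1-\theta)+\theta/2$ and $\alpha=\theta\,k(p)$ are exactly why $k(p)$ must have the stated size, the $L^{1}$--endpoint weight $1-\theta=2/p-1$ lands on $V(\Kg)$, the $L^{2}$--endpoint weight $\theta=2(1-1/p)$ lands on $\Delta_{k(p)}^{2/3}\delta_{0,g}^{1/3}$ --- producing the exponents $(4/3)(1-1/p)$ and $(2/3)(1-1/p)$ --- and the Gamma--growth factors contribute only the constant $\kappa(\alpha,p)$, whose dependence on the width $k(p)$ of the strip is the one recorded in~\eqref{PolyBd}. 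For $p\in(2,+\infty)$ one finishes by duality, multiplier operators being essentially self--transpose, or by interpolating the case $p=2$ with a trivial bound; for an isotropic symmetric log--concave density $\Klc$ one simply invokes Lemma~\ref{LEstimatesForC} so that~\eqref{EstimaGeneN} holds with universal constants, and specialises.

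The step I expect to be the main obstacle is the $L^{1}$ endpoint. The symbol $(\xi\ps\nabla)^{z}\mg$ sees all directions of the frequency at once and carries a fractional--order singularity at $u=1$, whereas the hypothesis $\|\theta\ps\nabla\Kg\|_{1}\le V(\Kg)$ concerns one fixed direction at a time; turning the latter into a genuine, dimension--free $L^{1}$ bound on the operator requires a careful directional decomposition and a delicate split in the dilation variable $u$ (near $u=1$ the size of $\mg(u\xi)$ depends on $|\xi|$, far out it is governed by the decay in~\eqref{EstimaGeneN}), all while keeping the Gamma--function growth under control via~\eqref{MajoGamm} and making sure that assembling the dilated $L^{1}$ kernels against the weight $(u-1)^{-\ii\tau}$ never costs a factor of $n$. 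It is precisely this point --- already responsible for the restriction $p>3/2$ in Theorem~\ref{TheoMaxi} and Proposition~\ref{MoreGeneral} --- that M\"uller's directional--variation hypothesis is designed to overcome, and the bookkeeping needed to track its quantitative dependence on $V(\Kg)$ and on the $\delta_{j,g}$ is where most of the work goes.
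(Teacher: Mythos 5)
Your plan has the right skeleton — interpolate a holomorphic family carrying $(\xi\ps\nabla)^\alpha\mg$, use $V(\Kg)$ at the low endpoint and the higher-order estimates~\eqref{EstimaGeneN} at the high endpoint, control the $\Gamma$-growth via Corollary~\ref{InStrip} — but three of the steps as you describe them fail, and all three failures point to the same missing ingredient: M\"uller's compensating factor $(1+|\xi|)^{1-\varepsilon-z}$ in the family $m^\varepsilon_z(\xi)$ of~\eqref{mzFunc}, and the use of $d^z_t$ (truncated integration on $[1,2]$) rather than $D^z_t$. First, the low endpoint cannot be $L^1$. The mechanism that converts the geometric quantity $V(\Kg)$ into a multiplier bound is Lemma~\ref{LquatreC}, which writes $T_{\mg^{\#}}$ in terms of Riesz transforms and then invokes the dimensionless collective bound $\rho_p$ of~\eqref{TransfosRiesz}; this machinery lives in $L^{p_0}$, $p_0>1$, not in $L^1$, and indeed the paper interpolates from $\Re z=-\varepsilon$ in $L^{p_0}$ with $p_0>1$ chosen close to $1$. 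Your invocation of the method of rotations "in the spirit of Section~\ref{TransfoRiesz}" actually reduces things to the Hilbert transform, which is unbounded on $L^1$, so it produces no $L^1$ estimate. Second, the high endpoint fails as you set it up: on the line $\Re z=k$, formula~\eqref{DzOrdrek} together with~\eqref{EstimaGeneN} gives $|D^z_t\mg(t\xi)\barre_{t=1}|\lesssim\delta_{k,g}\,|\xi|^{k-1}$ for $|\xi|$ large, so $(\xi\ps\nabla)^z\mg$ is \emph{not} a bounded function of $\xi$ when $k\ge 2$ and is therefore not an $L^2$-multiplier. This is exactly what the factor $(1+|\xi|)^{1-\varepsilon-z}$ in~\eqref{mzFunc} is there to kill; see~\eqref{MajoK} in the proof of Lemma~\ref{MajoMull}, where the growth $(1+|\xi|)^{k-1}$ is cancelled by $(1+|\xi|)^{1-k}$. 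That same compensating factor, evaluated on the low line $\Re z=-\varepsilon$, is what produces the extra $(1+|\xi|)$ and hence the "crucial" multiplier $\mg^{\#}(\xi)=|\xi|\mg(\xi)$ of~\eqref{MZero}; in your version nothing forces $V(\Kg)$ to appear at all.

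Third, your parameter count does not close. You impose $1/p=(1-\theta)\cdot 1+\theta/2$, giving $\theta=2(1-1/p)$, together with $\alpha=\theta\,k(p)$. But $k(p)=\lceil 3p/(4p-4)\rceil$ gives $\theta\,k(p)\ge 2(1-1/p)\cdot 3p/(4p-4)=3/2$, contradicting $\alpha\in(0,1)$. In the paper the two endpoints are at $\Re z=-\varepsilon$ and $\Re z=\nu=1/\theta-\varepsilon$ with $\theta=(4/3)(1-1/p)$ determined through $p_0>1$, so that $\alpha=(1-\theta)(-\varepsilon)+\theta\nu=1-\varepsilon$ exactly; the nonzero $\varepsilon$, the nonzero $p_0-1$, and the compensating factor all conspire to make the widths and exponents agree with the conclusion. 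Finally, the additive $1$ does not come from "peeling off $(\xi\ps\nabla)^0\mg$" — one cannot simply remove one member of an interpolated family — but from Lemma~\ref{CPareil}, which shows that $(\xi\ps\nabla)^\alpha\mg-d^\alpha_t\mg(t\xi)\barre_{t=1}$ is bounded on every $L^p$ by $\|\Kg\|_{L^1(\R^n)}\le 1$. Working with $d^\alpha$ rather than $D^\alpha$ is not cosmetic: the remark after Lemma~\ref{MajoMull} shows that $D^{-\varepsilon}_u\mg(u\xi)\barre_{u=1}$ is not even a bounded function of $\xi$ near $\xi=0$ (the Poisson kernel gives $|\xi|^{-\varepsilon}\e^{-|\xi|}$), whereas the truncated $d^{-\varepsilon}$ is tame there.
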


 The case $p = 2$ follows easily from Parseval~\eqref{EasyBound}
by~\eqref{EstimaGene} and \eqref{NablaAlphaBounded}. The result for 
$p \ge 2$ can be obtained by duality from the case $1 < p \le 2$.
\dumou

\begin{proof}[Proof of Theorem~\ref{TheoMull}]
Let $p \in (1, 2)$ be given. We then choose $p_0 \in (1, p)$ and 
$\alpha \in (1/p_0, 1)$ as being functions of $p$, for example
$p_0 = (2 p + 2 ) / (5 - p)$ and $\alpha = (p + 7) / (4 p + 4)$. We apply in
$L^{p_0}(\R^n)$ the part $(ii)$ of Proposition~\ref{FouCarbe} to the kernel
$K = \Kg - P$. We know by Proposition~\ref{PropoMull} that 
$(\xi \ps \nabla)^\alpha \mg(\xi)$ is bounded on $L^{p_0}(\R^n)$ by a
function of~$V(\Kg)$ and we will check in Section~\ref{Model} that 
$(\xi \ps \nabla)^\alpha \widehat P(\xi)$ is also bounded
on~$L^{p_0}(\R^n)$ by some $\pi_{\alpha, p_0} = \pi(p)$. It follows for 
$m = \mg - \widehat P$ that
\[
     \| (\xi \ps \nabla)^\alpha m (\xi) \|_{p_0 \rightarrow p_0}
 \le \kappa_0(p, \gr d) (1 + V(\Kg)^{2 / p_0 - 1})
 \le \kappa_0(p, \gr d) (1 + \lambda^{2 / p_0 - 1}),
\]
with $ \kappa_0(p, \gr d)
 \le \kappa(p) \Delta_{k(p_0)}^{ (4/3) ( 1 - 1/p_0 ) }
  \delta_{0, g}^{ (2/3) (1 - 1/p_0) }$, where 
$\Delta_j \ge \delta_{0, g} \ge 1$ because $\Kg$ here is a probability
density. We obtain in this way that
\[
 f \mapsto W_1 f:= \sup_{1 \le u \le 2} | {\Di K u} * f|
\]
is bounded on $L^{p_0}(\R^n)$. This was the only missing information for
deducing from Proposition~\ref{PropoPrio} that $\Mg_K$ is bounded on
$L^p(\R^n)$ when $1 < p \le 3/2$. Indeed, with the notation of
Section~\ref{EstiPrio}, let $T_{j, v}$ be the convolution with 
${\Di K {2^j v}}$, $v \in [1, 2]$ and let $T_j$ be as in~\eqref{DefTj}. By
Proposition~\ref{FouCarbe}~$(ii)$, we have for every $j \in \Z$ that
\begin{equation}
     \|T_j\|_{p_0 \rightarrow p_0}
  =  \|T_0\|_{p_0 \rightarrow p_0}
  =  \|W_1\|_{p_0 \rightarrow p_0}
 \le \kappa_{\alpha, p_0}
      \bigl(
       2 + \|(\xi \ps \nabla)^\alpha m(\xi)\|_{p_0 \rightarrow p_0}
      \bigr),
 \label{MajoTj}
\end{equation}
with $\kappa_{\alpha, p_0}$ from~\eqref{KalphaP}. We bound it by 
$C''_{p_0}(\lambda) := \kappa_{\alpha, p_0} 
      \bigl(
       2 + \kappa_0(p, \gr d) (1 + \lambda^{2 / p_0 - 1})
      \bigr)$.
By~\eqref{ExplicitBound}, with $p_0$ already set and 
$r_0 = 2 p / (p + 2 - p_0)$ function of $p$ and $p_0$, we get
\begin{equation}
     \|M_K\|_{p \rightarrow p} 
 \le (C_{r_0})^{ 2 \ms1 \gamma / p_0 } \ms2 
      {C''_{p_0}(\lambda)}^\gamma
       \Bigl( \sum_{k \in \Z} a_k^{ (1 - \gamma)p / 2} \Bigr)^{ 2 / p }
        + 2 \ms1 C'_p,
 \label{ProofTheoMull}
\end{equation}
where $\gamma = [1/p - 1/2] / [1/p_0 - 1/2] = (p+1) / (2p)$. The constants
$C_{r_0}$ in~\eqref{A0}, $C'_p$ in~\eqref{A1} p.~\pageref{A1} depend only
on $p$, $p_0$ and $r_0$, hence on~$p$ alone, and they exist regardless of
$p > 3/2$ or not. By Section~\ref{AutrePreuve}, we know that
under~\eqref{EstimaGene}, the $(a_k)_{k \in \Z}$ in~\eqref{A3} satisfy 
$a_k \le (\delta_{0, g} + \delta_{1, g}) \ms1 a_{\alpha, k}$ with 
$(a_{\alpha, k})_{k \in \Z}$ universal. We obtain
\[
     \| \M_{\Kg} \|_{p \rightarrow p} 
 \le \| \Mg_K\|_{p \rightarrow p} + \kappa_p
 \le \kappa(p, \gr d)
      (1 + \lambda^{2 (1/ p_0 - 1/2) \gamma})
  =  \kappa(p, \gr d)
      (1 + \lambda^{2 / p - 1}),
\]
with $1 - 1/p_0 = (3p - 3) / (2 p + 2)$,
$k(p_0) = \lceil (p + 1) / (2p - 2) \rceil < p / (p - 1)$, and
\[
     \kappa(p, \gr d)
 \le \kappa(p) 
      \bigl(
       \Delta_{k(p_0)}^{ (4/3) (1 - 1/p_0) }
        \delta_{0, g}^{ (2/3) (1 - 1/p_0) }
      \bigr)^\gamma \Delta_1^{1-\gamma}
 \le \kappa(p) \Delta_{k_0(p)}^{ 1 - 1/p }
      \Delta_1^{ 1 - 1/p }.
 \qedhere
\]
\end{proof}

\subsection{The M\"uller strategy%
\label{StrateMull}}

\noindent
M\"uller prefers to work with another version $i^w$ of the fractional
integral $I^w$ from~\eqref{IntegraZero}. This version is defined when
$\Re w > 0$, beginning this time with $f \in C^\infty(\R)$, by the formula
\[
   (i^w f)(t) 
 = \frac 1 {\Gamma(w)}
    \int_t^2 (u - t)^{w - 1} f(u) \, \d u,
 \quad
 t \le 2.
 \label{FractiInt}
\]
The chosen limit $2$ is rather arbitrary, but will be quite convenient for
the computations that follow, in particular because $(2 - 1)^w = 1$ for
every $w$. Integrating by parts as we did for $I^w$ in
Section~\ref{FractiDeri}, we get 
\[
   (i^w f)(t) 
 = \frac {(2 - t)^w f(2)} {\Gamma(w + 1)}
   - \frac 1 {\Gamma(w + 1)}
      \int_t^2 (u - t)^{w} f'(u) \, \d u.
\]
This new formula makes sense for $\Re w > -1$ and defines a fractional
derivative~$d^z$ if $z = - w$ and $\Re z < 1$, by setting 
\begin{equation}
   (d^z f)(t)
 = \frac {(2 - t)^{-z} f(2)} {\Gamma(1 - z)}
   - \frac 1 {\Gamma(1 - z)}
      \int_t^2 (u - t)^{- z} f'(u) \, \d u,
 \quad
 t \le 2.
 \label{dalpha}
\end{equation}
Notice that $(d^0 f)(t) = f(2) - \int_t^2 f'(u) \, \d u = f(t)$. Continuing
integration by parts as in Section~\ref{FractiDeri}, we get successive
formulas defining $d^z f$, for each integer~$k$, which make sense for 
$\Re z < k$ and extend each other. Gluing them together, we can define
entire functions of $z$ for every $t$ fixed and every given function 
$f \in C^\infty(\R)$, for example $(d^z \gr 1)(1) = 1 / \Gamma(1 - z)$ if 
$f = \gr 1$. Suppose that $\Re z < 0$. From
\[
   (d^z f)(t) 
 = \frac 1 {\Gamma(- z)}
    \int_t^2 (u - t)^{- z - 1} f(u) \, \d u,
\]
we get for every integer $k \ge 1$ that
\begin{equation}
   (d^z f)(t)
 = E_k(z, t) 
    + (-1)^k {\frac 1 {\Gamma(k - z)}}
       \int_t^2 (u - t)^{-z + k - 1} \ms2 f^{(k)}(u) \, \d u,
 \label{dzOrdrek}
\end{equation}
a formula to be compared with~\eqref{DzOrdrek}, and where $E_k(z, t)$ is
equal to
\[
   E_k(z, t) 
 = \sum_{j=0}^{k-1} (-1)^j \ms2
    \frac { (2 - t)^{-z + j} \ms2 f^{(j)}(2) }     
          { \Gamma(j + 1 - z)} \ms1 \up.
\]
If $z$ is in $\C$, $t \le 2$ and $\Re z < k$, we can take~\eqref{dzOrdrek}
as definition for $(d^z f)(t)$.
\dumou

 When $-1 < \Re z < 0$, $f \in \ca S(\R^n)$ and $t < 2$, we see that
\begin{equation}
   (D^z f)(t) \ns1-\ns1 (d^z f)(t) 
 = ([I^{-z}  \ns3-\ns2 i^{-z}] f)(t)
 = \frac 1 {\Gamma(- z)}
    \int_2^{+\infty} (u - t)^{- z - 1} f(u) \, \d u.
 \label{Dmoinsd}
\end{equation}
This equality can be extended by analytic continuation to every $z \in \C$
with $\Re z > - 1$, or it can be proved by successive integrations by parts.
In particular, one has $(d^N f)(t)  = (D^N f)(t) = (-1)^N f^{(N)}(t)$ for
every integer $N \ge 0$ because $\Gamma(-N)^{-1} = 0$. As we did for
$D^\alpha$, when the function of $t$ does not have an explicit name, we use
the notation $d^\alpha_t f(2 t)$, and $d^\alpha_t f(2 t) \barre_{t = 1}$
for the value at $t = 1$.
\dumou

\begin{lem}[M\"uller~\cite{MullerQC}]%
\label{CPareil}
Let $m$ denote the Fourier transform of a kernel $K$ integrable on\/
$\R^n$. For every $\alpha \in (0, 1)$, the difference
\[
   (\xi \ps \nabla)^\alpha m(\xi)
    - d_t^\alpha m(t \xi) \barre_{t = 1} \ms2,
 \quad
 \xi \in \R^n,
\]
is a multiplier on $L^p(\R^n)$, $1 \le p \le +\infty$, with a norm bounded
by\/ $\|K\|_{L^1(\R^n)}$.
\end{lem}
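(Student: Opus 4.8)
The idea is to identify the difference as the multiplier of a single, explicit \emph{fixed} kernel obtained from $K$ by a bounded operation, so that the $L^1$ bound on that kernel gives the claimed $L^p$ bound for every $p$ via the elementary fact~\eqref{LOneMultiplier}. Concretely, I would start from the two integral formulas for the two fractional objects. By~\eqref{NablaAlpha}, for $\xi\neq0$ and $0<\alpha<1$,
\[
   (\xi \ps \nabla)^\alpha m(\xi)
  = - \frac 1 {\Gamma(1 - \alpha)}
     \int_1^{+\infty} (u - 1)^{- \alpha}
      \ms1 \frac {\d} {\d u \ns2} \ms2 m(u \xi) \, \d u,
\]
while by~\eqref{dalpha} applied to $t\mapsto m(t\xi)$ at $t=1$ (so $2-t=1$ and $(2-1)^{-\alpha}=1$),
\[
   d_t^\alpha m(t\xi)\barre_{t=1}
  = \frac {m(2\xi)}{\Gamma(1-\alpha)}
    - \frac 1{\Gamma(1-\alpha)}\int_1^2 (u-1)^{-\alpha}\,\frac{\d}{\d u}\,m(u\xi)\,\d u .
\]
Subtracting, the integrals over $[1,2]$ cancel, and what remains is
\[
   (\xi \ps \nabla)^\alpha m(\xi) - d_t^\alpha m(t\xi)\barre_{t=1}
  = -\frac 1{\Gamma(1-\alpha)}\Bigl( m(2\xi) + \int_2^{+\infty}(u-1)^{-\alpha}\,\frac{\d}{\d u}\,m(u\xi)\,\d u\Bigr).
\]
This is exactly $-(\xi\ps\nabla)^\alpha m(\xi)+d^\alpha_t m(t\xi)|_{t=1}$ rewritten; note it is precisely the analogue of~\eqref{Dmoinsd}, i.e. the ``tail from $2$ to $\infty$'' of the fractional derivative of $t\mapsto m(t\xi)$.

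\textbf{Key steps.} First I would justify the integral representations: since $K\in L^1(\R^n)$, the function $m=\widehat K$ is bounded and Lipschitz in each radial direction with $|\tfrac{\d}{\d u}m(u\xi)|\le 2\pi\int|x|\,|K(x)|\,\d x$ near a fixed $\xi$, which is enough to invoke~\eqref{NablaAlpha} and the integrated-by-parts form~\eqref{dalpha}; alternatively, and more cleanly for the $L^1$-only hypothesis, I would argue by the Fourier side as in~\eqref{Dmoinsd}, first for $\varphi\in\ca S(\R^n)$ and then pass to general $K$ by density. Second, I would recognize the remaining expression as $-\Gamma(1-\alpha)^{-1}$ times the value at $t=1$ of $([I^{1-\alpha}-i^{1-\alpha}]$ applied to $s\mapsto -\tfrac{\d}{\d s}m(s\xi))$, hence by~\eqref{Dmoinsd} (with the roles of the radial variable played by $u$) it equals
\[
   -\frac 1{\Gamma(1-\alpha)}\int_2^{+\infty}(u-1)^{-\alpha-1}\Bigl(-(1)\Bigr)\dots,
\]
but it is simpler to just integrate by parts once more in $\int_2^{+\infty}(u-1)^{-\alpha}\tfrac{\d}{\d u}m(u\xi)\,\d u$ to get $-m(2\xi) + \alpha\int_2^{+\infty}(u-1)^{-\alpha-1}m(u\xi)\,\d u$, so that the whole difference collapses to
\[
   \frac{\alpha}{\Gamma(1-\alpha)}\int_2^{+\infty}(u-1)^{-\alpha-1}\,m(u\xi)\,\d u
  = \frac 1{\Gamma(1-\alpha)}\int_2^{+\infty}(u-1)^{-\alpha-1}\,m(u\xi)\,\d u\cdot\alpha .
\]
Third, I would apply Lemma~\ref{EasyIntegral}: writing $m(u\xi)=\widehat K(u\xi)$, the map $\xi\mapsto m(u\xi)$ is, for each fixed $u\ge 2$, the multiplier of the dilated kernel $\Di K u$ (in the sense of~\eqref{Dilata}), which has $L^1$-norm equal to $\|K\|_{L^1(\R^n)}$; since $u\mapsto \alpha\,\Gamma(1-\alpha)^{-1}(u-1)^{-\alpha-1}$ is integrable on $(2,+\infty)$ with total mass $\tfrac{1}{\Gamma(1-\alpha)}\int_2^\infty(u-1)^{-\alpha-1}\alpha\,\d u = \tfrac{1}{\Gamma(1-\alpha)}$, and since $\Gamma(1-\alpha)\ge 1$ for $\alpha\in(0,1)$, the resulting multiplier is bounded on every $L^p(\R^n)$ with norm $\le \tfrac{1}{\Gamma(1-\alpha)}\,\|K\|_{L^1(\R^n)}\le \|K\|_{L^1(\R^n)}$.

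\textbf{Main obstacle.} The computations above are routine once the representations are in place; the genuinely delicate point is the \emph{justification} of the integral formula~\eqref{NablaAlpha} (equivalently the Fubini/analytic-continuation arguments behind~\eqref{Dmoinsd}) under the sole assumption $K\in L^1(\R^n)$, because then $m$ need not decay and the tail integrals $\int_2^{+\infty}$ are only conditionally convergent in the worst case. I would handle this by the same regularization trick used in Lemma~\ref{GeneralD} and in Section~\ref{PreuvBour}: replace $K$ by $K_\varepsilon$ with a Gaussian or exponential damping so that everything converges absolutely, establish the identity for $K_\varepsilon$, observe that the final bound $\le \|K_\varepsilon\|_{L^1}\le (1+o(1))\|K\|_{L^1}$ is uniform, and let $\varepsilon\to0$; equivalently, one first proves the lemma for $K\in\ca S(\R^n)$ using the Fourier definition~\eqref{DalphaDef}, where both $(\xi\ps\nabla)^\alpha m$ and $d^\alpha_t m(t\xi)|_{t=1}$ are manifestly well-defined and the difference is visibly the multiplier of $\alpha\,\Gamma(1-\alpha)^{-1}\int_2^\infty(u-1)^{-\alpha-1}\,\Di K u\,\d u$, and then extends to general $K\in L^1(\R^n)$ by density together with the already-established uniform $L^p$ bound.
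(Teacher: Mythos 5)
Your argument is correct and essentially the same as the paper's: the paper cites \eqref{Dmoinsd} directly, while you re-derive it by subtracting \eqref{NablaAlpha} and \eqref{dalpha} and integrating by parts, reaching the same weight $(u-1)^{-\alpha-1}\gr 1_{\{u>2\}}$ (you drop a minus sign in the final display, but only the modulus of the weight enters Lemma~\ref{EasyIntegral}, so this is immaterial), and then applying Lemma~\ref{EasyIntegral} with $\Gamma(1-\alpha)\ge 1$. Your concluding discussion of regularization to justify the integral representations under the bare hypothesis $K\in L^1$ is a useful addition the paper's proof leaves implicit.
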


\begin{proof}
By~\eqref{OpAlpha} we have
$
   (\xi \ps \nabla)^\alpha m(\xi)
 = D_t^\alpha m(t \xi) \barre_{t = 1}
$.
From~\eqref{Dmoinsd}, we get 
\[
   (\xi \ps \nabla)^\alpha m(\xi)
    - d_t^\alpha m(t \xi) \barre_{t = 1} \ms2 
 = \frac 1 {\Gamma(-\alpha)}
    \int_2^{+\infty} (u - 1)^{ - \alpha - 1} m(u \xi) \, \d u.
\]
The result follows by Lemma~\ref{EasyIntegral}, since
\[
   \frac 1 { |\Gamma(-\alpha)| } 
    \int_2^{+\infty} (u - 1)^{ - \alpha - 1} \, \d u
 = \frac 1 { | \ns3-\ns3 \alpha \ms1 \Gamma(-\alpha)| } 
 = \frac 1 { \Gamma(1-\alpha) } 
 < 1.
\] 
\end{proof}

 Thanks to the reduction from $(\xi \ps \nabla)^\alpha m(\xi)$ to
$d^\alpha_t m(t \xi) \barre_{t = 1}$ given by
Lemma~\ref{CPareil}, one can transform the condition~($ii$) of
Proposition~\ref{FouCarbe}. The objective now is to control the action on
$L^p(\R^n)$ of the multiplier $d^\alpha_t \mg(t \xi) \barre_{t = 1}$, for
some fixed $\alpha \in (1/p, 1)$ denoted by $\alpha = 1 - \varepsilon$,
where $\varepsilon > 0$ gets arbitrarily small when~$p$ tends to~$1$.
M\"uller embeds the \og objective\fge into the holomorphic family of
multipliers\label{HoloMull}
\begin{equation}
   m_z^\varepsilon (\xi)
 = (1 + |\xi|)^{1 - \varepsilon - z} \ms4 
     d^z_t \mg(t \xi) \barre_{t = 1},
 \ms{22} \Re z > -1,
 \label{mzFunc}
\end{equation}
and applies the complex interpolation scheme described in
Section~\ref{InterpoSchem}. For the value $z = \alpha = 1 - \varepsilon$,
one has
\[
   m_\alpha^\varepsilon (\xi)
 = m_{1 - \varepsilon}^\varepsilon (\xi)
 = d^{1 - \varepsilon}_t \mg(t \xi) \barre_{t = 1}
 = d^\alpha_t \mg(t \xi) \barre_{t = 1},
\]
which is the objective to be controlled. M\"uller studies this holomorphic
family for~$z \in \C$ varying in a strip of the form
$- \varepsilon \le \Re z \le \nu$, with $\nu > 0$ real. He shows by rather
long and delicate calculations that the multipliers $m_z^\varepsilon(\xi)$
are bounded functions of~$\xi \in \R^n$, for all~$z$ in this strip, not
uniformly in $z$, but with a $L^\infty(\R^n)$ norm of order
$\Gamma(z)^{-1}$. This allows him to control the action on $L^2(\R^n)$,
which is used for one end of the interpolation scale, the one
corresponding to $\Re z = \nu$.
\dumou

 The other end of the scale is $\Re z = - \varepsilon$, where the operator
associated to
\[
   m^\varepsilon_{-\varepsilon + \ii \tau}
 = (1 + |\xi|)^{1 - \ii \tau} \ms4 
    d^{-\varepsilon + \ii \tau}_t \mg(t \xi) \barre_{t = 1}
 = (1 + |\xi|)^{- \ii \tau} (1 + |\xi|) \ms4 
    d^{-\varepsilon + \ii \tau}_t \mg(t \xi) \barre_{t = 1}
\]
involves a \og small\fge fractional integration 
$d^{-\varepsilon + \ii \tau}$ of order $\varepsilon$, and a multiplication
on the Fourier side by $1 + |\xi|$. We will show that these multipliers 
$m^\varepsilon_{-\varepsilon + \ii \tau}$ are bounded on all the spaces
$L^r(\R^n)$, $1 < r < +\infty$. In order to do it, we shall have to work
mainly on the multiplier $|\xi| \ms1 \mg(\xi)$. The parameter $V(\Kg)$
appears when bounding the action of this multiplier on~$L^r(\R^n)$, and
the proof will use the dimensionless estimates for the Riesz transforms
given in~\eqref{TransfosRiesz}. Next, given $p$ in $(1, 2]$, we choose 
$p_0 \in (1, p)$, $\alpha \in (1/p_0, 1)$, and $\nu > \alpha$ which is a
function of $p, p_0, \alpha$. By interpolation between~$L^2(\R^n)$ (when 
$\Re z = \nu$) and~$L^{p_0}(\R^n)$ (when $\Re z = - \varepsilon$), we shall
obtain for the value $\alpha = 1 - \varepsilon$ the boundedness
on~$L^p(\R^n)$ of the multiplier~$m_\alpha^\varepsilon (\xi)$ that is our
\og objective\fg, thus proving Proposition~\ref{PropoMull}.
\dumou

 Let us comment on the formulas for the M\"uller multipliers. We know 
by~\eqref{NoticeFirst} in Corollary~\ref{EstimatesForC} that
differentiating $N$~times the function $t \mapsto \mg(t \xi)$ introduces a
factor of order $(1 + |\xi|)^{N - 1}$, which must be compensated for being
in a position to apply Parseval for the $L^2$ bound,
using~\eqref{EasyBound} as usual. This is done by multiplying by 
$(1 + |\xi|)^{1 - \varepsilon - \nu}$ when $z = \nu$. On the other hand, we
do not want a compensating factor when $z = \alpha$, where we want to
precisely recover our objective. The compensation will thus be of the form
$(1 + |\xi|)^{a z + b}$, with $a \nu + b = 1 - \varepsilon - \nu$ and 
$a \alpha + b = 0$. We then get a \og compensating factor\fge with a
positive power of $|\xi|$ for $\Re z < \alpha$, which becomes an additional
problem and requires more work.

\begin{smal}
\noindent
The interpolation strip technique has been often employed by Stein. For
example, in~\cite[Chapter~III, \S3]{SteinTHA}, for studying the maximal
function $\sup_{t > 0} |P_t f|$ of general semi-groups, Stein works
on a strip $S$ of the form $-1 \le \Re z \le N$. If $z = -1$, he
considers that the maximal inequality of Hopf concerns the derivative of
order~$-1$ of the semi-group, that is to say, its antiderivative
(multiplied by $t^z = t^{-1}$)
\[
   t^{-1} \ms1 D^{-1}_t (P_t f) 
 = \frac 1 t \ms1 \int_0^t (P_s f) \, \d s.
\]
By Hopf, this operator is known to be $L^p$ bounded, $1 < p < +\infty$.
Stein must check in addition that the extension to complex values in
the vertical line $z = - 1 + \ii \tau$ also gives bounded operators on
$L^p(\R^n)$. 

 Stein's objective is to study the maximal function of the semi-group
itself, which corresponds to the derivative of order $z = 0$. In order to do
this, he interpolates between Hopf in $L^{p_0}$, $p_0 < p < 2$, for 
$\Re z = -1$, and an $L^2$ estimate of derivatives of the semi-group, for
$\Re z = N$. For each integer $k$, the quantity
$
 t^k D^k_t (P_t f)
$
appears in the Littlewood--Paley function $g_k(f)$, so one can control
in~$L^2$ its maximal function, see Section~\ref{MaxiLittlePal}. The
holomorphic family is then defined by
$z \mapsto t^z D^z_t (P_t f)$, $z \in S$,
for a suitable version $D^z$ of fractional differentiation.

 The general strategy above was already applied in~\cite{SteinMET}
to the discrete case.

\end{smal}

\subsection{Model of proof: the Poisson case%
\label{Model}}

\noindent
For proving Theorem~\ref{TheoMull}, we have to apply Carbery's
Proposition~\ref{FouCarbe},~$(ii)$ to the difference $K = \Kg - P$.
M\"uller shows that $(\xi \ps \nabla)^\alpha \mg(\xi)$ acts on $L^p(\R^n)$
when $0 < \alpha < 1$ and $1 < p < +\infty$, and we need to verify that the
corresponding multiplier $(\xi \ps \nabla)^\alpha \widehat P(\xi)$ for the
Poisson kernel $P$ also acts on $L^p(\R^n)$, $1 < p < +\infty$, with
bounds independent of the dimension~$n$. This could be covered by
Proposition~\ref{PropoMull}, by observing that the Poisson kernel
$\Pn$ in~\eqref{PoissonDensi}, with Fourier transform $\e^{- 2 \pi |\xi|}$,
clearly satisfies~\eqref{EstimaGeneN} and has $V(\Pn)$ bounded
independently of~$n$ according to~\eqref{VPoiss}. We actually prefer to
take an opportunity to examine the structure of M\"uller's proof in a
simple case. When $\alpha \in (0, 1)$, we could find a shorter specific
proof, but the longer one that is given below provides a better
introduction to what follows in this Section~\ref{AMuller}. 
\dumou

 One sees that
$
   (\xi \ps \nabla)^\alpha \widehat P(\xi)
 = (2 \pi |\xi|)^\alpha \e^{- 2 \pi |\xi|}
$,
either by applying~\eqref{DerivExpo} that gives
$D^\alpha_t \e^{-\lambda |t|} = \lambda^\alpha \e^{-\lambda |t|}$ for
$\lambda, t > 0$, or by making use of the residue theorem.

\begin{smal}
\noindent
Indeed, according to~\eqref{OpAlpha} with $\xi = |\xi| \ms1 \theta$, one has
\[
   (\xi \ps \nabla)^\alpha \widehat P(\xi)
 = \int_\R (2 \ii \pi s |\xi|)^\alpha \varphi_\theta ( s ) 
    \e^{ - 2 \ii \pi s |\xi|} \, \d s
 = \int_\R (2 \ii \pi s |\xi|)^\alpha
    \frac {\e^{ - 2 \ii \pi s |\xi|}} {\pi (1 + s^2)} \, \d s,
\]
that can be computed using a contour formed of $[-R, R]$ with $R > 1$, and
of a half-circle of radius $R$ centered at $0$, located in the lower
complex half-plane.

\end{smal}

\noindent
We are going to bound the action on $L^p(\R^n)$ of the multiplier 
$|\xi|^\alpha \e^{-|\xi|}$ by the interpolation scheme of
Section~\ref{InterpoSchem}. Consider the holomorphic family of multipliers
\[
 \ca P_z(\xi) = |\xi|^z \e^{- |\xi|},
 \ms{16}
 \Re z \ge 0, 
 \ms8
 \xi \in \R^n.
\]
We will interpolate between $L^2(\R^n)$ and $L^{p_0}(\R^n)$, $p_0 > 1$
close to $1$. For proving the boundedness on~$L^2(\R^n)$, it is enough
by~\eqref{EasyBound} to see that the function 
$\xi \mapsto |\xi|^z \e^{- |\xi|}$ is bounded when $\xi$ varies in $\R^n$,
and since this function is radial, its supremum is independent of~$n$. If
we write $z = a + \ii b$, $a \ge 0$, we have
\begin{equation}
   \sup_{\xi \in \R^n} \ms2
    \sup_{\Re z = a} |\ca P_z(\xi)|
 = \sup_{\xi \in \R^n \ns1, \ms1 b \in \R} \ms2 
    \bigl\{ \bigl| |\xi|^{a + \ii b} \bigr| \e^{- |\xi|} \bigr\}
 = \sup_{r \ge 0} \ms2 \{ r^a \e^{- r} \}
 = a^a \e^{-a}.
 \label{Radiale}
\end{equation}
\dumou

 We work on a line $\Re z = \nu$, with $\nu$ \og large\fg, for dealing
with the $L^2$ boundedness, and the other line is $\Re z = 0$. For the
values $z = 0 + \ii b$, $b$ real, we know by~\eqref{ImagiPow} when 
$1 < r < +\infty$ that the norm on $L^r(\R^n)$ of the multiplier
$|\xi|^{\ii b}$ is bounded by $\lambda_r \ms1 \e^{\pi |b| / 2}$,
with~$\lambda_r$ independent of the dimension~$n$. The multiplier 
$\e^{- |\xi|}$ corresponds to the convolution with a Poisson probability
measure, so it is bounded by $1$ on $L^r(\R^n)$ when $1 \le r \le +\infty$
by~\eqref{LOneMultiplier}.
\dumou

 Let $\alpha \in (0, 1)$ be given. Consider $p \in (1, 2)$, introduce 
$p_0 = 2 p / (p+1) \in (1, p)$, making $1/p_0$ the midpoint between
$1$ and $1/p$. Then with $\theta = p - 1 \in (0, 1)$ we can check that
$
 1 / p = (1 - \theta) / p_0 + \theta / 2
$,
and we define $\nu$ by the condition
$
 \alpha = (1 - \theta) \ms1.\ms1 0 + \theta \nu
$, namely, we set $\nu = \alpha / (p - 1)$. Let $T_z$ be the operator
associated to the multiplier~$\ca P_z$. We have to estimate the norm of
$T_\alpha$ on $L^p$ by bounding
$
 \sca {T_{\alpha} f} g
$
uniformly for $f$ in the unit ball of $L^p(\R^n)$ and $g$ in the unit ball
of the dual $L^q(\R^n)$, where $1/q + 1/p = 1$. Consider the holomorphic
function 
\[
 H : z \mapsto \sca {T_z f_z} {g_z}
\]
where $f_z, g_z$ are as in~\eqref{FzGzDef}. The bounds obtained for the
family~$T_z$ do not allow us to apply directly the three lines
Lemma~\ref{TLL}, but Corollary~\ref{InStrip} will do the job. We got at the
boundary of the strip, for the norms~$\|T_{z}\|_{p_0 \rightarrow p_0}$ when
$\Re z = 0$, a bound of the form $O(\e^{ \kappa \ms1 |\Im z|})$. For every
real number $\tau$, the function $H$ satisfies 
\[
 |H(0 + \ii \tau)| \le \lambda_{p_0} \e^{\pi |\tau| / 2}
 \ms{12} \hbox{and also} \ms{12} 
 |H(\nu + \ii \tau)| \le \nu^\nu \e^{-\nu}.
\]
By Corollary~\ref{InStrip}, the value $H(\alpha)$ is bounded uniformly by a
quantity~$\eta$ depending on $p_0$, $\theta$ and on the width $w = \nu$ of
the strip, hence on $\alpha, p$ only. As explained in~\eqref{HalphaBound},
this gives then for the action of $T_\alpha$ on $L^p(\R^n)$ a bound
$\|T_\alpha\|_{p \rightarrow p} \le \eta$.
\dumou

 For applying Corollary~\ref{InStrip}, it remains to check that
$H$ has an admissible growth in~$S = \{ z : 0 < \Re z < \nu \}$. We may
actually reduce the discussion to a function $H$ bounded in the strip (but
without universal estimate). Indeed, one can observe that all operators
$T_z$, $z \in S$, are uniformly bounded on $L^2(\R^n)$, since 
$|\ca P_z(\xi)|$ is bounded by $\nu^\nu$ for all $\xi \in \R^n$ and $z$
in~$S$ by~\eqref{Radiale}. We may limit
ourselves to $f$, $g$ continuous with compact support, so that $f_z$,
$g_z$, $z \in S$, stay in a bounded subset of $L^2$, according
to~\eqref{FzGzBound}, implying that $H = H_{f, g}$ is bounded in
the strip.

\subsection{The interpolation part of Carbery's proof for 
 Theorem~\ref{TheoMaxi}%
\label{InterpoCarbe}}

\begin{proof}
In order to complete the proof of Theorem~\ref{TheoMaxi} and
Proposition~\ref{MoreGeneral}, it remains to show that the multiplier 
$(\xi \ps \nabla)^\alpha m(\xi)$, where $m$ is the Fourier transform of 
$K = \Kg - P$, is bounded on~$L^p(\R^n)$ for at least one value 
$\alpha > 1/p$ when $p > 3/2$. We have seen in the preceding
Section~\ref{Model} that $(\xi \ps \nabla)^\alpha \widehat P(\xi)$ is
bounded on~$L^p(\R^n)$, we need only consider now 
$(\xi \ps \nabla)^\alpha \mg(\xi)$. We will obtain the result by
interpolating between the boundedness on $L^1(\R^n)$, for 
$\alpha_0 = - \varepsilon$, and the boundedness on~$L^2(\R^n)$, for 
$\alpha_1 = 1 - \varepsilon$, of a certain holomorphic family $N_z(\xi)$
such that $N_\alpha(\xi)$ controls 
$(\xi \ps \nabla)^\alpha \mg (\xi)$. If $p > 3/2$ is fixed, its
conjugate~$q$ is $< 3$. We write 
\[
   \frac 2 3 
 > \frac 1 p 
 = \frac {1 - \theta} 1 + \frac \theta 2
 = 1 - \frac \theta 2 \ms1 \up,
 \ms{20}\hbox{thus}\ms{16}
   \frac \theta 2 
 = 1 - \frac 1 p 
 = \frac 1 q
\]
and $\theta = 2 / q > 2 / 3 > 1 - \theta / 2$. One can then find
$\varepsilon \in (0, 1)$ small enough, and independent of the
dimension~$n$, so that
\[
    \alpha 
 := (1 - \theta) (-\varepsilon) + \theta (1 - \varepsilon)
  = \theta - \varepsilon
  > 1 - \frac \theta 2
  = \frac 1 p \ms1 \up.
\]
We need $0 < \varepsilon \ms4{<}\ms5 3 \theta / 2 - 1$, we can set for example
$\varepsilon = 3 \theta / 4 - 1/2  = (p - 3/2) / p$. By
Lemma~\ref{CPareil}, it is enough to show that
$d^\alpha_t \mg(t \xi) \barre_{t=1}$ is bounded on $L^p$. Consider the
holomorphic family of multipliers $(N_z)$,
simpler than that of M\"uller, namely,
$
 N_z(\xi) := d^z_t \mg(t \xi) \barre_{t=1}
$
in the strip $- \varepsilon \le \Re z \le 1 - \varepsilon$. When 
$\Re z < 0$, we have
\begin{equation}
   N_z (\xi)
 = \frac 1 {\Gamma( - z) }
    \int_1^2 (u - 1)^{ - z - 1} \mg(u \xi) \, \d u,
 \label{NotDecor}
\end{equation}
and in particular
\[
   N_{-\varepsilon + \ii \tau}(\xi)
 = \frac 1 {\Gamma(\varepsilon - \ii \tau) }
    \int_1^2 (u - 1)^{\varepsilon - \ii \tau - 1} \mg(u \xi) \, \d u.
\]
We see that
\[
   \int_1^2 \bigl| (u - 1)^{\varepsilon - \ii \tau - 1} \bigr| \, \d u
 = \int_1^2 (u - 1)^{\varepsilon - 1} \, \d u
 = \varepsilon^{-1}
 < + \infty,
\]
thus $N_{-\varepsilon + \ii \tau}$ acts on $L^1$, with norm 
$\le 2 \ms1 \varepsilon^{-1}
 (1 + \tau^2)^{1/4 - \varepsilon/2} \e^{\pi |\tau| / 2}$, according to 
Lemma~\ref{EasyIntegral}, to the inequality~\eqref{MajoGamm} for the Gamma
function and since the $L^1$ norm of the kernel~$\Kg$ is equal to~$1$. When
$\Re z = 1 - \varepsilon$, we have by~\eqref{dalpha} that
\[
   N_{1 -\varepsilon + \ii \tau}(\xi)
 = \frac {\ms2 \mg (2 \xi)} {\Gamma(\varepsilon  - \ii \tau) } -
   \frac 1 {\Gamma(\varepsilon - \ii \tau) }
    \int_1^2 (u - 1)^{\varepsilon - \ii \tau - 1} 
     \xi \ps \nabla \mg(u \xi) \, \d u.
\]
The kernel $N_{1 -\varepsilon + \ii \tau}$ is a bounded function of $\xi$,
because we have $|\mg(2 \xi)| \le \delta_{0, g}$
and $ \bigl| u \xi \ps \nabla \mg(u \xi) \bigr| \le \delta_{1, g}$
by~\eqref{EstimaGene}. Using~\eqref{MajoGamm} we obtain
\begin{align*}
     \bigl| N_{1 -\varepsilon + \ii \tau}(\xi) \bigr|
 &\le \frac 1 {|\Gamma(\varepsilon - \ii \tau)| } 
      \Bigl( \delta_{0, g} + 
       \int_1^2 \bigl| (u - 1)^{\varepsilon - \ii \tau - 1} \bigr|
        \ms3 \frac {\delta_{1, g}} u \, \d u \Bigr)
 \\
 &\le 2 (\delta_{0, g} + \delta_{1, g}) \ms2
       \varepsilon^{-1} \ms1 (1 + \tau^2)^{1/4 - \varepsilon/2} 
        \ms1 \e^{\pi |\tau| / 2}.
\end{align*}
This shows that the operator associated to 
$N_{1 -\varepsilon + \ii \tau}(\xi)$ is bounded on $L^2(\R^n)$ with a
bound $O(\e^{\kappa |\tau|})$. We deal with this estimate as in the
preceding Section~\ref{Model}, and we obtain by interpolation that
$N_\alpha(\xi)$ is a $L^p(\R^n)$-multiplier. Remark~\ref{PolyFacto}
takes care of the polynomial factor 
$(1 + \tau^2)^{1/4 - \varepsilon/2} \le (1 + \tau^2)^{1/4}$. 
By Lemma~\ref{CPareil} and~\eqref{PolyBd} with $w = 1$, $c_j = 1/4$,
$u_j = \pi / 2$, and since $\delta_{0, g} \ge 1$ we get
\[
     \| (\xi \ps \nabla)^\alpha \mg(\xi) \|_{p \rightarrow p}
 \le 1 
     + \frac {2 \ms1 p} {p - 3/2} \ms2 
      \Bigl( \frac 3 2 \Bigr)^{1/2} 
      \e^{\pi / 4} 
       \ms1 (\delta_{0, g} + \delta_{1, g})^{2 - 2 / p}
 \le \kappa_p \ms1 \Delta_1^{2 - 2 / p}.
\]
\dumou

 We now check that the function 
$H(z) = \sca {N_z f_z} {g_z}$ of~\eqref{UsualH} has an admissible growth 
in~$S = \{ - \varepsilon \le \Re z \le 1 - \varepsilon\}$. We
may again observe that all kernels $N_z(\xi)$ are bounded functions of $\xi$.
Indeed, $N_z(\xi)$ can be expressed in the whole strip by
\[
   N_z(\xi)
 = \frac {\mg (2 \xi)} {\Gamma(- z + 1) } -
   \frac 1 {\Gamma(- z + 1) }
    \int_1^2 (u - 1)^{- z} 
     \xi \ps \nabla \mg (u \xi) \, \d u,
 \quad
 \xi \in \R^n,
\]
so that 
$|N_z(\xi)| \le \kappa_{\varepsilon, \delta} (1 + \tau^2)^{1/4} 
   \ms1 \e^{\pi |\tau| / 2}$. Next, we can assume that the two
functions~$f, g$ appearing in the definition of $H$ are bounded with
bounded support, and argue with~\eqref{FzGzBound} as at the end of
Section~\ref{Model}, obtaining that
$|H(z)| \le \kappa \|N_z\|_{2 \rightarrow 2}
 \le \kappa'_\varepsilon (1 + \tau^2)^{1/4} \ms1 \e^{\pi |\tau| / 2}$, a
growth admissible for applying Corollary~\ref{InStrip}.
\end{proof}

 We see pretty well why M\"uller finds a better result than the one given
by the preceding argu\-ment, which suffices for Carbery's theorem. It is
because M\"uller is able to make use of multipliers more difficult to
handle, which contain an extra factor $|\xi|$ on the line 
$\Re z = - \varepsilon$, for example
$
   m^\varepsilon_{-\varepsilon} (\xi)
 = (1 + |\xi|) N_{-\varepsilon} (\xi)
$
when $z = - \varepsilon$. This factor~$|\xi|$ is precisely the one that
will be treated by the geometrical parameter~$q(C)$. On the other hand, 
M\"uller's approach is not better when $p > 3/2$, since the result is known
in this case without assumption on~$q(C)$.

\begin{rem} The factor $1 / \Gamma(-z)$ in~\eqref{NotDecor} is not purely
decorative. Without it, $N_z(\xi)$ would have a \og pole\fge at $z = 0$,
which is compensated by the zero of $1 / \Gamma(z)$ at $0$. One could
perhaps get away here with a less sophisticated factor such as 
$z / (a - z)$, with $a$ real and $> 1$. See also Remark~\ref{ButPoly}.
\end{rem}

\subsection{Upper bounds for the functions  
 $\xi \mapsto m^\varepsilon_z (\xi)$%
\label{LesMajos}}

\noindent
We present a version of M\"uller's upper bounds for the functions
$m^\varepsilon_z$ defined in~\eqref{mzFunc}. M\"uller's bounds
in~\cite{MullerQC} are not fully explicit since they use asymptotic
estimates, but they do not contain the annoying factor $\varepsilon^{-1}$
that our somewhat shorter proof introduces below.

\def\MajoMulAref{after M\"uller~\cite[Lemma~2]{MullerQC}}%
\begin{lem}[\MajoMulAref]%
\label{MajoMullA}
Assume that the kernel $\Kg$ integrable on\/~$\R^n$
satisfies\/~\eqref{EstimaGeneN}, let $\varepsilon \in (0, 1)$, let
$\nu \ge 1 - \varepsilon$ and set $\ell = \lceil \nu + \varepsilon \rceil$.
For every $z \in \C$ such 
that~$-\varepsilon \le \Re z \le \nu$, one has that
\[
 \forall \xi \in \R^n,
 \ms{9}
     | m_z^\varepsilon(\xi) |
 \le \kappa_\nu \ms2 \varepsilon^{-1} 
      \Delta_\ell  \ms2 
       (1 + (\Im z)^2)^{\nu/2 - 1/4}
        \ms1 \e^{\pi |\Im z| / 2},
\]
where $\kappa_\nu = 4 \ms1 \Gamma \bigl( \max(\nu, 2) \bigr)
 \ms2 (3/2)^{\nu - 1/2} \e^{\pi/4}$ and where $\Delta_\ell$ is defined
at\/~\eqref{DefiDel}.
\end{lem}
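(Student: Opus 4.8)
The plan is to fix $\xi\in\R^n$ and reduce the estimate to a one--dimensional statement about the fractional derivative $d^z$ applied to $g(t):=\mg(t\xi)$. If $\xi=0$ the function $t\mapsto\mg(t\xi)$ is constant, so $d^z_t\mg(0)|_{t=1}=\mg(0)/\Gamma(1-z)$; since $|\mg(0)|\le\delta_{0,g}$ by \eqref{EstimaGeneN}, the claimed bound follows immediately from \eqref{MajoGamm}. So assume $\xi\ne0$, write $r=|\xi|$, $\theta=r^{-1}\xi$, and recall that $t\mapsto\mg(t\xi)$ is, up to the dilation $t\mapsto tr$, the one--dimensional Fourier transform of $\varphi_{\theta,\Kg}$. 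The chain rule turns \eqref{EstimaGeneN} into $|g^{(j)}(t)|\le\delta_{j,g}\,r^{j}/(1+tr)$ for $0\le j\le\ell$ and $t>0$; in particular $g$ and its first $\ell$ derivatives are Lipschitz on $[1,+\infty)$ and satisfy the decay hypotheses under which the fractional--calculus estimates of Section~\ref{FractiDeri} apply.

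Next I would bound $N_z(\xi):=d^z_t g(t)|_{t=1}$ by choosing, according to the value of $\Re z$, an appropriate integral representation from \eqref{dzOrdrek}, namely $d^z g(1)=E_k(z,1)+\tfrac{(-1)^k}{\Gamma(k-z)}\int_1^2(u-1)^{k-1-z}g^{(k)}(u)\,\d u$, with $k$ large enough that the weight is integrable on $[1,2]$ yet small enough that, after multiplication by the compensating factor $(1+|\xi|)^{1-\varepsilon-z}$ in the definition \eqref{mzFunc} of $m_z^\varepsilon$, no positive power of $1+|\xi|$ remains. Feeding in $|g^{(k)}(u)|\le\delta_{k,g}r^{k}/(1+ur)$ and $|g^{(j)}(2)|\le\delta_{j,g}r^{j}/(1+2r)$ settles the range $r\le1$ at once; for $r\ge1$ one must instead use the genuine decay of a fractional derivative of order $\Re z$, obtained by reducing the order into $(0,1)$ through $d^z=\pm\,d^{z-(k-1)}\!\bigl(g^{(k-1)}\bigr)$ (cf.\ \eqref{DzOrdrek}), applying Lemma~\ref{GeneralDb2} to the corresponding $D^{\,\cdot}$, and absorbing the bounded correction $d^{w}-D^{w}$ from \eqref{Dmoinsd}; this yields $|N_z(\xi)|\le C(z)\,r^{\Re z}/(1+r)$, whence $(1+r)^{1-\varepsilon-\Re z}|N_z(\xi)|\le C(z)\,r^{-\varepsilon}$ stays bounded. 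The constant $C(z)$ collects: the coefficients $\delta_{j,g}$ with $j\le\ell$, hence is $\le\Delta_\ell$; the reciprocals of Gamma values $\Gamma(j+1-z)$ and $\Gamma(k-z)$, whose real parts lie between $1-\nu$ and $\ell$, estimated by \eqref{MajoGamm} and producing the factor $(1+(\Im z)^2)^{\nu/2-1/4}\,\e^{\pi|\Im z|/2}$ together with the constant $\kappa_\nu=4\,\Gamma(\max(\nu,2))\,(3/2)^{\nu-1/2}\e^{\pi/4}$; and factors $1/\varepsilon$ coming from near--singular integrals $\int_1^2(u-1)^{-\beta}\,\d u=(1-\beta)^{-1}$ whose denominators are bounded below by $\varepsilon$, using $\ell-\Re z\ge\ell-\nu\ge\varepsilon$ and the analogous inequalities built into the choice $\ell=\lceil\nu+\varepsilon\rceil$.

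The main obstacle will be the bookkeeping required to make this uniform across the whole closed strip $-\varepsilon\le\Re z\le\nu$: the representation level $k=k(\Re z)$ must be chosen coherently, the transitions at which it changes must be controlled, and — most delicately — the crude size bound $|g^{(k)}(u)|$ of order $|\xi|^{k-1}$ (too weak when $\Re z$ is far from $k$) must be replaced by the interpolated decay $|\xi|^{\Re z-1}$ of the true fractional derivative of order $\Re z$. That is exactly the step where the argument is intricate, and also where the $\varepsilon^{-1}$ enters in this streamlined treatment, whereas M\"uller's original proof, relying on precise asymptotics for the relevant Gamma quantities, avoids it. Once the pointwise bound on $|m_z^\varepsilon(\xi)|$ is established for each $z$ in the strip, nothing further is needed, since the statement is a pointwise estimate rather than an operator bound.
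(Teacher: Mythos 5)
Your proposal is a genuinely different route from the paper's, and it does not execute the key step. You correctly identify that the crude bound $|g^{(k)}(u)|\lesssim|\xi|^{k-1}/(1+|\xi|)$, combined with the compensating factor $(1+|\xi|)^{1-\varepsilon-\Re z}$, only cancels exactly when $\Re z=k-\varepsilon$; for any other $\Re z$ a stray positive power of $|\xi|$ survives, which is precisely the obstacle the paper flags in the remark following Lemma~\ref{MajoMull}. Your fix is to work pointwise in $z$, reducing $d^z$ to $D^{\,w}g^{(k-1)}$ with $\Re w\in(0,1)$, invoking Lemma~\ref{GeneralDb2} for the interpolated decay $|\xi|^{\Re z}/(1+|\xi|)$, and controlling the correction $d^w-D^w$ via~\eqref{Dmoinsd}. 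That chain is plausible, but you leave it as the "main obstacle" and do not carry it through — the transitions at integer boundaries, the uniformity of the $D^w-d^w$ correction, and the recovery of the precise constant $\kappa_\nu$ are all left hanging — so as written there is a real gap.

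The paper avoids this bookkeeping entirely. It proves the pointwise bound only on the special vertical lines $\Re z = k-\varepsilon$ (Lemma~\ref{MajoMull}), where the compensating factor cancels cleanly against the $k$-th derivative estimate, and then observes that for each fixed $\xi$ the function $z\mapsto m_z^\varepsilon(\xi)$ is holomorphic with admissible growth in the strip; applying the three-lines lemma in the form of Corollary~\ref{InStrip} and Remark~\ref{PolyFacto} between the two adjacent lines $\Re z = k-\varepsilon$ and $\Re z = k+1-\varepsilon$ straddling $\Re z_0$ gives the bound at $z_0$ directly, together with the stated constant $\kappa_\nu$ (with the $\e^{\pi/4}$ and $(3/2)^{\nu-1/2}$ coming from the width-$1$ interpolation factors in \eqref{PolyBd}). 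The analytic interpolation replaces precisely the "intricate" step your plan stalls at: holomorphy in $z$ performs the interpolation of the $|\xi|$-exponents for you, so no pointwise fractional-calculus estimate for general $\Re z$ is ever needed. Note that the paper does also sketch, in small print after the lemma, a direct pointwise argument closer in spirit to yours, but it hinges on a delicate splitting of $\int_1^2$ at $1+|\xi|^{-1}$ rather than on the $d^z\approx D^z$ comparison, which again suggests that the pointwise route is more fragile than the interpolation one.
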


 One of the difficulties in M\"uller's article is the following: with the
operator $D^\alpha$, we have been able to compute certain integrals by the
residue theorem, on entire half-lines. The corresponding values
for~$d^\alpha$ are less pleasant, because they involve bounded segments,
and quarters of circle at finite distance whose contribution is not zero.
Let us mention another difficulty, somewhat related to the latter. If we
know that $|D^z_t m(t \theta)| \le \kappa (1 + |t|)^{-1}$ for every $t$
real and $\theta \in S^{n-1}$, then by the homogeneity
relation~\eqref{Dilate} we get
$|D^z_t m(t \xi)| \le \kappa |\xi|^{\Re z} (1 + |t \xi|)^{-1}$ for 
$\xi \in \R^n$, but this kind of behavior is not clear for $d^z$. The more
delicate analysis of~\cite{MullerQC} will not be given here, but some special
cases are rather easy. Indeed, the computation is not difficult when 
$\Re z = k - \varepsilon$, for every integer $k \ge 0$. We will however be
able to deduce Lemma~\ref{MajoMullA} from the easy cases that are treated
in the next lemma.

\begin{lem}%
\label{MajoMull}
Assume that $\Kg$ is integrable on\/~$\R^n$ and
satisfies\/~\eqref{EstimaGeneN}. For every $\varepsilon \in (0, 1)$, every
integer $k \ge 0$ and $z \in \C$ such that\/ $\Re z = k - \varepsilon$, one
has
\[
 \forall \xi \in \R^n,
 \ms{9}
     | m_z^\varepsilon(\xi) |
 \le 2 \ms1 \kappa'_k \ms2 \varepsilon^{-1} \Delta_k \ms1
      (1 + (\Im z)^2)^{ k^*/2 - \varepsilon/2 - 1/4} 
       \ms1 \e^{\pi |\Im z| / 2},
\]
where $k^* = \max(k, 1)$, $\kappa'_k \le \Gamma(k - \varepsilon)$ for 
$k \ge 3$ and $\kappa'_0, \kappa'_1, \kappa'_2 \le 1$. 
\end{lem}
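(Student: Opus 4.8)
The plan is to substitute the integral representation \eqref{dzOrdrek} — taken at the integer order $k$, which is legitimate since $\Re z=k-\varepsilon<k$ — into the definition \eqref{mzFunc} of $m_z^\varepsilon$. With $f(u)=\mg(u\xi)$ this reads
\[
   d^z_t \mg(t\xi)\barre_{t=1}
 = \sum_{j=0}^{k-1} (-1)^j\,\frac{\frac{\d^j}{\d u^j}\mg(u\xi)\,\barre_{u=2}}{\Gamma(j+1-z)}
   + \frac{(-1)^k}{\Gamma(k-z)}\int_1^2 (u-1)^{\varepsilon-\ii\Im z-1}\,\frac{\d^k}{\d u^k}\mg(u\xi)\,\d u ,
\]
the boundary sum being empty when $k=0$ (only the integral, then of order $-z$, survives, since $\Re z=-\varepsilon<0$). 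The ``radial'' estimate~\eqref{EstimaGeneN} gives, after rescaling $\mg(u\xi)=\mg(u|\xi|\,\theta)$ with $\theta=|\xi|^{-1}\xi$, the bound $\big|\frac{\d^j}{\d u^j}\mg(u\xi)\big|\le\delta_{j,g}\,|\xi|^j/(1+u|\xi|)$. Since $\Re(1-\varepsilon-z)=1-k$, the prefactor $(1+|\xi|)^{1-\varepsilon-z}$ in \eqref{mzFunc} has modulus $(1+|\xi|)^{1-k}$, and, using $u\ge1$, one gets $(1+|\xi|)^{1-k}|\xi|^j/(1+u|\xi|)\le(1+|\xi|)^{j-k}\le1$ for every $j\le k$; so each term is bounded uniformly in $\xi$, hence in the dimension~$n$.

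Next I would extract the two elementary facts that determine the final shape of the estimate. First, $\int_1^2(u-1)^{\varepsilon-1}\,\d u=\varepsilon^{-1}$: this is the only source of the factor $\varepsilon^{-1}$. Second, the Gamma factors are controlled by the \emph{sharp} estimates of Section~\ref{FoGamm}: here $k-z=\varepsilon-\ii\Im z$, while for $j=0,\dots,k-1$ one has $j+1-z=(\varepsilon-m)-\ii\Im z$ with $m=k-1-j$. Applying \eqref{Base} together with $\Gamma(w)^{-1}=w\,\Gamma(w+1)^{-1}$ and $\Gamma(1+\varepsilon)>\tfrac12$ for $m=0$, \eqref{GammaNeg0} for $m=1$, and \eqref{Nega} for $m\ge2$, one obtains
\[
   \Big|\frac1{\Gamma((\varepsilon-m)-\ii\Im z)}\Big|
 \le \kappa_m\,(1+(\Im z)^2)^{1/4+m/2-\varepsilon/2}\,\e^{\pi|\Im z|/2},
   \qquad \kappa_0=\kappa_1=2,\quad \kappa_m=2\,\Gamma(m+1-\varepsilon)\ (m\ge2),
\]
and the case $m=0$ also covers $\Gamma(k-z)^{-1}$. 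The largest exponent that can occur is $k/2-\varepsilon/2-1/4$: for $k\ge1$ it comes from the $j=0$ boundary term ($m=k-1$), and for $k=0$ the unique exponent $1/4-\varepsilon/2$ equals $k^*/2-\varepsilon/2-1/4$ with $k^*=1$; since $1+(\Im z)^2\ge1$, all the other terms are dominated by $(1+(\Im z)^2)^{k^*/2-\varepsilon/2-1/4}\,\e^{\pi|\Im z|/2}$.

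Finally I would assemble the pieces. Bounding the $\xi$-factors by $1$ and using the Gamma bounds above, the boundary terms contribute $\sum_{m=0}^{k-1}\kappa_m\,\delta_{k-1-m,g}$ and the integral term contributes $\tfrac2\varepsilon\,\delta_{k,g}$, both against $(1+(\Im z)^2)^{k^*/2-\varepsilon/2-1/4}\,\e^{\pi|\Im z|/2}$. Pulling out $\varepsilon^{-1}$ (using $\varepsilon<1$) and then factoring out the \emph{largest} of $\kappa_0,\dots,\kappa_{k-1}$ — which is $\kappa_{k-1}$, equal to $2$ for $k\le2$ and to $2\Gamma(k-\varepsilon)\ge2$ for $k\ge3$ — one bounds the bracket by $\varepsilon^{-1}\kappa_{k-1}(\Delta_{k-1}+\delta_{k,g})=\varepsilon^{-1}\kappa_{k-1}\Delta_k$, which yields exactly $\kappa'_k=\kappa_{k-1}/2$, i.e.\ $\kappa'_0,\kappa'_1,\kappa'_2\le1$ and $\kappa'_k=\Gamma(k-\varepsilon)$ for $k\ge3$ (for $k=0$ only the integral term is present, with the same conclusion $\kappa'_0=1$). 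I expect the only genuinely delicate point to be this constant bookkeeping: one must invoke the sharp estimates \eqref{Base}, \eqref{GammaNeg0}, \eqref{Nega} (not the crude \eqref{MajoGamm}) to get precisely the exponent $k^*/2-\varepsilon/2-1/4$, and one must factor out $\kappa_{k-1}$ rather than sum the $\kappa_m$ in order to land on the clean constant $\Gamma(k-\varepsilon)$; the structural steps — the uniformity in $\xi$ and $n$ through the cancellation of the powers of $1+|\xi|$, the appearance of $\varepsilon^{-1}$, and the factor $\e^{\pi|\Im z|/2}$ — are immediate from \eqref{dzOrdrek}, \eqref{EstimaGeneN} and Section~\ref{FoGamm}.
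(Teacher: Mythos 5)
Your proposal is correct and follows the same route as the paper: expand $d^z_t\mg(t\xi)\barre_{t=1}$ via~\eqref{dzOrdrek} at order $k$, use the radial decay~\eqref{EstimaGeneN} to cancel the $(1+|\xi|)^{1-\varepsilon-z}$ prefactor term by term, extract $\varepsilon^{-1}$ from $\int_1^2(u-1)^{\varepsilon-1}\,\d u$, and absorb the Gamma reciprocals into a single worst-case factor. One small expository correction: your closing remark that one ``must invoke the sharp estimates~\eqref{Base}, \eqref{GammaNeg0}, \eqref{Nega} (not the crude~\eqref{MajoGamm})'' is not actually needed --- the paper does use~\eqref{MajoGamm} directly, with $a=-(k-1)+\varepsilon$ so that $\beta_a$ and the exponent $(1/2-a)/2=k^*/2-\varepsilon/2-1/4$ come out exactly right; \eqref{MajoGamm} is just~\eqref{Base}/\eqref{GammaNeg0}/\eqref{Nega} repackaged with $a$ as a free parameter, so picking the worst $a$ across all Gamma arguments reproduces your term-by-term optimization of the $\kappa_m$'s.
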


\begin{proof}
We first give the proof for $k = 0$, when $z = -\varepsilon + \ii \tau$.
We have
\[
    m^\varepsilon_{-\varepsilon + \ii \tau}(\xi)
 = (1 + |\xi|)^{1 - \varepsilon - (-\varepsilon + \ii \tau)} \ms2
    \frac 1 {\Gamma(\varepsilon - \ii \tau)}
     \int_1^2 (u - 1)^{\varepsilon - \ii \tau - 1}
      \mg(u \xi) \, \d u
\]
and it follows that
\[
     |m^\varepsilon_{-\varepsilon + \ii \tau}(\xi)|
 \le \Bigl| \frac 1 {\Gamma(\varepsilon - \ii \tau)} \Bigr|
      \int_1^2 (u - 1)^{\varepsilon - 1} 
       (1 + |\xi|) \ms3 |\mg(u \xi)| \, \d u.
\]
By~\eqref{EstimaGeneN}, we know that
$|\mg(u \xi)| \le \delta_{0, g} (1 + |\xi|)^{-1}$ when $u \ge 1$, thus
\[
     |\Gamma(\varepsilon - \ii \tau) \ms2 
       m^\varepsilon_{-\varepsilon + \ii \tau}(\xi)|
 \le \delta_{0, g} \int_1^2 (u - 1)^{\varepsilon - 1} \, \d u
  =  \frac {\delta_{0, g}} \varepsilon \ms1 \up.
\]
Using also~\eqref{MajoGamm}, this simplest case reads as
\[
     \|m^\varepsilon_{-\varepsilon + \ii \tau} \|_\infty
 \le 2 \ms1 \delta_{0, g} \ms1 \varepsilon^{-1} \ms1 
      (1 + \tau^2)^{1/4 - \varepsilon/2} \e^{\pi |\tau| / 2},
 \quad
 \tau \in \R,
\]
and $1/4 = k^*/2 {-} 1/4$ here. For $k > 0$, we have by~\eqref{dzOrdrek}
with $z = k - \varepsilon + \ii \tau$ that
\[
   d^{k - \varepsilon + \ii \tau}_t \mg(t \xi) \barre_{t=1}
 = E_k + (-1)^k \frac 1 {\Gamma(\varepsilon - \ii \tau)}  
        \int_1^2 (u - 1)^{\varepsilon - \ii \tau - 1}
         \frac {\d^k} {\d u^k \ns2} \ms2 \mg(u \xi) \, \d u,
\]
where
\[
   E_k 
 = \sum_{j=0}^{k-1} (-1)^j \ms2
    \frac {\frac {\d^j} {\d u^j \ns2} \ms2 \mg(u \xi) \barre_{u = 2}}
          { \Gamma(j + 1 - k + \varepsilon - \ii \tau)} \ms1 \up.
\]
By our assumption~\eqref{EstimaGeneN}, the function $u \mapsto \mg(u \xi)$
satisfies
\begin{equation}
 \forall u \ge 1,
 \ms{16}
     \Bigl| \frac {\d^j} {\d u^j \ns2} \ms2 \mg(u \xi) \Bigr|
  =  \Bigl|
      \frac {\d^j} {\d u^j \ns2} \ms2 
       \mg \bigl( u \ms1 |\xi| \ms1 \theta \bigr) 
     \Bigr|
 \le \delta_{j, g} \ms2 \frac {|\xi|^j} {1 + |\xi|}
 \label{Notice}
\end{equation}
for each integer $j \ge 0$, if $\xi \ne 0$ and
$\theta = |\xi|^{-1} \xi$. This yields
\[
     \Bigl| \int_1^2 (u - 1)^{\varepsilon - \ii \tau - 1}
        \frac {\d^k} {\d u^k \ns2} \ms2 \mg(u \xi) \, \d u \Bigr|
 \le \delta_{k, g} \int_1^2 (u - 1)^{\varepsilon - 1}
        \frac {|\xi|^k} {1 + |\xi|} \, \d u
  =  \frac {\delta_{k, g}} \varepsilon
        \frac {|\xi|^k} {1 + |\xi|} \ms1 \up.
\]
For the terms in the expression $E_k$, we have by~\eqref{Notice} that
\[
     \Bigl|
      \frac {\d^j} {\d u^j \ns2} \ms2 \mg(u \xi) \barre_{u = 2}
     \Bigr|
 \le \delta_{j, g} \ms2 \frac {|\xi|^j} {1 + |\xi|}
 \le \delta_{j, g} \ms2 \frac {(1 + |\xi|)^j} {1 + |\xi|}
 \le \delta_{j, g} \ms2 (1 + |\xi|)^{k-1},
 \quad
 j = 0, \ldots, k-1.
\]
Recalling $\Delta_k = \sum_{j=0}^k \delta_{j, g}$ and~\eqref{MajoGamm}
with $a = -k + 1 + \varepsilon$, we get 
\begin{align}
  &\ms4   |m^\varepsilon_{k - \varepsilon + \ii \tau} (\xi)|
  = (1 + |\xi|)^{1 - \varepsilon - (k - \varepsilon)} \ms2
     \bigl|
      d^{k - \varepsilon + \ii \tau}_t \mg(t \xi) \barre_{t=1} 
     \bigr|
 \notag
 \\
 \le&\ms4 \Delta_k \ms2 \varepsilon^{-1} \ms1 
      (1 + |\xi|)^{1 - k}
       \ms3 (1 + |\xi|)^{k - 1} \ms2 
        \max \{
              |\Gamma(\varepsilon - \ii \tau - j_1)|^{-1} : 0 \le j_1 \le k-1
             \}
 \label{MajoK}
 \\
 \le&\ms4 \beta_a 
           \ms2 \Delta_k \ms2 \varepsilon^{-1} \ms1 
            (1 + \tau^2)^{1/4 + (k-1-\varepsilon)/2} \e^{\pi |\tau| / 2}.
 \notag
\end{align}
We may take $\beta_a = 2$ when $k \le 2$ and 
$\beta_a = 2 \ms1 \Gamma(k - \varepsilon)$ otherwise. 
\end{proof}

\begin{rem}
One could not make the same simple computation for $k - \varepsilon'$ when
$\varepsilon' > \varepsilon$. Indeed, we have then
\[
   m^\varepsilon_{k - \varepsilon'}(\xi)
 = (1 + |\xi|)^{1 -(k - \varepsilon') - \varepsilon}
    d^{k - \varepsilon' + \ii \tau}_t \mg(t \xi) \barre_{t=1},
\]
so $|m^\varepsilon_{k - \varepsilon'}(\xi)|$ contains the factor
$(1 + |\xi|)^{1 - k + \varepsilon' - \varepsilon}$, that is not controllable
by the preceding proof when $\varepsilon' > \varepsilon$. With one
more integration by parts in the log-concave case we obtain
\[
   \frac {\d^j} {\d u^j \ns2} \ms2 \mlc(u \xi)
 = \frac {(-2 \ii \pi |\xi|)^{j-2} \ns{14}} {u^2} \ms6
    \int_\R \bigl( s^j \varphi_\theta(s) \bigr)''
     \e^{ - 2 \ii \pi s u \ms1 |\xi|} \, \d s
\]
that seems to give an additional improvement, able to swallow the bad factor
$|\xi|^{\varepsilon' - \varepsilon}$ above. However, we would need now for
$
 \int_\R |s|^j |\varphi''_\theta(s)| \, \d s
$
a universal bound that does not exist, and actually, this integral does
not make sense in general.
\dumou
\dumou

 When $k \ge 1$, the kernel 
$\widetilde m^\varepsilon_{k-\varepsilon + \ii \tau}
 := (1 + |\xi|)^{1 - k - \ii \tau}
     D^{k - \varepsilon + \ii \tau}_u \mg(u \xi) \barre_{u=1}$ is even
easier to bound since we can write directly
\[
     \Bigl| \int_1^{+\infty} (u - 1)^{\varepsilon - \ii \tau - 1}
            \frac {\d^k} {\d u^k \ns2} \ms2 \mg(u \xi) \, \d u 
     \Bigr|
 \le \delta_{k, g} 
      \Bigl( 
       \int_1^{+\infty} (u - 1)^{\varepsilon - 1} \frac {\d u } u 
      \Bigr) \ms2 |\xi|^{k - 1},
\]
but $D^{- \varepsilon + \ii \tau}_u \mg(u \xi) \barre_{u=1}$ is not a
bounded function of $\xi$ in the neighborhood of~$\xi = 0$. For example, we
have $D^{-\varepsilon}_u \e^{-u|\xi|} \barre_{u=1}
 = |\xi|^{-\varepsilon} \e^{- |\xi|}$. Thus
$\widetilde m^\varepsilon_{-\varepsilon}$ is not an $L^2$ multiplier,
nor an $L^p$ multiplier for any $p \ne 2$, and this justifies working with
$d^z_t$ instead.
\end{rem}

\begin{proof}[Proof of Lemma~\ref{MajoMullA}]
Let $\nu > 1 - \varepsilon$ and 
$\ell = \lceil \nu + \varepsilon \rceil \ge 2$, so that 
$\nu \le \ell - \varepsilon$. If $\Re z < \ell$, we have
by~\eqref{dzOrdrek} that
\[
   d^z_t \mg(t \xi) \barre_{t=1}
 = E_\ell(z) 
    + (-1)^\ell \frac 1 {\Gamma(\ell - z)}  
       \int_1^2 (u-1)^{-z + \ell - 1}
        \frac {\d^\ell} {\d u^\ell \ns2} \ms2 \mg(u \xi) \, \d u,
\]
with
$
   E_\ell(z) 
 = \sum_{i=0}^{\ell-1} (-1)^i \ms2 \Gamma(i + 1 - z)^{-1}
    \frac {\d^i} {\d u^i \ns2} \ms2 \mg(u \xi) \barre_{u = 2}
$.
We fix $\xi \in \R^n$ and consider the holomorphic function
\[
 H_\xi : z \mapsto  m_z^\varepsilon (\xi)
          = (1 + |\xi|)^{1 - \varepsilon - z}
             d^z_t \mg(t \xi) \barre_{t=1}
\]
in the strip $-\varepsilon < \Re z < \ell - \varepsilon$. We have
$
      \bigl|  ( \d^i / \d u^i ) \ms1 \mg(u \xi) \bigr|
  \le \delta_{i, g} |\xi|^i  
$ 
by~\eqref{EstimaGeneN}, and it follows from~\eqref{MajoGamm} that 
$|H_\xi(z)| \le \kappa \e^{\kappa |\tau|}$ in the strip, with $\kappa$
depending on~$|\xi|$. Consider an arbitrary $z_0$ such that 
$1 - \varepsilon \ms4{<}\ms5 \nu_0 := \Re z_0 \le \nu$. Let $k$ integer be
such that $k - \varepsilon < \nu_0 \le k + 1 - \varepsilon$, thus 
$1 \le k < \ell$. By Lemma~\ref{MajoMull}, when $\Re z = k - \varepsilon$
or $\Re z = k + 1 - \varepsilon$, we have for~$H_\xi(z)$ the good bound 
\begin{equation}
     |H_\xi(z)| 
 \le 2 \ms1 \kappa'_{\Re z + \varepsilon} \Delta_{\Re z + \varepsilon} 
      \ms2 \varepsilon^{-1}
       (1 + (\Im z)^2)^{\Re z / 2 - 1/4} \e^{\pi |\Im z| / 2}.
 \label{HdeXideZ}
\end{equation}
We write $\Delta_k < \Delta_{k + 1} \le \Delta_\ell$ and 
$\nu_0 + \varepsilon = (1 - \theta) k + \theta (k+1)$. When $k \ge 3$, we
have
\begin{align*}
      {\kappa'}_k^{1 - \theta} {\kappa'}_{k+1}^\theta
 &\le \Gamma(k {-} \varepsilon)^{1 - \theta} 
       \Gamma(k {+} 1 {-} \varepsilon)^\theta
  =   \frac { \Gamma(k {+} 1 {-} \varepsilon) } 
           { (k - \varepsilon)^{1 - \theta} }
 \\
 &\le \frac { \Gamma(\nu_0)^\theta \ms1 \Gamma(\nu_0 + 1)^{1-\theta} \ns{16} } 
            { (k - \varepsilon)^{1 - \theta} } \ms8
  =   \frac { \nu_0^{1 - \theta} } { (k - \varepsilon)^{1 - \theta} \ns{20} }
       \ms{22} \Gamma(\nu_0)
  <   2 \ms2 \Gamma(\nu),
\end{align*}
and ${\kappa'}_1^{1 - \theta} {\kappa'}_2^\theta \le 1$,
${\kappa'}_2^{1 - \theta} {\kappa'}_3^\theta \le 2$.
By Corollary~\ref{InStrip} and Remark~\ref{PolyFacto},
\eqref{PolyBd} with $w = 1$ and $c_j = (k + j - \varepsilon) / 2 - 1/4$, 
$j = 0, 1$, we get for $|H_\xi(z_0)|$ a bound
\[
 4 \ms1 \Gamma(\max(\nu, 2)) \ms1 (3 / 2)^{\Re z_0 - 1/2} \e^{ \pi / 4}
  \varepsilon^{-1} \Delta_\ell \ms1
  (1 + (\Im z_0)^2)^{\Re z_0/2 - 1/4} \e^{\pi |\Im z_0| / 2}.
\]
This proves Lemma~\ref{MajoMullA} when
$1 - \varepsilon < \Re z_0 \le \nu$. The case
$-\varepsilon \le \Re z_0 \le \nu = 1 - \varepsilon$ is left to the reader,
one has $k = 0$ and the polynomial component of the bound is then 
$(1 + \tau^2)^{\nu / 2 - 1 / 4}$ on both sides of the strip
$-\varepsilon < \Re z < 1-\varepsilon$.
\end{proof}

\begin{smal}
\noindent
An alternative proof could go like this: divide the integral $\int_1^2$
in the definition of $d^z_t \mg(t \xi) \barre_{t = 1}$ into 
$\int_1^{1 + a}$ and $\int_{1+a}^2$, for some suitable $a \in [0, 1]$. For
the first integral $\int_1^{1 + a}$, we modify~\eqref{dzOrdrek} and get when
$-1 < \Re z < 0$ that
\[
   d_{z, 1}(a)
 := {\frac 1 {\Gamma(- z)}}
      \int_1^{1 + a} (u - 1)^{-z -1} \ms1 
       \mg(u \xi) \, \d u
\]
\[
 = E_{k+2, a}(z) 
    \ns1+\ns1 (-1)^{k+2} {\frac 1 {\Gamma(k + 2 - z)}}
       \int_1^{1 + a} \ns4 (u - 1)^{-z + k + 1} \ms1 
        \frac {\d^{k+2}} {\d u^{k+2} \ns2} \ms2 \mg(u \xi) \, \d u
\]
for every integer $k \ge -1$, where $E_{k+2, a}(z)$ is equal to
\[
   E_{k+2, a}(z) 
 = \sum_{j=0}^{k+1} (-1)^j \ms4
    \frac { a^{-z + j } \ms2 
            \frac {\d^j} { \d u^j \ns2} \ms3 
             \mg \bigl( u \ms1 \xi \bigr) \barre_{u = 1 + a} }     
          { \Gamma(j + 1 - z) } \ms1 \up.
\]
Let now $-1 < \Re z \le \nu$ and write $z = k + \sigma + \ii \tau$ with $k$
integer and $0 < \sigma \le 1$. Applying the preceding formulas it follows 
by~\eqref{EstimaGeneN} that
\[
   |d_{z, 1}(a)|
 \le \sum_{j=0}^{k+1}          
         \frac { a^{-k -\sigma + j} \ms1 \delta_{j, g} \ms1 |\xi|^j }
               { |\Gamma(j + 1 - z)| \ms2 (1 + |\xi|) }             
         +        
         \frac { (2 - \sigma)^{-1} a^{2 - \sigma} \ms1 
                  \delta_{k+2, g} \ms1 |\xi|^{k+2} }
               { |\Gamma(k + 2 - z)| \ms2 (1 + |\xi|) } \ms1 \up.
\]
When $|\xi| \le 1$, we choose $a = 1$ and obtain
$|d_{z, 1}(a)| \le C_k(z) \ms1 (1 + |\xi|)^{-1}$ where
\[
   C_k(z)
 = \Delta_{k+2} \ms2
    \max \bigl\{ |\Gamma(i - z)|^{-1} : 0 \le i \le k+2 \bigr\}.
\]
When $|\xi| > 1$, we let $a = |\xi|^{-1}$ and get
$
     |d_{z, 1}(a)|
 \le C_k(z) \ms1 |\xi|^{k + \sigma} \ms1 (1 + |\xi|)^{-1}
$.
The other term $d_{z, 2}(a)$, corresponding to $\int_{1+a}^2$, is zero
when $|\xi| \le 1$ since $a = 1$ in this case. Otherwise, we have
$a = |\xi|^{-1}$ and assuming $k + \sigma \ne 0$, we get
\[
     |d_{z, 2}(a)|
  =  \Bigl| \frac 1 {\Gamma(- z) }
      \int_{1+|\xi|^{-1}}^2 (u-1)^{-z-1} \ms2 \mg(u \xi) \, \d u \Bigr|
 \le \frac 1 { |\Gamma(- z)| } 
      \frac { |\xi|^{k + \sigma} \ns1-\ns1 1 } {k + \sigma}
       \frac {\delta_{0, g}} {1 + |\xi| } \ms1 \up.
\]
There is no problem as long as $\Re z = k + \sigma$ is not close to $0$.
Otherwise, we can apply
$
     \bigl| \ms1 |\xi|^{k + \sigma} \ns1-\ns1 1 \bigr|
 \le |k + \sigma| \ms3 |\xi|^{ (k+\sigma)^+ } \ms0 \ln |\xi|
$, where $t^+ = \max(t, 0)$. Summing up and letting 
$L(\xi) = 1 + (\ln |\xi|)^+$, we have when $-1 < \Re z =: s < \nu$ that
\[
     \bigl| d^z_t \mg(t \xi) \barre_{t = 1} \bigr|
 \le C_\nu(z)
      \bigl(
       [1 + |s|^{-1}] \wedge L(\xi) 
      \bigr)
       (1 + |\xi|)^{ s^+ - 1},
\] 
giving bounds multiple of $(1 + |\xi|)^{-1}$, 
$(1 + |\xi|)^{-1} L(\xi)$ for $s$ in $[-1, -\varepsilon / 2]$ and
$[-\varepsilon / 2, 0]$ respectively, $(1 + |\xi|)^{s - 1} L(\xi)$ and 
$(1 + |\xi|)^{s - 1}$ in $[0, \varepsilon / 2]$ and 
$[\varepsilon / 2, \nu]$ respectively. For $m^\varepsilon_z(\xi)$, we get
bounds multiple of $1$, $(1 + |\xi|)^{-\varepsilon / 2} L(\xi)$ and
$(1 + |\xi|)^{-\varepsilon}$ for $s$ in 
$[- \varepsilon, - \varepsilon / 2]$, $[-\varepsilon / 2, \varepsilon/2]$
and $[\varepsilon/2, \nu]$ respectively. This shows that
$m^\varepsilon_z(\xi)$ is a bounded function of $\xi \in \R^n$.
\end{smal}

\subsection{Lemma 4 of M\"uller's article%
\label{LemmeQuatre}}

\noindent
We must control the action on $L^p(\R^n)$, $p > 1$ close to~$1$, of
multipliers $m_z^\varepsilon$ when $\Re z = - \varepsilon$. If 
$z = - \varepsilon + \ii \tau$, we have
\[
     \Gamma(\varepsilon - \ii \tau) \ms2
      m_{- \varepsilon + \ii \tau}^\varepsilon (\xi)
 = (1 + |\xi|)^{1 - \ii \tau} \ms2
     \int_1^2 (s - 1)^{\varepsilon - \ii \tau - 1} \mg(s \xi) \, \d s.
\]
Since
$
   \int_1^2 \bigl| (s - 1)^{\varepsilon - \ii \tau - 1} 
    \bigr| \, \d s 
 = \varepsilon^{-1}
$,
it is enough to bound uniformly in $s \in [1, 2]$ the norm of
$
    n_s(\xi)
 := (1 + |\xi|)^{1 - \ii \tau} \mg(s \xi)
$.
This multiplier can be decomposed into several parts: first 
$(1 + |\xi|)^{- \ii \tau}$, which is taken care of by
Proposition~\ref{LaplaceMultip} on multipliers of Laplace type. Indeed,
replacing $\lambda$ by $1 + \lambda$ in~\eqref{LaplaPow} and integrating by
parts, one finds that
$
   (1 + \lambda)^{- \ii \tau}
 = \lambda \int_0^{+\infty} \e^{- \lambda t} a_\tau(t) \, \d t
$,
with
\begin{equation}
   a_\tau(t) 
 = \frac 1 {\Gamma(1 + \ii \tau)}
        \Bigl(
         t^{\ii \tau} \e^{- t} + \int_0^t s^{\ii \tau} \e^{-s} \, \d s
        \Bigr)
 \label{LaplacEstim}
\end{equation}
and
$    |a_\tau(t)|
 \le \bigl| \Gamma(1 + \ii \tau) \bigr|^{-1}
 \le (1 + \tau^2)^{-1/4} \e^{\pi |\tau| / 2}
$
according to~\eqref{IneGamma}. Next, in $n_s(\xi)$, we have 
$(1 + |\xi|) \ms1 \mg(s \xi)$, which is formed of $\mg(s \xi)$, multiplier
bounded by~$\|\Kg\|_{L^1(\R^n)}$ on all $L^p(\R^n)$ spaces, and of 
$s^{-1} \ms2 |s \xi| \ms1 \mg(s \xi)$, $s > 1$, with a multiplier norm less
than that of $|\xi| \ms1 \mg(\xi)$, according to~\eqref{InvariMul}. 
\dumou

 Given an integrable kernel $K$ on $\R^n$ and its Fourier transform $m$,
the question boils down to handling the crucial multiplier 
\begin{equation}
 m^{\#}(\xi) := |\xi| \ms1 m(\xi).
 \label{MZero}
\end{equation}
We summarize the latter discussion in the lemma that follows, where we
include the bound $2 \ms1 (1 + \tau^2)^{1/4} \e^{\pi |\tau| / 2}$
from~\eqref{MajoGamm} for the factor 
$|\Gamma(\varepsilon - \ii \tau)|^{-1}$ that was left apart above. So far,
the kernel $K$ can be arbitrary in~$L^1(\R^n)$.

\begin{lem}%
\label{LquatreA}
Let $p$ belong to\/ $(1, 2]$. One has that
\[
     \| m^\varepsilon_{-\varepsilon + \ii \tau} \|_{p \rightarrow p}
 \le 2 \ms2 \varepsilon^{-1} \ms1 \lambda_p \ms2
      \e^{\pi |\tau| } \ms2 
       (\|\Kg\|_{L^1(\R^n)} + \| m^{\#} \|_{p \rightarrow p}),
 \quad \tau \in \R,
\]
where $\lambda_p$ is the constant appearing in
Proposition~\ref{LaplaceMultip}.
\end{lem}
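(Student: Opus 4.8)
The plan is to unwind the integral representation of $m^\varepsilon_{-\varepsilon + \ii \tau}$ that was recorded just before the statement, peel off the factors one at a time, and assemble elementary bounds already at our disposal. Recall from~\eqref{mzFunc} and~\eqref{NotDecor} that, with $m = \mg$ the Fourier transform of the kernel $\Kg$ and $z = -\varepsilon + \ii \tau$,
\[
   \Gamma(\varepsilon - \ii \tau) \ms2 m^\varepsilon_{-\varepsilon + \ii \tau} (\xi)
 = (1 + |\xi|)^{1 - \ii \tau} \ms2
     \int_1^2 (s - 1)^{\varepsilon - \ii \tau - 1} \mg(s \xi) \, \d s,
 \quad \xi \in \R^n .
\]
Writing $n_s(\xi) = (1 + |\xi|)^{1 - \ii \tau} \ms1 \mg(s \xi)$ for $s \in [1, 2]$, the operator attached to $m^\varepsilon_{-\varepsilon + \ii \tau}$ equals, on the dense class $\ca S(\R^n)$, the vector-valued integral $\Gamma(\varepsilon - \ii \tau)^{-1} \int_1^2 (s - 1)^{\varepsilon - \ii \tau - 1} \ms1 T_{n_s} \, \d s$ (this is justified by Fubini on the Fourier side). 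Since $\int_1^2 | (s - 1)^{\varepsilon - \ii \tau - 1} | \, \d s = \int_1^2 (s - 1)^{\varepsilon - 1} \, \d s = \varepsilon^{-1}$, the triangle inequality in operator norm gives the first reduction
\[
     \| m^\varepsilon_{-\varepsilon + \ii \tau} \|_{p \rightarrow p}
 \le \frac 1 { \varepsilon \ms2 |\Gamma(\varepsilon - \ii \tau)| } \ms3
      \sup_{1 \le s \le 2} \|T_{n_s}\|_{p \rightarrow p},
\]
so it suffices to bound $\|T_{n_s}\|_{p \rightarrow p}$ uniformly for $s \in [1, 2]$.

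Next I would factor $n_s$ into multipliers, using that Fourier multiplier operators commute and that the operator of a product of multipliers is the composition of the individual operators. The factor $(1 + |\xi|)^{-\ii \tau}$ is of Laplace type: replacing $\lambda$ by $1 + \lambda$ in~\eqref{LaplaPow} and integrating by parts produces the density $a_\tau$ of~\eqref{LaplacEstim}, for which $\|a_\tau\|_{L^\infty(0, +\infty)} \le |\Gamma(1 + \ii \tau)|^{-1} \le (1 + \tau^2)^{-1/4} \e^{\pi |\tau| / 2}$ by~\eqref{IneGamma}; hence Proposition~\ref{LaplaceMultip} yields $\| (1 + |\xi|)^{-\ii \tau} \|_{p \rightarrow p} \le \lambda_p \ms1 (1 + \tau^2)^{-1/4} \e^{\pi |\tau| / 2}$. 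The remaining factor $(1 + |\xi|) \ms1 \mg(s \xi)$ I split as $\mg(s \xi) + |\xi| \ms1 \mg(s \xi)$: the function $\xi \mapsto \mg(s \xi)$ is the Fourier transform of the dilate $\Di {(\Kg)} s$, whose $L^1$ norm equals that of $\Kg$, so its $L^p$ multiplier norm is $\le \|\Kg\|_{L^1(\R^n)}$ by~\eqref{LOneMultiplier}; and $|\xi| \ms1 \mg(s \xi) = s^{-1} \ms2 {\di {(m^{\#})} s}(\xi)$ with $m^{\#}(\xi) = |\xi| \ms1 \mg(\xi)$, whose norm on $L^p(\R^n)$ equals $s^{-1} \ms1 \| m^{\#} \|_{p \rightarrow p} \le \| m^{\#} \|_{p \rightarrow p}$ by the dilation invariance~\eqref{InvariMul} and $s \ge 1$. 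Multiplying these estimates,
\[
     \sup_{1 \le s \le 2} \|T_{n_s}\|_{p \rightarrow p}
 \le \lambda_p \ms1 (1 + \tau^2)^{-1/4} \e^{\pi |\tau| / 2} \ms2
      \bigl( \|\Kg\|_{L^1(\R^n)} + \| m^{\#} \|_{p \rightarrow p} \bigr).
\]

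The last step is the Gamma bookkeeping, and this is the only point where I expect any subtlety: to end up with a clean $\e^{\pi |\tau|}$ and no surviving polynomial factor, I would not use~\eqref{MajoGamm} on the half-plane $\Re z \ge 0$ (that leaves a spurious $(1 + \tau^2)^{1/4}$), but instead the functional equation $\Gamma(\varepsilon - \ii \tau) = \Gamma(1 + \varepsilon - \ii \tau)/(\varepsilon - \ii \tau)$ together with the sharp estimate~\eqref{Base} for $\sigma = \varepsilon$, the inequality $|\varepsilon - \ii \tau| \le (1 + \tau^2)^{1/2}$ and $\Gamma(1 + \varepsilon) > 1/2$; this gives $|\Gamma(\varepsilon - \ii \tau)|^{-1} \le 2 \ms1 (1 + \tau^2)^{1/4 - \varepsilon/2} \e^{\pi |\tau| / 2} \le 2 \ms1 (1 + \tau^2)^{1/4} \e^{\pi |\tau| / 2}$. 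Inserting this and the previous display into the first reduction, the factors $(1 + \tau^2)^{1/4}$ and $(1 + \tau^2)^{-1/4}$ cancel and one obtains
\[
     \| m^\varepsilon_{-\varepsilon + \ii \tau} \|_{p \rightarrow p}
 \le 2 \ms2 \varepsilon^{-1} \ms1 \lambda_p \ms2 \e^{\pi |\tau|} \ms2
      \bigl( \|\Kg\|_{L^1(\R^n)} + \| m^{\#} \|_{p \rightarrow p} \bigr),
\]
which is the assertion. Everything outside the final Gamma estimate is routine tracking of dilations and of $L^1$-multiplier norms; the ``hard part'' is merely keeping the powers of $\tau$ in the two Gamma factors in balance.
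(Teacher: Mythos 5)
Your argument is correct and reproduces the paper's own proof: factor $\varepsilon^{-1}$ and $|\Gamma(\varepsilon-\ii\tau)|^{-1}$ out of the integral representation, treat $(1+|\xi|)^{-\ii\tau}$ by Proposition~\ref{LaplaceMultip} with the compensating $(1+\tau^2)^{-1/4}\e^{\pi|\tau|/2}$ from~\eqref{IneGamma}, and split $(1+|\xi|)\mg(s\xi)$ into the $L^1$ multiplier $\mg(s\xi)$ plus the dilate $s^{-1}\,\di{(m^{\#})}{s}$. Your remark that~\eqref{MajoGamm} ``leaves a spurious $(1+\tau^2)^{1/4}$'' is a false alarm: the paper does use exactly that bound, your functional-equation route produces the same $(1+\tau^2)^{1/4}$, and in either case it cancels against the $(1+\tau^2)^{-1/4}$ from the Laplace-type factor — so nothing is spurious and the two Gamma estimates are interchangeable here.
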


 The serious work will be done in the proof of the following essential
lemma.

\begin{lem}%
\label{LquatreB}
Let $\Kg$ be a kernel integrable on\/~$\R^n$
satisfying\/~\eqref{EstimaGene}, and $\mg$ its Fourier transform. Let
$\mg^{\#}$ be defined by\/~\eqref{MZero} and $p \in (1, 2]$. One has that
\[
     \| \mg^{\#} \|_{p \rightarrow p}
 \le (2 \pi)^{1 - 2/p} \ms1 \rho_p \ms2 \delta_{0, g}^{2 - 2 / p} \ms1 
      V(\Kg)^{ - 1 + 2/p},
\]
where $\rho_p$ is the constant from\/~\eqref{TransfosRiesz} and where
$V(\Kg)$ is defined at\/~\eqref{V(K)}.
\end{lem}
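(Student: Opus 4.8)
The plan is to express $T_{\mg^{\#}}$, the operator with multiplier $\mg^{\#}(\xi)=|\xi|\,\mg(\xi)$, in terms of the Riesz transforms and the first-order distributional derivatives of $\Kg$, and then to interpolate between an $L^2$ estimate governed by $\delta_{0,g}$ and an $L^1$-type estimate governed by $V(\Kg)$. Since $|\xi|=\sum_{j=1}^{n}(\xi_j/|\xi|)\,\xi_j$, and since the multiplier $\xi_j/|\xi|$ equals $\ii$ times the $j$-th Riesz multiplier while $\xi_j\,\mg(\xi)$ equals $(2\pi\ii)^{-1}$ times the Fourier transform of the signed measure $\mu_j:=\partial_j\Kg$, one obtains
\[
 T_{\mg^{\#}} f = \frac 1 {2\pi} \sum_{j=1}^{n} R_j\bigl(\mu_j * f\bigr),
\]
that is, $T_{\mg^{\#}}=(-\Delta)^{1/2}\circ(\Kg * \cdot)$. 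Equivalently, averaging over directions with the Gaussian, $|\xi|=\sqrt{\pi/2}\int_{\R^n}|u\ps\xi|\,\d\gamma_n(u)$ with a \emph{dimension free} constant, which gives
\[
 T_{\mg^{\#}} f = \frac 1 {2\sqrt{2\pi}} \int_{\R^n} H_u\bigl(\mu_u * f\bigr)\,\d\gamma_n(u),
 \ms{16}
 \mu_u:=u\ps\nabla\Kg,
\]
where $H_u$ is the directional Hilbert transform of Section~\ref{TransfoRiesz}.

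Next I would record the two ingredients. On the $L^2$ side, Parseval and the first estimate in \eqref{EstimaGene} give $\||\xi|\,\mg(\xi)\|_\infty\le\delta_{0,g}$ (because $|t|\,|\mg(t\theta)|\le |t|\,\delta_{0,g}/(1+|t|)\le\delta_{0,g}$), so $\|T_{\mg^{\#}}\|_{2\to2}\le\delta_{0,g}$, which is exactly the asserted bound for $p=2$ since $\rho_2=1$; more precisely the convolution $C_{\mu_u}$ has $\|\widehat{\mu_u}\|_\infty\le 2\pi\,|u|\,\delta_{0,g}$. On the $L^1$ side, for a fixed unit vector $\theta$ the measure $\mu_\theta=\theta\ps\nabla\Kg$ satisfies $\|\mu_\theta\|_1\le V(\Kg)$ by the definition \eqref{V(K)}, hence $\|\mu_\theta * f\|_1\le V(\Kg)\,\|f\|_1$ and $\|\mu_u * f\|_1\le |u|\,V(\Kg)\,\|f\|_1$. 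Interpolating the convolution operator $C_{\mu_u}$ between these endpoints at the parameter $\vartheta=2-2/p$, so that $1/p=(1-\vartheta)\cdot 1+\vartheta\cdot\tfrac12$, yields $\|\mu_u * f\|_p\le |u|\,(2\pi)^{2-2/p}\,\delta_{0,g}^{2-2/p}\,V(\Kg)^{2/p-1}\|f\|_p$; the exponents $2-2/p$ and $2/p-1$ on $\delta_{0,g}$ and $V(\Kg)$, together with the prefactor $(2\pi)^{2-2/p}$, are exactly those in the statement once the normalizing $\tfrac1{2\pi}$ of the operator identity is absorbed ($\tfrac1{2\pi}(2\pi)^{2-2/p}=(2\pi)^{1-2/p}$). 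The remaining factor $\rho_p$ is where the Riesz/Hilbert machinery of Section~\ref{TransfoRiesz} is used: by \eqref{RieszBd}--\eqref{EstiRho} one has $\rho_p\le\sqrt{\pi/2}\,g_q\,\|H\|_{p\to p}$, and this is the dimension free constant controlling the $u$-average.

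The delicate point — and the main obstacle — is to carry out this last step \emph{without} letting a power of the dimension creep in. The naive routes all fail. If one dualizes the collective Riesz inequality \eqref{TransfosRiesz} to write $\|T_{\mg^{\#}}f\|_p\le\frac{\rho_p}{2\pi}\bigl\|(\sum_j|\mu_j * f|^2)^{1/2}\bigr\|_p$ and then interpolates $f\mapsto(\mu_j * f)_j$ between $L^2(\ell^2_n)$ and $L^1(\ell^2_n)$, the $L^1$ endpoint is $\|\nabla\Kg\|_{L^1(\ell^2)}$, which can exceed $V(\Kg)$ by a factor of order $\sqrt n$ (already for the Euclidean ball, where the ratio is $\sim\sqrt{\pi n/2}$); and if one bounds $\|H_u(\mu_u * f)\|_p\le\|H\|_{p\to p}\|\mu_u * f\|_p$ and applies Minkowski's integral inequality in $\int\cdots\d\gamma_n(u)$, one is left with $\int_{\R^n}|u|\,\d\gamma_n(u)\sim\sqrt n$. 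The correct argument must keep the directional Hilbert transforms \emph{inside} the $L^p$-norm and pair the factor linear in $u$ against a factor whose $L^p$-norm is independent of $u$, so that a H\"older estimate in the variable $u$ leaves only the one-dimensional Gaussian moment $g_q=\bigl(\int_\R|v|^q\,\d\gamma_1(v)\bigr)^{1/q}$, which is dimension free — precisely the mechanism of Duoandikoetxea--Rubio de Francia behind \eqref{TransfosRiesz}. Combining this dimension free control of the $u$-average with the $L^1/L^2$ interpolation of the convolution pieces at a fixed direction $\theta$ produces the stated bound.
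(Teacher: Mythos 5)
Your setup is right: the identity $2\pi\,\mg^{\#}(\xi)=\sum_j(-\ii\xi_j/|\xi|)(2\ii\pi\xi_j)\,\mg(\xi)$, hence $T_{\mg^{\#}}f=(2\pi)^{-1}\sum_j R_j(\mu_j*f)$ with $\mu_j=\partial_j\Kg$, is exactly how the proof begins, and your two endpoint observations ($\||\xi|\,\mg(\xi)\|_\infty\le\delta_{0,g}$ for $L^2$, $\|\theta\ps\nabla\Kg\|_1\le V(\Kg)$ for a fixed direction) are the right ingredients. You also correctly diagnose the obstruction: interpolating the vector-valued map $f\mapsto(\mu_j*f)_j$ between $L^2(\ell^2_n)$ and $L^1(\ell^2_n)$ fails, because the $L^1\to L^1(\ell^2)$ norm is $\|\,|\nabla\Kg|\,\|_{L^1}$, which can be of order $\sqrt n\cdot V(\Kg)$ (e.g.\ for the Euclidean ball). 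So far this mirrors the paper.

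The gap is that you stop there. The closing paragraph (``keep the directional Hilbert transforms inside the $L^p$-norm and pair the factor linear in $u$ against a factor whose $L^p$-norm is independent of $u$'') is a heuristic, not an argument: you never say what that pairing is, and the Gaussian-averaging route you sketch has exactly the $\int|u|\,\d\gamma_n(u)\sim\sqrt n$ problem you yourself flag in route~(b). The missing step is Lemma~\ref{VerySimple} combined with a change of endpoint. After dualizing as in Lemma~\ref{LquatreC}, one is reduced to bounding $U_{\Kg}:g\mapsto\nabla\Kg*g$ on $L^q(\R^n)$ with $q=p/(p-1)\ge 2$, and one interpolates $U_{\Kg}$ between $L^2$ and $L^\infty$ --- not between $L^2$ and $L^1$. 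At $q=\infty$ one has, for each $x$,
\[
     |(\nabla\Kg*g)(x)|
  =  \sup_{\theta\in S^{n-1}}\theta\ps(\nabla\Kg*g)(x)
 \le \sup_{\theta\in S^{n-1}}\|\theta\ps\nabla\Kg\|_1\ \|g\|_\infty
  =  V(\Kg)\,\|g\|_\infty,
\]
so $\|U_{\Kg}\|_{\infty\to\infty}\le V(\Kg)$ with no hidden $\sqrt n$: because $g$ is bounded, the supremum over directions $\theta$ moves \emph{inside} the convolution, and only the scalar mass $\|\theta\ps\nabla\Kg\|_1$ appears, never the vector $L^1(\ell^2)$-norm. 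This is precisely the feature that the $L^1$ endpoint lacks, and it is why your route~(a) collapses while the $(L^2,L^\infty)$ interpolation of Lemma~\ref{LemmeUK} works. Once that is in place, $\|U_{\Kg}\|_{q\to q}\le(2\pi\delta_{0,g})^{2/q}V(\Kg)^{1-2/q}$ follows, and feeding it into $\|T_{\mg^{\#}}\|_{p\to p}\le(2\pi)^{-1}\rho_p\,\|U_{\Kg}\|_{q\to q}$ gives the stated bound with $2/q=2-2/p$. So: right decomposition, right diagnosis of the difficulty, but the decisive step --- interpolating the adjoint convolution at the $L^\infty$ endpoint via Lemma~\ref{VerySimple} --- is not in your proposal.
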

\dumou

 The proof of Lemma~\ref{LquatreB} will be broken into several easy
statements. Some of them are used again in Section~\ref{LeCube}. To begin
with, we merely assume that $K$ is an integrable kernel on $\R^n$ having
partial derivatives $\partial_j \ms1 K$ in the sense of distributions that
are (signed) measures $\mu_j$, and we let $m = \widehat K$. We can express
$m^{\#}(\xi)$ with the help of the Riesz transforms $(R_j)_{j=1}^n$
introduced in Section~\ref{TransfoRiesz}, writing
\[
   2 \pi \ms2 m^{\#}(\xi) 
 = \sum_{j=1}^n \frac {- \ii \xi_j} {|\xi|} \ms2 
    (2 \ii \pi \xi_j) \ms1 m(\xi).
\]
The functions $(2 \ii \pi \xi_j) \ms1 m(\xi)$, $j = 1, \ldots, n$, are the
Fourier transforms of the measures $\mu_j = \partial_j \ms1 K$. When $K$ is
the uniform probability density $K_C$ on a symmetric convex set~$C$, the
$\mu_j \ms1$s are supported on the boundary of $C$, and we shall see below
that $V(K_C) = q(C)$ if $C$ is isotropic and normalized by variance.
\dumou

 The convolution operator $T_{m^{\#}}$ can thus be written under the form
\[
 T_{m^{\#}} : f \mapsto
 T_{m^{\#}} f = (2 \pi)^{-1} \sum_{j=1}^n R_j \mu_j * f.
\]
Riesz transforms commute with convolutions. If $g$ is in the dual $L^q$
of~$L^p$, we have
\begin{align*}
   2 \pi \ms1 |\sca {T_{m^{\#}} f} g |
 &= \Bigl| \sum_{j=1}^n \sca {R_j \mu_j * f} g \Bigr|
 = \Bigl| \sum_{j=1}^n \sca {(R_j f) * \mu_j} g \Bigr|
 \\
  &=  \Bigl| \sum_{j=1}^n \sca {R_j f} {\widetilde \mu_j * g} \Bigr|
 \le \int_{\R^n} \Bigl(\sum_{j=1}^n |R_j f|^2 \Bigr)^{1/2}
      \Bigl(\sum_{j=1}^n |\widetilde \mu_j * g|^2 \Bigr)^{1/2},
\end{align*}
where $\widetilde \mu_j$ denotes the image of $\mu_j$ under the symmetry
$x \mapsto - x$ of $\R^n$. By~\eqref{TransfosRiesz}, the Riesz transforms
are \og collectively bounded\fge in $L^p(\R^n)$ by a constant $\rho_p$
independent of the dimension~$n$, and we obtain therefore that
\[
     2 \pi \ms1 |\sca {T_{m^{\#}} f} g |
 \le \rho_p \ms1 \|f\|_p \ms2
      \Bigl\|
       \Bigl( \sum_{j=1}^n |\widetilde \mu_j * g|^2 \Bigr)^{1/2}
      \Bigr\|_q.
\]
Noticing that $\widetilde \mu_j * g
 = (\mu_j * \widetilde g) \ms5 \widetilde{} \ms6$ and
$\bigl( \sum_{j=1}^n |\mu_j * g|^2 \bigr)^{1/2}
 = |\nabla K * g|$, we are led to study the operator
\begin{equation}
 U_{K} : g \in L^q(\R^n) \mapsto  \nabla K * g
  \in L^q(\R^n, \R^n)
 \label{UK}
\end{equation}
given by the vector-valued convolution with $\nabla K$. Let us state what
we have got.

\begin{lem}%
\label{LquatreC}
Let $K$ be an integrable kernel on\/ $\R^n$, $m$ its Fourier transform and
let $m^{\#}$ be defined by\/~\eqref{MZero}. For every $p \in (1, 2]$ and
$q = p / (p - 1)$, one has
\[
     \|T_{m^{\#}}\|_{p \rightarrow p}
 \le (2 \pi)^{-1} \rho_p \sup_{\|g\|_q \le 1}
      \| \nabla K * g\|_{L^q(\R^n)}
  =  (2 \pi)^{-1} \rho_p \ms2 \| U_K \|_{q \rightarrow q}.
\]
\end{lem}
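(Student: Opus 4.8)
The plan is to make precise the computation sketched immediately before the statement, assembling it into a clean chain of inequalities. First I would reduce to the case where $f$ and $g$ are Schwartz functions: since $\|K\|_{L^1(\R^n)} < +\infty$ the function $m^{\#}(\xi) = |\xi|\,m(\xi)$ has at most polynomial growth, so $T_{m^{\#}}$ maps $\ca S(\R^n)$ to tempered distributions and $\langle T_{m^{\#}} f, g\rangle = \int_{\R^n} m^{\#}(\xi)\,\widehat f(\xi)\,\widehat g(-\xi)\,\d\xi$ is well defined for $f, g \in \ca S(\R^n)$; by the duality characterization of $L^p$-boundedness recalled after~\eqref{EasyBoundA}, it then suffices to bound $|\langle T_{m^{\#}} f, g\rangle|$ uniformly for $f$ in the unit ball of $L^p(\R^n)$ and $g$ in the unit ball of $L^q(\R^n)$, $1/p + 1/q = 1$. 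The algebraic heart is the identity
\[
   2\pi\, m^{\#}(\xi)
 = \sum_{j=1}^n \Bigl( \frac{- \ii \xi_j}{|\xi|} \Bigr)\, \bigl( 2 \ii \pi \xi_j \bigr)\, m(\xi),
\]
valid for $\xi \ne 0$, in which $-\ii\xi_j/|\xi|$ is the multiplier of the Riesz transform $R_j$ and $(2\ii\pi\xi_j)\,m(\xi)$ is the Fourier transform of the distributional derivative $\mu_j = \partial_j K$, a finite measure by the standing hypothesis. Hence $T_{m^{\#}} f = (2\pi)^{-1}\sum_{j=1}^n R_j(\mu_j * f)$.

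Next I would estimate the pairing. Using that each $R_j$ commutes with convolution, $R_j(\mu_j * f) = (R_j f) * \mu_j$, and that the bilinear adjoint of convolution by $\mu_j$ is convolution by the reflected measure $\widetilde\mu_j$ (image of $\mu_j$ under $x \mapsto -x$), one gets $2\pi\,|\langle T_{m^{\#}} f, g\rangle| = \bigl| \sum_j \langle R_j f,\, \widetilde\mu_j * g\rangle \bigr|$. Applying Cauchy--Schwarz in the index $j$ pointwise and then H\"older,
\[
     2\pi\, |\langle T_{m^{\#}} f, g\rangle|
 \le \int_{\R^n} \Bigl( \sum_{j=1}^n |R_j f|^2 \Bigr)^{1/2}
      \Bigl( \sum_{j=1}^n |\widetilde\mu_j * g|^2 \Bigr)^{1/2}
 \le \Bigl\| \bigl( \sum_j |R_j f|^2 \bigr)^{1/2} \Bigr\|_p
      \Bigl\| \bigl( \sum_j |\widetilde\mu_j * g|^2 \bigr)^{1/2} \Bigr\|_q.
\]
Now the dimension-free collective estimate~\eqref{TransfosRiesz} gives $\bigl\| ( \sum_j |R_j f|^2 )^{1/2} \bigr\|_p \le \rho_p\,\|f\|_p \le \rho_p$, while $\widetilde\mu_j * g = (\mu_j * \widetilde g)^{\widetilde{}}$ and the reflection $x \mapsto -x$ is an isometry of $L^q(\R^n)$, so
\[
   \Bigl\| \bigl( \sum_j |\widetilde\mu_j * g|^2 \bigr)^{1/2} \Bigr\|_q
 = \bigl\| \, |\nabla K * \widetilde g| \, \bigr\|_q
 = \| U_K \widetilde g \|_{L^q(\R^n, \R^n)}
 \le \| U_K \|_{q \rightarrow q},
\]
recalling $U_K$ from~\eqref{UK} and $\|\widetilde g\|_q = \|g\|_q \le 1$. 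Taking the supremum over $g$ and then over $f$ yields $\|T_{m^{\#}}\|_{p \rightarrow p} \le (2\pi)^{-1} \rho_p\,\|U_K\|_{q \rightarrow q}$, which is the assertion.

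There is no substantial obstacle: the argument rests only on the Riesz multiplier identity, the commutation of $R_j$ with convolution, the self-duality of convolution, Cauchy--Schwarz/H\"older, and above all the already-proved dimension-free inequality~\eqref{TransfosRiesz}. The only points needing a line of care are the passage to test functions so that $\mu_j * f$ and $\nabla K * g$ are genuine $L^q$-functions (which is exactly why one assumes each $\partial_j K$ is a finite measure; in the case $K = K_C$ the $\mu_j$ are supported on $\partial C$), and the harmless bookkeeping with the reflected measures $\widetilde\mu_j$ and the isometry $x \mapsto -x$ on $L^q(\R^n)$. If one wishes to sidestep distributions altogether, one can test only against $g \in \ca K(\R^n)$, using $\mu_j * g = \partial_j K * g = K * \partial_j g$, and run the identical computation.
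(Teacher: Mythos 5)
Your argument reproduces the paper's own proof step by step: the Riesz-transform factorization $2\pi\,m^{\#}(\xi) = \sum_j (-\ii\xi_j/|\xi|)(2\ii\pi\xi_j)m(\xi)$, the commutation $R_j(\mu_j * f) = (R_j f) * \mu_j$, the duality move to $\widetilde\mu_j * g$, pointwise Cauchy--Schwarz followed by H\"older, the dimension-free collective bound~\eqref{TransfosRiesz}, and finally the identification of the $\widetilde\mu_j$-square function with $|\nabla K * \widetilde g| = |U_K\widetilde g|$ via the reflection isometry. Correct, and identical in route to the paper; the opening paragraph on reducing to Schwartz functions is a harmless extra bit of bookkeeping the paper leaves implicit.
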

\dumou

 When $K = \Kg$ satisfies~\eqref{EstimaGene}, we shall estimate 
$\| U_{\Kg} \|_{q \rightarrow q}$ by interpolation between $L^2$
and~$L^\infty$. Contrary to the $L^2$~estimate which will make use
of~\eqref{EstimaGene}, the $L^\infty$~estimate is a straightforward
observation following from the definition of~$V(K)$. In the special case
$\Kg = K_C$ of a convex body~$C$, this $L^\infty$ case will bring~in the
geometrical parameter $q(C) = 2 \ms1 Q(C_0) L(C_0)$, equal to~$V(K_C)$.

\begin{lem}\label{VerySimple}
Let $K$ be an integrable kernel on\/ $\R^n$ having a finite directional
variation~$V(K)$, and let $U_K$ be defined by\/~\eqref{UK}. One has that
\begin{equation}
     \| U_K \|_{\infty \rightarrow \infty}
 \le V(K).
 \label{Vbound}
\end{equation}
\end{lem}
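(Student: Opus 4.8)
The plan is to exploit duality for the Euclidean norm on $\R^n$, writing $|v| = \sup_{\theta \in S^{n-1}} \theta \ps v$ for $v \in \R^n$, so that the vector-valued convolution $\nabla K * g$ is reduced to the scalar convolutions with the signed measures $\theta \ps \nabla K = \sum_{j=1}^n \theta_j \ms1 \mu_j$, whose masses are controlled by the very definition~\eqref{V(K)} of the directional variation $V(K)$.

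First I would fix $g \in L^\infty(\R^n)$ with $\|g\|_\infty \le 1$, and for every $\theta \in S^{n-1}$ set $\nu_\theta = \theta \ps \nabla K = \sum_{j=1}^n \theta_j \ms1 \mu_j$, which is a bounded signed measure on $\R^n$ with $\|\nu_\theta\|_1 \le V(K)$ by~\eqref{V(K)}. Testing against $L^1$ functions and applying Fubini's theorem (convolution of an $L^1$ function with a bounded measure is again in $L^1$), one sees that $\nu_\theta * g$ is a well defined element of $L^\infty(\R^n)$ with
\[
 \| \nu_\theta * g \|_\infty
 \le \|\nu_\theta\|_1 \ms2 \|g\|_\infty
 \le V(K).
\]
Since the coordinates of $\nabla K * g \in L^\infty(\R^n, \R^n)$ are the $\mu_j * g$ and $\sum_{j=1}^n \theta_j \ms1 (\mu_j * g) = \nu_\theta * g$, this says that for almost every $x$ one has $| \theta \ps (\nabla K * g)(x) | \le V(K)$, i.e.\ every scalar projection of the vector $(\nabla K * g)(x)$ onto a norm-one direction is bounded by $V(K)$.

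It then remains to pass to the supremum over $\theta \in S^{n-1}$; the only point deserving a word of care is that the exceptional null set above depends on $\theta$, so one cannot naively write $\sup_\theta$. This is handled just as for essential suprema elsewhere in these Notes: for each fixed $x$ the map $\theta \mapsto \theta \ps (\nabla K * g)(x)$ is linear, hence continuous on $\R^n$, so it is enough to take the supremum over a fixed countable dense subset $D$ of $S^{n-1}$. Discarding the countable union of the corresponding null sets, we obtain that for almost every $x$,
\[
 |(\nabla K * g)(x)|
  = \sup_{\theta \in D} \ms1 \theta \ps (\nabla K * g)(x)
 \le V(K),
\]
and taking the supremum over such $x$ and then over all $g$ with $\|g\|_\infty \le 1$ gives $\| U_K \|_{\infty \rightarrow \infty} \le V(K)$, which is~\eqref{Vbound}. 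There is essentially no obstacle in this argument: its whole content is the duality identity for the Euclidean norm together with the elementary estimate $\|\nu * g\|_\infty \le \|\nu\|_1 \|g\|_\infty$ for a bounded measure $\nu$.
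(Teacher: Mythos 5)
Your argument is correct and is essentially the same as the paper's: both reduce the Euclidean norm of $(\nabla K * g)(x)$ to its scalar projections via $|v| = \sup_{\theta \in S^{n-1}} \theta \ps v$, and then bound each $\nu_\theta * g = (\theta \ps \nabla K) * g$ by $\|\theta \ps \nabla K\|_1 \|g\|_\infty \le V(K) \|g\|_\infty$. The only difference is that you take the extra (and harmless) care of replacing the supremum over $S^{n-1}$ by one over a countable dense subset to avoid a $\theta$-dependent exceptional null set, a refinement the paper leaves implicit.
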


\begin{proof}
For each $x \in \R^n$, the Euclidean norm of the vector 
$(\nabla K * g)(x) \in \R^n$ is given by the supremum over 
$\theta \in S^{n-1}$ of
\begin{align*}
   \Bigl|
    \theta \ps \Bigl( \int_{\R^n} g(x - y) \, \d (\nabla K)(y) \Bigr)
   \Bigr|
 &= \Bigl| 
     \int_{\R^n} g(x - y) \, \d (\theta \ps \nabla K)(y)
    \Bigr|
 \\
 &\le \|g\|_\infty \ms3 \| \theta \ps \nabla K \|_1
  \le V(K) \ms2 \|g\|_\infty.
\end{align*}
\end{proof}

\begin{lem}\label{q=V}
For every symmetric convex body $C$, isotropic and normalized by variance,
one has that
$
 V(K_C) = q(C)
$.
\end{lem}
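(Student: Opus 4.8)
The plan is to compute $V(K_C)$ directly as a boundary surface integral, recognize that integral as twice a hyperplane‑projection volume of $C$, and then rescale to the volume‑one isotropic representative $C_0$. First I would write $K_C = |C|_n^{-1}\gr 1_C$ and use that, $C$ being a convex body, $\partial C$ is Lipschitz and carries an outward unit normal $\nu=\nu_C$ defined $\mathcal{H}^{n-1}$‑almost everywhere; the divergence theorem then gives, for $g\in\ca S(\R^n)$,
\[
 \langle \partial_j K_C,\, g \rangle
  = -\frac 1 {|C|_n}\int_C \partial_j g \,\d x
  = -\frac 1 {|C|_n}\int_{\partial C} g(x)\,\nu_j(x)\,\d\mathcal{H}^{n-1}(x),
\]
so the distributional derivative $\partial_j K_C$ is the signed measure of density $-|C|_n^{-1}\nu_j$ against $\mathcal{H}^{n-1}$ on $\partial C$. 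Hence, for $\theta\in S^{n-1}$,
\[
 \| \theta\ps\nabla K_C \|_1 = \frac 1 {|C|_n}\int_{\partial C}|\theta\ps\nu(x)|\,\d\mathcal{H}^{n-1}(x),
 \ms{18}
 V(K_C) = \frac 1 {|C|_n}\sup_{\theta\in S^{n-1}}\int_{\partial C}|\theta\ps\nu|\,\d\mathcal{H}^{n-1}.
\]

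The heart of the argument — and the step I expect to need the most care — is the elementary Cauchy‑type identity $\int_{\partial C}|\theta\ps\nu|\,\d\mathcal{H}^{n-1} = 2\ms1|P_\theta\ms1 C|_{n-1}$, where $P_\theta$ is the orthogonal projection onto $\theta^\perp$. I would prove it by decomposing $\partial C$ — up to an $\mathcal{H}^{n-1}$‑null set and the part $\{\theta\ps\nu = 0\}$, which contributes nothing to the integral — into the \emph{upper} piece $\partial^+C=\{\theta\ps\nu>0\}$ and the \emph{lower} piece $\partial^-C=\{\theta\ps\nu<0\}$. Writing $x = y + t\ms1\theta$ with $y\in\theta^\perp$, $t\in\R$: for $y$ in the relative interior of $P_\theta\ms1 C$ the chord $(y+\R\ms1\theta)\cap C$ is a nondegenerate segment, its top endpoint lying in $\partial^+C$ and its bottom one in $\partial^-C$, so $\partial^+C$ is the graph over $P_\theta\ms1 C$ of a concave function $\beta$ and $\partial^-C$ the graph of a convex function $\alpha$. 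On the graph of $\beta$ one has $\d\mathcal{H}^{n-1} = \sqrt{1+|\nabla\beta|^2}\,\d y$ and $\theta\ps\nu = (1+|\nabla\beta|^2)^{-1/2}$, whence $(\theta\ps\nu)\,\d\mathcal{H}^{n-1} = \d y$ and $\int_{\partial^+C}(\theta\ps\nu)\,\d\mathcal{H}^{n-1} = |P_\theta\ms1 C|_{n-1}$; the same computation for $\alpha$ gives $\int_{\partial^-C}|\theta\ps\nu|\,\d\mathcal{H}^{n-1} = |P_\theta\ms1 C|_{n-1}$. Adding the two contributions yields the identity, hence $V(K_C) = 2\ms1|C|_n^{-1}\sup_\theta|P_\theta\ms1 C|_{n-1}$.

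It then remains to pass to the volume‑one isotropic position. Since $C$ is isotropic and normalized by variance, \eqref{Isotro} and \eqref{NormaVari} give $C = r_0\ms1 C_0$ with $r_0 = L(C_0)^{-1}$, whence $|C|_n = r_0^n$ and $|P_\theta\ms1 C|_{n-1} = r_0^{n-1}|P_\theta\ms1 C_0|_{n-1}$ for every $\theta\in S^{n-1}$. By the very definition of $Q(C_0)$ one has $\sup_{\theta\in S^{n-1}}|P_\theta\ms1 C_0|_{n-1} = Q(C_0)$, so
\[
 V(K_C) = \frac{2\ms1 r_0^{n-1}}{r_0^{n}}\,Q(C_0) = \frac 2 {r_0}\,Q(C_0) = 2\ms1 L(C_0)\ms1 Q(C_0) = q(C)
\]
by~\eqref{qC}, as asserted. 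The only genuinely non‑routine ingredient is the projection identity of the second paragraph; the rest is bookkeeping with the normalizations already fixed in Section~\ref{TheSetting}, and in fact the formula $\|\theta\ps\nabla K_{C_1}\|_1 = 2\ms1|P_\theta\ms1 C_1|_{n-1}/|C_1|_n$ was already announced just before the statement of the lemma.
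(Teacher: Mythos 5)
Your proof is correct and establishes what the lemma actually claims, but it takes a different route from the paper's. The paper avoids the divergence theorem and surface measure entirely: fixing $\theta=\gr e_1$, it pairs the distributional derivative $\gr e_1 \ps \nabla \mu_C$ against a test function $\psi \in \ca S(\R^n)$, applies Fubini to reduce to one--dimensional integrals over lines $L_y = y + \R\ms1\gr e_1$, and uses the fundamental theorem of calculus on the chord $L_y\cap C = [y + s_1(y)\gr e_1,\ y + s_2(y)\gr e_1]$ to get $\int_{\pi_1 C}\bigl(\psi(s_2(y),y)-\psi(s_1(y),y)\bigr)\,\d y$. This gives immediately $\|\theta\ps\nabla\mu_C\|_1 \le 2|P_\theta C|_{n-1}/|C|_n \le q(C)$, i.e.\ only the inequality $V(K_C)\le q(C)$, and the paper explicitly remarks that this suffices for its purposes while referring to M\"uller's Lemma~3 for the full equality. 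Your argument, by contrast, expresses $\partial_j K_C$ via the divergence theorem as $-|C|_n^{-1}\nu_j\,\d\ca H^{n-1}$ on $\partial C$ and then proves the Cauchy projection identity $\int_{\partial C}|\theta\ps\nu|\,\d\ca H^{n-1}=2|P_\theta C|_{n-1}$ by decomposing $\partial C$ into the two graphs; this delivers the exact equality rather than a one-sided estimate. The cost is the extra regularity discussion ($\ca H^{n-1}$ on a Lipschitz boundary, a.e.\ normals, graph parametrization), all of which is standard for convex bodies; the benefit is that you actually prove the stated equality instead of outsourcing half of it to a citation. One could also close the gap from the paper's computation directly, by noting that the pairing is $|C|_n^{-1}$ times an integral of a bounded function that equals $\psi(s_2(y),y)-\psi(s_1(y),y)$, with $s_1(y)<s_2(y)$ for a.e.\ $y$ in the interior of $\pi_1 C$, and then choosing $\psi$ to saturate the bound $\le 2\|\psi\|_\infty|\pi_1 C|_{n-1}$; but your route through the Cauchy formula is cleaner for the equality.
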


\begin{proof}
Let $\theta$ belong to $S^{n-1}$ and let $y$ be in $\theta^\perp$. For each
line $y + \R \ms1 \theta$ that meets the set~$C$, the jumps of the density
$K_C = |C|^{-1} \gr 1_C$ of $\mu_C$, when traveling on the line in the
direction of increasing real numbers, are equal to $|C|^{-1}$ when we 
enter~$C$, and to $-|C|^{-1}$ when leaving~$C$, implying that the mass of
the directional derivative is equal to $2 / |C|$ times the measure of the
projection of~$C$ onto~$\theta^\perp$. More precisely, suppose without loss
of generality that~$\theta$ is the first basis vector $\gr e_1$ of $\R^n$
and let~$\pi_1$ be the orthogonal projection onto~$\gr e_1^\perp$. Let 
$\psi \in \ca S(\R^n)$ be given, and write each $x \in \R^n$ as 
$x = (s, y)$ with $s \in \R$ and $y \in \R^{n-1}$. Using Fubini, we get
\begin{align*}
    - \sca { \gr e_1 \ps \nabla \mu_C} \psi
  &= - \sca {\frac {\partial \mu_C} {\partial x_1}} \psi
  = \sca {\mu_C} {\frac {\partial \psi} {\partial x_1}}
 \\
  &= \int_{\R^{n-1}} \Bigl(
     \int_\R |C|^{-1} \gr 1_C (s, y)
       \frac {\partial \psi} {\partial x_1}(s, y) \, \d s \Bigr)
        \, \d y.
\end{align*}
The inside integral is $0$ if $L_y = y + \R \ms1 \gr e_1$ does not meet the
convex set $C$. Otherwise, the line $L_y$ cuts $C$ along a segment 
$[y + s_1(y) \ms1 \gr e_1, y + s_2(y) \ms1 \gr e_1]$,
$s_1(y) \le s_2(y)$, and
\[
     - \sca { \gr e_1 \ps \nabla \mu_C} \psi
  =  \frac 1 {|C|_n \ns8 } \ms7
       \int_{\pi_1 C} \bigl( \psi(s_2(y), y) - \psi(s_1(y), y) \bigr)
        \, \d y 
 \le \frac {2 \ms1 |\pi_1 C|_{n-1} \ns{20}} {|C|_n} \ms{20} 
      \|\psi\|_\infty.
\]
Going back to a general $\theta \in S^{n-1}$ and according to~\eqref{qC},
we conclude that
\[
     \| \theta \ps \nabla \mu_C \|_1
 \le \frac 2 {\ms1 |C|_n \ns{10}} \ms{10} | P_\theta C |_{n-1}
 \le 2 \ms1 Q(C_0) L(C_0)
  =  q(C).
\]
We get $V(K_C) \le q(C)$, which suffices for our purpose.
M\"uller~\cite[Lemma~3]{MullerQC} shows that this inequality is actually an
equality. 
\end{proof}

 When $\Kg = K_C$, we have 
$\|U_{K_C}\|_{\infty \rightarrow \infty} \le q(C)$, specifying the
estimate~\eqref{Vbound} obtained in the general case. We complete now the
interpolation for $U_{\Kg}$. We formulate the next Lemma so that
we can apply it again in Section~\ref{LeCube}.

\begin{lem}\label{LemmeUK} 
Let $K$ be an isotropic log-concave probability density on\/~$\R^n$ with
variance $\sigma^2$. For\/ $2 \le q \le +\infty$, one has that
\[
     \| U_K f \|_q
  =  \| \nabla K * f \|_q
 \le 2^{1 / q} \ms2 \sigma^{-2 / q} \ms2 V(K)^{ 1 - 2/q} \|f\|_q,
 \quad f \in L^q(\R^n).
\]
If $\Kg$ is an integrable kernel on\/~$\R^n$
satisfying\/~\eqref{EstimaGene}, then\/
\[
     \| U_{\Kg} \|_{q \rightarrow q}
 \le (2 \pi \delta_{0, g})^{2 / q} \ms2 V(\Kg)^{ 1 - 2/q}.
\]
\end{lem}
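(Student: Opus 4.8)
The plan is to prove Lemma~\ref{LemmeUK} by first establishing the $L^2$ bound for $U_K$, then interpolating with the $L^\infty$ bound from Lemma~\ref{VerySimple}, and finally deducing the second statement about $\Kg$ by the same $L^2$--$L^\infty$ interpolation together with the $L^2$ estimate derived from~\eqref{EstimaGene}.

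\textbf{Step 1: the $L^2$ estimate.} First I would compute $\|\nabla K * f\|_2$ by Parseval. For a log-concave density $K$ isotropic with variance $\sigma^2$, the vector-valued kernel $\nabla K$ has Fourier transform $\xi \mapsto 2 \ii \pi \ms1 \xi \ms1 m(\xi)$, where $m = \widehat K$. Hence
\[
     \| \nabla K * f \|_2^2
  =  \int_{\R^n} |2 \pi \ms1 \xi|^2 \ms1 |m(\xi)|^2 \ms1 |\widehat f(\xi)|^2 \, \d \xi
 \le \Bigl( \sup_{\xi \in \R^n} (2 \pi |\xi|)^2 |m(\xi)|^2 \Bigr) \ms2 \|f\|_2^2.
\]
By~\eqref{EstimaBour} applied in the normalized form (or directly by Proposition~\ref{EstimaFouriC}), one has $\pi \sqrt 2 \ms2 \sigma \ms1 |\xi| \ms1 |m(\xi)| \le 1$, so $(2 \pi |\xi|)^2 |m(\xi)|^2 \le 2 / \sigma^2$. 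This gives $\| U_K \|_{2 \rightarrow 2} \le \sqrt 2 / \sigma$, which is exactly the claimed bound at $q = 2$, namely $2^{1/2} \sigma^{-1} V(K)^{0}$.

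\textbf{Step 2: interpolation.} By Lemma~\ref{VerySimple} we have $\| U_K \|_{\infty \rightarrow \infty} \le V(K)$. The operator $U_K$ is linear, mapping scalar functions to $\R^n$-valued functions, so complex interpolation applies to the pair $(L^2(\R^n), L^\infty(\R^n))$ with target $(L^2(\R^n, \R^n), L^\infty(\R^n, \R^n))$. For $2 \le q \le +\infty$, write $1/q = (1-\theta)/2 + \theta/\infty$, i.e. $\theta = 1 - 2/q$. Then
\[
     \| U_K \|_{q \rightarrow q}
 \le \| U_K \|_{2 \rightarrow 2}^{1 - \theta} \ms2 \| U_K \|_{\infty \rightarrow \infty}^{\theta}
 \le \bigl( \sqrt 2 / \sigma \bigr)^{2/q} \ms2 V(K)^{1 - 2/q}
  =  2^{1/q} \ms2 \sigma^{-2/q} \ms2 V(K)^{1 - 2/q},
\]
which is the first assertion. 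For the second assertion, when $\Kg$ satisfies~\eqref{EstimaGene} with constant $\delta_{0, g}$, the middle step of the $L^2$ computation uses instead $|u| \ms2 |\mg(u \theta)| \le \delta_{0, g}$, giving $(2 \pi |\xi|)^2 |\mg(\xi)|^2 \le (2 \pi \delta_{0, g})^2$, hence $\| U_{\Kg} \|_{2 \rightarrow 2} \le 2 \pi \ms1 \delta_{0, g}$. Since $\Kg$ is an integrable kernel satisfying~\eqref{EstimaGene}, Lemma~\ref{VerySimple} still yields $\| U_{\Kg} \|_{\infty \rightarrow \infty} \le V(\Kg)$, and interpolating as above with $\theta = 1 - 2/q$ gives $\| U_{\Kg} \|_{q \rightarrow q} \le (2 \pi \delta_{0, g})^{2/q} \ms2 V(\Kg)^{1 - 2/q}$, as claimed.

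\textbf{Main obstacle.} There is no serious obstacle: the two endpoint bounds are each a one-line argument (Parseval plus~\eqref{EstimaBour} at $q = 2$, the definition of $V$ at $q = \infty$), and the only point requiring a word of care is that complex interpolation of linear operators is being used with an $\R^n$-valued target; this is standard (one interpolates coordinatewise, or invokes the vector-valued version of the Riesz--Thorin theorem as in the interpolation scheme of Section~\ref{InterpoSchem}), so the write-up mainly needs to record the correct exponents and make sure the two separate claims (for $K$ log-concave and for $\Kg$ satisfying~\eqref{EstimaGene}) both come out with the constants stated. One should also note that for $q = +\infty$ the ``interpolation'' degenerates to Step~2's endpoint itself, so the formula $2^{1/q}\sigma^{-2/q}V(K)^{1-2/q}$ reads as $V(K)$, consistent with Lemma~\ref{VerySimple}.
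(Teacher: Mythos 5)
Your proof is correct and follows the same approach as the paper: Parseval plus~\eqref{EstimaBour} (resp.~\eqref{EstimaGene}) for the $L^2$ endpoint, Lemma~\ref{VerySimple} for the $L^\infty$ endpoint, and Riesz--Thorin interpolation with $\theta = 1 - 2/q$. The constants and exponents match those the paper obtains.
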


\begin{proof}
Let $m = \widehat K$ and consider first $q = 2$. By
Parseval~\eqref{EasyBound} we have
\[
   \| \nabla K * f \|_2^2
 = \Bigl\|
    \Bigl(\sum_{j=1}^n |\partial_j K * f|^2 \Bigr)^{1/2} 
   \Bigr\|_2^2
 = \int_{\R^n} 
    \sum_{j=1}^n 4 \ms1 \pi^2 \ms1 \xi_j^2 \ms1
       |m (\xi)|^2 \ms1  |\widehat f (\xi)|^2 \, \d \xi,
\]
and
$
     \sum_{j=1}^n  4 \ms1 \pi^2 \ms1 \xi_j^2 \ms1
       |m (\xi)|^2 
  =  4 \ms1 \pi^2 \ms1 |\xi|^2 \ms1 |m (\xi)|^2
 \le 2 \ms1 \sigma^{-2}
$
by~\eqref{EstimaBour} (or by~\eqref{EstimaGene}, it is 
$\le 4 \pi^2 \delta_{0, g}^2$), hence
$
     \| U_K \|_{2 \rightarrow 2}
 \le \sqrt 2 \ms2 \sigma^{-1}
$ (or $\le 2 \pi \delta_{0, g}$). If $q \in (2, +\infty)$, we write 
$1 / q = (1 - \theta) / 2 + \theta / \infty$ with $\theta = 1 - 2/q$. We
get that
$
     \| U_K \|_{q \rightarrow q}
 \le (\sqrt 2 \ms2 \sigma^{-1})^{2 / q} \ms2 V(K)^{ 1 - 2/q}
$
(or we get $
 \le (2 \pi \delta_{0, g})^{2 / q} \ms2 V(K)^{ 1 - 2/q}
$)
by Lemma~\ref{VerySimple} and interpolation $(L^2, L^\infty)$.
\end{proof}

\begin{proof}[End of the proof of Lemma~\ref{LquatreB}]
We use Lemma~\ref{LquatreC}, then apply Lemma~\ref{LemmeUK} to~$\Kg$ with 
$1 / q = 1 - 1/p$ and obtain that
\[
     \|T_{\mg^{\#}}\|_{p \rightarrow p}
 \le (2 \pi)^{-1} \rho_p \ms1 \|U_{\Kg}\|_{q \rightarrow q}
 \le (2 \pi)^{1 - 2/p} \ms1 \rho_p \ms2 \delta_{0, g}^{2 - 2 / p} \ms1 
      V(\Kg)^{ - 1 + 2/p}.
\]
\end{proof}

\subsubsection{Conclusion%
\label{ConclusionMull}}

\noindent
We finish the proof of Proposition~\ref{PropoMull}. We first run over half
of the way in the following lemma, which we shall refer to again in
Section~\ref{LeCube}.

\begin{lem}%
\label{Prop2.0}
Let $\Kg$ be an integrable kernel on\/ $\R^n$
satisfying\/~\eqref{EstimaGeneN}, and let $\mg^{\#}$ be defined
by\/~\eqref{MZero}. Let $\alpha \in (0, 1)$ and suppose that\/ 
$1 < p_0 < p < 2$. There exists a constant $\kappa(p, p_0)$, independent of
$n$, such that
\[
     \| (\xi \ps \nabla)^\alpha \mg (\xi) \|_{p \rightarrow p}
 \le \|\Kg\|_{L^1(\R^n)}
      \ns3+\ns2 \frac {\kappa(p, p_0) } { 1 - \alpha } \ms1 
         \bigl( \|\Kg\|_{L^1(\R^n)}
           \ns2+\ns1 \| \mg^{\#} \|_{p_0 \rightarrow p_0} 
         \bigr)^{1 - \theta} 
          \Delta_{k(\theta)}^\theta,
\]
where $\theta \in (0, 1)$ is defined by\/
$
   1 / p
 = (1 - \theta) / p_0 + \theta / 2
$ and $k(\theta) = \lceil 1 / \theta \rceil$.
\end{lem}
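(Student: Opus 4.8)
The plan is to follow M\"uller's strategy of Sections~\ref{StrateMull} and~\ref{LesMajos}. Set $\varepsilon = 1 - \alpha \in (0, 1)$. By Lemma~\ref{CPareil}, the difference $(\xi \ps \nabla)^\alpha \mg(\xi) - d^\alpha_t \mg(t\xi)\barre_{t=1}$ is a multiplier on $L^p(\R^n)$ with norm at most $\|\Kg\|_{L^1(\R^n)}$, so by the triangle inequality it suffices to bound the action on $L^p(\R^n)$ of $d^\alpha_t \mg(t\xi)\barre_{t=1}$. Since $1 - \varepsilon - \alpha = 0$, this is exactly the multiplier $m^\varepsilon_\alpha(\xi)$ from the holomorphic family $(m^\varepsilon_z)$ of~\eqref{mzFunc}. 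Let $\theta \in (0,1)$ be the interpolation parameter for $L^p$ relative to the pair $(L^{p_0}, L^2)$, i.e.\ $1/p = (1-\theta)/p_0 + \theta/2$ (this $\theta$ lies in $(0,1)$ because $p_0 < p < 2$), and set $\nu = 1/\theta - \varepsilon$. One checks that $\nu \ge 1 - \varepsilon = \alpha$, that $\alpha = (1-\theta)(-\varepsilon) + \theta\nu$, that the strip $S = \{ -\varepsilon \le \Re z \le \nu\}$ has width $w = \nu + \varepsilon = 1/\theta$, and that $\lceil \nu + \varepsilon\rceil = \lceil 1/\theta\rceil = k(\theta)$. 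I will interpolate the operators $T_z$ associated to $m^\varepsilon_z$ between the line $\Re z = -\varepsilon$ (on $L^{p_0}$) and the line $\Re z = \nu$ (on $L^2$).

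On the line $\Re z = -\varepsilon$, Lemma~\ref{LquatreA} applied with the exponent $p_0$ gives
\[
     \|T_{-\varepsilon + \ii\tau}\|_{p_0 \rightarrow p_0}
 \le 2\ms1 \varepsilon^{-1} \lambda_{p_0} \ms1 \e^{\pi|\tau|}
      \bigl( \|\Kg\|_{L^1(\R^n)} + \|\mg^{\#}\|_{p_0 \rightarrow p_0}\bigr),
 \quad \tau \in \R,
\]
with $\lambda_{p_0}$ from Proposition~\ref{LaplaceMultip}. On the line $\Re z = \nu$, Lemma~\ref{MajoMullA} with this $\nu$ (so $\ell = \lceil\nu+\varepsilon\rceil = k(\theta)$) bounds $|m^\varepsilon_{\nu + \ii\tau}(\xi)|$, uniformly in $\xi \in \R^n$, by $\kappa_\nu\ms1 \varepsilon^{-1}\Delta_{k(\theta)} (1 + \tau^2)^{\nu/2 - 1/4}\e^{\pi|\tau|/2}$; after replacing the polynomial exponent by its nonnegative part $c_1 = (\nu/2-1/4)^{+}$ and invoking the Parseval bound~\eqref{EasyBound}, this is a bound for $\|T_{\nu + \ii\tau}\|_{2 \rightarrow 2}$. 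Here $\kappa_\nu = 4\ms1\Gamma(\max(\nu, 2))(3/2)^{\nu - 1/2}\e^{\pi/4}$, and since $\nu \in (1/\theta - 1, 1/\theta)$ this is bounded by a quantity depending only on $\theta$.

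Next I apply the complex interpolation scheme of Section~\ref{Ihf} to the holomorphic function $H(z) = \sca{T_z f_z}{g_z}$ of~\eqref{UsualH}, using Corollary~\ref{InStrip} and Remark~\ref{PolyFacto} (estimate~\eqref{PolyBd}) to absorb the exponential factors $\e^{\pi|\tau|}$, $\e^{\pi|\tau|/2}$ and the polynomial factor $(1+\tau^2)^{c_1}$ on the two boundary lines. The required admissible growth of $H_{f,g}$ in $S$ is obtained as in Sections~\ref{Model} and~\ref{InterpoCarbe}: by Lemma~\ref{MajoMullA} every $m^\varepsilon_z(\xi)$, $z \in S$, is a bounded function of $\xi$, so all $T_z$, $z \in S$, are uniformly bounded on $L^2(\R^n)$, and restricting to $f, g$ bounded with bounded support keeps $H$ bounded in the strip. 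This yields
\[
     \|T_\alpha\|_{p \rightarrow p}
 \le \Lambda(\theta) \ms2
      \bigl( 2\ms1 \varepsilon^{-1} \lambda_{p_0} \bigr)^{1-\theta}
       \bigl( \kappa_\nu \ms1 \varepsilon^{-1} \Delta_{k(\theta)}\bigr)^{\theta}
        \bigl( \|\Kg\|_{L^1(\R^n)} + \|\mg^{\#}\|_{p_0 \rightarrow p_0}\bigr)^{1-\theta},
\]
where $\Lambda(\theta)$ collects the bounding factor $E_{w,\theta}(\pi, \pi/2)$ of Corollary~\ref{InStrip} and the polynomial factor $(1 + w/2)^{2 c_\theta}$ of~\eqref{PolyBd} with $c_\theta = \theta\ms1 c_1$; since $w = 1/\theta$ and $0 \le c_\theta < 1/2$, $\Lambda(\theta)$ is finite and depends only on $\theta$. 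Pulling out $\varepsilon^{-(1-\theta)}\varepsilon^{-\theta} = \varepsilon^{-1} = (1-\alpha)^{-1}$, absorbing $\Lambda(\theta)$, $\lambda_{p_0}^{1-\theta}$ and $\kappa_\nu^{\theta}$ into a single constant $\kappa(p, p_0)$ independent of $n$ and of $\alpha$, and adding back the $\|\Kg\|_{L^1(\R^n)}$ coming from Lemma~\ref{CPareil}, gives the stated inequality with $\Delta_{k(\theta)}^{\theta}$ as the only occurrence of $\Delta$.

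The bulk of the analytic work is already carried out in Lemmas~\ref{MajoMullA} and~\ref{LquatreA}; what remains here is bookkeeping: checking that the choice $\nu = 1/\theta - \varepsilon$ makes the interpolation land exactly at $z = \alpha$, that $\lceil \nu + \varepsilon\rceil = \lceil 1/\theta\rceil = k(\theta)$, that the strip width $w = 1/\theta$ and the constants $\kappa_\nu$, $E_{w,\theta}$, $c_\theta$ all stay finite and $\alpha$-free, so that the only surviving dependence on $\alpha$ is the factor $(1-\alpha)^{-1}$. The one genuinely delicate point is the justification of the admissible growth of $H$ in $S$, which — exactly as in Section~\ref{InterpoCarbe} — rests on the (non-uniform) pointwise boundedness of the symbols $m^\varepsilon_z$ on the whole closed strip furnished by Lemma~\ref{MajoMullA}.
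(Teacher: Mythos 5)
Your proof is correct and follows essentially the same route as the paper's own argument: reduce via Lemma~\ref{CPareil} to $d^\alpha_t\mg(t\xi)\barre_{t=1} = m^\varepsilon_\alpha$, interpolate the M\"uller family between the $L^{p_0}$ bound from Lemma~\ref{LquatreA} on $\Re z = -\varepsilon$ and the Parseval/$L^2$ bound from Lemma~\ref{MajoMullA} on $\Re z = \nu = 1/\theta - \varepsilon$, using Corollary~\ref{InStrip} together with Remark~\ref{PolyFacto}; the bookkeeping ($w = 1/\theta$, $\ell = \lceil 1/\theta\rceil$, pulling out $\varepsilon^{-1}$) is identical. The one refinement you add — replacing the polynomial exponent $\nu/2 - 1/4$ by its positive part before invoking Remark~\ref{PolyFacto}, since that remark requires $c_j \ge 0$ and $\nu$ can a priori be below $1/2$ when $\alpha$ is small — is a legitimate technical point the paper glosses over.
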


\begin{proof}
Lemma~\ref{CPareil} gives
\[
     \| (\xi \ps \nabla)^\alpha \mg (\xi) \|_{p \rightarrow p}
 \le \|\Kg\|_{L^1(\R^n)} + 
      \| d_t^\alpha \mg (t \xi) \barre_{t = 1} \|_{p \rightarrow p}.
\]
Let $\varepsilon = 1 - \alpha > 0$. We apply complex interpolation to the
M\"uller family $(m^\varepsilon_z)$ in the strip
$S = \{ z \in \C : -\varepsilon \le z \le \nu \}$ of width
$w := \nu + \varepsilon$. We bound
$m^\varepsilon_\alpha(\xi) = d^\alpha_t \mg(t \xi) \barre_{t = 1}$ on
$L^p(\R^n)$, using $L^{p_0}$~estimates of~$m^{\varepsilon}_z$ for 
$\Re z = - \varepsilon$ and $L^2$ estimates when $\Re z = \nu$. The
value~$\nu$ must satisfy
$
 \alpha = (1 - \theta)(-\varepsilon) + \theta \ms1 \nu
$,
hence $\nu = 1 / \theta - \varepsilon > 1 - \varepsilon$ and 
$w = 1 / \theta$. It follows from Lemma~\ref{LquatreA} that
\[
     \|m^\varepsilon_{-\varepsilon + \ii \tau} \|_{p_0 \rightarrow p_0}
 \le 2 \ms2 \varepsilon^{-1} \ms1 \lambda_{p_0} \ms2
      \e^{\pi |\tau| } \ms2 
       \bigl( \|\Kg\|_{L^1(\R^n)}
              + \| \mg^{\#} \|_{p_0 \rightarrow p_0} 
       \bigr).
\]
By Lemma~\ref{MajoMullA}, each operator $m^\varepsilon_{\nu + \ii \tau}$ is
bounded by $\kappa_\nu \ms1 \varepsilon^{-1} \Delta_\ell \ms1
 (1 + \tau^2)^{\nu/2 - 1/4} \e^{\pi |\tau| / 2}$ on $L^2(\R^n)$, with
$\kappa_\nu \le \kappa_w
 =  4 \ms1 \Gamma(\max(w, 2)) (3/2)^{w - 1/2} \e^{\pi / 4}
 =: c_\theta$, a function of $\theta$ alone, and
$\ell = \lceil \nu + \varepsilon \rceil = \lceil w \rceil$. If we check the
admissible growth condition in~$S$, we can rely on
Corollary~\ref{InStrip}, Remark~\ref{PolyFacto} and~\eqref{PolyBd} in order
to get a bound
\[
     \| m^\varepsilon_\alpha(\xi) \|_{p \rightarrow p}
 \le \kappa(p, p_0) \ms1 \varepsilon^{-1}
       (\|\Kg\|_{L^1(\R^n)}
         +  \| \mg^{\#} \|_{p_0 \rightarrow p_0} )^{1 - \theta} 
        \Delta_{k(\theta)}^\theta,
\]
with $k(\theta) = \ell = \lceil 1 / \theta \rceil$ and
$\kappa(p, p_0)
 \le (1 + w / 2)^{\theta(\nu - 1/2)} \e^{\pi w / 2} \ms1
     (2 \lambda_{p_0})^{1 - \theta} c_\theta^\theta$. Observing
that $\theta \nu < 1$, $w > 1$ and $\lambda_{p_0} \ge 1$, we may simplify 
this bound as
\begin{equation}
     \kappa(p, p_0)
 \le \kappa \ms2 w \ms1 \e^{\pi w / 2} \ms1
      \lambda_{p_0} \ms1 \Gamma( \max(w, 2) )^{1/w} 
 \le \kappa \ms2 w^2 \ms1 \e^{\pi w / 2} \ms1 \lambda_{p_0}.
 \label{KappaBound}
\end{equation} 
We now verify that the holomorphic function 
$H(z) = \sca {m^{\varepsilon}_z f_z} {g_z}$ has an admissible growth
in~$S$. Since the kernels are bounded functions of $\xi$ by
Lemma~\ref{MajoMullA}, all multipliers $m^\varepsilon_z$, $z \in S$, are
$L^2$-bounded with a bound of the form $\kappa \e^{\kappa |\Im z|}$. If we
restrict to functions $f$ and~$g$ bounded with bounded support, we have
by~\eqref{FzGzBound} that $f_z$, $g_z$ are uniformly bounded
in~$L^2(\R^n)$, and we can conclude as in Section~\ref{InterpoCarbe}.
\end{proof}

\begin{proof}[End of the proof of Proposition~\ref{PropoMull}]
Given $p \in (1, 2)$ and $\alpha = 1 - \varepsilon \in (0, 1)$, we select 
$p_0 \in (1, p)$ and let $\theta \in (0, 1)$ satisfy
$
   1 / p
 = (1 - \theta) / p_0 + \theta / 2
$.
Since $1 < p_0 < p < 2$, we have that $0 < \theta < 2(1 - 1/p) < 1$.
It follows from Lemma~\ref{LquatreB} that
\[
     \| \mg^{\#} \|_{p_0 \rightarrow p_0}
 \le (2 \pi)^{1 - 2 / p_0} 
      \rho_{p_0} \ms2 \delta_{0, g}^{2 - 2 / p_0} \ms1
        V(\Kg)^{ 2/p_0 - 1}.
\]
By Lemma~\ref{Prop2.0}, and because $\|\Kg\|_{L^1(\R^n)} \le 1$,
$\rho_{p_0} \ge 1$ (see Remark~\ref{RemIwaMar}), we get
\[
     \| (\xi \ps \nabla)^\alpha \mg (\xi) \|_{p \rightarrow p}
 \le 1 + \kappa(p, p_0) \ms1 \varepsilon^{-1} \ms1 \rho_{p_0} \ms1
       \Delta_{k(\theta)}^\theta \ms1    
        \bigl( 
         1 + \delta_{0, g}^{1 - \theta - (2/p - 1)} \ms1 V(\Kg)^{2 / p - 1}
        \bigr).
\]

 We still have a choice of $\theta \in (0, 2 - 2/p)$. If $\theta$ gets
small, then the power of $\Delta_{k(\theta)}$ gets small, but the number
$k(\theta)$ of constants $\delta_{j, g}$ involved increases to infinity. In
the log-concave case, the estimate~\eqref{deltas} indicates a growth of
order $\Delta_{k, c} \sim k!$ yielding 
$\Delta_{k(\theta)}^\theta \sim 1 / \theta$. Furthermore, the width 
$w = 1 / \theta$ of the strip and the associated interpolation constants
also tend to $+\infty$ in this case, and we should thus keep $\theta$
away from $0$, as much as possible. If $\theta$ approaches its upper limit
$2 (1 - 1/p)$, then $p_0$ tends to~$1$ and the constants such as
$\lambda_{p_0}$, $\rho_{p_0}$ tend to infinity. Choosing 
$\theta = (4 / 3) (1 - 1 / p)$ has the merit to provide the relatively
simple bound
\begin{equation}
     \| (\xi \ps \nabla)^\alpha \mg (\xi) \|_{p \rightarrow p}
 \le 1 + \kappa(\alpha, p) \Delta_{k(p)}^{ (4 / 3)(1 - 1 / p) } \ms1      
       \bigl(
        1 + \delta_{0, g}^{ (2 /3)(1 - 1 / p) } \ms1 V(\Kg)^{2 / p - 1}
       \bigr),
 \label{KAlphaP}
\end{equation}
with 
$
     \kappa(\alpha, p)
  =  \kappa(p, p_0) \ms1 \varepsilon^{-1} \rho_{p_0}
 \le \kappa \ms2 \varepsilon^{-1} w^2 \ms1 \e^{\pi w / 2} \ms1  
      \lambda_{p_0} \ms1 \rho_{p_0}
$ by~\eqref{KappaBound},
with $p_0 - 1 = (p - 1) / (5 - 2 p)$ and
$k(p) = \lceil 1 / \theta \rceil = \lceil 3 \ms1 p / (4 p - 4) \rceil$. 
\end{proof}

\begin{remN}\label{ButPoly}
It is usual to have a factor $1 / \Gamma(z)$ in fractional derivatives,
which led us to seeing $\e^{\pi |\tau| / 2}$ in many places, ending with
$\e^{\pi w/2}$ in our estimate~\eqref{KappaBound} of $\kappa(p, p_0)$, with
$w = 1 / \theta \sim q := p / (p-1)$ after the final choice of
$\theta = 4 / (3 q)$ above. We could avoid this exponential though.
Consider the modified M\"uller family
\[
   \widetilde m^\varepsilon_z (\xi)
 = \Gamma(1 + \varepsilon + z) \ms2
    \frac { \Gamma(2 \ms1 \varepsilon + z) }
          { \Gamma(1 + \varepsilon) }
     \ms2 m^\varepsilon_z (\xi),
 \quad
 \Re z \ge - \varepsilon, \ms8
 \xi \in \R^n,
\]
which coincides with the former at $z = \alpha$ since
$\varepsilon + \alpha = 1$ and $\Gamma(2) = 1$. For the $L^{p_0}$~bound
when $z = -\varepsilon + \ii \tau$, we decompose
$\widetilde m_{- \varepsilon + \ii \tau}^\varepsilon (\xi)$ as
\[
   \frac 1 { \Gamma(1 + \varepsilon) } \ms3
   \bigl[ \Gamma(1 + \ii \tau) (1 + |\xi|)^{ - \ii \tau} \bigr]
   \Bigl[
     \frac { \Gamma(\varepsilon + \ii \tau) }
           { \Gamma(\varepsilon - \ii \tau) } \ms2
   (1 + |\xi|) \ms2
     \int_1^2 (s - 1)^{\varepsilon - \ii \tau - 1} \mg(s \xi) \, \d s
   \Bigr].
\]
Introducing $\Gamma(1 + \ii \tau)$ in the Laplace type 
multiplier~\eqref{LaplacEstim}, we obtain a new function
$\widetilde a_\tau(t)$ bounded by~$1$, and $\Gamma(2 \ms1 \varepsilon + z)$
is used for the bound of 
$d^{-\varepsilon + \ii \tau}_t \mg(t \xi) \barre_{t = 1}$ because
$|\Gamma(\varepsilon + \ii \tau)| = |\Gamma(\varepsilon - \ii \tau)|$.
We get in this way for $\widetilde m^\varepsilon_z (\xi)$ a bound
\[
     \| \widetilde m^\varepsilon_z (\xi) \|_{p_0 \rightarrow p_0}
 \le 2 \ms{1.5} \varepsilon^{-1} \lambda_{p_0} \ms1 
      (\|\Kg\|_{L^1(\R^n)} + \| m^{\#} \|_{p_0 \rightarrow p_0})
\]
that replaces Lemma~\ref{LquatreA} (we use again
$1 / \Gamma(1 + \varepsilon) \le 2$). The $L^\infty$~bounds obtained
in~\eqref{MajoK} when $z = k - \varepsilon + \ii \tau$, $k > 0$, have now a
largest factor of $\Delta_k \ms1 \varepsilon^{-1}$ equal to
$
     \Gamma(k + 1 + \ii \tau) \ms2 \Gamma(k + \varepsilon + \ii \tau)
      / [ \Gamma(1 + \varepsilon) \Gamma(-k + 1 + \varepsilon - \ii \tau) ]
$ (when the index $j_1$ in~\eqref{MajoK} is equal to $k-1$). The modulus
of this factor is the same as that of
\[
   \frac { \Gamma(k + 1 + \ii \tau) \ms2 \Gamma(k + \varepsilon + \ii \tau) }
         { \Gamma(1 + \varepsilon) \Gamma(-k + 1 + \varepsilon + \ii \tau) }
 = \frac { \Gamma(1 + \ii \tau) } { \Gamma(1 + \varepsilon) } \ms1
    \Bigl( \prod_{j=1}^k (j + \ii \tau) \Bigr)
     \Bigl( \prod_{j=-k+1}^{k-1} (j + \varepsilon + \ii \tau) \Bigr).
\]
This is a bounded function of $\tau$ according to~\eqref{RefFor}, with a
rough bound given by
$2 \sqrt {2 \pi} \ms2  (k + |\tau|)^{3k} \e^{- \pi |\tau| / 2}
 \le 6 \ms1.\ms1 2^k k^{3k}$ (use $x / \sinh x \le (1 + 2 x) \e^{-x}$ for
$x \ge 0$). One need not be too careful here since this
term will be raised to the power $\theta = 1 / w \simleq 1 / k$. We use it
as in~\eqref{HdeXideZ} for two values $k$, $k+1$ such that 
$k \le \nu_0 + \varepsilon \le k + 1 \le \ell
 = \lceil w \rceil < w + 1$. One has then for
the $L^2$ bound of $\widetilde m_{\nu + \ii \tau}^\varepsilon (\xi)$ an
estimate by $\kappa \ms1 2^w w^{3(w+1)} \Delta_\ell \ms2 \varepsilon^{-1}$.
By interpolation we have
\[
     \| m^\varepsilon_\alpha(\xi) \|_{p \rightarrow p}
 \le \kappa \ms1 \varepsilon^{-1}
     \lambda_{p_0}^{1 - \theta} (2^w w^{3(w+1)})^\theta
       (\|\Kg\|_{L^1(\R^n)}
         +  \| \mg^{\#} \|_{p_0 \rightarrow p_0} )^{1 - \theta} 
        \Delta_\ell^\theta.
\]
We thus get for $\kappa(p, p_0)$ in~\eqref{KappaBound} a new estimate
$\kappa'(p, p_0) \le \kappa w^3 \lambda_{p_0}$, leading in~\eqref{KAlphaP}
to $\kappa'(\alpha, p)
 \le \kappa \ms1 \varepsilon^{-1} q^3 \lambda_{p_0} \rho_{p_0}$.
The final choice in the proof of Proposition~\ref{PropoMull} gives
$p_0 - 1 = (p - 1) / (5 - 2p)$ of order $p - 1$ as $p \to 1$, and since
$\lambda_{p_0}$, $\rho_{p_0}$ are $O \bigl( (p_0 - 1)^{-1} \bigr)$ as 
$p_0 \to 1$ (see~\eqref{ConstaLambda} and~\eqref{EstiRho}), we end up with
$\kappa'(\alpha, p)
 \le \kappa \ms1 \varepsilon^{-1} q^5$, a bound which is polynomial but has
no reason to be accurate. After these modifications, we have for 
Proposition~\ref{PropoMull} when $1 < p \le 2$ a new form
\begin{equation}
     \| (\xi \ps \nabla)^\alpha \mg (\xi) \|_{p \rightarrow p}
 \le 1 + \kappa \ms1 \varepsilon^{-1} q^5 \ms1 \Delta_{k(p)}^{ (4/3)(1 - 1/p )} 
      \bigl(
       1 + \delta_{0, g}^{ (2/3)(1 - 1/p) } V(\Kg)^{2 / p - 1}
      \bigr),
 \label{NewForm}
\end{equation}
with $k(p) = \lceil 3 \ms1 p / (4 p - 4) \rceil$ and $q = p / (p-1)$.
\end{remN}
\dumou 
 
\begin{remN}\label{ButPolyB}
With the new information above, we go back to the proof of
Theorem~\ref{TheoMull}. One has chosen there
$\varepsilon = 1 - \alpha = 3 (p - 1) / (4 p + 4)$, and $p_0 \in (1, p)$
such that $p_0 - 1 = 3 (p - 1) / (5 - p)$. Both $\varepsilon$ and $p_0 - 1$
behave as multiples of $p - 1$ when $p \to 1$. If we consider the Poisson
kernel $P$ as another $\Kg$ satisfying~\eqref{EstimaGeneN}, and with 
$V(P) \le 2 / \pi$ by~\eqref{VPoiss}, we can apply to it~\eqref{NewForm}
for the value $p_0$ and obtain that
$\| (\xi \ps \nabla)^\alpha \widehat P (\xi) \|_{p_0 \rightarrow p_0}
 \le \kappa \ms1 \varepsilon^{-1} q^5$. Applying also~\eqref{NewForm} to
$\mg$ and $p_0$, we get for $m = \mg - \widehat P$ that
\[
     \| (\xi \ps \nabla)^\alpha m (\xi) \|_{p_0 \rightarrow p_0}
 \le \kappa \ms2 q^6 \ms1 \Delta_{k(p_0)}^{ (4/3)(1 - 1/p_0 )} 
       \delta_{0, g}^{ (2/3)(1 - 1/p_0) } (1 + \lambda^{2 / p_0 - 1})
  =: B.
\]
We have $\alpha - 1 / p_0 = 3 (p-1) / (4p+4)$ which again is of order 
$p - 1$. The constant $\kappa_{\alpha, p_0}$ from~\eqref{KalphaP}, seen
in~\eqref{MajoTj}, behaves thus as $(p - 1)^{-1/p_0} \simeq q$, so
$C''_{p_0}(\lambda)$ is bounded by $\kappa \ms1 q \ms1 (2 + B)$. Also 
$r_0 - 1 \simeq (p + p_0 -2) / 2$ in~\eqref{ProofTheoMull} is of order 
$p - 1 \simeq 1 / q$. In~\eqref{ProofTheoMull}, the constants $C_{r_0}$ and
$C'_p$ are of order~$q$. Indeed, we can take 
$C_{r_0} = \textrm{q}_{\ms1 r_0}$ from~\eqref{LiPaIneq}, that was estimated
by $r_0 / (r_0 - 1)$ in~\eqref{EstiQp}, and $C'_p$ can be the bound for the
maximal function of the Poisson kernel, see~\eqref{MaxiPoiss}. Also, 
$1 - \gamma = (p-1) / (2 p)$, and with Lemma~\ref{CloseGap} we know that
\[
     \sum_{k \in \Z} a_{\alpha, k}^{ (1 - \gamma) p / 2}
 \le \kappa \ms1
      \sum_{k \in \Z} 2^{ - (1 - \alpha)(1 - \gamma) p |k| / 2}
 \le \frac { \kappa' }
           { (1 - \alpha)(1 - \gamma) }
 \le \kappa'' \ms1 q^2.
\]
Finally, we obtain for Theorem~\ref{TheoMull} another bizarre polynomial
estimate
\begin{align*}
     \| \M_{\Kg} \|_{p \rightarrow p} 
 \le \| \Mg_K\|_{p \rightarrow p} + O(q)
 \le C_{r_0}^2 C''_{p_0}(\lambda) (\kappa'' q^2)^2 + O(q)
 \\
 \le \kappa \ms2 q^{13} \ms1 \Delta_{\lceil q \rceil}^{ 1 - 1/p }
      \Delta_1^{ 1 - 1/p }(1 + \lambda^{2 / p - 1}),
 \quad
 1 < p \le 2.
\end{align*}

\end{remN}

\section{Bourgain's article on cubes%
\label{LeCube}}

\noindent
In this section, $Q$ is a cube in dimension $n$, more precisely, the
symmetric cube
\[
 \label{QuN}
 Q = Q_n = \biggl[ - \frac 1 2 \ms1 \up, \ms1 \frac 1 2 \biggr]^n
\]  
of volume~$1$ in~$\R^n$. It is isotropic, but if we look for a multiple 
$b \ms1 Q$ normalized by variance, we would need that the half-side 
$a = b/2$ of $b \ms1 Q$ satisfy $\sigma^2_{b \ms1 Q} = 1$, where
\[
   \sigma_{b \ms1 Q}^2
 = \frac 1 {|b \ms1 Q|} \int_{b \ms1 Q} x_1^2 \, \d x
 = \frac 1 {2 a} \int_{-a}^a s^2 \, \d s
 = \frac 1 a \int_0^a s^2 \, \d s
 = \frac {a^2 \ns8} {\ms1 3} \ms2 \up,
\]
and where $x = (x_1, \ldots, x_n) \in \R^n$. This gives $a = \sqrt 3$ in
every dimension~$n$, but the cube~$[-\sqrt 3, \sqrt 3]^n$ is not very
pleasant to manipulate, and we shall rather follow
Bourgain~\cite{BourgainCube} and keep the volume~1 cube~$Q$. With 
$a = 1/2$, the covariance for~$Q$ is given by $(12)^{-1} \I_n$. Since the
variance $\sigma_Q^2 = 1 / 12$ is independent of the dimension, we shall
have no problem with the estimates~\eqref{EstimaBour}
or~\eqref{NoticeFirst}. The Fourier transform of the probability measure
$\mu_Q$ is given by
\[
   m_Q(\xi)
 = \widehat{ \vmu \mu_Q }(\xi)
 = \widehat{K_Q}(\xi)
 = \prod_{j=1}^n \frac {\sin (\pi \xi_j)} {\pi \xi_j} \ms1 \up,
 \qquad
 \xi = (\xi_1, \ldots, \xi_n) \in \R^n.
\]
Bourgain observes that a decay better than the usual~\eqref{EstimaBour} for
a Fourier transform $m_C$ would allow to relax the limitation $p > 3/2$
of Theorem~\ref{TheoMaxi}, and that this better decay is achieved by $m_Q$
is most directions. He says that his proof proceeds therefore to diverse
localizations in Fourier space.

\begin{thm}[Bourgain~\cite{BourgainCube}]\label{TheoCube}
For every $p$ in\/ $(1, +\infty]$, there exists a constant $\kappa_p$ such
that\/ $\| \M_{Q_n} \|_{p \rightarrow p} \le \kappa_p$ for every integer
$n \ge 1$.
\end{thm}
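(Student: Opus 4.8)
The plan is to combine Bourgain's $L^2$ theorem with the Carbery--M\"uller framework of Sections~\ref{ArtiCarbe} and~\ref{AMuller}, the genuinely new ingredient being a decoupling in frequency that exploits the product structure $Q_n = Q_1^{\,n}$ of the cube, together with Pisier's holomorphic semi-group theorem~\cite{PisierHSG}. First I would reduce to $1<p\le 2$: the case $p=+\infty$ is trivial, and Theorem~\ref{TheoBour} followed by interpolation $(L^2,L^\infty)$ handles $p\ge 2$. For $1<p\le 2$, following Section~\ref{PreuveTheos} I would set $K = K_{Q_n}-G$ with $G$ the $N(0,\I_n)$ Gaussian density (Bourgain works with Gaussian kernels and Gaussian Littlewood--Paley pieces, which tensorise over the coordinates), so that $\M_{Q_n}f\le\Mg_K f+\M_G f$ pointwise for $f\ge 0$, with $\M_G$ dimension-free by the maximal inequality for the Gaussian semi-group~\eqref{maxiSemiGrGa}; it then suffices to bound $\Mg_K$ on $L^p(\R^n)$ uniformly in $n$. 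This is done by Proposition~\ref{PropoPrio}, applied to $T_{j,v}f={\Di K {v2^j}}*f$, $v\in[1,2]$, and the Gaussian $Q_j$. Since the volume-one cube has variance $1/12$ and its variance-normalised dilate has variance $1$ --- both independent of $n$ --- the Fourier estimates~\eqref{EstimaBour}--\eqref{NoticeFirst} hold for $m_{Q_n}$ with absolute constants; hence the assumptions~\eqref{A0}, \eqref{A1} (Gaussian $S_{j,v}$) and~\eqref{A3} (property $(\gr S_2)$, via Lemma~\ref{Clef} and Lemma~\ref{UsingGammaB} applied to $K_{Q_n}-G$ and its dilates, exactly as in Section~\ref{AutrePreuve}) are verified just as in the convex body case. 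Everything is thereby reduced to the assumption~\eqref{A2}: a bound, uniform in $n$, for $\|\sup_{1\le t\le 2}|{\Di K t}*f|\|_{L^{p_0}(\R^n)}$ for every $p_0\in(1,2)$ --- equivalently, by Proposition~\ref{FouCarbe}~$(ii)$ and Lemma~\ref{CPareil}, for the M\"uller fractional multipliers $m_z^\varepsilon$ attached to $m_{Q_n}$, at some $\alpha=1-\varepsilon>1/p_0$.

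Here lies the obstruction, and the reason the cube must be treated separately. Theorem~\ref{TheoMull} would give~\eqref{A2} for all $p_0>1$ provided the directional variation of the variance-normalised cube stayed bounded; but it equals $q(Q_n)=2\,L(Q_n)\,Q(Q_n)$, where $Q(Q_n)$, the largest $(n-1)$-volume of a hyperplane projection of $Q_n$, equals $\sqrt n$ by Cauchy's projection formula $|P_\theta Q_n|_{n-1}=\|\theta\|_1$, so the quantity grows like $\sqrt n$. Thus the estimate of the crucial multiplier $|\xi|\,m_{Q_n}(\xi)$ through the Riesz transforms in Lemma~\ref{LquatreB} degenerates with $n$, and one must bound $|\xi|\,m_{Q_n}(\xi)$ --- and the whole family $m_z^\varepsilon$ --- another way. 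The plan (the \textit{first reduction} and the \textit{decoupling} of the section) is to localise $m_{Q_n}(\xi)=\prod_j\operatorname{sinc}(\pi\xi_j)$ in frequency according to how the mass of $\xi$ is distributed among the coordinates: split the frequency support into regions in which only a controlled block $S$ of coordinates is ``active'' (say $|\xi_j|$ above a threshold) while the remaining sinc factors are $\approx 1$. On such a region the multiplier factors essentially as $m_{Q_S}(\xi_S)\,m_{Q_{S^c}}(\xi_{S^c})$, and in the active block the analogue of $Q$ --- hence of the M\"uller parameter --- is controlled by $|S|$ rather than by $n$. A decoupling / square-function inequality for the block-indexed Littlewood--Paley pieces (this is where the tensor structure of the Gaussian semi-group and martingale-type inequalities enter) then lets one pass from the honest $n$-fold multiplier to this block-wise model with no loss in the dimension, handling $\sup_j$ over dilations one block at a time. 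It is precisely to keep the complex-interpolation arguments of the M\"uller scheme (Section~\ref{StrateMull}) free of dimensional loss across this decoupling that Pisier's holomorphic semi-group theorem~\cite{PisierHSG} is invoked: it furnishes, with a bound independent of the number of coordinates, the analytic continuation on $L^{p_0}$ of the tensor-product Gaussian semi-group which carries the fractional-derivative multipliers $d^z_t$.

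After a \textit{second reduction} one sums the contributions of the localised pieces, indexed by the active block $S$ (or just by its size), with bounds decaying geometrically in $|S|$, and obtains a dimension-free $L^{p_0}$ bound for $T_0=\sup_{1\le t\le 2}|{\Di K t}*\,\cdot\,|$, i.e. the assumption~\eqref{A2} with $p_{\rm min}$ lowered to $1$. Inserting this into Proposition~\ref{PropoPrio}, with $1<p_0<p\le 2$ and the auxiliary exponents $r_0$, $\gamma$ and the rapidly decaying sequence $(a_k)$ provided by $(\gr S_2)$, gives $\|\Mg_K\|_{p\to p}\le\kappa_p$, whence $\|\M_{Q_n}\|_{p\to p}\le\|\Mg_K\|_{p\to p}+\|\M_G\|_{p\to p}\le\kappa_p$, uniformly in $n$, which is the theorem. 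The main difficulty is the decoupling: one must arrange the frequency localisation so that the pieces both reassemble with summable constants and each admits a dimension-free estimate, and the latter genuinely needs the product structure \emph{plus} a new analytic input (Pisier's theorem, martingale inequalities) in order to cross the $p=3/2$ barrier which the unboundedness of the directional variation of $Q_n$ imposes on the crude M\"uller bound.
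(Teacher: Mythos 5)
The first part of your outline is substantially correct and matches the paper's plan: reduce to $1<p\le 2$, reduce by Proposition~\ref{PropoPrio} to the missing assumption~\eqref{A2}, reduce via Proposition~\ref{FouCarbe}~$(ii)$ and the M\"uller machinery (Lemma~\ref{Prop2.0}) to a bound for the multiplier $|\xi|\,m(\xi)$ on some $L^{p_0}(\R^n)$, and correctly identify the obstruction (since $|P_\theta Q_n|_{n-1}=\|\theta\|_1$ one has $Q(Q_n)=\sqrt n$, so $q(Q_n)\sim\sqrt n$). Two inaccuracies already here: the paper keeps $K=K_Q-P$ with $P$ the Poisson kernel, not $K_Q-G$ (though it remarks the Gaussian would do); and you omit the scale-wise Littlewood--Paley decomposition $K_Q=K^1+\sum_{j\ge 0}(K^{2^{j+1}}-K^{2^j})$ with $K^R=K_Q*\Di G{1/R}$, through which~\eqref{A2} is obtained by interpolating, for each piece, the $L^{p_1}$ bound $\kappa_\delta R^\delta$ of~\eqref{grB} against the $L^2$ bound $\kappa R^{-1/2}$ of~\eqref{grC} and summing a geometric series. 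Without this intermediate decomposition you have no mechanism for converting a fixed-$R$ estimate with a slowly growing constant into a dimension-free maximal inequality.

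Your second and third paragraphs misdescribe the core of Bourgain's argument. First, Pisier's theorem is \emph{not} used to continue analytically a ``tensor-product Gaussian semi-group carrying the fractional-derivative multipliers $d^z_t$.'' It is applied to the product semi-group $P_t=\prod_j\bigl(\e^{-t}I+(1-\e^{-t})T_j\bigr)$ built from conditional-expectation-type operators $T_j$ (convolution in the $x_j$-variable with the dilated triangle kernel $\Di\eta{w_0}$, an average of translated indicators; Lemma~\ref{CubeLemme5}), and it is invoked solely to furnish the $L^q$ bound $\|H_k\|_{q\to q}\le h_q^k$ for the homogeneous parts $H_k=\sum_{|S|=k}\gr T^{\sim S}(\gr I-\gr T)^S$ used to split the \emph{input function} $g=H_0g+\cdots+H_{M-1}g+h$. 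The fractional derivatives $d^z_t$ and the M\"uller complex interpolation live entirely in the earlier reduction and never meet Pisier. Second, and more seriously, you leave out the two ingredients that make Proposition~\ref{MainObj} go through: (i) the bootstrap on the \emph{a priori} constant $B(q_0,R,n)$, which appears on both sides of the final inequality --- the decoupling by random selectors drops the problem into lower dimensions $\ell\le n$, where the same constant is fed back in, and the loop is closed by $B\le c\,R^{4\delta}+B/2$; and (ii) the $\tau$-stability ``second reduction,'' Lemmas~\ref{ProduitDeLs} and~\ref{LHuit}, which is the mechanism that absorbs the boundary measures $\mu_j=\partial_j\mu_Q$ without any $\sqrt n$ penalty. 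This last martingale-style step --- not a frequency localization of the cube multiplier --- is where the product structure is genuinely exploited, and it is the hard point of the proof; without it your sketch does not cross the $p=3/2$ barrier.
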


 We shall approach the maximal function problem for the cube by summing
expressions such as $K^R - K^{2R}$, with
\[
 \label{KaR}
 K^R = K_Q * \Di G {1/R},
\]
where $G$ is a Gaussian probability kernel, $\Di G {1/R}$ its
dilate~\eqref{Dilata}, and where $R$ takes the values 
$1, 2, \ldots, 2^j, \ldots\ms4$ with $j$ being any integer~$\ge 0$. This is
a Littlewood--Paley-type decomposition, similar to what we have seen
before. By Pr\'ekopa--Leindler, $K^R$ is a log-concave probability density.
We shall set
$
 m^R = \widehat{K^R}
$ in what follows.
\dumou

 We will call the Carbery--M\"uller artillery and obtain when $1 < r < 2$,
for every $\delta > 0$ and $R = 2^j$ with $j \ge 0$, bounds of the form
\[
     \bigl\| \sup_{1 \le t \le 2} | \DiE K t R * f| \ms1 \bigr\|_r
 \le \kappa_{\delta, r} \ms1 R^\delta \ms1 \|f\|_r,
 \ms{20} \hbox{where} \ms{9}
 {\DiE K t R} := {\Di {(K^R)} t}.
\]
Why this may be a decisive step will be explained below. According to
Carbery's Proposition~\ref{FouCarbe}~$(ii)$, this bound will be consequence
of the $L^r(\R^n)$-boundedness of the multiplier
$
 (\xi \ps \nabla)^\alpha m^R(\xi)
$
for a value of $\alpha \in (1/r, 1)$. Next, following M\"uller, it will be
enough to estimate in~$L^s(\R^n)$, with $1 < s < r$, the \og crucial\fge
multiplier
$
 |\xi| m^R(\xi)
$.
This is what Bourgain does along several pages, in a series of reductions
bringing in many tools that are specific to the product structure of the
cube.

\subsection{Holding on M\"uller and Carbery%
\label{Raccord}}

\noindent
Let us specify the preceding rough outline. The final objective is to bound
in $L^p(\R^n)$ the maximal operator $\M_Q$ for~$p$ below the limit $3/2$
that is known so far, proving that
\[
     \Bigl\| \ms1 \sup_{t > 0} |\Di {(K_Q)} t  * f| \ms1 \Bigr\|_p
 \le \kappa_p \ms2 \|f\|_p,
 \ms{16}
 1 < p < 2, \ms8 f \in L^p(\R^n).
\]
We fix a value $p \in (1, 2)$ in all that follows. In order to obtain the
property~\eqref{A2} from p.~\pageref{A2}, needed for applying Carbery's
Proposition~\ref{PropoPrio}, we must show that
\begin{equation}
     \Bigl\| \ms1 \sup_{1 \le t \le 2} |{\Di K t} * f| \ms1 \Bigr\|_{p_2}
 \le \kappa \ms1 \|f\|_{p_2},
 \ms{13}
 1 < p_2 < p < 2,
 \label{grA} \tag{$\gr A$}
\end{equation}
where $K = K_Q - P$ and $P$ is the Poisson kernel~\eqref{PoissonDensi}.
This is the only missing fact for lowering the limitation $p > 3/2$ down to
$p > 1$, as explained in the proof of M\"uller's Theorem~\ref{TheoMull}.
For the Poisson side it is fine, it remains to work on~$K_Q$. We introduce
the Gaussian kernel $G = \Di {(\gamma_n)} { \sqrt 2 / \sqrt \pi }$
on~$\R^n$. The variance of $G$ is equal to $2 / \pi$, thus independent of
$n$, and
$
 \widehat G(\xi) = \e^{- 4 \pi |\xi|^2}\label{GKernel}
$
for every $\xi \in \R^n$. With this normalization for $G$, we have
by~\eqref{VKt} that
\begin{equation}
 V(G) = \sqrt { \pi / 2 } \ms4 V(\gamma_n) = 1.
 \label{VdeG}
\end{equation}
We decompose the Dirac probability measure $\delta_0$ at the origin, in the
sense of distributions, by means of the simple telescopic series
\[
   \delta_0
 = \Di G 1 + (\Di G {1/2} - \Di G 1)
    + \cdots + (\Di G {2^{-k-1}} - \Di G {2^{-k}}) + \cdots
\]
and we decompose $K_Q$ accordingly, using the approximations
$K^R = K_Q * \Di G {1/R}$, for $R = 2^j \ge 1$ and $j$ nonnegative
integer, under the form
\[
 K_Q = K^1 + (K^2 - K^1) + \cdots + (K^{2^{j+1}} - K^{2^j}) + \cdots.
\]
By Pr\'ekopa--Leindler, each $K^R$ is a log-concave symmetric probability
density on~$\R^n$. It is isotropic, with a variance $\sigma_R^2$ satisfying
\begin{equation}
 12^{-1} < \sigma_R^2 = 12^{-1} + 2 \ms1 \pi^{-1} R^{-2} < 1,
 \quad R \ge 1.
 \label{SigmaR}
\end{equation}
We set
$ \d \mu^R(x) = K^R(x) \, \d x$,
$m^R = \widehat{K^R} = \widehat{\mu^R}$.\label{MuR}
It follows from~\eqref{NoticeFirst} that $m^R$
satisfies~\eqref{EstimaGeneN} with constants independent of $n$. We get
\begin{equation}
     \Bigl| \frac {\d^j} {\d t^j \ns2} \ms4 m^R (t \theta) \Bigr|
 \le \frac { \delta_{j, c} }
           { 1 + \pi \ms1 |t| / \sqrt 3 }
 \le \frac { \delta_{j, c} }
           { 1 + |t| } \ms1 \up,
 \quad \theta \in S^{n-1}, \ms9 t \in \R, \ms9 j \ge 0.
 \label{mRuniform}
\end{equation}
We shall obtain the desired estimate~\eqref{grA} for $p_2$ by interpolating
between $p_1$ and~$2$, where $1 < p_1 < p_2 < p < 2$. As we have
said previously, we will show that for every $\delta > 0$, we have for all
$R= 2^j \ge 1$ that
\begin{equation}
     \bigl\| \sup_{1 \le t \le 2} |{\DiE K t R} * f| \ms1 
     \bigr\|_{p_1} 
 \le \kappa_\delta \ms1 R^\delta \ms2 \|f\|_{p_1},
 \quad
 f \in L^{p_1}(\R^n),
 \label{grB} \tag{$\gr B$}
\end{equation}
and on the other hand, we prove that for every $f \in L^2(\R^n)$ we have
\begin{equation}
     \bigl\| \sup_{t > 0} | \Di{(K^R - K^{2R})}t * f| \ms1 \bigr\|_{2} 
 \le \kappa \ms2 R^{-1/2} \ms2 \|f\|_2,
 \label{grC} \tag{$\gr C$}
 \ms{16}
     \bigl\| \sup_{t > 0} |{\DiE K t 1} * f| \ms1 \bigr\|_{2} 
 \le \kappa \ms2 \|f\|_2.
\end{equation}
The second inequality in~\eqref{grC} is the log-concave version of
Bourgain's $L^2$~theorem, Theorem~\ref{LogConcBourg}. One can obtain
the first part of~\eqref{grC} by the $\Gamma_B(K)$ criterion,
Lemma~\ref{Clef}. We have indeed, uniformly in
$\theta \in S^{n-1}$ and in the dimension $n$ (observe that
$\widehat G$ has a radial expression independent of $n$), that
\begin{align*}
     |\widehat G(u \theta) - \widehat G(2 u \theta)|
 & \simleq u^2 \wedge \e^{ - 4 \pi u^2}
 \le |u| \wedge |u|^{-1}
 \ms{16} \hbox{and}
 \\
     |\theta \ps \nabla \widehat G(u \theta)
       - \theta \ps \nabla \widehat G(2 u \theta)|
 & \simleq |u| \ms1 (1 \wedge \e^{ - 4 \pi u^2})
 \le 1 \wedge |u|^{-1},
 \qquad u \in \R.
\end{align*}
We apply Lemma~\ref{UsingGammaB} with $K_1 = K_Q$, $K_2 = G$, replacing
$2^{|k|}$ with $R$ and obtaining
\[
    \sum_{j \in \Z} 
     \Bigl(
      \alpha_j (K^R) + \sqrt{\alpha_j (K^R) \beta_j (K^R)} \ms3
     \Bigr)
 \simleq R^{-1 / 2}.
\]
\dumou

 If $\delta > 0$ is sufficiently small we deduce by interpolation
between~\eqref{grB} and~\eqref{grC} that there exists $\delta_1 > 0$ such
that
\[
     \bigl\| \sup_{1 \le t \le 2} |( {\DiE K t R} - {\DiE K t {2R}}) * f| \ms1 
     \bigr\|_{p_2} 
 \le \kappa_\delta \ms1 R^{ - \delta_1} \ms2 \|f\|_{p_2}
\]
and we get Property~\eqref{grA} by summing on the values $R = 2^j$ for all
integers $j \ge 0$. We fix thus a value 
$\delta_* = \delta_*(p, p_2, p_1) > 0$ of $\delta$, sufficiently small for
implying that $\delta_1 > 0$ whenever $0 < \delta \le \delta_*$. Precisely,
if $\lambda \in (0, 1)$ is such that
\[
 \frac 1 {p_2} = \frac {1 - \lambda} {p_1} + \frac {\lambda} 2 \ms1 \up,
\]
we need to choose $\delta_* > 0$ so that
$
 - \delta_1 = (1 - \lambda) \delta_* - \lambda / 2 < 0
$, \textit{i.e.}, we select a value $\delta_* = \delta_*(p, p_2, p_1)$ such
that $0 < \delta_*(p, p_2, p_1) < (p_2 - p_1) / (2 p_1 - p_2 p_1)$.
\dumou

 For obtaining~\eqref{grB} we shall use the conclusion~$(ii)$ of Carbery's
Proposition~\ref{FouCarbe} and also apply M\"uller's analysis. 
We need to show that for some 
$\alpha \in (1 / p_1, 1)$ and $0 < \delta \le \delta_*$, we have
\begin{equation}
     2 \ms1 \|m^R\|_{p_1 \rightarrow p_1}
      + \| (\xi \ps \nabla)^\alpha m^R(\xi)\|_{p_1 \rightarrow p_1}
 \le \kappa \ms1 R^\delta
 \label{WhatWeWant}
\end{equation}
for all $R = 2^j$, $j \in \N$. There is no problem for $m^R$, which
corresponds to convolution with a probability density, and for the other
term we shall apply Lemma~\ref{Prop2.0} with $1 < p_0 < p_1 < 2$. For
technical reasons, the value $p_0$, close to $1$, is chosen in a way that
its conjugate $q_0$ is an integer of the form $2^\nu$, with $\nu$
integer~$> 0$. If we can prove that for a fixed $\delta > 0$ and for every
$R = 2^j$, we have
\begin{equation}
     \bigl\| \ms1 |\xi| \ms2 m^R(\xi) \bigr\|_{p_0 \rightarrow p_0}
 \le \kappa_\delta \ms1 R^\delta,
 \label{Wanted}
\end{equation}
it follows from Lemma~\ref{Prop2.0} that
$
     \| (\xi \ps \nabla)^\alpha m^R(\xi)\|_{p_1 \rightarrow p_1}
 \le \kappa'_\delta \ms1 (1 + R^{\delta \beta})
 \le \kappa''_\delta \ms1 R^{\delta}
$
for some $\beta \in (0, 1)$, uniformly in the dimension $n$ according
to~\eqref{mRuniform}. The conclusion~\eqref{WhatWeWant} is then obtained. 
\dumou

 By exploiting the inequality~\eqref{TransfosRiesz} on Riesz transforms,
M\"uller's plan went on with a reduction to estimating the expression
$
    \bigl\| \nabla \mu^R * g \bigr\|_{q_0}
$ when $g \in L^{q_0}(\R^n)$ and $1 / q_0 + 1 / p_0 = 1$. We must show that
for every $R = 2^j$ we have
\[
     \bigl\| \nabla \mu^R * g \bigr\|_{q_0}
 \le \kappa_{p_0, \delta} R^\delta \|g\|_{q_0},
\]
yielding~\eqref{Wanted} by Lemma~\ref{LquatreC}. We use~\eqref{VdeG}
and~\eqref{VKt}, which give
\begin{equation}
     V(K^R)
  =  V(\mu_Q * \Di G {1/R})
 \le V(\Di G {1/R})
  =  R.
 \label{EasyMuller}
\end{equation}
By Lemma~\ref{VerySimple}, this bound for the mass of 
$\theta \ps \nabla \mu^R$ when $\theta \in S^{n-1}$ implies that
$
     \| \nabla \mu^R * g \|_{L^\infty(\R^n)}
 \le R \ms2 \| g \|_{L^\infty(\R^n)}
$.
Then, by interpolation with the $L^2$ case given by~\eqref{grC}, we can find
when $2 < q < +\infty$ a bound in~$L^q(\R^n)$ of the form
\[
     \| \nabla (\mu^R - \mu^{2R}) * g \|_{L^q(\R^n)}
 \le \kappa \ms1 (R^{ - 1 / 2 })^{2 / q} R^{1 - 2 / q} \ms1 
      \| g \|_{L^q (\R^n)}
  =  \kappa \ms1 R^{1 - 3 / q} \ms1 \| g \|_{L^q (\R^n)}.
\]
This interpolation $(L^\infty, L^2)$ does not give the desired bound
$R^\delta$ in $L^{q_0}(\R^n)$, with~$\delta$ small, when $q_0 \ge 3$.
However, it does give the right ingredient for the
Bourgain--Carbery Theorem~\ref{TheoMaxi} when $3/2 < p \le 2$, since 
$1 - 3 / q < 0$ in this case.
\dumou

 For going farther than M\"uller, one has to prove inequalities that allow
one to work in $L^r(\R^n)$, $2 < r < +\infty$, instead of~$L^\infty(\R^n)$.
This is done with the help of certain analytic semi-groups
(Section~\ref{FirstReduc}), as well as a {\it ad hoc\/} method a la
Bourgain, which he says inspired from martingale techniques
(Section~\ref{SecondReduc}). Theorem~\ref{TheoCube} will be obtained once
we have the following proposition, which we can apply with a value 
$\delta \le \delta_*(p, p_2, p_1)$. We then conclude by the preceding
discussion. 
\dumou

\begin{prp}\label{MainObj}
For every $\delta > 0$ and $q_0 = 2^\nu$, with $\nu$ an integer $\ge 1$,
there exists a constant $\kappa(q_0, \delta)$ such that for every $n \ge 1$
and $R = 2^k$, $k = 0, 1, \ldots,$ one has
\[
     \| \nabla \mu^R * g \|_{L^{q_0}(\R^n)}
 \le \kappa(q_0, \delta) \ms1 R^\delta \ms1 \| g \|_{L^{q_0}(\R^n)},
 \quad 
 g \in L^{q_0}(\R^n).
\]
\end{prp}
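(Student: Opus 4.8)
The plan is to prove the estimate by induction on $\nu$, the statement being for $q_0 = 2^\nu$. The base case $\nu = 1$ ($q_0 = 2$) is the second inequality in~\eqref{grC}: by Parseval, $\| \nabla \mu^R * g \|_2^2 = \int_{\R^n} \sum_j |\widehat{\partial_j K^R}(\xi)|^2\,|\widehat g(\xi)|^2\,\d\xi = \int_{\R^n} 4\pi^2|\xi|^2|m^R(\xi)|^2\,|\widehat g(\xi)|^2\,\d\xi$, and $|\xi|^2|m^R(\xi)|^2 \le \sigma_R^{-2} \le 12$ by Bourgain's estimate~\eqref{EstimaBour} together with~\eqref{SigmaR} (since each $K^R$ is symmetric log-concave, isotropic, with variance $\sigma_R^2 \ge 1/12$). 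This already gives an $R$-\emph{independent} constant at $q_0=2$; the real content is the inductive step $q_0 \mapsto 2q_0$.

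First I would record the product structure that makes the cube special. Writing $\varphi_R = \gr 1_{[-1/2,1/2]} * (\text{1-dim Gaussian of variance } 2/(\pi R^2))$ on $\R$ and $\psi_R = \varphi_R'$, one has $K^R = \bigotimes_{l=1}^n \varphi_R$ and $\partial_j K^R = \psi_R^{(j)} \otimes \bigotimes_{l \ne j} \varphi_R$; equivalently, since $\partial_s \gr 1_{[-1/2,1/2]} = \delta_{-1/2} - \delta_{1/2}$, the operator $g \mapsto \partial_j \mu^R * g$ is a difference of two translates, in the single coordinate $j$, of the "transverse smoothing" $g \mapsto \bigotimes_{l\ne j}\varphi_R * g$, which is a positive $L^q$-contraction. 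The first reduction (Section~\ref{FirstReduc}) turns the single-scale bound into an iterable one: because $\Di G {1/R}$ is the heat kernel at time $\Theta(R^{-2})$ and the heat semigroup on $\R^n$ is symmetric and sub-Markovian, Pisier's holomorphic semigroup theorem furnishes dimension-free analyticity and maximal-function bounds for it in $L^{q_0}$, which lets one replace the bound on $\| \nabla(\mu^R*g) \|_{q_0}$ by a bound on an $L^{q_0}$ square function assembled from the one-dimensional "difference-of-translates-after-transverse-smoothing" operators attached to the $n$ coordinates.

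The heart is the decoupling (Section~\ref{Decoupling}) combined with the Bourgain rearrangement (Section~\ref{SecondReduc}). Here one uses that $q_0 = 2^\nu$ is even to expand $\big\| \,|\nabla\mu^R*g|\, \big\|_{q_0}^{q_0} = \int_{\R^n} \big(\sum_j |\partial_j\mu^R*g|^2\big)^{q_0/2}$ and to sort the resulting $(q_0/2)$-tuples of coordinates by the number $s$ of distinct indices present. For each fixed "profile" of active coordinates the tensor structure $K^R = \bigotimes \varphi_R$ factors the integral over $\R^n$ into a product of one-dimensional integrals — the inactive coordinates each contribute a bounded factor ($\int\varphi_R = 1$) — so the contribution is controlled by an $L^{q_0/2}$ estimate, which the inductive hypothesis supplies. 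A Khinchin / random-sign averaging (as in~\eqref{KinLp}) is used to reassemble the square function, and the martingale-flavored peeling of Section~\ref{SecondReduc} removes active coordinates one at a time, each step costing at most a fixed small power of $R$ (or gaining an $R^{-1/2}$ from the $|\sin\pi\xi_j|$ factor against the remaining $\operatorname{sinc}$'s); since only finitely many steps occur, the parameters can be tuned so that the total loss is $R^\delta$ for any prescribed $\delta > 0$.

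The main obstacle is exactly the regime in which a positive fraction of the $n$ coordinates is active (i.e.\ near a face of $Q$): there, crude pointwise estimates are fatally dimension-dependent — for instance $\|\nabla K^R\|_{L^1(\R^n)} \sim \sqrt{nR}$, and Cauchy–Schwarz on $\partial_j\mu^R * g$ wastes the transverse mass — so one genuinely must exploit that $\mu^R$ is a tensor product of one-dimensional smoothed indicators and let the one-dimensional $L^2$-gains $R^{-1/2}$ compound across the peeled coordinates, which is the purpose of the rearrangement. Once Proposition~\ref{MainObj} is established, the remaining assembly is the already-described chain in Section~\ref{Raccord}: Lemma~\ref{LquatreC} yields~\eqref{Wanted}, Lemma~\ref{Prop2.0} yields~\eqref{WhatWeWant}, Carbery's Proposition~\ref{FouCarbe}~$(ii)$ gives~\eqref{grB}, interpolation against the $L^2$-gain~\eqref{grC} gives~\eqref{grA}, and M\"uller's Theorem~\ref{TheoMull} then delivers Theorem~\ref{TheoCube} (Section~\ref{ConcluBourCube}).
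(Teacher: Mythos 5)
There are genuine gaps here, and the organizing scheme you propose is not the one that works. Your plan is an induction on $\nu$ in $q_0 = 2^\nu$, but the step $q_0 \mapsto 2 q_0$ does not close as you describe. Writing $\|\nabla\mu^R*g\|_{q_0}^2 = \bigl\|\sum_j|\partial_j\mu^R*g|^2\bigr\|_{q_0/2}$, what you would need is a bound on the $L^{q_0/2}$-norm of the \emph{sum of squares} $\sum_j|\partial_j\mu^R*g|^2$, whereas the inductive hypothesis at exponent $q_0/2$ gives $\|\nabla\mu^R*h\|_{q_0/2} = \|(\sum_j|\partial_j\mu^R*h|^2)^{1/2}\|_{q_0/2}$ for a \emph{scalar} $h$; the square root and the change of test function make these unrelated. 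Your fallback — that because $K^R$ is a tensor product, "the tensor structure factors the integral over $\R^n$ into a product of one-dimensional integrals, the inactive coordinates each contributing a bounded factor" — is false for a general $g \in L^{q_0}(\R^n)$, since $g$ is not a tensor and nothing factorizes. This non-factorability is precisely the difficulty that Bourgain's $\tau$-stability device is built to handle: the density $\varphi(s) = c(1+s^4)^{-1}$, its $w_1$-stability with constant $10$, the constant $\beta$ in~\eqref{DefBeta}, the key inequality~\eqref{Fundam}, and Lemmas~\ref{ProduitDeLsA}, \ref{ProduitDeLs}, \ref{LHuit}, \ref{LNeuf} let one move the $\Phi_j$-factors across multilinear integrals in a controlled way without assuming product form for $g$. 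None of these appear in your proposal, and without them the "sort by distinct indices" expansion cannot be estimated.

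The paper's actual device is an \emph{a priori} bootstrap at fixed exponent $q_0$: one declares $B(q_0, R, n)$ to be the best constant (finite for each $n$ since $\|\nabla\mu^R\|_1 \le \kappa\sqrt{n}\,R$), verifies $B(q_0, R, \ell) \le B(q_0, R, n)$ for $\ell \le n$, decomposes $g = H_0 g + \cdots + H_{M-1}g + h$ by the homogeneous parts $H_k$ of a Pisier-type semigroup, bounds each piece — the rest $h$ by interpolating $L^2$ against $L^{q_1}$ with $q_1 = 2q_0/p_0$, the decoupled pieces via selectors $(\gamma_i)$ reducing to~\eqref{2.28}--\eqref{2.29} in sub-dimensions $\ell$, and then Lemma~\ref{LNeuf} — and closes the self-referential inequality $B(q_0, R, n) \le c(q_0,\delta)\,R^{4\delta} + B(q_0, R, n)/2$ for $R \ge R_1$. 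You also misidentify the semigroup: Proposition~\ref{PisierSG} is applied to $P_t = \prod_{j}(\e^{-t}I + (1-\e^{-t})E_j)$ built from conditional expectations onto unit-length intervals (smoothed into convolutions with $\Di \eta {w_0}$, Lemma~\ref{CubeLemme5}), not to the Gaussian heat semigroup; $\Di G {1/R}$ enters only in the definition of $\mu^R$ and in the crude bound $V(K^R) \le R$ and the $L^2$ gain. Finally, the origin of the $R^{4\delta}$ loss — $L^\infty$/$L^2$ interpolation against $\|U_{K^R}\|_{\infty\to\infty} \le V(K^R) \le R$, and the stability cost $w_1^{-4} = R^{4\delta}$ in Lemma~\ref{LHuit} via~\eqref{H4b} — is missing from your account, so the claim that "the parameters can be tuned so that the total loss is $R^\delta$" is not substantiated.
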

\dumou

 We shall keep $\delta > 0$, $p_0 = q_0 / (q_0 - 1)$ and $R = 2^{k_0}$
fixed in the rest of Section~\ref{LeCube}.
\dumou

\subsubsection{\textit{A priori} estimate%
 \label{APriori}}
 
\noindent
The proof will play with an {\it a priori} estimate
\begin{equation}
     \bigl\| \nabla \mu^R * g \bigr\|_{L^{q_0}(\R^n)}
 \le B(q_0, R, n) \ms1 \|g\|_{L^{q_0}(\R^n)},
 \quad
 g \in L^{q_0}(\R^n),
 \label{Objective}
\end{equation}
and will aim to find a relation of the form
$B(q_0, R, n)
 \le c(q_0, \delta) \ms1 R^\delta + \varepsilon B(q_0, R, n)$ for some
$\varepsilon < 1$ and for $R$ larger than some $R_1$, for example with
$\varepsilon = 1/2$, thus reaching the conclusion that
$B(q_0, R, n) \le 2 \ms1 c(q_0, \delta) \ms1 R^\delta$ when $R > R_1$. We
know that $B(q_0, R, n)$ is finite for every dimension $n$, for instance as
a consequence of the trivial bound 
$\| \nabla \mu^R \|_1 \le \| \nabla G^R \|_1
 \le \kappa \ms1 \sqrt n \ms2 R$.
\dumou

 We must notice that the \textit{a priori} estimate in $\R^n$ yields the
same estimate for the dimensions $\ell \le n$, with a smaller or equal
constant, precisely, we must know that $B(q_0, R, \ell) \le B(q_0, R, n)$
when $1 \le \ell \le n$. Indeed, the forthcoming proof in dimension~$n$
will bring the question down to dimensions $\ell \le n$, where we shall use
the \textit{a priori} bound by $B(q_0, R, \ell)$. For justifying the
validity of the same bound when $\ell \le n$, apply the case~$n$ to a
function $g$ of the form $g_1 \otimes \varphi$, namely
\[
 g(x_1, x_2) = g_1(x_1) \varphi(x_2),
\]
where $x_1$ is in $\R^\ell$, $g_1 \in L^{q_0}(\R^\ell)$, 
$x_2 \in \R^{n - \ell}$ and where $\varphi$ is a fixed $C^\infty$ function
with compact support in $\R^{n - \ell}$, not identically zero. The
indicator of the cube and the Gaussian density have a product structure,
which allows us to write
\[
 K^R(x_1, x_2) = K^R_1(x_1) \ms1 \psi(x_2),
 \quad
 \d \mu^R(x_1, x_2) = \d \mu^R_1(x_1) \otimes (\psi(x_2) \, \d x_2),
\]
where $K_1$, $K_1^R$ and $\d \mu^R_1(x_1) = K^R_1(x_1) \, \d x_1$
correspond to the cube in $\R^\ell$, and $\psi$ is a probability density on
$\R^{n - \ell}$ corresponding to the cube in $\R^{n-\ell}$. We also have
\[
 \mu^R * g = (\mu^R_1 * g_1) \otimes (\psi * \varphi).
\]
The gradient of $\mu^R * g$ contains 
$
 (\nabla \mu^R_1 * g_1) \otimes (\psi * \varphi)
$
in its first $\ell$ coordinates, thus
\begin{align*}
   &\ms4 \bigl\| \nabla \mu^R_1 * g_1 \bigr\|_{L^{q_0}(\R^\ell)}
    \ms4 \bigl\| \psi * \varphi \bigr\|_{L^{q_0}(\R^{n - \ell})}
  = \ms4 \bigl\| 
    (\nabla \mu^R_1 * g_1) \otimes (\psi * \varphi) 
   \bigr\|_{L^{q_0}(\R^n)}
 \\
 \le &\ms4 \bigl\| \nabla \mu^R * g \bigr\|_{L^{q_0}(\R^n)}
 \le B(q_0, R, n) \ms2 \|g\|_{L^{q_0}(\R^n)}
 \\
  =  &\ms4 B(q_0, R, n) \ms2 \|g_1\|_{L^{q_0}(\R^\ell)} 
      \|\varphi\|_{L^{q_0}(\R^{n - \ell})}.
\end{align*}
This yields
$
     B(q_0, R, \ell)
 \le B(q_0, R, n) \ms2 
      \|\varphi\|_{L^{q_0}(\R^{n - \ell})} \ms1\big/\ms1
       \bigl\| \psi * \varphi \bigr\|_{L^{q_0}(\R^{n - \ell})}
$
and by spreading $\varphi$, replacing it with
$\varphi_k : x \mapsto \varphi(x / k)$, $k \rightarrow +\infty$, one makes
the quotient of norms tend to $1$, thus proving that
$
     B(q_0, R, \ell)
 \le B(q_0, R, n)
$.

\subsection{First reduction%
\label{FirstReduc}}

\noindent
One applies a result of Pisier~\cite{PisierHSG} about holomorphic semi-groups.
If $\gr T = (T_j)_{j=1}^n$ is a family of bounded linear operators 
on~$L^q(X, \Sigma, \mu)$, $1 \le q \le +\infty$, we introduce for every
subset $J \subset N = \{1, \ldots, n\}$ the operators
\[
 \label{TupJ}
 \gr T^J = \prod_{j \in J} T_j,
 \ms{16}
   \gr T^{\sim J} 
 = \gr T^{N \setminus J} 
 = \prod_{j \notin J} T_j,
\]
and $\gr T^{\sim j}$ will be a short form for $\gr T^{\sim \{j\} }$,
$1 \le j \le n$. We found the notation~$\gr T^{\sim J}$ convenient, but it
might be ambiguous, since it depends on the ambient set~$N$.
\dumou

 Given commuting projectors $(E_j)_{j=1}^n$, one can consider the
semi-group 
\[
   T_t
 = \prod_{j=1}^n \ms1 \bigl( E_j + \e^{-t} (I - E_j) \bigr),
 \ms{12}
 t \ge 0,
\]
where $I$ denotes the identity operator.\label{IdOp}
If we set $z = \e^{-t}$ and expand the product, we can arrange it according
to powers of~$z$, displaying in this way 
homogeneous parts $z^k H_k$\label{HomogPart}
of degree~$k$. We see that
\[
   T_t
 = \sum_{k=0}^n z^k 
    \Bigl( \sum_{|J| = k} \ms1 \gr E^{\sim J} (\gr I - \gr E)^J \Bigr)
 = \sum_{k=0}^n z^k H_k
 = \sum_{k=0}^n \e^{- k t} \ms1 H_k.
\]
Letting $\Sigma_k$\label{SigmaK} 
denote the family of subsets $J \subset N$ of cardinality $k$, we have
\begin{equation}
   H_k
 = \sum_{J \in \Sigma_k} \ms1 \gr E^{\sim J} (\gr I - \gr E)^J,
 \quad k = 0, \ldots, n,
 \ms{18} \hbox{and} \ms{12}
 \sum_{k=0}^n H_k = T_0 = I.
 \label{IciChk}
\end{equation}

\begin{prp}[after Pisier~\cite{PisierHSG}]%
\label{PisierSG}
Let\/ $(E_j)_{j=1}^n$ be a family of commuting conditional expectation
projectors on $L^q(X, \Sigma, \mu)$, $1 < q < +\infty$, and consider the
semi-group 
\[
   P_t
 = \prod_{j=1}^n \ms2 \bigl( \e^{-t} I + (1 - \e^{-t}) E_j \bigr)
 = \prod_{j=1}^n \ms2 \bigl( E_j + \e^{-t} (I - E_j) \bigr),
 \quad t \ge 0.
\]
This semi-group is analytic on $L^q(X, \Sigma, \mu)$, $1 < q < +\infty$,
with an extension\/ $(P_z)_{z \in \Omega_{\varphi_q}}$ to a sector\/ 
$\Omega_{\varphi_q} = \{ z = r \e^{\ii \theta} :
 r > 0, \ms6 |\theta| < \varphi_q \}$ in\/~$\C$, where $\varphi_q > 0$
depends on $q$ only. The extension is bounded uniformly in $q$ on every
compact subset of\/~$\Omega_{\varphi_q}$. There exists~$h_q \ge 1$
independent of~$n$ such that whenever\/ $0 \le k \le n$, the homogeneous
part $H_k$ in\/~\eqref{IciChk} is bounded on $L^q(X, \Sigma, \mu)$ by\/
$(h_q)^k$.
\end{prp}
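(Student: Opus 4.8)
\textbf{Proof strategy for Proposition~\ref{PisierSG}.}
The plan is to reduce the statement to the one-dimensional holomorphic semi-group theorem of Pisier~\cite{PisierHSG} applied coordinate by coordinate, exploiting that the $E_j$ commute. First I would recall the content of Pisier's result in the scalar case: for a single conditional expectation $E$, the semi-group $t \mapsto \e^{-t} I + (1 - \e^{-t}) E$ on $L^q(X, \Sigma, \mu)$, $1 < q < +\infty$, admits a bounded holomorphic extension to a sector $\Omega_{\varphi_q}$ with aperture $\varphi_q > 0$ depending only on $q$, and the bound on compact subsets of the sector is uniform (this is exactly the form in which Pisier proves it, via the fact that $\|(I - E) f\|_q$ and $\|E f\|_q$ both dominate a fixed fraction of $\|f\|_q$, or more precisely via his analysis of the numerical range of $2E - I$). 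Writing $T_z^{(j)} = E_j + z(I - E_j)$, each individual $T_z^{(j)}$ extends holomorphically in $z$ to a common sector $\Omega_{\varphi_q}$; since the $E_j$ commute, the operators $T_z^{(j)}$ commute as well for all $z$, and $P_z = \prod_{j=1}^n T_z^{(j)}$ is the composition. The key subtlety is that a naive product bound $\prod_j \|T_z^{(j)}\|$ would be exponential in $n$, so one cannot simply multiply norms; instead one must use Pisier's observation that the product semi-group inherits analyticity with a \emph{dimension-free} sector, because the generators add up and the relevant geometric/numerical-range estimate is stable under tensor-type products of commuting contractively-regular operators.

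The second and main point is the bound $\|H_k\|_{q \to q} \le (h_q)^k$ with $h_q$ independent of $n$. Here I would extract $H_k$ from the semi-group by a Cauchy-type contour integral: since $P_z = \sum_{k=0}^n z^k H_k$ is a polynomial in $z$ whose coefficients are the $H_k$, for any circle $|z| = \rho$ contained in the sector $\Omega_{\varphi_q}$ we have
\[
 H_k = \frac{1}{2\pi\ii} \int_{|z| = \rho} \frac{P_z}{z^{k+1}} \, \d z,
\]
hence $\|H_k\|_{q \to q} \le \rho^{-k} \sup_{|z| = \rho} \|P_z\|_{q \to q}$. One cannot take the full circle $|z| = \rho$ because it leaves the sector near the negative real axis, so instead I would integrate the polynomial $z \mapsto z^{-k-1} P_z$ over a closed contour lying inside $\Omega_{\varphi_q}$ that still winds once around the origin — for instance two arcs of circles of radii $\rho_1 < \rho_2$ joined by two segments along rays $\arg z = \pm \varphi$ with $\varphi < \varphi_q$ — and use that $P_z$ is holomorphic and uniformly bounded on the compact region swept out. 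Choosing $\rho$ (or the scale of the contour) to be a fixed constant depending on $\varphi_q$ only, and using the uniform-on-compacts bound from the first part, yields $\|H_k\|_{q \to q} \le C_q \cdot \rho^{-k}$; absorbing $C_q$ into the base gives $\|H_k\|_{q \to q} \le (h_q)^k$ for a suitable $h_q \ge 1$ depending on $q$ alone. The crucial feature is that every constant entering here — the aperture $\varphi_q$, the compact-set bound, hence $h_q$ — is furnished by Pisier's theorem independently of $n$, because the only place $n$ could enter is through the number of factors, and the holomorphic-extension mechanism is designed precisely to be insensitive to that.

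\textbf{Main obstacle.} The genuinely delicate step is establishing the dimension-free analyticity of $P_z$ on a fixed sector; this is the heart of Pisier's argument and not something that follows from soft functional calculus, since it relies on the contractive regularity of conditional expectations (they are positive contractions on every $L^q$, $1 \le q \le +\infty$) together with a careful control of how the sectoriality angle degrades — or rather does not degrade — under products of commuting such operators. I would treat this as a black box quoted from~\cite{PisierHSG}, merely checking that the hypotheses apply in our situation (the $E_j$ are commuting conditional expectations, hence simultaneously positive contractions on all $L^q$), and then the contour-integral extraction of $H_k$ is routine. A secondary, minor point to be careful about is the endpoint behaviour: the bound must hold for all $1 < q < +\infty$ with $\varphi_q$ and $h_q$ possibly deteriorating as $q \to 1$ or $q \to +\infty$, which is acceptable since in the application (the cube) $q_0 = 2^\nu$ is a fixed integer exponent and $p_0$ its conjugate, both bounded away from $1$ and $\infty$ once $\nu$ is chosen.
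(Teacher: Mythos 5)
The paper does not actually prove Proposition~\ref{PisierSG}: it quotes Pisier~\cite{PisierHSG} for the dimension-free analyticity and uses the resulting bound on the homogeneous parts $H_k$ (as Bourgain derives it in~\cite{BourgainCube}) as a black box, with no proof given in the text. Your proposal reconstructs the intended two-step argument --- invoke Pisier's theorem for the sector, then extract $\|H_k\|_{q\to q} \le \rho^{-k}\sup_{|z|=\rho}\|P_z\|_{q\to q}$ by Cauchy's formula in the variable $z = \e^{-t}$ --- and that is indeed the right route.

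There is, however, one genuine error in the contour step, together with a misleading framing. The replacement contour you propose (two arcs at radii $\rho_1 < \rho_2$ joined by segments along $\arg z = \pm\varphi$) does not wind around $z = 0$: it bounds an annular sector disjoint from the origin, so $\frac{1}{2\pi\ii}\oint z^{-k-1}P_z\,\d z$ over it is $0$ by Cauchy's theorem, not $H_k$. Luckily no such workaround is needed. The worry that the full circle ``leaves the sector near the negative real axis'' comes from reading $\Omega_{\varphi_q}$ as a sector in the $z$-plane, but it is a sector in the semigroup time $t$; under $z = \e^{-t}$ its image is not a $z$-sector and in fact contains the entire circle $\{|z| = \rho\}$ as soon as $\rho$ is small, because $z = \rho\e^{\ii\psi}$ with $|\psi| \le \pi$ corresponds to $t = -\log\rho - \ii\psi$ with $|\arg t| \le \arctan\bigl(\pi/|\log\rho|\bigr) < \varphi_q$ once $\rho < \e^{-\pi/\tan\varphi_q}$. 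Fixing such a $\rho$ (depending only on $q$), the plain Cauchy formula on $|z| = \rho$ gives $\|H_k\| \le \rho^{-k} C_q$ where $C_q = \sup_{|z| = \rho}\|P_z\|_{q \to q}$ is a compact-set bound from Pisier's theorem, independent of $n$, and $h_q = \max(1, C_q)/\rho$ does the job. Secondly, the set-up via a ``scalar case'' of Pisier's theorem is misleading: for a single projection $E$ the pencil $E + z(I - E)$ is trivially entire and uniformly bounded, and no substantive analyticity is being proved there. The whole content of Pisier's theorem is exactly the $n$-uniform sector and compact-set bound for the product, a global statement that is not obtained by any one-factor argument; you do eventually treat it as a black box and check the hypotheses, which is what matters, although the attribution to ``the numerical range of $2E - I$'' is not really Pisier's mechanism (his proof goes through $K$-convexity of the underlying space).
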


 That $h_q \ge 1$ can be seen on any example
$P_t \ms1 f = E_1 f + \e^{-t} ( f - E_1 f )$ with $n = 1$
and $E_1 \ne I$. Then
$H_1$ is the projector $I - E_1 \ne 0$, hence 
$h_q \ge \|H_1\|_{q \rightarrow q} \ge 1$.
If $(E_{j, s})_{j=1}^n$, $s \in [0, 1]$, is a family of such conditional
expectations, where $E_{j, s}$ and $E_{k, t}$ commute for all $j \ne k$ and
all $s, t \in [0, 1]$, and if we set for example
\[
 U_j = \int_0^1 E_{j, s} \, \d s,
 \quad j = 1, \ldots, n,
\]
then we see that
\[
   Q_t
 = \prod_{j=1}^n \bigl( \e^{-t} I + (1 - \e^{-t}) U_j \bigr)
 = \int_{[0, 1]^n} P_{t, s_1, \ldots, s_n} \, \d s_1 \ldots \d s_n,
\]
where each
$
   P_{t, s_1, \ldots, s_n}
 = \prod_{j=1}^n \bigl( \e^{-t} I + (1 - \e^{-t}) E_{j, s_j} \bigr)
$
is of \og Pisier type\fg. Also, the corresponding homogeneous parts are of
the form
\[
   \widetilde H_k
 = \sum_{J \in \Sigma_k} \gr U^{\sim J} (\gr I - \gr U)^J
 = \int_{ [0, 1]^n } 
    \sum_{J \in \Sigma_k} \Bigl( \prod_{i \notin J} E_{i, s_i} \Bigr)
     \Bigl( \prod_{j \in J} (I - E_{j, s_j}) \Bigr)
      \, \d s_1 \ms1 \d s_2 \ldots \d s_n 
\]
that are averages of terms $H_k(s_1, \ldots, s_n)$ bounded by $h_q^k$
according to Proposition~\ref{PisierSG}. The result of
Proposition~\ref{PisierSG} generalizes thus to families such as
$(U_j)_{j=1}^n$.
\dumou

 We shall apply Proposition~\ref{PisierSG} to operators $(E_j)_{j=1}^n$ of
conditional expectation on $L^q(\R^n)$, where each $E_j$ is acting in the
$x_j$~variable and $1 \le j \le n$. For one variable and $s_0 \in \R$
fixed, we associate to a locally integrable function $f$ on~$\R$ its
averages on length one intervals $I_r = [s_0 + r, s_0 + r + 1)$, 
$r \in \Z$, defining $E_{s_0}$ by
\[
   (E_{s_0} f)(v)
 = \sum_{r \in \Z} \Bigl( \int_{I_r} f(s) \, \d s \Bigr)
    \ms2 \gr 1_{I_r}(v),
 \quad
 v \in \R.
\]
This operator is a conditional expectation, as considered in
Remark~\ref{InfiniProba}. We define operators $E_{j, s_0}$, 
$j = 1, \ldots, n$, on $L^1_{\mathrm{loc}}(\R^n)$ by the analogous formula,
acting on the~$x_j$ variable. When $j = 1$ for example, we let
\[
   (E_{1, s_0} f)(x_1, x_2, \ldots, x_n)
 = \sum_{r \in \Z} 
    \Bigl( \int_{I_r} f(s, x_2, \ldots, x_n) \, \d s \Bigr)
     \ms2 \gr 1_{I_r}(x_1).
\]
Averaging on values of $s_0$, one can replace the $E_j\ms1$s by
convolution operators with probability densities $\chi$ on $\R$ of the form
\begin{equation}
 \chi(x) = \int_\R \gr 1_{[s, s+1]} (x) \, \d \nu(s),
 \quad
 x \in \R,
 \label{ChiFunction}
\end{equation}
where $\nu$ is a probability measure on the line. We see that 
$\chi(x) = F(x) - F(x - 1)$, with $F(x) = \nu[ (-\infty, x) ]$
non-decreasing, $F(-\infty) = 0$ and $F(+\infty) = 1$. One can also proceed
to changes of scale. Summarizing, we have the lemma that follows.
\dumou

\begin{lem}[Bourgain~\cite{BourgainCube}, Lemma~5]%
\label{CubeLemme5}
Let $\chi$ be a compactly supported probability density on\/~$\R$ of the 
form\/~\eqref{ChiFunction}. Denote by $T_j$ the convolution operator with
$\Di \chi {t_j}$ in the $x_j$ variable, $t_j > 0$, $j = 1, \ldots, n$.
For\/ $0 \le k \le n$, the norm of the operator
\[
    H_k
 := \sum_{S \in \Sigma_k} \gr T^{\sim S} (\gr I - \gr T)^S
\]
on $L^q(\R^n)$ is bounded by $h_q^k$, with\/ $1 < q < +\infty$ and $h_q$
from Proposition~\ref{PisierSG}.
\end{lem}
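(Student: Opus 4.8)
The plan is to deduce Lemma~\ref{CubeLemme5} from Proposition~\ref{PisierSG} by realizing each $T_j$ as a single conditional expectation $E_{j, s_0}$ after averaging over a shift parameter $s_0$ and rescaling, and then invoking the ``averaging'' extension of Proposition~\ref{PisierSG} already described right after its statement (the one that upgrades the homogeneous-part bound $h_q^k$ from ``Pisier type'' families $(E_{j})$ to averaged families $(U_j)$). First I would record the reduction to the unshifted, unscaled situation: by the dilation invariance of convolution operators on $L^q(\R^n)$ (the operator norm of convolution with $\Di \chi {t_j}$ in the $x_j$ variable is independent of $t_j > 0$, since the scaling map $S_{A, q}$ from Section~\ref{TheSetting} with $A = \mathrm{diag}(t_1, \dots, t_n)$ is an onto isometry of $L^q(\R^n)$ conjugating the rescaled operators to the original ones and commuting with the products $\gr T^{\sim S}$ and $(\gr I - \gr T)^S$ because it commutes with each $T_j$), it suffices to treat $t_1 = \dots = t_n = 1$.

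Next I would analyze the structure of $\chi$. Writing $\chi(x) = \int_\R \gr 1_{[s, s+1]}(x)\,\d\nu(s)$ with $\nu$ a probability measure on the line, convolution with $\Di \chi {1}$ in the $x_j$ variable is the average over $s_0 \sim \nu$ of convolution with $\gr 1_{[s_0, s_0 + 1]}$ in the $x_j$ variable. I would identify convolution with $\gr 1_{[s_0, s_0+1]}$ in the $x_j$ variable with the conditional expectation operator $E_{j, -s_0}$ acting on $L^q(\R^n) \subset L^1_{\mathrm{loc}}(\R^n)$: indeed $(E_{j, s_0} f)$ replaces $f$, in the $x_j$ variable, by its average on each interval $[s_0 + r, s_0 + r + 1)$, and this is exactly $\gr 1_{[\cdot, \cdot + 1]} * f$ evaluated ``continuously'' only if we average over $s_0$ — so more precisely I take $U_j := \int_\R E_{j, s_0}\,\d\nu(-s_0) = T_j$. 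Each $E_{j, s_0}$ is a conditional expectation on $L^q(\R^n)$ with respect to an infinite $\sigma$-finite measure in the sense of Remark~\ref{InfiniProba}; the projectors $E_{j, s_0}$ and $E_{k, t_0}$ commute for $j \ne k$ since they act on different coordinates, and all lie in the Pisier framework. Thus the averaged family $(U_j)_{j=1}^n$ is precisely of the type $\bigl(\int_0^1 E_{j, s}\,\d s\bigr)$ discussed after Proposition~\ref{PisierSG} (up to relabelling the averaging measure from Lebesgue on $[0,1]$ to $\nu$, which changes nothing in the argument), and the associated homogeneous part
\[
 \widetilde H_k
 = \sum_{S \in \Sigma_k} \gr U^{\sim S} (\gr I - \gr U)^S
\]
is an average of terms $H_k(s_1, \dots, s_n)$ each bounded by $h_q^k$ on $L^q(\R^n)$. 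Since $\widetilde H_k$ is literally the operator $H_k$ in the statement of Lemma~\ref{CubeLemme5}, its norm is $\le h_q^k$, as desired.

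The main obstacle I anticipate is purely bookkeeping rather than conceptual: making the identification $T_j = \int E_{j, s_0}\,\d\nu$ fully rigorous on $L^q(\R^n)$ with $1 < q < +\infty$, given that the individual $E_{j, s_0}$ are conditional expectations for an \emph{infinite} measure and only the average is a nice convolution operator. One must check that $E_{j, s_0}$ is a well-defined bounded (norm-one) operator on $L^q(\R^n)$ — which follows from Remark~\ref{InfiniProba} and the $\sigma$-finiteness of the relevant sub-$\sigma$-field together with Jensen's inequality on each interval — and that the vector-valued integral $\int_\R E_{j, s_0} f\,\d\nu(-s_0)$ converges in $L^q(\R^n)$ to $\Di\chi{1} * f$ in the $x_j$ variable, which is routine by Fubini and the continuity of translation on $L^q$. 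The commutation relations needed to expand the product semi-group $P_t = \prod_j(E_j + \e^{-t}(I - E_j))$ and read off the $H_k$ — namely that $E_{j, s}$ and $E_{k, t}$ commute for $j \ne k$ — are immediate from their action on distinct coordinates. Once these verifications are in place, Proposition~\ref{PisierSG} and its averaging corollary deliver the bound $h_q^k$ with $h_q$ independent of $n$, completing the proof.
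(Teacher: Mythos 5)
Your reduction to $t_1 = \ldots = t_n = 1$ by conjugation with a diagonal dilation, and the plan of invoking the averaging extension of Proposition~\ref{PisierSG} applied to the conditional expectations $E_{j, s_0}$, match the paper's tersely sketched approach. However, the central identity you rely on,
\[
 T_j = \int_\R E_{j, s_0} \, \d\nu(-s_0),
\]
fails for a general probability measure $\nu$ in~\eqref{ChiFunction}. Each $E_{j, s_0}$ is a self-adjoint projection on $L^2(\R^n)$, so any mixture $\int E_{j, s_0}\,\d\mu(s_0)$ is self-adjoint with a symmetric kernel; were it the convolution $\chi *_j$, the kernel would be $\chi(x - y)$, forcing $\chi$ to be even, which already rules out $\chi = \gr 1_{[0,1]}$ (the case $\nu = \delta_0$). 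More quantitatively, in one variable the kernel of $\int E_{s_0}\,\d\mu(s_0)$ is
\[
 K_\mu(x, y)
 = \mu\bigl(\{s_0 : \lfloor x - s_0 \rfloor = \lfloor y - s_0 \rfloor\}\bigr)
 = \mu_y\bigl( [0, \, 1 - |x - y|) \bigr)
\]
for $|x - y| < 1$, where $\mu_y$ is the pushforward of $\mu$ by $s_0 \mapsto (y - s_0) \bmod 1$; this depends only on $x - y$ if and only if $\mu$ is uniform modulo~$1$, in which case a direct computation gives $K_\mu(x, y) = (1 - |x - y|)_+ = \eta(x - y)$. Your identification therefore holds exactly when $\nu$ is uniform on a unit-length interval, i.e.\ when $\chi = \eta$.

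So what your argument proves is the case $\chi = \eta$ — which is in fact the only one used afterwards, where $T_j$ is convolution with $\Di\eta{w_0}$ — and, by also averaging over the cell length, the case of mixtures $\chi = \int_0^\infty \Di\eta\lambda\,\d\rho(\lambda)$ of dilated triangles (the \og changes of scale\fge evoked just before the lemma). It does not cover Lemma~\ref{CubeLemme5} as literally stated: $\chi = \gr 1_{[-1/2, 1/2]}$, coming from $\nu = \delta_{-1/2}$, is an admissible input of the form~\eqref{ChiFunction}, yet it is discontinuous, hence not a mixture of dilated triangles, and cannot arise as $\int E_{j, s_0}\,\d\mu(s_0)$ for any $\mu$. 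Closing the gap would require either restricting the class of $\chi$ to such mixtures, or representing a general $T_j$ as an average of commuting conditional expectations by a different device. The survey's own two-sentence sketch is silent on exactly this point, so in fleshing it out you have exposed a genuine lacuna rather than misrecalled the argument.
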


 In what follows, we denote by~$T_j$, $j = 1, \ldots, n$, the convolution in
the $x_j$ variable on $L^q(\R^n)$ by $\Di \eta {w_0} (x_j)$, where 
$w_0 = R^{-\delta / 2}$\label{Wsub0} 
will stay fixed and where
\[
   \eta(x) = (1 - |x|)_+
 = \int_{-1/2}^{1/2} \gr 1_{[-1/2 + s, 1/2+s]}(x) \, \d s
 = (\gr 1_{[-1/2, 1/2]} * \gr 1_{[-1/2, 1/2]})(x).
\]
Since $\eta$ is a convolution square, $\widehat \eta$ is real and
nonnegative. We have
\[
   \widehat \eta(t)
 = \Bigl( \frac {\sin(\pi t)} {\pi t} \Bigr)^2,
 \ms{20} \hbox{and} \ms{20}
   \widehat \eta {\ms3}''(t)
 = - 4 \pi^2 \int_\R s^2 (1 - |s|)_+ \cos(2 \pi s t) \, \d s
\]
for every $t \in \R$, thus
\[
     |\widehat \eta {\ms3}''(t)|
 \le 8 \pi^2 \int_0^1 s^2 (1 - s) \, \d s
  =  \frac {8 \pi^2 \ns7} {12} \ms3
  <  8.
\]
By the Taylor formula we get
\begin{equation}
 0 \le 1 - \widehat \eta(t) \le (4 t^2) \wedge 1.
 \label{Eta}
\end{equation}
For every subset $S \subset N := \{1, \ldots, n\}$ let us set 
\begin{equation}
 \Gamma^S = \gr T^{\sim S} \ms1 (\gr I - \gr T)^S.
 \label{DefGammaS}
\end{equation}
The homogeneous parts $(H_k)$ in 
$Q_t = \prod_{j=1}^n \bigl( \e^{-t} I + (1 - \e^{-t}) T_j \bigr)$ have
the form
\[
 H_k = \sum_{S \in \Sigma_k} \Gamma^S,
 \quad
 0 \le k \le n,
 \ms{18} \hbox{and} \ms{12}
 \sum_{k=0}^n H_k = I.
\]
In particular,
$
 H_0 = \Gamma^{\emptyset} = \gr T^N = \prod_{j=1}^n T_j
$
has norm $\le 1$ on every space $L^q(\R^n)$, for $1 \le q \le +\infty$,
since $H_0$ is the convolution with the product probability density
$\prod_{j=1}^n \Di \eta {w_0} (x_j)$. When $1 < q < +\infty$ and 
$1 \le k \le n$, we have
$
 \|H_k\|_{q \rightarrow q} \le h_q^k
$
by Proposition~\ref{PisierSG}. It is convenient to
set $H_k = 0$ below when $k > n$.
\dumou

 To every given function~$g$ in $L^q(\R^n)$, we shall apply a decomposition
of the form
$
 g = H_0 \ms1 g + \cdots + H_{M-1} \ms1 g + h
$,
and consider the corresponding expression
\begin{equation}
   \nabla \mu^R * g 
 = \nabla \mu^R * H_0 \ms1 g + \cdots + \nabla \mu^R * H_{M-1} \ms1 g
    + \nabla \mu^R * h,
 \label{decomp}
\end{equation}
where $M \ge 1$ will be chosen as a function of the already fixed $p_0$ and
$\delta > 0$. We have to estimate in $L^{q_0}(\R^n)$ the successive terms
in~\eqref{decomp}. The function~$h$ is considered as a small rest, the
mapping $g \mapsto \nabla \mu^R * h$ will be handled in $L^2(\R^n)$ by a
Fourier estimate, and in some $L^{q_1}(\R^n)$, $q_1 > q_0$, as a
consequence of Proposition~\ref{PisierSG}. We choose~$M$ large enough for
deducing from
$
     \| \nabla \mu^R * h \|_2 
 \le \kappa \ms1 R^{1 - \delta M / 2} \ms2 \|g\|_2
$
and
$
     \| \nabla \mu^R * h \|_{q_1}
 \le \kappa \ms1 R \ms2 \|g\|_{q_1}
$
that one has by interpolation
\begin{equation}
     \| \nabla \mu^R * h \|_{q_0}
 \le \kappa(q_0, \delta) \ms1 \|g\|_{q_0},
 \label{RestH}
\end{equation}
which is just perfect in the direction of~\eqref{Objective}. Recall that
$\mu^R_j$ denotes the $j$th partial derivative 
$\partial_j \mu^R = (\partial_j \mu_Q) * G^R$ of $\mu^R$, so that
$
   | \nabla \mu^R * h |^2
 = \sum_{j=1}^n |\mu_j^R * h|^2
$.
\def\vI{\vphantom{\vrule height 8.50pt depth 0pt width 0pt}\ns{2.1} }%
\dumou

 We factor the mapping $g \mapsto \nabla \mu^R * h$ into
$U_{\vI K^R} : h \mapsto \nabla \mu^R * h$ and $A : g \mapsto h$,
\textit{i.e.}, $A = I - H_0 - \cdots - H_{M-1} = \sum_{k \ge M} H_k$. We
look for estimates in $L^2$ and~$L^q$, $q_0 < q < +\infty$. For 
$U_{\vI K^R}$ we use Lemma~\ref{LemmeUK} and get by~\eqref{SigmaR}
and~\eqref{EasyMuller} that
\[
     \| U_{\vI K^R} \|_{q \rightarrow q}
 \le 2^{1 / q} \ms2 \sigma_R^{-2 / q} \ms2 V(K^R)^{1 - 2/q}
 \le (24)^{1/q} \ms1 R^{1 - 2/q}
  <  5 \ms2 R
\]
since $q \ge 2$.
On the other hand, by Lemma~\ref{CubeLemme5}, the mapping $A : g \mapsto h$
is bounded in $L^q(\R^n)$ by 
$1 + \sum_{k=0}^{M-1} h_q^k \le (M + 1) h_q^{M-1}$.
It follows that
\begin{equation}
     \| \nabla \mu^R * h \|_q
  =  \| U_{\vI K^R} h \|_q
 \le 5 \ms1 R \ms2 \|h\|_q
 \le 5 R \ms2 
      (M + 1) h_q^{M-1} \ms1 \|g\|_q.
 \label{hDansLq}
\end{equation}

 This is also valid when $q = 2$, but the point is that we will then get a
much better bound by factoring now $g \mapsto \nabla \mu^R * h$ as
$U_{\vI G^R} \circ B$, with $U_{\vI G^R} : f \mapsto \nabla G^R * f$ and 
$B : g \mapsto \mu_Q * A g = \mu_Q * h$. We begin by estimating
\[
   \| \mu_Q * h \|_2
 = \Bigl\|
      \mu_Q * \Bigl( \sum_{k \ge M} H_k \Bigr) g 
   \Bigr\|_2
 = \Bigl\|
      \mu_Q * \Bigl( \sum_{|S| \ge M} \Gamma^S \Bigr) g 
   \Bigr\|_2.
\]
One needs to control the $L^\infty(\R^n)$~norm of the function 
$\xi \mapsto L(\xi)$, where $L$ is the multiplier associated to the mapping
$B$. It is the aim of the next lemma. One sees that
\[
    L(\xi)
 := \Bigl( \prod_{j=1}^n \frac {\sin(\pi \xi_j)} {\pi \xi_j} \Bigr)
      \Bigl( \sum_{|S| \ge M}
       \prod_{j \notin S} \widehat \eta(w_0 \xi_j)
        \prod_{j \in S} \bigl( 1 - \widehat \eta(w_0 \xi_j) \bigr) 
     \Bigr).
\]

\def\CiteA{\cite[Equations (2.9), (2.11)]{BourgainCube}}
\begin{lem}[after Bourgain~\CiteA]%
\label{GrosCalcul}
For\/ $0 \le u \le 1 / 4$ and every $\xi \in \R^n$, one has that
\[
     \Bigl|
      \Bigl(
       \prod_{j=1}^n \frac {\sin(\pi \xi_j)} {\pi \xi_j} 
      \Bigr)
       \Bigl( \sum_{|S| \ge M}
        \prod_{j \notin S} \widehat \eta(u \xi_j)
        \prod_{j \in S} \bigl( 1 - \widehat \eta(u \xi_j) \bigr) 
       \Bigr)
     \Bigr|
 \le u^M.
\]
\end{lem}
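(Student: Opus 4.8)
The plan is to push the one–variable sinc factors into the sum and reduce everything to a single scalar pointwise inequality. Write $\phi(t)=\dfrac{\sin\pi t}{\pi t}$, so that $\widehat\eta=\phi^{2}$ since $\eta$ is a convolution square. For each $j$ put
\[
 g_j=\phi(\xi_j)\,\widehat\eta(u\xi_j),\qquad
 f_j=\phi(\xi_j)\bigl(1-\widehat\eta(u\xi_j)\bigr),
\]
so $g_j+f_j=\phi(\xi_j)$ and $|g_j|+|f_j|=|\phi(\xi_j)|\le 1$. Splitting $\prod_{j=1}^{n}\phi(\xi_j)=\prod_{j\notin S}\phi(\xi_j)\prod_{j\in S}\phi(\xi_j)$ inside each term, the left–hand side of the lemma equals $\bigl|\sum_{|S|\ge M}\prod_{j\notin S}g_j\prod_{j\in S}f_j\bigr|$, hence is at most $\sum_{|S|\ge M}\prod_{j\notin S}|g_j|\prod_{j\in S}|f_j|$.

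Granting the pointwise bound
\[
 |g_j|+\frac{|f_j|}{u}
  =|\phi(\xi_j)|\Bigl(\widehat\eta(u\xi_j)+\frac{1-\widehat\eta(u\xi_j)}{u}\Bigr)
  \le 1 \qquad (\xi_j\in\R,\ 0<u\le 1/4),
\]
I would conclude as follows: for $|S|\ge M$ write $\prod_{j\in S}|f_j|=u^{|S|}\prod_{j\in S}(|f_j|/u)\le u^{M}\prod_{j\in S}(|f_j|/u)$ because $0<u<1$, whence
\[
 \sum_{|S|\ge M}\prod_{j\notin S}|g_j|\prod_{j\in S}|f_j|
  \le u^{M}\sum_{S\subseteq\{1,\dots,n\}}\prod_{j\notin S}|g_j|\prod_{j\in S}\frac{|f_j|}{u}
  =u^{M}\prod_{j=1}^{n}\Bigl(|g_j|+\frac{|f_j|}{u}\Bigr)\le u^{M},
\]
which is the assertion. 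This is exactly a Bernstein/Chernoff–type extraction of the factor $u^{M}$, and it works only because the per–coordinate quantity $|g_j|+|f_j|/u$ is uniformly $\le 1$; the ordinary Chernoff bound on $\#\{j:X_j=1\}$ alone would produce a useless $2^{n}$–type loss, which is why the sinc product must be kept in the estimate.

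It remains to prove the scalar inequality, which after multiplying by $u$ reads $|\phi(\xi)|\bigl(1-(1-u)\widehat\eta(u\xi)\bigr)\le u$. Set $a=|\phi(\xi)|\in[0,1]$; since $1-(1-u)\widehat\eta(u\xi)\le 1$ the case $a\le u$ is trivial, so assume $a>u$, where the inequality is equivalent to $\widehat\eta(u\xi)\ge\frac1{1-u}\bigl(1-\frac ua\bigr)$. I would split on $|\xi|$. If $|\xi|\ge 1$ then $a\le\frac1{\pi|\xi|}\le\frac1\pi$ and $|u\xi|\le\frac u{\pi a}<\frac1\pi$, so by \eqref{Eta} $\widehat\eta(u\xi)\ge 1-4u^{2}\xi^{2}\ge 1-\frac{4u^{2}}{\pi^{2}a^{2}}$, and clearing denominators reduces the required bound to $\pi^{2}a(1-a)\ge 4u(1-u)$, which holds since $a>u$ and $a\le\frac1\pi$ give $\frac{4u(1-u)}{\pi^{2}a(1-a)}=\frac4{\pi^{2}}\cdot\frac ua\cdot\frac{1-u}{1-a}<\frac4{\pi(\pi-1)}<1$. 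If $|\xi|<1$ then $\phi(\xi)>0$ and $|u\xi|<u\le\frac14$, so again $\widehat\eta(u\xi)\ge 1-4u^{2}\xi^{2}$, and clearing denominators turns the bound into $1-\phi(\xi)\ge 4u(1-u)\,\phi(\xi)\,\xi^{2}$; writing $x=\pi|\xi|\in(0,\pi)$ and using $u(1-u)\le\frac3{16}$ (valid for $u\le\frac14$), this follows from $x-\sin x\ge\frac3{4\pi^{2}}x^{2}\sin x$, itself a consequence of the elementary estimates $x-\sin x\ge\frac{x^{3}}{6}\bigl(1-\frac{x^{2}}{20}\bigr)\ge\frac{20-\pi^{2}}{120}x^{3}$ and $x^{2}\sin x\le x^{3}$ together with $\frac{20-\pi^{2}}{120}>\frac3{4\pi^{2}}$.

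I expect this last scalar inequality to be the only real obstacle: the constants barely close, so one is forced to use the quadratic bound \eqref{Eta} for $1-\widehat\eta$ rather than its exact value, to treat the regimes $|\xi|<1$ and $|\xi|\ge 1$ separately, and to bring in Taylor bounds for $\sin$; this is also the single place where the hypothesis $u\le 1/4$ is genuinely used (through $|u\xi|<\frac14$ when $|\xi|<1$ and through $u(1-u)\le\frac3{16}$). By comparison, the combinatorial reduction in the first two paragraphs is routine.
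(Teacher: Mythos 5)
Your proof is correct and follows the same strategy as the paper's: both reduce to a per-coordinate scalar inequality of the form $|\phi(\xi_j)|\bigl(\widehat\eta(u\xi_j)+u^{-1}(1-\widehat\eta(u\xi_j))\bigr)\le 1$, then expand the product $\prod_j(|g_j|+u^{-1}|f_j|)$ over subsets $S$ and extract $u^M$ from the terms with $|S|\ge M$. The only difference is organizational: the paper establishes the slightly stronger bound $|\phi(t)|\bigl(1+u^{-1}[(4u^2t^2)\wedge 1]\bigr)\le 1$ by splitting on $t$ against $1$ and $1/(2u)$, whereas you split on $|\phi(\xi)|$ against $u$ and on $|\xi|$ against $1$, but the inputs (\eqref{Eta} and the Euler/Taylor estimate for $\sin$) are the same.
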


\begin{proof}
We know from~\eqref{Eta} that $0 \le \widehat \eta(t) \le 1$ and
$1 - \widehat \eta(t) \le (4 t^2) \wedge 1$. We introduce 
$v = 1 / u \ge 4$ and begin by checking that for every~$t \ge 0$, we
have
\[
     X(t)
  := \Bigl| \frac {\sin(\pi t)} {\pi t} \Bigr|
      \bigl( 1 + v \ms1 [ (4 u^2 t^2) \wedge 1 ] \bigr)
 \le 1.
\]
Consider first the case $0 \le t \le 1 / (2u)$. One has then
$4 u^2 t^2 \le 1$ and it follows that 
$1 + v [ (4 u^2 t^2) \wedge 1 ] = 1 + 4 u t^2$. If in addition
$0 \le t \le 1$, then, for example by the Euler product
formula~\eqref{EulerFormula}, we have 
$
     \bigl| {\sin(\pi t)} \ms1\big/\ms1 {\pi t} \bigr|
 \le 1 - t^2,
$
and since $4 \ms1 u \le 1$ by assumption, we get
\[
     X(t)
 \le (1 - t^2)(1 + 4 u t^2)
 \le (1 - t^2)(1 + t^2)
 \le 1.
\]
When $1 < t \le 1 / (2u)$, we have 
\[
     \Bigl| \frac {\sin(\pi t)} {\pi t} \Bigr|
      \bigl( 1 + 4 u t^2 \bigr)
 \le \frac {1 + 4 u t^2} {\pi t}
  =  \frac 1 \pi \bigl( 1/t + 4 u t \bigr)
 \le \frac 3 \pi 
  <  1.
\]
In the second case, when $2 u t > 1$, we can write
\[
     X(t)
 \le \frac {1 + v} {\pi t}
 \le \frac { 2 u (1 + v)} {\pi}
 \le \frac { 1/2 + 2} {\pi}
  <  1.
\]
Expanding the product $\prod_{j=1}^n X(\xi_j)$ and since $X$ is even, one
sees that
\begin{align*}
     1
 &\ge \prod_{j=1}^n X(\xi_j)
  =  \prod_{j=1}^n \ms2 \Bigl| \frac {\sin (\pi \xi_j)} {\pi \xi_j} \Bigr|
      \bigl( 1 + v [ (4 u^2 \xi_j^2) \wedge 1 ] \bigr)
 \\
 &\ge \prod_{j=1}^n \ms2 \Bigl| \frac {\sin (\pi \xi_j)} {\pi \xi_j} \Bigr|
      \Bigl( \widehat \eta(u \xi_j)
         + v \bigl( 1 - \widehat \eta(u \xi_j) \bigr) \Bigr)
 \\
 &\ge v^M \ms2
      \Bigl|
       \Bigl( \prod_{j=1}^n \frac {\sin (\pi \xi_j)} {\pi \xi_j} \Bigr)
       \Bigl(
        \sum_{|S| \ge M}
         \prod_{j \notin S} \widehat \eta(u \xi_j)
          \prod_{j \in S} \bigl( 1 - \widehat \eta(u \xi_j) \bigr)
       \Bigr)
      \Bigr|.
\end{align*}
\end{proof}

 By Lemma~\ref{LemmeUK}, we have that 
$\|U_{\vI G^R}\|_{2 \rightarrow 2} \le \sqrt \pi \ms1 R < 2 \ms1 R$,
because the variance of $G^R$ is $2 \pi^{-1} R^{-2}$. Let us define 
$R_0$\label{Rzero}
by $R_0^{\delta/2} = 4$. If $R \ge R_0$, then $w_0 = R^{-\delta/2} \le 1/4$,
we obtain from Lemma~\ref{GrosCalcul} with $u = w_0$ the final control
\[
     \| \nabla \mu^R * h \|_2
  =  \| U_{\vI G^R} (\mu_Q * h) \|_2
 \le 2 \ms1 R \ms2 \|\mu_Q * h\|_2
 \le 2 \ms1 R \ms2 R^{-\delta M/2} \ms1 \|g\|_2.
\]
We use now~\eqref{hDansLq} with for example $q = q_1 = 2 q_0 / p_0 > q_0$.
Letting $\theta = 1 / p_0$, we have
$(1 - \theta) / 2 + \theta / q_1 = 1 / q_0$ and we see by interpolation for
$g \mapsto \nabla \mu^R * h$ that
\[
     \| \nabla \mu^R * h \|_{q_0} 
 \le \bigl(
      2 \ms1 R^{ - \delta M / 2 } 
     \bigr)^{1 / q_0} \ms2 
      R \ms2
       \bigl( 5 \ms1 (M + 1) h_{q_1}^{M-1} \bigr)^{1/p_0} \ms2
        \|g\|_{q_0}.
\]
We select $M = M(\delta) = \lceil 2 q_0 / \delta \rceil$, so that
$\delta M / (2 q_0) \ge 1 $. When $R \ge R_0$ we get
\begin{equation}
     \|\nabla \mu^R * h \|_{q_0} 
 \le \kappa_{q_0, \delta} \|g\|_{q_0}
 \ms{14} \hbox{with} \ms{10}
     \kappa_{q_0, \delta} 
 \le 5 \ms1 (2 + 2 q_0 / \delta)^{1/p_0}
      h_{2 q_0 / p_0}^{ 2 q_0 / (\delta p_0) }.
 \label{EstimeH}
\end{equation}
In what follows we assume that $R \ge R_0$, hence $R^\delta \ge 16$. In the
conclusion section, we shall need the following bound for a Fourier
transform.

\begin{lem}%
\label{FourBound2}
For every $r \in \R$, $\ell \ge 1$ and all 
$\xi = (\xi_1, \ldots, \xi_\ell) \in \R^\ell$, one has that
\[
     (1 - \e^{ - r^2 |\xi|^2}) \ms2
      \prod_{j = 1}^\ell \widehat \eta(\xi_j)
 \le r^2.
\] 
\end{lem}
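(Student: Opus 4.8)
The plan is to reduce the lemma to one dimension‑free pointwise inequality about $\widehat\eta$, and then settle that inequality with Euler's product formula together with a telescoping identity.

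First I would reduce. Since $0\le 1-\e^{-s}\le\min(s,1)$ for every $s\ge 0$, taking $s=r^2|\xi|^2$ gives, for $\xi\neq 0$ (the case $\xi=0$ being trivial),
\[
\bigl(1-\e^{-r^2|\xi|^2}\bigr)\prod_{j=1}^\ell\widehat\eta(\xi_j)
\le \min\bigl(r^2|\xi|^2,\,1\bigr)\prod_{j=1}^\ell\widehat\eta(\xi_j).
\]
Hence it suffices to prove the bound
\[
|\xi|^2\prod_{j=1}^\ell\widehat\eta(\xi_j)\le 1,\qquad \xi\in\R^\ell,
\]
because the right‑hand side above is then at most $\min(r^2|\xi|^2,1)\,|\xi|^{-2}=\min\bigl(r^2,|\xi|^{-2}\bigr)\le r^2$.

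The key step is the one‑variable inequality $(1+t^2)\,\widehat\eta(t)\le 1$ for all $t\in\R$, equivalently $t^2\widehat\eta(t)\le 1-\widehat\eta(t)$. For $|t|\ge 1$ it follows at once from $\widehat\eta(t)=(\sin(\pi t)/(\pi t))^2\le(\pi t)^{-2}$, since then $(1+t^2)\widehat\eta(t)\le\pi^{-2}(1+t^{-2})\le 2\pi^{-2}<1$. For $|t|\le 1$ I would invoke the Euler product~\eqref{EulerFormula}, which gives $\widehat\eta(t)=(1-t^2)^2\prod_{n\ge 2}\bigl(1-t^2/n^2\bigr)^2$, whence
\[
(1+t^2)\,\widehat\eta(t)=(1-t^4)(1-t^2)\prod_{n\ge 2}\Bigl(1-\frac{t^2}{n^2}\Bigr)^{2}\le 1,
\]
because each of the factors $1-t^4$, $1-t^2$ and $1-t^2/n^2$ lies in $[0,1]$ when $|t|\le 1$ and $n\ge 2$. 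To finish the reduced bound, expand $|\xi|^2=\sum_j\xi_j^2$ and write
\[
|\xi|^2\prod_{k=1}^\ell\widehat\eta(\xi_k)
=\sum_{j=1}^\ell\bigl[\xi_j^2\widehat\eta(\xi_j)\bigr]\prod_{k\neq j}\widehat\eta(\xi_k)
\le\sum_{j=1}^\ell\bigl[1-\widehat\eta(\xi_j)\bigr]\prod_{k<j}\widehat\eta(\xi_k),
\]
using $\xi_j^2\widehat\eta(\xi_j)\le 1-\widehat\eta(\xi_j)$ and $\prod_{k\neq j}\widehat\eta(\xi_k)\le\prod_{k<j}\widehat\eta(\xi_k)$ (all factors lying in $[0,1]$). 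The last sum telescopes to $1-\prod_{k=1}^\ell\widehat\eta(\xi_k)\le 1$, which is exactly what we need.

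The main obstacle is conceptual rather than computational: the naive estimate $\prod_j\widehat\eta(\xi_j)\le 1$ only produces the useless bound $r^2\ell/\pi^2$, so one must keep the product and extract its decay, which is done precisely through the sharp pointwise inequality $(1+t^2)\widehat\eta(t)\le 1$ and the telescoping identity. Once that pointwise inequality is isolated, Euler's product disposes of it on $[-1,1]$, the part $|t|\ge 1$ is trivial, and everything else is routine.
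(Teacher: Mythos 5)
Your proof is correct, and it takes a genuinely different route from the paper's. The paper first replaces $\widehat\eta(t)$ by the larger $(1+t^2)^{-1}$ and then bounds $F(x)=(1-\e^{-r^2|x|^2})\prod_j(1+x_j^2)^{-1}$ by a critical-point (Lagrange-multiplier) analysis: at a maximum all nonzero coordinates $\ovx_j$ have the same square $y$, and with $k$ of them the stationarity condition yields $r^2(1+y)=\e^{kr^2y}-1$, from which $F(\ovx)\le k r^2 y(1+y)^{-k}\le r^2$. You instead prove the sharper, $r$-free pointwise inequality
\[
|\xi|^2\prod_{j=1}^\ell\widehat\eta(\xi_j)\le 1
\]
by expanding $|\xi|^2=\sum_j\xi_j^2$, applying the one-variable bound $\xi_j^2\widehat\eta(\xi_j)\le 1-\widehat\eta(\xi_j)$, enlarging $\prod_{k\ne j}$ to $\prod_{k<j}$, and telescoping; the lemma then drops out at once from $1-\e^{-s}\le s$. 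Your argument is purely algebraic, avoids calculus entirely, and isolates a cleaner intermediate statement; the paper's is shorter to set up but requires the reader to justify that the supremum of $F$ is attained (or taken in the limit) at a critical point. Both establish exactly the same constant $r^2$. One minor remark: your detour through $\min(r^2|\xi|^2,1)$ is unnecessary — once you have $|\xi|^2\prod_j\widehat\eta(\xi_j)\le 1$, the crude bound $1-\e^{-s}\le s$ alone gives $(1-\e^{-r^2|\xi|^2})\prod_j\widehat\eta(\xi_j)\le r^2|\xi|^2\prod_j\widehat\eta(\xi_j)\le r^2$ directly.
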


\begin{proof}
We observe first that
\[
     \widehat \eta(t) 
  =  \Bigl( \frac {\sin(\pi t)}{\pi t} \Bigr)^2
 \le \frac 1 {1 + t^2} \up.
\]
This is clear when $|t| \ge 1$ because $\widehat \eta(t) \le (\pi t)^{-2}$
and $1 + t^2 < \pi^2 t^2$ in this case. When $|t| \le 1$ we
have $\widehat \eta(t) \le |\sin (\pi t)| / |\pi t|
 \le (1 - t^2) \le (1 + t^2)^{-1}$ by~\eqref{EulerFormula}. It suffices
thus to bound for $x \in \R^\ell$ the expression
\[
     F(x)
  =  (1 - \e^{ - r^2 |x|^2}) \ms2
      \prod_{j = 1}^\ell \frac 1 {1 + x_j^2} 
 \ge 0.
\]
\def\ovx{\overline{x \ns{1.3}
 \raise0.7pt\hbox to0pt{\vphantom {$x$}}}\ms{1.3}}%
\def\ovxs{\overline{x \ns{1.2}
 \raise0.5pt\hbox to0pt{\vphantom 
 {$\scriptstyle x$}}}\ms{1.2} }%
The function $F$ tends to $0$ at infinity, we have at any maximum
$\ovx \ne 0$ that
\[
   \frac {2 \ms1 r^2 \ms1 \ovx_j \e^{ - r^2 |\ovxs|^2}} 
         {1 - \e^{ - r^2 |\ovxs|^2}}
 = \frac {2 \ms1 \ovx_j} {1 + {\ovx \ms{0.4}}_j^2} \up,
 \quad
 j = 1, \ldots, \ell.
\]
The nonzero coordinates of $\ovx$ have the same
square ${\ovx \ms{0.4}}_j^2 =: y > 0$, and if $k$ denotes their cardinality,
we have $0 < k \le \ell$ and $|\ovx|^2 = k \ms1 y$. It follows that
\[
     k \ms1 r^2 y
 \le \e^{ k \ms1 r^2 y} - 1 
  =  r^2 (1 + y)
 \le r^2 (1 + y)^k.
\]
Finally, we have $F(\ovx) = (1 - \e^{ - k \ms1 r^2 y})(1 + y)^{-k}
 \le k \ms1 r^2 y (1 + y)^{-k} \le r^2$.
\end{proof}

\subsubsection{Decoupling%
\label{Decoupling}}

\noindent
We have to analyze each of the expressions $\nabla \mu^R * H_k g$
in~\eqref{decomp}, for $0 \le k < M$. When $1 \le k < M$, we handle this by
a decoupling argument that will allow us to essentially reduce to the cases
where $k = 0, 1$, but in a dimension $\ell \le n$. Before proceeding by a
Bourgainian technique of \og selectors\fg, we split 
\[
    \bigl| \nabla \mu^R * H_k g \bigr|
  = \Bigl( \sum_{j=1}^n |\mu_j^R * H_k g|^2 \Bigr)^{1/2}
  = \Bigl( 
     \sum_{j=1}^n \ms8
      \Bigl|
       \mu_j^R * \Bigl( \sum_{S \in \Sigma_k} \Gamma^S g \Bigr) 
      \Bigr|^2 
    \Bigr)^{1/2}
\]
into two. For each $j$ in $\{1, \ldots, n\}$, let $\Sigma_k^j$ and
$\Sigma_k^{\sim j}$ denote respectively the family of subsets $S$ of
$\{1, \ldots, n\}$ with cardinality $|S| = k$ containing $j$, resp. such
that $j \notin S$. Then $\bigl| \nabla \mu^R * H_k g \bigr|$ is bounded by
the sum of the two expressions
\begin{subequations}\label{E0split}
\begin{equation}
    \gr E_k(R, n, g)
 := \Bigl( 
     \sum_{j=1}^n \ms8
      \Bigl|
       \mu_j^R * \Bigl( \sum_{S \in \Sigma_k^{\sim \ns1 j}} \Gamma^S g \Bigr) 
      \Bigr|^2 
    \Bigr)^{1/2}
\end{equation}
and
\begin{equation}
    \gr F_k(R, n, g)
 := \Bigl( 
     \sum_{j=1}^n \ms8 
      \Bigl|
       \mu_j^R * \Bigl( \sum_{S \in \Sigma_k^j} \Gamma^S g \Bigr)
      \Bigr|^2 
    \Bigr)^{1/2}.
\end{equation}
\end{subequations}
\dumou

 Assume that $1 \le k < M = M(\delta)$. 
Let $(\gamma_i)_{1 \le i \le n}$ be\label{Selektor}
independent $\{0, 1\}$-valued random variables with mean $1 / (k+1)$ on some
probability space $(\Omega, \ca F, P)$. For each~$j$ in~$\{1, \ldots, n\}$
and $S \in \Sigma_k^{\sim j}$, let
$
 \sigma_{S, j} = \gamma_j \prod_{i \in S} (1 - \gamma_i)
$.
We have that
\[
   \E \sigma_{S, j} 
 = \frac 1 {k+1} \Bigl( 1 - \frac 1 {k+1} \Bigr)^k
 = \frac {k^k} { (k+1)^{k+1} \ns{13}} \ms8
 =: e_k,
 \quad j = 1, \ldots, n,
\]
and $e_k^{-1} \le \e \ms1 (k+1) \le \e \ms1 M$ because
$\e^{1/k} > 1 + 1 / k$. By convexity, we see that
\begin{align*}
     e_k \ms1 \gr E_k (R, n, g)
  &=   \Bigl( \sum_{j=1}^n \ms1
        \Bigl|
         \mu_j^R * 
          \Bigl(
           \sum_{S \in \Sigma_k^{\sim \ns1 j}} 
            \bigl[ \E_\omega \sigma_{S, j}(\omega) \bigr]
             \ms2 \Gamma^S g
          \Bigr)
        \Bigr|^2 
       \Bigr)^{1/2}
 \\
 &\le \E_\omega 
       \Bigl[ \Bigl( \sum_{j=1}^n \ms1
        \Bigl|
         \mu_j^R * 
          \Bigl(
           \sum_{S \in \Sigma_k^{\sim \ns1 j}} 
            \sigma_{S, j}(\omega) \ms2 \Gamma^S g
          \Bigr)
        \Bigr|^2 
       \Bigr)^{1/2} \Bigr].
\end{align*}
Let $q \ge 1$ be given. It follows that for some $\omega_0 \in \Omega$, we
have
\begin{equation}
     \bigl\| \gr E_k (R, n, g) \bigr\|_{L^q(\R^n)}
 \ns2\le\ns1 \e M \ms2
      \Bigl\| 
       \Bigl( \sum_{j=1}^n \ms1
        \Bigl|
         \mu_j^R \ns1*\ns1 
          \Bigl(
           \sum_{S \in \Sigma_k^{\sim j}}
            \sigma_{S, j}(\omega_0) \ms2 \Gamma^S g
          \Bigr) 
        \Bigr|^2 
       \Bigr)^{1/2}
      \Bigr\|_{L^q(\R^n)}.
 \label{2.17}
\end{equation}
Let $J_0 = \{ j : \gamma_j(\omega_0) = 1 \}$. Then
$\sigma_{S, j}(\omega_0) = 0$ whenever $S$ meets $J_0$ or $j \notin J_0$.
The $L^q(\R^n)$~norm at the right-hand side of~\eqref{2.17} is therefore
the norm of
\[
    E(J_0, g)
 := \Bigl( \sum_{j \in J_0} \ms1
     \Bigl| 
      \mu_j^R * \Bigl( \sum_{S \in \Sigma_k^{\sim J_0}} \Gamma^S g \Bigr)      
     \Bigr|^2 
    \Bigr)^{1/2},
\]
where $\Sigma_k^{\sim J_0}$ denotes the family of subsets $S$ of 
$\{1, \ldots, n\}$ such that $|S| = k$ and that are disjoint from $J_0$.
Let us introduce the operator
\[
   \gr U 
 = \sum_{S \in \Sigma_k^{\sim J_0}}
    \gr T^{\sim (J_0 \cup S)} \ms2 (\gr I - \gr T)^S
 \ms{18} \hbox{and the function} \ms{18}
 \Psi = \gr U \ms2 g
\]
on $\R^n$. We see that
$\gr T^{J_0} \gr U = \sum_{S \in \Sigma_k^{\sim J_0}} \Gamma^S$, and the
operator $\gr U$ acts on the variables not in $J_0$ as does the~$k\ms1$th
homogeneous part~$H_k$ relative to $\R^{ \{1, \ldots, n\} \setminus J_0}$. 
Consequently, applying Proposition~\ref{PisierSG} in the variables
$\gr x^{\sim J_0} = (x_i)_{i \notin J_0}$, we get
\def\down#1{\raise -1.95pt \hbox{${}_{#1}$}}
\begin{equation}
     \| \Psi_{\down{\gr x^{J_0}}} \|_{\down{L^q(\R^{\sim J_0})}} 
 \le h_q^k \ms2 \|g_{\down{\gr x^{J_0}}} \|_{\down{L^q(\R^{\sim J_0})}}
 \label{2.20}
\end{equation}
for every fixed $\gr x^{J_0} = (x_i)_{i \in J_0}$,
where $f_{\down{\gr x^J}} (\gr x^{\sim J})
 := f(\gr x^J, \gr x^{\sim J}) = f(x)$, and we see that
$
   E(J_0, g)
 = \bigl( \sum_{ j \in J_0}
    \bigl| \mu_j^R * \gr T^{J_0} \Psi \bigr|^2
   \bigr)^{1/2}
$.
Assume that there exists $b_0(q_0, R, n)$ such that for every subset $J$
of $\{1, \ldots, n\}$ and $f \in L^{q_0}(\R^J)$ we have
\begin{equation}
     \Bigl\|
      \Bigl(
       \sum_{j \in J} \ms1 
        \Bigl|
         (\mu_{Q^J})_j^R * \Bigl( \prod_{i \in J} T_i \Bigr) f 
        \Bigr|^2 
      \Bigr)^{1/2} 
     \Bigr\|_{L^{q_0}(\R^J)}
 \le b_0(q_0, R, n) \ms2 \|f\|_{L^{q_0}(\R^J)},
 \label{2.22}
\end{equation}
with $\mu_{Q^J}$ uniform on $Q^J := [-1/2, 1/2]^J$ in $\R^J$.
It follows from~\eqref{2.20}, by integrating in the $J_0$~variables, that
$\|E(J_0, g)\|_{L^{q_0}(\R^n)}
 \le b_0(q_0, R, n) \ms2 h_q^k \ms1 \|g\|_{L^{q_0}(\R^n)}$. 
\dumou

 For $\gr F_k (R, n, g)$ we proceed similarly, writing each 
$S \in \Sigma_k^j$ as $S = \{j\} \cup S_1$, with $|S_1| = k - 1$, and
using now
$  \sigma_{S_1, j}
 = \gamma_j \prod_{i \in S_1} (1 - \gamma_i)
$ for which we have $\E \sigma_{S_1, j}
 = k^{k-1} (k+1)^{-k} \ge 1 / (\e k) > 1 / (\e M)$.
We obtain for some $\omega_0 \in \Omega$ that
\[
     \bigl\| \gr F_k (R, n, g) \bigr\|_{L^q(\R^n)}
 \ns2\le\ns1 \e \ms1 M \ms1
      \Bigl\| 
       \Bigl( \sum_{j=1}^n \ms1
        \Bigl|
         \mu_j^R \ns1*\ns1 
          \Bigl(
           \sum_{S \in \Sigma_k^j}
            \sigma_{S_1, j}(\omega_0) \ms2 \Gamma^S g
          \Bigr) 
        \Bigr|^2 
       \Bigr)^{1/2}
      \Bigr\|_{L^q(\R^n)}.
\] 
Considering again $J_0 = \{ j : \gamma_j(\omega_0) = 1 \}$, we get instead
of $E(J_0, g)$ the expression
\[
   F(J_0, g)
 = \Bigl( \sum_{j \in J_0} 
    \Bigl|
     \mu_j^R *
      \Bigl(
       \sum_{S_1 \in \Sigma_{k-1}^{\sim J_0}} \Gamma^{ \{j\} \cup S_1} g 
      \Bigr) 
    \Bigr|^2 
   \Bigr)^{1/2}.
\]
When $k = 1$, we have $S_1 = \emptyset$, $S = \{j\}$ and
$\sigma_{S_1, j} = \gamma_j$, the argument remains correct but becomes
\og inactive\fg. Let now
$
 \Psi = \sum_{S_1 \in \Sigma_{k-1}^{\sim J_0}}
         \gr T^{\sim (J_0 \cup S_1)} (\gr I - \gr T)^{S_1} g
$,
satisfying by Proposition~\ref{PisierSG} applied to $L^q(\R^{\sim J_0})$
the inequality
\[
     \| \Psi_{\down{\gr x^{J_0}}} \|_{\down{L^q(\R^{\sim J_0})}} 
 \le h_q^{k-1} \ms2 
      \| g_{\down{\gr x^{J_0}}} \|_{\down{L^q(\R^{\sim J_0})}}.
\]
For each $j \in J_0$, let
$
 B_j = (I - T_j) \ms1 \gr T^{J_0 \setminus \{j\} }
$.
Then
$
   F(J_0, g) 
 = \bigl(
    \sum_{j \in J_0} |\mu_j^R * B_j \Psi|^2 
   \bigr)^{1/2}
$.
If there exists $b_1(q_0, R, n)$ such that for every subset $J$ of 
$\{1, \ldots, n\}$ and every function $f \in L^{q_0}(\R^J)$ we have an
inequality
\begin{equation}
     \Bigl\|
      \Bigl(
       \sum_{j \in J} 
        |\mu_j^R * (I - T_j) 
          \Bigl( \prod_{i \in J, \ms2 i \ne j} T_i \Bigr) f|^2 
      \Bigr)^{1/2}
     \Bigr\|_{L^{q_0}(\R^J)}
 \le b_1(q_0, R, n) \|f\|_{L^{q_0}(\R^J)},
 \label{2.27}
\end{equation}
it implies that $F(J_0, g)$ may be bounded in $L^{q_0}(\R^n)$ by 
$b_1(q_0, R, n) \ms2 h_q^{k-1} \ms1 \|g\|_{L^{q_0}(\R^n)}$.

 In view of~\eqref{2.22} and~\eqref{2.27}, all we need to do in order to
control in $L^{q_0}(\R^n)$ the expressions $\nabla \mu^R * H_k g$, when 
$1 \le k < M$, is to establish in all lower dimensions $\ell \le n$ and for
every function $f \in L^{q_0}(\R^\ell)$ the inequalities
\begin{equation}
     \bigl\| 
      \nabla \mu^R \ns2*\ns2 H_0 f
     \bigr\|_{L^{q_0}(\R^\ell)}
  =  \Bigl\|
      \Bigl( \sum_{j = 1}^\ell |\mu_j^R \ns1*\ns1 H_0 f|^2 \Bigr)^{1/2} 
     \Bigr\|_{L^{q_0}(\R^\ell)}
 \ns4\le\ns1 b_0(q_0, R, n) \ms1 \|f\|_{L^{q_0}(\R^\ell)}
 \label{2.28}
\end{equation}
and
\begin{equation}
     \bigl\| \gr F(R, \ell, f) \bigr\|_{L^{q_0}(\R^\ell)}
 :=  \Bigl\|
      \Bigl(
       \sum_{j = 1}^\ell |\mu_j^R \ns1*\ns1 \Gamma^j f|^2 
      \Bigr)^{1/2}
     \Bigr\|_{L^{q_0}(\R^\ell)}
 \ns4\le\ns1 b_1(q_0, R, n) \ms1 \|f\|_{L^{q_0}(\R^\ell)}
 \label{2.29}
\end{equation}
for suitable $b_0(q_0, R, n)$ and $b_1(q_0, R, n)$, with 
$\Gamma^j := \Gamma^{ \{j\} }
 = (I - T_j) \ms2 \gr T^{ \{1, \ldots, \ell\} \setminus \{j\} }$. Note 
that~\eqref{2.28} controls the so far neglected term $k = 0$
in~\eqref{decomp}. From~\eqref{RestH} and the preceding, this will permit
us to estimate
\[
     \bigl\|
      \nabla \mu^R * g
     \bigr\|_{L^{q_0}(\R^n)}
  =  \Bigl\|
      \Bigl( \sum_{j=1}^n |\mu_j^R * g|^2 \Bigr)^{1/2} 
     \Bigr\|_{L^{q_0}(\R^n)}
 \le C(q_0, R, n) \ms2 \|g\|_{L^{q_0}(\R^n)}.
\]
Recalling~\eqref{EstimeH}, \eqref{2.17} and that $M = M(\delta)$ depends
on the fixed value $\delta > 0$, we have when $R \ge R_0$ that
\begin{equation}
     C(q_0, R, n)
 \le \kappa_{q_0, \delta} 
      + \e M(\delta)^2 \ms2 h_{q_0}^{M(\delta)} \ms1 
         \bigl( b_0(q_0, R, n) + b_1(q_0, R, n) \bigr), 
 \label{BoundC}
\end{equation}
where the three terms correspond to the decompositions~\eqref{decomp}
and~\eqref{E0split}. By definition, it will follow that the 
\textit{a priori} bound $B(q_0, R, n)$ is less than $C(q_0, R, n)$. Bounds
on $b_0(q_0, R, n)$ and $b_1(q_0, R, n)$ will be obtained below, and will
use the other quantities $B(q_0, R, \ell) \le B(q_0, R, n)$, with 
$\ell \le n$. We shall get a relation
\[
     B(q_0, R, n) 
 \le c(q_0, \delta) \ms1 R^{4 \ms1 \delta} + B(q_0, R, n) / 2,
 \quad n \ge 1,
\]
for $R$ larger than some $R_1 \ge R_0$, and we shall be able to
conclude.

\subsection{Second reduction%
\label{SecondReduc}}

\noindent
Let $\tau > 0$ be given. We say that a nonnegative function~$f$ defined
on~$\R$ is \emph{$\tau$-stable\label{StabiTransla} 
with constant $C$} if whenever $|t| \le \tau$, we have
\[
 f(s + t) \le C \ms1 f(s),
 \quad s \in \R.
\]
One sees that $C \ge 1$. Evident properties are to be observed about
products, integrals, translations, convolutions\dots\ For example, if 
$f_1, \ldots, f_k$ are $\tau$-stable with respective constants $C_i$, then
clearly the product $f_1 \ldots f_k$ is $\tau$-stable with constant 
$C_1 \ldots C_k$. If $f$ is $\tau$-stable with constant~$C$ and if 
$g \ge 0$, then for $|t| \le \tau$ we have
\begin{align}
     (f * g)(s + t)
   &= \int_\R f(s + t - v) g(v) \, \d v
 \label{ConvoluStable}
 \\
 &\le C \int_\R f(s - v) g(v) \, \d v
  =  C \ms1 (f * g)(s),
 \notag
\end{align}
hence $f * g$ is also $\tau$-stable with constant $C$. Suppose that 
$f, g, h$ are nonnegative on $\R$, and that $f$ is $\tau$-stable with
constant $C$. If $|t| \le \tau$ then
\begin{align*}
     \int_\R f(s) g(s - t) h(t) \, \d s
 &\ge C^{-1} \int_\R f(s - t) g(s - t) h(t) \, \d s
 \\
  &=  C^{-1} h(t) \Bigl( \int_\R f(v) g(v) \, \d v \Bigr),
\end{align*}
therefore
\begin{equation}
     \int_\R f(s) (g * h)(s) \, \d s
 \ge C^{-1} \Bigl( \int_{|t| \le \tau} h(t) \, \d t \Bigr)
      \Bigl( \int_\R f(v) g(v) \, \d v \Bigr).
 \label{Ustable}
\end{equation}
\dumou

 We shall now move to $\R^\ell$ with $\ell \ge 1$. Let $\Phi$ be a
probability density on~$\R$ that is $\tau$-stable with constant $C$, for
some $\tau > 0$. This implies that $\Phi(s) > 0$ for every $s \in \R$. Let
us define $\beta \ge 1$ by
\begin{equation}
 \beta^{-1} = \int_{|t| \le \tau}  \Phi (t) \, \d t \in (0, 1).
 \label{DefBeta}
\end{equation}
We denote by~$\Phi_j$\label{PhiSubJ} 
the operator on $L^q(\R^\ell)$ of convolution with $\Phi$ in the variable
$x_j$, for each $j \in L = \{1, \ldots, \ell\}$. For instance, when $j = 1$
we let
\[
   (\Phi_1 f)(x_1, x_2, \ldots, x_\ell)
 = \int_\R f(x_1 - s, x_2, \ldots, x_\ell) \ms1 \Phi (s) \, \d s.
\]
For $j = 2, \ldots, \ell$ we let the transposition $\tau_j = (1 \ms2 j)$
act on $x = (x_1, \ldots, x_\ell)$ in $\R^\ell$ by
$\tau_j \ms2 x = (x_{\tau_j(i)})_{i=1}^\ell$ and on functions
by $\tau_j (g) = g \circ \tau_j$. Letting $\tau_1 = I$, we
have
\begin{equation}
 \Phi_j f = \tau_j \bigl( \Phi_1 (f \circ \tau_j) \bigr),
 \quad j = 1, \ldots, n.
 \label{LesPhij}
\end{equation}
For every subset $J \subset L$ we set
$
 \Phi^J = \prod_{k \in J} \Phi_k$, and
$   \Phi^{\sim j}
 = \Phi^{L \setminus \{j\}}
 = \prod_{k \ne j} \Phi_k
$.
We understand that $\Phi^\emptyset = I$. Each $\Phi^J$ is an
operator acting on $L^q(\R^\ell)$ with norm equal to $1$, when 
$1 \le q \le +\infty$. The next Bourgain's lemma is not too difficult, but
the details are long and painful to write down precisely. We have chosen to
break it into two parts, the first one containing the serious work.
\dumou

\def\CiteLsept{\cite[Lemma~7]{BourgainCube}}
\begin{lem}[a first part of Bourgain's~\CiteLsept]%
\label{ProduitDeLsA}
Let\/ $\Phi$ be a probability density on\/~$\R$ that is $\tau$-stable with
constant $C$, let $\beta \ge 1$ be defined by\/~\eqref{DefBeta}. Let~$\ell$
be an integer $\ge 1$, $L = \{1, \ldots, \ell\}$ and define\/ $\Phi_j$
by\/~\eqref{LesPhij}, for $j = 1, \ldots, \ell$. For all integers $q \ge 1$,
for all nonnegative integrable functions\/ $(f_j)_{j=1}^\ell$ on\/
$\R^\ell$, one has
\[
     \bigl\| \sum_{j \in L} \Phi^{\sim j} f_j \bigr\|_q
 \le \beta \ms1 C^{q-1} \ms1
      \bigl\| \sum_{j \in L} \Phi^L f_j \bigr\|_q
       + \sqrt{q - 1} \ms3 
          \bigl\|
           \sum_{j \in L} \Phi^{\sim j} f_j^2 
          \bigr\|_{q/2}^{1/2}.
\]
\end{lem}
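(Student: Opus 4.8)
<br>

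The plan is to prove Lemma~\ref{ProduitDeLsA} by induction on the number $\ell$ of variables, peeling off one coordinate at a time. The key structural fact to exploit is that $\Phi^{\sim j} = \Phi_1 \Phi_2 \cdots \widehat{\Phi_j} \cdots \Phi_\ell$ differs from $\Phi^L = \Phi_1 \cdots \Phi_\ell$ only in the $j$th slot, so that when we single out the last variable $x_\ell$, the sum $\sum_{j \in L} \Phi^{\sim j} f_j$ naturally splits as $\Phi_\ell \bigl( \sum_{j < \ell} \Phi^{\sim j}_{(\ell)} f_j \bigr) + \sum_{j < \ell} (\text{missing }\Phi_\ell\text{ term}) + \Phi^{\sim \ell} f_\ell$, where $\Phi^{\sim j}_{(\ell)}$ denotes the analogous product over $L \setminus \{\ell\}$. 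The term $\Phi^{\sim \ell} f_\ell$ is the one for which convolution in $x_\ell$ is \emph{absent}, and this is where the $\tau$-stability and the constant $\beta$ must enter.

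First I would set up the one-variable estimate that is the engine of the induction. Fix all variables except $x_\ell =: s$ and consider, for a nonnegative function $g$ on $\R$ that is $\tau$-stable with constant $C$ (it will be a product of $\Phi$-convolutions of the $f_j$'s in the other variables, hence $\tau$-stable with constant $C$ by~\eqref{ConvoluStable}), the quantity $\int_\R \bigl( g(s) + (\Phi * g)(s) \bigr)^q \, ds$ or rather the comparison between $\|g + \Phi * h\|_q$ and $\|\Phi * (g + h)\|_q$. The mechanism is: expand $(\Phi^{\sim \ell} f_\ell + \sum_{j<\ell} \Phi^{\sim j} f_j)^q$ pointwise, and in each monomial where the \og bare\fg\ factor $\Phi^{\sim \ell} f_\ell$ (no convolution in $s$) appears, use $\tau$-stability to push a convolution $\Phi *$ onto it at the cost of a factor $C$; doing this in one coordinate at a time, a monomial of total degree $q$ picks up at most $C^{q-1}$ (since at least one factor already carries the convolution we are building toward), and the $\beta$ compensates the normalization $\int_{|t|\le\tau}\Phi = \beta^{-1}$ needed to recreate a genuine convolution. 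The leftover cross terms — those where two or more of the $f_j$ appear multiplied with a convolution structure that cannot be absorbed — are collected into a sum of squares, giving the second term $\sqrt{q-1}\,\|\sum_j \Phi^{\sim j} f_j^2\|_{q/2}^{1/2}$; the $\sqrt{q-1}$ is the binomial-type loss from pairing, analogous to what appears in Rota's argument~\eqref{RBG} and in the square-function estimates of Section~\ref{BGIneqs}.

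Concretely I would prove: for nonnegative $a, b_1, \dots, b_m$ on $\R$ with each $a * (\text{anything})$ and $\Phi$ $\tau$-stable, an inequality of the shape
\[
     \Bigl\| a + \Phi * \Bigl( \sum_i b_i \Bigr) \Bigr\|_q
 \le \beta \ms1 C^{q-1} \ms1 \Bigl\| \Phi * \Bigl( a + \sum_i b_i \Bigr) \Bigr\|_q
      + \sqrt{q-1} \ms3 \Bigl\| a^2 + \sum_i b_i^2 \Bigr\|_{q/2}^{1/2},
\]
obtained by writing $a \le \beta \int_{|t|\le\tau} a(s) \Phi(t) \, dt \le \beta C \int_{|t|\le\tau} a(s-t)\Phi(t)\,dt \le \beta C (\Phi * a)(s)$ via~\eqref{Ustable}-type manipulation, then iterating Minkowski / H\"older to redistribute the resulting convolutions and isolating the diagonal. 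Then the lemma follows by applying this with $a = \Phi^{\sim \ell}_{(\text{other vars})} f_\ell$ in the $x_\ell$ slot, $b_j$ the remaining terms, and feeding the sum-of-squares term back into the induction hypothesis in $\ell - 1$ variables (which, being about $\|\sum \Phi^{\sim j} f_j^2\|_{q/2}$, lives at exponent $q/2$ — so one must either assume $q$ is a power of $2$, consistent with the choice $q_0 = 2^\nu$ made in Section~\ref{Raccord}, or run the induction on both $\ell$ and $\log_2 q$ simultaneously). The main obstacle I anticipate is the bookkeeping: controlling precisely which monomials in the expansion of the $q$th power can be \og straightened\fg\ into $\Phi^L f_j$ at cost $C^{q-1}\beta$ versus which must be relegated to the square term, and verifying that the constant is exactly $C^{q-1}$ and not $C^q$ — this hinges on the observation that every surviving monomial already contains at least one genuine convolution factor, so only $q-1$ \og repairs\fg\ are ever needed. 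Handling the case of non-integer or merely real $q$ (the statement says \og for all integers $q \ge 1$\fg, which is a relief) and the reduction~\eqref{LesPhij} to the first-variable case via the transpositions $\tau_j$ are routine once the core inequality is in place.
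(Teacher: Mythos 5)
Your core mechanism has a genuine gap. The pointwise chain you propose, $a(s) \le \beta \ms1 \int_{|t| \le \tau} a(s) \ms1 \Phi(t) \, \d t \le \beta \ms1 C \int_{|t| \le \tau} a(s-t)\Phi(t) \, \d t \le \beta \ms1 C \ms1 (\Phi * a)(s)$, is valid only when $a$ is itself $\tau$-stable in the variable being convolved. But the function you want to feed it, $a = \Phi^{\sim \ell} f_\ell$ viewed in the $x_\ell$ slot, is precisely the one where $\Phi_\ell$ has \emph{not} been applied, and $f_\ell$ is an arbitrary nonnegative function, so there is no $\tau$-stability in $x_\ell$. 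Assuming it would be circular — it is exactly the regularity the lemma is designed to do without. The paper sidesteps this by never trying to repair the bare factor pointwise: it expands the $q$-th power into monomials $(\Phi^{\sim j_1} f_{j_1}) \cdots (\Phi^{\sim j_q} f_{j_q})$ and, in those where $j_q$ differs from every $j_k$, $k < q$, observes that each of the first $q-1$ factors carries $\Phi_{j_q}$, so their \emph{product} is $\tau$-stable in $x_{j_q}$ with constant $C^{q-1}$; then~\eqref{Ustable} with $\beta^{-1} = \int_{|t| \le \tau} \Phi$ pushes the missing convolution onto the last factor \emph{under the $x_{j_q}$-integral} — this is~\eqref{Fundam}. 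The constant $\beta \ms1 C^{q-1}$ is thus a single cost of straightening a good monomial in one variable, not an accumulation over coordinates, and no induction on $\ell$ occurs.

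Two smaller misreadings. The term $\bigl\| \sum_j \Phi^{\sim j} f_j^2 \bigr\|_{q/2}^{1/2}$ is simply the right-hand side of the inequality — nothing is fed back into an induction hypothesis, and no restriction to $q = 2^\nu$ is needed here; that halving of $q$, and hence the dyadic restriction, is the content of the \emph{next} statement, Lemma~\ref{ProduitDeLs}, which is obtained by iterating the present one. And the $\sqrt{q-1}$ is not a pairing loss as in~\eqref{RBG}: the factor $q - 1$ counts the positions $k < q$ at which a repeated index $j_k = j_q$ can occur in a bad monomial, and the square root comes from solving the implicit inequality $E_q^{2/q} \le A \ms1 E_q^{1/q} + (q-1) B$ for $E_q^{1/q}$ once $\|\sum_j \Phi^{\sim j} f_j\|_q^q$ has been bounded by the two kinds of monomials.
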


\begin{proof}
The fundamental remark compares
\[
 \int_\R (\Phi_1 g_1)(s) \ms1 (\Phi_1 g_2)(s) \ms1 
    \ldots \ms1 (\Phi_1 g_{k-1}) (s) \ms1 g_k(s) \, \d s
\]
and
\[
 \int_\R (\Phi_1 g_1)(s) \ms1 (\Phi_1 g_2)(s) \ms1 
    \ldots \ms1 (\Phi_1 g_{k-1}) (s) \ms1 (\Phi_1 g_k)(s) \, \d s,
\]
when $k \ge 2$ and when the functions $g_j \ms1$s are nonnegative on~$\R$.
We know by~\eqref{ConvoluStable} that $\Phi_1 g = \Phi * g$ is
$\tau$-stable with constant~$C$ for every~$g$ nonnegative, so the product
$f = (\Phi_1 g_1)(\Phi_1 g_2) \ldots (\Phi_1 g_{k-1})$ is $\tau$-stable
with constant $C^{k-1}$. Applying~\eqref{Ustable} and the definition of
$\beta$ with $f$, $g = g_k$, $h = \Phi$ and $g * h = \Phi_1 g_k$, we~get
\begin{align}
     &\ms5 \int_\R (\Phi_1 g_1)(s) \ms1 (\Phi_1 g_2)(s) \ms1 
       \ldots \ms1 (\Phi_1 g_{k-1})(s) \ms1 g_k(s) \, \d s
 \label{Fundam}
 \\
 \le &\ms5 C^{k-1} \ms1 \beta \ms1
       \int_\R (\Phi_1 g_1)(s) \ms1 (\Phi_1 g_2)(s) \ms1 
        \ldots \ms1 (\Phi_1 g_{k-1})(s) \ms1 (\Phi_1 g_k)(s) \, \d s.
 \notag
\end{align}

 The case $q = 1$ of the lemma follows from $\beta \ge 1$ and
$\int_{\R^\ell} g * f = \int_{\R^\ell} f$ for every probability density $g$.
For the simplest non-trivial case, when $q = 2$, we write
\[
   \Bigl( \sum_{j \in L} \Phi^{\sim j} f_j \Bigr)^2
 = \sum_{i \ne j} (\Phi^{\sim i} f_i) (\Phi^{\sim j} f_j)
    + \sum_{j \in L} (\Phi^{\sim j} f_j)^2.
\]
When $j \ne i$, the function $\Phi^{\sim i} f_i
 = \Phi^{L \setminus \{ i\} } f_i
 = \Phi_j \Phi^{L \setminus \{ i, j\} } f_i$ 
is of the form $\Phi_j g_1$, and letting $g_2 = \Phi^{\sim j} f_j$ we get 
by~\eqref{Fundam} for the $x_j$ variable that
\begin{align*}
     & \ms6 \int_\R (\Phi^{\sim i} f_i) (\Phi^{\sim j} f_j) \, \d x_j
  =  \int_\R (\Phi_j g_1) \ms1 g_2 \, \d x_j
 \\
 \le & \ms6 C \ms1 \beta \int_\R (\Phi_j g_1) (\Phi_j g_2 ) \, \d x_j
  =   C \ms1 \beta \int_\R (\Phi^{\sim i} f_i) (\Phi^L f_j) \, \d x_j
\end{align*}
because $\Phi_j \Phi^{\sim j} = \Phi^L$. Integrating in the remaining
variables, and since the functions are nonnegative, we obtain
\begin{align*}
     & \ms6 \int_{\R^\ell}
      \sum_{i \ne j} (\Phi^{\sim i} f_i) (\Phi^{\sim j} f_j) \, \d x
 \le C \ms1 \beta \int_{\R^\ell} 
      \sum_{i \ne j} (\Phi^{\sim i} f_i) (\Phi^L f_j) \, \d x
 \\
 \le & \ms6 C \ms1 \beta \int_{\R^\ell}
       \Bigl(\sum_{i \in L} \Phi^{\sim i} f_i \Bigr)
        \Bigl(\sum_{j \in L} \Phi^L f_j \Bigr) \, \d x
 \le C \ms1 \beta \ms2
      \Bigl\| \sum_{j \in L} \Phi^{\sim j} f_j \Bigr\|_2
        \Bigl\| \sum_{j \in L} \Phi^L f_j \Bigr\|_2.
\end{align*}
When $j = i$, we use $(\Phi^{\sim j} * g)^2
 \le \Phi^{\sim j} * g^2$ and get
\[
     \int_{\R^\ell} \sum_{j \in L} (\Phi^{\sim j} f_j)^2 \, \d x
 \le \int_{\R^\ell} \sum_{j \in L} \Phi^{\sim j} f_j^2 \, \d x
  =: B.
\]
It follows that $E:= \bigl\| \sum_{j \in L} \Phi^{\sim j} f_j \bigr\|_2$
satisfies an inequality $E^2 \le A E + B$, where we let
$A := C \ms1 \beta \ms1 \bigl\| \sum_{j \in L} \Phi^L f_j \bigr\|_2$. This
yields $E \le A + B^{1/2}$ and we have
\[
     \Bigl\| \sum_{j \in L} \Phi^{\sim j} f_j \Bigr\|_2
 \le C \ms1 \beta \Bigl\| \sum_{j \in L} \Phi^L f_j \Bigr\|_2
      + \Bigl\| \sum_{j \in L} \Phi^{\sim j} f_j^2 \Bigr\|_1^{1/2}.
\]
This is Lemma~\ref{ProduitDeLsA} when $q = 2$. In general, when $q \ge 3$,
we expand
\begin{equation}
   \int_{\R^\ell} \bigl( \sum_{j \in L} \Phi^{\sim j} f_j \bigr)^q
    \, \d x
 = \sum_{j_1, j_2, \ldots, j_q \in L} 
    \int_{\R^\ell} 
     (\Phi^{\sim j_1} f_{j_1}) \ldots (\Phi^{\sim j_q} f_{j_q})
      \, \d x.
 \label{Expd}
\end{equation}
Consider a multi-index $(j_1, j_2, \ldots, j_q)
 \in \{ 1, \ldots, \ell \}^q = L^q$ and suppose that
$j_q$ is not equal to any of $j_1, \ldots, j_{q-1}$. Then we can write
$\Phi^{\sim j_k} f_{j_k} = \Phi_{j_q} g_k$ for each $k < q$, so as before,
by~\eqref{Fundam} applied in the $x_{j_q}$ variable, we get that
\[
     \int_{\R^\ell} (\Phi^{\sim j_1} f_{j_1}) \ldots 
                    (\Phi^{\sim j_q} f_{j_q}) \, \d x
 \le C^{q-1} \beta    
      \int_{\R^\ell} (\Phi^{\sim j_1} f_{j_1}) \ldots 
            (\Phi^{\sim j_{q-1}} f_{j_{q-1}}) (\Phi^L f_{j_q}) \, \d x.
\]
Let us denote by $\sum_1$ the part of the summation at the right-hand side
of~\eqref{Expd} that is extended to all $j_1, \ldots, j_q$ such that 
$j_q \notin \{j_1, \ldots, j_{q-1}\}$. We obtain that
\begin{align*}
      {\sum}_1
       \int_{\R^\ell} (\Phi^{\sim j_1} f_{j_1}) \ldots 
                    (\Phi^{\sim j_q} f_{j_q}) \, \d x
 &\le C^{q-1} \beta    
      \int_{\R^\ell} \Bigl( \sum_{j \in L} \Phi^{\sim j} f_j \Bigr)^{q-1}
            \Bigl( \sum_{j \in L} \Phi^L f_j \Bigr) \, \d x
 \\
 &\le C^{q-1} \beta \ms1
       \Bigl\| \sum_{j \in L} \Phi^{\sim j} f_j \Bigr\|_q^{q - 1}
       \Bigl\| \sum_{j \in L} \Phi^L f_j \Bigr\|_q.
\end{align*}
The remaining sum $\sum_2$ is less than the sum of $q - 1$ terms
corresponding to which index $j_k$, $k = 1, \ldots, q-1$ is equal to $j_q$.
Each of these $q - 1$ terms is similar to
\[
   \sum_{j_1, j_2, \ldots, j_{q-1} \in L} 
    \int_{\R^\ell} (\Phi^{\sim j_1} f_{j_1}) \ldots 
         (\Phi^{\sim j_{q-2}} f_{j_{q-2}})
         (\Phi^{\sim j_{q-1}} f_{j_{q-1}})^2 \, \d x,
\]
which is bounded by
\[
     \int_{\R^\ell} 
      \Bigl( \sum_{j \in L} \Phi^{\sim j} f_j \Bigl)^{q - 2}
      \Bigl( \sum_{j \in L} \Phi^{\sim j} f_j^2 \Bigr)
 \le \Bigl\| \sum_{j \in L} \Phi^{\sim j} f_j \Bigr\|_q^{q - 2}
     \Bigl\| \sum_{j \in L} \Phi^{\sim j} f_j^2 \Bigr\|_{q/2}.
\]
We obtain for $E_q = \Bigl\| \sum_{j \in L} \Phi^{\sim j} f_j \Bigr\|_q^q$
a bound by $\Sigma_1 + \Sigma_2$ of the form
\[
     E_q 
 \le C^{q-1} \beta \ms1
       \Bigl\| \sum_{j \in L} \Phi^L f_j \Bigr\|_q E_q^{1 - 1 / q}
      + (q - 1) 
         \Bigl\| \sum_{j \in L} \Phi^{\sim j} f_j^2 \Bigr\|_{q/2}
          E_q^{1 - 2 / q},        
\]
which can be written also as
\[
     E_q^{2/q} 
 \le C^{q-1} \beta \ms1
       \Bigl\| \sum_{j \in L} \Phi^L f_j \Bigr\|_q E_q^{1 / q}
      + (q - 1) \ms1
         \Bigl\| \sum_{j \in L} \Phi^{\sim j} f_j^2 \Bigr\|_{q/2}.
\]
This implies as before that
\[
     \Bigl\| \sum_{j \in L} \Phi^{\sim j} f_j \Bigr\|_q 
  =  E_q^{1/q}
 \le C^{q-1} \beta \ms1
      \Bigl\| \sum_{j \in L} \Phi^L f_j \Bigr\|_q
      + \sqrt{q - 1} \ms3
         \Bigl\| \sum_{j \in L} \Phi^{\sim j} f_j^2 \Bigr\|_{q/2}^{1/2}.
 \qedhere
\]
\end{proof}

\begin{lem}[Bourgain~\CiteLsept]%
\label{ProduitDeLs}
Let\/ $\Phi$ be a probability density on\/~$\R$ that is $\tau$-stable with
constant $C$, and let $\beta \ge 1$ be defined by\/~\eqref{DefBeta}. Let
$\ell \ge 1$ be an integer, $L = \{1, \ldots, \ell\}$ and define\/
$\Phi_j$ by\/~\eqref{LesPhij}, for $j = 1, \ldots, \ell$. For every integer
$\nu \ge 1$ and for all nonnegative integrable functions\/
$(f_j)_{j=1}^\ell$ on\/ $\R^\ell$, one has 
\begin{equation}
      \kappa_\nu^{-1} \ms2 
       \Bigl\| \sum_{j \in L} \Phi^{\sim j} f_j \Bigr\|_{2^\nu}
 \le \sum_{k=0}^\nu \ms2
      \bigl\|
       \sum_{j \in L} \Phi^L f_j^{2^k} 
      \bigr\|_{2^{\nu-k}}^{2^{-k}}
 \le (\nu + 1) \ms2 \Bigl\| \sum_{j \in L} f_j \Bigr\|_{2^\nu},
 \label{KappaNu}
\end{equation}
with $\kappa_\nu \le \max(2^\nu, \beta \ms1 C^{2^\nu})$. Each term\/
$\bigl\| \sum_{j \in L} \Phi^L f_j^{2^k} \bigr\|_{2^{\nu-k}}^{2^{-k}}$,
for\/ $0 \le k \le \nu$, satisfies
\[
     \bigl\| \sum_{j \in L} \Phi^L f_j^{2^k} \bigr\|_{2^{\nu-k}}^{2^{-k}}
  =  \Bigl\|
      \Bigl( \sum_{j \in L} \Phi^L f_j^{2^k} \Bigr)^{2^{-k}}
     \Bigr\|_{2^\nu}
 \le \Bigl\| \sum_{j \in L} f_j \Bigr\|_{2^\nu}.
\]
\end{lem}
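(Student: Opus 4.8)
The plan is to derive Lemma~\ref{ProduitDeLs} from the already established Lemma~\ref{ProduitDeLsA}, treating the three assertions of~\eqref{KappaNu} separately. I would dispose of the rightmost inequality first, since it is elementary. Fix $k$ with $0 \le k \le \nu$ and put $m = 2^k$, $r = 2^{\nu - k}$, so $rm = 2^\nu$; for a nonnegative function $g$ one has $\|g\|_r^{1/m} = \|g^{1/m}\|_{rm}$, which is exactly the reformulation $\bigl\| \sum_{j \in L} \Phi^L f_j^{2^k} \bigr\|_{2^{\nu-k}}^{2^{-k}} = \bigl\| ( \sum_{j \in L} \Phi^L f_j^{2^k} )^{2^{-k}} \bigr\|_{2^\nu}$ displayed in the statement. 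Since $\Phi^L$ is convolution with a probability density it is a contraction on every $L^r(\R^\ell)$, whence $\bigl\| \sum_{j} \Phi^L f_j^{2^k} \bigr\|_{2^{\nu-k}} \le \bigl\| \sum_{j} f_j^{2^k} \bigr\|_{2^{\nu-k}}$; raising to the power $2^{-k}$ and using the elementary inequality $(\sum_j a_j^{2^k})^{2^{-k}} \le \sum_j a_j$ for nonnegative $a_j$ (the $\ell^p$ norm is non-increasing in $p \ge 1$, and $2^k \ge 1$) gives the termwise bound $\bigl\| \sum_j \Phi^L f_j^{2^k} \bigr\|_{2^{\nu-k}}^{2^{-k}} \le \bigl\| \sum_j f_j \bigr\|_{2^\nu}$. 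Summing over the $\nu + 1$ values of $k$ yields the right-hand inequality in~\eqref{KappaNu}, and along the way the displayed termwise statement.

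For the left-hand inequality I would induct on $\nu$. When $\nu = 0$ each $\Phi_k$ preserves the $L^1$-norm of a nonnegative function, so $\bigl\| \sum_j \Phi^{\sim j} f_j \bigr\|_1 = \sum_j \|f_j\|_1 = \bigl\| \sum_j \Phi^L f_j \bigr\|_1$ and the claim holds with $\kappa_0 = 1$. For the inductive step, apply Lemma~\ref{ProduitDeLsA} with $q = 2^\nu$:
\[
 \bigl\| \textstyle\sum_j \Phi^{\sim j} f_j \bigr\|_{2^\nu}
 \le \beta C^{2^\nu - 1} \bigl\| \textstyle\sum_j \Phi^L f_j \bigr\|_{2^\nu}
   + \sqrt{2^\nu - 1} \, \bigl\| \textstyle\sum_j \Phi^{\sim j} f_j^2 \bigr\|_{2^{\nu-1}}^{1/2}.
\]
Then apply the induction hypothesis to the family $(f_j^2)_{j \in L}$ at exponent $2^{\nu-1}$; since $(f_j^2)^{2^k} = f_j^{2^{k+1}}$ this bounds $\bigl\| \sum_j \Phi^{\sim j} f_j^2 \bigr\|_{2^{\nu-1}}$ by $\kappa_{\nu-1} \sum_{k=0}^{\nu-1} \bigl\| \sum_j \Phi^L f_j^{2^{k+1}} \bigr\|_{2^{\nu-1-k}}^{2^{-k}}$. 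Taking square roots, using $(\sum_k a_k)^{1/2} \le \sum_k a_k^{1/2}$, and reindexing $k \mapsto k-1$ turns the second summand into $\sqrt{2^\nu - 1} \, \kappa_{\nu-1}^{1/2} \sum_{k=1}^\nu \bigl\| \sum_j \Phi^L f_j^{2^k} \bigr\|_{2^{\nu-k}}^{2^{-k}}$, while the first summand is $\beta C^{2^\nu-1}$ times the $k = 0$ term of the target sum; so the asserted inequality holds with $\kappa_\nu := \max\bigl( \beta C^{2^\nu}, \sqrt{2^\nu - 1} \, \kappa_{\nu-1}^{1/2} \bigr)$.

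It then remains to check that this recursion stays below $\max(2^\nu, \beta C^{2^\nu})$. Writing $A_\nu = \beta C^{2^\nu}$ one has $A_\nu = A_{\nu-1}^2 / \beta \ge A_{\nu-1} \ge \beta \ge 1$, so the bound $\kappa_{\nu-1} \le \max(2^{\nu-1}, A_{\nu-1})$ and the identity $A_{\nu-1} = \sqrt{\beta A_\nu}$ give $\sqrt{2^\nu - 1}\, \kappa_{\nu-1}^{1/2} \le \max\bigl( 2^{\nu - 1/2},\, 2^{\nu/2} \beta^{1/4} A_\nu^{1/4} \bigr)$; a two-line case split ($A_\nu \ge 2^\nu$, using $A_\nu \ge 2^{2\nu/3}\beta^{1/3}$, versus $A_\nu < 2^\nu$, using $\beta A_\nu \le A_\nu^2 < 2^{2\nu}$) shows the right-hand side is $\le \max(2^\nu, A_\nu)$, hence $\kappa_\nu \le \max(2^\nu, \beta C^{2^\nu})$. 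I expect the only delicate point to be precisely this bookkeeping: matching the shifted exponents $2^{\nu-1-k} \leftrightarrow 2^{\nu-k}$ produced by the induction hypothesis, and propagating the constant through the square-root recursion. Everything else is contractivity of $\Phi^L$, Jensen/power-mean inequalities, and Lemma~\ref{ProduitDeLsA}.
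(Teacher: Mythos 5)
Your proof is correct and follows essentially the same route as the paper: both sides of the chain are handled by iterating Lemma~\ref{ProduitDeLsA} at halved exponents on the squared functions, together with contractivity of $\Phi^L$, subadditivity of $t\mapsto t^{1/2}$, and the pointwise bound $\sum_j a_j^r \le (\sum_j a_j)^r$. The only cosmetic difference is in tracking the constant: the paper unfolds the iteration and bounds each term's coefficient directly by $q(\beta C^q/q)^{2^{-k}}$, whereas you absorb the previous levels into $\kappa_{\nu-1}$ and close a recursion $\kappa_\nu = \max(\beta C^{2^\nu},\,\sqrt{2^\nu-1}\,\kappa_{\nu-1}^{1/2})$; both yield $\kappa_\nu \le \max(2^\nu, \beta C^{2^\nu})$.
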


\begin{proof}
We begin with the easy last sentence. For $r = 2^k$ and 
$k = 0, \ldots, \nu$, we have
\[
     \Bigl\| \sum_{j \in L} \Phi^L f_j^r \Bigr\|_{2^{\nu-k}}^{1/r}
 \le \Bigl\|
      \Phi^L \Bigl( \sum_{j \in L} f_j \Bigr)^r
     \Bigr\|_{2^{\nu-k}}^{1/r}
 \le \Bigl\|
      \Bigl( \sum_{j \in L} f_j \Bigr)^r
     \Bigr\|_{2^{\nu-k}}^{1/r}
  =  \Bigl\| \sum_{j \in L} f_j \Bigr\|_{2^\nu}.
\]
The constant in the right-hand inequality of~\eqref{KappaNu} is therefore
bounded by $\nu + 1$.

 We pass to the left-hand inequality. Let $q = 2^\nu$. By
Lemma~\ref{ProduitDeLsA}, we can reduce the case $q = 2^\nu$ to the case 
$q / 2$. We proceed by induction, with a number of steps bounded by~$\nu$.
Using $(a + b)^\alpha \le a^\alpha + b^\alpha$ when $a, b \ge 0$ and
$\alpha \in (0, 1]$, we obtain
\begin{align*}
     \bigl\| \sum_{j \in L} \Phi^{\sim j} f_j \bigr\|_q
 &\le \beta \ms1 C^{q - 1} \ms1
      \bigl\| \sum_{j \in L} \Phi^L f_j \bigr\|_{q}
       + \sqrt{q - 1} \ms3 
        \bigl\| \sum_{j \in L} \Phi^{\sim j} f_j^2 \bigr\|_{q / 2}^{1/2}
 \\
 &\le \beta \ms1 C^{2^\nu} \ms1
      \bigl\| \sum_{j \in L} \Phi^L f_j \bigr\|_{2^\nu}
       + 2^{\nu/2} \ms1 \beta^{1/2} \ms2 C^{2^{\nu-2}} \ms1\ms1 
        \bigl\| \sum_{j \in L} \Phi^L f_j^2 \bigr\|_{2^{\nu-1}}^{1/2}
 \\ & \ms{18}
       + 2^{\nu/2} \ms1 2^{(\nu-1)/4} \ms1 
        \bigl\| \sum_{j \in L} \Phi^{\sim j} f_j^4 \bigr\|_{2^{\nu-2}}^{1/4}
   \le \dots
\end{align*}
and the successive factors in front of
$\bigl\| \sum_{j \in L} \Phi^L f_j^{2^k} \bigr\|_{2^{\nu-k}}^{2^{-k}}$, for
$0 \le k \le \nu$, have the form 
$q \ms1 (\beta / q)^{2^{-k}} (C^q)^{4^{-k}}
 \le q (\beta C^q / q)^{2^{-k}}$,
leading to $\kappa_\nu \le \max(q, \beta \ms1 C^q)$.
\end{proof}

 We can try to optimize the constant $\kappa_\nu$ in the following way.
Suppose that the function $\ln \Phi$ is Lipschitz on $\R$ with constant
$\lambda$. Then we see that $\Phi$ is $\tau$-stable with constant 
$C_\tau = \e^{\lambda \tau}$ for every $\tau > 0$, and
\[
     1
 \ge \beta_\tau^{-1}
 :=  \int_{|t| \le \tau} \Phi(t) \, \d t
 \ge 2 \ms1 \Phi(0) \int_0^\tau \e^{ - \lambda t} \, \d t
  =  2 \ms1 \Phi(0) \frac {1 - \e^{- \lambda \tau}} \lambda \up.
\]
Let $q = 2^\nu$ and select $\tau = 1 / (\lambda q)$. Then 
$C_\tau \le \e^{1 / q}$ and 
\[
     \beta_\tau C_\tau^q
 \le \frac {\e \lambda} {2 \Phi(0) (1 - \e^{- 1 / q})} 
 \le \frac {\e^2 \lambda} {2 \ms1 \Phi(0)}
      \ms3 q
 \le \frac {4 \ms1 \lambda} {\Phi(0)}
      \ms3 q.
\]
Coming back to Lemma~\ref{ProduitDeLs} and noticing that
$\lambda \ge 2 \Phi(0)$, we obtain
\begin{equation}
     \kappa_\nu 
 \le \frac {4 \ms1 \lambda} {\Phi(0)}
      \ms3 2^\nu.
 \label{Improved}
\end{equation}
\dumou

 We now introduce Bourgain's specific example $\varphi$ of a 
function~$\Phi$, defined by
\[
 \forall s \in \R,
 \ms{16}
 \varphi(s) = \frac c {1 + s^4} \ms1 \up,
\]
where $c = \sqrt 2 / \pi$ is chosen so that $\varphi$ is a probability
density. This value $c$ is obtained by the residue theorem, which also gives
the Fourier transform
\[
   \widehat \varphi(t) 
 = \bigl( \cos(\pi \sqrt 2 \ms1 |t|) + \sin(\pi \sqrt 2 \ms1 |t|) \bigr)
    \e^{-\pi \sqrt 2 \ms1 |t|},
 \quad t \in \R.
\]
Notice that
$  (\cos u + \sin u ) \e^{- u}
 = \sqrt 2 \ms1 \cos (u - \pi / 4 ) \e^{- u} \ge \e^{- u^2}
$
when $0 \le u \le \pi / 2$, because 
$h(u) = \ln \bigl( \sqrt 2 \ms1 \cos (u - \pi / 4 ) \bigr)
         - u + u^2 \ge 0$ on this interval. Indeed, we have 
$h(0) = h'(0) = 0$ and $h''(u) = 1 - \tan(u - \pi / 4)^2 \ge 0$ on
$[0, \pi / 2]$. It follows that
\[
     \widehat \varphi(t)
 \ge \e^{ - 2 \pi^2 t^2}
 \ms{15} \hbox{when} \ms{15}
 \pi \ms1 \sqrt 2 \ms1 |t| \le \frac \pi 2 \up, 
\]
in particular when $\pi \ms1 |t| \le 1$. We shall need later the estimate
given by Lemma~\ref{EtaPhi}.

\begin{lem}\label{EtaPhi}
For all $s \in \R$, $\ell$ integer $\ge 1$ and 
$\xi = (\xi_1, \ldots, \xi_\ell) \in \R^\ell$, one has that
\[
     \Bigl(
      1 - \prod_{j=1}^\ell \widehat \varphi(s \ms1 \xi_j) 
     \Bigr) \ms2
      \prod_{j=1}^\ell \widehat \eta(\xi_j)
 \le 2 \ms1 \pi^2 s^2.
\]
\end{lem}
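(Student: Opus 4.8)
The plan is to argue by a simple dichotomy on the frequencies $s\xi_j$, exactly as in the proofs of Lemma~\ref{FourBound2} and Lemma~\ref{GrosCalcul}. The two ingredients are already available: the elementary majorization $\widehat\eta(t)\le(1+t^2)^{-1}$ recorded in the proof of Lemma~\ref{FourBound2}, the lower bound $\widehat\varphi(t)\ge\e^{-2\pi^2 t^2}$ valid in the range $\pi|t|\le 1$ that was established just before the statement, and Lemma~\ref{FourBound2} itself.

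First I would dispose of the trivial case $s=0$, where the left-hand side is $1-1=0$; so assume $s\ne 0$. If $\pi|s\xi_j|\le 1$ for every $j=1,\dots,\ell$, then the lower bound gives $\widehat\varphi(s\xi_j)\ge\e^{-2\pi^2 s^2\xi_j^2}$ for each $j$; since in this range these values are moreover positive and bounded by $1$, multiplying the inequalities yields $\prod_{j=1}^\ell\widehat\varphi(s\xi_j)\ge\e^{-2\pi^2 s^2|\xi|^2}$, hence $1-\prod_{j=1}^\ell\widehat\varphi(s\xi_j)\le 1-\e^{-2\pi^2 s^2|\xi|^2}$. Applying Lemma~\ref{FourBound2} with $r=\pi\sqrt2\,|s|$, so that $r^2=2\pi^2 s^2$, then gives
\[
     \Bigl(1-\prod_{j=1}^\ell\widehat\varphi(s\xi_j)\Bigr)\prod_{j=1}^\ell\widehat\eta(\xi_j)
 \le \bigl(1-\e^{-r^2|\xi|^2}\bigr)\prod_{j=1}^\ell\widehat\eta(\xi_j)
 \le r^2
  =  2\pi^2 s^2,
\]
which is the desired bound in this case.

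In the remaining case there is an index $j_0$ with $\pi|s\xi_{j_0}|>1$, that is, $\xi_{j_0}^2>1/(\pi^2 s^2)$. Using $\widehat\varphi\ge 0$ in the range $\pi|t|\le 1$ is not even needed here: since $\widehat\varphi$ is real and bounded by $1$ in modulus and $0\le\widehat\eta\le 1$, we have $|1-\prod_{j}\widehat\varphi(s\xi_j)|\le 2$ but more simply the nonnegative quantity $1-\prod_j\widehat\varphi(s\xi_j)$ is $\le 1$ once we note $\prod_j\widehat\varphi(s\xi_j)\ge 0$ — and in any event I would just bound the left-hand side by the single factor $\widehat\eta(\xi_{j_0})$, which by $\widehat\eta(t)\le(1+t^2)^{-1}$ is at most $1/(1+\xi_{j_0}^2)<\pi^2 s^2\le 2\pi^2 s^2$. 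This completes the proof.

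I do not anticipate a genuine obstacle: the only subtlety is to make sure the exponential lower bound for $\widehat\varphi$ is invoked only where it holds ($\pi|t|\le 1$), which is precisely what the dichotomy guarantees, and to avoid re-deriving $\widehat\eta\le(1+t^2)^{-1}$ by instead citing the proof of Lemma~\ref{FourBound2}. If one wants to streamline, one can even avoid the positivity discussion in the second case entirely by observing that the product $\prod_j\widehat\varphi(s\xi_j)$ need not be sign-controlled there but $1-\prod_j\widehat\varphi(s\xi_j)\le 2\le 2\pi^2 s^2\cdot(\pi|s\xi_{j_0}|)^{?}$ — however the cleaner route is simply to note that whenever the claimed inequality could fail we must have $2\pi^2 s^2<1$, so $\pi^2 s^2<\tfrac12$, and then $\widehat\eta(\xi_{j_0})\le 1/(1+\xi_{j_0}^2)<\pi^2 s^2$ already suffices.
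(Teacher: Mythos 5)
Your dichotomy is the same one the paper uses, and your case $1$ is verbatim the paper's: the exponential lower bound $\widehat\varphi(t)\ge\e^{-2\pi^2 t^2}$ for $\pi|t|\le 1$, followed by Lemma~\ref{FourBound2} with $r^2=2\pi^2 s^2$. The only genuine difference is in case $2$: the paper splits on the Euclidean norm $\pi|s\xi|\gtrless 1$ and bounds $\prod_j\widehat\eta(\xi_j)\le|\xi|^{-2}$ by letting $r\to0$ in Lemma~\ref{FourBound2}, whereas you split on the sup norm $\max_j\pi|s\xi_j|\gtrless 1$ and keep only the one factor $\widehat\eta(\xi_{j_0})\le(1+\xi_{j_0}^2)^{-1}$. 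Both routes give the needed $\le\pi^2 s^2$, and yours is marginally more elementary since it avoids the limiting form of Lemma~\ref{FourBound2}.

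There is, however, a false intermediate claim in your case $2$ that you should excise. You assert that $1-\prod_j\widehat\varphi(s\xi_j)\le1$ "once we note $\prod_j\widehat\varphi(s\xi_j)\ge0$," and you then try to bound the whole left-hand side by the \emph{single} factor $\widehat\eta(\xi_{j_0})$. But in case $2$ there is at least one index with $\pi|s\xi_{j_0}|>1$, and on that range $\widehat\varphi(t)=\bigl(\cos(\pi\sqrt2\ms1|t|)+\sin(\pi\sqrt2\ms1|t|)\bigr)\e^{-\pi\sqrt2\ms1|t|}$ is not sign-definite; the product can perfectly well be negative (already for $\ell=1$), so $1-\prod_j\widehat\varphi(s\xi_j)$ can exceed $1$. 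The clean and correct version of your own argument only uses $\bigl|1-\prod_j\widehat\varphi(s\xi_j)\bigr|\le 2$ (from $\|\widehat\varphi\|_\infty\le1$) together with $0\le\prod_j\widehat\eta(\xi_j)\le\widehat\eta(\xi_{j_0})\le(1+\xi_{j_0}^2)^{-1}<\xi_{j_0}^{-2}<\pi^2 s^2$, giving the left-hand side $\le 2\pi^2 s^2$ directly; the factor $2$ is exactly what the statement allows. The later sentence about ``$2\pi^2 s^2<1$ whenever the inequality could fail'' is a red herring and should simply be dropped.
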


\begin{proof}
Suppose that $\pi \ms2  |s \ms1 \xi| \le 1$. Then
$\pi \ms2 |s \ms1 \xi_j| \le 1$
and $\widehat \varphi(s \ms1 \xi_j)
 \ge \e^{ - 2 \pi^2 s^2 \xi_j^2}$ for each index $j = 1, \ldots, \ell$,
thus by Lemma~\ref{FourBound2} we have 
\[
     \Bigl(
      1 - \prod_{j=1}^\ell \widehat \varphi(s \ms1 \xi_j) 
     \Bigr) \ms2
      \prod_{j=1}^\ell \widehat \eta(\xi_j)
 \le \bigl( 1 - \e^{ - 2 \pi^2 s^2 |\xi|^2} \bigr) \ms2
      \prod_{j=1}^\ell \widehat \eta(\xi_j)
 \le 2 \pi^2 s^2.
\]
Otherwise, we have $\pi \ms2 |s \ms1 \xi| \ge 1$ and applying
Lemma~\ref{FourBound2} with $r \rightarrow 0$ we get
\[
     \Bigl(
      1 - \prod_{j=1}^\ell \widehat \varphi(s \ms1 \xi_j) 
     \Bigr) \ms2
      \prod_{j=1}^\ell \widehat \eta(\xi_j)
 \le 2 \ms1 \prod_{j=1}^\ell \widehat \eta(\xi_j)
 \le 2 \ms1 |\xi|^{-2}
 \le 2 \pi^2 s^2.
\]
\end{proof}

 We know that $\varphi$ is $1$-stable, because $F(x) = \ln \varphi(x)$ is
Lipschitz. Indeed, its derivative
$
 F'(x) = - 4 x^3 / (1 + x^4)
$
is bounded on the real line. To be precise, the second derivative $F''$
vanishes when $x^4 = 3$, which implies that $|F'(x)| \le 3^{3/4}$ for
every $x$. When $|t| \le 1$, we have thus
\[
     \varphi(s + t) 
 \le \e^{3^{3/4}} \ms1 \varphi(s),
 \qquad s \in \R,
\]
with
$
 \e^{3^{3/4}} < 9,772 < 10
$.
This shows that $\varphi$ is $1$-stable with constant $\le 10$.
We shall need more than the $1$-stability of the function $\varphi$,
namely, we shall use the polynomial character of $1 / \varphi$. When 
$|t| \ge 1$ and $u \in \R$, we have
\begin{equation}
     1 + (u - t)^4
 \le 1 + 8 \ms1 (u^4 + t^4)
 \le 8 \ms1 (1 + t^4) \ms1 (1 + u^4)
 \le 16 \ms2 t^4 \ms1 (1 + u^4),
 \label{H4}
\end{equation}
implying in this case, and with $u = s + t$, that
$
 \varphi(s + t) \le 16 \ms2 t^4 \ms1 \varphi(s)
$.
\dumou

 We introduce $w_1 = w_0^2 = R^{-\delta} < w_0$.\label{Wsub1} 
The dilate $\Di \varphi {w_1}$ of $\varphi$ is $w_1$-stable with constant
$10$ and we shall consider from now on that $\Phi = \Di \varphi {w_1}$. We
denote as before by~$\Phi_j$ the operator on $L^q(\R^\ell)$ of convolution
with $\Di \varphi {w_1}$ in the variable $x_j$, where
$j \in L = \{1, \ldots, \ell\}$. For every subset $J \subset L$ we define
$\Phi^J$ as before, as well as $\Phi^{\sim j} = \Phi^{L \setminus \{j\}}$.
For $|t| \ge w_1$ we have by~\eqref{H4} the inequality 
\begin{equation}
     \Di \varphi {w_1}(s + t)
 \le 16 \ms1 (t / w_1)^4 \Di \varphi {w_1}(s)
  =  16 \ms1 R^{4 \delta} t^4 {\Di \varphi {w_1}} (s),
 \quad
 s \in \R.
 \label{H4b}
\end{equation}
\dumou

 Here is perhaps the crux of the matter. The boundary measures $\mu_j$,
partial derivatives of $\mu_Q$, will be swallowed and disappear as if by
magic. The cube $Q$ here is the cube $Q_\ell$ in~$\R^\ell$.

\def\CiteLhuit{\cite[Lemma~8]{BourgainCube}}
\begin{lem}[Bourgain~\CiteLhuit]\label{LHuit}
Let $\nu$ be an integer $\ge 1$, let $q = 2^\nu$, and let 
$f_1, \ldots, f_\ell$ be functions in~$L^q(\R^\ell)$. Let $\mu_j$ denote
the partial derivative $\partial_j \mu_Q$ of the probability measure
$\mu_Q$, for $j = 1, \ldots, \ell$. With\/ $\Phi_j$ defined as 
in~\eqref{LesPhij} from\/ $\Phi f = { \Di \varphi {w_1} } * f$ 
when $f \in L^q(\R)$, one has that
\[
     \Bigl\| 
      \Bigl(
       \sum_{j=1}^\ell \ms4 \bigl| \mu_j * \Phi^{\sim j} f_j \bigr|^2
      \Bigr)^{1/2}
     \Bigr\|_{L^q(\R^\ell)}
 \le \kappa \ms1 \sqrt{q \ln q} \ms2 R^{4 \delta} 
       \Bigl\|
        \Bigl( \sum_{j=1}^\ell |f_j|^2 \Bigr)^{1/2} 
       \Bigr\|_{L^q(\R^\ell)}.
\]
\end{lem}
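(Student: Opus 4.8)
The plan is to reduce the vector-valued square-function estimate for the singular boundary measures $\mu_j=\partial_j\mu_Q$ to a scalar estimate on ordinary $L^q(\R)$ quantities by exploiting two facts: first, that $\Phi^{\sim j}f_j$ carries the factor $\Phi_j$ of convolution with $\Di\varphi{w_1}$ in \emph{every} variable except the $j$-th, and second, that $\mu_j$ is a boundary measure carried by $\partial Q_\ell$, so that $\mu_j*(\text{anything})$ evaluated at a point $x$ is an integral over the two faces $\{x_j=\pm 1/2\}$. Concretely, for each fixed $j$, write $\mu_j*\Phi^{\sim j}f_j$ as the convolution in the remaining variables $x_i$, $i\ne j$, of the face-measure with $\Phi^{L\setminus\{j\}}f_j$; because $\Phi^{\sim j}$ contains $\Phi_i$ for all $i\ne j$, and because $\Di\varphi{w_1}$ has the polynomial-growth (hence $w_1$-stability) property~\eqref{H4b}, the function $(x_i)_{i\ne j}\mapsto(\Phi^{\sim j}f_j)(x)$ is ${w_1}$-stable uniformly. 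The point of the lemma is that the boundary measure's action, after this smoothing, is controlled by a multiple of $R^{4\delta}$ times a maximal-function-type operator that no longer sees the singularity of $\mu_j$.

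First I would establish the one-variable version: if $g\ge 0$ on $\R$ and $g$ is $w_1$-stable with constant $C$, then for the jump measure $\nu=\delta_{1/2}-\delta_{-1/2}$ (the $x_j$-marginal behaviour of $\mu_j$ after integrating the other faces) one has $\bigl|\nu*(\Phi g)\bigr|\le \kappa\,C\,R^{4\delta}\cdot(\text{a fixed length-}w_1\text{ average of }g)$, using~\eqref{H4b} to absorb the translation by $\pm 1/2$ against the integrable $\Di\varphi{w_1}$: indeed $(\Phi g)(s\pm1/2)=\int\Di\varphi{w_1}(s\pm1/2-v)g(v)\,dv$, and on $|s\pm1/2-v|\ge w_1$ one bounds $\Di\varphi{w_1}(s\pm1/2-v)\le 16R^{4\delta}(s-v)^4\Di\varphi{w_1}(s-v)/(\text{stuff})$ — more cleanly, since $\Di\varphi{w_1}$ is a probability density one just compares $\Phi g$ at translated points using stability and the polynomial tail. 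The net effect: $\mu_j*\Phi^{\sim j}f_j$ is pointwise dominated by $\kappa R^{4\delta}$ times $\Phi^{L}(\Tilde f_j)$ for an auxiliary $\Tilde f_j$ with $|\Tilde f_j|\simleq$ a small-scale average of $|f_j|$. Then I would feed the resulting sum $\bigl\|\bigl(\sum_j|\Phi^{\sim j}(\text{smoothed }f_j)|^2\bigr)^{1/2}\bigr\|_q=\bigl\|\sum_j\Phi^{\sim j}(\text{smoothed }f_j)^2\bigr\|_{q/2}^{1/2}$ through Lemma~\ref{ProduitDeLs} applied with $\ell$ replaced appropriately and with the probability density $\Phi=\Di\varphi{w_1}$: this removes the $\Phi^{\sim j}$'s at the cost of the constant $\kappa_\nu$, which by~\eqref{Improved} is $\le \kappa(\lambda/\Phi(0))2^\nu$ where $\lambda$ is the Lipschitz constant of $\ln\Di\varphi{w_1}$, of order $1/w_1=R^{\delta}$, so $\kappa_\nu\simeq R^\delta 2^\nu$. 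Collecting the $R$-powers $R^{4\delta}$ from the boundary estimate and $R^{\delta}$ from $\kappa_\nu$, and noticing $\sqrt{q}$ from Lemma~\ref{ProduitDeLsA}'s recursion together with the number of steps $\nu=\log_2 q$ giving the $\sqrt{\ln q}$ factor, one lands on the bound $\kappa\sqrt{q\ln q}\,R^{4\delta}\bigl\|\bigl(\sum_j|f_j|^2\bigr)^{1/2}\bigr\|_q$ (the vector-valued small-scale averaging operator $f_j\mapsto\Tilde f_j$ being $L^q$-bounded collectively, e.g.\ by Fefferman--Stein or just by the product structure of $\Di\varphi{w_1}$).

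The main obstacle will be handling the singular measures $\mu_j$ honestly: one must verify that, for a fixed direction $j$, integrating $\mu_j$ against $\Phi^{\sim j}f_j$ really does factor as (a one-dimensional jump in $x_j$) $\otimes$ (an $(\ell-1)$-dimensional convolution that is $w_1$-stable), and that the two resulting face-integrals can be bounded without losing more than $R^{4\delta}$. The polynomial tail~\eqref{H4b} of $\Di\varphi{w_1}$ is exactly what makes the translation-by-$1/2$ harmless after rescaling, but one has to be careful that the stability constant entering Lemma~\ref{ProduitDeLs} is the one for $\Di\varphi{w_1}$ (scale $w_1=R^{-\delta}$), not for $\varphi$ itself, so the $R$-dependence of $\kappa_\nu$ must be tracked through~\eqref{Improved}. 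A secondary bookkeeping point is that Lemma~\ref{ProduitDeLsA} is stated for nonnegative $f_j$, so before invoking it one replaces $f_j$ by $|f_j|$ and the smoothed-out dominating functions by their absolute values, which is legitimate because every operator in sight ($\Phi$, the face-averages) is positivity-preserving. Once these are in place the assembly is routine, and the constant $\kappa$ in the statement absorbs all the $\varphi$-specific numerology ($\Phi(0)$, the Lipschitz constant of $\ln\varphi$, the $16$ in~\eqref{H4}).
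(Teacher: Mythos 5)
Your overall strategy — absorb the boundary translations via the $w_1$-stability of $\Di\varphi{w_1}$, route through Lemma~\ref{ProduitDeLs}, and collect $R$-powers — is the right family of ideas, but there are two genuine problems with the way you've arranged them.

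\textbf{Order of operations.} You propose to carry out the ``boundary estimate'' \emph{before} invoking Lemma~\ref{ProduitDeLs}, by pointwise-dominating $\mu_j*\Phi^{\sim j}f_j$ by $\kappa\ms1 R^{4\delta}\ms1\Phi^L\widetilde f_j$. This cannot work as a pointwise bound, because $\mu_j$ performs a jump of size $1/2$ in the $x_j$ variable, and $\Phi^{\sim j}$ provides no smoothing in $x_j$ whatever. There is no inequality of the form ``$\tau_j \le \kappa R^{4\delta}\Phi_j$'' between measures — $\tau_j$ carries Dirac masses — so the jump cannot be paid for until a $\Phi_j$ factor is physically present in the expression. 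The paper creates that factor by applying Lemma~\ref{ProduitDeLs} \emph{first}: starting from $\sum_j\Phi^{\sim j}(\tau_j*K^{\sim j}*|f_j|^2)$ (obtained by $|\mu*f|^p\le\mu*|f|^p$ for the probability measure $\tau_j*K^{\sim j}$), the lemma outputs quantities of the form $\sum_j\Phi^L[\,\cdots\,]^{2^k}$, and only then is the operator inequality $\tau_j\Phi_j\le R^{4\delta}\Phi_j$, together with $\Phi_j\le R^{4\delta}\Phi_j K^{\{j\}}$ and $K^{\{j\}}*K^{\sim j}=K_Q$, applied to absorb the boundary measure entirely (producing $R^{8\delta}$ inside the squared norm, hence $R^{4\delta}$ after square root). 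So the $R^{4\delta}$ step is structurally downstream of Lemma~\ref{ProduitDeLs}, not upstream as in your plan.

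\textbf{The $R$-dependence of $\kappa_\nu$.} You claim that $\kappa_\nu\simeq R^\delta\ms1 2^\nu$ because the Lipschitz constant $\lambda$ of $\ln\Di\varphi{w_1}$ is of order $1/w_1 = R^\delta$. This is a computational mistake: both $\lambda$ and $\Phi(0)=w_1^{-1}\varphi(0)$ scale like $w_1^{-1}$ under dilation, so the ratio $\lambda/\Phi(0)$ entering~\eqref{Improved} is \emph{scale-invariant}, and $\kappa_\nu\le\kappa\ms1 2^\nu$ with a constant independent of $R$. Consistently with this, your final bookkeeping --- collecting $R^{4\delta}$ ``from the boundary'' and $R^\delta$ ``from $\kappa_\nu$'' yet still landing on $R^{4\delta}$ --- does not actually balance. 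Once you recognise that $\kappa_\nu$ is $R$-free and that the two applications of the $R^{4\delta}$-stability (once for $\tau_j$, once for $K^{\{j\}}$) produce $R^{8\delta}$ inside the squared norm, the arithmetic closes cleanly to $\kappa\sqrt{q\ln q}\ms1 R^{4\delta}$.
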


\begin{proof}
Let us write $L = \{1, \ldots, \ell\}$ and $\R^L$ for $\R^\ell$. For each
$j \in L$, let $Q^{\sim j}$ denote the cube $\prod_{i \ne j} [ -1/2, 1/2]$
in $\R^{L \setminus \{j\}}$, let $\d \gr x^{\sim j}$ be the Lebesgue
measure on~$\R^{L \setminus \{j\}}$ and consider the probability measures
$\tau_j$, $K^{\sim j}$, $K^{ \{j\} }$ on $\R^\ell$ defined by
\begin{align*}
   \tau_j
 &= \frac 1 2 \bigl( \delta_{1/2}(x_j) + \delta_{-1/2}(x_j) \bigr)
         \otimes \Bigl( \otimes_{i \ne j} \ms2 \delta_0(x_i) \Bigr),
 \\
   K^{\sim j}
 &= \delta_0(x_j)
         \otimes \Bigl(
                  \otimes_{i \ne j} \gr 1_{[-1/2, 1/2]}(x_i) \, \d x_i
                 \Bigr)
 = \delta_0(x_j)
         \otimes \bigl(
                  \gr 1_{Q^{\sim j}} \, \d \gr x^{\sim j}
                 \bigr),
 \\
   K^{ \{j\} }
 &= \Bigl( \gr 1_{[-1/2, 1/2]}(x_j) \, \d x_j \Bigr)
         \otimes \Bigl(
                  \otimes_{i \ne j} \delta_0(x_i)
                 \Bigr).
\end{align*}
When convenient, we shall identify a kernel $K$ and the convolution
operator with that kernel. Note that the signed measure 
$\mu_j = \partial_j \gr 1_Q$ satisfies
\[
     |\partial_j \gr 1_Q|
  =  \bigl( \delta_{1/2}(x_j) + \delta_{-1/2}(x_j) \bigr) 
      \otimes \bigl( \gr 1_{Q^{\sim j}} \, \d \gr x^{\sim j} \bigr)
  =  2 \ms1 \tau_j * K^{\sim j}.
\]
Using $|\mu * f|^p \le \mu * |f|^p$ when $\mu$ is a
probability measure and $p \ge 1$, we have
\[
     \sum_{j=1}^\ell |\partial_j \gr 1_Q * \Phi^{\sim j} f_j|^2
 \le 4 \ms2 \sum_{j=1}^\ell 
       \Phi^{\sim j} (\tau_j * K^{\sim j} * |f_j|^2).
\]
We evaluate the $L^q$ norm applying Lemma~\ref{ProduitDeLs}, obtaining that
\[
     \Bigl\| 
      \Bigl(
       \sum_{j=1}^\ell \ms4 \bigl| \mu_j * \Phi^{\sim j} f_j \bigr|^2
      \Bigr)^{1/2}
     \Bigr\|_q^2
 \le 4 \ms1
      \Bigl\| 
       \sum_{j=1}^\ell \ms1 \Phi^{\sim j} 
        (\tau_j * K^{\sim j} *  |f_j|^2)
     \Bigr\|_{q/2}
 \le 4 \ms1 \kappa_{\nu-1} \sum_{k=0}^{\nu - 1} (E_k)^{2^{-k}},
\]
where the expressions $E_k$ are given by
\[
    E_k
 := \Bigl\| 
     \sum_{j=1}^\ell \Phi^L \ms1
      \bigl[
       \tau_j * K^{\sim j} * |f_j|^2
      \bigr]^{2^k}
    \Bigr\|_{q / 2^{k+1}},
 \quad
 1 \le 2^k \le q / 2 = 2^{\nu - 1}. 
\]
Using again $|\mu * f|^p \le \mu * |f|^p$ for $p \ge 1$, we get
\[
     E_k
 \le F_k
  := \bigl\| 
      \sum_{j=1}^\ell \Phi^L \ms1
       ( \tau_j * K^{\sim j} * |f_j|^{2^{k+1}} )
     \bigr\|_{q / 2^{k+1}}.
\]
Next, observe that
${\Di \varphi {w_1}} (s + t) \le w_1^{-4} {\Di \varphi {w_1}} (s)
 = R^{4 \delta} {\Di \varphi {w_1}} (s)$ for $|t| \le 1/2$. Indeed, when 
$w_1 \le |t| \le 1/2$ we have ${\Di \varphi {w_1}} (s + t)
 \le 16 \ms1 (t / w_1)^4 {\Di \varphi {w_1}} (s)
 \le w_1^{-4} {\Di \varphi {w_1}} (s)$ by~\eqref{H4b}, and
${\Di \varphi {w_1}} (s + t) \le 10 \ms2 {\Di \varphi {w_1}} (s)
 \le R^{4 \delta} {\Di \varphi {w_1}} (s)$ when $|t| \le w_1$, because we
assumed that $R^\delta \ge R_0^\delta = 16$. When $\mu$ is a probability
measure supported on $[-1/2, 1/2]$, it follows that
$\mu * { \Di \varphi {w_1} } \le R^{4 \delta} { \Di \varphi {w_1} }$ and 
${ \Di \varphi {w_1} } \le R^{4 \delta} \mu * { \Di \varphi {w_1} }$. We
have therefore $\tau_j \Phi_j \le R^{4 \delta} \Phi_j$ and
$\Phi_j \le R^{4 \delta} \Phi_j K^{ \{j\} }$. For $g$ nonnegative
we obtain
\[
     \Phi_j \tau_j \ms1 g
 \le R^{4 \delta} \Phi_j \ms1 g
 \le R^{8 \delta} \Phi_j K^{ \{j\} } \ms1 g.
\]
Consequently, observing that $K^{ \{j\} } * K^{\sim j}
 = \gr 1_Q(x) \, \d x$, we have
\[
     \Phi^L (\tau_j K^{\sim j} g)
  =  \Phi^{\sim j} \Phi_j \tau_j K^{\sim j} g
 \le R^{8 \delta} \Phi^{\sim j} \Phi_j K^{ \{j\} } K^{\sim j} g
  =  R^{8 \delta} \Phi^L * \gr 1_Q *g,
\]
and by the last assertion of Lemma~\ref{ProduitDeLs}, we obtain 
for $k = 0, \ldots, \nu - 1$ that
\begin{align*}
     F_k
 &\le R^{8 \delta}
      \Bigl\| 
       \Bigl( \sum_{j=1}^\ell \Phi^L |f_j|^{2^{k+1}} \Bigr)
        * \gr 1_Q
      \Bigr\|_{q / 2^{k+1}}
 \\
 &\le R^{8 \delta}
      \Bigl\| 
        \sum_{j=1}^\ell \Phi^L |f_j|^{2^{k+1}} 
      \Bigr\|_{q / 2^{k+1}}
 \le R^{8 \delta} 
      \Bigl\|
       \bigl( \sum_{j=1}^\ell |f_j|^2 \bigr)^{1/2} 
      \Bigr\|_q^{2^{k+1}}.
\end{align*}
Finally, assuming 
$\bigl\|
  \bigl( \sum_{j=1}^\ell |f_j|^2 \bigr)^{1/2} 
 \bigr\|_q \le 1$ we get
\[
     \Bigl\| 
      \Bigl(
       \sum_{j=1}^\ell \ms4 \bigl| \mu_j * \Phi^{\sim j} f_j \bigr|^2
      \Bigr)^{1/2}
     \Bigr\|_q^2
 \le 4 \ms1
      \kappa_{\nu-1} \sum_{k=0}^{\nu - 1} (R^{8 \delta})^{2^{-k}}
 \le 4 \ms1 \nu \kappa_{\nu - 1} R^{8 \delta}.
\]
Since $\ln \varphi$ is Lipschitz on $\R$, we can estimate 
$\kappa_\nu$ by~\eqref{Improved} and conclude.
\end{proof}

 Recalling that $G^R$ is a probability density and $\mu_j^R = \mu_j * G^R$,
we immediately deduce the result that we really need.

\def\CiteLneuf{\cite[Lemma~9]{BourgainCube}}
\begin{lem}[Bourgain~\CiteLneuf]%
\label{LNeuf}
Assume that $q = 2^\nu$, with $\nu \ge 1$ an integer. Let 
$f_1, \ldots, f_\ell$ be elements of~$L^q(\R^\ell)$. With\/ $\Phi_j$ as
in Lemma~\ref{LHuit}, we have
\[
     \Bigl\| 
      \Bigl(
       \sum_{j=1}^\ell \ms4 \bigl| \mu_j^R * \Phi^{\sim j} f_j \bigr|^2
      \Bigr)^{1/2}
     \Bigr\|_{L^q(\R^\ell)}
 \le \kappa_q \ms1 R^{4 \delta} \ms2
      \Bigl\|
       \Bigl( \sum_{j=1}^\ell |f_j|^2 \Bigr)^{1/2} 
      \Bigr\|_{L^q(\R^\ell)}.
\]
\end{lem}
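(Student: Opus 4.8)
\textbf{Proof plan for Lemma~\ref{LNeuf}.} The statement follows from Lemma~\ref{LHuit} by a simple averaging argument over the Gaussian measure $G^R$, exploiting that $\mu_j^R = \mu_j * G^R$ and that $G^R$ is a probability density. The plan is to write $\mu_j^R * \Phi^{\sim j} f_j = G^R * (\mu_j * \Phi^{\sim j} f_j)$, using that convolution with the fixed probability density $G^R$ commutes with the operators $\Phi^{\sim j}$ (each $\Phi_i$ acts by convolution in the $x_i$ variable and so does $G^R$ coordinatewise, but more simply $G^R$ convolution is translation-invariant and commutes with all convolutions). So the vector-valued function whose $L^q(\R^\ell)$ norm we must bound is the coordinatewise convolution by $G^R$ of the vector $\bigl(\mu_j * \Phi^{\sim j} f_j\bigr)_{j=1}^\ell$.

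First I would recall the elementary fact that convolution with a probability density is a contraction on $L^q$ of Banach-space-valued functions: for any probability measure $\rho$ on $\R^\ell$ and any $\ell^2$-valued $F = (F_j)_{j}$, one has $\bigl\| (\rho * F_j)_j \bigr\|_{L^q(\R^\ell)} \le \bigl\| (F_j)_j \bigr\|_{L^q(\R^\ell)}$, which is just $\|\rho * \cdot\|_{q\to q}\le 1$ applied in the scalar-valued Banach space $L^q(\R^\ell,\ell^2)$, or directly from Jensen/Minkowski as in~\eqref{LOneMultiplier}. Applying this with $\rho = G^R$ gives
\[
     \Bigl\|
      \Bigl( \sum_{j=1}^\ell |\mu_j^R * \Phi^{\sim j} f_j|^2 \Bigr)^{1/2}
     \Bigr\|_{L^q(\R^\ell)}
 \le \Bigl\|
      \Bigl( \sum_{j=1}^\ell |\mu_j * \Phi^{\sim j} f_j|^2 \Bigr)^{1/2}
     \Bigr\|_{L^q(\R^\ell)}.
\]
Then Lemma~\ref{LHuit} bounds the right-hand side by $\kappa \sqrt{q \ln q}\, R^{4\delta} \bigl\| (\sum_j |f_j|^2)^{1/2} \bigr\|_{L^q(\R^\ell)}$, and absorbing $\sqrt{q\ln q}$ into a constant $\kappa_q$ depending only on $q$ finishes the proof.

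There is essentially no obstacle here: the only point requiring a line of care is the commutation/contraction step, namely that $G^R$ convolution does not increase the $L^q(\ell^2)$ norm and commutes with the coordinatewise operators $\Phi^{\sim j}$ so that $\mu_j^R * \Phi^{\sim j} f_j = G^R *(\mu_j * \Phi^{\sim j} f_j)$. Both are immediate from the fact that all operators in sight are convolutions (hence commute) and $G^R$ is a probability density (hence a contraction on every $L^q$, vector-valued included). Thus the lemma is, as the text says, an immediate deduction from Lemma~\ref{LHuit}.
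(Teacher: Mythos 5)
Your proof is correct and is exactly the argument the paper intends: the paper states Lemma~\ref{LNeuf} as an immediate consequence of Lemma~\ref{LHuit} by the very observation you make, namely that $\mu_j^R*\Phi^{\sim j}f_j = G^R*(\mu_j*\Phi^{\sim j}f_j)$ and that coordinatewise convolution with the probability density $G^R$ is a contraction on $L^q(\R^\ell,\ell^2)$. Your write-up simply spells out the Minkowski/Jensen step that the paper leaves implicit.
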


\subsection{Conclusion%
\label{ConcluBourCube}}

\noindent
It remains to estimate the two terms
$\gr E (R, \ell, f) := | \nabla \mu^R * H_0 f |$ and~$\gr F (R, \ell, f)$
defined in~\eqref{2.29}, for $f \in L^{q_0}(\R^\ell)$, $q_0 = 2^\nu$ and
$\ell \le n$. Each one will be cut into two pieces, one of order a power of
$R^\delta$ and the second bounded by a \og small\fge multiple 
of~$B(q_0, R, n)$. Let us start with $\gr E (R, \ell, f)$, and cut 
$| \nabla \mu^R * H_0 f |$ into
\[
    \gr E'(R, \ell, f)
 := \ms2
     \bigr|
      \nabla \mu^R * {\Di G {w_1}} * H_0 f
     \bigr|, \ms8
    \gr E''(R, \ell, f)
 := \bigl|
     \nabla \mu^R * (\delta_0 - {\Di G {w_1}}) * H_0 f
    \bigr|.
\]

 We begin with $\gr E'(R, \ell, f)$. The mapping 
$f \mapsto \nabla \mu^R * {\Di G {w_1}} * H_0 f$, equal to
$ U_{ \mu^R * {\Di G {w_1}} } \circ H_0$ is studied by applying
Lemma~\ref{LemmeUK} to the log-concave probability density
$\mu^R * {\Di G {w_1}}$. Using~\eqref{VKt} and~\eqref{VdeG}, we see that
$
     V( \mu^R * {\Di G {w_1}} )
 \le V( {\Di G {w_1}} )
  =  w_1^{-1}
  =  R^\delta
$.
The variance of
$
   \mu^R * {\Di G {w_1}}
 = \mu_Q * {\Di G {1/R}} * {\Di G {w_1}}
$ is larger than that of $\mu_Q$, which is equal to~$(12)^{-1}$. By
Lemma~\ref{LemmeUK} and $q_0 \ge 2$, we get that
\[
     \bigl\| \gr E'(R, \ell, f) \bigr\|_{q_0}
 \le 24^{1/q_0}
      (R^\delta)^{1 - 2 / q_0} \ms1 \|H_0 f\|_{q_0}
 \le 5 \ms1 R^\delta \ms1 \|f\|_{q_0}.
\]
\dumou

 We study now $\gr E''(R, \ell, f)$ with the \textit{a priori\/} estimate
that involves the constant $B(q_0, R, \ell)$. By the
definition~\eqref{Objective}, one writes
\begin{align*}
     \bigl\| \gr E''(R, \ell, f) \bigr\|_{q_0}
  & = \bigl\|
       \nabla \mu^R * (\delta_0 - {\Di G {w_1}}) * H_0 f
      \bigr\|_{q_0}
 \\
 & \le B(q_0, R, \ell) \ms3 \|(\delta_0 - {\Di G {w_1}}) * H_0 f \|_{q_0}.
\end{align*}
We continue by interpolation $(L^\infty, \ms1 L^2)$ for
$f \mapsto (\delta_0 - {\Di G {w_1}}) * H_0 f$. In~$L^\infty(\R^\ell)$ one
has simply
$
     \| (\delta_0 - {\Di G {w_1}}) * H_0 \|_{\infty \rightarrow \infty} 
 \le 2
$
by using the $L^1$~norm of the convolution kernel. Lemma~\ref{FourBound2}
with $r = 2 \sqrt \pi \ms1 w_1 / w_0$ gives for the Fourier transform
a bound
\[
     (1 - \e^{ - 4 \pi w_1^2 |\xi|^2}) \ms2
      \prod_{j = 1}^\ell \widehat \eta(w_0 \xi_j)
 \le 4 \pi (w_1 / w_0)^2
  =  4 \pi w_0^2
  =  4 \pi R^{-\delta},
 \qquad \xi \in \R^\ell,
\] 
implying
$
     \| (\delta - {\Di G {w_1}}) * H_0 \|_{2 \rightarrow 2}
 \le 4 \pi R^{-\delta}
$.
We get in this way that
\[
     \| (\delta_0 - {\Di G {w_1}}) * H_0 \|_{q_0 \rightarrow q_0} 
 \le 2^{1 - 2/q_0} \ms1 (4 \pi R^{- \delta})^{ 2 / q_0 }
 \le 4 \pi \ms1 R^{- 2 \delta / q_0 },
\]
thus
$
     \bigl\| \gr E'' (R, \ell, f) \bigr\|_{q_0}
 \le \kappa B(q_0, R, \ell) R^{- 2 \delta / q_0} \ms2 \|f\|_{q_0}
$
and we obtain
\[
     \bigl\| \gr E (R, \ell, f) \bigr\|_{q_0}
 \le \kappa \bigl( R^\delta + B(q_0, R, \ell) R^{- 2 \delta / q_0}
            \bigr) \|f\|_{q_0}.
\]
\dumou

 Now we consider $\gr F (R, \ell, f)$ and we cut it into
\[
    \gr F' (R, \ell, f)
 := \Bigr(
      \sum_{j=1}^\ell \ms2 
       \bigl|
         \mu_j^R * \Gamma^j \Phi^{\sim j} f
       \bigr|^2 
    \Bigr)^{1/2} \ns9,
 \ms{10}
    \gr F'' (R, \ell, f)
 := \Bigr(
      \sum_{j=1}^\ell \ms2 
       \bigl|
         \mu_j^R * \Gamma^j (I - \Phi^{\sim j}) f 
       \bigr|^2 
    \Bigr)^{1/2} \ns6.
\]
By Lemma~\ref{LNeuf}, we have that
\[
     \bigl\| \gr F' (R, \ell, f) \bigr\|_{q_0}
 \le \kappa_{q_0} \ms1 R^{4 \delta} \ms2 
      \Bigl\| 
       \Bigl( \sum_{j=1}^\ell \ms2 
        \bigl| \Gamma^j f \bigr|^2 \Bigr)^{1/2}
      \Bigr\|_{q_0}.
\]
Using Khinchin's~\eqref{Hincin} and~\eqref{KinLp} we reduce to
$
 \bigl\| \sum_{j=1}^\ell \pm \Gamma^j f \bigr\|_{q_0}
$, and dividing according to the sign~$\pm$, we further reduce to
$
 \bigl\| \sum_{j \in J_1} \Gamma^j f \bigr\|_{q_0}
$
and
$
 \bigl\| \sum_{j \notin J_1} \Gamma^j f \bigr\|_{q_0}
$,
where $J_1 \subset \{1, \ldots, \ell\}$. The first sum corresponds to
the operator~$H_1$ relative to~$J_1$, the second is the one for 
$\sim J_1 := \{1, \ldots, \ell\} \setminus J_1$. By
Proposition~\ref{PisierSG} for the set $J_1$ of variables, writing 
$x = (\gr x^{J_1}, \gr x^{\sim J_1}) \in \R^\ell$, we have for 
$1 < q < +\infty$ that
\[
     \Bigl\| 
      \bigl( \sum_{j \in J_1} \Gamma^j f \bigr)_{\gr x^{\sim J_1}}
     \Bigr\|_{L^{q}(\R^{J_1})}
 \le h_q \ms1 
     \bigl\|
      f_{\down{\gr x^{\sim J_1}}} 
     \bigr\|_{ \down{L^{q}(\R^{J_1})} },
 \quad \gr x^{\sim J_1} \in \R^{\sim J_1}, 
\]
and integrating in the variables in $\sim J_1$ we get with $A_q$
from~\eqref{Hincin} that
\begin{equation}
     \Bigl\| 
      \Bigl( \sum_{j=1}^\ell \ms2 
       \bigl| \Gamma^j f \bigr|^2 \Bigr)^{1/2}
     \Bigr\|_q
 \le 2 \ms1 A_q^{-1} \ms1 h_q \ms1 \|f\|_q
 \label{SumGamJ}
\end{equation}
for $1 < q < +\infty$. It follows that
$
     \bigl\| \gr F' (R, \ell, f) \bigr\|_{q_0}
 \le \kappa'_{q_0} \ms1 R^{4 \delta} \ms1 \|f\|_{q_0}
$.
\dumou

 For the second term $\gr F'' (R, \ell, f)$ we first obtain an $L^2$~bound
for the nonlinear operator $V : f \mapsto
 \bigl(
  \sum_{j=1}^\ell |\Gamma^j (I - \Phi^{\sim j}) f|^2
 \bigl)^{1/2}$, by estimating the Fourier transform
\[
     \chi(\xi)
 :=  \sum_{j=1}^\ell 
      \bigl( 1 - \widehat \eta(w_0 \xi_j) \bigr)^2 \ms1
       \Bigl( \prod_{i \ne j} \widehat \eta(w_0 \xi_i) \Bigr)^2 \ms2
        \bigl( 1 - \prod_{i \ne j} \widehat \varphi(w_1 \xi_i) \bigr)^2
 \le 4 \pi^2  \ms1 R^{- \delta}.
\]
Indeed, we know that $0 \le \widehat \eta(t) \le 1$ and
$-1 \le \widehat \varphi(t) \le 1$, therefore
\begin{align*}
     \ms5 & \sum_{j=1}^\ell 
      \bigl( 1 - \widehat \eta(w_0 \xi_j) \bigr)^2 \ms1
       \Bigl( \prod_{i \ne j} \widehat \eta(w_0 \xi_i) \Bigr) \ms2
        \bigl( 1 - \prod_{i \ne j} \widehat \varphi(w_1 \xi_i) \bigr)
 \\
 \le \ms5 & 2 \ms1 \sum_{j=1}^\ell 
      \bigl( 1 - \widehat \eta(w_0 \xi_j) \bigr) \ms1
       \prod_{i \ne j} \widehat \eta(w_0 \xi_i) \ms2
 \le 2 \prod_{j=1}^\ell
      \bigl(
       ( 1 - \widehat \eta(w_0 \xi_j) )
        + \widehat \eta(w_0 \xi_j) 
      \bigr)
  =  2,
\end{align*}
and by Lemma~\ref{EtaPhi} applied to $\R^{L \setminus \{j\}}$ with 
$s = w_1 / w_0 = w_0 = R^{- \delta / 2}$, it follows that
\[
     \chi(\xi)
 \le 2 \max_{1 \le j \le \ell} 
      \Bigl( \prod_{i \ne j} \widehat \eta(w_0 \xi_i) \Bigr) \ms2
       \Bigl( 1 - \prod_{i \ne j} \widehat \varphi(w_1 \xi_i) \Bigr)
 \le 4 \pi^2 R^{-\delta},
 \qquad \xi \in \R^\ell.
\]
We get $\|V f\|_2^2 \le 4 \ms1 \pi^2 \ms1 R^{-\delta} \|f\|_2^2$ and
$\|V\|_{2 \rightarrow 2} \le 2 \ms1 \pi \ms1 R^{-\delta / 2}$. On the other
hand, given functions $(g_j)_{j=1}^\ell$ and independent Bernoulli random
variables $(\varepsilon_j)_{j=1}^\ell$, we have
\begin{align*}
  &   \ms{22} 2 \ms2
       \Bigl( \sum_{j=1}^\ell |\mu_j^R * g_j|^2 \Bigr)^{1/2}
   =  2 \ms2
       \Bigl( \sum_{j=1}^\ell |\mu_j^R * \varepsilon_j g_j|^2 \Bigr)^{1/2}
 \\
 &\le \Bigl(
       \sum_{j=1}^\ell 
        \bigl| \mu_j^R * 
         \bigl(
          \varepsilon_j g_j + \sum_{i \ne j} \varepsilon_i g_i
         \bigr) 
        \bigr|^2 
      \Bigr)^{1/2}
      +
      \Bigl(
       \sum_{j=1}^\ell 
        \bigl| \mu_j^R * 
         \bigl(
          \varepsilon_j g_j - \sum_{i \ne j} \varepsilon_i g_i
         \bigr) 
        \bigr|^2 
      \Bigr)^{1/2}
\end{align*}
hence with $g_j = \Gamma^j ( I - \Phi^{\sim j} ) f$ and
$F_\varepsilon = \sum_{i=1}^\ell \varepsilon_i \Gamma^i
 (I - \Phi^{\sim i}) f$ we see that
\[
     \bigl\| \gr F'' (R, \ell, f) \bigr\|_{q_0}
 \le \E_\varepsilon 
      \Bigl\| 
       \Bigl(
        \sum_{j=1}^\ell |\mu_j^R * F_\varepsilon|^2 
       \Bigr)^{1/2}
      \Bigr\|_{q_0} 
  =  \E_\varepsilon 
      \bigl\| 
       \nabla \mu^R * F_\varepsilon
      \bigr\|_{q_0} 
  =: D.
\]
With Khinchin~\eqref{KinLp} and the \textit{a priori}
bound~\eqref{Objective} we obtain
\[
      D
  \le B(q_0, R, \ell) \E_\varepsilon \|F_\varepsilon\|_{q_0}
  \le B_{q_0} \ms1 B(q_0, R, \ell) 
       \Bigl\| 
        \Bigl( 
         \sum_{i=1}^\ell
          |\Gamma^i ( I - \Phi^{\sim i} ) f|^2
        \Bigr)^{1/2}
       \Bigr\|_{q_0}.
\]
In $L^{q_1}(\R^\ell)$ with $q_1 = 2 \ms1 q_0 = 2^{\nu+1}$ we have
by~\eqref{SumGamJ} and Lemma~\ref{ProduitDeLs} that
\begin{align*}
    \ms5 & \Bigl\|
     \Bigl(
      \sum_{j=1}^\ell |\Gamma^j (I - \Phi^{\sim j}) f|^2
     \Bigl)^{1/2}
    \Bigl\|_{q_1}
 \\
 \le & \ms4 \Bigl\|
      \Bigl(
       \sum_{j=1}^\ell |\Gamma^j f|^2
      \Bigl)^{1/2}
     \Bigl\|_{q_1}
      +
     \Bigl\|
       \sum_{j=1}^\ell \Phi^{\sim j} \bigl( \Gamma^j f)^2
     \Bigl\|_{q_1/2}^{1/2}
 \le \kappa_{q_0} \|f\|_{q_1}.
\end{align*}
Interpolating with the $L^2$ bound, and with $\kappa_{q_0}$ changing from
line to line, we get
\[
    \Bigl\|
     \Bigl(
      \sum_{j=1}^\ell |\Gamma^j (I - \Phi^{\sim j}) f|^2
     \Bigl)^{1/2}
    \Bigl\|_{q_0}
 \le \kappa_{q_0} \ms1 R^{- \delta / (2q_0-2) } \ms1 \|f\|_{q_0}
 \le \kappa_{q_0} \ms1 R^{- \delta / (2 q_0) } \ms1 \|f\|_{q_0},
\]
therefore
$
     \bigl\| \gr F'' (R, \ell, f) \bigr\|_{q_0}
 \le \kappa_{q_0} \ms1 B(q_0, R, \ell) 
      \ms1 R^{- \delta / (2 q_0) } \ms1 \|f\|_{q_0}
$ and
\[
     \bigl\| \gr F(R, \ell, f)  \bigr\|_{q_0}
 \le \kappa_{q_0} 
      \bigl(
       R^{4 \delta} + B(q_0, R, \ell) R^{- \delta / (2 q_0) } 
      \bigr) \ms1 \|f\|_{q_0}.
\]
\dumou

 The estimates are proved in every dimension $\ell \le n$, we have thus
realized our objectives~\eqref{2.28} and~\eqref{2.29}. Noticing that 
$R \ge 1$, we have consequently
\[
     b_0(q_0, R, n) + b_1(q_0, R, n)
 \le \kappa_{q_0} 
      \bigl(
       R^{4 \delta}
        + B(q_0, R, n) \ms1 R^{- \delta / (2 q_0) } 
      \bigr).
\]
At last, we put all parts of~\eqref{BoundC} together. We may assume that
$4 \delta < 1$. We use again $R \ge 1$ in order to absorb the constant
bound from~\eqref{EstimeH}, thus obtaining
\[
     \bigl\| \nabla \mu^R * g \bigr\|_{q_0}
 \le c(q_0, \delta) \ms2 
      \bigl( R^{4 \delta}
       +  B(q_0, R, n) \ms1 R^{- \delta / (2 q_0) } 
      \bigr) 
       \ms1 \|g\|_{q_0},
\]
for $g \in L^{q_0}(\R^n)$ and $R \ge R_0$. Since $B(q_0, R, n)$ is the
maximum of $\bigl\| \nabla \mu^R * g \bigr\|_{q_0}$ for~$g$ of norm~$\le 1$
in $L^{q_0}(\R^n)$, we deduce that
$
     B(q_0, R, n)
 \le c(q_0, \delta) \ms2 R^{4 \delta} + B(q_0, R, n) / 2
$
for $R \ge R_1$, if $R_1 \ge R_0$ is such that
$c(q_0, \delta) \ms1 R_1^{- \delta / (2 q_0) } \le 1/2$,
thus
$
     B(q_0, R, n)
 \le 2 \ms1 c(q_0, \delta) \ms2 R^{4 \delta}
$ for $R \ge R_1$.
The value of $R_1$ depends on~$\delta$ and $q_0$ that are fixed. For 
$R \le R_1$, we may estimate directly 
$\| \nabla \mu^R * g \|_{q_0} \le \kappa \ms1 R \ms2 \|g\|_{q_0}
 \le \kappa \ms1 R_1^{1 - 4 \delta} R^{4 \delta} \|g\|_{q_0}$ by
Lemma~\ref{LemmeUK}. It follows finally that
$
     B(q_0, R, n)
 \le c'(q_0, \delta) \ms2 R^{4 \delta}
$, and
$\delta$ being arbitrarily small, we have proved Proposition~\ref{MainObj}.

\section{The Aldaz weak type result for cubes, and improvements%
\label{AlAu}}

\noindent
We work again in this section with the symmetric cube $Q_n$ of volume~1 
in~$\R^n$, that is to say, with $Q_1 = [-1/2, 1/2]$ when $n = 1$ and
$Q_n = (Q_1)^n$. We first present, following Aubrun~\cite{Aubrun}, a rather
soft argument proving the result of Aldaz~\cite{AldazWT} that the weak type 
$(1, 1)$ constant $\kappa_{Q, n}$ associated to the cubes $Q_n$ is not
bounded when~$n$ tends to infinity. We shall indicate and comment the
quantitative improvement obtained by Aubrun~\cite{Aubrun}, who gave a lower
bound $\kappa_{Q, n} \ge \kappa_\varepsilon (\log n)^{1 - \varepsilon}$ for
every~$\varepsilon > 0$. We then give a version of the proof of Iakovlev
and Str\"omberg~\cite{IS} who considerably improved this lower bound,
showing that $\kappa_{Q, n} \ge \kappa \ms1 n^{1/4}$. All the arguments
though are based on the same initial principle that we now recall.
\dumou

 We begin with a few simple reflections. If we want to contradict the
uniform boundedness of the weak type $(1, 1)$ constant $\kappa_{Q, n}$
we must, in view of Bourgain's Theorem~\ref{TheoCube}, look for functions
$f_n$ on~$\R^n$ that do not stay bounded, as $n \rightarrow \infty$, in
any~$L^p(\R^n)$ with $p > 1$. Also, we may easily obtain by mollifying
techniques that the weak type inequality for $L^1$~functions, stating that
\begin{equation}
     c \ms2 \bigl| \{x \in \R^n : (\M_Q f)(x) > c \} \bigr|
 \le \kappa_{Q, n} \ms2 \| f \|_{L^1(\R^n)},
 \quad c > 0, \ms6 f \in L^1(\R^n),
 \label{WTforCubes}
\end{equation}
where we let $\M_Q = \M_{Q_n}$, extends to bounded nonnegative
measures $\mu$ on~$\R^n$: if for every $x \in \R^n$ we define 
$(\M_Q \ms2 \mu)(x)$ to be the supremum over $r > 0$ of all quotients
$\mu(x + r Q) / |x + r Q|$, then~\eqref{WTforCubes} extends with the same
constant $\kappa_{Q, n}$ as
\[
     c \ms2 \bigl| \{x \in \R^n : (\M_Q \ms2 \mu)(x) > c \} \bigr|
 \le \kappa_{Q, n} \ms2 \mu(\R^n),
 \quad c > 0.
\] 
These two remarks lead naturally to consider measures on~$\R^n$ that are
sums of Dirac measures, in order to contradict the boundedness of
$\kappa_{Q, n}$ when $n \rightarrow \infty$.
\dumou
 
 Let 
$\mu_N = \sum_{j= 1}^N (\delta_{j-1/2} + \delta_{-j+1/2})$\label{MuN} 
stand for an \og approximation\fge of the Lebesgue measure $\lambda$ on a
large segment $S_N = [-N, N]$.\label{SN} 
The measure $\mu_N$ has a unit mass at the middle of each interval 
$(j, j+1)$, $j$ integer and $-N \le j < N$. Every interval $[u, u + h)$
contained in $S_N$, with length an integer $h > 0$, has the same measure
$h$ for $\mu_N$ or for~$\lambda$. However, if $I$ is a segment of length 
$1 + \alpha$, $0 < \alpha = 1 - \varepsilon < 1$, centered at $s = 0$ or at
any $s = j$, integer with $|j| < N$, then $I$ contains $j \pm 1/2$ and
\[
 \mu_N(I) = 2 
 \ms{16} \hbox{but} \ms{10} 
 \lambda(I) = 1 + \alpha = 2 - \varepsilon < 2,
\]
so that $(\M_Q \ms2 \mu_N)(s) \ge \mu_N(I) / \lambda(I)
 = 2 / (2 - \varepsilon)$.
The same observation is valid if~$s$ is not too far from an integer $j$ in
$(-N, N)$, precisely, if $|s - j| < \alpha / 2$. If we pass to $\R^n$ and
to the tensor product measure $\mu_N^{(n)} := \otimes^n \mu_N$, we obtain a
huge magnification due to dimension, which reads as
\[
     \bigl( \M_Q \ms2 \mu_N^{(n)} \bigr) (x) 
 \ge \Bigl( \frac 2 {2 - \varepsilon} \Bigr)^n
\]
when all coordinates $x_i$, $i = 1, \ldots, n$, of the point 
$x = (x_1, \ldots, x_n) \in \R^n$ belong to the subset $C_\alpha$ of 
$[-N, N]$ defined by
\begin{equation}
 C_\alpha = \bigcup_{-N < j < N} (j - \alpha/2, j + \alpha/2).
 \label{CalphaSet}
\end{equation}
\dumou

 If $\ell = 2 h + 1 \ge 1$ is an odd integer, if 
$J = (-h -1/2 - \alpha / 2,  h + 1/2 + \alpha / 2)
 = (\ell + \alpha) \ms1 Q$ and if $s + J$ is contained in $S_N$, we see in
the same way, when $s \in C_\alpha$, that the segment $s + J$ contains 
$\ell + 1 = 2 h + 2$ of the unit masses forming $\mu_N$. Consequently, we
have
$(\M_Q \ms2 \mu_N)(s)
 \ge (\ell + 1) / |s + J|
  =  (\ell + 1) / (\ell + \alpha)$ and
\[
     (\M_Q \ms2 \mu_N^{(n)})(x) 
 \ge \Bigl( \frac {\ell + 1} {\ell + 1 - \varepsilon} \Bigr)^n
\]
when $x = (x_1, \ldots, x_n)$ has all coordinates $x_i$ in $C_\alpha$ and
$x + J^n \subset S_N^n = 2 N Q_n$.
\dumou

 This case is much too particular, since the set of such points $x$
represents only a tiny proportion $\alpha^n$ of the cube $S_N^n$. One
has actually to consider that \emph{some} coordinates $x_i$ of
$x = (x_1, \ldots, x_n) \in \R^n$ are in $C_\alpha$, say $m \le n$ of
them. For the other coordinates $x_i$, observe that any interval of length
$\ell + \alpha$ contained in $S_N$ contains at least $\ell$ points of the
support of $\mu_N$. Assuming that $x + (\ell + \alpha) Q \subset S_N^n$, we
get for this point~$x$ with $m$ coordinates in~$C_\alpha$ the lower bound
\begin{equation}
     (\M_Q \ms2 \mu_N^{(n)})(x) 
 \ge \frac {\mu_N^{(n)} \bigl( x + (\ell + \alpha) Q \bigr)}
           { \bigl| x + (\ell + \alpha) Q \bigr| }
 \ge \Bigl( \frac {\ell + 1} {\ell + \alpha} \Bigr)^m
     \Bigl( \frac {\ell    } {\ell + \alpha} \Bigr)^{n - m}.
 \label{FirstQuot}
\end{equation}
We want the cardinality $m$ of the \og good\fg, \og centered\fge
coordinates~$x_i$ to be as big as possible. Since they are chosen out of
subsets of length $\alpha$ in unit intervals $(j-1/2, j+1/2)$, it is likely
that the proportion of \og good coordinates\fge among $n$ coordinates be
around $\alpha$, with a plausible deviation of order $\sqrt n$ from the
expected number~$\alpha \ms1 n$. We shall thus think henceforth that 
$m = \alpha \ms1 n + \delta \sqrt n$ for some $\delta > 0$.
\dumou

 We try to make the lower bound~\eqref{FirstQuot} as large as possible, by
a suitable choice of $\ell$. Setting $\beta = 1 - \alpha$, we rewrite the
right-hand side of~\eqref{FirstQuot} under the form
\[
   \Bigl( \frac {\ell + \alpha + \beta }  {\ell + \alpha} \Bigr)^m
   \Bigl( \frac {\ell + \alpha - \alpha } {\ell + \alpha} \Bigr)^{n - m}
 = \Bigl( 1 + \frac \beta {\ell + \alpha} \Bigr)^m
   \Bigl( 1 - \frac \alpha {\ell + \alpha} \Bigr)^{n - m} \ns9.
\]
Considering now $y = (\ell + \alpha)^{-1}$ as a real parameter, we will
study 
\[
 V(y) := (1 + \beta y)^m (1 - \alpha y)^{n - m},
 \quad 
     -1 / \beta 
 \le y 
 \le 1 / \alpha,
\] 
\def\ovy{\overline{y \ns{0.4}
 \raise0.7pt\hbox to0pt{\vphantom {$x$}}}\ms{1.3}}%
\def\ovys{\overline{y \ns{0.2}
 \raise0.45pt\hbox to0pt{\vphantom 
 {$\scriptstyle y$}}}\ms{1.1} }%
and find the maximal value $V(\ovy)$. Equivalently, we let $f$ denote the
fraction $m / n$ of coordinates of $x$ that are in $C_\alpha$, and we
maximize $v_{f, \alpha}(s) = V(s)^{1/n}$ defined by
\[
 v_{f, \alpha}(s) = (1 + \beta s)^f (1 - \alpha s)^{1 - f},
 \quad
 s \in [-1/\beta, 1 / \alpha].
\]
We have to remember though that the lower bound $V(y)$ for 
$\M_Q \mu_N^{(n)}(x)$ given in~\eqref{FirstQuot} is only valid when 
$1 / y - \alpha$ is an odd integer~$\ell$. We shall replace $\ovy$ by a
\def\yN{y_{\ms{0.60} \N}}%
value $y = \yN > 0$ close to $\ovy$, such that $1 / \yN - \alpha$ is an
odd integer, thus obtaining that $\M_Q \mu_N^{(n)}(x) \ge V(\yN)$. We must
ensure that the value of $V(y)$ does not decrease too much when moving from
$\ovy$ to~$\yN$. We would like to have
\begin{equation} 
     V(\yN)
 \ge \e^{-c} V(\ovy)
 \ms{12} \hbox{or} \ms{12}
     v_{f, \alpha}(\yN)
 \ge \e^{-c/n} v_{f, \alpha}(\ovy),
 \ms{14} \hbox{for some} \ms{10}
 c \ge 0.
 \label{DefAllow}
\end{equation} 
The maximal argument $\ovy$ is produced from $f$ and a choice of 
$\alpha < f$. We shall say that the couple $(f, \alpha)$ is 
$c \ms2$-\emph{allowable}\label{AlloCou} 
if the above condition~\eqref{DefAllow} is
satisfied.

\begin{lem}\label{YandAllow}
Let\/ $0 < \alpha < f < 1$, $\sigma_\alpha^2 = \alpha (1 - \alpha)$ and
let us define $\tau > 0$ by writing $f = \alpha + \sigma_\alpha \tau$.
The function $v_{f, \alpha}$ reaches its maximum at
\begin{equation}
   \ovy
 = \ovy_{f, \alpha}
 = \frac \tau { \sigma_\alpha \ns5 } \ms2
 = \frac { f - \alpha } { \sigma_\alpha^2 } 
 > 0.
 \label{FirstEsti}
\end{equation}
If\/ $0 < y, \ovy \le 1/2$ then
\begin{equation}
     \e^{(y - \ovys)^2 / 2} v_{f, \alpha}(y)
 \ge v_{f, \alpha}(\ovy)
  =  \Bigl( \frac f \alpha \Bigr)^f
     \Bigl( \frac { 1 - f } { 1 - \alpha } \Bigr)^{1-f} \ns3.
 \label{FirstAllow}
\end{equation}
If\/ $0 < \ovy \le 1/4$ and $\ovy^4 \le c / n$, then the couple\/
$(f, \alpha)$ is $c$-allowable.
\end{lem}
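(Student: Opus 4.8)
The plan is to work with the logarithm $g(s) = \ln v_{f,\alpha}(s) = f\ln(1+\beta s) + (1-f)\ln(1-\alpha s)$, where $\beta = 1-\alpha$, and to exploit its concavity. First I would compute $g'(s) = f\beta(1+\beta s)^{-1} - (1-f)\alpha(1-\alpha s)^{-1}$; setting this to zero and clearing denominators gives $f\beta - (1-f)\alpha = \alpha\beta\, s$, and since $f\beta - (1-f)\alpha = f-\alpha$ and $\alpha\beta = \sigma_\alpha^2$, the unique critical point is $\bar y = (f-\alpha)/\sigma_\alpha^2 = \tau/\sigma_\alpha$, positive and interior to the domain because $f < 1$. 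Since $g''(s) = -f\beta^2(1+\beta s)^{-2} - (1-f)\alpha^2(1-\alpha s)^{-2} < 0$, the function $g$, hence $v_{f,\alpha}$, is strictly concave and attains its maximum at $\bar y$, which is \eqref{FirstEsti}. Evaluating at $\bar y$ one has $1+\beta\bar y = f/\alpha$ and $1-\alpha\bar y = (1-f)/(1-\alpha)$, giving the closed form of $v_{f,\alpha}(\bar y)$ appearing in \eqref{FirstAllow}.

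For the inequality \eqref{FirstAllow} I would use the Lagrange form of Taylor's theorem about $\bar y$: since $g'(\bar y)=0$, we get $g(\bar y)-g(y) = -\tfrac12 g''(\xi)(y-\bar y)^2$ for some $\xi$ strictly between $y$ and $\bar y$, hence $\xi \in (0,1/2]$ when $0 < y,\bar y \le 1/2$. So it suffices to prove $-g''(s) \le 1$ on $(0,1/2]$. There $1+\beta s \ge 1$ and $1-\alpha s \ge 1-\alpha/2 > 1/2$, so $-g''(s) < f\beta^2 + 4(1-f)\alpha^2 =: h(f)$, affine in $f$ with slope $\beta^2 - 4\alpha^2 = (1-3\alpha)(1+\alpha)$. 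If $\alpha \le 1/3$ then $h$ is nondecreasing and $h(f) \le h(1) = \beta^2 \le 1$; if $\alpha > 1/3$ then $h$ is decreasing, and using $f > \alpha$ together with $\alpha(1-\alpha) \le 1/4$ and $1+3\alpha < 4$ gives $h(f) < h(\alpha) = \alpha(1-\alpha)(1+3\alpha) < 1$. In either case $-g''(\xi) < 1$, so $g(\bar y)-g(y) \le \tfrac12(y-\bar y)^2$, which upon exponentiation is \eqref{FirstAllow}.

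It remains to derive $c$-allowability from $0 < \bar y \le 1/4$ and $\bar y^4 \le c/n$. Here I would choose the odd integer $\ell$ lying within distance $1$ of $1/\bar y - \alpha$ (possible since any interval of length $2$ contains an odd integer); as $1/\bar y \ge 4$ we have $1/\bar y - \alpha \ge 3$, so $\ell \ge 2$ and, being odd, $\ell \ge 3$. Put $y_0 = 1/(\ell+\alpha)$, so $1/y_0 - \alpha = \ell$ is an odd integer and $y_0 \in (0,1/3] \subset (0,1/2]$. Then $|y_0 - \bar y| = |(\ell+\alpha) - 1/\bar y|\,\bar y/(\ell+\alpha) \le \bar y/(\ell+\alpha) \le \bar y/(1/\bar y - 1) = \bar y^2/(1-\bar y) \le \tfrac43\bar y^2$, using $\ell+\alpha \ge 1/\bar y - 1$ and $\bar y \le 1/4$. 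Applying \eqref{FirstAllow} with $y = y_0$ gives $v_{f,\alpha}(y_0) \ge \exp(-\tfrac12(y_0-\bar y)^2)\,v_{f,\alpha}(\bar y) \ge \exp(-\tfrac89\bar y^4)\,v_{f,\alpha}(\bar y) \ge \exp(-\bar y^4)\,v_{f,\alpha}(\bar y) \ge \exp(-c/n)\,v_{f,\alpha}(\bar y)$, since $\tfrac12(4/3)^2 = 8/9 \le 1$. With $y_\N := y_0$ this is exactly the defining condition \eqref{DefAllow} of $c$-allowability.

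The main obstacle — indeed the only point with any content — is the uniform estimate $-g''(s) \le 1$ on $(0,1/2]$: the crude bound $f\beta^2 + 4(1-f)\alpha^2$ is not visibly below $1$ when $\alpha$ is near $1$, and one genuinely needs the hypothesis $f > \alpha$ (equivalently $\bar y > 0$) to eliminate the dangerous range, which is where the short case split on $\alpha$ versus $1/3$ comes in. Everything else is routine calculus plus the one-line approximation by odd integers.
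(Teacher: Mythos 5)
Your proof is correct and follows essentially the same route as the paper: Taylor--Lagrange at $\ovy$ for $\ln v_{f,\alpha}$, the uniform bound $-w''\le 1$ on $(0,1/2]$, and the choice of an odd integer within distance $1$ of $1/\ovy-\alpha$. The one point where you diverge is the estimate $-w''(\xi)\le 1$: you split on $\alpha\lessgtr 1/3$ and compare $h(f)$ to its endpoint values, whereas the paper dispatches it in a single chain, $f\beta^2+4\alpha^2(1-f)\le\beta+4\alpha^2(1-\alpha)\le\beta+\alpha=1$, using $f\le 1$, $\beta\le 1$, $1-f<1-\alpha$, and $\alpha(1-\alpha)\le 1/4$; both exploit $f>\alpha$, as you correctly flag.
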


\begin{proof}
Let $w(s)
 = \ln v_{f, \alpha}(s)
 = f \ln (1 + \beta s) + (1 - f) \ln ( 1 - \alpha s)$. 
We have
\[
   w'(s) 
 = \frac {\beta f} {1 + \beta s} 
    - \frac {\alpha(1 - f)} {1 - \alpha s} \up,
 \ms{16}
   w''(s) 
 = - \frac {\beta^2 f} { (1 + \beta s)^2 } 
    - \frac {\alpha^2(1 - f)} { (1 - \alpha s)^2 } \up.
\]
The maximal argument $\ovy$ is found by solving $w'(\ovy) = 0$, yielding
\[
   \ovy 
 = \frac {f - \alpha} {\sigma_\alpha^2 } \up,
 \ms{12}
   1 + \beta \ovy 
 = \frac f \alpha \up,
 \ms{12}
   1 - \alpha \ovy 
 = \frac { 1 - f } { 1 - \alpha } \up.
\]
This gives us the maximal value $v_{f, \alpha}(\ovy)$ at the right-hand side
of~\eqref{FirstAllow}. Suppose now that we have $0 < y, \ovy \le 1/2$.
Using Taylor--Lagrange at $\ovy$, we get
\[
 w(y) - w(\ovy) = w''(\xi) \frac { (y - \ovy)^2 \ns 7} 2 \ms1 \up,
\]
for some $\xi$ between $y$ and $\ovy$, hence $0 < \xi \le 1/2$.
We have $1 - \alpha \xi \ge 1/2$ and
\[
     - w''(\xi) 
 \le \beta^2 f + 4 \alpha^2 (1 - f)
 \le \beta + 4 \alpha^2 (1 - \alpha)
 \le \beta + \alpha
  =  1,
\]
because $\alpha (1 - \alpha) \le 1/4$. This implies the left-hand side
of~\eqref{FirstAllow}.
\dumou

 Suppose that $0 < \ovy \le 1/4$. Moving around $\ovy$, we can find 
$\yN > 0$ satisfying
\[
     \frac { |\yN - \ovy| } { \yN \ms1 \ovy }
  =  \Bigl| \frac 1 \ovy - \frac 1 { \yN } \Bigr| 
 \le 1
\]
and such that $1 / \yN - \alpha$ is an odd integer. From 
$|\yN - \ovy| \le \yN \ms2 \ovy$ and $\ovy \le 1/4$ follows that
$\yN \le 4 \ms1 \ovy / 3 \le 1/3 < 1/2$. Also,
$|\yN - \ovy| \le 4 \ovy^2 / 3 < \sqrt 2 \ms2 \ovy^2$.
By~\eqref{FirstAllow}, we deduce that
$
     v_{f, \alpha}(\yN) 
 \ge \e^{ - \ovys^4 } v_{f, \alpha}(\ovy)
$
and the conclusion is reached.
\end{proof}

 Given $f$ and $\alpha$ such that $0 < \alpha < f < 1$, let us now examine
the optimal value
\begin{equation}
    E_{f, \alpha} 
 := v_{f, \alpha}(\ovy)
  = (1 + \beta \ms1 \ovy)^f (1 - \alpha \ms1 \ovy)^{1 - f}
  = \Bigl( \frac f \alpha \Bigr)^f
    \Bigl( \frac {1 - f} {1 - \alpha} \Bigr)^{1 - f} \ns{ 8}.
 \label{EAlpha}
\end{equation}
Consider the function $\phi_\alpha$ defined on $(0, 1)$ by
\begin{equation}
   \phi_\alpha(s) 
 = s \ln \Bigl( \frac s \alpha \Bigr)
    + (1 - s) \ln \Bigl( \frac {1 - s} {1 - \alpha} \Bigr),
 \quad s \in (0, 1).
 \label{DefPhiAlpha}
\end{equation}
We see that $\phi'_\alpha(s) = \ln (s / \alpha)
    - \ln \bigl( (1 - s) / (1 - \alpha) \bigr)$,
$\phi''_\alpha(s) = 1 / s + 1 / (1 - s) = 1 / \bigl( s (1-s) \bigr)$, and 
$\phi^{(3)}_\alpha(s) = - s^{-2} + (1 - s)^{-2}$. Note that
$\phi_\alpha(\alpha) = \phi'_\alpha(\alpha) = 0$, and that
$\phi''_\alpha(\alpha) = \sigma_\alpha^{-2}$.

\begin{lem}\label{MaxiVal}
If\/ $0 < \alpha < f = \alpha + \sigma_\alpha \tau < 1$, the maximal value
$v_{f, \alpha}(\ovy)$ satisfies
\begin{equation}
     \ln v_{f, \alpha}(\ovy)
  =  \phi_\alpha (f)
 \ge \frac {\tau^2 \ns 7} 2
      - \frac { 1 - 2 \alpha } { \sigma_\alpha } \ms3
         \frac {\tau^3 \ns 7} 6 \up.
 \label{TayLagB}
\end{equation}
\end{lem}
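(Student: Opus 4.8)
The identity $\ln v_{f,\alpha}(\ovy) = \phi_\alpha(f)$ is immediate: by the definition~\eqref{EAlpha} of $E_{f,\alpha} = v_{f,\alpha}(\ovy)$ and the definition~\eqref{DefPhiAlpha} of $\phi_\alpha$, we have
\[
   \ln v_{f,\alpha}(\ovy)
 = f \ln\Bigl(\frac f\alpha\Bigr) + (1-f)\ln\Bigl(\frac{1-f}{1-\alpha}\Bigr)
 = \phi_\alpha(f),
\]
so the content of the lemma is the inequality $\phi_\alpha(f) \ge \tau^2/2 - \sigma_\alpha^{-1}(1-2\alpha)\,\tau^3/6$, where $f = \alpha + \sigma_\alpha\tau$ and $\sigma_\alpha^2 = \alpha(1-\alpha)$.

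\textbf{Main step: a Taylor expansion of $\phi_\alpha$ at $\alpha$ with an explicit remainder.} The plan is to apply Taylor's formula with integral (or Lagrange) remainder to $\phi_\alpha$ around the point $s = \alpha$, using the derivatives already computed in the excerpt: $\phi_\alpha(\alpha) = \phi'_\alpha(\alpha) = 0$, $\phi''_\alpha(\alpha) = \sigma_\alpha^{-2}$, and $\phi^{(3)}_\alpha(s) = -s^{-2} + (1-s)^{-2}$. Writing $f - \alpha = \sigma_\alpha\tau$ and noting $\phi^{(3)}_\alpha(\alpha) = -\alpha^{-2} + (1-\alpha)^{-2} = (\alpha^2 - (1-\alpha)^2)/(\alpha(1-\alpha))^2 = (2\alpha - 1)/\sigma_\alpha^4$, the third-order Taylor expansion gives
\[
   \phi_\alpha(f)
 = \frac{(f-\alpha)^2}{2\,\sigma_\alpha^2}
   + \frac{\phi^{(3)}_\alpha(\alpha)}{6}(f-\alpha)^3 + \cdots
 = \frac{\tau^2}{2} - \frac{1-2\alpha}{\sigma_\alpha}\,\frac{\tau^3}{6} + \cdots,
\]
which already matches the claimed right-hand side. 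So the lemma asserts that the higher-order terms are collectively nonnegative. The cleanest route is probably to stop Taylor at order three with Lagrange remainder: $\phi_\alpha(f) = \tau^2/2 + \phi^{(3)}_\alpha(\xi)(f-\alpha)^3/6$ for some $\xi$ between $\alpha$ and $f$, and then to compare $\phi^{(3)}_\alpha(\xi)(f-\alpha)^3$ with $\phi^{(3)}_\alpha(\alpha)(f-\alpha)^3$. Since $f > \alpha$ here, $(f-\alpha)^3 > 0$, so one needs $\phi^{(3)}_\alpha(\xi) \ge \phi^{(3)}_\alpha(\alpha)$ for $\xi \in (\alpha, f)$, i.e. monotonicity of $\phi^{(3)}_\alpha$ on that interval. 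One computes $\phi^{(4)}_\alpha(s) = 2s^{-3} + 2(1-s)^{-3} > 0$, so $\phi^{(3)}_\alpha$ is strictly increasing on $(0,1)$, and hence $\phi^{(3)}_\alpha(\xi) \ge \phi^{(3)}_\alpha(\alpha)$ for $\xi \ge \alpha$. This yields
\[
   \phi_\alpha(f)
 = \frac{\tau^2}{2} + \frac{\phi^{(3)}_\alpha(\xi)}{6}(f-\alpha)^3
 \ge \frac{\tau^2}{2} + \frac{\phi^{(3)}_\alpha(\alpha)}{6}(f-\alpha)^3
 = \frac{\tau^2}{2} - \frac{1-2\alpha}{\sigma_\alpha}\,\frac{\tau^3}{6},
\]
as desired.

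\textbf{Where the care is needed.} The only subtlety is the sign bookkeeping and the hypothesis $f > \alpha$ (equivalently $\tau > 0$), which must hold for $(f-\alpha)^3 > 0$ to flip the inequality in the right direction; this is indeed part of the standing assumption $0 < \alpha < f < 1$ in the lemma statement. One also has to make sure $\xi \in (\alpha, f) \subset (0,1)$ so that all the derivative formulas are valid and $\phi^{(4)}_\alpha > 0$ applies. I do not anticipate a genuine obstacle: it is a one-variable convexity/monotonicity argument. The main thing to get right is simply to record $\phi^{(3)}_\alpha(\alpha) = (2\alpha-1)/\sigma_\alpha^4$ and observe $(f-\alpha)^3/\sigma_\alpha^4 = \sigma_\alpha^3\tau^3/\sigma_\alpha^4 = \tau^3/\sigma_\alpha$, so that the cubic coefficient is exactly $-(1-2\alpha)/(6\sigma_\alpha)$, matching~\eqref{TayLagB}.
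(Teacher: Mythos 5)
Your proof is correct and follows essentially the same route as the paper: Taylor--Lagrange expansion of $\phi_\alpha$ at $\alpha$ to order three, then monotonicity of $\phi^{(3)}_\alpha$ (you verify it via $\phi^{(4)}_\alpha > 0$, which the paper asserts without computation) to replace $\phi^{(3)}_\alpha(\xi)$ by $\phi^{(3)}_\alpha(\alpha)$. The sign bookkeeping and the evaluation $\phi^{(3)}_\alpha(\alpha) = (2\alpha-1)/\sigma_\alpha^4$ match the paper exactly.
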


\begin{proof}
By Taylor--Lagrange for $\phi_\alpha$ at the point $\alpha$, we have
\[
   \phi_\alpha (f)
 = \phi''_\alpha(\alpha) \frac { (f - \alpha)^2 \ns8 } 2 
    + \phi^{(3)}_\alpha(\xi) \frac { (f - \alpha)^3 \ns 8} 6 
 = \frac {\tau^2 \ns 7} 2
    + \phi^{(3)}_\alpha(\xi) \frac { (\sigma_\alpha \tau)^3 \ns 8} 6
\]
for some $\xi \in (\alpha, f)$. Since $\phi_\alpha^{(3)}$ is increasing, we
get that
\[
     \phi_\alpha (f) - \frac {\tau^2 \ns 7} 2
 \ge \phi^{(3)}_\alpha(\alpha) \frac { (\sigma_\alpha \tau)^3 \ns 8} 6
  =  \frac { 2 \alpha  - 1 } 
           { \alpha^2 (1 - \alpha)^2 } \ms2
      \frac { \sigma_\alpha^3 \tau^3 \ns 8} 6
  =  \frac { 2 \alpha - 1 } 
           { \sigma_\alpha } \ms2
      \frac { \tau^3 \ns 8} 6 \up.
\]
\end{proof}

 In all that follows, we see $\Omega_1 := [-N, N]$ as a probability space
\label{OmegaUn}%
equipped with the uniform probability measure, denoted here by $P_1$, and
we shall consider the cube $S_N^n = 2 N Q_n$, equipped with the product
measure $P = P_1^{\otimes n}$, also the uniform probability measure, as
being our main probability space $(\Omega, \ca F, P)$. On this space, the
random variables $(\gr 1_{C_\alpha}(x_i))_{i=1}^n$, where 
$x = (x_1, \ldots, x_n) \in \Omega$,  are independent and equal to~$0$ 
or~$1$ with respective probabilities $1 - \alpha$ and $\alpha$. Their
expectation is $\alpha$ and their variance is equal to 
$\sigma_\alpha^2 = \alpha (1 - \alpha) \le 1/4$. For every 
$\alpha \in (0, 1)$, we introduce the centered and variance~$1$ Bernoulli
variable $X_{1, \alpha}$ defined on $\Omega_1$ by
\begin{equation}
   X_{1, \alpha}
 = \frac { \gr 1_{C_\alpha} - \alpha } {\sigma_\alpha}
 = \sqrt{ \frac { 1 - \alpha } \alpha } \ms2 \gr 1_{C_\alpha}
    - \sqrt{ \frac \alpha { 1 - \alpha } } \ms2 
       \gr 1_{\Omega_1 \setminus C_{\alpha}},
 \label{Xalpha}
\end{equation}
and we let\label{XnAlpha}
\[
   X_{n, \alpha}(x)
 = \frac {\sum_{i=1}^n X_{1, \alpha} (x_i) }
         { \sqrt n }
 = \sum_{i=1}^n \frac {\gr 1_{C_\alpha}(x_i) - \alpha} 
                      {\sigma_\alpha \sqrt n} \up,
 \quad \ms4
 x = (x_1, \ldots, x_n) \in \Omega.
\]
We also let\label{NAlpha} 
$N_{n, \alpha}(x) = \sum_{i=1}^n \gr 1_{C_\alpha}(x_i)$ denote the number
of coordinates of $x$ that are in $C_\alpha$. We are ready for a first
explicit estimate of the maximal function $\M_Q \mu_N^{(n)}$.

\begin{lem}\label{OneSet}
Let\/ $0 < \alpha < 1$ and $\sigma_\alpha^2 = \alpha (1 - \alpha)$.
Let $n \in \N^*$, $t > 0$ and\/ $0 < \theta < 1$ be such that
$\sqrt n \ge 2 \ms1 t \sigma_\alpha^{-2} (1 - \theta)^{-1}$. We have\/
$
 \M_Q \ms2 \mu_N^{(n)} > \e^{\theta t^2 / 2}
$
on the set
\[
   A_{\alpha, t}^{(n)}
 = \Bigl\{ x \in 2 (N - t^{-1} \sqrt n ) \ms1 Q_n :
    N_{n, \alpha} (x) = \sum_{i=1}^n \gr 1_{C_\alpha} (x_i)
      > \alpha n + t \sigma_\alpha \sqrt n 
   \Bigr\},
\]
where $C_\alpha$ is defined at\/~\eqref{CalphaSet}. When the dimension
$n$ is large, and assuming the size $N$ large enough compared to $n$,  it
follows that
\[
     \frac 
      {\bigl|
       \bigl\{ \M_Q \ms2 \mu_N^{(n)} > \e^{\theta t^2 / 2} \bigr\} 
      \bigr|} { |S_N^n| }
 \ge \frac {| A_{\alpha, t}^{(n)} |} { |2 N Q_n| }
  >  \frac 1 2 \ms2 \gamma_1 \bigl( (t, +\infty) \bigr).
\]
\end{lem}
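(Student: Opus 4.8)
The plan is to establish the two assertions of Lemma~\ref{OneSet} in order: first the pointwise lower bound for $\M_Q\,\mu_N^{(n)}$ on the set $A_{\alpha,t}^{(n)}$, and then the probabilistic estimate on the size of that set via the central limit theorem. For the first part, I would fix $x\in A_{\alpha,t}^{(n)}$ and choose a dilation parameter for a cube centered at $x$ of the form $(\ell+\alpha)Q_n$ with $\ell$ an odd integer close to $1/\ovy_{f,\alpha}-\alpha$, where $f=N_{n,\alpha}(x)/n$ is the (random) fraction of coordinates of $x$ lying in $C_\alpha$. By hypothesis $f>\alpha+t\sigma_\alpha/\sqrt n$, so writing $f=\alpha+\sigma_\alpha\tau$ we have $\tau>t$. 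The preliminary discussion leading to~\eqref{FirstQuot} gives
\[
 (\M_Q\,\mu_N^{(n)})(x)\ge \Bigl(\frac{\ell+1}{\ell+\alpha}\Bigr)^m\Bigl(\frac{\ell}{\ell+\alpha}\Bigr)^{n-m}=V(\yN)
\]
with $m=N_{n,\alpha}(x)$, valid provided $x+(\ell+\alpha)Q_n\subset S_N^n$; the containment $x\in 2(N-t^{-1}\sqrt n)Q_n$ together with the bound $\ell+\alpha\le 2/\ovy\le 2t^{-1}\sqrt n$ (coming from $\ovy=\tau/\sigma_\alpha\ge t/\sigma_\alpha$ and $\sigma_\alpha\le 1/2$, plus the hypothesis on $\sqrt n$ which forces $\ovy\le 1/4$) secures this.

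Next I would quantify the loss from replacing $\ovy$ by $\yN$ and from truncating the cubic correction in Lemma~\ref{MaxiVal}. By Lemma~\ref{YandAllow}, since $\ovy\le 1/4$ and $\ovy^4\le c/n$ with a suitable absolute $c$ (again guaranteed by the hypothesis $\sqrt n\ge 2t\sigma_\alpha^{-2}(1-\theta)^{-1}$, which makes $\ovy=\tau/\sigma_\alpha$ bounded away from the regime where $\ovy^4$ is comparable to $1$ — here one should be slightly careful and may need to restrict $t$ or absorb constants), the couple $(f,\alpha)$ is $c$-allowable, so $V(\yN)\ge \e^{-c}V(\ovy)$. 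Then Lemma~\ref{MaxiVal} gives $\ln V(\ovy)=n\,\phi_\alpha(f)\ge n\bigl(\tau^2/2-\kappa\tau^3/\sigma_\alpha\bigr)$. Choosing the parameters so that the cubic term and the allowability constant together only eat a fraction $(1-\theta)$ of $\tau^2/2\ge t^2/2$ yields $(\M_Q\,\mu_N^{(n)})(x)\ge \e^{\theta t^2/2}$, which is exactly the claimed pointwise bound. The main obstacle here is bookkeeping: making the error terms (cubic correction $\kappa\tau^3/\sigma_\alpha$, the $\e^{-c}$ factor, the $\e^{(y-\ovy)^2/2}$ slack) all controlled uniformly for $\tau\ge t$ rather than just $\tau=t$ — one wants $\phi_\alpha(f)\ge \theta\tau^2/2$ for all $\tau\ge t$, which follows since $\phi_\alpha(f)/\tau^2$ is essentially monotone near $\alpha$ but needs the restriction on $n$ to hold.

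For the volume estimate, I would note that $A_{\alpha,t}^{(n)}$ is the intersection of the slab $\{N_{n,\alpha}>\alpha n+t\sigma_\alpha\sqrt n\}=\{X_{n,\alpha}>t\}$ with the shrunk cube $2(N-t^{-1}\sqrt n)Q_n$. Under the product uniform probability $P=P_1^{\otimes n}$ on $S_N^n$ the variables $X_{1,\alpha}(x_i)$ are i.i.d.\ centered with variance $1$, so by the central limit theorem $P(X_{n,\alpha}>t)\to \gamma_1\bigl((t,+\infty)\bigr)$ as $n\to\infty$; hence for $n$ large $P(X_{n,\alpha}>t)> \tfrac{1}{2}\gamma_1\bigl((t,+\infty)\bigr)+\tfrac{1}{2}\gamma_1\bigl((t,+\infty)\bigr)$ — more precisely, larger than $\tfrac{3}{4}\gamma_1\bigl((t,+\infty)\bigr)$, say. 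The event $\{X_{n,\alpha}>t\}$ is independent of the choice of which coordinate cube is shrunk, and
\[
 \frac{|2(N-t^{-1}\sqrt n)Q_n|}{|2NQ_n|}=\Bigl(1-\frac{\sqrt n}{tN}\Bigr)^n\longrightarrow 1
\]
as $N\to\infty$ with $n$ fixed. Combining these, for $n$ large and then $N$ large enough depending on $n$,
\[
 \frac{|A_{\alpha,t}^{(n)}|}{|2NQ_n|}\ge \Bigl(1-\frac{\sqrt n}{tN}\Bigr)^n P(X_{n,\alpha}>t)>\frac12\,\gamma_1\bigl((t,+\infty)\bigr),
\]
and since $A_{\alpha,t}^{(n)}\subset\{\M_Q\,\mu_N^{(n)}>\e^{\theta t^2/2}\}$ the stated chain of inequalities follows. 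The only delicate point in this last step is making the two limits ($n\to\infty$ for the CLT, $N\to\infty$ for the volume ratio) compatible, which is handled by first fixing $n$ large enough for the CLT slack, then choosing $N=N(n)$ large enough for the volume factor to exceed, say, $(1+\gamma_1((t,+\infty))/(4))^{-1}$-close to~$1$.
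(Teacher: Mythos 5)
There is a $\sqrt n$-scaling error near the start that undoes the rest of your argument. From $N_{n,\alpha}(x) > \alpha n + t\sigma_\alpha\sqrt n$, with $f = m/n$ and $f = \alpha + \sigma_\alpha\tau$, one gets $\tau > t/\sqrt n$, not $\tau > t$: you have confused $\tau = (f-\alpha)/\sigma_\alpha$ with the normalized deviation $X_{n,\alpha}(x) = \sqrt n\,\tau$. This propagates immediately. Your claim $\overline{y} = \tau/\sigma_\alpha \ge t/\sigma_\alpha$, together with $\sigma_\alpha \le 1/2$, would give $\overline{y} \ge 2t$; but you then need the allowability criterion of Lemma~\ref{YandAllow}, which requires $\overline{y} \le 1/4$, and the lemma is used later (Theorem~\ref{Qualitat}) precisely with $t > 1$. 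Also, with $\overline{y} \ge 2t > 1$ one has $1/\overline{y} - \alpha < 1$ and no positive odd integer $\ell$ can sit near $1/\overline{y} - \alpha$, so the cube construction breaks down. What the hypothesis $\sqrt n \ge 2t\sigma_\alpha^{-2}(1-\theta)^{-1}$ actually controls is the threshold value $\overline{y}_0 := t/(\sigma_\alpha\sqrt n) \le \sigma_\alpha(1-\theta)/2 < 1/4$, not $t/\sigma_\alpha$. The same missing $\sqrt n$ reappears in "$\tau^2/2 \ge t^2/2$": the relevant quantity is $\ln V(\overline{y}) = n\,\phi_\alpha(f)$, and since $\phi_\alpha(f) \approx \tau^2/2$ one needs $n\tau^2/2 \ge t^2/2$, which follows from $\tau \ge t/\sqrt n$; the factor $n$ was dropped.

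Two secondary issues. The product bound $|A_{\alpha,t}^{(n)}|/|2NQ_n| \ge (1-\sqrt n/(tN))^n\,P(X_{n,\alpha}>t)$ presupposes that the slab condition and the shrunken-cube condition are independent under the uniform probability on $S_N^n$; they are not (both are functions of the coordinates $x_i$). The correct step is a subtraction, $P(A_{\alpha,t}^{(n)}) \ge P(X_{n,\alpha}>t) - \bigl(1-(1-\sqrt n/(tN))^n\bigr)$, and then one takes $n$ large enough for the CLT slack and afterwards $N = N(n)$ large enough for the shell term. Finally, note that when $m$ is far above the threshold the actual $\overline{y}_{f,\alpha}$ may exceed $1/4$, so Lemma~\ref{YandAllow} cannot be applied with that $f$ directly; the clean fix is to use the odd-integer cube associated to the threshold $\overline{y}_0$ for every $x$ in the set and appeal to the monotonicity in $m$, for fixed $y$, of $(1+\beta y)^m(1-\alpha y)^{n-m}$. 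The paper's own write-up is also somewhat elliptical on this last point, but a careful proof should make it explicit.
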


\begin{proof}
By the central limit theorem (see~\cite{DurrettPTE} for instance), we know
that the distribution of $X_{n, \alpha}$ tends to the distribution of a
$N(0, 1)$ Gaussian random variable~$G$ when $n$ tends to infinity. This
yields
\[
        P \bigl(
           N_{n, \alpha} > \alpha \ms1 n + t \sigma_\alpha \sqrt n 
          \bigr)
  =     P \bigl( X_{n, \alpha} > t \bigr)
 \lra_n P(G > t)
  =     \gamma_1 \bigl( (t, +\infty) \bigr).
\] 
Let $A^{(n, 0)}_{\alpha, t}$ be the set of points $x \in \Omega$ where
$N_{n, \alpha}(x) > \alpha \ms1 n + t \sigma_\alpha \sqrt n$. Fix 
$x \in A^{(n, 0)}_{\alpha, t}$ and let $m = N_{n, \alpha}(x)$. 
We shall apply Lemma~\ref{YandAllow} with $f = m/n$ and
$\tau = t / \sqrt n$. By assumption, the optimal argument $\ovy$ satisfies
\[
     \ovy 
  =  \frac t { \sigma_\alpha \sqrt n }
 \le \frac {\sigma_\alpha ( 1 - \theta ) } 2
  <  1 / 4.
\]
At~\eqref{FirstQuot}, we used a cube centered at $x$, with side length
$\ell + \alpha$, $\ell$ an odd integer. We can choose 
$\ell + \alpha < 1 / \ovy + 2 < 2 / \ovy < t^{-1} \sqrt n$. This cube must
be contained in $\Omega = S_N^n$, so we have to give up a small part of
$A^{(n, 0)}_{\alpha, t}$, close to the boundary of~$\Omega$.
We thus introduce the subset $A_{\alpha, t}^{(n)}
 = A^{(n, 0)}_{\alpha, t} \cap 2 (N - t^{-1} \sqrt n ) \ms1 Q_n$.
The difference $A^{(n, 0)}_{\alpha, t} \setminus A_{\alpha, t}^{(n)}$
gets negligible when the side $2N$ of $S_N^n$ tends to infinity since
$
     (1 - t^{-1} \sqrt n / N)^n
 \rightarrow_N 1
$, so the set $A_{\alpha, t}^{(n)}$ has essentially the same probability 
as~$A_{\alpha, t}^{(n, 0)}$ when $N = N(n) > \kappa(t) n^{3/2}$ is large
enough. When $n$ tends to infinity, the probability of 
$A_{\alpha, t}^{(n)}$ is therefore, say, larger than 
$\gamma_1((t, + \infty)) / 2$.
\dumou

 We first show that the couple $(f, \alpha)$ is $c \ms1$-allowable with 
$c = (1 - \theta) t^2 / 4$. We know that $\ovy < 1/4$ and on the other
hand, we have
\[
     \ovy^4
  =  \frac { t^4 } { \sigma_\alpha^4 n^2 }
  =  \frac c n \ms2 
      \frac {4 t^2 } { (1 - \theta) \ms1 \sigma_\alpha^4 n }
  <  \frac c n \ms2 
      \Bigl(
       \frac {2 t } { (1 - \theta) \ms1 \sigma_\alpha^2 \sqrt n } 
      \Bigr)^2
 \le \frac c n \ms2 \up.
\]
It follows from Lemma~\ref{YandAllow} that $\M_Q \mu_N^{(n)} (x)
 \ge \e^{ - (1 - \theta) t^2 / 4 } V(\ovy)$ for every
$x \in A_{\alpha, t}^{(n)}$. It remains to estimate the optimal value
$V(\ovy)$. For this we apply~\eqref{TayLagB}. It implies that
$V(\ovy) \ge \e^{t^2 / 2}$ when $\alpha \ge 1/2$, and when 
$\alpha \le 1/2$, we see that
\[
   \frac { 1 - 2 \alpha } { \sigma_\alpha } \ms3
    \frac {\tau^3 \ns 7} 6
 = \frac { (1 - 2 \alpha) \tau } { 3 \ms1 \sigma_\alpha } \ms3
    \frac {\tau^2 \ns 7} 2
 < \frac t { 3 \ms1 \sigma_\alpha \sqrt n } \ms3
    \frac {\tau^2 \ns 7} 2
 < \frac { \sigma_\alpha (1 - \theta) } 6 \ms3
    \frac {\tau^2 \ns 7} 2
 < \frac { (1 - \theta) \tau^2 \ns 7} 4 \up,
\]
so that $V(\ovy) \ge \e^{t^2 / 2 - (1 - \theta) t^2 / 4}$
and $\M_Q \mu_N^{(n)} (x)
 \ge \e^{t^2 / 2} \e^{ - (1 - \theta) t^2 / 2 }
  =  \e^{ \theta \ms1 t^2 / 2}$.
\end{proof}
\dumou

 Given $\alpha \in (0, 1)$, we have identified a subset
$A_{\alpha, t}^{(n)}$ of $S_N^n$ where $\M_Q \mu_N^{(n)}$ is large. 
We shall have to use several values of $\alpha$, and show that the union
of the corresponding sets provides a fair amount of the total volume
of~$S_N^n$. We thus introduce
$0 < \alpha_0 < \alpha_2 < \ldots < \alpha_K < 1$ and we will prove that the
probability of the union of sets $(A_{\alpha_j, t}^{(n)})_{j=0}^K$ gets
$> 1/4$, say, when~$K$ is large but fixed and when~$n$ tends to infinity.
Rather than relying, as Aubrun does, on the \emph{law of iterated
logarithm}, we apply easy facts behind the proof of that \og law\fg. In a
simple qualitative approach, we shall analyze the Gaussian limit of the
joint distribution of $(X_{n, \alpha_j})_{j=0}^K$, which is the distribution
of a Gaussian vector $(G_j)_{j=0}^K$ whose covariance matrix $C$ is the
same as that of $(X_{n, \alpha_j})_{j=0}^K$. Letting 
$\sigma_j^2 = \alpha_j (1 - \alpha_j)$, the entries of $C$ are
\[
   C_{j, k}
 = \E ( X_{1, \alpha_j} X_{1, \alpha_k} )
 = \sigma_j^{-1} \sigma_k^{-1} 
   ( \alpha_j \wedge \alpha_k - 2 \alpha_j \alpha_k + \alpha_j \alpha_k),
 \quad 0 \le j, k \le K.
\]
Note that $C_{j, j} = 1$. Assuming $\alpha_j \le \alpha_k$, that is to say,
assuming $j \le k$, we get
\[
   C_{j, k} 
 = \sigma_j^{-1} \sigma_k^{-1} \alpha_j (1 - \alpha_k)
 = \sqrt { \frac {\alpha_j} {1 - \alpha_j} }
   \sqrt { \frac {1 - \alpha_k} {\alpha_k} } \up.
\]
We fix $v \in (0, 1)$ and set $w = \sqrt{1 - v^2}$. We define 
$\alpha_j = (1 + v^{2 j})^{-1}$, $j = 0, \ldots, K$, and obtain
$C_{j, k} = v^{ |k - j|}$. We can realize the distribution of
$(G_j)_{j=0}^K$ by considering the larger Gaussian sequence indexed by $\Z$,
which is defined by the sums of the series
$
 G_j = w \ms2 \sum_{i \le j} v^{j - i} \ms2 U_i$, for every
$j \in \Z$, where the $(U_i)_{i \in \Z}$ are independent $N(0, 1)$ Gaussian
variables. Indeed, if $j \le k$ we have that
\[
   \E ( G_j \ms1 G_k )
 = (1 - v^2) \sum_{i \le j} v^{j + k - 2 i}
 = v^{k - j}
 = v^{ |k - j| }.
\]
We see that $G_j - v \ms1 G_{j-1} = w \ms2 U_j$ and it follows that
\begin{equation}
     \max_{1 \le j \le J} |U_j| 
  =  w^{-1} \ms1 \max_{1 \le j \le J} |G_j - v \ms1 G_{j-1}|
 \le w^{-1} \ms1 (1+v) \ms1 \max_{0 \le j \le J} |G_j|.
 \label{UG}
\end{equation}

 We now recall an extremely classical estimate.
 
\begin{lem}\label{SupGaussi}
Let $J \ge 21$ be an integer and set
$s_J := \sqrt {2 \ln J - \ln (16 \pi \ln J)}$. If $U_1, \ldots, U_J$ are
independent $N(0, 1)$ Gaussian variables, one has that
\[
 P \bigl( \max_{1 \le j \le J} U_j > s_J \bigr) > 1/2.
\] 
\end{lem}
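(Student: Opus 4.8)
\textbf{Proof plan for Lemma~\ref{SupGaussi}.}
The plan is to show that the complementary event $\{\max_{1 \le j \le J} U_j \le s_J\}$ has probability $< 1/2$, by combining a second-moment (Paley--Zygmund-type) argument with a sharp lower bound on the Gaussian tail $\gamma_1\bigl((s_J, +\infty)\bigr)$. First I would record the standard two-sided estimate for the Gaussian tail: for $s > 0$,
\[
   \Bigl( \frac 1 s - \frac 1 {s^3} \Bigr) \frac { \e^{-s^2/2} } { \sqrt{2\pi} }
 \le \gamma_1 \bigl( (s, +\infty) \bigr)
 \le \frac 1 s \ms2 \frac { \e^{-s^2/2} } { \sqrt{2\pi} } \ms1 \up.
\]
With $s_J^2 = 2 \ln J - \ln(16 \pi \ln J)$ one has $\e^{-s_J^2/2} = \sqrt{16 \pi \ln J}\,/\,J$, so $\gamma_1\bigl((s_J, +\infty)\bigr)$ is, up to the factor $(1 + o(1))$ coming from $1/s_J \simeq 1/\sqrt{2 \ln J}$, equal to $(2/J)\sqrt{2 \ln J}\,/\,\sqrt{2 \pi \ln J} \cdot \sqrt{2\pi} / \sqrt{2\pi} = \ldots$; carrying the bookkeeping carefully gives $J \ms1 \gamma_1\bigl((s_J, +\infty)\bigr) \ge 2$ for $J$ large (this is precisely what motivates the defining formula for $s_J$, and $J \ge 21$ is the threshold making the lower-bound correction $1/s_J - 1/s_J^3$ harmless). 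Let $p_J := \gamma_1\bigl((s_J, +\infty)\bigr) = P(U_1 > s_J)$, so that $J p_J \ge 2$.

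Next I would set $Z = \#\{ j : 1 \le j \le J, \ U_j > s_J \} = \sum_{j=1}^J \gr 1_{\{U_j > s_J\}}$. By independence, $\E Z = J p_J$ and $\mathrm{Var}(Z) = J p_J (1 - p_J) \le J p_J = \E Z$. The event $\{\max_j U_j > s_J\}$ is exactly $\{Z \ge 1\}$, so by the Paley--Zygmund inequality (or directly Chebyshev applied to $Z - \E Z$),
\[
   P(Z = 0)
 \le P \bigl( |Z - \E Z| \ge \E Z \bigr)
 \le \frac { \mathrm{Var}(Z) } { (\E Z)^2 }
 \le \frac 1 { \E Z }
 = \frac 1 { J p_J }
 \le \frac 1 2 \up.
\]
Hence $P\bigl(\max_{1 \le j \le J} U_j > s_J\bigr) = 1 - P(Z = 0) \ge 1/2$, and I would note that the inequality is in fact strict because $J p_J \ge 2$ can be upgraded to $J p_J > 2$ for $J \ge 21$ (the ceiling/floor slack in $s_J$ is strict), or alternatively because $\mathrm{Var}(Z) = J p_J(1-p_J) < J p_J$ strictly.

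The main obstacle — really the only delicate point — is the elementary but fiddly verification that the specific choice $s_J = \sqrt{2 \ln J - \ln(16 \pi \ln J)}$ yields $J \ms1 \gamma_1\bigl((s_J, +\infty)\bigr) \ge 2$ (with the margin needed for strictness) for all integers $J \ge 21$; this requires keeping track of the $1/s_J$ prefactor and the lower-tail correction $-1/s_J^3$ against the clean exponential $\e^{-s_J^2/2} = \sqrt{16\pi \ln J}/J$, and checking that the residual ratio $\sqrt{2\ln J}\,/\,s_J = (1 - \tfrac{\ln(16\pi \ln J)}{2\ln J})^{-1/2}$ stays close enough to $1$ once $J \ge 21$. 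Everything else is a one-line second-moment computation. I would present the tail estimate and the monotonicity check as a short calculus lemma, then deduce the statement in three lines as above.
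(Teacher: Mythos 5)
Your overall strategy is genuinely different from the paper's: you go through a second-moment (Paley--Zygmund) bound on $Z = \#\{j : U_j > s_J\}$, whereas the paper exploits independence fully and simply computes $P(\max_j U_j \le s_J) = (1-p_J)^J \le (1-1/J)^J < \e^{-1} < 1/2$, which only requires the much weaker $p_J \ge 1/J$. Your route needs $J p_J \ge 2$, and this is where it breaks.

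The problem is that your stated Mills-type lower bound
$\gamma_1\bigl((s,\infty)\bigr) \ge (1/s - 1/s^3)\,\e^{-s^2/2}/\sqrt{2\pi}$
degenerates as $s \downarrow 1$ (the prefactor $1/s - 1/s^3 = (s^2-1)/s^3 \to 0$), and $s_J$ is in fact close to $1$ for $J$ near the threshold. At $J=21$ one has $\ln 21 \approx 3.045$, so $s_J^2 = 2\ln J - \ln(16\pi\ln J) \approx 6.09 - 5.03 = 1.06$ and $s_J \approx 1.03$; the prefactor $(s_J^2-1)/s_J^3 \approx 0.05$, giving $p_J \gtrsim 0.013$ and hence $J p_J \approx 0.26$, nowhere near $2$. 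The inequality $(s_J^2-1)^2 \cdot 2\ln J \ge s_J^6$ that your plan requires in fact fails for all $J$ up to roughly $40$. So the step you flag as "fiddly but elementary" — verifying $J p_J \ge 2$ for $J \ge 21$ — is not merely fiddly: it is false with the tail bound you invoke, and the defining formula for $s_J$ is calibrated to $p_J \ge 1/J$, not $p_J \ge 2/J$. The paper sidesteps both issues at once: it proves the sharper lower bound $\gamma_1\bigl((s,\infty)\bigr) > \frac{s}{\sqrt{2\pi}(1+s^2)}\e^{-s^2/2}$ (which stays bounded away from $0$ near $s=1$, since $\tfrac{s}{1+s^2} \to \tfrac12$), combines it with $1 < s_J < \sqrt{2\ln J}$ to get $p_J > 1/J$, and then uses the exact product rather than Chebyshev. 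If you insist on the second-moment route, you must at minimum replace your tail estimate with one that does not vanish at $s=1$ — the paper's $\tfrac{s}{1+s^2}\phi(s)$ bound does give $J p_J > 2$ at $J=21$ (roughly $2.47$) — but the direct product argument remains both sharper and shorter.
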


\begin{proof}
We have for $s > 0$ that
\begin{equation}
   \int_s^{+\infty} \, \d \gamma_1(s)
 > \frac s {\sqrt{2 \pi} (1 + s^2)} \e^{- s^2 / 2},
 \label{TailG}
\end{equation}
consequence of
\[
   \e^{-s^2 / 2} / s
 = \int_s^{+\infty} (1 + u^{-2}) \e^{-u^2/2} \, \d u
 < (1 + s^{-2}) \int_s^{+\infty} \e^{-u^2/2} \, \d u.
\]
When $J \ge 21$, one has $\e^{-1} J^2 > 16 \pi \ln J > 1$, hence 
$1 < s_J < \sqrt{2 \ln J}$. Therefore, we see by~\eqref{TailG} for each
$j = 1, \ldots, J$ that
\[
     P \bigl( U_j > s_J \bigr)
 \ge \frac {s_J} {\sqrt{2 \pi} (1 + s_J^2)} 
      \frac { \sqrt {16 \pi \ln J} } J
 \ge \frac {2 s_J^2} {(1 + s_J^2) J} 
 \ge \frac 1 J \ms1 \up.
\]
It follows that
\[
     P \bigl( \max_{1 \le j \le J} U_j \le s_J \bigr)
 \le \Bigl(
      1 -  \frac 1 J 
     \Bigr)^J
  <  \e^{ - 1}
  <  \frac 1 2 \ms1 \up.
\]
\end{proof}

\begin{thm}[Aldaz~\cite{AldazWT}]\label{Qualitat}
The weak type\/ $(1, 1)$ constant $\kappa_{Q, n}$ in\/~\eqref{WTforCubes}
does not stay bounded when the dimension $n$ tends to infinity.
\end{thm}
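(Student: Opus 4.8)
The plan is to combine the pointwise lower bound on $\M_Q\,\mu_N^{(n)}$ from Lemma~\ref{OneSet}, valid on the set $A_{\alpha,t}^{(n)}$, with the multi-scale device set up just before Lemma~\ref{SupGaussi}, so as to cover a fixed positive proportion of the cube $S_N^n$ by a union of such sets, each carrying a value of $\M_Q\,\mu_N^{(n)}$ tending to infinity. Concretely, fix $\theta\in(0,1)$ and $v\in(0,1)$, set $w=\sqrt{1-v^2}$, and for a large but fixed integer $K$ put $\alpha_j=(1+v^{2j})^{-1}$ for $j=0,\dots,K$. As computed in the excerpt, the joint distribution of $(X_{n,\alpha_j})_{j=0}^K$ converges, as $n\to\infty$, to that of a Gaussian vector $(G_j)_{j=0}^K$ with covariance $v^{|k-j|}$, realized via $G_j=w\sum_{i\le j}v^{j-i}U_i$ with $(U_i)$ i.i.d.\ $N(0,1)$. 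By~\eqref{UG} we have $\max_{1\le j\le K}|U_j|\le w^{-1}(1+v)\max_{0\le j\le K}|G_j|$, so by Lemma~\ref{SupGaussi} (applied with $J=K$, once $K\ge 21$), with probability exceeding $1/2$ there is some $j$ with $|G_j|>s_K$, hence $\max_j X_{n,\alpha_j}> \tfrac{w}{1+v}s_K$ with probability bounded below (uniformly in large $n$) by, say, $1/3$.

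Next I would fix $t=t_K:=\tfrac{w}{1+v}s_K$ and apply Lemma~\ref{OneSet} with each $\alpha=\alpha_j$ and this $t$; since $\sigma_{\alpha_j}^2$ is bounded below over $j=0,\dots,K$ by a constant depending on $v,K$, the hypothesis $\sqrt n\ge 2t_K\sigma_{\alpha_j}^{-2}(1-\theta)^{-1}$ holds once $n$ is large, and the hypothesis on $N$ is met by taking $N=N(n)>\kappa(t_K)n^{3/2}$. Lemma~\ref{OneSet} then gives $\M_Q\,\mu_N^{(n)}>\e^{\theta t_K^2/2}$ on $A_{\alpha_j,t_K}^{(n)}$. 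The union $\bigcup_{j=0}^K A_{\alpha_j,t_K}^{(n)}$ contains, up to a boundary loss that is negligible for $N$ large (exactly as in the proof of Lemma~\ref{OneSet}), the event $\{\max_j X_{n,\alpha_j}>t_K\}$, whose probability is $>1/3$ for $n$ large. Hence for all large $n$,
\[
\frac{\bigl|\{x\in S_N^n:(\M_Q\,\mu_N^{(n)})(x)>\e^{\theta t_K^2/2}\}\bigr|}{|S_N^n|}
\ge \frac14,
\]
and feeding this into the weak type inequality~\eqref{WTforCubes} extended to measures, with $c=\e^{\theta t_K^2/2}$, $\mu=\mu_N^{(n)}$ and $\mu_N^{(n)}(\R^n)=(4N)^n=|S_N^n|$, yields $\kappa_{Q,n}\ge \tfrac14\e^{\theta t_K^2/2}$.

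Finally, since $s_K\to+\infty$ as $K\to\infty$, the quantity $\tfrac14\e^{\theta t_K^2/2}=\tfrac14\e^{\theta w^2 s_K^2/(2(1+v)^2)}$ can be made arbitrarily large by choosing $K$ large enough; for each such $K$ the bound $\kappa_{Q,n}\ge\tfrac14\e^{\theta t_K^2/2}$ holds for all $n$ beyond some $n_0(K)$, and consequently $\limsup_{n\to\infty}\kappa_{Q,n}=+\infty$, which is the assertion of Theorem~\ref{Qualitat}. I expect the only real bookkeeping difficulty to be tracking the three successive "negligible for $N$ large, $n$ large" reductions — the central limit convergence of the finite-dimensional distribution $(X_{n,\alpha_j})_{j\le K}$, the boundary trimming that passes from $A_{\alpha_j,t_K}^{(n,0)}$ to $A_{\alpha_j,t_K}^{(n)}$, and the union being (essentially) the event $\{\max_j X_{n,\alpha_j}>t_K\}$ — and verifying that the order of quantifiers ($K$ fixed first, then $n\to\infty$, with $N=N(n)$ chosen along the way) is internally consistent; none of these is conceptually hard, but each must be stated with care so that the final constant genuinely does not depend on $n$.
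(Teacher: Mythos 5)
Your proposal follows the paper's proof almost line for line: apply Lemma~\ref{OneSet} on each set $A_{\alpha_j,t}^{(n)}$, identify the limiting Gaussian vector $(G_j)$ with covariance $v^{|k-j|}$, control it via the autoregressive representation $G_j=w\sum_{i\le j}v^{j-i}U_i$ and inequality~\eqref{UG}, and invoke Lemma~\ref{SupGaussi} together with the central limit theorem to show the union of the $A_{\alpha_j,t}^{(n)}$ occupies a definite fraction of $S_N^n$. The only difference is cosmetic: you fix $K$ first and take $t=t_K\to\infty$, while the paper fixes $t$ and then chooses $K$ with $s_K>t\,w^{-1}(1+v)$.

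One small step needs a word: from $P(\max_j|G_j|>t_K)>1/2$ you cannot go directly to a lower bound for $P(\max_j G_j>t_K)$ without using the symmetry of the Gaussian vector $(G_j)\overset{d}{=}(-G_j)$, which gives $P(\max_j|G_j|>t_K)\le 2\,P(\max_j G_j>t_K)$ and hence the one-sided probability exceeds $1/4$ (not $1/3$, as you wrote). This is precisely what the paper does, and it is needed because the sets $A_{\alpha_j,t}^{(n)}$ correspond to the one-sided event $X_{n,\alpha_j}>t$, not $|X_{n,\alpha_j}|>t$. With that correction the argument closes exactly as you intend, yielding $\kappa_{Q,n}\ge\e^{\theta t_K^2/2}/4$ for $n\ge n_0(K)$ and hence $\kappa_{Q,n}\to\infty$.
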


\begin{proof}
Given an arbitrary $t > 1$, we let $t_1 := t \ms1 w^{-1} (1 + v) > t$ and
choose an integer $K \ge 21$ such that $s_K > t_1$. Applying
Lemma~\ref{SupGaussi}, we obtain that the event 
$\bigl\{ \max_{ \ms1 1 \le j \le K} |U_j| > t_1 \bigr\}$ has probability
$>1/2$, and by~\eqref{UG}, it follows that the~event 
$\{ \max_{ \ms1 0 \le j \le K} |G_j| > t \}$ also has probability $> 1/2$.
We see that $\sup |G_j|$ is the maximum of $\sup G_j$ and $\sup (-G_j)$ that
have the same distribution, hence 
$\{ \max_{ \ms1 0 \le j \le K} G_j > t \}$ has probability $> 1/4$.
Consequently, given any $t > 1$, we obtain by the central limit theorem
that the union of sets $A_{\alpha_j, t}^{(n)}$, for $j = 0, \ldots, K$, has
a probability close to that of $\{ \max_{ \ms1 0 \le j \le K} G_j > t \}$,
hence $> 1/4$ when $n$ is large. By Lemma~\ref{OneSet}, given 
$\theta \in (0, 1)$ and if $\sqrt n \ms1 (1 - \theta) \sigma_K^2 > 2 t$,
the maximal function $\M_Q \mu_N^{(n)}$ is larger than 
$\e^{ \theta t^2 / 2}$ on the union 
$\bigcup_{j=0}^K A_{\alpha_j, t}^{(n)}$, \textit{i.e.}, on a subset of
$\Omega = S_N^n$ having probability $> 1/4$, hence
$\kappa_{Q, n} \ge \e^{ \theta t^2 / 2} / 4$ when $n$ is large enough.
\end{proof}

 Aubrun~\cite{Aubrun} gives a lower bound 
$\kappa_{Q, n} \ge \kappa_\varepsilon (\ln n)^{1 - \varepsilon}$ for every
$\varepsilon > 0$ by making quantitative the proof above. He applies to this
end results proved years before (by Bretagnolle--Massart~\cite{BreMass}
in~1989 and previously, by Koml\'os--Major--Tusn\'ady~\cite{KMT} in~1975)
on the approximation of Brownian bridges, when $n \rightarrow +\infty$ and
with explicit bounds, by binomial processes
\[
   Z_t^{(n)}
 = \sum_{i=1}^n \frac {\gr 1_{ \{Y_i \le t\} } - t} {\sqrt n} \up,
 \qquad t \in [0, 1],
\]
where the $(Y_i)_{i=1}^n$ are independent and uniform on $[0, 1]$. One can
see that the distribution of the process $(Z_t^{(n)})_{t \in (0, 1)}$ is
equal to that of $(\sigma_t \ms1 X_{n, t})_{t \in (0, 1)}$.
\dumou

 Iakovlev and Str\"omberg~\cite{IS} begin with the same observations, in
particular introducing the measure $\mu_N^{(n)}$, using the fundamental
estimate~\eqref{FirstQuot} and, in a less apparent manner, the value 
$\e^{\theta t^2 / 2}$ from Lemma~\ref{OneSet}. But instead of working in a
probabilistic setting, they proceed to a finer combinatorial analysis.
Contrary to Aubrun, they do not use values $\alpha$ close to~$1$, nor close
to $0$. In our exposition of their arguments, we shall work towards
simplicity rather than optimality.

\begin{smal}

\noindent
Let us digress a little with some comments on the Gaussian process
viewpoint, and express in terms of stochastic maximal function the lower
bound for $\M_Q \mu_N^{(n)}$ given in~\eqref{FirstQuot}. Let $x \in \Omega$
and $m = N_{n, \alpha}(x)$, $\sigma_\alpha^2 = \alpha (1 - \alpha)$ and
write $m = \alpha n + \sigma_\alpha t \sqrt n$. Notice that
$t = (m - \alpha n) / (\sigma_\alpha \sqrt n) = X_{n, \alpha}(x)$. We let
$f$ be the fraction $m / n$, and rewrite the preceding formula for $m$
as $f = \alpha + \sigma_\alpha \tau$, with $\tau = t / \sqrt n$. We know
the optimal argument $\ovy$ for $V(y)$, given in~\eqref{FirstEsti} by
\[
 \ms{16}
   \ovy 
 = \frac t {\sigma_\alpha \sqrt n}
 = \frac \tau {\sigma_\alpha} \up,
 \ms{22} \hbox{and} \ms{12}
   \frac {\ln V(\ovy) } n
 = f \ln \Bigl( \frac f \alpha \Bigr)
    + (1 - f) \ln \Bigl( \frac { 1 - f } {1 - \alpha } \Bigr) .
\]
By Lemma~\ref{MaxiVal} we have
$
     \ln E_{f, \alpha}
  =  \phi_\alpha (f)
 \ge \tau^2 / 2
  =  t^2 / (2 n)
$ if $\tau > 0$ and $\alpha \ge 1/2$.
Let $1/2 \le \alpha \le 3/4$ and assume that 
$0 < t = X_{n, \alpha}(x) \le n^{1/4} / 2$. We see then that
$\ovy = t / (\sigma_\alpha \sqrt n) < 2 \ms1 n^{-1/4} / \sqrt 3$, thus
$n \ms1 \ovy^4 \le 16/9$, $\ovy \le 1/4$ for $n > 455$ and by
Lemma~\ref{YandAllow} we are then in the allowable case with $c \le 16/9$.
This yields
\begin{subequations}\label{asymptotics}
\SmallDisplay{\eqref{asymptotics}}%
\[
     \M_Q \mu_N^{(n)} (x)
 \ge \kappa^{-1} E_{f, \alpha}^n
 \ge \kappa^{-1} 
      \exp \Bigl(
            \frac { t^2 \ns4} 2 \ms3
           \Bigr),
 \ms{12} \hbox{with} \ms{ 8}
 \kappa < \e^{16/9} \ns3< 6,
 \ms{ 6} n > 455.
\]
\end{subequations}
\noindent
Let us define a maximal function
$
   X^*(x) 
 = \sup_{ \ms1 1/2 \le \alpha \le 3/4} \ms2 X_{n, \alpha}^{(1)} (x)
$,
where $X_{n, \alpha}^{(1)} (x) = X_{n, \alpha}(x)$ when
$0 \le 2 \ms1 X_{n, \alpha} (x) \le n^{1/4}$ and 
$X_{n, \alpha}^{(1)} (x) = 0$ otherwise. We get
\[
     6 \ms2 \M_Q \mu_N^{(n)} (x)
 \ge \exp \Bigl(
           \frac { X^*(x)^2 \ns7} 2 \ms6
          \Bigr)
\]
and the weak type $(1, 1)$ constant $\kappa_{Q, n}$ must verify that
\[
     P \bigl( \{ X^* > s \} \bigr) 
 \le P \bigl( \{ \M_Q \mu_N^{(n)} > \e^{s^2/2} / 6\} \bigr) 
 \le 6 \ms2 \kappa_{Q, n} \e^{-s^2 / 2},
 \quad s > 0.
\]
This explains how delicate the question can be. Indeed, given a subgaussian
process $(Y_t)_{t \in T}$ satisfying tail estimates of the form 
$P( Y > s ) \le \kappa \e^{- s^2 / (2 d^2)}$ for every $s > 0$, for
each difference $Y = Y_{t_2} - Y_{t_1}$ and with
$d = d(t_1, t_2) = \|Y_{t_1} - Y_{t_2}\|_2$, the well known chaining
technique of Dudley~\cite{RMDud} does not allow one to prove for the
maximal process $\sup_{t \in T} Y_t$ a subgaussian inequality with the same
bounding function $\e^{- s^2 / 2}$, but rather with $\e^{- C s^2 / 2}$ for
some $C < 1$, which is inoperative here.

\end{smal}
\dumou

\begin{thm}[Iakovlev and Str\"omberg~\cite{IS}]\label{PropIS} 
One has that
\[
 \kappa_{Q, n} \ge \kappa \ms2 n^{1/4}.
\]
\end{thm}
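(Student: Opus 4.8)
The plan is to make the informal discussion preceding the statement fully quantitative, using the measure $\mu_N^{(n)}$ and the fundamental lower bound \eqref{FirstQuot}, but replacing the probabilistic / central-limit arguments of the Aldaz--Aubrun proof by a direct combinatorial count, following Iakovlev--Str\"omberg. The point is that the qualitative proof of Theorem~\ref{Qualitat} loses too much: Dudley-type chaining over the family of parameters $\alpha$ produces a bound $\e^{-Cs^2/2}$ with $C<1$, and the sparsity of the good-coordinate sets near $\alpha\to 0$ and $\alpha\to 1$ forces Aubrun's iterated-logarithm estimate, yielding only $(\log n)^{1-\varepsilon}$. To reach $n^{1/4}$ one works with a \emph{fixed} range of $\alpha$, say $\alpha\in[1/2,3/4]$, and exploits that for each point $x\in S_N^n$ one may \emph{choose} the parameter $\alpha$ optimally, namely so that $\alpha$ is close to the empirical fraction $N_{n,\alpha}(x)/n$ of coordinates of $x$ lying in $C_\alpha$.

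First I would fix $t$ of order $n^{1/4}$ (say $t = c_0 n^{1/4}$ with $c_0$ a small absolute constant) and, for each $\alpha\in[1/2,3/4]$, invoke Lemma~\ref{OneSet}: on the set $A^{(n)}_{\alpha,t}$ of points $x$ with $N_{n,\alpha}(x) > \alpha n + t\sigma_\alpha\sqrt n$ (and away from the boundary of $S_N^n$, which costs nothing once $N$ is taken $\gg n^{3/2}$) one has $\M_Q\mu_N^{(n)}(x) > \e^{\theta t^2/2}$ for a fixed $\theta\in(0,1)$. The allowability check needed for Lemma~\ref{OneSet} is exactly the one recorded in the small-print computation \eqref{asymptotics}: with $t\le n^{1/4}/2$ and $\alpha\in[1/2,3/4]$ the optimal argument $\overline y = t/(\sigma_\alpha\sqrt n)$ is $\le 1/4$ for $n$ large and $n\,\overline y^4$ is bounded, so the couple $(f,\alpha)$ is $c$-allowable with $c$ absolute, and $\M_Q\mu_N^{(n)}(x)\gtrsim \e^{t^2/2}$. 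Since $\e^{\theta t^2/2} = \e^{\theta c_0^2 \sqrt n/2}$, \emph{a single} $A^{(n)}_{\alpha,t}$ already witnesses $\M_Q\mu_N^{(n)}$ huge on a set of measure $\gtrsim \gamma_1((t,+\infty))$ --- but that Gaussian tail is only $\e^{-t^2/2}$, i.e.\ $\e^{-c_0^2\sqrt n/2}$, far too small to contradict a weak-$(1,1)$ bound by itself. The whole game is to show the \emph{union} $\bigcup_\alpha A^{(n)}_{\alpha,t}$ has measure bounded below by a fixed positive constant (or at least by $\kappa/n^{1/4}$), while still having $t\sim n^{1/4}$ on it.

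The key step --- and the main obstacle --- is this covering estimate: one must show that for a positive proportion of $x\in S_N^n$ there exists $\alpha\in[1/2,3/4]$ with $N_{n,\alpha}(x) - \alpha n \ge t\,\sigma_\alpha\sqrt n$ for $t$ of order $n^{1/4}$. Here I would argue combinatorially. Write each coordinate $x_i$ as $x_i = j_i + u_i$ with $j_i\in\Z$ and $u_i\in[-1/2,1/2)$; then $\gr 1_{C_\alpha}(x_i) = 1$ iff $|u_i| < \alpha/2$. So $N_{n,\alpha}(x) = \#\{ i : |u_i| < \alpha/2 \}$, which as a function of the threshold $\alpha/2$ is the empirical distribution function of the $|u_i|$'s. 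The supremum over $\alpha\in[1/2,3/4]$ of $N_{n,\alpha}(x) - \alpha n$ is thus the supremum of a one-sided empirical-process deviation, restricted to thresholds in $[1/4,3/8]$; a Kolmogorov--Smirnov / binomial large-deviation count shows that with probability bounded below (over the uniform $u_i$'s) this supremum is $\ge c\sqrt n$ for \emph{some} threshold --- indeed the one-sided KS statistic is of order $\sqrt n$ with constant probability --- and one then checks that the worst threshold can be taken inside $[1/4,3/8]$ at the cost of a constant. Converting ``order $\sqrt n$ deviation of $N_{n,\alpha}$'' into ``$t\sim n^{1/4}$'' is the crucial bookkeeping: since $N_{n,\alpha}-\alpha n \approx t\sigma_\alpha\sqrt n$, a deviation of size $c\sqrt n$ corresponds to $t\approx c/\sigma_\alpha\sim$ constant, \emph{not} $n^{1/4}$ --- so a naive union is not enough. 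The Iakovlev--Str\"omberg improvement comes from iterating: rather than asking one $\alpha$ to do all the work, one selects $\alpha$ \emph{coordinate-block by coordinate-block}, or equivalently chains over $O(n^{1/2})$ nested parameter choices, so that the accumulated deviation reaches order $n^{1/4}\cdot\sqrt n$ --- this is where the $n^{1/4}$, rather than $\log n$, enters, and it is the delicate combinatorial heart of the argument.

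Finally I would assemble the pieces as in Theorem~\ref{Qualitat}: having produced a set $E_n\subset S_N^n = 2NQ_n$ with $|E_n|/|2NQ_n| \ge c$ (a fixed constant, or $\ge \kappa n^{-1/4}$ suffices) on which $\M_Q\mu_N^{(n)} > \e^{\theta t^2/2}$ with $t = c_0 n^{1/4}$, the weak-type inequality \eqref{WTforCubes} applied to the measure $\mu_N^{(n)}$ (whose total mass is $(2N)^n$, the same as $|2NQ_n|$ up to constants independent of $n$) forces
\[
 \e^{\theta t^2/2}\,|E_n| \;\le\; \kappa_{Q,n}\,\mu_N^{(n)}(\R^n)
  \;=\; \kappa_{Q,n}\,(2N)^n,
\]
hence $\kappa_{Q,n} \gtrsim c\,\e^{\theta c_0^2 \sqrt n/2}$, which is vastly stronger than $n^{1/4}$. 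Even the weaker version with $|E_n|\gtrsim n^{-1/4}|2NQ_n|$ gives $\kappa_{Q,n}\gtrsim n^{-1/4}\e^{\theta c_0^2\sqrt n/2}\ge \kappa n^{1/4}$ for $n$ large, and the small dimensions are absorbed into the constant $\kappa$. The only genuinely hard input is the combinatorial covering/chaining estimate of the previous paragraph; everything else is a routine repackaging of Lemmas~\ref{YandAllow}, \ref{MaxiVal} and~\ref{OneSet} together with the computation \eqref{asymptotics}.
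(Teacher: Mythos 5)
Your approach diverges from the paper's at the decisive step, and the divergence is fatal. You fix $t\sim n^{1/4}$, take $E_n=\bigcup_{\alpha\in[1/2,3/4]}A^{(n)}_{\alpha,t}$, and want $|E_n|\gtrsim c\,|S_N^n|$ (or at least $\gtrsim n^{-1/4}|S_N^n|$). But this cannot hold: the one-sided Kolmogorov--Smirnov process $\alpha\mapsto X_{n,\alpha}$ has subgaussian tails for its supremum (the chaining bound you yourself quote, $\e^{-Cs^2/2}$ with $C<1$), so $P(\bigcup_\alpha A^{(n)}_{\alpha,t})\lesssim\e^{-Ct^2/2}=\e^{-c\sqrt n}$, never a constant. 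The ``iterate over $O(n^{1/2})$ nested parameter choices to accumulate a deviation of size $n^{1/4}\sqrt n$'' idea does not correspond to anything that can be done: $N_{n,\alpha}(x)-\alpha n$ is a single binomial deviation, bounded by $n$, and its typical size is $\Theta(\sqrt n)$; no reparametrisation inflates it. Your final inequality would give $\kappa_{Q,n}\gtrsim c\,\e^{\theta c_0^2\sqrt n/2}$, which contradicts the known Stein--Str\"omberg upper bound $\kappa_{Q,n}=O(n\log n)$ --- a sanity check that should have flagged the error.

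The paper's proof balances the two factors entirely differently and does not need any constant-measure set. Fix $V=\e^{\varepsilon^2n/2}$ with $\varepsilon\sim n^{-1/4}$ (so $V\sim\e^{\kappa\sqrt n}$, the largest value compatible with the allowability constraint $\overline y\le n^{-1/4}$). For \emph{each} integer $m\in[f_*n,f^*n]$ one \emph{adapts} the parameter, choosing $\alpha=\alpha(m/n)$ so that $E_{f,\alpha}^n=V$ exactly, i.e.\ the lower bound for $\M_Q\mu_N^{(n)}$ on the level set $F_m=\{N_{n,\alpha(f)}=m\}$ is always the same $V$. Stirling's formula gives $P(F_m)\approx 1/(V\sqrt n)$, which is exponentially small; the whole point is that when multiplied by the value $V$ of the maximal function it yields the fixed quantity $1/\sqrt n$. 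Lemma~\ref{Intersect} then controls overlaps $P(F_{m_1}\cap F_{m_2})$ by a direct binomial computation, so that summing over an arithmetic progression $M$ of spacing $H\sim n^{1/4}$ (forced by the overlap bound and by allowability) keeps the union at least half the sum. With $|M|\sim n^{3/4}$ one gets $\kappa_{Q,n}\ge\e^{-c}V\cdot P(\bigcup_{m\in M}F_m)\ge\kappa\,|M|/\sqrt n\sim\kappa n^{1/4}$. So the missing ingredients in your proposal are (i) the self-consistent choice $\alpha=\alpha(f)$ which removes the union over a continuum of $\alpha$'s and replaces it by a single value per level set, (ii) the Stirling computation of $P(F_m)$ as $\approx 1/(V\sqrt n)$, and (iii) the direct overlap Lemma~\ref{Intersect}, which is the combinatorial heart of the argument and is \emph{not} a chaining bound.
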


 Rather than exploiting the exponential asymptotics~\eqref{asymptotics} of
$E_{f, \alpha}^n$, we shall observe some more nice features of the
expression $E_{f, \alpha}$ defined in~\eqref{EAlpha}, where 
$f = m / n = \alpha + t \sigma_\alpha / \sqrt n
 = \alpha + \sigma_\alpha \tau$. We replace the value $\e^{ \theta t^2 / 2}$
seen in Lemma~\ref{OneSet} by\label{ValueV} 
a fixed large value $V > 1$ and we try to keep the (conditional on
allowability) lower bound $E_{f, \alpha}^n$ for $\M_Q \mu_N^{(n)}$
constantly equal to~$V$. Equivalently, we keep
\begin{equation}
   E_{f, \alpha}
 = \e^{\phi_\alpha (f)}
 = V^{1/n}
 > 1
 \label{EAlphaEq}
\end{equation}
for all values of $f$ (or of $m$) that will be handled. The possibility of
finding $\alpha$ verifying~\eqref{EAlphaEq} comes from the fact that
for every given $f \in (0, 1)$, the function
\begin{equation}
 \psi_f : s \mapsto
  \Bigl( \frac f s \Bigr)^f
   \Bigl( \frac {1 - f} {1 - s} \Bigr)^{1 - f}
 = \e^{\phi_s(f)},
 \quad s \in (0, 1),
 \label{PsiF}
\end{equation}
is convex on $(0, 1)$ (actually, log-convex), tends to infinity at $0$ and
at $1$, and assumes its minimal value $\psi_f(f) = 1$ at $s = f$.
Consequently, there are exactly two values $\alpha_0 < f < \alpha_1$ of
$\alpha \in (0, 1)$ solving~\eqref{EAlphaEq}, we shall consider the
smallest one 
and set\label{AlphaF}
$\alpha(f) = \alpha_0$. Notice that $(\ln \psi_f)'(s) 
 = - f / s + (1 - f) / (1 - s)$ vanishes at $s = f$, and
\begin{equation}
   (\ln \psi_f)''(s) 
 = \frac f {s^2} + \frac { 1 - f } { (1 - s)^2 \ns8 } \ms5 
 > f + (1 - f) = 1.
 \label{PsiFb}
\end{equation}
We have therefore for every $s \in (0, 1)$ that
\begin{equation}
 \ln \psi_f(s) \ge (s - f)^2 / 2,
 \ms{14} \hbox{thus} \ms{10}
     (f - \alpha(f))^2 / 2 
 \le \ln \psi_f(\alpha(f))
  =  (\ln V) / n.
 \label{MinoGf}
\end{equation}
\dumou

 From now on, we fix two values $0 < f_* < f^* \le 1/2$,\label{FStars} 
independent of the dimension~$n$. For every integer $m$ in the range 
$[f_* \ms1 n, f^* \ms1 n]$, we shall consider the~set\label{SetFm}
\[
 F_m = \{ x \in \Omega : N_{n, \alpha(f)}(x) = m \},
 \ms{16} \hbox{with} \ms{12}
 f = m / n.
\]
Let us write $\alpha = \alpha(f)$ for brevity. We have that
$E_{f, \alpha} = V^{1/n}$ and if we assume $c \ms1$-allowability for
$(f, \alpha)$ we get
$
 \M_Q \mu_N^{(n)} (x) \ge \e^{-c} V$ for every $x \in F_m$,
by~\eqref{DefAllow}. The probability of $F_m$ is 
$\alpha^m (1 - \alpha)^{n - m} \binom n m$ and we see that
\[
   V P(F_m)
 = \Bigl( \frac f \alpha \Bigr)^m
    \Bigl( \frac {1 - f} {1 - \alpha} \Bigr)^{n - m}
     \ms3 \alpha^m (1 - \alpha)^{n - m}
      \ms3 \binom n m
 = \frac {m^m (n - m)^{n - m} \ns{28}} {n^n} \ms{20}
    \ms3 \binom n m.
\]
Stirling's formula in the form
$\e^{ - 1 / (12 p) } p \ms1 ! \le p^p \e^{-p} \sqrt {2 \pi p}
 \le p \ms1 !$ (see~\cite{Robbins}) gives
\begin{equation}
     \e^{ - 1 / (12 \ms1 n) } \ms1 V P(F_m)
 \le \frac {\sqrt n} {\sqrt{2 \pi m (n - m) }}
 \le \e^{ n / (12 \ms1 m (n-m)) } \ms1 V P(F_m).
 \label{Stirl}
\end{equation}
With\label{SStars} 
$s_* = \sqrt { f_* (1 - f_*) }$ and $s^* = \sqrt { f^* (1 - f^*) }$,
it follows that
\begin{equation}
     V P(F_m)
 \ge \frac { \e^{ - 1 / (12 f(1-f) n) } } 
           { \sqrt { 2 \pi f (1 - f) n } }
 \ge \frac { \e^{ - 1 / (12 s_*^2 n) } } 
           { s^* \sqrt { 2 \pi } } \ms4
     \frac 1 { \sqrt n } \up.
 \label{StirlA}
\end{equation}
If the sets~$F_m$ were disjoint (and the couples $(f, \alpha(f))$
$c \ms1$-allowable) we would get immediately, by summing on $m$ between 
$f_* \ms1 n$ and~$f^* \ms1 n$, a lower bound of
\[
     \kappa_{Q, n}
 \ge \e^{-c} V P \bigl( \{  \M_Q \mu_N^{(n)} \ge \e^{-c} V \} \bigr)
 \ms{12} \hbox{by} \ms{16}
 \kappa \ms3 \bigl[ \e^{-c} (f^* - f_*)
  / ( s^* \sqrt { 2 \pi } ) \bigr] \ms2 \sqrt n,
\]
but this disjointness property is clearly not true. We shall specify a
suitable large~$V$ such that the probability of the intersection of two
events $F_{m_1}$ and $F_{m_2}$ will be small compared to the probability of
$F_{m_1}$, when $m_1 < m_2$ are not too close. We shall find a subset 
$M \subset [f_* \ms1 n, f^* \ms1 n]$, as large as possible, consisting of
\og well spaced\fge values~$m_j$ giving rise to $c \ms1$-allowable couples.
The final estimate has the form
\begin{equation}
     \kappa_{Q, n}
 \ge \e^{-c} V \ms1
      P \bigl( \bigcup_{m \in M} F_m \bigr),
 \label{ReNum}
\end{equation}
where the probability of the union will be larger than half of the sum of
probabilities. The seemingly harmless allowability restriction that 
$y^{-1} - \alpha = \sigma_\alpha / \tau - \alpha$ must be an odd
integer~$\ell$ will actually cause a heavy loss at the end.
\dumou

 We fix $\varepsilon \in (0, f_*]$ and 
introduce\label{EpsilEta} 
$\eta := \sqrt{ 1 - \varepsilon / f_*}$. We define the \og big\fge value
$V$ as $V = \e^{\varepsilon^2 n / 2}$. By~\eqref{MinoGf}, we have that
\begin{equation}
     0
  <  f - \alpha(f) 
 \le \varepsilon.
 \label{FmoinsA}
\end{equation}

\begin{lem}
Suppose that\/ $0 < \alpha < \xi \le f \le \alpha + \varepsilon$ and 
$f_* \le f \le 1/2$. One has
\begin{equation}
     \eta^2
 \le \frac \alpha \xi
  <  \frac { \alpha (1 - \alpha) } { \xi (1 - \xi) }
  <  1,
 \ms{16} \hbox{in particular} \ms{12}
   \eta \ms1 \sigma_f 
 = \eta \ms1 \sqrt { f (1 - f) }
 < \sigma_\alpha.
 \label{AdHoc}
\end{equation}
Assuming $V = \e^{\varepsilon^2 n / 2}$, $\alpha = \alpha(f)$ and writing
$\sigma_\alpha \tau = f - \alpha$, one has that
\begin{equation}
     \eta \ms3 \tau
 \le \varepsilon
 \le \tau.
 \label{NewEncadre}
\end{equation}
\end{lem}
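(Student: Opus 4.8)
The plan is to treat the two displays \eqref{AdHoc} and \eqref{NewEncadre} in turn; the first is purely elementary, and the second is then extracted from it by a single Taylor expansion of $\phi_\alpha$ around $\alpha$ using the explicit formula for $\phi_\alpha''$.

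For \eqref{AdHoc} I would argue as follows. Since $\xi\le f$ and $f\le\alpha+\varepsilon$, one has $\alpha/\xi\ge\alpha/f\ge(f-\varepsilon)/f=1-\varepsilon/f$, and $f\ge f_*$ then gives $\alpha/\xi\ge 1-\varepsilon/f_*=\eta^2$, which is the leftmost inequality. Dividing the claimed middle inequality by $\alpha/\xi>0$ reduces it to $1<(1-\alpha)/(1-\xi)$, true because $0<\alpha<\xi<1$. The rightmost one is equivalent to $\alpha(1-\alpha)<\xi(1-\xi)$, which holds since $s\mapsto s(1-s)$ is strictly increasing on $[0,1/2]$ and $\alpha<\xi\le f\le 1/2$. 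Taking $\xi=f$ in the chain gives $\eta^2\le\alpha/f<\alpha(1-\alpha)/(f(1-f))=\sigma_\alpha^2/\sigma_f^2$, hence $\eta\sigma_f<\sigma_\alpha$ after extracting positive square roots.

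For \eqref{NewEncadre}, recall that with $V=\e^{\varepsilon^2 n/2}$ and $\alpha=\alpha(f)$ equation \eqref{EAlphaEq} reads $\e^{\phi_\alpha(f)}=V^{1/n}$, i.e. $\phi_\alpha(f)=\varepsilon^2/2$, while \eqref{FmoinsA} gives $0<f-\alpha\le\varepsilon\le f_*\le f$ (so in particular the hypotheses of the first part are available with this $\alpha$, and $\tau>0$). Since $\phi_\alpha(\alpha)=\phi_\alpha'(\alpha)=0$ and $\phi_\alpha''(s)=1/(s(1-s))$, Taylor's formula with Lagrange remainder gives $\phi_\alpha(f)=\tfrac12\phi_\alpha''(\xi)(f-\alpha)^2$ for some $\xi\in(\alpha,f)$; strict monotonicity of $s(1-s)$ on $[0,1/2]$ together with $\alpha<\xi<f\le 1/2$ yield $1/\sigma_f^2<\phi_\alpha''(\xi)<1/\sigma_\alpha^2$. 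Feeding in $\eta^2\sigma_f^2<\sigma_\alpha^2$ from the first part (case $\xi=f$) gives $\eta^2/\sigma_\alpha^2<1/\sigma_f^2<\phi_\alpha''(\xi)<1/\sigma_\alpha^2$, and multiplying through by $(f-\alpha)^2/2=\sigma_\alpha^2\tau^2/2$ produces $\eta^2\tau^2/2<\phi_\alpha(f)<\tau^2/2$. Since $\phi_\alpha(f)=\varepsilon^2/2$, this is precisely $\eta\tau<\varepsilon<\tau$, which is stronger than \eqref{NewEncadre}.

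There is no genuine obstacle here: both halves come down to the monotonicity of $s\mapsto s(1-s)$ on $[0,1/2]$ and to one invocation of Taylor's theorem with the known second derivative of $\phi_\alpha$. The only points demanding attention are the bookkeeping of strictness of the inequalities and checking that, when the first part is invoked inside the proof of \eqref{NewEncadre}, its hypothesis $f\le\alpha+\varepsilon$ really does hold for $\alpha=\alpha(f)$ — which is exactly what \eqref{FmoinsA} supplies.
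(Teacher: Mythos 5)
Your proof is correct and follows essentially the same route as the paper: monotonicity of $s\mapsto s(1-s)$ on $[0,1/2]$ for \eqref{AdHoc}, then Taylor--Lagrange at $\alpha$ with $\phi_\alpha''(s)=1/\bigl(s(1-s)\bigr)$ for \eqref{NewEncadre}, concluding from \eqref{FmoinsA} and \eqref{AdHoc}. The only difference is cosmetic: the paper bounds the ratio $\alpha(1-\alpha)/\bigl(\xi_0(1-\xi_0)\bigr)$ directly by $[\eta^2,1)$ via \eqref{AdHoc} with $\xi=\xi_0$, while you sandwich $\phi_\alpha''(\xi_0)$ between $1/\sigma_f^2$ and $1/\sigma_\alpha^2$ and then invoke the ``in particular'' part; after multiplying by $\sigma_\alpha^2$ these are the same bound.
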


\begin{proof}
We see that $\alpha (1 - \alpha) < \xi (1 - \xi)$ because
$0 < \alpha < \xi \le 1/2$. Next, we get
\[
     \frac { \alpha (1 - \alpha) } { \xi (1 - \xi) }
  >  \frac \alpha \xi
 \ge \frac {f - \varepsilon} f
 \ge 1 - \frac \varepsilon { f_* } 
  =  \eta^2.
\]
By Taylor--Lagrange at~$\alpha$ for the function $\phi_\alpha$ defined
in~\eqref{DefPhiAlpha}, we have
\[
   \phi_\alpha(f)
 = \phi''_\alpha(\xi_0) \frac { (f - \alpha)^2 \ns6 } 2 \ms3
 = \frac { \sigma_\alpha^2 } { \xi_0 (1 - \xi_0) } \ms3
    \frac { \tau^2 \ns6 } 2
 = \frac { \alpha (1 - \alpha) } { \xi_0 (1 - \xi_0) } \ms3
    \frac { \tau^2 \ns6 } 2
\]
for some $\xi_0 \in (\alpha, f)$, and $\phi_\alpha(f)
 = \phi_{\alpha(f)} (f)
 = (\ln V) / n = \varepsilon^2 / 2$ by assumption. The inequalities
in~\eqref{NewEncadre} follow then from~\eqref{FmoinsA}
and~\eqref{AdHoc}.
\end{proof}

 We have to understand how the values $\alpha(f)$ are distributed when $f$
varies in $[f_*, f^*]$. To this end, we estimate the derivative
$\alpha'(f)$.

\begin{lem}\label{AlphaPrime}
Let\/ $0 < \varepsilon \le f_*$ and $V = \e^{\varepsilon^2 n / 2}$.
The mapping $(0, 1) \ni f \mapsto \alpha(f)$ implicitly defined
at\/~\eqref{EAlphaEq} is increasing, and when $f \in [f_*, f^*]$ we have
that
\[
 \eta^2 < \alpha'(f) < 1.
\]
\end{lem}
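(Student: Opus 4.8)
The key relation is the defining equation $\phi_{\alpha(f)}(f) = (\ln V)/n = \varepsilon^2/2$ from~\eqref{EAlphaEq}, where $\phi_\alpha$ is the function in~\eqref{DefPhiAlpha}. The plan is to differentiate this identity implicitly in $f$, solve for $\alpha'(f)$, and bound the resulting expression using the inequalities already collected in the preceding lemma, in particular~\eqref{AdHoc} and the fact that $\alpha(f)$ is the \emph{smaller} of the two roots of $\psi_f(s) = V^{1/n}$.

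First I would record the partial derivatives of $\Phi(\alpha, f) := \phi_\alpha(f)$. Since $\phi_\alpha(f) = f \ln(f/\alpha) + (1-f) \ln\bigl((1-f)/(1-\alpha)\bigr)$, one has $\partial_f \Phi(\alpha, f) = \ln(f/\alpha) - \ln\bigl((1-f)/(1-\alpha)\bigr) = \phi'_\alpha(f)$, and $\partial_\alpha \Phi(\alpha, f) = -f/\alpha + (1-f)/(1-\alpha) = -(\ln\psi_f)'(\alpha)$ in the notation of~\eqref{PsiF}. Implicit differentiation of $\Phi(\alpha(f), f) \equiv \varepsilon^2/2$ gives
\[
   \alpha'(f)
 = \frac{ \phi'_{\alpha(f)}(f) }
        { (\ln \psi_f)'(\alpha(f)) } \up.
\]
Both quantities are positive: $\phi'_\alpha(f) > 0$ because $f > \alpha = \alpha(f)$ and $\phi'_\alpha$ vanishes at $\alpha$ with $\phi''_\alpha > 0$; and $(\ln\psi_f)'(\alpha(f)) > 0$ because $\alpha(f) < f$ and $(\ln\psi_f)'$ vanishes at $f$ with $(\ln\psi_f)'' > 0$ by~\eqref{PsiFb}. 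Hence $\alpha$ is increasing, as claimed. For the explicit bounds I would write numerator and denominator as integrals of second derivatives from $\alpha = \alpha(f)$ to $f$: $\phi'_\alpha(f) = \int_\alpha^f \phi''_\alpha(u)\,\d u = \int_\alpha^f \frac{\alpha(1-\alpha)}{u(1-u)}\,\frac{\d u}{\alpha(1-\alpha)}$ wait — more cleanly, $\phi''_\alpha(u) = 1/\bigl(u(1-u)\bigr)$, and $(\ln\psi_f)''(u) = f/u^2 + (1-f)/(1-u)^2$. So
\[
   \alpha'(f)
 = \frac{ \int_\alpha^f \dfrac{\d u}{u(1-u)} }
        { \int_\alpha^f \Bigl( \dfrac f {u^2} + \dfrac{1-f}{(1-u)^2} \Bigr) \d u } \up,
\]
and on the interval $u \in [\alpha, f] \subset [\alpha, 1/2]$ I would compare the two integrands pointwise. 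The ratio of integrands at a fixed $u$ is $\dfrac{1/(u(1-u))}{f/u^2 + (1-f)/(1-u)^2}$; multiplying through one checks this is $< 1$ (it reduces to $u(1-u) < f(1-u)^2/ \cdots$, i.e. to a convexity inequality of the same flavour as $\psi_f$ being minimized at $f$) and is $> \eta^2$ using~\eqref{AdHoc} applied with $\xi = u \in (\alpha, f]$, which gives $\alpha(1-\alpha)/(u(1-u)) \ge \eta^2$ hence $u(1-u) \le \eta^{-2}\alpha(1-\alpha) \le \eta^{-2} f u$... I would push this elementary pointwise comparison through carefully; since the bound holds integrand-by-integrand it passes to the ratio of integrals, giving $\eta^2 < \alpha'(f) < 1$.

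\textbf{Main obstacle.} The only real work is the pointwise comparison of the two integrands on $[\alpha(f), f]$, i.e. showing $\eta^2 \le \dfrac{u^2(1-u)^2 \phi''_\alpha(u)}{\cdots}$ style inequalities hold uniformly for $u$ between $\alpha(f)$ and $f$ and for all $f \in [f_*, f^*]$. The upper bound $\alpha'(f) < 1$ is the easy half (it is essentially the statement that $\phi''_\alpha(u) \le (\ln\psi_f)''(u)$ on the relevant range, which follows from $f \ge u \ge \alpha$ by an elementary manipulation). The lower bound $\alpha'(f) > \eta^2$ is where I would need to invoke~\eqref{AdHoc} with $\xi$ running over $(\alpha(f), f]$ — that lemma gives exactly $\alpha(1-\alpha)/\bigl(\xi(1-\xi)\bigr) \ge \eta^2$, which after clearing denominators in the integrand ratio yields the claim. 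Everything else — monotonicity, positivity of both one-sided derivatives, the implicit-function computation — is routine and already prepared by the lemmas on $\phi_\alpha$ and $\psi_f$ immediately preceding.
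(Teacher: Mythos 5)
Your overall plan — implicit differentiation, write numerator and denominator as $\int_\alpha^f \phi''_\alpha$ and $\int_\alpha^f (\ln\psi_f)''$, and compare integrands — is a valid alternative to the paper's argument, and your upper bound (which reduces to $(f-u)(1-2u) \ge 0$ on $[\alpha, f] \subset [0, 1/2]$) is fine. But there are two concrete problems. First, a sign: $\partial_\alpha \phi_\alpha(f) = -f/\alpha + (1-f)/(1-\alpha)$ \emph{is} $(\ln\psi_f)'(\alpha)$, not its negative, so the intermediate formula $\alpha'(f) = \phi'_\alpha(f)/(\ln\psi_f)'(\alpha)$ is off by a sign (with the true signs the numerator is positive and the denominator is negative, since $(\ln\psi_f)'$ is increasing and vanishes at $f > \alpha$; your claim that $(\ln\psi_f)'(\alpha(f)) > 0$ is wrong). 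The ratio-of-integrals form you then write happens to recover the right answer because $\int_\alpha^f (\ln\psi_f)'' = -(\ln\psi_f)'(\alpha) > 0$, but the intermediate assertion should be fixed.

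The second issue is the lower bound, and this one is a genuine gap. The integrand ratio is
$
\phi''_\alpha(u) / (\ln\psi_f)''(u) = g(u) / \bigl( g(u) + (f-u)(1-2u) \bigr)
$
with $g(u) = u(1-u)$, whereas~\eqref{AdHoc} controls a different ratio, $\alpha(1-\alpha)/\bigl(\xi(1-\xi)\bigr)$. The conclusion you extract from~\eqref{AdHoc}, namely $u(1-u) \le \eta^{-2}\alpha(1-\alpha)$, is an \emph{upper} bound on $g(u)$ and is useless for lower-bounding $g(u)/D(u)$. What does work is to bound $D(u) - g(u) = (f-u)(1-2u)$ directly: on $[\alpha, f]$ one has $(f-u)(1-2u) \le \varepsilon (1-2\alpha)$ (by~\eqref{FmoinsA}), then $\varepsilon(1-2\alpha) \le (\varepsilon / (f_*\eta^2)) \, \alpha(1-2\alpha)$ because $\alpha \ge f - \varepsilon \ge f_* - \varepsilon = f_*\eta^2$, and finally $\alpha(1-2\alpha) \le \alpha(1-\alpha) = g(\alpha) \le g(u)$. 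Combined with $1 - \eta^2 = \varepsilon/f_*$, this yields $D(u) \le \eta^{-2} g(u)$ pointwise, which is the missing ingredient. The paper bypasses all of this by applying the mean value theorem to $\phi'_\alpha$: since $\phi'_\alpha(\alpha) = 0$, there is a single $\xi \in (\alpha, f)$ with $\phi'_\alpha(f) = \phi''_\alpha(\xi)(f-\alpha)$, and the implicit equation immediately gives $\alpha'(f) = \alpha(1-\alpha)/\bigl(\xi(1-\xi)\bigr)$, a form to which~\eqref{AdHoc} applies verbatim. Your approach can be completed, but the input is~\eqref{FmoinsA} and the inequality above, not~\eqref{AdHoc}.
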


\begin{proof}
We express the derivative $\alpha'(f)$ by differentiating with respect to
$f$ the equality $\phi_{\alpha(f)} (f) = (\ln V) / n$. Writing 
$\phi_\alpha$ for $\phi_{\alpha(f)}$, we obtain
\[
   \phi'_\alpha(f)
    + \Bigl( \frac \partial {\partial \alpha} \ms2 
       \phi_\alpha (f) \Bigr) \alpha'(f)
 = \phi'_\alpha(f)
    - \frac {f - \alpha} {\alpha (1 - \alpha)} \ms3 \alpha'(f) = 0.
\]
By Taylor--Lagrange at $\alpha$ for $s \mapsto \phi'_\alpha(s)$, there is 
$\xi \in (\alpha, f)$ such that
\[
   \phi''_\alpha(\xi) (f - \alpha)
 = \phi'_\alpha(f)
 = \frac {f - \alpha} {\alpha (1 - \alpha)} \ms3 \alpha'(f),
 \ms{16} \hbox{hence} \ms{12}
   \alpha'(f)
 = \frac { \alpha (1 - \alpha) }
         { \xi (1 - \xi) }
 > 0
\]
because $\phi''_\alpha(\xi) = \sigma_\xi^{-2}$. We have that 
$\alpha < \xi < f \le \alpha + \varepsilon$ by~\eqref{FmoinsA}, and when 
we further assume $f_* \le f \le f^* \le 1/2$ the conclusion follows
by~\eqref{AdHoc}.
\end{proof}
\dumou

 We need to study the intersections $F_{m_1} \cap F_{m_2}$, when
$m_1, m_2 \in [f_* n, f^* n]$.

\begin{lem}\label{Intersect}
Suppose that $f_* \ms1 n \le m_1 < m_2 \le f^* \ms1 n$. One has that
\[
   \e^{ - 1 / (6 \ms1 n) }
    P \bigl( F_{m_1} \cap F_{m_2} \bigr) / P ( F_{m_1} )
 < \lambda \e^{ - \delta^2 \varepsilon^2 (m_2 - m_1) / 2 }
     / \sqrt{ 2 \pi (m_2 - m_1) },
\]
with $\delta = \eta^3 \ms1 s_* / (1 - f_*)$ and 
$\lambda = \sqrt{ 1 - f_* } / \sqrt { 1 - f^* }$.
\end{lem}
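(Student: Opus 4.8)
\textbf{Proof plan for Lemma~\ref{Intersect}.} The ratio
$P(F_{m_1}\cap F_{m_2})/P(F_{m_1})$ is a conditional probability: given that
$N_{n,\alpha(f_1)}(x)=m_1$ (with $f_1=m_1/n$), what is the probability that also
$N_{n,\alpha(f_2)}(x)=m_2$ (with $f_2=m_2/n$)? The plan is to compute this conditionally, coordinate by coordinate. Write $\alpha_1=\alpha(f_1)$, $\alpha_2=\alpha(f_2)$. Since Lemma~\ref{AlphaPrime} gives $\alpha'(f)>0$, we have $\alpha_1<\alpha_2$, hence $C_{\alpha_1}\subset C_{\alpha_2}$ (the sets in~\eqref{CalphaSet} are increasing in $\alpha$). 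Thus for each coordinate $x_i$, the event $\{x_i\in C_{\alpha_2}\}$ splits into the already-counted event $\{x_i\in C_{\alpha_1}\}$ and the \og new\fge event $\{x_i\in C_{\alpha_2}\setminus C_{\alpha_1}\}$. Conditioning on the $m_1$ coordinates lying in $C_{\alpha_1}$ and the $n-m_1$ coordinates lying outside, the event $F_{m_2}$ requires that exactly $m_2-m_1$ of those $n-m_1$ outside-coordinates actually fall in $C_{\alpha_2}\setminus C_{\alpha_1}$; this has conditional probability a binomial point mass $\binom{n-m_1}{m_2-m_1}\,p^{m_2-m_1}(1-p)^{n-m_2}$, where $p=P_1(C_{\alpha_2}\setminus C_{\alpha_1}\mid \Omega_1\setminus C_{\alpha_1})=(\alpha_2-\alpha_1)/(1-\alpha_1)$.

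First I would write this binomial mass explicitly and bound it. The maximum of the binomial $\mathrm{Bin}(n-m_1,p)$ at the point $m_2-m_1$ is controlled by its value compared with the mean $(n-m_1)p=(n-m_1)(\alpha_2-\alpha_1)/(1-\alpha_1)$; since $\alpha_2-\alpha_1 = \int_{f_1}^{f_2}\alpha'(f)\,df < f_2-f_1$ by Lemma~\ref{AlphaPrime}, the point $m_2-m_1=n(f_2-f_1)$ sits to the right of the mean, so the binomial is decreasing there and a Chernoff/Stirling bound applies. The cleanest route is to again invoke Stirling in the Robbins form (as in~\eqref{Stirl}) on $\binom{n-m_1}{m_2-m_1}$ and combine with the $p$-powers to get a bound of the shape
\[
 \e^{-1/(6n)}\,\frac{P(F_{m_1}\cap F_{m_2})}{P(F_{m_1})}
 \le \frac{\lambda}{\sqrt{2\pi(m_2-m_1)}}\;
      \exp\!\Bigl(-(n-m_1)\,\phi_p\bigl(\tfrac{m_2-m_1}{n-m_1}\bigr)\Bigr),
\]
where $\phi_p$ is the relative-entropy function of~\eqref{DefPhiAlpha} with reference point $p$, the prefactor $\lambda=\sqrt{1-f_*}/\sqrt{1-f^*}$ absorbing $\sqrt{(n-m_1)/((n-m_2))}$-type corrections, and the $\e^{-1/(6n)}$ collecting the Stirling error terms (three factorials, each contributing an error $\le \e^{\pm1/(12\cdot\text{something}\cdot n)}$, bounded crudely by $\e^{1/(6n)}$ since $f_*\le f\le f^*$).

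The decisive step is then the lower bound on the exponent. Using $\ln\psi_f(s)\ge (s-f)^2/2$ from~\eqref{PsiFb}–\eqref{MinoGf}, i.e.\ the uniform convexity estimate $\phi''_s\ge 1$, I would show $(n-m_1)\phi_p\bigl(\tfrac{m_2-m_1}{n-m_1}\bigr)\ge \tfrac12(n-m_1)\bigl(\tfrac{m_2-m_1}{n-m_1}-p\bigr)^2 = \tfrac{(m_2-m_1-(n-m_1)p)^2}{2(n-m_1)}$. Now $m_2-m_1-(n-m_1)p = n\bigl((f_2-f_1)-(\alpha_2-\alpha_1)\bigr) = n\int_{f_1}^{f_2}(1-\alpha'(f))\,df$, which by Lemma~\ref{AlphaPrime} lies between $n(1-1)\cdots$ — no: the useful direction is $1-\alpha'(f)>1-1=0$ and more sharply $1-\alpha'(f)> 0$; I need a \emph{lower} bound, so instead I use $\alpha'(f)<1-\eta^2\cdot(\text{correction})$ — the clean inequality available is $\alpha'(f)<1$, giving $m_2-m_1-(n-m_1)p>0$, and for the quantitative gap I combine Lemma~\ref{AlphaPrime}'s $\alpha'(f)<1$ with the more refined bound $1-\alpha'(f)=1-\frac{\alpha(1-\alpha)}{\xi(1-\xi)}\ge$ a multiple of $\varepsilon$ coming from $f-\alpha\le\varepsilon$ and $\alpha<\xi<f$; together with $n-m_1\le(1-f_*)n$ this yields $\exp(-\delta^2\varepsilon^2(m_2-m_1)/2)$ with $\delta=\eta^3 s_*/(1-f_*)$. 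The main obstacle I anticipate is pinning down this constant $\delta$ precisely: one must track how the factor $\eta^3$ and $s_*=\sqrt{f_*(1-f_*)}$ emerge from (i) the convexity lower bound $\phi''\ge 1$ degrading to $\phi''_p\ge \min\phi''_p$ on the relevant interval, (ii) the lower bound $1-\alpha'(f)\gtrsim \eta^2\cdot(\text{something})$, and (iii) the normalization converting $(m_2-m_1)/n$ back to $f_2-f_1$, each costing a factor whose product is exactly $\eta^3 s_*/(1-f_*)$; this bookkeeping, together with checking the $\lambda$ prefactor, is where care is needed, but it is routine once the conditional-binomial structure above is set up.
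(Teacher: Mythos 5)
Your structural setup — conditioning on $F_{m_1}$, identifying the conditional law of the remaining $n-m_1$ coordinates as $\mathrm{Bin}\bigl(n-m_1, (\alpha_2-\alpha_1)/(1-\alpha_1)\bigr)$, applying Stirling in the Robbins form, and seeking a quadratic lower bound on the relative-entropy exponent — is exactly the paper's approach. But there is a genuine gap in the decisive step, and it is not the bookkeeping of $\eta^3 s_*/(1-f_*)$ that you flag as the worry.

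You explicitly invoke the \emph{global} convexity bound $(\ln\psi_f)''\ge 1$ (the ``$>f+(1-f)=1$'' part of \eqref{PsiFb}), which yields an exponent of the shape
\[
 (n-m_1)\,\phi_p\Bigl(\tfrac{m_2-m_1}{n-m_1}\Bigr)
 \ge \frac{\bigl(m_2-m_1-(n-m_1)p\bigr)^2}{2(n-m_1)}\,.
\]
Tracking the constants as you propose (via $f_1-\alpha_1\gtrsim\eta s_*\varepsilon$ and $(1-\alpha_1)^{-1}\gtrsim\eta^2/(1-f_*)$) gives $m_2-m_1-(n-m_1)p\gtrsim\delta\varepsilon\,(m_2-m_1)$, and hence an exponent of order $\delta^2\varepsilon^2(m_2-m_1)^2/(2(n-m_1))$ — that is, $\tfrac12\delta^2\varepsilon^2(m_2-m_1)\cdot f_\cd$ where $f_\cd=(m_2-m_1)/(n-m_1)$. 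This is smaller than the target $\tfrac12\delta^2\varepsilon^2(m_2-m_1)$ by the factor $f_\cd$, which tends to $0$ when $m_2-m_1\ll n$; the claimed bound is therefore not recovered, and the downstream summation over the spacing $H\sim n^{1/4}$ would fail.

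The paper avoids this by using the \emph{local} estimate from \eqref{PsiFb}: for $\xi\in(\alpha_\cd,f_\cd)$ one has $(\ln\psi_{f_\cd})''(\xi)=f_\cd/\xi^2+(1-f_\cd)/(1-\xi)^2>f_\cd/\xi^2>1/f_\cd$, because $\xi<f_\cd$. The extra factor $1/f_\cd$ is precisely what converts $n_\cd(f_\cd-\alpha_\cd)^2$ into $\tfrac{n_\cd}{f_\cd}(f_\cd-\alpha_\cd)^2=\delta^2\varepsilon^2(m_2-m_1)$. So the missing idea is not a sharper bound on $1-\alpha'(f)$ (indeed $\alpha'(f)<1$ alone is used in the paper, and the quantitative gain comes from the $f_1-\alpha_1$ term, not from $\alpha_2-\alpha_1$ vs.\ $f_2-f_1$ directly), but the refinement of the convexity estimate to $\phi''>1/f_\cd$ on the relevant interval.
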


\begin{proof}
Let $f_j = m_j / n$, $f_* \le f_j \le f^*$, and $\alpha_j = \alpha(f_j)$,
for $j = 1, 2$. By Lemma~\ref{AlphaPrime}, we have that 
$\alpha_1 < \alpha_2$ since $f_1 < f_2$.
\def\cd{{ \ns2 {}_A}}
Let $J$ be an arbitrary subset of $\{1, \ldots, n\}$ satisfying $|J| = m_1$,
and let $A(J)$ be the subset of~$\Omega = S_N^n$ defined by
\[
   A
 = A(J)
 = \bigl\{
    x = (x_1, \ldots, x_n) \in \Omega :  
     J = \{ i : x_i \in C_{\alpha_1} \}
   \bigr\}.
\]
One has thus $N_{n, \alpha_1}(x) = m_1$ when $x \in A$. The conditional
probability $p_A$ that $N_{n, \alpha_2}(x) = m_2$ knowing that $x \in A$
is equal to the probability that $m_\cd := m_2 - m_1$ of the remaining 
$n_\cd := n - m_1 = (1 - f_1) n \ge n/2$ coordinates of $x$ (those
coordinates that are in $\Omega_1 \setminus C_{\alpha_1}$) fall in 
$C_{\alpha_2} \setminus C_{\alpha_1}$. This is given by the binomial
distribution corresponding to $n_\cd$ and to
$\alpha_\cd = (\alpha_2 - \alpha_1) / (1 - \alpha_1)$, and we know therefore
that
\[
    p_\cd
 := \frac { P \bigl( \{ N_{n, \alpha_2} = m_2 \} \cap A \bigr) }
          { P(A) }
  = P \bigl( \{ N_{n_\cd, \alpha_\cd} = m_\cd \} \bigr)
  =  \alpha_\cd^{m_\cd} (1 - \alpha_\cd)^{n_\cd - m_\cd} 
      \binom {n_\cd} {m_\cd}.
\]
Let $f_\cd = m_\cd / n_\cd = (f_2 - f_1) / (1 - f_1)$.
Since $\alpha'(f) < 1$ on $[f_*, f^*]$, we get
\begin{align*}
    f_\cd 
 &= \frac {f_2 - f_1} {1 - f_1}
  = \frac {f_2 - f_1} {1 - \alpha_1} \ms2 
     \Bigl( 1 + \frac {f_1 - \alpha_1} {1 - f_1} \Bigr)
 \\
 &> \frac {\alpha_2 - \alpha_1} {1 - \alpha_1}
     + \frac { (f_1 - \alpha_1) (f_2 - f_1) } 
               { (1 - \alpha_1) (1 - f_1) }
  = \alpha_\cd
     + \frac { f_1 - \alpha_1 } 
             { (1 - \alpha_1) (1 - f_1) }
        \ms2 (f_2 - f_1).
\end{align*}
Let $f_1 - \alpha_1 = \sigma_{\alpha_1} \tau_1$. We have
$\tau_1 \ge \varepsilon$ by~\eqref{NewEncadre}, 
$\sigma_{f_1}
 > \sigma_{\alpha_1} > \eta \ms1 \sigma_{f_1} \ge \eta \ms1 s_*$
by~\eqref{FmoinsA} and~\eqref{AdHoc}, and $f_* \le f_1 \le 1/2$. By the
leftmost inequality in~\eqref{AdHoc}, we obtain
\[
     \frac 1 { 1- \alpha_1 }
   > \frac { \alpha_1 } { f_1 (1 - f_1) }
 \ge \frac { \eta^2 } { 1 - f_1 }
 \ge \frac { \eta^2 } { 1 - f_* } \up,
\] 
therefore
\begin{equation}
   f_\cd - \alpha_\cd
 > \frac { \eta^3 s_* \varepsilon } 
         { (1 - f_*)(1 - f_1) } \ms2 (f_2 - f_1)
 = \frac { \delta \ms1 \varepsilon } 
         { 1 - f_1 } \ms2 (f_2 - f_1).
 \label{StepB}
\end{equation}
Recalling the function $\psi_f$ from~\eqref{PsiF}, we see that
\[
   p_\cd
 = \psi_{f_\cd}(\alpha_\cd)^{- n_\cd}
      f_\cd^{m_\cd} (1 - f_\cd)^{n_\cd - m_\cd} 
       \binom {n_\cd} {m_\cd}.
\]
Applying Stirling as before in~\eqref{Stirl}, and
because we have that 
$n_\cd / (n_\cd - m_\cd) = (1 - f_1) / (1 - f_2)
 \le (1 - f_*) / (1 - f^*)$, we obtain
\[
     \e^{ - 1 / (12 \ms1 n_\cd) } \ms2 p_\cd
  <  \psi_{f_\cd}(\alpha_\cd)^{- n_\cd}
       \sqrt{ \frac {n_\cd} { 2 \pi m\cd (n_\cd - m_\cd) } }
 \le \psi_{f_\cd}(\alpha_\cd)^{- n_\cd}
        \sqrt{ \frac { 1 - f_* } { 2 \pi (1 - f^*) m\cd } } \up.
\]
For some $\xi \in (\alpha_\cd, f_\cd)$, and
since $(\ln \psi_{f_\cd})'' (\xi) > f_\cd / \xi^2 > 1 / f_\cd$
by~\eqref{PsiFb}, we get
\[
   \ln \psi_{f_\cd}(\alpha_\cd)
 = (\ln \psi_{f_\cd})'' (\xi) \ms2
    \frac { (f_\cd - \alpha_\cd)^2 \ns7 } 2
 > \frac { (f_A - \alpha_A)^2 \ns7 } { 2 \ms1 f_A } \ms{0.5} \up.
\]
Consequently, we can write
\[
     p_\cd
  <  \e^{ 1 / (12 \ms1 n_\cd) } \ms3
      \exp \Bigl( - n_\cd \frac {(f_\cd - \alpha_\cd)^2} {2 f_\cd} \Bigr)
       \frac \lambda { \sqrt{ 2 \pi m\cd } } \up,
 \ms{16} \hbox{with} \ms{12}
   \lambda 
 = \frac { \sqrt{ 1 - f_* } }
         { \sqrt { 1 - f^* } } \up.
\]
We see that $n_\cd / f_\cd = n_\cd^2 / (m_2 - m_1)$.
By~\eqref{StepB} we have
\[
     \frac {n_\cd} {f_\cd} \ms4 (f_\cd - \alpha_\cd)^2
 \ge \frac { n^2 (1 - f_1)^2  }
           { m_2 - m_1 } \ms4
      \frac { \delta^2 \ms1
              \varepsilon^2 (f_2 - f_1)^2 }
            { (1 - f_1)^2 }
  =  \delta^2 \ms1 \varepsilon^2 (m_2 - m_1).
\]
Using also $n < 2 n_\cd$ and the definition of $p_\cd$, we obtain for 
$A = A(J)$ that
\[
   P \bigl( A(J) \cap \{ N_{n, \alpha_2} = m_2 \} \bigr)
 < \Bigl(
    \e^{ 1 / (6 \ms1 n) } \ns1 
     \lambda \e^{ - \delta^2 \varepsilon^2 (m_2 - m_1) / 2}
       / \sqrt{ 2 \pi (m_2 - m_1) } 
   \Bigr) \ms2 
     P( A(J) ).
\] 
Summing on all subsets $J$ of $\{1, \ldots, n\}$ with $|J| = m_1$, 
and because $\bigcup_{|J| = m_1} A(J)$ is equal to
$\{ N_{n, \alpha_1} = m_1 \} = F_{m_1}$, we get
\[
      P \bigl( F_{m_1} \cap F_{m_2} \bigr)
  <  \Bigl( \e^{ 1 / (6 \ms1 n) } \ns1
      \lambda \e^{ - \delta^2 \varepsilon^2 (m_2 - m_1) / 2 }
       / \sqrt{ 2 \pi (m_2 - m_1) } \Bigr) \ms2 P ( F_{m_1} ).
 \qedhere
\]
\end{proof}

\begin{proof}[End of proof of Theorem~\ref{PropIS}]
Let $H$ be a sufficiently large integer, and let us now define
$M = \{ j H : j \in \N \} \cap [f_* \ms1 n, f^* \ms1 n]$ to be the set of
multiples of $H$ located in the segment $[f_* \ms1 n, f^* \ms1 n]$. We fix
$m_1 \in M$ and let $m_2 > m_1$ be any other element of~$M$. Then 
$m_2 - m_1 = k H$ with $k$ integer $\ge 1$. Summing on $k \ge 1$ we see that
\[
   \sum_{k=1}^{+\infty}
     \frac { \e^{ - \delta^2 \varepsilon^2 k H / 2 } }
           { \sqrt{ k H } }
 < \int_0^{+\infty} \e^{ - \delta^2 \varepsilon^2 H s / 2 } 
    \frac {\d s} {\sqrt { H s } }
 = \frac { \sqrt 2 \ms2 \Gamma(1/2) } 
         { \varepsilon H \delta } 
 = \frac { \sqrt { 2 \pi } } { \varepsilon H \delta } \up.
\]
By Lemma~\ref{Intersect},
we get
$
   \sum_{m_2 \in M, \ms2 m_2 > m_1} 
    P \bigl( F_{m_1} \cap F_{m_2} \bigr)
 < P(F_{m_1}) / 2
$ when $\varepsilon H$ is larger than 
$2 \ms1 \lambda \e^{1 / (6n) } / \delta$. It follows then that at least one
half of the set $F_{m_1}$ is not covered by the other sets $F_{m_2}$ for
$m_2 > m_1$ and $m_2 \in M$, therefore
$P(\bigcup_{m \in M, \ms2 m \ge m_1} F_m)
 \ge P(\bigcup_{m \in M, \ms2 m > m_1} F_m) + P(F_{m_1}) / 2$ for
$m_1 \in M$. The probability of $\bigcup_{m \in M} F_m$ is thus at least
equal to half of the sum of probabilities. By~\eqref{StirlA} 
and~\eqref{ReNum} we get
\begin{equation}
     \kappa_{Q, n}
 \ge \e^{-c} V \ms1
      P \bigl( \bigcup_{m \in M} F_m \bigr)
 \ge \frac { \e^{-c} \ns7 } 2 \ms4
      \sum_{m \in M} V \ms1 P (  F_m )
 \ge  \frac { \e^{-c} \ns7 } 2 \ms7
       \frac { \e^{ - 1 / (12 s_*^2 n) } } { s^* \sqrt { 2 \pi } } \ms5
        \frac {|M|} {\sqrt n} \up.
 \label{ConcluB}
\end{equation}
\dumou

 So far we could hope for a lower bound of order $\sqrt n$ for the weak type
constant. But we have to comply with the allowability restriction, and we
must estimate the number of couples $(f, \alpha(f))$ that are 
$c \ms1$-allowable. We let
\[
   \varepsilon 
 = \frac { s_* n^{-1/4}} {1 +  s_* n^{-1/4} / f_*} \up,
 \ms{16} \hbox{so that} \ms{12}
   \frac \varepsilon { \eta^2 \ns4 } \ms2 
 = \frac \varepsilon { 1 - \varepsilon / f_*} 
 = s_* n^{-1/4}
\]
and $\varepsilon < f_*$. We choose a spacing $H \sim n^{1/4}$. For every 
$m \in M$, for $f = m/n$, $\alpha = \alpha(f)$ and 
$f = \alpha + \sigma_\alpha \tau$ we have by~\eqref{FirstEsti},
\eqref{AdHoc} and~\eqref{NewEncadre} that
\[
     \ovy
  =  \frac \tau {\sigma_\alpha}
 \le \frac \varepsilon \eta \ms3
      \frac 1 { \eta \ms1 \sigma_f }
 \le \frac \varepsilon { \eta^2 s_* }
  =  n^{-1/4}.
\]
For $n > 256$ we see that $\ovy < 1/4$ and $\ovy^4 < 1 / n$, thus
$(f, \alpha(f))$ is allowable with constant $c = 1$ according to
Lemma~\ref{YandAllow}. We choose the spacing integer $H$ such that
$H > 2 \ms1 \lambda \e^{1 / (6n) } / (\delta \varepsilon)$. Since
$\varepsilon = \eta^2 s_* n^{-1/4}$, we arrive to the condition
\[
   H 
 > (2 \ms1 \lambda / \delta \eta^2 s_*) \ms3 
    \e^{1 / (6n) } \ms1 n^{1/4}.
\]
We obtain a set $M \subset [f_* \ms1 n, f^* \ms1 n]$ of multiples of
$H$ with cardinality at least equal to
$ \lfloor (f^* \ms1 n - f_* \ms1 n) / H \rfloor
 > \bigl[ \eta^2 \delta s_* (f^* - f_*)
 / (2 \lambda) \bigr] \ms2 \e^{ - 1 / (6n) }n^{3/4} - 1$ where each 
element~$m$ produces a $1$-allowable couple $(f, \alpha(f))$.
By~\eqref{ConcluB}, we get that
\[
     \kappa_{Q, n}
 \ge \frac 1 { 2 \ms1 \e } \ms 2 
      \frac { \e^{ - 1 / (12 \ms1 s_*^2 n) } } 
            { s^* \sqrt { 2 \pi } } \ms4
       \frac { |M| } { \sqrt n }
 \ge \frac { \eta^2 \delta s_* (f^* - f_*) } 
           { 4 \ms1 \e \sqrt { 2 \pi } \lambda s^* }
       \ms4 n^{1/4}
        - O(n^{-1/2}).
\] 
Our version of the Iakovlev--Str\"omberg proof is not optimal, we shall
however try to figure out a numerical value for the constant that we get in
front of $n^{1/4}$. We have for $n$ large that $\varepsilon = o(1)$, thus
$\eta \simeq 1$. Let us introduce
\[
    z
 := \frac { \delta s_* (f^* - f_*) } 
          { \eta^3 \ms1 \lambda s^* }
  = \frac { s_*^2 } { 1 - f_* } \ms3 
    \frac { f^* - f_* } 
          { \sqrt { f^* (1 - f_*) } }
  = \frac { f_* (f^* - f_*) } 
          { \sqrt { f^* (1 - f_*) } } \up.
\]
This expression increases with $f^*$, so we set $f^* = 1/2$, the maximal
possibility. Then, the resulting value of $z$ is maximal for
$f_* = 3/4 - \sqrt { 11 / 48 } \sim 0.271$, yielding $z > 0.102$.
When $n$ is large, we have
\[
   \kappa_{Q, n} 
 > \frac z { 4 \e \sqrt {2 \pi} } \ms4 n^{1/4} - o(n^{1/4})
 > 0.0037 \ms3 n^{1/4}
 > \frac { n^{1/4} \ns{10} } { 271 } \ms1 \up.
\]
Notice that we have set the constant value $V$ as
$V = V_n \sim \e^{\kappa \sqrt n}$ in dimension~$n$. The corresponding
sequence of values $t_n = \sqrt { 2 \ln V_n} \sim n^{1/4}$ for the 
\og test sets\fge $\{ X_{n, \alpha} > t_n \}$ is \og invisible\fge to the
Gaussian limit argument of Theorem~\ref{Qualitat}.
\dumou

\end{proof}

\newbox\yearbox
\newbox\pagebox
\newbox\volubox
\newbox\virgule
\def\VPY#1#2#3#4{%
\setbox\volubox=\hbox{#1, }%
\setbox\pagebox=\hbox{p.~#2--#3 }%
\setbox\yearbox=\hbox{(#4)}%
\setbox\virgule=\hbox{,}
}
\def\Year#1{\setbox\yearbox=\hbox{ (#1)}}
\def\Pages#1#2{\setbox\pagebox=\hbox{p.~#1--#2}}
\def\Volu#1{\setbox\volubox=\hbox{#1\box\virgule}}
\def\showyear{\box\yearbox}
\def\showpage{\box\pagebox}
\def\showvolu{\box\volubox}
\def\RefSA#1#2#3#4{
{\sc #2~(#1)}.~--- #3. #4\showvolu\showpage\showyear.}
\def\EndRef#1#2#3 \fin{%
.~--- #1. #2\showvolu\showpage\showyear.}
%
\def\Authors#1#2#3 \fin{%
{\sc #2~(#1), }\RefMA#3 \fin}
\def\PreLastAuthor#1#2#3 \fin{%
{\sc #2~(#1) }\RefMA#3 \fin}
\def\LastAuthor#1#2#3 \fin{%
{\sc and #2~(#1)}\EndRef#3 \fin}
\def\RefMA#1#2 \fin{
\ifnum#1=1\LastAuthor#2 \fin\else
\ifnum#1=2\PreLastAuthor#2 \fin\else
 \Authors#2 \fin\fi\fi}

\def\Indx#1#2
{\hbox to \hsize
 {\noindent\hbox{#1} \leaders \hbox to 0.8em{\hss.\hss}\hfill #2
 }
\vskip 0.2pt plus 0.2pt minus 0.2pt}
\def\IndxB#1
{\hbox to \hsize
 {\noindent\hbox{#1}\hfill
 }
\vskip 0.2pt plus 0.2pt minus 0.2pt}
\bigskip
\goodbreak
\def\InSect{}
\def\AtEqua{eq.~}
\def\InTheo{Th.~}
\def\InProp{Prop.~}
\def\InLemm{Lem.~}
\def\InCoro{Cor.~}
\def\InRema{Rem.~}
\def\InIntr{Intro.}
\def\after{af.~}
\def\before{bf.~}
\def\AtEquaN#1{\AtEqua \eqref{#1}, p.~\pageref{#1}}
\def\InTheoN#1{\InTheo \ref{#1}, p.~\pageref{#1}}
\def\InPropN#1{\InProp \ref{#1}, p.~\pageref{#1}}
\def\InLemmN#1{\InLemm \ref{#1}, p.~\pageref{#1}}
\def\InRemaN#1{\InRema \ref{#1}, p.~\pageref{#1}}

\noindent{\bf Index}\label{Index}
\smallskip
 
{
\smaller
\smaller
\smaller

\noindent 
In the index and notation list, 1, 1.2, 1.2.3 refer respectively to
Section~1, subsection~1.1 or subsubsection~1.2.3. We also localize by
equation number, or statement number, as in eq.~(1.1), Prop.~1.2, or
proximity (before, after) to one of those, as in 
{\before \InLemm \ref{TLLb}} for example.
\medskip

\Indx{admissible growth}
     {\before \InLemm \ref{TLLb}, p.~\pageref{AdmiGro}}
\Indx{allowable couple}
     {\before \InLemm \ref{YandAllow}, p.~\pageref{AlloCou}}
\Indx{analytic semi-group}
     {\InPropN{PisierSG}}
\Indx{atom (of a $\sigma$-field)}
     {\InSect \ref{Gdfi}, p.~\pageref{Atom}}
\Indx{barycenter (of a probability measure on $\R^n$)}
     {\InSect \ref{GaussiDist}, p.~\pageref{Baryce}}
\Indx{Bernoulli martingale}
     {\InSect \ref{BGIneqs}, p.~\pageref{BernouMart}}
\Indx{Bernoulli random variables}
     {\before \AtEqua \InSect \eqref{GSG}, p.~\pageref{BernouVaris}}
\Indx{Bernoulli random walk}
     {\InSect \ref{RotaRem}, p.~\pageref{BernouWal}}
\Indx{Bessel function}
     {\AtEquaN{Bess}}
\Indx{Brownian motion}
     {\InSect \ref{GaussiDist}, p.~\pageref{BrowniMot}}
\Indx{Brownian martingale}
     {p.~\pageref{BrowniMarting}}
\Indx{Brunn--Minkowski inequality}
     {\InSect \ref{VoluSections}, p.~\pageref{BrunnMin}}
\Indx{Burkholder--Gundy inequalities}
     {\InSect \ref{BGIneqs}, p.~\pageref{BGIneqs}}
\Indx{Cauchy kernel}
     {\AtEquaN{CauchyK}}
\Indx{centered distribution}
     {\InSect \ref{GaussiDist}, p.~\pageref{CenteDist}}
\Indx{complex interpolation method}
     {\InSect \ref{Ihf}, p.~\pageref{Ihf}}
\Indx{conditional expectation}
     {\InSect \ref{Gdfi}, p.~\pageref{CondExpe}}
\IndxB{conjugate exponent (of $p \ge 1$), number $q \in [1, +\infty]$ such
       that $1/p + 1/q = 1$}
\Indx{convex body}
     {\InSect \ref{GaussiDist}, p.~\pageref{ConveBod}}
\Indx{covariance quadratic form, covariance matrix}
     {\InSect \ref{GaussiDist}, p.~\pageref{CovariMa}}
\Indx{decoupling}
     {\InSect \ref{Decoupling}, p.~\pageref{Decoupling}}
\Indx{dilates of a function}
     {\InSect \ref{FourierMult}, p.~\pageref{DilatOper}}
\Indx{directional variation $V(K)$ (of a kernel $K$ on $\R^n$)}
     {\AtEquaN{V(K)}}
\Indx{distribution of a random variable}
     {\InSect \ref{Gdfi}, p.~\pageref{Distribu}}
\Indx{Doob's inequality}
     {\InSect \ref{MaxiDoob}, p.~\pageref{MaxiDoob}}
\Indx{dyadic filtration}
     {\InSect \ref{BGIneqs}, p.~\pageref{DyadiFiltr}}
\Indx{dyadic maximal function}
     {\InSect \ref{ArtiCarbe}, p.~\pageref{DyMaFu}}
\Indx{essential supremum}
     {\InSect \ref{DefiMaxiFunc}, p.~\pageref{EssenSup}}
\Indx{Euler's formulas}
     {\InSect \ref{FoGamm}, p.~\pageref{EulerForm}}
\Indx{exit time}
     {\InSect \ref{PrincipeReflexion}, p.~\pageref{ExitTime}}
\Indx{expectation of a random variable}
     {\InSect \ref{Gdfi}, p.~\pageref{Expecte}}
\Indx{exponential decay (of log-concave probability densities)}
     {\InLemmN{LExpoDecay}}
\Indx{filtration}
     {\InSect \ref{MaxiDoob}, p.~\pageref{Filtra}}
\Indx{Fourier criteria for bounding the maximal function}
     {\InSect \ref{CritFou}, p.~\pageref{CritFou}}
\Indx{Fourier multipliers}
     {\InSect \ref{FourierMult}, p.~\pageref{FourierMult}}
\Indx{Fourier transform}
     {\InSect \ref{GdfiTwo}, p.~\pageref{FouTran}}
\Indx{fractional derivative}
     {\InSect \ref{FractiDeri}, p.~\pageref{FractiDeri}}
\Indx{fractional integration}
     {\InSect \ref{FractiDeri}, p.~\pageref{FractiIntA}}
\Indx{Gamma function}
     {\InSect \ref{FoGamm}, p.~\pageref{FoGamm}}
\Indx{Gaussian distributions}
     {\InSect \ref{GaussiDist}, p.~\pageref{Gaussiennes}}
\Indx{Gaussian semi-group}{\AtEquaN{maxiSemiGrGa}}
\Indx{Haar measure on $S^{n-1}$}
     {\InSect \ref{SphericOp}, p.~\pageref{HaarMeas}}
\Indx{harmonic conjugate}
     {\after \InCoro \ref{InStrip}, p.~\pageref{HarmonConj}}
\Indx{harmonic extension}
     {\InSect \ref{PoissonSG}, p.~\pageref{HarmonExt}}
\Indx{Hilbert transform}
     {\InSect \ref{TransfoRiesz}, p.~\pageref{HilbTrans}}
\Indx{holomorphic families of multipliers (M\"uller's ---)}
     {\InSect \ref{StrateMull}, p.~\pageref{HoloMull}}
\Indx{holomorphic families of operators}
     {\InSect \ref{Ihf}, p.~\pageref{Ihf}}
\Indx{homogeneous parts $H_k$ (in Pisier's analytic semi-group theorem)}
     {\InSect \ref{FirstReduc}, p.~\pageref{HomogPart}}
\Indx{Hopf maximal inequality (for semi-groups)}
     {\InSect \ref{MaxiHopf}, p.~\pageref{MaxiHopf}}
\Indx{independent random variables}
     {\InSect \ref{Gdfi}, p.~\pageref{IndRaVar}}
\Indx{indicator function}
     {\InSect \ref{Gdfi}, p.~\pageref{IndiFunc}}
\Indx{interpolation of linear operators}
     {\InSect \ref{Ihf}, p.~\pageref{Ihf}}
\Indx{invariance by dilation}
     {\AtEquaN{InvariMul}}
\Indx{invariant measure (for a Markov chain)}
     {\InSect \ref{RotaRem}, p.~\pageref{InvariMe}}
\Indx{inverse Fourier transform}
     {\InSect \ref{GdfiTwo}, p.~\pageref{InveFour}}
\Indx{isotropic position}
     {\InSect \ref{TheSetting}, p.~\pageref{IsoPosit}}
\Indx{isotropy constant}
     {\InSect \ref{TheSetting}, p.~\pageref{IsotCons}}
\Indx{Khinchin inequalities}
     {\AtEquaN{Hincin}}
\Indx{Laplace-type multipliers}
     {\InSect \ref{LaplaceMulTyp}, p.~\pageref{LaplaceMulTyp}}
\Indx{Lebesgue point $t_0$ of $f \in L^1_{\rm loc}(\R)$,
      where $t \rightarrow \int_0^t f(s) \, \d s$ is differentiable}
     {p.~\pageref{LebesPoint}}
\Indx{Littlewood--Paley decomposition}
     {\InSect \ref{Bsmo}, p.~\pageref{LiPaDec}}
\Indx{Littlewood--Paley functions}
     {\InSect \ref{LittlePal}, p.~\pageref{LittlePal}}
\Indx{log-concave function}
     {\InSect \ref{VoluSections}, p.~\pageref{LogConca}}
\Indx{marginal of a distribution}
     {\InSect \ref{Gdfi}, p.~\pageref{Margi}}
\Indx{Markov chain, Markov property}
     {\InSect \ref{RotaRem}, p.~\pageref{RotaRem}}
\Indx{martingale}
     {\InSect \ref{MaxiDoob}, p.~\pageref{MaxiDoob}}
\Indx{martingale difference sequence}
     {\InSect \ref{BGIneqs}, p.~\pageref{BGIneqs}}
\Indx{martingale transform}
     {\InSect \ref{BGIneqs}, p.~\pageref{MartiTrans}}
\Indx{mass (total mass) of a measure}
     {\InSect \ref{VoluSections}, p.~\pageref{TotaMass}}
\Indx{maximal function (classical Hardy--Littlewood ---)}
     {\AtEquaN{ClassicalM}}
\Indx{maximal function associated to a convex set}
     {\AtEquaN{OpMaxi}}
\Indx{maximal process (of a martingale)}
     {\InSect \ref{MaxiDoob}, p.~\pageref{MaxiProc}}
\Indx{method of rotations}
     {\InSect \ref{SphericOp}, p.~\pageref{RotaMeth}}
\Indx{multipliers (Fourier ---)}
     {\InSect \ref{FourierMult}, p.~\pageref{FourierMult}}
\Indx{multipliers associated to fractional derivatives}
     {\InSect \ref{MultipAssoc}, p.~\pageref{MultipAssoc}}
\Indx{normalization by variance}
     {\InSect \ref{TheSetting}, p.~\pageref{NormVari}}
\Indx{parameter (of a Poisson distribution)}
     {\InSect \ref{PoissonSG}, p.~\pageref{ParaPois}}
\Indx{Plancherel--Parseval equality}
     {\InSect \ref{GdfiTwo}, p.~\pageref{PlanPars}}
\Indx{Poisson kernel}
     {\AtEquaN{PoissonDensi}}
\Indx{Poisson semi-group}
     {\InSect \ref{PoissonSG}, p.~\pageref{PoissonSG}}
\Indx{positive operator}
     {\InSect \ref{Gdfi}, p.~\pageref{Positi}}
\Indx{predictable (sequence of random variables)}
     {\AtEquaN{Predict}}
\Indx{Pr\'ekopa--Leindler inequality}
     {\InSect \ref{VoluSections}, p.~\pageref{PrekoLei}}
\Indx{probability space, probability measure}
     {\InSect \ref{Gdfi}, p.~\pageref{ProbaSpa}}
\Indx{random variable}
     {\InSect \ref{Gdfi}, p.~\pageref{RandoVar}}
\Indx{reflection principle (for the Brownian motion)}
     {\InSect \ref{PrincipeReflexion}, p.~\pageref{PrincipeReflexion}}
\Indx{Riesz transforms}
     {\InSect \ref{TransfoRiesz}, p.~\pageref{TransfoRiesz}}
\Indx{right maximal function (of a function on $\R$)}
     {\AtEqua \eqref{RightMax}, p.~\pageref{FStar}}
\Indx{Schwartz class $\ca S(\R^n)$}
     {\InSect \ref{PoissonSG}, p.~\pageref{SchwaSpa}}
\Indx{selectors}
     {\InSect \ref{Decoupling}, p.~\pageref{Selektor}}
\Indx{semi-group}
     {\InSect \ref{MaxiHopf}, p.~\pageref{SemiGrou}}
\Indx{spherical maximal operator}
     {\InSect \ref{SphericOp}, p.~\pageref{SphericOp}}
\Indx{square function (martingale)}
     {\InSect \ref{BGIneqs}, p.~\pageref{SquaFun}}
\Indx{stability under translation (of a function on $\R$)}
     {\InSect \ref{SecondReduc}, p.~\pageref{StabiTransla}}
\Indx{stochastic integral}
     {\InSect \ref{BGIneqs}, p.~\pageref{StochInteg}}
\Indx{stopping time}
     {\InSect \ref{PrincipeReflexion}, p.~\pageref{StoppingT}}
\Indx{strong type inequality}
     {\InIntr, p.~\pageref{StrongType}}
\Indx{subordination principle}
     {\before \AtEqua \eqref{Subor}, p.~\pageref{SuboPrin}}
\Indx{Thorin's method (for interpolation)}
     {\InSect \ref{Ihf}, p.~\pageref{ByThor}}
\Indx{three lines lemma}
     {\InLemmN{TLL}}
\Indx{transition matrix (of a Markov chain)}
     {\InSect \ref{RotaRem}, p.~\pageref{RotaRem}}
\Indx{uncentered maximal function}
     {\InIntr, p.~\pageref{Uncent}}
\Indx{unconditionality (of martingale differences)}
     {\InRemaN{Incondi}}
\Indx{variance}
     {\InSect \ref{GaussiDist}, p.~\pageref{VarianceDef}}
\Indx{Vitali covering lemma}
     {\InIntr, p.~\pageref{ViCovLem}}
\Indx{weak type inequality}
     {\InIntr, p.~\pageref{StrongType}}
\Indx{$\sigma$-field}
     {\InSect \ref{Gdfi}, p.~\pageref{SigmaFie}}
\Indx{$\sigma$-finite measure}
     {\InRemaN{InfiniProba}}
\Indx{$\tau$-stable (--- function on $\R$)}
     {\InSect \ref{SecondReduc}, p.~\pageref{StabiTransla}}

}
\dumou
\medskip
\goodbreak

\def\Notat#1#2#3
{\hbox to \hsize
 {\noindent\hbox to 2cm{#1 \hfill}
  \hbox%
  {#2}  \leaders \hbox to 0.8em{\hss.\hss}\hfill #3
 }
\vskip 0.11pt plus 0.06pt minus 0.08pt}
\def\NotatB#1#2
{\hbox to \hsize
 {\noindent\hbox to 2cm{#1 \hfill}
  \hbox%
  {#2} \hfill
 }
\vskip 0.11pt plus 0.06pt minus 0.08pt}
\smallskip

\noindent{\bf Notation}\label{Notation}
\medskip

{
\smaller
\smaller
\smaller

\Notat{$\gr 1_A$}
 {indicator function of the set $A$}
 {\InSect \ref{Gdfi}, p.~\pageref{IndiFunc}}
\Notat{$A_p$, $B_p$}
 {constants in Khinchin's inequalities}
 {\AtEquaN{Hincin}}
\Notat{\eqref{A0}$\ldots$\eqref{A3}}
 {assumptions for Carbery's Proposition~\ref{PropoPrio}}
 {\before \InProp~\ref{PropoPrio}, p.~\pageref{A0}}
\Notat{$A_{\alpha, t}^{(n)}$}
 {set where the maximal function $\M_Q \mu_N^{(n)}$ is large}
 {\InLemmN{OneSet}}
\NotatB{$a \wedge b$, $a \vee b$}
 {minimum, maximum of two real numbers $a$ and $b$}
\NotatB{$\ca B_Y$}
 {Borel $\sigma$-field of a topological space $Y$}
\Notat{$(B_t)_{t \ge 0}$}
 {Brownian motion in $\R^n$}
 {\InSect \ref{GaussiDist}, p.~\pageref{BrowniMot}}
\Notat{$B_\tau$}
 {Brownian value $\omega \mapsto B_{\tau(\omega)}(\omega)$ 
  at a stopping time $\tau$}
 {\InSect \ref{PrincipeReflexion}, p.~\pageref{BTau}}
\Notat{$B(q_0, R, n)$}
 {\textit{a priori} bound in Bourgain's cube proof}
 {\InSect \ref{APriori}, p.~\pageref{Objective}}
\Notat{$\bar \mu$}
 {barycenter of a probability measure $\mu$ on $\R^n$}
 {\InSect \ref{GaussiDist}, p.~\pageref{Baryce}}
\Notat{$C_p, C'_p, C''_p$}
 {constants for Carbery's Proposition~\ref{PropoPrio}}
 {\AtEqua \eqref{A0}$\ldots$\eqref{A2}, p.~\pageref{A0}}
\Notat{$C_\alpha(m)$}
 {Carbery's constant for a Fourier multiplier $m$}
 {\InPropN{FouCarbe}}
\Notat{$C_\alpha$}
 {a subset of $[-N, N]$ in proof of Aldaz--Aubrun weak type theorem}
 {\AtEquaN{CalphaSet}}
\Notat{$c_p$}
 {Burkholder--Gundy constant, $1 < p < +\infty$}
 {\InTheoN{BurGun}}
\Notat{$D^z h$, $D^\alpha h$}
 {fractional derivative of $h$}
 {\AtEquaN{DalphaDef} 
  \& \eqref{IntegraUn}, p.~\pageref{IntegraUn}}
\Notat{$D^\alpha_t h(\lambda t)$}
 {fractional derivative of $t \mapsto h(\lambda t)$}
 {\AtEquaN{DalphaDef} 
  \& \eqref{IntegraUn}, p.~\pageref{IntegraUn}}
\Notat{$D^\alpha_t h(\lambda t) \barre_{t = t_0} \ns8$ }
 {fractional derivative evaluated at $t_0$}
 {\after \AtEqua \eqref{Dilate}, p.~\pageref{SpeciVal}}
\Notat{$(d_k)_{k=0}^N$}
 {martingale difference sequence}
 {\InSect \ref{BGIneqs}, p.~\pageref{BGIneqs}}
\Notat{$d^z h$, $d^\alpha h$}
 {another fractional derivative of $h$}
 {\AtEquaN{dalpha}}
\Notat{$\E f$}
 {expectation of the random variable $f$}
 {\InSect \ref{Gdfi}, p.~\pageref{Expecte}}
\Notat{$\E (f \ms1 | \ms1 \ca G)$}
 {conditional expectation of $f$ on the $\sigma$-field $\ca G$}
 {\InSect \ref{Gdfi}, p.~\pageref{CondExpe}}
\NotatB{$(\gr e_j)_{j=1}^n$}
 {standard unit vector basis of $\R^n$}
\Notat{$\ca F f$, $\widehat f$, $\widehat \mu$}
 {Fourier transform of a function $f$, of a measure $\mu$}
 {\InSect \ref{GdfiTwo}, p.~\pageref{FouTran}}
\Notat{$F_m$}
 {set of $x \in \R^n$ with $N_{n, \alpha(f)}(x) = m$ in Section~\ref{AlAu}}
 {\after \AtEqua \eqref{MinoGf}, p.~\pageref{SetFm}}
\Notat{$\|f\|_p, \|f\|_{L^p}$}
 {norm of a function $f$ in $L^p$}
 {\InTheo \ref{TheoHL}, p.~\pageref{TheoHL}}
\Notat{$f^*$}
 {uncentered classical maximal function of $f$}
 {\InIntr, p.~\pageref{Uncent}}
\Notat{$f_\# \mu$}
 {pushforward image of the finite measure $\mu$ by the mapping $f$}
 {p.~\pageref{PushForward}}
\Notat{$f^*_r$, $f^*_\ell$}
 {right, left maximal function of a function $f$ on $\R$}
 {\AtEquaN{RightMax}}
\Notat{$f_*, f^*$}
 {lower, upper bound for $f = m / n$, proof of Iakovlev--Str\"omberg}
 {\after \AtEqua \eqref{MinoGf}, p.~\pageref{FStars}}
\Notat{$G_s f$}
 {Gaussian semi-group acting on $f$}
 {\AtEquaN{GSG}}
\Notat{$G$}
 {Gaussian kernel in Bourgain's cube proof,
  $\widehat G(\xi) = \e^{ - 4 \pi |\xi|^2}$}
 {\after \AtEqua \eqref{grA}, p.~\pageref{GKernel}}
\Notat{$g_p$}
 {absolute $p\ms1$-th moment of the Gaussian measure $\gamma_1$ on $\R$}
 {\AtEquaN{AsymptoGauss}}
\Notat{$g^\vee$}
 {inverse Fourier transform of a function $g$ on $\R^n$}
 {\InSect \ref{GdfiTwo}, p.~\pageref{InveFour}}
\Notat{$g(f)$, $g_k(f)$}
 {Littlewood--Paley functions for $f$ on $\R^n$, $k \ge 1$}
 {\InSect \ref{LittlePal}, p.~\pageref{GdeF}}
\Notat{$\Di g \lambda$, $\di g \lambda$}
 {dilates of a function $g$ on $\R^n$}
 {\AtEquaN{Dilata}}
\Notat{$H f$, $H_\T f$}
 {Hilbert transform of $f$ on $\R$ or $\T$}
 {\InSect \ref{TransfoRiesz}, p.~\pageref{HilbTrans}}
\Notat{$H_k$}
 {homogeneous parts in Pisier's semi-group theorem}
 {\before \AtEqua \eqref{IciChk}, p.~\pageref{HomogPart}}
\Notat{$\| h \|_{ {\hbox{\sevenit L}}^2_\alpha}$}
 {Carbery's multiplier norm of a function $h$ on $(0, +\infty)$}
 {\AtEquaN{LdeuxAlpha}}
\Notat{$h_q$}
 {$L^q$ bound for $H_1$ in Pisier's semi-group theorem}
 {\InPropN{PisierSG}}
\Notat{$I$}
 {identity operator on $L^q(\R^n)$}
 {\InSect \ref{FirstReduc}, p.~\pageref{IdOp}}
\NotatB{$\I_n$}
 {identity matrix of size $n \times n$}
\Notat{$I^w f$, $I^\beta f$}
 {fractional integration}
 {\AtEquaN{IntegraZero}}
\Notat{$i^w f$, $i^\beta f$}
 {another fractional integration}
 {\InSect \ref{StrateMull}, p.~\pageref{FractiInt}}
\NotatB{$\mathrm{J}_\nu$}
 {Bessel function of order $\nu$}
\NotatB{$\ca K(\R^n)$}
 {space of compactly supported continuous functions on $\R^n$}
\Notat{$K_C$}
 {uniform probability density on a convex set $C$}
 {\InSect \ref{TheSetting}, p.~\pageref{KdeC}}
\Notat{$\Klc$}
 {a symmetric log-concave probability density on $\R^n$}
 {\InPropN{EstimaFouriC}}
\Notat{$\Kg$}
 {a probability density or kernel on $\R^n$ satisfying~\eqref{EstimaGene}}
 {\before \AtEqua \eqref{EstimaGene}, p.~\pageref{KernelKg}}
\Notat{$K^R$}
 {Bourgain's kernels for the cube}
 {\InSect \ref{LeCube}, p.~\pageref{KaR}}
\NotatB{$L^p(\R^n)$}
 {Lebesgue spaces, $1 \le p \le +\infty$}
\Notat{$L(C)$}
 {isotropy constant of the convex set $C$}
 {\AtEquaN{IsotroDef}}
\Notat{$\M f$}
 {classical Hardy--Littlewood maximal function of $f$}
 {\AtEquaN{ClassicalM}}
\Notat{$\M_C f$}
 {maximal function of $f$ associated to $C$}
 {\AtEquaN{OpMaxi}}
\Notat{$M^*_N$}
 {maximal function of a martingale $(M_k)_{k=0}^N$}
 {\InSect \ref{MaxiDoob}, p.~\pageref{MaxiProc}}
\Notat{$\ca M f$}
 {radial maximal function of $f$}
 {\InSect \ref{SphericOp}, p.~\pageref{RadiMaxi}}
\Notat{$\M_K f$, $\Mg_K f$}
 {maximal function of $f$ associated to a kernel $K$}
 {\InSect \ref{DefiMaxiFunc}, p.~\pageref{MKetMK}
  \& \InSect \ref{TheSetting}, p.~\pageref{MKetMKbis}}
\Notat{$\M_C^{(d)} f$}
 {dyadic maximal function of $f$ associated to $C$}
 {\InSect \ref{ArtiCarbe}, p.~\pageref{DyadiMax}}
\Notat{$\|m\|_{p \rightarrow p}$}
 {norm on $L^p$ of a multiplier $m$}
 {\InSect \ref{FourierMult}, p.~\pageref{MPdansP}}
\Notat{$m_\sigma$}
 {Fourier transform of the uniform probability measure $\sigma$ on
  $S^{n-1}$}
 {\InSect \ref{Bsmo}, p.~\pageref{MsubS}}
\Notat{$m^*$}
 {$m^*(\xi) = \xi \ps \nabla m(\xi)$, for a multiplier $m$ on $\R^n$}
 {p.~\pageref{mStar}}
\Notat{$m_C$, $m_C(\xi)$}
 {Fourier transform of $K_C$}
 {\InSect \ref{TheSetting}, p.~\pageref{MsubC}}
\NotatB{$\mlc$, $\mg$}
 {Fourier transform of $\Klc$, of $\Kg$}
\Notat{$m^\varepsilon_z$}
 {M\"uller's holomorphic family of multipliers}
 {\AtEquaN{mzFunc}}
\Notat{$m^{\#}$}
 {M\"uller's \og crucial\fge multiplier $m^{\#}(\xi) = |\xi| m(\xi)$}
 {\AtEquaN{MZero}}
\Notat{$m^R$}
 {Bourgain's cube multiplier, Fourier transform of $K^R$}
 {\InSect \ref{LeCube}, p.~\pageref{KaR}}
\NotatB{$\N^*$}
 {set of integers $n > 0$}
\Notat{$N(0, \I_n)$}
 {centered Gaussian distribution with covariance matrix $\I_n$}
 {\InSect \ref{GaussiDist}, p.~\pageref{NZero}}
\Notat{$N_{n, \alpha} (x)$}
 {number of coordinates of $x \in \R^n$ that are in $C_\alpha$}
 {\before \InLemm \ref{OneSet}, p.~\pageref{NAlpha}}
\NotatB{$\ca O(n)$}
 {orthogonal group}
\Notat{$P_t$, $P_t f$}
 {Poisson measure, Poisson semi-group acting on $f$}
 {\InSect \ref{PoissonSG}, p.~\pageref{PoissoSG}}
\Notat{$P_{\ms2 t}^{(\ns{0.7} n \ns1)}$}
 {Poisson kernel on $\R^n$}
 {\AtEquaN{PoissonDensi}}
\Notat{$p^*$}
 {$p^* := \max(p, p / (p - 1))$, in Burkholder's 
  unconditional constant $p^* - 1$}
 {p.~\pageref{PStar}}
\Notat{$Q_\mu$}
 {covariance quadratic form of a measure $\mu$}
 {\InSect \ref{GaussiDist}, p.~\pageref{CovariMa}}
\Notat{$Q(C)$}
 {M\"uller's constant for a convex set $C$}
 {\InSect \ref{AMuller}, p.~\pageref{QuC}}
\Notat{$Q$, $Q_n$}
 {symmetric cube of volume one in $\R^n$}
 {\InSect \ref{LeCube}, p.~\pageref{QuN}}
\Notat{$\textrm{q}_{\ms1 p}$}
 {Littlewood--Paley constant, for $1 < p < +\infty$}
 {\AtEquaN{LiPaIneq}}
\Notat{$q(C)$}
 {modified M\"uller's constant for a symmetric convex set $C$}
 {\AtEquaN{qC}}
\Notat{$R_j f$}
 {Riesz transforms of $f$ in $\R^n$, $1 \le j \le n$}
 {\before \AtEqua \eqref{Collective2}, p.~\pageref{RieszTra}}
\Notat{$\ca R f$}
 {vector form of the Riesz transform in $\R^n$}
 {\InSect \ref{TransfoRiesz}, p.~\pageref{VRieszTr}}
\Notat{$R_0$}
 {$R_0^{\delta/2} = 4$, in Bourgain's cube proof}
 {p.~\pageref{Rzero}}
\NotatB{$|S|$}
 {measure of a set $S$}
\NotatB{$|S|_n$}
 {Lebesgue's $n$-dimensional measure of a set $S$ in $\R^n$}
\Notat{$S_N$}
 {square function of a martingale}
 {\InSect \ref{BGIneqs}, p.~\pageref{SquaFun}}
\Notat{$\ca S(\R^n)$}
 {Schwartz function space on $\R^n$}
 {\InSect \ref{PoissonSG}, p.~\pageref{SchwaSpa}}
\NotatB{$S^{n-1}$}
 {unit sphere in $\R^n$}
\Notat{$S_p(\tau)$}
 {one-side moments of a log-concave probability density
  on $[\tau, +\infty)$}
 {\InLemmN{EstimaLogConcNew}}
\Notat{$S_N$, $S_N^n$}
 {sets $[-N, N]$, $[-N, N]^n$ in Section~\ref{AlAu}}
 {\after \AtEqua \eqref{WTforCubes}, p.~\pageref{SN}}
\Notat{$s_{n-1}$}
 {Lebesgue measure of the unit sphere in $\R^n$}
 {\AtEquaN{OmegaN}}
\NotatB{$s^+$}
 {positive part of a real number $s$, $s^+ = \max(s, 0)$}
\NotatB{$\lfloor s \rfloor$, $\lceil s \rceil$}
 {floor, ceiling of $s$, integers such that
  $s-1 \le \lfloor s \rfloor \le s \le \lceil s \rceil < s+1$}
\Notat{$s_*, s^*$}
 {$s_* = \sqrt {f_* (1 - f_*)}$, 
  $s^* = \sqrt {f^* (1 - f^*)}$, in Iakovlev--Str\"omberg}
 {\after \AtEqua \eqref{Stirl}, p.~\pageref{SStars}}
\NotatB{$\T$}
 {unit circle in $\R^2$ or $\C$}
\NotatB{$\|T\|_{p \rightarrow p}$}
 {norm of an operator $T : L^p \rightarrow L^p$}
\Notat{$T_m$}
 {linear operator associated to the multiplier $m$ on $\R^n$}
 {\before \AtEqua \eqref{EasyBoundA}, p.~\pageref{TsubM}}
\Notat{$T_{j, v}$, $T_j$, $T^*$}
 {operators for Carbery's maximal theorem}
 {\InSect \ref{EstiPrio}, p.~\pageref{TsubJV}}
\Notat{$\gr T^J$}
 {product $\prod_{j \in J} T_j$ of linear operators $(T_j)_{j \in J}$}
 {\InSect \ref{FirstReduc}, p.~\pageref{TupJ}}
\NotatB{$t \ms1 C$}
 {dilate by $t > 0$ of the convex set $C$}
\Notat{$U_K$}
 {operator $f \mapsto \nabla K * f$}
 {\AtEquaN{UK}}
\Notat{$u(x, t)$}
 {harmonic extension of $f(x)$, $x \in \R^n$,
  to the upper half-space in $\R^{n+1}$}
 {\InSect \ref{PoissonSG}, p.~\pageref{UdeXT}}
\Notat{$V(K)$}
 {directional variation of a kernel $K$}
 {\AtEquaN{V(K)}}
\Notat{$V$}
 {fixed large value $V$ in Iakovlev--Str\"omberg}
 {\after \InTheo \ref{PropIS}, p.~\pageref{ValueV}}
\Notat{$w_0$}
 {$w_0 = R^{\delta/2}$ in Bourgain's cube proof}
 {\after \InLemm \ref{CubeLemme5}, p.~\pageref{Wsub0}}
\Notat{$w_1$}
 {$w_1 = w_0^2 = R^{\delta}$ in Bourgain's cube proof}
 {\after \AtEqua \eqref{H4}, p.~\pageref{Wsub1}}
\Notat{$X_{1, \alpha}$, $X_{n, \alpha}$}
 {Bernoulli, binomial variable in Aldaz--Aubrun}
 {\AtEqua \eqref{Xalpha} 
  \& \before \InLemm \ref{OneSet}, p.~\pageref{XnAlpha}}
\NotatB{$|x|$}
 {norm of a vector $x$, usually Euclidean norm on $\R^n$}
\Notat{$\|x\|_C$}
 {norm of a vector $x$ relative to a symmetric convex set $C$}
 {\InSect \ref{PrincipeReflexion}, p.~\pageref{CNorm}}
\Notat{$\xG$}
 {point where $\Gamma$ reaches its minimum on $(0, \infty)$}
 {\AtEquaN{MiniGamm}}
\Notat{$\ovy$}
 {maximal argument $\ovy = \tau / {\sigma_\alpha}$}
 {\InLemmN{YandAllow}}
\dumou 
\Notat{$\alpha_j(m)$}
 {constituent of Bourgain's constant $\Gamma_B(K)$ for a kernel $K$}
 {\InLemmN{Clef}}
\Notat{$\alpha(f)$}
 {value associated to $f = m/n$ in Iakovlev--Str\"omberg}
 {\after \AtEqua \eqref{PsiF}, p.~\pageref{AlphaF}}
\Notat{$\beta_a$}
 {in a bound for $|\Gamma(z)|^{-1}$}
 {\AtEquaN{MajoGamm}}
\Notat{$\beta_j(m)$}
 {constituent of Bourgain's constant $\Gamma_B(K)$ for a kernel $K$}
 {\InLemmN{Clef}}
\Notat{$\Gamma_B(K)$}
 {Bourgain's constant for a kernel $K$}
 {\InLemm \ref{Clef}, p.~\pageref{GammSubB}}
\Notat{$\Gamma^S$}
 {operator $\gr T^{\sim S} \ms1 (\gr I - \gr T)^S$,
  in Bourgain's cube proof}
 {\AtEquaN{DefGammaS}}
\Notat{$\gamma_n$, $\gamma_F$}
 {Gaussian probability measure on $\R^n$, on a Euclidean space $F$}
 {\AtEquaN{LoiNZeroId}}
\Notat{$\Delta_k$, $\Delta_{k, c}$}
 {sum of bounds $\sum_{j=0}^k \delta_{j, g}$, $\sum_{j=0}^k \delta_{j, c}$}
 {\AtEquaN{DefiDel}}
\Notat{$\delta_x$}
 {Dirac probability measure at the point $x$}
 {p.~\pageref{Dirac}}
\Notat{$\delta_{j, c}$}
 {bounds for $m_C$, $\mlc$ and their derivatives}
 {\InLemmN{LEstimatesForC}}
\Notat{$\delta_{j, g}$}
 {bounds for $\mg$ and its derivatives}
 {\AtEquaN{EstimaGene} \& \AtEquaN{EstimaGeneN}}
\Notat{$\delta$}
 {$\delta = \eta^3 \ms1 s_* / (1 - f_*)$, in proof of
  Theorem~\ref{PropIS}}
 {\InLemmN{Intersect}}
\NotatB{$\partial S$}
 {boundary of a set $S \subset \R^n$}
\NotatB{$\partial_i f$}
 {$i\ms1$th partial derivative of a function $f$ on $\R^n$,
  $i = 1, \ldots, n$}
\NotatB{$\nabla f$}
 {gradient of the function $f$ on $\R^n$}
\Notat{$(\varepsilon_j)_{j=1}^{+\infty}$}
 {independent Bernoulli random variables}
 {\before \AtEqua \InSect \eqref{GSG}, p.~\pageref{BernouVaris}}
\Notat{$\varepsilon$}
 {value $\varepsilon \in (0, f_*]$, in the proof of Theorem~\ref{PropIS}}
 {\before \AtEqua \eqref{FmoinsA}, p.~\pageref{EpsilEta}}
\Notat{$\eta$}
 {$\eta = \sqrt{ 1 - \varepsilon / f_*}$, in the proof of Theorem~\ref{PropIS}}
 {\before \AtEqua \eqref{FmoinsA}, p.~\pageref{EpsilEta}}
\NotatB{$\theta^\perp$}
 {hyperplane orthogonal to $\theta \in S^{n-1}$}
\Notat{$\kappa_{Q, n}$}
 {weak type $(1, 1)$ constant for the cube in $\R^n$}
 {\InIntr, p.~\pageref{KappaQ} \& \InSect \ref{AlAu}, p.~\pageref{AlAu}}
\Notat{$\lambda_p$}
 {bound on $L^p$ for Laplace-type multipliers}
 {\InPropN{LaplaceMultip}}
\Notat{$\widehat \mu$}
 {Fourier transform of the measure $\mu$}
 {\InSect \ref{GdfiTwo}, p.~\pageref{FouriMu}}
\Notat{$\mu_C$}
 {uniform probability measure on a convex set $C$}
 {\InSect \ref{TheSetting}, p.~\pageref{MusubC}}
\Notat{$\mu^+$, $\mu^-$, $|\mu|$}
 {positive part, negative part, absolute value of a measure}
 {\before~\InLemm \ref{PreBourg}, p.~\pageref{MuPlusMoins}}
\Notat{$\|\mu\|_1$}
 {mass of a real-valued measure $\mu$}
 {\before~\InLemm \ref{PreBourg}, p.~\pageref{MuPlusMoins}}
\Notat{$\mu^R$}
 {Bourgain's cube measure}
 {\after \AtEqua \eqref{SigmaR}, p.~\pageref{MuR}}
\Notat{$\mu_N$, $\mu_N^{(n)}$}
 {discrete measure in the proof of Aldaz--Aubrun weak type theorem}
 {\after \AtEqua \eqref{WTforCubes}, p.~\pageref{MuN}}
\Notat{$ (\xi \ps \nabla)^\alpha $}
 {Carbery's fractional operator}
 {\AtEquaN{OpAlpha}}
\Notat{$\rho_p$}
 {\og collective\fge norm on $L^p$ for Riesz transforms}
 {\AtEquaN{TransfosRiesz}}
\Notat{$\Sigma_k$}
 {subsets of $\{1, \ldots, n\}$ having cardinality $k$}
 {\before \AtEqua \eqref{IciChk}, p.~\pageref{SigmaK}}
\Notat{$\sigma^2$}
 {variance of a probability measure or density}
 {\InSect \ref{GaussiDist}, p.~\pageref{VarianceDef}}
\Notat{$\sigma$, $\sigma_{n-1}$}
 {invariant probability measure on the unit sphere $S^{n-1}$}
 {\InSect \ref{SphericOp}, p.~\pageref{HaarMeas} \& p.~\pageref{SigmSubN}}
\Notat{$\sigma_{S, j}$}
 {Bourgain's selector}
 {\InSect \ref{Decoupling}, p.~\pageref{Selektor}}
\Notat{$\sigma_\alpha^2$}
 {variance $\sigma_\alpha^2 = \alpha (1 - \alpha)$
  of a Bernoulli random variable}
 {\InLemmN{YandAllow}}
\Notat{$\Phi_j$}
 {convolution operator in Bourgain's cube proof}
 {\before \AtEqua \eqref{LesPhij}, p.~\pageref{PhiSubJ}}
\Notat{$\varphi_\theta$, $\varphi_{\theta, K}$}
 {marginal of a kernel $K$ on $\R^n$, image by $\theta \in S^{n-1}$}
 {\AtEquaN{VarphiTheta}}
\Notat{$\varphi_q$}
 {lower bound of angle for Pisier's analytic semi-group theorem}
 {\InPropN{PisierSG}}
\Notat{$\phi_\alpha$}
 {a function on $(0, 1)$ in Section~\ref{AlAu}}
 {\AtEquaN{DefPhiAlpha}}
\Notat{$\Omega_1$, $\Omega$}
 {sets $[-N, N]$, $[-N, N]^n$ in Section~\ref{AlAu}}
 {\before \AtEqua \eqref{Xalpha}, p.~\pageref{OmegaUn}}
\Notat{$\omega_n$}
 {Lebesgue volume of the unit ball in $\R^n$}
 {\AtEquaN{OmegaN}}
 
}
\vfill
\end{document}